\numberwithin{equation}{section}
\theoremstyle{plain}
\newtheorem{theorem}{Theorem}[section]
\newtheorem{lemma}[theorem]{Lemma}
\newtheorem{proposition}[theorem]{Proposition}
\newtheorem{corollary}[theorem]{Corollary}
\theoremstyle{definition}
\newtheorem{definition}[theorem]{Definition}
\newtheorem{notation}[theorem]{Notation}
\newtheorem{remark}[theorem]{Remark}
\newtheorem{example}[theorem]{Example}
\newtheorem{conjecture}[theorem]{Conjecture}
\newtheorem*{rmk}{Remark}
\newcommand{\C}{\mathbb{C}}
\newcommand{\Q}{\mathbb{Q}}
\newcommand{\R}{\mathbb{R}}
\newcommand{\Z}{\mathbb{Z}}
\renewcommand{\P}{\mathbb{P}}
\newcommand{\CP}{\mathbb{CP}}
\newcommand{\mB}{\mathcal{B}}
\newcommand{\mE}{\mathcal{E}}
\newcommand{\mF}{\mathcal{F}}
\newcommand{\mG}{\mathcal{G}}
\newcommand{\mK}{\mathcal{K}}
\newcommand{\mO}{\mathcal{O}}
\newcommand{\mH}{\mathcal{H}}
\newcommand{\mI}{\mathcal{I}}
\newcommand{\mL}{\mathcal{L}}
\newcommand{\mN}{\mathcal{N}}
\newcommand{\mP}{\mathcal{P}}
\newcommand{\mQ}{\mathcal{Q}}
\newcommand{\mS}{\mathcal{S}}
\newcommand{\mU}{\mathcal{U}}
\newcommand{\mV}{\mathcal{V}}
\newcommand{\ba}{\mathbf{a}}
\newcommand{\bb}{\mathbf{b}}
\newcommand{\be}{\mathbf{e}}
\newcommand{\bx}{\mathbf{x}}
\newcommand{\mLi}{\mathcal{L}_{\textup{irr}}}
\newcommand{\mLr}{\mathcal{L}_{\textup{red}}}
\newcommand{\p}{\partial}
\newcommand{\iprod}{\mathbin{\lrcorner} \,}
\newcommand{\tL}{\widetilde{L}}
\newcommand{\tH}{\widetilde{H}}
\newcommand{\tp}{\widetilde{p}}
\newcommand{\tX}{\widetilde{X}}
\newcommand{\tD}{\widetilde{D}}
\newcommand{\tP}{\widetilde{P}}
\newcommand{\tQ}{\widetilde{Q}}
\newcommand{\tV}{\widetilde{V}}
\newcommand{\newpar}{\vspace{0.15in}}
\newcommand{\tv}{\tilde{v}}
\newcommand{\bc}{\mathbf{c}}
\newcommand{\bv}{\mathbf{v}}
\DeclareMathOperator{\rk}{rank}
\DeclareMathOperator{\spn}{span}
\DeclareMathOperator{\codim}{codim}
\DeclareMathOperator{\parch}{par-ch}
\DeclareMathOperator{\parc}{par-c}
\DeclareMathOperator{\pardeg}{par-deg}
\DeclareMathOperator{\ch}{ch}
\DeclareMathOperator{\Hom}{Hom}
\DeclareMathOperator{\Gr}{Gr}
\DeclareMathOperator{\Pic}{Pic}
\DeclareMathOperator{\Bl}{Bl}
\DeclareMathOperator{\pr}{pr}
\DeclareMathOperator{\vol}{vol}
\DeclareMathOperator{\PGL}{PGL}
\DeclareMathOperator{\Tan}{Tan}
\DeclareMathOperator{\Irr}{Irr}
\DeclareMathOperator{\wt}{wt}
\DeclareMathOperator{\Ric}{Ric}
\DeclareMathOperator{\Cl}{Cl}
\DeclareMathOperator{\loc}{loc}
\DeclareMathOperator{\FS}{FS}
\DeclareMathOperator{\RF}{RF}
\DeclareMathOperator{\KE}{KE}
\DeclareMathOperator{\PG}{PG}
\newcommand{\Address}{{
		\bigskip
		\begin{flushright}
			\textsc{Loughborough University, Department of Mathematics\\
				Schofield Building, LE11 3TU, United Kingdom} \\
				\small{\href{mailto:m.de-borbon@lboro.ac.uk}{\nolinkurl{m.de-borbon@lboro.ac.uk}}} \\
			\vspace{3mm}
			\textsc{King's College London, Department of Mathematics\\
				Strand, WC2R 2LS, United Kingdom} \\
				\small{\href{mailto:dmitri.panov@kcl.ac.uk}{\nolinkurl{dmitri.panov@kcl.ac.uk}}}
		\end{flushright}
}}
\def\blfootnote{\xdef\@thefnmark{}\@footnotetext}
\title{A Miyaoka-Yau inequality for hyperplane arrangements in $\mathbb {CP}^n$ }
\date{\today}
\author{Martin de Borbon and Dmitri Panov}
\begin{document}

\maketitle
\blfootnote{\noindent 2020 \textit{Mathematics Subject Classification}. Primary 14N20, 14J60; Secondary 32S22, 52C35.}

\begin{abstract}
	Let \(\mH\) be a hyperplane arrangement in \(\CP^n\).
	We define a quadratic form \(Q\) on \(\R^{\mH}\) that is entirely determined by the intersection poset of \(\mH\).
	Using the Bogomolov-Gieseker inequality for parabolic bundles, we show that if \(\ba \in \R^{\mH}\) is such that the weighted arrangement \((\mH, \ba)\) is \emph{stable}, then \(Q(\ba) \leq 0\). 
	
	As an application, we consider the symmetric case where all the weights are equal. The inequality \(Q(a, \ldots, a) \leq 0\) gives a lower bound for the total sum of multiplicities of codimension \(2\) intersection subspaces of \(\mH\). The lower bound is attained when every \(H \in \mH\) intersects all the other members of \(\mH \setminus \{H\}\) along \((1-2/(n+1))|\mH| + 1\) codimension \(2\) subspaces; extending from \(n=2\) to higher dimensions a condition found by Hirzebruch for line arrangements in the complex projective plane.   
\end{abstract}

\tableofcontents

\section{Introduction}
Let \(\mH\) be an arrangement of complex hyperplanes \(H \subset \CP^n\).
Let \(N = |\mH|\) be the number of hyperplanes, which we assume to be finite, and fix a labelling, say \(\mH = \{H_1, \ldots, H_N\}\). We assume that
\(H_i \neq H_j\) for \(i \neq j\).
In this paper,
we introduce a quadratic form \(Q: \R^N \to \R\) associated to the arrangement \(\mH\) and show that \(Q \leq 0\) on a certain convex polyhedral cone \(C \subset \R_{\geq 0}^N\), that we call the \emph{semistable cone} of \(\mH\). \newpar

\noindent {\bf The quadratic form \(Q\) of \(\mH\).}
The multiplicity \(m_L\) of a linear subspace \(L \subset \CP^n\) is the number of hyperplanes \(H_i \in \mH\) that contain \(L\).
A codimension \(2\) subspace \(L \subset \CP^n\) is \emph{reducible} if \(m_L = 2\), and \emph{irreducible} if \(m_L \geq 3\).
Let \(\sigma_i\) be the number of codimension \(2\) irreducible subspaces contained in the hyperplane \(H_i\). 
The quadratic form \(Q: \R^N \to \R\) of \(\mH\) is defined by the symmetric matrix with entries given by
\[
Q_{ij} = \begin{cases}
- (n+1) \sigma_i + 2n &\text{ if } i = j \,, \\
-2 &\text{ if }  i \neq j \text{ and } L = H_i \cap H_j \text{ is reducible}, \\
\,\, n-1 &\text{ if } i \neq j \text{ and } L = H_i \cap H_j \text{ is irreducible} .
\end{cases}
\]

\begin{rmk}
If \(n=2\), then \((-Q_{ij})\) is equal to the matrix given by \cite[Equation (3)]{hirzebruch2} in the context of H\"ofer's formula for `the proportionality' \(3c_2 - c_1^2\) of algebraic surfaces obtained as branched covers of the projective plane branching along a line arrangement. 
\end{rmk}

\noindent{\bf The semistable and stable cones.}
A \emph{basis} \(\mB\) of \(\mH\) is a subset \(\mB \subset \mH\) of \(n+1\) linearly independent hyperplanes. The indicator function \(\be_{\mB}\) of a basis \(\mB\) is the vector in \(\R^N\) whose \(i\)-th component is \(1\) if \(H_i \in \mB\) and \(0\) otherwise. The \emph{semistable cone} \(C\) is the {\it conical hull} of the vectors \(\be_{\mB}\) with \(\mB\) ranging over all bases of \(\mH\).
Put differently, if \(P\) is the convex hull of the vectors \(\be_{\mB}\)\,, then \(C = \R_{\geq 0} \cdot P\) is the cone over \(P\). 
The convex set \(P\) is called the \emph{matroid polytope} of \(\mH\).

Dually, the cone \(C\) can be described in terms of defining linear inequalities.
Let \(\mL\) be the set of non-empty and proper linear subspaces \(L \subset \CP^n\)
obtained by intersecting hyperplanes in \(\mH\). The semistable cone \(C\)
is the set of points \(\ba = (a_1, \ldots, a_N) \in \R^N\) with non-negative components \(a_i \geq 0\) such that, for every \(L \in \mL\) , the following holds:
\[
\sum_{i \,|\, L \subset H_i} a_i \leq \frac{\codim L}{n+1} \cdot \sum_{i=1}^{N} a_i \,.
\]

The \emph{stable cone} \(C^{\circ}\) is the interior of the semistable cone \(C\). Equivalently, \(C^{\circ}\) is the subset of \(\R_{>0}^N\) where the above inequalities hold strictly. 

\begin{rmk}
The hyperplanes \(H_i \subset \CP^n\) correspond to points \(p_i \in (\CP^n)^*\). 
If the numbers \(a_i\) are positive integers, then \(\ba = (a_1, \ldots, a_N)\) belongs to \(C\) (resp. \(C^{\circ}\)) if and only if the weighted configuration of points \(\{(p_i, a_i)\}_{i=1}^N\) is semistable (resp. stable) in the sense of Geometric Invariant Theory, as follows from \cite[Theorem 11.2]{dolgachev}.
\end{rmk}

\noindent {\bf Miyaoka-Yau inequality.}
Our main result, in its most general form, is Theorem \ref{thm:maingeneral}. It 
asserts that the quadratic form \(Q\) is non-positive on the semistable cone:
\[
C \subset \{ Q \leq 0 \} \,.
\]
We think of this as a version of the Miyaoka-Yau inequality because we have the following expectation.\newpar

{\bf Conjecture:} suppose that \(\ba \in C^{\circ}\) is such that \(Q(\ba) = 0\) and \(0 < a_i < 1\) for all \(i\). Then there is a K\"ahler metric on \(\CP^n\) of constant holomorphic sectional curvature with cone angles \(2\pi\alpha_i\) along the hyperplanes \(H_i\)\,, where \(\alpha_i = 1-a_i\). (For a more precise version, in the case of zero curvature, see Conjecture \ref{conjecture}.)\newpar

This conjecture holds for \(n=1\), in which case the quadratic form \(Q\) is identically zero, and the existence of the metric is proven in \cite{mcowen} and \cite{troyanov} (see also \cite{dimasphere} for an approach similar to this paper).
For \(n=2\) and zero curvature the conjecture holds  by \cite[Theorem 1.12]{panov}. 
An important class of examples that fit into the above conjecture are complex reflection arrangements, for which the existence of the metric is proved in \cite{chl}.
In light of our expectation, we propose the following.
\newpar

\emph{Problem:} classify the arrangements for which there exists \(\ba \in C^{\circ}\)  with \(Q(\ba) = 0\). \newpar

Note that if \(\mH\) is as above then its quadratic form  \(Q\) is degenerate and negative semidefinite.
We believe that such arrangements should be very special and rigid for $n\ge 2$.   For example, in dimension $2$, their complements are $K(\pi,1)$, see \cite[\S 11]{dimapetrunin}.
\newpar

\noindent {\bf Hirzebruch arrangements.} 
We analyse the above problem in the particular case that \(\ba\) is the vector \(\mathbf{1} = (1, \ldots, 1) \in \R^N\). It is easy to see that  \(\mathbf{1}\) belongs to the kernel of \(Q\) precisely when every \(H_i \in \mH\) intersects the other members of \(\mH \setminus \{H_i\}\) along 
\[
\left( 1 - \frac{2}{n+1} \right) \cdot N + 1
\] 
codimension \(2\) subspaces.
If \(\mH\) satisfies this property, and the additional requirement that \(\mathbf{1} \in C\), we say that \(\mH\) is a \emph{Hirzebruch arrangement}.
We show that complex reflection arrangements, defined by irreducible unitary reflection groups \(G \subset U(n+1)\), are Hirzebruch. 
It remains an open problem to understand if
all Hirzebruch arrangements come from reflection groups.
If \(n=2\), this is an old question posed by Hirzebruch in \cite[\S 3]{hirzebruch2}, and it has a positive answer for \emph{real} line arrangements by \cite{dima}.\newpar

\noindent {\bf The Hirzebruch quadratic form of a matroid.} In Section \ref{sec:matroid}\,, we state our results in the wider context of matroid theory. Given a simple matroid \(M\) on the ground set with \(N\) elements, we define a quadratic form \(Q: \R^N \to \R\), which we call \emph{the Hirzebruch quadratic form} of \(M\). Our results show that, if \(M\) is representable over \(\C\), then the quadratic form \(Q\) is \(\leq 0\) on the cone over the matroid polytope of \(M\).

\subsection{Main result: klt and CY arrangements}\label{sec:mainresults}

Let \(\mH\) be a finite set of pairwise distinct complex hyperplanes \(H \subset \CP^n\), and let \(\ba\) be a vector in \(\R^{\mH}\) whose components are positive real numbers \(a_H > 0\) indexed by the elements \(H \in \mH\). We say that \((\mH, \ba)\) is a weighted hyperplane arrangement.

Let \(\mL\) be the set of non-empty and proper linear subspaces \(L \subset \CP^n\) obtained as intersections of hyperplanes in \(\mH\). 
\newpar

\noindent {\bf Klt and CY arrangements.}
The weighted arrangement \((\mH, \ba)\) is:
\begin{itemize}
	\item klt (which stands for Kawamata log terminal) if
	\begin{equation}\label{eq:klt}
			\forall \, L \in \mL: \,\, \sum_{H \,| \, L \subset H} a_H < \codim L \,,
	\end{equation}
	where the sum runs over all the hyperplanes \(H \in \mH\) that contain \(L\);
	
	\item Calabi-Yau, or CY for short, if 
	\begin{equation}\label{eq:cy}
		\sum_{H \in \mH} a_H = n+1 \,.
	\end{equation}
\end{itemize}

\begin{rmk}
	 Equation \eqref{eq:klt} applied to \(\mH \subset \mL\), together with the assumption that the weights \(a_H\) are positive, imply that
	\[
	\forall \, H \in \mH: \,\, 0 < a_H < 1 \,.
	\]
\end{rmk}

Our main result, in its primitive technical form is the following.

\begin{theorem}\label{thm:main}
	Suppose that \((\mH, \ba)\) is a weighted arrangement that is both klt and CY.  
	Then the following inequality holds:
	\begin{equation}\label{eq:mythm}
		\sum_{L \in \mLi^{n-2}} a_L^2 - \frac{1}{2} \sum_{H \in \mH}  B_H \cdot a_H^2  - \frac{n+1}{2} \leq 0 \,.
	\end{equation}
\end{theorem}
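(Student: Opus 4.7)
My plan is to prove Theorem~\ref{thm:main} by applying the Bogomolov-Gieseker inequality for parabolic bundles to a rank-\(n\) parabolic vector bundle \(\mE\) on \(\CP^n\) built from the logarithmic cotangent sheaf \(\Omega^1_{\CP^n}(\log \mH)\) with parabolic weights along each \(H \in \mH\) determined by \(\ba\). With the standard normalisation, the parabolic first Chern class works out to
\[
\parc_1(\mE) = \left(n+1 - \sum_{H \in \mH} a_H\right) h,
\]
where \(h\) is the hyperplane class. The CY hypothesis \eqref{eq:cy} forces \(\parc_1(\mE)=0\), so the parabolic BG inequality reduces to the clean statement \(\parch_2(\mE) \leq 0\), and the remaining task is to identify \(\parch_2(\mE)\) with the left-hand side of \eqref{eq:mythm}.

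For the input to BG one needs semistability of \(\mE\) with respect to the hyperplane polarisation. This is where I would use the klt hypothesis \eqref{eq:klt}: for each \(L \in \mL\), saturated parabolic subsheaves of \(\mE\) whose flag is adapted to the stabiliser of \(L\) have their parabolic slope controlled by the quantity \(\sum_{H \supset L} a_H - \codim(L)\), which is strictly negative by \eqref{eq:klt}. Combined with the CY normalisation \(\sum_H a_H = n+1\) this gives the parabolic slope inequality for every such competitor, and one can view this as the differential-geometric reformulation of the GIT semistability of the dual weighted points in \((\CP^n)^*\) noted in the introduction, thereby reducing semistability to the defining inequalities of the semistable cone \(C\).

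The computation of \(\parch_2(\mE)\) is the combinatorial heart of the argument. Since \(\mH\) is in general not SNC, I would take an iterated blow-up \(\pi: X \to \CP^n\) along the irreducible codimension-two intersections \(L \in \mLi^{n-2}\), producing an SNC model whose boundary consists of the proper transforms \(\widetilde{H}\) and the exceptional divisors \(E_L\), with parabolic weights \(a_H\) and \(a_L = \sum_{H \supset L} a_H\) respectively. Applying the standard formula for parabolic Chern characters of log-SNC parabolic bundles on \(X\), and pushing forward to \(\CP^n\), collects the three contributions visible in \eqref{eq:mythm}: the constant \(\tfrac{n+1}{2} = \ch_2(\Omega^1_{\CP^n})\) coming from the underlying (unweighted) cotangent sheaf, the squares \(a_L^2\) from the exceptional divisors \(E_L\) over irreducible codimension-two intersections, and the correction \(-\tfrac{1}{2} B_H a_H^2\) from self-intersection numbers along the proper transforms \(\widetilde{H}\), with \(B_H\) tracking \(\sigma_H\) together with the number of hyperplanes meeting \(H\) along reducible codimension-two subspaces.

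The step I expect to be the main obstacle is not the BG inequality itself but the interplay between (i) a clean proof of parabolic semistability of \(\mE\) from only the klt condition, and (ii) the careful blow-up bookkeeping needed so that the various exceptional contributions reassemble into exactly the coefficients \(B_H\) prescribed in the definition of \(Q\). Both depend on making a coordinated choice of parabolic structure on the log cotangent sheaf and keeping track of how it transforms under the successive blow-ups; reconciling the algebro-geometric computation on \(X\) with the purely combinatorial formula for \(Q\) is the delicate part.
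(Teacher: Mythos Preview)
Your overall strategy---apply a parabolic Bogomolov--Gieseker inequality to a natural parabolic bundle whose \(\parc_1\) vanishes by the CY condition and whose \(\parch_2\) encodes the left side of \eqref{eq:mythm}---is exactly the paper's. The paper works with the pullback \emph{tangent} bundle \(\mE=\pi^*T\CP^n\) rather than a log cotangent sheaf; this is essentially dual to your choice and not a real obstacle. But two steps in your sketch are genuine gaps.

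\textbf{The resolution.} Blowing up only the irreducible codimension-two intersections does \emph{not} make \(\pi^{-1}(\mH)\) simple normal crossing once \(n\ge 3\): you must also blow up the deeper irreducible strata. The paper uses the full minimal De Concini--Procesi model, blowing up every \(L\in\mLi\) of codimension \(\ge 2\), and this is what guarantees SNC (Theorem~\ref{thm:conproc}) and the locally abelian property of \(\mE_*\) (Theorem~\ref{thm:locab}), both of which are prerequisites for Mochizuki's inequality. Relatedly, your proposed weight \(a_L=\sum_{H\supset L}a_H\) on an exceptional divisor need not lie in \((0,1)\): klt only gives \(\sum_{H\supset L}a_H<\codim L\). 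The paper divides by \(\codim L\) to obtain \(a_L\in(0,1)\). It is true that in the final computation of \((\pi^*h)^{n-2}\cdot\parch_2(\mE_*)\) the contributions from strata of codimension \(\ge 3\) vanish (Corollary~\ref{cor:vanishing} and Lemma~\ref{lem:hoc}), so your intuition that only codimension-two data survives in the formula is correct; but the higher blowups are still needed to set up the parabolic bundle rigorously. Note also that on the blowup there is no canonical polarization: the paper uses a family \(P_k\) and extracts \eqref{eq:mythm} as the leading coefficient in \(k\) of \(c_1(P_k)^{n-2}\cdot\parch_2(\mE_*)\).

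\textbf{Stability.} This is where your sketch seriously underestimates the difficulty. You propose to test only subsheaves ``adapted to the stabiliser of \(L\)'' for \(L\in\mL\), but slope stability requires bounding the parabolic degree of \emph{every} saturated subsheaf \(\mV\subset\mE\). Such a subsheaf corresponds to an arbitrary distribution \(\mV\subset T\CP^n\), and the set of hyperplanes tangent to \(\mV\) need not be a localization \(\mH_L\). The paper's actual argument (Section~\ref{sec:stability}) reduces to the key estimate of Proposition~\ref{prop:key}: for any distribution of index \(\imath\ge 0\), the total weight of hyperplanes \emph{transverse} to \(\mV\) is at least \(\imath+\delta\) for some \(\delta>0\) depending only on \((\mH,\ba)\). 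Proving this requires structural results about \(\Tan(\mV)\) for distributions of non-negative index (Proposition~\ref{prop:posindexdist}): when \(\imath>0\) all tangent hyperplanes contain a common subspace \(M\) of dimension \(\ge\imath-1\), and when \(\imath=0\) tangency to \(n+1\) independent hyperplanes forces a nonzero tangent vector field, contradicting Corollary~\ref{cor:noauto}. Only with these in hand does the klt inequality for each \(L\) close the argument. So your claim that semistability ``reduces to the defining inequalities of the semistable cone'' is morally right but hides the substantive work: translating those linear inequalities into a slope bound for arbitrary distributions is precisely the content of Propositions~\ref{prop:posindexdist} and~\ref{prop:key}.
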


The terms in Equation \eqref{eq:mythm} are defined as follows: 
\begin{itemize}
	\item \(\mLi^{n-2}\) is the set of codimension \(2\) irreducible subspaces \(L \subset \CP^n\)\,;
	\item \(a_L\) for \(L \in \mLi^{n-2}\) is given by \(2 \cdot a_L = \sum\limits_{H \,|\, L \subset H} a_H\)\,;
	\item \(B_H + 1\) is equal to the number of elements \(L \in \mLi^{n-2}\) that are contained in \(H\).
\end{itemize}

\begin{remark}
    The set of weights \(\ba \in \R^{\mH}\) such that \((\mH, \ba)\) is klt and CY is equal to the relative interior \(P^{\circ}\) of the matroid polytope \(P\) of \(\mH\) as a subset of the hyperplane \(\sum a_H = n+1\).
    It is well-known that, if the arrangement \(\mH\) is \emph{essential} and \emph{irreducible},\footnote{I.e. \(\bigcap_{H \in \mH}H = \emptyset\) and \(\mH\) is not isomorphic to a \emph{product} -as in Definition \ref{def:prod}- of two non-empty arrangements, see Corollary \ref{cor:redarr}.}
    then \(\dim P = |\mH|-1\) and therefore \(P^{\circ}\) is non-empty. See \cite[Theorem 1.12.9]{borovikgelfand} and Appendix \ref{app:essirr} for a self contained proof.
\end{remark}

\noindent\textbf{Calabi-Yau metrics on log pairs.}
We explain here why Theorem \ref{thm:main} can be viewed as a Miyaoka-Yau type inequality in the setting of hyperplane arrangements, by relating it to the existence of singular Calabi-Yau metrics.

We begin by recalling the smooth case. Let \(X\) be a compact \(n\)-dimensional Calabi-Yau manifold and let \(\omega\) be a Ricci-flat K\"ahler metric on \(X\). Then the energy of \(\omega\), namely the squared \(L^2\)-norm of its curvature \( \int_X |\mathrm{Riem}|^2\)
is, up to a positive constant depending on the dimension, given by the topological quantity \([\omega]^{n-2} \cdot c_2(X)\).
In particular, one obtains the inequality \([\omega]^{n-2} \cdot c_2(X) \geq 0\), with equality if and only if \(\omega\) is flat.

We now turn to log pairs \((X,\Delta)\). To match the context of this paper, we assume that \(X = \CP^n\) and that \(\Delta = \sum a_i H_i\), where \(H_i \subset \CP^n\) are complex hyperplanes and the coefficients \(a_i\) lie in the interval \((0,1)\).
It follows from Kolodziej's work \cite{kolodziej} that, if the weighted arrangement \((\mH,\ba)\) is klt and CY, then there exists a (unique up to scale) \emph{weak} Ricci-flat K\"ahler metric \(\omega_{\RF}\) on \(\CP^n\) whose volume form has conical-type singularities along \(\mH\) (see Proposition \ref{prop:weakrf} in Appendix \ref{app:ke} for a precise statement). 

The metric \(\omega_{\RF}\) is smooth on the complement \(\CP^n \setminus \mH\).
By analogy with the smooth case, one might expect that the energy \(\int_{\CP^n \setminus \mH} |\mathrm{Riem}|^2\) of \(\omega_{\RF}\) should be determined by topological data together with the cone angles, or more precisely by a suitable logarithmic second Chern class \(c_2(X,\Delta)\) associated to the pair.
At present, such a formula is far from being established. The main difficulty lies in the limited regularity of \(\omega_{\RF}\) near \(\mH\); in particular, it is not even known in general whether the above energy integral is finite. When \(n=2\) some partial progress can be found in \cite{dbs}.

For log smooth pairs, that is, when \(\Delta\) has simple normal crossing support, notions of logarithmic second Chern classes \(c_2(X,\Delta)\) are well established, and Miyaoka-Yau type inequalities involving these classes have been proved; see for instance \cite[Theorem~1.13]{li}. Moreover, one can easily check that if the arrangement \(\mH\) is simple normal crossing, then, up to a negative constant depending on the dimension, our quantity \(Q(\ba)\) coincides with the logarithmic second Chern class defined in \cite[Equation~(4)]{li}. However, for such configurations one always has \(Q(\ba) < 0\) if $\dim X>1$; see Example \ref{ex:Qgeneric}. Thus, in order to capture flat metrics, it is essential to go beyond the normal crossing setting.

In this paper, following \cite{panov}, we associate a natural parabolic bundle to the pullback of the tangent bundle on a log resolution of the arrangement. This allows us to invoke Mochizuki's theory \cite{mochizuki}, where a refined regularity theory for singular connections is available. In this way, we bypass the analytic difficulties arising from the weak Ricci-flat K\"ahler metric \(\omega_{\RF}\).

Finally, we comment on the sign of our inequality \(Q(\ba) \leq 0\), which is opposite to the classical Miyaoka--Yau inequality \([\omega]^{n-2} \cdot c_2(X) \geq 0\). This discrepancy is explained by the fact that \(Q(\ba)\) should be viewed, roughly speaking, as the second parabolic Chern character \(\parch_2(\mE_*)\) of the parabolic bundle we consider. On the other hand, if \(E\) is a vector bundle with \(c_1(E)=0\), then \(\ch_2(E) = -c_2(E)\), which accounts for the difference in sign.

\subsection{Sketch proof of Theorem \ref{thm:main}}

If \(n=2\), then Theorem \ref{thm:main} is part of Theorem 1.12 in \cite{panov}. Our proof follows the same approach: we work on a logarithmic resolution of the arrangement, define a natural parabolic structure on the pullback tangent bundle, show that it is slope stable, and deduce Theorem \ref{thm:main} from Mochizuki's Bogomolov-Gieseker inequality for parabolic bundles \cite{mochizuki}.
Next, we present a more detailed account of these steps.
\newpar

\noindent {\bf The resolution.} 
We take the logarithmic resolution of the arrangement given by the \emph{minimal De Concini-Procesi model} of \(\mH\). This is a smooth projective variety \(X\) together with a birational morphism
\[
X \xrightarrow{\pi} \CP^n
\]
such that \(D = \pi^{-1}(\mH)\) is a simple normal crossing divisor.
We refer to \(X\) as \emph{the resolution} of \(\mH\).
The variety \(X\) is constructed as an iterated blowup of \(\CP^n\) along the \emph{irreducible subspaces} of the arrangement, and it is a particular instance of the  \emph{wonderful models} of subspace arrangements introduced in  \cite{conciniprocesi}.

Concretely, a linear subspace \(L \subset \CP^n\) obtained as intersection of hyperplanes in \(\mH\) is  \emph{irreducible}, if the localized arrangement \(\mH_L = \{H \in \mH \, |\, H \supset L\}\) is irreducible (Definition \ref{def:irrarr}). 
We denote by \(\mLi\) the set of all non-empty and proper irreducible subspaces. The irreducible components of the divisor \(D = \pi^{-1}(\mH)\) are in one to one correspondence with elements \(L \in \mLi\).
Specifically, the irreducible decomposition of \(D\) is
\[
D = \bigcup_{L \in \mLi} D_L \,,
\]
where \(D_L\) is the unique irreducible component of \(D\) such that \(\pi(D_L) = L\).\newpar

\noindent {\bf The parabolic bundle.}
Let \(X \xrightarrow{\pi} \CP^n\) be the resolution of \(\mH\) and let \(\mE = \pi^*(T\CP^n)\) be the pullback tangent bundle. We define a natural \emph{parabolic bundle} (Definition \ref{def:parbun}) \(\mE_{*}\) on \((X, D)\) where \(D = \pi^{-1}(\mH)\). 

The parabolic bundle \(\mE_{*}\) is defined by increasing filtrations of \(\mE|_{D_L}\) (for \(L \in \mLi\)) by vector subbundles \(F^L_a\) with \(0 < a \leq 1\). The vector subbundles \(F^L_a\) are defined as
\begin{equation*}
    F^L_a = 
    \begin{cases}
	\pi^*(TL) &\text{ if } a < a_L, \\
	\mE|_{D_L} &\text{ if } a \geq a_L ,
  \end{cases}
\end{equation*}
where the weights \(a_L\) are given by
\[
a_L = (\codim L)^{-1} \sum_{H | L \subset H} a_H \,.
\]
Note that the klt condition \eqref{eq:klt} implies that \(0 < a_L < 1\). 

The \emph{parabolic first Chern class} \(\parc_1(\mE_{*})\)
(Definition \ref{def:pch1}) is an element  of the real cohomology group \(H^2(X, \R)\).
Using the CY condition \eqref{eq:cy}, we show that
\[
\parc_1(\mE_{*}) = 0 \,.
\]

\noindent {\bf The locally abelian property and the parabolic second Chern character.}
A parabolic bundle \(\mE_{*}\) on \((X, D)\) is \emph{locally abelian} (Definition \ref{def:locab}) if the filtrations that define it satisfy certain compatibility conditions at the intersections \(\cap D_i\)\,, where \(D_i\) are the irreducible components of the divisor \(D\). In a few words, the locally abelian condition requires that \(\mE_{*}\) is locally isomorphic to a direct sum of parabolic line bundles. If the complex dimension is \(n=2\), then every parabolic bundle is locally abelian. However, if \(n > 2\), the locally abelian condition imposes strong restrictions on the parabolic structure, and the compatibility conditions are not satisfied for generic choices of filtrations. 

Theorem \ref{thm:locab} asserts that the parabolic bundle \(\mE_{*}\) on the resolution \(X\) is locally abelian. The proof of this theorem is non-trivial and relies on the fact that 
a collection of divisors \(D_L\) with \(L \in \mS \subset \mLi\) has non-empty common intersection \(\bigcap_{L \in \mS} D_L\) if and only if the set \(\mS\) is \emph{nested} relative to \(\mLi\) (Definition \ref{def:nested}). 
The locally abelian property of \(\mE_{*}\) is used to define the \emph{parabolic second Chern character} \(\parch_2(\mE_{*}) \in H^4(X, \R) \) (Definition \ref{def:pch2}).

We calculate \(\parch_2(\mE_{*})\) in terms of the intersection poset of the arrangement \(\mH\).
Let \(h = c_1(\mO_{\P^n}(1))\) be the generator of \(H^2(\CP^n, \Z)\),
and let \(\gamma_L \in H^2(X, \Z)\) -with \(L \in \mLi\)- be the Poincar\'e dual of the divisor \(D_L\).
We derive the formula
\[
\parch_2(\mE_{*}) = - \frac{n+1}{2} \cdot (\pi^*h)^2 
	+  \frac{1}{2} \sum_{L \in \mLi} a_L^2 \cdot \codim L \cdot \gamma_L^2 + \sum_{L \subsetneq M} a_L  a_{M} \cdot \codim M \cdot \gamma_L \cdot \gamma_M \,,
\]
where the last sum is over all pairs \((L, M) \in \mLi \times \mLi\) with \(L \subsetneq M\). \newpar

\noindent {\bf The Picard group of \(X\) and the polarization.}
The Picard group of \(X\) is the free abelian group generated by \([\pi^*(\mO_{\P^n}(1)]\) together with
the classes \([D_L]\) of the exceptional divisors \(D_L\) of the birational morphism \(\pi: X \to \CP^n\). The exceptional divisors \(D_L\) correspond to non-empty irreducible subspaces \(L \subset \CP^n\) of codimension \(\geq 2\), we denote the set of all such subspaces by \(\mLi^{\circ} := \mLi \setminus \mH\).

We fix positive integers \(b_L\) for each \(L \in \mLi^{\circ}\)\,, that we call \emph{polarization coefficients}, such  that
\begin{equation*}
	P_k = k \cdot \pi^*\big(\mO_{\P^n}(1)\big) - \sum_{L \in \mLi^{\circ}} b_L \cdot D_L
\end{equation*}
is an ample divisor on \(X\) for all \(k \gg 1\). \newpar 

\noindent {\bf Stability of the parabolic bundle.}
Theorem \ref{thm:stability} asserts that the parabolic bundle \(\mE_{*}\) is \(P_k\)-\emph{stable} (Definition \ref{def:parstable}) for all \(k \gg 1\). 
Theorem \ref{thm:stability} is a central result of this paper,  we refer to it as the \emph{stability theorem}.
To prove it, we must show that for every non-zero and proper \emph{saturated subsheaf} \(\mV \subset \mE\) (Definition \ref{def:saturated}), we have
\[
\pardeg_{P_k}(\mV_{*}) < 0 \,,
\]
where \(\mV_{*}\) is the naturally induced parabolic structure on \(\mV\) (Definition \ref{def:indpar}). 

The saturated subsheaf \(\mV \subset \mE\) defines a \emph{distribution} on \(\CP^n\) (Section \ref{sec:distributions}) which, by a slight abuse of notation, we denote by \(\mV \subset T\CP^n\). To prove stability, it is easy to reduce to the case where the \emph{index} \(\imath\) (Definition \ref{def:index}) of the distribution \(\mV\) is positive (this is essentially a consequence of the fact that, with our conventions,
the parabolic degree is less or equal than the degree).
We show that, if \(\imath \geq 0\), then
\[
\pardeg_{P_k}(\mV_{*}) \leq \left( \imath \, - \,\sum_{H \,|\, H \pitchfork \mV} a_H \right) \cdot k^{n-1} \,+\, O(k^{n-2}) \,.
\]
The above sum is over all hyperplanes \(H \in \mH\) that are \emph{transverse} to \(\mV\). The term \(O(k^{n-2})\) denotes a polynomial in \(k\) of degree at most \(n-2\) whose coefficients are uniformly bounded in absolute value by a number that depends only on the arrangement \(\mH\) and the polarization coefficients \(b_L\).

Given the above inequality for \(\pardeg_{P_k}(\mV_{*})\), the key estimate necessary to prove the stability theorem is provided by Proposition \ref{prop:key}. This proposition  asserts that, if the weighted arrangement \((\mH, \ba)\) is klt and CY, then there is \(\delta>0\) such that, for every distribution \(\mV \subset T\CP^n\) of index \(\imath \geq 0\) the following holds:
\[
\sum_{H \,|\, H \pitchfork \mV} a_H \geq \imath + \delta \,.
\]
The stability Theorem \ref{thm:stability} follows from this. \newpar

\noindent {\bf The Bogomolov-Gieseker inequality.}
Since the parabolic bundle \(\mE_{*}\) is stable with respect to the polarization \(P_k\) for all \(k \gg 1\) and \(\parc_1(\mE_{*}) = 0\)\,, the Bogomolov-Gieseker inequality
\cite[Theorem 6.5]{mochizuki} implies that
\[
\forall k \gg 1: \,\, c_1(P_k)^{n-2} \cdot \parch_2(\mE_{*})  \leq 0 \,.
\]

The cup product \(p(k) = c_1(P_k)^{n-2} \cdot \parch_2(\mE_{*})\) defines a polynomial of degree \(n-2\) in \(k\) that we write as
\[
p(k) = C_{n-2} \cdot k^{n-2} + O(k^{n-3}) \,.
\]
The leading order coefficient of \(p(k)\) is  given by \(C_{n-2} = (\pi^*h)^{n-2} \cdot \parch_2(\mE_{*})\)\,.
More geometrically, the coefficient \(C_{n-2}\) is the parabolic second Chern character of the restriction of \(\mE_{*}\) to a generic \(2\)-plane. 

Note that, since \(p(k)<0\) for \(k \gg 1\)\,, we must have \(C_{n-2} \leq 0\).
Using results on the cohomology ring of \(X\) from Section \ref{sec:intnum} together with our previous formula for \(\parch_2(\mE_{*})\), we show that
\[
C_{n-2} = \sum_{L \in \mLi^{n-2}} a_L^2 - \frac{1}{2} \sum_{H \in \mH}  B_H \cdot a_H^2  - \frac{n+1}{2} \,.
\]
Theorem \ref{thm:main} follows from this together with the inequality \(C_{n-2} \leq 0\).

\subsection{Context: PK manifolds}\label{sec:pk}

Our motivation for proving Theorem \ref{thm:main} comes from the study of \emph{polyhedral K\"ahler} (PK) manifolds. Let \(X\) be a complex manifold and let \(D \subset X\) be a complex hypersurface with irreducible decomposition \(D = \cup_i D_i\). We say that \(g\) is a PK metric on \(X\) with cone angles \(2\pi \alpha_i\) along \(D_i\) if the following conditions hold:
\begin{enumerate}[label=\textup{(\roman*)}]
	\item \(g\) is a flat K\"ahler metric on \(X \setminus D\)\,;
	\item  the metric completion of \(g\) is a polyhedral manifold \(M\) and the inclusion map \(X \setminus D \subset X\) extends continuously to \(M\) as a homeomorphism \(M \cong X\);
	\item if \(p \in D_i\) is a smooth point of \(D\) then \(g\) is holomorphically isometric near \(p\) to the product of a \(2\)-cone of total angle \(2\pi\alpha_i\) with a flat Euclidean factor \(\C^{n-1}\).
\end{enumerate}

The Levi-Civita connection \(\nabla\) of a PK metric is a meromorphic flat torsion-free unitary connection on \(TX\) with simple poles along the hypersurfaces \(D_i\). The residues of \(\nabla\) give a number of topological relations on the pair \((X, D)\). Conjecturally, these topological relations characterize the existence of PK metrics. 
In the particular case when \(X = \CP^n\) and \(D\) is a union of hyperplanes, we believe that these topological relations can be derived from quadratic expression \eqref{eq:mythm} and we propose the following.

\begin{conjecture}\label{conjecture}
	If equality holds in \eqref{eq:mythm} then there is a PK metric on \(\CP^n\) with cone angles \(2\pi\alpha_H\) along the hyperplanes \(H \in \mH\), with \(\alpha_H = 1-a_H\). 
\end{conjecture}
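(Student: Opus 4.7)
The plan is to extract the polyhedral K\"ahler metric from the equality case of the parabolic Bogomolov-Gieseker inequality already used in the sketch proof of Theorem \ref{thm:main}. Equality in \eqref{eq:mythm} is equivalent to the vanishing of the leading coefficient \(C_{n-2} = (\pi^*h)^{n-2}\cdot\parch_2(\mE_*)\); combined with \(\parc_1(\mE_*)=0\) and the \(P_k\)-stability of \(\mE_*\) from Theorem \ref{thm:stability}, this would place the parabolic bundle \(\mE_*\) in the equality case of Mochizuki's parabolic Bogomolov-Gieseker inequality for all large \(k\). The first step is to invoke the parabolic Donaldson-Uhlenbeck-Yau correspondence to produce a parabolic Hermite-Einstein metric \(h\) on \(\mE_*\), and then to appeal to the standard fact that, in the equality case of BG, the Chern connection of such \(h\) is projectively flat. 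Together with \(\parc_1(\mE_*) = 0\), this upgrades to genuine flatness: the induced connection \(\nabla\) on the tangent bundle of \(\CP^n\setminus\bigcup_{H\in\mH}H\) would be flat, unitary, and torsion-free.

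Next I would reconstruct the PK metric from \(\nabla\). Since \(\nabla\) is flat, unitary, torsion-free, and compatible with the complex structure on \(\CP^n\), it is the Levi-Civita connection of a flat K\"ahler metric \(g\) on the complement. Parallel transport would then give a developing map \(\delta\) of the universal cover of \(\CP^n\setminus\bigcup_H H\) into \(\C^n\) together with a holonomy representation \(\rho\) into \(U(n)\ltimes\C^n\). The local monodromy of \(\nabla\) around each irreducible divisor \(D_L\) on the resolution \(X\) is controlled by the residue of \(\nabla\), which in turn is read off from the parabolic filtration \(F^L_a\) of \(\mE_*|_{D_L}\): its non-trivial eigenvalue is exactly \(a_L\), producing a complex rotation by \(2\pi(1-a_L)\). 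In particular, for \(L = H \in \mH\) this gives precisely the cone angle \(2\pi\alpha_H\) predicted in the statement, and the locally abelian property of \(\mE_*\) (Theorem \ref{thm:locab}) ensures that the various residues commute along each nested chain of strata.

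The most delicate step, and what I expect to be the principal obstacle, is to show that the metric completion of \((\CP^n\setminus\bigcup_H H,g)\) is genuinely polyhedral and homeomorphic to \(\CP^n\), with the required product-of-cone structure at every stratum. For \(n=2\) (the case settled in \cite{panov}) the deep strata are isolated points and the link of each can be analyzed directly as a spherical polyhedral surface. For \(n\geq 3\), the nested structure of \(\mLi\) produces an intricate iterated blow-up, and one must control how the flat connection \(\nabla\) degenerates along each chain of nested irreducible subspaces \(L_1\supsetneq L_2\supsetneq\cdots\) appearing in the De Concini-Procesi model. A natural strategy is induction on the ambient dimension: take a transversal slice at a generic point of a stratum \(L\) and reduce the local completion problem to Conjecture \ref{conjecture} in dimension \(\codim L - 1\) for the quotient arrangement obtained by localizing \(\mH\) at \(L\). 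Carrying out this induction rigorously, and ruling out the possibility that the developing map extends non-polyhedrally across exceptional divisors, is where the bulk of the analytic and geometric work would concentrate. A useful preliminary target would be the case of complex reflection arrangements, where both the parabolic Hermite-Einstein metric and the expected polyhedral structure are predicted by the results of \cite{chl}, and can be used as a sanity check for the more general construction.
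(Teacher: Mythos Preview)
The statement you are attempting to prove is not a theorem in the paper; it is Conjecture~\ref{conjecture}, and the paper provides no proof. The authors explicitly remark that the case $n=2$ is settled in \cite[Theorem 1.12]{panov}, and that the case $n=3$ (under additional hypotheses) will appear in a forthcoming paper. So there is no ``paper's own proof'' to compare against.

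That said, your outline contains a genuine gap before you even reach the delicate metric-completion step you flag at the end. You assert that $C_{n-2}=0$ places $\mE_*$ in the equality case of Mochizuki's Bogomolov--Gieseker inequality for all large $k$. It does not. The inequality the paper invokes is $p(k)=c_1(P_k)^{n-2}\cdot\parch_2(\mE_*)\leq 0$ for each fixed $k\gg 1$; vanishing of the leading coefficient $C_{n-2}$ says only that $p(k)=C_{n-3}k^{n-3}+\cdots$, and the lower-order coefficients depend on the polarization constants $b_L$ and need not vanish. So for any given $k$ you may well have $p(k)<0$, and you are not in the BG equality case for any polarization in the family. Extracting a projectively flat Hermite--Einstein metric from the hypothesis $C_{n-2}=0$ therefore requires either a limiting argument as $k\to\infty$ (with control on how the HE metrics degenerate) or a separate argument that the full class $\parch_2(\mE_*)$ vanishes, neither of which is supplied.

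A second, smaller gap: you pass from ``$\nabla$ is flat and unitary on $T\CP^n$'' to ``$\nabla$ is the Levi--Civita connection of a flat K\"ahler metric'' without justification. The Chern connection of a Hermitian metric on the tangent bundle is torsion-free only when that metric is K\"ahler; flatness of the Hermite--Einstein connection on $\mE=\pi^*T\CP^n$ does not by itself guarantee this. In the $n=2$ argument of \cite{panov} this is handled separately, and you should expect the same difficulty here.
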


As we already mentioned, this conjecture holds for \(n=2\)  by \cite[Theorem 1.12]{panov}. The proof for \(n=3\), under some additional stability assumptions and applying the foundational results established in \cite{pkc}, will be the subject of a forthcoming paper.	





\subsection{Outline}

Section \ref{sec:ha} contains background material on hyperplane arrangements. \newpar

\noindent In Section \ref{sec:res}\,, we review the minimal De Concini-Procesi model \(X\) of \(\mH\), calculate certain cup products
in the integer cohomology \(H^*(X, \Z)\), and introduce the polarizations \(P_k\) used in the stability theorem.\newpar

\noindent In Section \ref{sec:parbun}\,, we define the parabolic bundle \(\mE_{*}\), show that it is locally abelian, and calculate its first and second Chern characters.\newpar

\noindent In Section \ref{sec:stability}\,, we show that the parabolic bundle \(\mE_{*}\) is \(P_k\)-stable for \(k \gg 1\) and deduce our main Theorem \ref{thm:main} from this.\newpar

\noindent In Section \ref{sec:qfsc}\,, we introduce the quadratic form \(Q\) of the arrangement \(\mH\), we define the semistable cone \(C\), and show that \(C \subset \{Q \leq 0\}\) (Theorem \ref{thm:maingeneral}).\newpar

\noindent In Section \ref{sec:hirzebruch}\,, we introduce the class of Hirzebruch arrangements, show that complex reflection arrangements defined by
irreducible unitary reflection groups belong to this class, and define the quadratic form \(Q\) in the broader context of matroids. We propose an extension (Conjecture \ref{conj:provocation}) of Theorem \ref{thm:maingeneral} to pseudoline arrangements, which relates to Bogomolov-Miyaoka-Yau inequality for symplectic $4$-maniolds.\newpar

\noindent Appendix \ref{app:auxres} contains supplementary results needed in our proofs.
We collect some basic results on filtrations of vector spaces, adapted basis, and nested sets; which are used in the proof of the locally abelian property of our parabolic bundle.
We also provide some background results on exterior algebra, saturated subsheaves, and distributions on \(\CP^n\); that are used in the proof of stability of our parabolic bundle.
\newpar

\noindent In Appendix \ref{app:essirr}\,, we prove that if \(\mH\) is essential and irreducible then the stable cone \(C^{\circ}\) is non-empty.\newpar

\noindent In Appendix \ref{app:ke}\,, we show that if the weighted arrangement \((\mH, \ba)\) is klt and CY then there is a (unique up to scale) weak Ricci-flat K\"ahler metric on \(\CP^n\) whose volume form has prescribed singularities of conical type along the hyperplanes of the arrangement.

\subsubsection*{Acknowledgments}
We thank Nikolai Mn\"ev, Artie Prendergast-Smith, and Calum Spicer for useful discussions. We also thank Jorge Vit\'orio Pereira for giving us Example \ref{ex:pereira}. We would like to thank the referee for a very careful reading of the manuscript and for providing many helpful comments and suggestions that improved the clarity of the paper.

\subsection*{Recent developments}

Conjecture \ref{conjecture} has recently been resolved in \cite{cpn}. As an offshoot, a more elementary proof of Theorem \ref{thm:main} was obtained in \cite{dunklnew}. However, it is worth noting that the methods developed in the present paper (most notably the stability of the parabolic bundle established in Theorem \ref{thm:stability}) remain fundamental; indeed, they play a central role in the proof of Conjecture \ref{conjecture} in \cite{cpn}.


\section{Hyperplane arrangements}\label{sec:ha}

In Section \ref{sec:habasicdef}\,, we review standard terminology related to hyperplane arrangements.\newpar 

\noindent In Section \ref{sec:irrarr}\,, we introduce irreducible arrangements and subspaces, and discuss their basic properties.\newpar

\noindent In Section \ref{sec:weightarr}\,, we consider weighted arrangements. We show that if the weighted arrangement \((\mH, \ba)\) is klt and CY, then \(\mH\) is essential and irreducible.
This implies that there are no non-zero holomorphic vector fields on \(\CP^n\) that are tangent to all the hyperplanes in \(\mH\). We will make use of this fact later, in Section \ref{sec:stability}, to show that our parabolic bundle is stable.


\subsection{Basic definitions}\label{sec:habasicdef}

Let \(\CP^n = \P(\C^{n+1})\) be the complex projective space of dimension \(n\),
\begin{equation*}
\CP^n = \left(\C^{n+1} \setminus \{0\}\right) \big/ \, \C^*	\,.
\end{equation*}

A hyperplane arrangement \(\mH\), or an arrangement for short, is a finite collection of complex hyperplanes \(H \subset \CP^n\).
We don't allow multiplicities, meaning that the elements of \(\mH\) are distinct hyperplanes. By abuse of notation, we write \(\mH \subset \CP^n\) if we want to emphasize the ambient projective space.  Two arrangements \(\mH\) and \(\mathcal{K}\) are isomorphic if they are linearly equivalent, i.e., if there is an element of \(\PGL(n+1, \C)\) that maps the hyperplanes in \(\mH\) to the hyperplanes in \(\mK\).\newpar

\noindent {\bf Linear subspaces.}
A (projective or linear) subspace \(L \subset \CP^n\) is the image of a linear subspace of  \(\C^{n+1}\) under the quotient projection
\[
\C^{n+1} \setminus \{0\} \to \CP^n .
\]
We write \(\P(L^{\bc}) = L\), where \(L^{\bc} \subset \C^{n+1}\) is the unique linear subspace that projects to \(L\). 
Similarly, \(\mH^{\bc}\) denotes the linear arrangement of hyperplanes \(H^{\bc} \subset \C^{n+1}\) with \(H \in \mH\), and we write \(\P(\mH^{\bc}) = \mH\).
The codimension of \(L \subset \CP^n\)
is equal to the codimension of the linear subspace \(L^{\bc} \subset \C^{n+1}\). 
We also consider the empty subset of \(\CP^n\) to be a linear subspace with \(\emptyset^{\bc}=\{0\}\) and \(\codim \emptyset = n+1\).\newpar

\noindent {\bf Centre and essential arrangements.}
The \emph{centre} \(T\) of \(\mH\) is the common intersection of all its members, i.e.,
\begin{equation*}
	T = \bigcap_{H \in \mH} H .
\end{equation*}	
We say that \(\mH\) is \emph{essential} if \(T = \emptyset\), or alternatively, if the common intersection of the elements of \(\mH^{\bc}\) is the origin \(\{0\} \subset \C^{n+1}\). Clearly, \(\mH\) is essential if and only if it contains \(n+1\) linearly independent hyperplanes.\newpar


\noindent {\bf Sum and complementary subspaces.}
If \(U\) and \(V\) are two projective subspaces of \(\CP^n\) then their \emph{sum} (or join) \(U + V\) is the smallest projective subspace that contains both \(U\) and \(V\). Equivalently,
\[
U + V = \P (U^{\bc} + V^{\bc}) \,,
\]
where \(U^{\bc} + V^{\bc}\) denotes the usual sum of vector subspaces of \(\C^{n+1}\). The subspaces \(U\) and \(V\) are said to be \emph{complimentary} if \(U+V = \CP^n\) and \(U \cap V = \emptyset\). In this case, we also say that \(V\) is a \emph{complement} of \(U\), and vice-versa.\newpar

\noindent {\bf Essentialization.}
Let \(\mH \subset \CP^n\) be an arrangement with centre \(T\). Choose a subspace \(S\) that is a complement of \(T\).
The \emph{essentialization} of \(\mH\) is the arrangement given by
\[\mH/T = \{ H \cap S \text{ with } H \in \mH \} .\]
Since \(T \cap S = \emptyset\), the arrangement \(\mH/T\) is essential.
Suppose that \(S'\) is another complement of \(T\) and let \(f: S \to S'\) be the projective transformation which maps \(p \in S\) to \(p'=f(p)\) with \(p' = (p + T) \cap S'\). Since every hyperplane \(H \in \mH\) contains \(T\), it follows that \(f\) maps the intersection \(H \cap S\) to \(H \cap S'\) for all \(H \in \mH\); showing that \(\mH/T\) is independent of the choice of complement.\newpar

\noindent {\bf Intersection poset.}
The \emph{intersection poset} \(\overline{\mL}\) of \(\mH\) is the set of all subspaces \(L \subset \CP^n\) obtained as intersection of hyperplanes in \(\mH\).
We equip \(\overline{\mL}\) with the partial order given by reverse inclusion. The intersection poset \(\overline{\mL}\) has a unique minimal element \(L=\CP^n\) corresponding to the intersection over the empty subset of \(\mH\) and a unique maximal element \(L=T\) given by the centre of the arrangement.
We will mainly work with the subposet \(\mL \subset \overline{\mL}\) consisting of all non-empty and proper intersection subspaces. In other words, we exclude \(\CP^n\) and \(\emptyset\) (if \(\mH\) is essential) from \(\overline{\mL}\) and let \(	\mL = \overline{\mL} \, \setminus \, \{\emptyset, \, \CP^n\}\).\newpar

\noindent {\bf Localization and multiplicity.}
The \emph{localization} of \(\mH\) at a subspace \(L \in \overline{\mL}\) is the set of all hyperplanes of the arrangement that contain \(L\),
	\begin{equation*}
	\mH_L = \{H \in \mH \, \text{ such that } \, L \subset H \} .
	\end{equation*}
The centre of \(\mH_L\) is equal to \(L\). The \emph{link} of \(\mH\) at \(L\) is the essential arrangement \(\mH_L / L \subset \CP^{m}\) where \(m = \codim L -1\).
The \emph{multiplicity} \(m_L\) of \(L\) is the number of hyperplanes that contain \(L\), i.e., \(m_L = |\mH_L|\).\newpar

\noindent {\bf Induced arrangement and complements.}
Let \(L \in \mL\). The \emph{induced arrangement} \(\mH^L\) is the hyperplane arrangement obtained by intersecting \(L\) with the elements \(H \in \mH\) such that \(L \not\subset H\),
\[
\mH^L = \{ H \cap L \textup{ with } H \in \mH \setminus \mH_L \} .
\]
The complement of \(\mH^L\) in \(L\) is denoted by \(L^{\circ}\), i.e.,
\[
L^{\circ} = L \setminus \bigcup_{H | L \not\subset H} (L \cap H) .
\]
Similarly, we write \((\CP^n)^{\circ}\) for the \emph{arrangement complement},
\[
(\CP^n)^{\circ} = \CP^n \setminus \bigcup_{H \in \mH} H \,.
\]

\subsection{Irreducible arrangements and subspaces}\label{sec:irrarr}

A splitting of a hyperplane arrangement \(\mH \subset \CP^n\) consists of two subsets \(\mH_1, \mH_2 \subset \mH\) satisfying the following two properties.
	\begin{enumerate}[label=\textup{(\roman*)}]
		\item Every hyperplane of \(\mH\) belongs to either \(\mH_1\) or \(\mH_2\), i.e.,
		\begin{equation}\label{eq:uniondec}
		\mH = \mH_{1} \bigcup \mH_{2} \,.
		\end{equation}
		\item If \(T_1\) and \(T_2\) are the centres of \(\mH_1\) and \(\mH_2\), then
		\begin{equation}\label{eq:sumdec}
		T_1 + T_2 = \CP^n \,.
		\end{equation}
	\end{enumerate} 
	The splitting is non-trivial if furthermore:
	\begin{enumerate}[label=\textup{(\roman*)}, resume]
		\item Both \(\mH_1\) and \(\mH_2\) are non-empty.
	\end{enumerate} 

We write \(\mH = \mH_1 \uplus \mH_2\) to indicate that \(\mH_1\) and \(\mH_2\) form a splitting of \(\mH\). It follows from Equation \eqref{eq:sumdec} that the union \eqref{eq:uniondec} is disjoint. 
If \(\mH = \mH_1 \uplus \mH_2\) then \(\mH_1 = \mH_{T_1}\) and \(\mH_2 = \mH_{T_2}\). If the splitting is non-trivial then \(T_1\) and \(T_2\) belong to the poset \(\mL\) of non-empty and proper hyperplane intersections. 

\begin{definition}\label{def:irrarr}
	The arrangement \(\mH\) is \emph{reducible} if it admits a  non-trivial splitting. The arrangement \(\mH\) is  
	\emph{irreducible} if it is not reducible. 	
\end{definition}

\begin{example}
	The empty arrangement is irreducible.
	An arrangement consisting of only one hyperplane is irreducible. An arrangement made of two hyperplanes is always reducible. 
\end{example}

\begin{example}\label{ex:irrP1}
	If \(n=1\) then \(\mH \subset \CP^1\) is reducible if and only if \(|\mH| = 2\). Indeed, if \(|\mH| \geq 3\) and
	\(\mH = \mH_1 \uplus \mH_2\) is a splitting of \(\mH\), then at least one of the subarrangements, say \(\mH_1\), must contain \(2\) or more points, so \(T_1 = \emptyset\) and \(T_2 = \CP^1\), i.e., \(\mH_2 = \emptyset\); showing that \(\mH\) is irreducible.
\end{example}

\begin{lemma}\label{lem:redarr}
	An arrangement \(\mH\) is reducible if and only if, up to a linear change of coordinates, we can write \(\mH = \mH_1 \cup \mH_2\) where \(\mH_1\) and \(\mH_2\) are non-empty and the defining linear equations for the hyperplanes in \(\mH_1\) and \(\mH_2\) share no common variables.
\end{lemma}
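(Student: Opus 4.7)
The plan is to translate the geometric reducibility condition $T_1 + T_2 = \CP^n$ into an algebraic transversality condition on the spans of defining linear forms inside $(\C^{n+1})^*$, which then yields the coordinate separation in a straightforward manner.

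For the forward direction, I would assume $\mH = \mH_1 \uplus \mH_2$ is a non-trivial splitting with centres $T_1, T_2$\,. Set $V_i = T_i^{\bc} \subset \C^{n+1}$ and let $L_i = V_i^{\perp} \subset (\C^{n+1})^*$ be the annihilator. By construction, $L_i$ is precisely the linear span of the defining equations of the hyperplanes in $\mH_i$\,, since a linear form vanishes on $V_i$ if and only if it is a linear combination of the equations defining the $H^{\bc}$ with $H \in \mH_i$\,. The splitting condition $T_1 + T_2 = \CP^n$ is equivalent to $V_1 + V_2 = \C^{n+1}$, which in turn is equivalent to $L_1 \cap L_2 = (V_1 + V_2)^{\perp} = 0$\,. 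I then choose a basis of $(\C^{n+1})^*$ adapted to this transversal decomposition: first a basis $\xi_1, \ldots, \xi_k$ of $L_1$, next a basis $\eta_1, \ldots, \eta_l$ of $L_2$ (which remains linearly independent from the previous batch by $L_1 \cap L_2 = 0$), and finally an arbitrary extension $\zeta_1, \ldots, \zeta_m$ to a full basis with $m = n+1-k-l$\,. Writing the dual coordinates as $(x_1, \ldots, x_k, y_1, \ldots, y_l, z_1, \ldots, z_m)$, the defining equations for the hyperplanes of $\mH_1$ become linear combinations of the $x$-variables only, and those of $\mH_2$ of the $y$-variables only, so the two sets of equations share no common variables. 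Both parts remain non-empty by non-triviality of the splitting.

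For the converse, suppose that after a linear change of coordinates $\mH = \mH_1 \cup \mH_2$ with both parts non-empty and with defining equations using disjoint variable sets $x_1, \ldots, x_k$ for $\mH_1$ and $y_1, \ldots, y_l$ for $\mH_2$\,. Then $T_1^{\bc}$ contains the coordinate subspace $\{x_1 = \cdots = x_k = 0\}$ and $T_2^{\bc}$ contains $\{y_1 = \cdots = y_l = 0\}$, and these two coordinate subspaces already sum to $\C^{n+1}$\,. Therefore $T_1 + T_2 = \CP^n$ and $(\mH_1, \mH_2)$ is a non-trivial splitting of $\mH$\,, so $\mH$ is reducible. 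The whole argument is a short duality computation, and I do not expect any serious obstacle: the only point that needs care is the identification of $L_i$ as the span of defining equations of $\mH_i$ and the equivalence $V_1 + V_2 = \C^{n+1} \iff L_1 \cap L_2 = 0$\,, which is precisely what enables the block-diagonal choice of coordinates.
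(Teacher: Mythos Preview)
Your proof is correct and follows essentially the same approach as the paper's: both exploit the duality between the splitting condition $T_1 + T_2 = \CP^n$ and the transversality of the spans of defining forms. The paper works primally, choosing coordinates so that $T_1, T_2$ are coordinate subspaces and then showing the index sets are disjoint, whereas you work dually via the annihilators $L_i = (T_i^{\bc})^{\perp}$ and use $L_1 \cap L_2 = 0$ directly; the arguments are interchangeable.
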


\begin{proof}
	Suppose that \(\mH\) is reducible and let \(\mH = \mH_1 \uplus \mH_2\) be a non-trivial splitting of \(\mH\). Take linear coordinates \(x_1, \ldots, x_{n+1}\) in \(\C^{n+1}\) such that 
	\[
	T_1 = \{x_i = 0 \textup{ for } i \in I_1\} \,\, \textup{ and } \,\, T_2 = \{x_i = 0 \textup{ for } i \in I_2\}
	\]
	where \(I_1\) and \(I_2\) are subsets of \([n+1] = \{1, \ldots, n+1\}\). The defining linear equation for the hyperplanes in \(\mH_1\) depend only  on the variables \(x_i\) for \(i \in I_1\) and similarly for \(\mH_2\). We will show that \(I_1 \cap I_2 = \emptyset\).
	The subspace \(T_1\) is the span in projective space of the vectors \(\p_{x_{i}}\) for \(i\) in the complement index set \(I^c_1 = [n+1] \setminus I_1\) and similarly for \(T_2\). The fact that \(T_1 + T_2 = \CP^n\) implies that \(I_1^c \cup I_2^c = [n+1]\). Taking complements we get that \(I_1 \cap I_2 = \emptyset\). The converse is similar.
\end{proof}

\begin{definition}\label{def:prod}
Let \(\mH_1 \subset \CP^{n_1}\) and \(\mH_2 \subset \CP^{n_2}\) be two arrangements. Let \(\CP^n = \P(\C^{n_1+1} \times \C^{n_2+1})\)
with \(n = n_1 + n_2 + 1\). Embed the projective spaces \(\CP^{n_i}\) as disjoint subspaces \(P_i \subset \CP^n\) with \(P_1 + P_2 = \CP^n\) given by
\[
P_1 = \P\left(\C^{n_1+1} \times \{0\}\right) \,\,\textup{ and }\,\, P_2 = \P\left(\{0\} \times \C^{n_2+1}\right) \,.
\]
The \emph{product} arrangement \(\mH_1 \times \mH_2 \subset \CP^n\) is obtained by taking the joins \(H_1 + P_2\) for \(H_1 \in \mH_1\)
together with \(P_1 + H_2\)  for \(H_2 \in \mH_2\)\,.	
\end{definition}


\begin{remark}
	The arrangement  \(\mH_1 \times \mH_2\) is the projectivization of  \(\mH_1^{\bc} \times \mH_2^{\bc}\)\,, where \(\mH_1^{\bc} \times \mH_2^{\bc}\) is the usual product of linear arrangements in a vector space as defined in \cite[Definition 2.13]{orlikterao}.
\end{remark}

From the Lemma \ref{lem:redarr}\,, we have the following.

\begin{corollary}\label{cor:redarr}
	The arrangement \(\mH\) is reducible if and only if it is linearly isomorphic to the product of two non-empty arrangements.
\end{corollary}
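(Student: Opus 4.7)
The plan is to translate Lemma~\ref{lem:redarr} into the language of Definition~\ref{def:prod}, observing that a decoupling of defining equations into disjoint sets of coordinates is exactly what the product construction encodes. Both directions will follow by a dictionary translation, with no substantial geometric content beyond the lemma already proven.

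For the ``if'' direction, suppose $\mH = \mH_1 \times \mH_2$ with $\mH_1 \subset \CP^{n_1}$ and $\mH_2 \subset \CP^{n_2}$ both non-empty. Define $\mA_1 = \{H + P_2 : H \in \mH_1\}$ and $\mA_2 = \{P_1 + H : H \in \mH_2\}$, so that $\mH = \mA_1 \cup \mA_2$ by construction. Every element of $\mA_1$ contains $P_2$, so the centre $T_1$ of $\mA_1$ contains $P_2$; similarly $T_2 \supseteq P_1$. Therefore $T_1 + T_2 \supseteq P_1 + P_2 = \CP^n$, and since both $\mA_i$ are non-empty this is a non-trivial splitting of $\mH$, proving reducibility.

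For the ``only if'' direction, I would apply Lemma~\ref{lem:redarr} to obtain coordinates $x_1, \ldots, x_{n+1}$ and a non-trivial decomposition $\mH = \mH'_1 \cup \mH'_2$ whose defining equations involve disjoint sets of variables indexed by $I_1$ and $I_2$ with $I_1 \cap I_2 = \emptyset$. After absorbing any unused coordinates into $I_1$, I may assume $I_1 \sqcup I_2 = [n+1]$. Let $V_j = \spn\{e_i : i \in I_j\}$ so that $\C^{n+1} = V_1 \oplus V_2$, and set $P_j = \P(V_j) \subset \CP^n$. For every $H \in \mH'_1$ the defining equation depends only on the $x_i$ with $i \in I_1$, so the cone satisfies $H^{\bc} = H_0 \oplus V_2$ for a uniquely determined hyperplane $H_0 \subset V_1$; therefore $H = \P(H_0) + P_2$ has the form appearing in Definition~\ref{def:prod}. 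Collecting these hyperplanes produces $\mH_1 = \{\P(H_0) : H \in \mH'_1\} \subset P_1$, and symmetrically $\mH_2 \subset P_2$, and we recognize $\mH = \mH_1 \times \mH_2$.

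Since this corollary is essentially a reformulation, the only bookkeeping to be careful about is matching the embeddings $P_1, P_2 \subset \CP^n$ of Definition~\ref{def:prod} with the coordinate splitting produced by Lemma~\ref{lem:redarr}, and handling the possibility that some variables are unused by either subarrangement (resolved by assigning them to $I_1$). No other step is expected to pose difficulty.
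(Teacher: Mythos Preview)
Your proof is correct and follows the same approach as the paper, which simply states that the corollary follows from Lemma~\ref{lem:redarr}. You have filled in the dictionary translation between ``disjoint sets of variables'' and the product construction of Definition~\ref{def:prod}, including the minor bookkeeping about unused coordinates; the paper leaves all of this implicit.
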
  

\begin{remark}
Every arrangement is linearly isomorphic to the product of irreducible arrangements. Moreover, this decomposition is unique up to re-labelling of the factors. See \cite[Lemma 2.32]{pkc}.	
\end{remark}

\begin{remark}
	An arrangement is reducible if and only if the Euler characteristic of its complement is equal to zero.
	One direction is easy, namely if \(\mH\) is reducible then there is a free \(\C^*\)-action on \((\CP^n)^{\circ}\), hence its Euler characteristic is zero. The converse is more subtle and involves the theory of M\"obius functions, see \cite[Theorem 5 (2)]{stv}.
\end{remark}



The next result will be crucial in the proof of Theorem \ref{thm:stability}\,.

\begin{lemma}\label{lem:noauto}
	Suppose that \(\mH \subset \CP^n\) is essential and irreducible. If \(Y\) is a holomorphic vector field on \(\CP^n\) that is tangent to all the hyperplanes in \(\mH\) then \(Y = 0\). 
\end{lemma}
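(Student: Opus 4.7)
The plan is to translate the tangency condition into linear algebra on \(\C^{n+1}\). Holomorphic vector fields on \(\CP^n\) are exactly the classes of linear endomorphisms \(A \colon \C^{n+1} \to \C^{n+1}\) modulo scalar multiples of the identity (the quotient by the Euler vector field \(\sum x_i \p_{x_i}\)). Under this identification, \(Y\) is tangent to \(H = \P(H^{\bc})\) if and only if \(A(H^{\bc}) \subset H^{\bc}\), and \(Y = 0\) if and only if \(A\) is a scalar matrix. The goal is therefore to show that any \(A\) preserving every member of \(\mH^{\bc}\) must be a scalar.

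Using that \(\mH\) is essential, I would first pick \(n+1\) linearly independent hyperplanes \(H_0, \ldots, H_n \in \mH\) and choose coordinates \(x_0, \ldots, x_n\) on \(\C^{n+1}\) so that \(H_i^{\bc} = \{x_i = 0\}\). Preservation of all \(n+1\) coordinate hyperplanes forces \(A\) to be diagonal, say \(A = \diag(\lambda_0, \ldots, \lambda_n)\). For any remaining \(H \in \mH\), write its defining equation as \(\sum_{i \in I_H} c_i x_i = 0\) with \(c_i \neq 0\); a one-line computation shows that the diagonal \(A\) preserves \(H^{\bc}\) precisely when the eigenvalues \(\{\lambda_i : i \in I_H\}\) are all equal.

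Next, introduce the equivalence relation \(\sim\) on \(\{0, \ldots, n\}\) generated by the relation ``\(i\) and \(j\) both lie in some support \(I_H\)''. By the previous step, \(\lambda_i\) depends only on the \(\sim\)-class of \(i\). If \(\sim\) has a single class, all \(\lambda_i\) agree, \(A\) is scalar, and we are done. Otherwise, pick one class \(I_1\), set \(I_2 = \{0, \ldots, n\} \setminus I_1\), and let
\[
\mH_k = \{H \in \mH : I_H \subset I_k\}, \quad k = 1, 2.
\]
No support \(I_H\) can meet both \(I_1\) and \(I_2\) (otherwise two elements from different classes would be equivalent), so this gives a disjoint decomposition \(\mH = \mH_1 \uplus \mH_2\). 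Each \(\mH_k\) is non-empty because it contains the coordinate hyperplanes \(\{x_i = 0\}\) for \(i \in I_k\), and a direct computation yields \(T_k^{\bc} = \spn(\p_{x_j} : j \notin I_k)\), so \(T_1 + T_2 = \CP^n\). This is a non-trivial splitting of \(\mH\), contradicting irreducibility.

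The step I expect to be the main obstacle is choosing the right combinatorial framework to extract the irreducibility hypothesis. Once one recognizes that the transitive closure of ``shared support'' is the correct equivalence relation, everything else is mechanical: the coordinate reduction to a diagonal matrix, the per-hyperplane eigenvalue constraint, and the centre computation producing the splitting are all routine.
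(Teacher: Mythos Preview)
Your proof is correct and follows essentially the same approach as the paper: translate to linear algebra on \(\C^{n+1}\), use essentiality to diagonalize, and invoke irreducibility to force all eigenvalues equal. The paper's version is marginally slicker in that it works dually---observing that each defining equation \(\ell_H\) is an eigenvector of the transpose \(g = A^*\), so the eigenspace decomposition of \(g\) directly yields the splitting---whereas you diagonalize \(A\) first and then introduce the equivalence relation on coordinate indices; but the content is the same.
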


\begin{proof}
	The vector field \(Y\) is given by a linear transformation \(f\) of \(\C^{n+1}\). The tangency condition means that \(f(H^{\bc}) \subset H^{\bc}\) for all \(H \in \mH\).
	Let \(g = f^*\) be the dual action on \((\C^{n+1})^*\). Choose defining linear equations \(\ell_H\) for the hyperplanes of the arrangement. The tangency condition implies that \(g(\ell_H) = \lambda_H \cdot \ell_H\) for some \(\lambda_H \in \C\). Since \(\mH\) is essential, we can take \(n+1\) linearly independent hyperplanes \(H_1, \ldots H_{n+1}\) whose corresponding defining equations \(\ell_i\) make a basis of \((\C^{n+1})^*\) of eigenvectors of \(g\) with eigenvalues \(\lambda_i\). We claim that \(\lambda_1 = \ldots = \lambda_{n+1}\). If not, let \(W_1\) be the \(\lambda_1\)-eigenspace of \(g\) and let \(W_2\) be the direct sum of all the other eigenspaces. Then
	\[
	\mH = \{H \,\, | \,\, \ell_H \in W_1\} \uplus \{H \,\, | \,\, \ell_H \in W_2 \}
	\]
	is a non-trivial splitting of \(\mH\), contradicting irreducibility. We conclude that \(f\) is a scalar multiple of the identity, hence \(Y = 0\).
\end{proof}

\begin{remark}
	If \(\mH\) is essential and irreducible, then the proof above shows that the stabilizer of \(\mH\), consisting of all elements in \(PGL(n, \C)\) that preserve each of the hyperplanes in \(\mH\), is trivial. In particular, the automorphism group of \(\mH\), consisting of linear isomorphisms that permute the members of \(\mH\), is finite.
\end{remark}

\begin{definition}
	A subspace \(L \in \overline{\mL}\) is irreducible if the localization \(\mH_L\) is an irreducible arrangement. Similarly, a subspace \(L\) is reducible if \(\mH_L\) is reducible.	 
\end{definition}

\begin{example}
	If \(L= \emptyset\) then \(\mH_L = \mH\), so \(L\) is irreducible if and only if \(\mH\) is.
	In the other extreme case, if \(L = \CP^n\), then \(\mH_L = \emptyset\), which is always irreducible.
\end{example}

\begin{example}
	The hyperplanes \(H \in \mH\) are irreducible subspaces. 
\end{example}

\begin{example}
	A subspace \(L \in \mL\) of codimension \(2\) is irreducible if and only if its multiplicity \(m_L\) is \(\geq 3\), c.f. Example \ref{ex:irrP1}.
\end{example}

\begin{example}
	If \(L \in \mL\) is an irreducible subspace of codimension \(\geq 2\) then \(m_L > \codim L\). However, if \(\codim L \geq 3\) then the converse is not true. For example, consider the arrangement in \(\CP^3\) made of \(3\) planes intersecting along a line \(L\) together with an extra plane \(H\) transverse to \(L\). The intersection point \(p = L \cap H\) has multiplicity \(4\) but the point \(p\) is a reducible subspace.
\end{example}

\begin{notation}\label{not:Lirr}
	Write
	\begin{equation*}
		\mLi = \bigcup_{i=0}^{n-1} \mLi^i
	\end{equation*}
	for the set of non-empty and proper irreducible subspaces, where \(\mLi^i\) is the subset of all \(L \in \mLi\) with \(\dim L = i\). In particular, \(\mLi^{n-1}\) is equal to \(\mH\). 
\end{notation}

\begin{definition}
	Let \(L \in \mL\). The \emph{irreducible components} of \(L\) are the maximal (with respect to the order in \(\mL\) by reverse inclusion) elements of \(\mLi\) that contain \(L\). 
\end{definition}

The term irreducible component is taken from \cite[\S 2.1]{chl}. 

\begin{notation}\label{not:IrrL}
Write \(\Irr(L)\) for the irreducible components of \(L\). In particular,
\(L \in \mL\) is reducible if and only if \(|\Irr(L)| > 1\).	
\end{notation}

\begin{lemma}\label{lem:irrcomp}
	Let \(L \in \mL\) and let \(\Irr(L) = \{L_1, \ldots, L_k\}\). Then the following holds.
	\begin{enumerate}[label=\textup{(\roman*)}]
		\item If \(M \in \mLi\) contains \(L\) then \(M \supset L_i\) for some \(1 \leq i \leq k\).
		\item \(L\) is the \emph{transversal intersection}\footnote{See Definition \ref{def:transint}\,.} of the subspaces \(L_i\), i.e.,
		\begin{equation}\label{eq:transint}
		L = \bigcap_{i=1}^k L_i \hspace{2mm} \textup{ and } \hspace{2mm}	\codim L = \sum_{i=1}^{k} \codim L_i \,.
		\end{equation}
	\end{enumerate}
\end{lemma}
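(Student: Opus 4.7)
The plan is to invoke the unique decomposition of $\mH_L$ into irreducible factors (cited from \cite[Lemma 2.32]{pkc} in the remark preceding Lemma \ref{lem:noauto}). This gives $\mH_L = \mH_{L_1} \uplus \cdots \uplus \mH_{L_k}$ with each $\mH_{L_j}$ irreducible and $L_j$ its centre, so that each $L_j$ is, by construction, an irreducible subspace containing $L$. I will verify (i) and (ii) for these $L_j$'s, from which $\Irr(L) = \{L_1, \ldots, L_k\}$ will follow.

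First, I establish the transversality claim in (ii). The uniqueness of the decomposition means that for each index $j_0$ the pair $\big(\mH_{L_{j_0}}, \bigcup_{j \neq j_0} \mH_{L_j}\big)$ is itself a binary splitting of $\mH_L$; its second part has centre $\bigcap_{j \neq j_0} L_j$, so the definition of splitting directly gives $L_{j_0} + \bigcap_{j \neq j_0} L_j = \CP^n$. Combined with $L = \bigcap_j L_j$ (the centre of $\mH_L$) and the dimension identity $\dim(A+B) = \dim A + \dim B - \dim(A \cap B)$, this yields $\codim L = \codim L_{j_0} + \codim \big(\bigcap_{j \neq j_0} L_j\big)$, and induction on $k$ produces $\codim L = \sum_j \codim L_j$, i.e.\ \eqref{eq:transint}. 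A byproduct is that $L_i + L_j = \CP^n$ whenever $i \neq j$, so the $L_j$'s are pairwise incomparable under inclusion.

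Next, I prove (i). Let $M \in \mLi$ with $L \subseteq M$; since $M \neq \CP^n$, $\mH_M$ is non-empty. Partition $\mH_M = \bigsqcup_j \mH_M^{(j)}$ with $\mH_M^{(j)} = \mH_M \cap \mH_{L_j}$. If two of these pieces were non-empty, I would single out some $\mH_M^{(j_0)}$ and regroup the rest to obtain a binary splitting of $\mH_M$: the two centres contain $L_{j_0}$ and $\bigcap_{j \neq j_0} L_j$ respectively, whose sum is $\CP^n$ by the preceding paragraph. This would contradict the irreducibility of $M$, so $\mH_M \subseteq \mH_{L_j}$ for a unique $j$, and consequently $M = \bigcap_{H \in \mH_M} H \supseteq L_j$. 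Together with pairwise incomparability of the $L_j$'s, this identifies $\Irr(L) = \{L_1, \ldots, L_k\}$ and completes the proof.

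The main obstacle is purely one of book-keeping: the transversality in (ii) must be in place before it can be used to rule out mixed decompositions in (i), and for the induction in (ii) to run one needs the iterated product structure to furnish binary splittings for arbitrary groupings of the factors, which is precisely what the uniqueness part of the cited product decomposition provides.
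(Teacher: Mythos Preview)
Your proof is correct and rests on the same product decomposition of \(\mH_L\) that the paper invokes. The organization differs, though. The paper takes the \(L_i\) as \emph{given} by the definition of \(\Irr(L)\) (the minimal, under inclusion, irreducible subspaces containing \(L\)), so item (i) is a one-line tautology about minimal elements in a finite poset; item (ii) is then obtained by citing the product isomorphism \(\mH_L/L \cong \prod_i (\mH_{L_i}/L_i)\) from \cite[Lemma 2.44]{pkc}. You run the argument in the opposite direction: start from the irreducible factors of \(\mH_L\), derive (ii) by an explicit induction from the splitting axioms, prove (i) as a genuine statement via the ``no mixed pieces'' argument, and only then identify the factor centres with \(\Irr(L)\). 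Your route is more self-contained (nothing is cited as a black box for (ii)); the paper's is terser.

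One minor quibble: the fact that regrouping the factors as \(\mH_{L_{j_0}}\) against \(\bigcup_{j\neq j_0}\mH_{L_j}\) yields a binary splitting comes from associativity of the product construction, not from \emph{uniqueness} of the decomposition. This does not affect the validity of the argument.
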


\begin{proof}
	Item (i) follows immediately from the definition. Item (ii) follows from the fact that the arrangement \(\mH_L/L\) is linearly isomorphic to the product 
	\[
	\mH_L / L \cong
	(\mH_{L_1} / L_1) \times \ldots \times (\mH_{L_k} / L_k) \,,
	\]
	see \cite[Lemma 2.44]{pkc}.
\end{proof}

\begin{lemma}\label{lem:l1l2}
	Let \(L_1\) and \(L_2\) be two irreducible subspaces such that their intersection \(L = L_1 \cap L_2\) is non-empty and reducible. Then \(\Irr(L) = \{L_1, L_2\}\). In particular, \(L_1 + L_2 = \CP^n\).
\end{lemma}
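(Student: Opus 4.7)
The plan is to invoke Lemma \ref{lem:irrcomp} on each of the two irreducible subspaces $L_1, L_2$ -- both of which contain $L$ -- in order to locate irreducible components of $L$ sitting inside them, and then to use the transversal decomposition of $L$ guaranteed by part (ii) of that lemma to force a matching of dimensions.

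I would start by writing $\Irr(L) = \{M_1, \ldots, M_k\}$. By Lemma \ref{lem:irrcomp}(i), the irreducible subspace $L_1 \supset L$ must contain some component $M_{i_1}$, and $L_2$ must contain some $M_{i_2}$. The first key observation is that $i_1 \neq i_2$: a common component $M$ would satisfy $L \subset M \subset L_1 \cap L_2 = L$, hence coincide with $L$, contradicting the irreducibility of components against the reducibility of $L$. Relabelling, I may assume $M_1 \subset L_1$ and $M_2 \subset L_2$.

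Next, the double inclusion $L \subset M_1 \cap M_2 \subset L_1 \cap L_2 = L$ gives $M_1 \cap M_2 = L$. Combining the Grassmann-type inequality $\codim(M_1 \cap M_2) \leq \codim M_1 + \codim M_2$ with the transversal identity $\codim L = \sum_{j=1}^{k}\codim M_j$ of Lemma \ref{lem:irrcomp}(ii) yields
\[
\codim M_1 + \codim M_2 \;\geq\; \codim L \;=\; \sum_{j=1}^{k}\codim M_j,
\]
and since each $\codim M_j \geq 1$, this forces $k = 2$ along with the transversality $\codim L = \codim M_1 + \codim M_2$. The latter is equivalent to $M_1 + M_2 = \CP^n$, so a fortiori $L_1 + L_2 = \CP^n$. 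A final Grassmann dimension count, comparing $n = \dim L_1 + \dim L_2 - \dim L$ (from $L_1 + L_2 = \CP^n$ and $L_1 \cap L_2 = L$) with $n = \dim M_1 + \dim M_2 - \dim L$, together with the containments $M_i \subset L_i$, forces $\dim L_i = \dim M_i$, and hence $L_i = M_i$ for $i=1,2$.

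The argument is largely mechanical once Lemma \ref{lem:irrcomp} is available; the only step requiring a moment's care is concluding $k = 2$ from the codimension identity, which crucially uses that every $\codim M_j$ is at least $1$. I anticipate no deeper obstacle beyond carefully tracking the inclusions and the Grassmann formula.
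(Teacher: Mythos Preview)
Your proof is correct and follows essentially the same approach as the paper: both invoke Lemma~\ref{lem:irrcomp} to place an irreducible component of $L$ inside each $L_i$, rule out a common component via the reducibility of $L$, and then use the transversal codimension identity from Lemma~\ref{lem:irrcomp}(ii) to force $k=2$ and $L_i = M_i$. The paper compresses the endgame into a single chain of inequalities $\codim L \leq \codim L_1 + \codim L_2 \leq \codim M_1 + \codim M_2 \leq \sum_j \codim M_j = \codim L$, which simultaneously yields $k=2$ and $\codim L_i = \codim M_i$; your version routes through $M_1 \cap M_2 = L$ and a second Grassmann count, but the content is the same.
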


\begin{proof}
	Let \(\Irr(L) = \{L_1', \ldots, L_p'\}\). We want to show that \(p=2\) and that, up to a re-label, \(L_1=L_1'\) and \(L_2 = L_2'\). Since \(L_1\) is irreducible, it must contain one of the irreducible components of \(L\), say \(L_1 \supset L_1'\). Similarly, \(L_2 \supset L_i'\) for some \(i\). We claim that \(i \neq 1\). Indeed, if \(L_2 \supset L_1'\) then \(L = L_1 \cap L_2\) would contain \(L_1'\) but \(L \subsetneq L_1'\). We can assume that \(L_2 \supset L_2'\). Since \(L = L_1 \cap L_2\) and \(\codim L_i \leq \codim L_i'\) for \(i=1, 2\); we get that
	\[
	\codim L \leq \codim L_1 + \codim L_2 \leq \sum_{i=1}^{p} \codim L_i' = \codim L ,
	\]
	where the last equality follows from Equation \eqref{eq:transint}. We conclude that all inequalities must be equalities. In particular, 
	\(p=2\) and \(\codim L_i = \codim L_i'\) for \(i= 1, 2\); therefore \(L_1 = L_1'\) and \(L_2=L_2'\).
\end{proof}

\begin{notation}\label{not:transint}
	Let \(L_1, L_2 \in \mLi\)\,, we write \(L_1 \pitchfork L_2\) if \(L_1 \cap L_2\) is non-empty and reducible. 
\end{notation}

\begin{example}
	Let \(H_1\) and \(H_2\) be two hyperplanes of the arrangement \(\mH\) and let \(L\) be their intersection. Then \(H_1 \pitchfork H_2\) if and only if \(\mH_L = \{H_1, H_2\}\).
\end{example}


\subsection{Weighted arrangements}\label{sec:weightarr}

\begin{definition}\label{def:weightedarr}
	A weighted arrangement \((\mH, \ba)\) is a hyperplane arrangement \(\mH\) in \(\CP^n\) together with a weight vector
	\(\ba \in \R^{\mH}\) whose components are positive real numbers \(a_H>0\) indexed by the elements \(H \in \mH\).
\end{definition}

Let \((\mH, \ba)\) be a weighted arrangement.
For an arbitrary non-empty and proper linear subspace \(L \subsetneq \CP^n\), consider the equation
\begin{equation}\label{eq:kltcondition}
\sum_{H | L \subset H} a _H < \codim L \,.
\end{equation}

\begin{lemma}\label{lem:kltcondition}
	The following conditions are equivalent: 
	\begin{enumerate}[label=\textup{(\roman*)}]
		\item Equation \eqref{eq:kltcondition} holds for every  \(L \in \mLi\)\,;
		\item Equation \eqref{eq:kltcondition} holds for every  \(L \in \mL\)\,;
		\item Equation \eqref{eq:kltcondition} holds for every  non-empty and proper linear subspace \(L \subsetneq \CP^n\).
	\end{enumerate}
\end{lemma}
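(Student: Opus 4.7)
\medskip
\noindent\emph{Proof plan.}
The implications (iii)$\Rightarrow$(ii)$\Rightarrow$(i) are immediate since $\mLi \subset \mL$ and every element of $\mL$ is a non-empty proper linear subspace of $\CP^n$. The work lies in the two reverse implications (i)$\Rightarrow$(ii) and (ii)$\Rightarrow$(iii).

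For (i)$\Rightarrow$(ii), given $L \in \mL$ with irreducible components $\Irr(L) = \{L_1, \ldots, L_k\}$, the plan is to decompose $\mH_L$ in terms of the $\mH_{L_i}$ and apply (i) termwise. Lemma \ref{lem:irrcomp}(i) implies that every hyperplane $H \supset L$ contains some $L_i$, while $L \subset L_i$ gives the reverse inclusion, so $\mH_L = \bigcup_{i} \mH_{L_i}$. Hence
\[
\sum_{H \in \mH_L} a_H \,\leq\, \sum_{i=1}^k \sum_{H \in \mH_{L_i}} a_H \,<\, \sum_{i=1}^k \codim L_i \,=\, \codim L,
\]
where the middle strict inequality applies (i) to each $L_i \in \mLi$ and the final equality is Lemma \ref{lem:irrcomp}(ii). (In fact any two distinct irreducible components should satisfy $L_i + L_j = \CP^n$, so the first step is an equality, but this refinement is not needed.)

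For (ii)$\Rightarrow$(iii), the plan is to approximate an arbitrary non-empty proper subspace $L \subsetneq \CP^n$ by the smallest intersection subspace containing it, namely $L' := \bigcap_{H \in \mH,\, L \subset H} H$. If no hyperplane of $\mH$ contains $L$, then the left-hand side of \eqref{eq:kltcondition} vanishes and the inequality is trivial. Otherwise $L'$ is a non-empty proper intersection of hyperplanes, so $L' \in \mL$; by construction $\mH_{L'} = \mH_L$, and $\codim L' \leq \codim L$ since $L \subset L'$. Applying (ii) to $L'$ then yields
\[
\sum_{H \mid L \subset H} a_H = \sum_{H \mid L' \subset H} a_H < \codim L' \leq \codim L.
\]
I do not expect a serious obstacle here: once Lemma \ref{lem:irrcomp} is in hand, the argument is essentially bookkeeping with the intersection poset and the decomposition of $L$ into irreducible components.
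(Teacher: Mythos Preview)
Your proof is correct and follows essentially the same route as the paper: for (i)$\Rightarrow$(ii) you decompose $\mH_L$ via the irreducible components of $L$ using Lemma~\ref{lem:irrcomp} and sum the codimensions transversally, and for (ii)$\Rightarrow$(iii) you pass to the intersection $L'$ of all hyperplanes containing $L$. The paper's only refinement is that the union $\mH_L=\bigcup_i \mH_{L_i}$ is in fact disjoint (so your $\leq$ is an equality), but as you observe this is not needed for the argument.
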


\begin{proof}
	Let us show first that (i) implies (ii). Let \(L \in \mL\) and consider its irreducible decomposition 
	\begin{equation}\label{eq:irrdecpf}
	\mH_L = \mH_{L_1} \uplus \ldots \uplus \mH_{L_k} .
	\end{equation}
	Each \(L_i \in \mLi\) and \(L = \cap_{i=1}^k L_i\) is their common transverse intersection. In particular,
	\begin{equation}\label{eq:sumcod}
	\codim L = \codim L_1 + \ldots + \codim L_k .
	\end{equation}
	Equation \eqref{eq:irrdecpf} implies that every \(H \in \mH\) that contains \(L\) must contain exactly one of the irreducible components \(L_i\). It follows that
	\begin{equation}\label{eq:sumirrlem}
	\sum_{H | L \subset H} a_H = \sum_{H | L_1 \subset H} a_H + \ldots + \sum_{H | L_k \subset H} a_H 
	\end{equation}
	Applying Equation \eqref{eq:kltcondition} to each \(L_i\) together with Equations \eqref{eq:sumirrlem} and \eqref{eq:sumcod}, implies that Equation \eqref{eq:kltcondition} holds for \(L\). This finishes the proof that (i) implies (ii).
	
	Let us now show that (ii) implies (iii). Let \(L \subset \CP^n\) be a linear subspace and let \(L'\) be the common intersection of all \(H \in \mH\) that contain \(L\). 
	If there are no hyperplanes of the arrangement that contain \(L\) then Equation \eqref{eq:kltcondition} is trivially satisfied because the left hand side is equal to zero and \(\codim L \geq 1\). Since \(L \subset L'\), we get that \(L'\) is non-empty and therefore \(L' \in \mL\).
	Applying Equation \eqref{eq:kltcondition} to \(L'\) we obtain
	\[
	\sum_{H | L \subset H} a_H  = \sum_{H | L' \subset H} a_H < \codim L' \leq \codim L . 
	\]
	This finishes the proof that (ii) implies (iii). Since (iii) clearly implies (i), the lemma follows.
\end{proof}

Recall from Section \ref{sec:mainresults} that the weighted arrangement \((\mH, \ba)\) is klt if Equation \eqref{eq:kltcondition} holds for any \(L \in \mL\). As a direct consequence of Lemma \ref{lem:kltcondition}\,,
we obtain \(3\) slightly different but equivalent formulations of the klt condition.

\begin{corollary}\label{cor:kltcondition}
	Let \((\mH, \ba)\) be a weighted arrangement with weights \(a_H>0\). Then \((\mH, \ba)\) is klt if and only if any of the following equivalent conditions is satisfied:
	\begin{enumerate}[label=\textup{(\roman*)}]
		\item Equation \eqref{eq:kltcondition} holds for every  \(L \in \mLi\)\,;
		\item Equation \eqref{eq:kltcondition} holds for every  \(L \in \mL\)\,;
		\item Equation \eqref{eq:kltcondition} holds for every  non-empty and proper linear subspace \(L \subsetneq \CP^n\).
	\end{enumerate}
\end{corollary}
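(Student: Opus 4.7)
The plan is to observe that this corollary is essentially a bookkeeping restatement of Lemma \ref{lem:kltcondition}, so the entire content is already in hand. Recall that the klt condition, as introduced in Equation \eqref{eq:klt}, asserts precisely that the inequality
\[
\sum_{H \,|\, L \subset H} a_H < \codim L
\]
holds for every \(L \in \mL\). This is exactly condition (ii) in the statement of both Lemma \ref{lem:kltcondition} and the corollary.

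Lemma \ref{lem:kltcondition} (applied under the standing assumption that the weights \(a_H\) are strictly positive, which is part of Definition \ref{def:weightedarr}) already proves the three-way equivalence of (i), (ii), (iii). So the proof reduces to a single sentence: by Definition \ref{def:weightedarr} the weights are positive, and by Lemma \ref{lem:kltcondition} the three conditions (i)--(iii) are mutually equivalent; combined with the definition of klt as condition (ii), this yields the stated corollary.

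I do not anticipate any real obstacle here; the only mild point to keep in mind is to make sure that the reduction to Lemma \ref{lem:kltcondition} is formally valid, i.e.\ that the hypotheses of that lemma (positive weights) match the hypotheses of the corollary (a weighted arrangement in the sense of Definition \ref{def:weightedarr}), which they do by construction. There are no case splits, no estimates, and nothing to compute: it is purely a matter of identifying the klt condition with condition (ii) of the previous lemma.
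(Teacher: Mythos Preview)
Your proposal is correct and matches the paper's approach exactly: the paper simply states that the corollary is ``a direct consequence of Lemma \ref{lem:kltcondition}'' and gives no separate proof, since the klt condition is by definition condition (ii) and the lemma supplies the equivalence of (i)--(iii).
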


 The next result will be crucial in Section \ref{sec:stability}.

\begin{lemma}\label{lem:klycy}
	If \((\mH, \ba)\) is klt and CY then \(\mH\) is essential and irreducible.
\end{lemma}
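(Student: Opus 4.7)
The plan is to prove both essentiality and irreducibility by contradiction, applying the klt inequality to carefully chosen elements of $\mL$ and summing to contradict the CY equation.

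\textbf{Essentiality.} First I would show $\mH$ is essential. Assume for contradiction that the centre $T = \bigcap_{H \in \mH} H$ is non-empty. Since the CY condition forces $\mH$ to be non-empty, $T$ is contained in some hyperplane, hence $T$ is proper, so $T \in \mL$. Every hyperplane of $\mH$ contains $T$, so the klt condition applied to $L = T$ reads
\[
\sum_{H \in \mH} a_H = \sum_{H \mid T \subset H} a_H < \codim T \leq n,
\]
where the last inequality holds because $T \neq \emptyset$ has dimension $\geq 0$. This contradicts the CY condition $\sum_{H\in\mH} a_H = n+1$.

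\textbf{Irreducibility.} Next, assuming essentiality, I would show irreducibility, again by contradiction. Suppose $\mH = \mH_1 \uplus \mH_2$ is a non-trivial splitting with centres $T_1, T_2$, so that $T_1 + T_2 = \CP^n$. Since $\mH$ is essential, $T_1 \cap T_2 \subset T = \emptyset$. Using the projective dimension formula
\[
\codim(T_1 + T_2) + \codim(T_1 \cap T_2) = \codim T_1 + \codim T_2
\]
with the conventions $\codim \CP^n = 0$ and $\codim \emptyset = n+1$, this gives $\codim T_1 + \codim T_2 = n+1$. Moreover, both $T_i$ are non-empty: if $T_i = \emptyset$ then $T_1 + T_2 = \CP^n$ forces $T_{3-i} = \CP^n$, i.e.\ $\mH_{3-i} = \emptyset$, contradicting non-triviality. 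Thus $T_i \in \mL$, and since $\mH_{T_i} = \mH_i$ (as recorded just after Definition of splitting in the excerpt), the klt condition applied to each $T_i$ gives
\[
\sum_{H \in \mH_i} a_H = \sum_{H \mid T_i \subset H} a_H < \codim T_i.
\]
Adding these two strict inequalities and using $\mH = \mH_1 \sqcup \mH_2$ yields
\[
\sum_{H \in \mH} a_H < \codim T_1 + \codim T_2 = n+1,
\]
contradicting the CY condition.

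\textbf{Main obstacle.} The argument is essentially bookkeeping, so there is no deep obstacle. The one point requiring a little care is ensuring the subspaces to which klt is applied lie in $\mL$ (i.e.\ are non-empty and proper): that $T$ and $T_i$ are proper follows from non-emptiness of $\mH$ and $\mH_i$, and non-emptiness of $T_i$ requires the short argument above using non-triviality of the splitting. The key structural input from earlier in the paper is the identification $\mH_{T_i} = \mH_i$ for a splitting, which ensures that the sum over hyperplanes containing $T_i$ is exactly the $\mH_i$-sum, so adding the two klt inequalities exhausts $\mH$ without overlap.
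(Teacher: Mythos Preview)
Your proof is correct and follows essentially the same approach as the paper: contradict CY by applying the klt inequality to the centre $T$ (for essentiality) and to the two centres $T_1, T_2$ of a non-trivial splitting (for irreducibility), then summing. The paper is terser because it has already recorded, just after the definition of splitting, that for a non-trivial splitting the centres $T_1, T_2$ lie in $\mL$; your careful re-derivation of this fact is not needed but does no harm.
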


\begin{proof}
	If the centre \(T\) of the arrangement is non-empty then the klt condition applied to \(L = T\)  implies that \(\sum_H a_H < n\) but this contradicts the CY condition \(\sum_H a_H = n+1\). Therefore, \(\mH\) is essential.
	
	If \(\mH\) is reducible then there are two linear subspaces \(L_1, L_2 \in \mL\) such that \(L_1 + L_2 = \CP^n\) and every \(H \in \mH\) contains either \(L_1\) or \(L_2\). Since \(\mH\) is essential, the subspaces \(L_1\) and \(L_2\) are disjoint, thus \(\codim L_1+ \codim L_2 = n+1\). It follows that 
	\[
	\begin{aligned}
	\sum_{H \in \mH} a_H &= \sum_{H | L_1 \subset H} a_H + \sum_{H | L_2 \subset H} a_H \\
	&< \codim L_1 + \codim L_2 = n + 1 \,,
	\end{aligned}
	\]
	which contradicts the CY condition.
\end{proof}

Lemmas \ref{lem:klycy} and \ref{lem:noauto} together yield the following:

\begin{corollary}\label{cor:noauto}
	If \((\mH, \ba)\) is klt and CY then there are no non-zero holomorphic vector fields tangent to all the members of \(\mH\).
\end{corollary}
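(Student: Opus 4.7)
The plan is essentially immediate: this corollary is a direct combination of the two preceding lemmas, so the proof is a one-step chain of implications.

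First, I would invoke Lemma \ref{lem:klycy} applied to the weighted arrangement $(\mH, \ba)$: the klt plus CY hypotheses guarantee that $\mH$ is both essential and irreducible. The point of that lemma was to rule out the two obstructions, namely a non-empty centre (which contradicts klt applied to $L = T$ since $\sum a_H < n$ would clash with $\sum a_H = n+1$) and a non-trivial splitting $\mH = \mH_{L_1} \uplus \mH_{L_2}$ (whose codimensions sum to $n+1$, forcing $\sum a_H < n+1$).

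Next, I would feed the conclusion "$\mH$ is essential and irreducible" directly into Lemma \ref{lem:noauto}. That lemma promotes the tangency condition on a vector field $Y$ to an eigenvector decomposition of the dual linear map on defining equations $\ell_H$; essentiality supplies a basis of eigenvectors, and irreducibility forces all eigenvalues to agree (otherwise the eigenspace decomposition would yield a non-trivial splitting of $\mH$), so $Y$ descends from a scalar multiple of the identity on $\C^{n+1}$ and therefore vanishes on $\CP^n$.

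Combining the two gives the corollary with no additional work. There is no real obstacle: the content is entirely absorbed by Lemmas \ref{lem:klycy} and \ref{lem:noauto}, and the corollary is simply recording the composition for later use in Section \ref{sec:stability}, where it will supply the non-existence of infinitesimal automorphisms needed to establish stability of the parabolic bundle $\mE_{*}$.
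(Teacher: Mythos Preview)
Your proposal is correct and follows exactly the paper's approach: the corollary is stated immediately after the sentence ``Lemmas \ref{lem:klycy} and \ref{lem:noauto} together yield the following,'' with no further proof given. Your additional unpacking of the content of those two lemmas is accurate but not required.
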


Recall that the multiplicity \(m_L\) of a subspace \(L \in \mL\) is the number of hyperplanes \(H \in \mH\) that contain \(L\).
The next results asserts that arrangements with no subspaces of relatively high multiplicity are irreducible.

\begin{corollary}\label{cor:hir2irred}
	Let \(\mH \subset \CP^n\) be a non-empty arrangement such that 
	\begin{equation}\label{eq:mul}
		\forall \, L \in \mL: \,\,  m_L < \codim L \cdot \frac{|\mH|}{n+1} .
	\end{equation}
	Then \(\mH\) is essential and irreducible.
\end{corollary}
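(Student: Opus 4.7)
The cleanest route is to reduce the corollary directly to Lemma \ref{lem:klycy} by exhibiting a symmetric weight vector that satisfies the klt and CY conditions under the multiplicity hypothesis \eqref{eq:mul}.

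\textbf{Step 1: Construct uniform weights.} Let $N = |\mH| \geq 1$ and assign to every $H \in \mH$ the same weight
\[
a_H = \frac{n+1}{N} > 0.
\]
Then immediately $\sum_{H \in \mH} a_H = n+1$, so the CY condition \eqref{eq:cy} holds, and the weights are positive as required by Definition \ref{def:weightedarr}.

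\textbf{Step 2: Verify the klt condition.} By Corollary \ref{cor:kltcondition}, it suffices to check Equation \eqref{eq:kltcondition} for every $L \in \mL$. With uniform weights we have
\[
\sum_{H \,|\, L \subset H} a_H = m_L \cdot \frac{n+1}{N},
\]
so the klt inequality becomes precisely $m_L \cdot (n+1)/N < \codim L$, which rearranges to $m_L < \codim L \cdot N/(n+1)$. This is exactly the hypothesis \eqref{eq:mul}. Hence $(\mH, \ba)$ is klt.

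\textbf{Step 3: Apply Lemma \ref{lem:klycy}.} Since $(\mH, \ba)$ is both klt and CY, Lemma \ref{lem:klycy} yields that $\mH$ is essential and irreducible, completing the proof.

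\textbf{Obstacle.} There is essentially no obstacle once one recognises that the hypothesis \eqref{eq:mul} is equivalent to the klt condition for the uniform weight vector $a_H = (n+1)/N$; the only subtlety worth noting is that Corollary \ref{cor:kltcondition} permits us to restrict the klt check from all non-empty proper subspaces to the elements of $\mL$, which is precisely where the hypothesis \eqref{eq:mul} lives.
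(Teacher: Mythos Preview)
Your proof is correct and follows essentially the same approach as the paper: both introduce the uniform weights $a_H = (n+1)/|\mH|$, observe that the hypothesis \eqref{eq:mul} is exactly the klt condition for this weight vector, and conclude via Lemma \ref{lem:klycy}. The paper additionally remarks that $m_L \geq \codim L$ forces $|\mH| > n+1$ (hence $a_H < 1$), but this is a side observation rather than a logically necessary step, since the klt condition for $L \in \mH$ already encodes it.
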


\begin{proof}
	Since for \(L \in \mL\) the multiplicity \(m_L\) is always greater or equal than \(\codim L\), we must have \(|\mH|> n+1\). Consider the CY weighted arrangement \((\mH, \ba)\) where all weights \(a_H\) are equal to \((n+1)/|\mH|\). As \(|\mH|>n+1\), we have \(0 < a_H < 1\). Equation \eqref{eq:mul} guarantees that \((\mH, \ba)\) is klt. The result follows from Lemma \ref{lem:klycy}\,.
\end{proof}

\section{The resolution}\label{sec:res}

In Section \ref{sec:blowup}\,, we recall a canonical compactification \(X\) of an arrangement complement that replaces \(\mH\) with a simple normal crossing divisor. This compactification is a particular instance of the \emph{wonderful models} of subspace arrangements, introduced by De Concini and Procesi \cite{conciniprocesi}.

We review the construction of \(X\) as an iterated blowup of \(\CP^n\) along linear subspaces and recall the notion of \emph{nested set}. The upshot is that the common intersection \(\bigcap_{L \in \mS} D_L\) is non-empty, where \(\{D_L \,|\, L \in \mS\}\) is a collection of irreducible components of \(D = \pi^{-1}(\mH)\), if and only if the set \(\mS\) is nested relative to \(\mLi\).\newpar

\noindent In Section \ref{sec:intnum}\,, we discuss the Picard group of \(X\). We also calculate some intersection numbers in the cohomology ring \(H^*(X, \Z)\). We will make use of the results on the Picard group and cohomology ring of \(X\) later, in Sections \ref{sec:parbun} and \ref{sec:stability}, in calculations of parabolic Chern classes of our parabolic bundle \(\mE_{*}\) and their products with the polarizations \(P_k\).\newpar

\noindent In Section \ref{sec:pol}\,, we introduce the polarizations \(P_k\) on \(X\) which are used in the \emph{stability} Theorem \ref{thm:stability}\,. 

\subsection{The minimal De Concini-Procesi model}\label{sec:blowup}

Before introducing De Concini and Procesi's wonderful models, we recall, as a warmup, one of the standard constructions of the blowup of \(\CP^n\) along a linear subspace.

Let \(L \subset \CP^n\) be a linear subspace and let \(P_L\) be a complementary subspace, that is \(L + P_L = \CP^n\) and \(L \cap P_L = \emptyset\).
The linear projection 
\[
\pr_L : \CP^n \setminus L \to P_L \,,
\]
sends a point \(p \in \CP^n \setminus L\) to the intersection of \(L + p\) with \(P_L\).
The blowup of \(\CP^n\) along \(L\) is the map \(B \xrightarrow{\sigma} \CP^n\), where \(B \subset \CP^n \times P_L\) is the closure of the graph of \(\pr_L\) and \(\sigma\) is the restriction to \(B\) of the projection to the first factor.

Consider now a hyperplane arrangement \(\mH \subset \CP^n\). Let
\((\CP^n)^{\circ}\) be the arrangement complement and let \(\mLi\) be the set of non-empty and proper irreducible subspaces. 
For each \(L \in \mLi\) choose a complementary subspace \(P_L\) and define
\begin{equation}\label{eq:pr}
\pr: (\CP^n)^{\circ} \to \prod_{L \in \mLi} P_L	\,,
\end{equation}
with components the linear projections \(\pr_L\)\,. 

\begin{definition}\label{def:resolution}
	Let \(X \subset \CP^n \times \prod P_L\) be the closure of the graph of \eqref{eq:pr} and let 
	\begin{equation}\label{eq:resol}
		\pi: X \to \CP^n 
	\end{equation}  	
	be the restriction to \(X\) of the projection to the first factor.
	
	The resolution of \(\mH\) is the variety \(X\) together with the map \(\pi\).
	For brevity, we will also refer to it simply as the resolution \(X\), omitting the map \(\pi\), or as the resolution \(\pi\), omitting the variety \(X\), depending on which aspect we wish to emphasize. 
\end{definition}

\begin{remark}
	De Concini and Procesi define a resolution
	\(X_{\mG}\) for any \emph{building set} \(\mG\) of the poset \(\mL\) \cite[\S 2.3]{conciniprocesi}. The irreducible subspaces \(\mLi\) form a building set of \(\mL\)
	and any building set of \(\mL\) contains \(\mLi\). Definition \ref{def:resolution}
	corresponds to  \(X = X_{\mG}\) with \(\mG = \mLi\). 
	If \(\mG_1\) and \(\mG_2\) are building sets with \(\mG_1 \subset \mG_2\), then \(X_{\mG_2}\) is obtained from \(X_{\mG_1}\) by performing a finite sequence of blowups along smooth subvarieties.
	For this reason, \eqref{eq:resol} is known as the \emph{minimal} De Concini-Procesi model of \(\mH\).
\end{remark}

The map \(\pi\) is a bijection on the preimage of \((\CP^n)^{\circ}\) with inverse \(\imath(x) = (x, \pr(x))\). 
By slight abuse of notation, suppressing \(\imath\), we can regard \((\CP^n)^{\circ}\) as an open subset of \(X\) and write
\[
X \setminus (\CP^n)^{\circ} = \pi^{-1}(\mH) .
\]	
The main result that we are after is the next.

\begin{theorem}[{\cite[\S 3.2 and \S 4.2]{conciniprocesi}}]\label{thm:conproc}
	The variety \(X\) is smooth and the preimage of the arrangement \(D = \pi^{-1}(\mH)\) is a simple normal crossing divisor whose irreducible components are in one to one correspondence with the elements of \(\mLi\).
	
	More precisely, 
	\begin{equation}
	D = \bigcup_{L \in \mLi} D_L ,
	\end{equation}	
	where \(D_L\) is the unique irreducible component of \(D\) such that \(\pi(D_L) = L\).
\end{theorem}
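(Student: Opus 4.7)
The plan is to identify the graph closure in Definition \ref{def:resolution} with an explicit iterated blowup of $\CP^n$, and then establish smoothness and the simple normal crossing property by induction. Order the irreducible subspaces $\mLi$ as $L_1, L_2, \ldots, L_N$ so that $\dim L_i$ is non-decreasing (equivalently, any linear extension of the partial order by reverse inclusion, smallest dimension first, so that $L_i \subsetneq L_j$ forces $i < j$). Set $X_0 = \CP^n$ and inductively $X_i = \Bl_{\tilde L_i} X_{i-1}$, where $\tilde L_i$ denotes the strict transform of $L_i$ in $X_{i-1}$. First I would observe that each projection $\pr_L: \CP^n \setminus L \to P_L$ extends to $\Bl_L \CP^n$, so the graph of $\pr_L$ on $(\CP^n)^{\circ}$ realizes $\Bl_L \CP^n$ inside $\CP^n \times P_L$. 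Iterating, one sees that $X_N$ coincides with the closure $X$ of the graph of $\pr$ in $\CP^n \times \prod_{L \in \mLi} P_L$; the order of the blowups can be shown to be irrelevant up to canonical isomorphism.

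The core of the proof is then an inductive statement: at each step $i$, the variety $X_{i-1}$ is smooth, the strict transform $\tilde L_i$ is smooth in $X_{i-1}$, the total transform $D^{(i-1)}$ of $\mH$ is a simple normal crossing divisor, and $\tilde L_i$ meets $D^{(i-1)}$ transversally. Blowing up then preserves all these properties and produces a new exceptional divisor $D_{L_i} \subset X_i$, the strict transform of which into $X = X_N$ will give the component we seek. For $L \in \mH = \mLi^{n-1}$ the corresponding $D_L$ is just the strict transform of the hyperplane itself, since blowing up a divisor is the identity.

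The main obstacle is verifying the transversality and SNC statement in the inductive step. To handle this, fix a point $p \in \tilde L_i$ and let $\mS_p \subset \{L_1, \ldots, L_{i-1}\}$ be the collection of previously-blown-up subspaces whose exceptional divisors pass through $p$, together with $L_i$ itself. The claim I would prove is that $\mS_p$ is \emph{nested} in $\mLi$ in the sense of Definition \ref{def:nested}: any two distinct non-comparable elements $L, L' \in \mS_p$ would have to satisfy $L \pitchfork L'$ (Notation \ref{not:transint}), because otherwise $L \cap L'$ would itself be irreducible and hence appear before $L$ and $L'$ in our ordering — impossible since $\dim(L \cap L') < \dim L, \dim L'$. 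But when $L \pitchfork L'$, Lemma \ref{lem:l1l2} says $\{L, L'\} = \Irr(L \cap L')$, and the irreducible components of $L \cap L'$ have already been blown up at earlier steps, which separates the exceptional divisors $D_L$ and $D_{L'}$, contradicting $p \in D_L \cap D_{L'}$. Once $\mS_p$ is nested, a standard computation in linear coordinates produces local coordinates on $X_{i-1}$ near $p$ in which all members of $D^{(i-1)}$ through $p$ are coordinate hyperplanes and $\tilde L_i$ is a coordinate subspace, so smoothness, transversality, and the inductive SNC property follow.

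The final step is to assemble the output: $X = X_N$ is smooth, $D = \pi^{-1}(\mH)$ is SNC, and by construction the irreducible components are exactly the strict transforms $D_L$ for $L \in \mLi$, with $\pi(D_L) = L$. The bijection $\mLi \leftrightarrow \Irr(D)$ is then automatic from the ordered blowup description, and the nestedness criterion established in the inductive step will be precisely the characterization of which collections $\{D_L\}_{L \in \mS}$ have non-empty common intersection that is used elsewhere in the paper.
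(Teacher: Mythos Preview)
The paper does not prove this theorem; it is quoted from De Concini--Procesi \cite{conciniprocesi}, and the iterated-blowup description you rely on is likewise recorded (without proof) as Proposition~\ref{prop:itblowup}. Your overall strategy --- identify $X$ with an iterated blowup ordered by increasing dimension and verify smoothness and the SNC property inductively via a local nestedness analysis --- is exactly the route taken in the original reference, so there is nothing to compare at the level of approach.

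That said, your nestedness paragraph has its two cases crossed. You want to show that if $L,L'\in\mS_p$ are non-comparable then $L\cap L'$ is reducible. The correct argument runs: if $L\cap L'$ were \emph{irreducible}, then $M=L\cap L'\in\mLi$ has strictly smaller dimension than $L$ and $L'$, hence was blown up before either of them; since $L,L'$ are linear subspaces meeting exactly along $M$, blowing up $M$ separates their strict transforms, so $D_L\cap D_{L'}=\emptyset$, contradicting $p\in D_L\cap D_{L'}$. Your text instead invokes Lemma~\ref{lem:l1l2} in the \emph{reducible} case $L\pitchfork L'$ and asserts that ``the irreducible components of $L\cap L'$ have already been blown up at earlier steps, which separates $D_L$ and $D_{L'}$'' --- but by that very lemma $\Irr(L\cap L')=\{L,L'\}$, so there is nothing earlier to blow up, and in fact $D_L$ and $D_{L'}$ \emph{do} meet in this case (this is precisely Corollary~\ref{cor:2intersect}). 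The clause ``impossible since $\dim(L\cap L')<\dim L,\dim L'$'' is also a non sequitur as written. Once you swap the roles of the two cases and supply the separation-after-blowup argument in the irreducible case, the sketch is sound.
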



\begin{remark}
	It is shown in \cite[\(\S 1.6\)]{conciniprocesi} that \(D_L\) is the closure of \(\pi^{-1}(L^{\circ})\) where \(L^{\circ}\) is the complement of the induced arrangement \(\mH^L\). 
	Moreover, \cite[\(\S 4.3\)]{conciniprocesi} shows that \(D_L\) is biholomorphic to the product
	\[
	D_L \cong X({\mH^L}) \times X({\mH_L/L})
	\]
	where \(X(\mK)\) denotes the minimal De Concini-Procesi model of the arrangement \(\mK\).
\end{remark}

\subsubsection{Construction of \(X\) as an iterated blowup}

We present a more hands-on description of \(X\) as an iterated blowup. Before stating the result, we recall first the notion of blowup along a submanifold.

If \(N\) is a complex manifold and \(S \subset N\) is a complex submanifold, then the blowup of \(N\) along \(S\) is a complex manifold \(M\) together with a proper holomorphic map \(M \xrightarrow{\sigma} N\) such that \(\sigma: M \setminus E \cong N \setminus S\) is a biholomorphism
and the restriction of \(\sigma\) to \(E\) is equivalent to the bundle projection \(\P(N_S) \to S\), where \(\P(N_S)\) is the projectivization of the normal bundle \(N_S\) of \(S \subset N\).

Next, we recall the notions of proper transform and exceptional divisor in a slightly more general context.
Let \(M\) and \(N\) be complex manifolds of the same dimension and let 
\[
f: M \to N
\] 
be a proper holomorphic map of degree \(1\).
Let \(E \subset M\) be the set of critical points of \(f\) and let \(S=f(E)\) be the critical values, so that \(f: M \setminus E \to N \setminus S\) is a biholomorphism. We recall the following notions.
\begin{itemize}
	\item If \(V\) is a subvariety of \(N\) with \(V \not\subset S\) then the
	proper transform of \(V\) by \(f\) is the subvariety of \(M\) obtained as the closure of \(f^{-1}(V \setminus S)\).
	\item A divisor \(D \subset M\) is \(f\)-exceptional if \(f(D)\) is an analytic subset of \(N\) of codimension \(\geq 2\).
\end{itemize}

In our case of interest, \(f\) is a composition of blowups along complex submanifolds.  Furthermore, the set of critical points is a (possibly reducible) \(f\)-exceptional divisor. 

\begin{example}
The irreducible \(\pi\)-exceptional divisors of the resolution \eqref{eq:resol} are precisely of the form \(D_L\) with \(L \in \mLi\) of codimension \(\geq 2\).	
\end{example}

\begin{notation}\label{not:Lirrcirc}
	Let \(\mLi^{\circ} = \mLi \setminus \mH\). In other words, \(\mLi^{\circ}\) is the set of non-empty irreducible subspaces of codimension \(\geq 2\).
\end{notation}

\begin{proposition}[{\cite[Proposition 2.13]{lili}}]\label{prop:itblowup}
	Let \(L_1, \ldots, L_k\) be a labelling of all the elements in \(\mLi^{\circ}\) compatible with the inclusion relations, i.e., if \(L_i \subset L_j\) then \(i \leq j\). Then the resolution \eqref{eq:resol} is equal to \(X_k \xrightarrow{\pi_k} \CP^n\), where the maps \(X_i \xrightarrow{\pi_i} \CP^n\)
	for \(1 \leq i \leq k\) are defined inductively as follows:
	\begin{enumerate}
		\item \(X_1 \xrightarrow{\sigma_1} \CP^n\) is the blowup of \(\CP^n\) along \(L_1\) and \(\pi_1 = \sigma_1\); 
		
		\item Let \(i \geq 2\) and suppose that \(\pi_{i-1}: X_{i-1} \to \CP^n\) is defined. Let \(\tL_i\) be the proper transform of \(L_i\) by \(\pi_{i-1}\). Then \(\tL_i\) is smooth and
		 \(\pi_i = \pi_{i-1} \circ \sigma_i\) where
		\(\sigma_i: X_i \to X_{i-1}\) is the blowup of \(X_{i-1}\) along \(\tL_i\). 
	\end{enumerate}
\end{proposition}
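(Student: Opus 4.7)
The plan is to induct on $i$, verifying at each stage two invariants for $X_i \xrightarrow{\pi_i} \CP^n$: (a) $X_i$ is smooth, and (b) for every $L_j$ with $j > i$, the proper transform $\tL_j$ in $X_i$ is smooth. The hypothesis that the ordering is compatible with inclusions is essential: whenever we are about to blow up $\tL_i$ in $X_{i-1}$, every $L_j \in \mLi^{\circ}$ strictly contained in $L_i$ has been dealt with at an earlier stage ($j<i$), while every $L_j$ strictly containing $L_i$ is still waiting. Once the induction terminates at $i=k$, we must separately identify $X_k$ with the closure-of-graph model $X$ of Definition \ref{def:resolution}.

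The inductive step (b) is the substantive part. For the base case, $L_1$ is minimal for inclusion in $\mLi^{\circ}$, hence a smooth linear subspace of $\CP^n$, so $X_1$ is smooth and each $\tL_j$ for $j>1$ is either disjoint from the exceptional divisor or is the blowup of $L_j$ along $L_1 \subsetneq L_j$, which is smooth. For the general inductive step, by the compatibility of the ordering, when we blow up $\tL_i$, the subspaces inside $L_i$ that have already been blown up in $X_{i-1}$ are precisely the elements of $\mLi^{\circ}$ strictly contained in $L_i$. A local calculation at a point of $\tL_i$, using Lemma \ref{lem:irrcomp} to describe the irreducible components of the strata of $\mH_{L_i}/L_i$, shows that $\tL_i$ is itself isomorphic to the minimal De Concini-Procesi model of the induced localized arrangement on $L_i$; hence smooth by induction on $\dim L_i$. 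The smoothness of the remaining proper transforms $\tL_j$ for $j>i$ follows because blowing up a smooth centre preserves smoothness of any proper transform whose intersection with that centre is transverse, and transversality is arranged exactly by the nested-set combinatorics of $\mLi$.

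To identify $X_k$ with $X$, I would invoke the universal property of blowups: the rational map $\pr_L : \CP^n \dashrightarrow P_L$ is not defined on $L$, but extends to a morphism on any birational model where the pullback of $L$ is a Cartier divisor. Applying this to each $L \in \mLi$, the product map \eqref{eq:pr} extends to a morphism $\prod \pr_L : X_k \to \prod_{L \in \mLi} P_L$. Together with $\pi_k$, this produces a proper birational morphism $f : X_k \to X$ over $\CP^n$ that is an isomorphism over the open set $(\CP^n)^{\circ}$. Since $X$ is smooth (and hence normal) by Theorem \ref{thm:conproc}, and since the irreducible exceptional divisors of $\pi_k$ are in bijection with those of $\pi$ — each mapping surjectively onto the corresponding $D_L$ — Zariski's Main Theorem forces $f$ to be an isomorphism.

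The main obstacle is the inductive verification that $\tL_i$ is smooth in $X_{i-1}$. This is a local claim that reduces, at a point $x \in \tL_i$, to analysing the nested collection of irreducible subspaces passing through $\pi_{i-1}(x)$ together with the history of previous blowups; the subtlety is that one must keep careful track of which exceptional divisors of $\pi_{i-1}$ meet $\tL_i$ and verify that they do so transversally. The key combinatorial input here is the description of common intersections $\bigcap_{L \in \mS} D_L$ as non-empty exactly when $\mS$ is nested relative to $\mLi$, which bootstraps the local transversality needed to carry the induction forward, as carried out in \cite[Proposition 2.13]{lili}.
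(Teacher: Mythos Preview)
The paper does not give its own proof of this proposition; it is quoted directly from \cite[Proposition 2.13]{lili}. Your outline follows the same overall architecture as the cited source: induct on the blowup stage, maintain smoothness of the remaining centres, and then identify the iterated blowup with the closure-of-graph model. In that sense your approach is the standard one.

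That said, two parts of your sketch are imprecise enough to count as gaps. First, your treatment of the inductive step collapses two distinct cases into one. For $j>i$ you assert that $\tL_j$ stays smooth because its intersection with the centre $\tL_i$ is ``transverse.'' But when $L_i \subsetneq L_j$ (which is allowed by the ordering), the proper transforms satisfy $\tL_i \subset \tL_j$, and the new $\tL_j$ is the blowup of the old $\tL_j$ along $\tL_i$; this requires $\tL_i$ to be a smooth \emph{subvariety} of $\tL_j$, not a transverse intersection. The incomparable case is separate and requires a clean-intersection statement, not bare transversality. Relatedly, your claim that $\tL_i$ is the minimal De Concini--Procesi model of ``the induced localized arrangement on $L_i$'' is not correct as stated: the $L_j \subsetneq L_i$ already blown up are irreducible subspaces of $\mH$, not of $\mH^{L_i}$, and these are different posets in general.

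Second, in the identification $X_k \cong X$, Zariski's Main Theorem alone does not give an isomorphism from a proper birational morphism between smooth varieties; you must also check that $f$ contracts no divisor. You gesture at this by saying the exceptional divisors are in bijection, but that bijection (and the fact that each $D_L^{(k)}$ maps \emph{birationally} onto $D_L$, not just surjectively) is exactly what needs to be verified. The cleanest route is to show that $f$ is an isomorphism in codimension one and then invoke the fact that a proper birational morphism between smooth varieties with no exceptional divisor is an isomorphism.
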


Proposition \ref{prop:itblowup} guarantees that the end result \(X \xrightarrow{\pi} \CP^n\) is independent of the labelling, as long as it is compatible with inclusion relations.
For example, we can order the elements of \(\mLi^{\circ}\) by increasing dimension \(\mLi^0, \mLi^1, \ldots, \mLi^{n-2}\), where the members of \(\mLi^i\) are taken in any order,  c.f. \cite[Theorem 4.2.4]{othypergeometric}. 

The following three examples illustrate the proposition.

\begin{example}
	If \(\mH \subset \CP^2\) then \(X\) is obtained by blowing up the points \(p \in \CP^2\) of \(\mH\) of multiplicity \(m_p \geq 3\).
\end{example}

\begin{example}\label{ex:dim3}
	If \(\mH \subset \CP^3\) then \(X\) is obtained in two steps.
	\begin{itemize}
		\item Step 1: blowup the points in \(\mLi^0\).
		\item Step 2: blowup of the proper transforms \(\tL\) of the lines \(L \in \mLi^1\). 
	\end{itemize}
	Here, if \(L_1\) and \(L_2\) are two irreducible lines meeting at a point \(p\), then,  by Lemma \ref{lem:l1l2}, the intersection point \(p\) must be irreducible. Therefore, the proper transforms \(\tL\) in Step 2 are mutually disjoint. 	
\end{example}

\begin{example}
	Let \(P\) and \(Q\) be two projective planes in \(\CP^4\) meeting at a single point. Consider the arrangement \(\mH \subset \CP^4\) made of \(6\) hyperplanes, \(3\) of which intersect along \(P\) and the other \(3\) intersect along \(Q\). This arrangement contains no irreducible points and no irreducible lines. The resolution \(X\) is obtained by blowing up one of the planes in \(\mLi^2 = \{P, Q\}\) and then blowing up the proper transform of the other plane. The result is independent of the order in which the two planes are blown up. In an affine chart where \(P = \{0\} \times \C^2\) and \(Q = \C^2 \times \{0\}\) the resolution is the product
	\[
	\Bl_0 \C^2 \times \Bl_0 \C^2 \xrightarrow{(\sigma, \sigma)} \C^2 \times \C^2
	\] 
	where \(\Bl_0 \C^2 \xrightarrow{\sigma} \C^2\) is the blowup of \(\C^2\) at the origin.
\end{example}

\subsubsection{Nested subsets of \(\mLi\) and intersections of divisors \(D_L\)}

Next, we analyse when a collection of divisors \(D_L\) with \(L\) ranging over a subset \(\mS \subset \mLi\) has non-empty intersection. To do this, we recall the following.

\begin{definition}[{\cite[\S 2.4]{conciniprocesi}}]\label{def:nested}
	Let \(\mS \subset \mLi\) be a subset of irreducible subspaces. The set \(\mS\) is \emph{nested relative to} \(\mLi\) if, for any \(L_1, \ldots, L_k \in \mS\) with \(k \geq 2\) pairwise non-comparable (i.e. \(L_i \not\subset L_j\) for \(i\neq j\)), their common intersection \(\bigcap_{i=1}^k L_i\) is non-empty and reducible.
\end{definition}

\begin{remark}
	If \(\mS\) is nested relative to \(\mLi\) and \(L_1, \ldots, L_k \in \mS\) are pairwise non-comparable, then the intersections \(\bigcap_{i \in I} L_i\) are non-empty and reducible for any subset \(I \subset [k]\) with \(|I| \geq 2\). Indeed, the elements of \(\{L_i \,|\, i \in I\} \subset \{L_1, \ldots, L_k\}\) are also pairwise non-comparable, therefore by Definition \ref{def:nested} their common intersection  \(\bigcap_{i \in I} L_i\) is non-empy and reducible.
\end{remark}

\begin{example}\label{ex:l1l2}
If \(L, M \in \mLi\) then \(\mS=\{L, M\}\) is nested if and only if either one subspace is contained in the other or the intersection \(L \cap M\) is non-empty and reducible.	
\end{example}

\begin{example}
	Consider the arrangement \(\mH\) of \(4\) hyperplanes with defining equations \(x_1 = 0\,,\, x_2=0\,,\, x_1 + x_2 = 0\), and \(x_3=0\).
	
	Then the set \(\mS\) of hyperplanes \(x_1 = 0\,,\, x_2 = 0\,,\, x_3 = 0\) doesn't form a nested set. Indeed, the intersection of \(x_1 = 0\) with \(x_2 = 0\) is not reducible (while all other intersections are reducible).
\end{example}

Next, we provide an extension of Lemma \ref{lem:l1l2} that will be useful later on.
Recall that we write \(\Irr(L)\) for the irreducible components of \(L \in \mL\).

\begin{lemma}\label{lem:nestedint}
	Let \(\mS\) be nested relative to \(\mLi\) and let \(L_1, \ldots, L_k \in \mS\) be pairwise non-comparable. If \(L = \bigcap_{i=1}^k L_i\) is their common intersection, then \(\Irr(L) = \{L_1, \ldots, L_k\}\). In particular, the subspaces \(L_1, \ldots, L_k\) intersect transversely.
\end{lemma}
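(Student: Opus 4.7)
The plan is to directly identify the irreducible components of $L$ with the given subspaces via a map defined through the splitting of the localised arrangement $\mH_L$. Let me sketch the steps.

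\textbf{Step 1: The intersection is non-empty.} By the remark following Definition \ref{def:nested}, nestedness of $\mS$ together with pairwise non-comparability of $L_1,\dots,L_k$ guarantees that $L=\bigcap_{i=1}^k L_i$ is non-empty and reducible (for $k\geq 2$; the case $k=1$ is trivial). So $L \in \mL$ and we may write $\Irr(L)=\{M_1,\dots,M_p\}$. By Lemma \ref{lem:irrcomp}(ii), the $M_j$ intersect transversely with $\codim L = \sum_{j=1}^p \codim M_j$, and the splitting of the localization $\mH_L = \mH_{M_1} \uplus \dots \uplus \mH_{M_p}$ is in particular a disjoint union.

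\textbf{Step 2: Defining the map $\sigma$.} For each $i$, since $L \subseteq L_i$, one has $\mH_{L_i} \subseteq \mH_L$. The arrangement $\mH_{L_i}$ is irreducible (as $L_i \in \mLi$) and non-empty, and the disjoint decomposition of $\mH_L$ restricts to a decomposition of $\mH_{L_i}$; hence exactly one piece $\mH_{L_i}\cap \mH_{M_{\sigma(i)}}$ is non-empty, and the others must be empty. This defines $\sigma: [k] \to [p]$ with $\mH_{L_i}\subseteq \mH_{M_{\sigma(i)}}$, which gives the containment $M_{\sigma(i)} \subseteq L_i$ (more hyperplanes cut out a smaller subspace).

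\textbf{Step 3: Injectivity of $\sigma$.} This is the key step. Suppose $\sigma(i)=\sigma(i')$ for some $i\neq i'$. Then $M_{\sigma(i)} \subseteq L_i \cap L_{i'}$. By nestedness applied to the non-comparable pair $L_i,L_{i'}$, the intersection $L_i\cap L_{i'}$ is non-empty and reducible, so by Lemma \ref{lem:l1l2} its irreducible components are exactly $\{L_i,L_{i'}\}$. Now $M_{\sigma(i)}\in \mLi$ contains $L_i\cap L_{i'}$, so Lemma \ref{lem:irrcomp}(i) forces $M_{\sigma(i)}$ to contain either $L_i$ or $L_{i'}$. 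Combined with $M_{\sigma(i)}\subseteq L_i$ and $M_{\sigma(i)}\subseteq L_{i'}$, this gives either $L_i\subseteq L_{i'}$ or $L_{i'}\subseteq L_i$, contradicting non-comparability.

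\textbf{Step 4: Codimension count and conclusion.} Using $\sigma$ injective and $M_{\sigma(i)} \subseteq L_i$, together with the trivial inequality $\codim L \leq \sum_{i=1}^k \codim L_i$ for any intersection, one obtains the chain
\[
\sum_{i=1}^k \codim L_i \;\geq\; \codim L \;=\; \sum_{j=1}^p \codim M_j \;\geq\; \sum_{i=1}^k \codim M_{\sigma(i)} \;\geq\; \sum_{i=1}^k \codim L_i.
\]
All inequalities must be equalities: the second forces $\sigma$ to be a bijection (so $k=p$), the third forces $\codim M_{\sigma(i)}=\codim L_i$ and hence $L_i = M_{\sigma(i)}$, and the first gives the transversality $\codim L = \sum_{i=1}^k \codim L_i$. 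This yields $\Irr(L)=\{L_1,\dots,L_k\}$ as required.

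The main obstacle is really Step 3, as it is where the nested hypothesis is used non-trivially; the rest is a bookkeeping exercise in codimensions. Note that no induction on $k$ is needed: the argument goes through in one stroke by working directly with the irreducible decomposition of $\mH_L$.
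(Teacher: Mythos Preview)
Your overall architecture matches the paper's: define the map \(\sigma\) sending each \(L_i\) to an irreducible component \(M_{\sigma(i)} \subseteq L_i\) of \(L\), then show \(\sigma\) is a bijection via a codimension count. However, Step~3 contains a genuine error.

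You correctly deduce \(M_{\sigma(i)} \subseteq L_i \cap L_{i'}\), but in the very next sentence you write ``\(M_{\sigma(i)}\in \mLi\) contains \(L_i\cap L_{i'}\)'' --- the reverse containment --- in order to invoke Lemma~\ref{lem:irrcomp}(i). That reverse containment is in fact false: combined with \(M_{\sigma(i)} \subseteq L_i \cap L_{i'}\) it would give \(M_{\sigma(i)} = L_i \cap L_{i'}\), impossible since the left side is irreducible and the right side reducible. So Lemma~\ref{lem:irrcomp}(i) does not apply, and the injectivity argument collapses. Since your Step~4 chain explicitly uses injectivity (in the inequality \(\sum_{j} \codim M_j \geq \sum_{i} \codim M_{\sigma(i)}\)), the whole proof breaks.

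The paper avoids this by reversing the order of your Steps~3 and~4. It first runs a codimension count indexed over \(j \in [p]\) rather than \(i \in [k]\): setting \(N_j := \bigcap_{i : \sigma(i) = j} L_i\), one has \(M_j \subseteq N_j\) and \(\bigcap_j N_j = L\), so
\[
\codim L \;\leq\; \sum_{j=1}^p \codim N_j \;\leq\; \sum_{j=1}^p \codim M_j \;=\; \codim L,
\]
forcing \(N_j = M_j\) for every \(j\). Only then does injectivity come out: if \(|\sigma^{-1}(j)| \geq 2\), then \(N_j\) is the intersection of at least two pairwise non-comparable members of \(\mS\), hence reducible directly from Definition~\ref{def:nested} --- no detour through Lemma~\ref{lem:l1l2} is needed --- contradicting \(N_j = M_j \in \mLi\). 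The point is that this version of the codimension chain requires no prior knowledge of injectivity.
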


\begin{proof}
	Let \(\Irr(L) = \{L_1', \ldots, L_p'\}\). We want to show that \(p=k\) and that, up to a relabel, \(L_i=L_i'\) for all \(1 \leq i \leq k\). Since \(L_i\) is irreducible, it must contain one (and only one) of the irreducible components of \(L\), say \(L_i \supset L_{\sigma(i)}'\). 
	Clearly,
	\[
	L'_j \subset \bigcap_{i \,|\, \sigma(i) = j} L_i \,:= M_j
	\]
	and
	\[
	\bigcap_{j=1}^p M_j = \bigcap_{i=1}^k L_i = L \,.
	\]
	Therefore,
	\[
	\codim L \leq \sum_{j=1}^{p} \codim M_j
	\leq \sum_{j=1}^{p} \codim L_j' = \codim L \,.
	\]
	Thus, we must have \(\codim M_j = \codim L_j'\) for all \(j\), hence
	\[
	\forall j \in [p]: \,\, L'_j = M_j\,.
	\] 
	On the other hand, since the elements of \(\{L_i \,|\, \sigma(i) = j\} \subset \mS\) are pairwise non-comparable, their common intersection \(M_j\) must be reducible if \(|\sigma^{-1}(j)| \geq 2\). Since \(M_j = L'_j\) is irreducible, we must have  \(|\sigma^{-1}(j)| = 1\) for all \(j\). We conclude that \(\sigma\) is a bijection, so \(p=k\) and we can relabel so that \(L_i = L'_i\) for all \(i\).
\end{proof}

The main point of introducing nested sets is that we have the following.

\begin{proposition}[{\cite[\S 3.2]{conciniprocesi}}]\label{prop:nested}
	Let \(\mS \subset \mLi\). The intersection
	\[
	\bigcap_{L \in \mS} D_L
	\] 
	is non-empty if and only if \(\mS\) is nested relative to \(\mLi\). 
\end{proposition}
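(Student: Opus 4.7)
The plan is to use the iterated blowup description from Proposition~\ref{prop:itblowup}, ordering the elements of \(\mLi^\circ\) by increasing dimension to obtain a sequence \(X_0 = \CP^n \leftarrow X_1 \leftarrow \cdots \leftarrow X_m = X\), and analyzing the intersections of proper transforms step by step. Throughout I rely on Theorem~\ref{thm:conproc}, that \(D = \bigcup_{L \in \mLi} D_L\) is simple normal crossing, and on the standard identification of the exceptional fiber of a blowup over a point \(p\) of its center with the projectivized normal space.

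For the ``only if'' direction, suppose \(\bigcap_{L\in\mS}D_L \ne \emptyset\) and pick pairwise non-comparable \(L_1,\ldots,L_k \in \mS\) with \(k \geq 2\). Setting \(L := \bigcap_i L_i\), the image of the non-empty intersection under \(\pi\) lies in \(L\), so \(L \ne \emptyset\). I argue by contradiction that \(L\) is reducible. If \(L\) were irreducible, then since \(L \subsetneq L_i\) for each \(i\) (by non-comparability), one has \(\codim L > \max_i \codim L_i \geq 1\), so \(L \in \mLi^\circ\) and \(L\) is blown up at some step \(j\). At a generic point \(p \in L\) (avoiding every smaller irreducible subspace), the exceptional fiber of \(D_L\) over \(p\) is \(\P(T_p\CP^n / T_p L)\), and the proper transform \(\widetilde{L_i}\) meets this fiber in the linear subspace \(\P(T_p L_i / T_p L)\). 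Using \(T_p L_1 \cap \cdots \cap T_p L_k = T_p(\bigcap_i L_i) = T_p L\), the intersection of these subspaces inside \(\P(T_p\CP^n / T_p L)\) is empty. Away from the exceptional fiber the intersection of the proper transforms is \((\bigcap_i L_i)\setminus L = \emptyset\). So already in \(X_j\) the proper transforms \(\widetilde{L_1},\ldots,\widetilde{L_k}\) are disjoint, and blowing up further centers preserves disjointness (proper transforms of disjoint subvarieties remain disjoint), yielding \(\bigcap_i D_{L_i} = \emptyset\) in \(X\), a contradiction.

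For the ``if'' direction, I induct on \(|\mS|\); the cases \(|\mS| \leq 1\) are immediate since each \(D_L\) is a non-empty divisor. Given a nested \(\mS\) with \(|\mS| \geq 2\), pick an element \(M \in \mS\) of maximal codimension; the set \(\mS\setminus\{M\}\) is still nested, so by induction there exists \(y \in \bigcap_{L\in\mS\setminus\{M\}} D_L\). I then produce a point of \(\bigcap_{L\in\mS} D_L\) by working at the step of the iterated blowup in which \(M\) is blown up. By maximality of \(\codim M\), every \(L \in \mS\setminus\{M\}\) either strictly contains \(M\) or is non-comparable with \(M\) and meets it reducibly in \(\CP^n\) (by the nested hypothesis). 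In either case, reversing the separation analysis above, the proper transform \(\widetilde{M}\) meets the existing stratum \(\bigcap_{L\in\mS\setminus\{M\}} D_L\) transversally along a non-empty locus, essentially by Lemma~\ref{lem:nestedint} applied after the earlier blowups. The newly created exceptional divisor \(D_M\) over this locus contains points lying on every \(D_L\) with \(L\in\mS\).

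The main obstacle is managing the intermediate proper transforms \(\widetilde{L_i}\) in the varieties \(X_j\): in the only-if direction one must verify that the separation observed at step \(j\) persists through all subsequent blowups (easy but requires that later centers do not re-link the separated strata), and in the if direction one must verify that the transversality of \(\widetilde{M}\) with the existing stratum is preserved at the correct step and that the new exceptional divisor really meets every previously-constructed \(D_L\). A fully rigorous treatment runs a parallel induction on the sequence \(X_0 \to X_1 \to \cdots \to X_m\), keeping track simultaneously of which families of proper transforms still intersect and of the nestedness of their index sets; this is essentially the original argument of De Concini and Procesi in \cite[\S 3]{conciniprocesi}.
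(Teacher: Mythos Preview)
The paper does not supply its own proof of this proposition; it is quoted directly from De Concini--Procesi \cite[\S 3.2]{conciniprocesi} with no argument given. So there is nothing in the paper to compare your proposal against.

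That said, your sketch captures the right geometric picture and follows the same iterated-blowup strategy as the original, but the gaps you flag are real. In the ``only if'' direction, your separation argument is carried out only at a \emph{generic} point \(p \in L\); however, a hypothetical point of \(\bigcap_i D_{L_i}\) could project under \(\pi\) to a point of \(L\) lying in some strictly smaller irreducible subspace already blown up at an earlier stage, and over such points your local model \(\P(T_p\CP^n/T_pL)\) no longer describes the situation. One really needs the inductive bookkeeping you allude to at the end. In the ``if'' direction, the assertion that \(\widetilde{M}\) meets the previously constructed stratum \(\bigcap_{L\in\mS\setminus\{M\}} D_L\) ``transversally along a non-empty locus'' is precisely the heart of the matter, and the phrases ``reversing the separation analysis above'' and ``essentially by Lemma~\ref{lem:nestedint}'' do not establish it: Lemma~\ref{lem:nestedint} is a statement about linear subspaces in \(\CP^n\), not about proper transforms in the intermediate \(X_j\). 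Since you already defer to \cite{conciniprocesi} for the rigorous version, your write-up is best read as an informal outline rather than a self-contained proof.
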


In particular, from Example \ref{ex:l1l2}, we have.

\begin{corollary}\label{cor:2intersect}
	Two divisors \(D_{L}\) and \(D_{M}\) intersect if and only if one of the following two cases happens:
	\begin{enumerate}[label=\textup{(\roman*)}]
		\item \(L \subset M\) or \(M \subset L\);
		\item \(L \cap M\) is non-empty and reducible.
	\end{enumerate}
\end{corollary}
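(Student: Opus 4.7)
The plan is to deduce the corollary as a direct specialization of Proposition \ref{prop:nested} to two-element subsets $\mS = \{L, M\} \subset \mLi$, using Example \ref{ex:l1l2} to characterize when such subsets are nested relative to $\mLi$.

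First I would apply Proposition \ref{prop:nested} to $\mS = \{L, M\}$: the intersection $D_L \cap D_M$ is non-empty if and only if $\{L, M\}$ is nested relative to $\mLi$. Then I would unpack Definition \ref{def:nested} in this two-element case, splitting into subcases according to whether $L$ and $M$ are comparable under inclusion. If $L \subset M$ or $M \subset L$, the nestedness condition is vacuous (there are no $k \geq 2$ pairwise non-comparable subspaces in $\{L,M\}$ to test), so $\{L, M\}$ is automatically nested; this gives alternative (i). If instead $L$ and $M$ are non-comparable, then $\{L, M\}$ itself is the only family of pairwise non-comparable elements of size $\geq 2$, and nestedness requires exactly that $L \cap M$ be non-empty and reducible; this gives alternative (ii).

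Combining the two subcases yields the biconditional characterization stated in the corollary, which is precisely the content of Example \ref{ex:l1l2}. There is no genuine obstacle here, since the result is a clean combination of two previously established facts; the only point one needs to be careful about is the vacuous case when one subspace is contained in the other, where the universal quantifier in Definition \ref{def:nested} ranges over the empty collection of pairwise non-comparable tuples. Consequently, the proof requires no new geometric input beyond Proposition \ref{prop:nested}.
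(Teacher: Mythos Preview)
Your proposal is correct and matches the paper's approach exactly: the paper simply states that the corollary follows from Example \ref{ex:l1l2} (combined with Proposition \ref{prop:nested}), and you have spelled out precisely that deduction, including the careful handling of the vacuous case when $L$ and $M$ are comparable.
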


\subsubsection{Example: tetrahedron in \(\CP^3\)}\label{sec:tetra}

Let \(p_1, \ldots, p_5 \in \CP^3\) be five points in general linear position, i.e., no \(3\) points lie on a line and no \(4\) points lie on a plane. We can represent these five points as the four vertices of a tetrahedron in \(\CP^3\) together with its barycentre. Each triplet of points determines a plane, giving rise to a collection of \(10\) planes \(\mH \subset \CP^3\) -see Figure \ref{fig:tetra}\,.

\begin{figure}[H]
	\centering
	\begin{tikzpicture}
		
		\fill[yellow!30] (0,0) -- (4,0) -- (1.5,-1);
		
		\node[circle,fill=black,inner sep=0pt,minimum size=5pt,label=left:{$p_1$}] (x1) at (0,0) {};
		\node[circle,fill=black,inner sep=0pt,minimum size=5pt,label=right:{$p_3$}] (x3) at (4,0) {};
		\node[circle,fill=black,inner sep=0pt,minimum size=5pt,label=right:{$p_4$}] (x4) at (2,3) {};
		\node[circle,fill=black,inner sep=0pt,minimum size=5pt,label=below:{$p_2$}] (x2) at (1.5,-1) {};
		\node[circle,fill=black,inner sep=0pt,minimum size=5pt,label=right:{$p_5$}] (x5) at (2,1) {};
		
		\draw (x1) -- (x2) -- (x3) -- (x4) -- (x1);
		\draw (x4) -- (x2);
		\draw[dashed] (x1) -- (x3);
		\draw[dashed, blue] (x5) -- (x1);
		\draw[dashed, blue] (x5) -- (x2);
		\draw[dashed, blue] (x5) -- (x3);
		\draw[dashed, blue] (x5) -- (x4);
		
		\node (h) at (4,-1) {\(H_{123}\)};
		\draw[<->, rounded corners, black!60!green] (2.5,-.3) -- (2.5, -1) -- (h);
		\node[black!30!blue] (l) at (.5,-.7) {\(L_{12}\)};
		\draw[thick, black!30!blue] (x1) -- (x2);
	\end{tikzpicture}
	\caption{The arrangement \(\mH\) of \(10\) planes in \(\CP^3\) spanned by triplets of \(5\) points in general linear position.}
	\label{fig:tetra}
\end{figure}
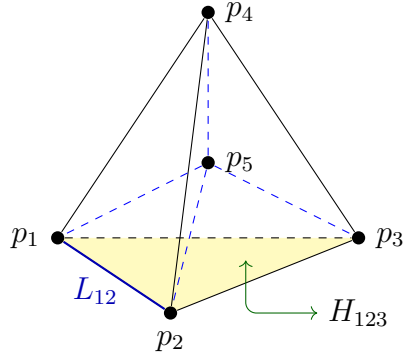

The irreducible subspaces of the arrangement are
\[
\mLi^2 = \{H_{ijk} = \overline{p_ip_jp_k}\}, \hspace{2mm} \mLi^1 = \{L_{ij} = \overline{p_ip_j}\}, \hspace{2mm} \mLi^0 = \{p_1, \ldots, p_5\} .
\] 
The resolution \(\pi = \sigma_1 \circ \sigma_2\) is constructed in two steps.
\begin{itemize}
	\item Step 1: \(X_1 \xrightarrow{\sigma_1} \CP^3\) is the blowup at the five points of \(\mLi^0\). 
	
	\item Step 2: \(X \xrightarrow{\sigma_2} X_1\) is the blowup at the \(10\) disjoint projective lines \(\tL\), where \(\tL\) are the proper transforms of the elements \(L \in \mLi^1\).
\end{itemize}

The preimage of the arrangement is a normal crossing divisor made of \(25\) irreducible components
\begin{equation*}
	\pi^{-1}(\mH) = \underbrace{\left( \bigcup_{H \in \mLi^2} D_H \right)}_{10 \text{ divisors } \cong \Bl_4 \P^2} \bigcup \underbrace{\left( \bigcup_{L \in \mLi^1} D_L \right)}_{10 \text{ divisors } \cong \P^1 \times \P^1} \bigcup \underbrace{\left( \bigcup_{p \in \mLi^0} D_p \right)}_{5 \text{ divisors } \cong \Bl_4 \P^2} .
\end{equation*}

\begin{remark}
	More generally, the braid or \(A_{n+1}\)-arrangement in \(\CP^n\) is the collection of \(\binom{n+2}{2}\) hyperplanes spanned by \(n\) out of \(n+2\) points in general linear position. The arrangement complement \((\CP^n)^{\circ}\) is naturally identified with the configuration space \(\mathcal{M}_{0, n+3}\) of \(n+3\) marked points in the Riemann sphere modulo M\"obius transformations. 
	It is well known that the minimal De Concini-Procesi model \eqref{eq:resol} agrees with the Deligne-Mumford-Knudsen compactification \(\overline{\mathcal{M}}_{0, n+3}\)\,, see  \cite[Theorem 4.3.3]{kapranov}. 
\end{remark}

\begin{remark}
	The action of the symmetric group \(S_6\) on \(X = \overline{\mathcal{M}}_{0, 6}\) is explained in \cite[Chapter 3]{hunt}. Continuing with the above notation,  
	\[X \xrightarrow{\sigma_2} X_1 \xrightarrow{\sigma_1} \CP^n .\]
	Contracting the ten projective lines \(\tL \subset X_1\), which have normal bundle \(\mO_{\P^1}(-1)^{\oplus 2}\), produces a \(3\)-fold with ten ordinary double point singularities which embeds in \(\CP^5\) as the Segre cubic
	\[
	\Bigl\{ \sum_{i=0}^{5} z_i = 0, \hspace{2mm} \sum_{i=0}^{5} z_i^3 = 0 \Bigr\} 
	\] 
	and \(S_6\) acts by permuting the coordinates.
\end{remark}

\subsection{Picard group and cohomology of \texorpdfstring{\(X\)}{X}}\label{sec:intnum}

In this section we calculate the Picard group of the resolution \(X\) of Section \ref{sec:blowup}. We also calculate some cup products in the integer cohomology of \(X\) that will be needed in the next sections. A set of generators and relations for the ring \(H^*(X, \Z)\) is given in \cite[\S 5]{conciniprocesi}. We provide quick and direct proofs of the formulas that we need rather than algebraic manipulation of the  relations there.

If \(X\) is a projective manifold, we write \(\Pic(X)\) for the group of isomorphisms classes of holomorphic -or equivalently algebraic- line bundles on \(X\). 
Given a divisor \(D \subset X\), we denote by \(\mO_X(D)\) the line bundle on \(X\) defined by the rank \(1\) locally free sheaf of rational functions on \(X\) with at most simple poles along \(D\).
We write \([D]\) for the class of  \(\mO_X(D)\) in \(\Pic(X)\).
We will use the following standard result.


\begin{lemma}\label{lem:blowup}
	Let \(X\) be a nonsingular variety and let \(Y\) be a nonsingular subvariety of codimension \(r \geq 2\). Let \(\pi: \tX \to X\) be the blowup of \(X\) along \(Y\).
	\begin{enumerate}[label=\textup{(\roman*)}]
		\item If \(Y' = \pi^{-1}(Y)\) is the exceptional divisor then
			\begin{equation}
			\Pic(\tX) = \pi^* \left(\Pic(X)\right) \oplus \Z \cdot [Y'] .
		\end{equation}
		\item If \(D \subset X\) is a divisor with \(Y \not\subset D\) and \(\tD \subset \tX\) is the proper transform of \(D\) then
		\begin{equation}\label{eq:proptran}
			\pi^*([D]) = [\tD] .
		\end{equation}
	\end{enumerate}
\end{lemma}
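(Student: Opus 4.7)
I would prove this standard result by combining the excision exact sequence for Weil divisor class groups with a local calculation involving the normal bundle of the exceptional divisor. For part (ii), the key observation will be that the hypothesis $Y \not\subset D$ forces the divisorial pullback of $D$ to contain no exceptional component.

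\textbf{Plan for (i).} Set $U = X \setminus Y$ and $\widetilde{U} = \pi^{-1}(U) = \tX \setminus Y'$, so that $\pi$ restricts to an isomorphism $\widetilde{U} \cong U$ and therefore $\Pic(\widetilde{U}) = \Pic(U)$. Since $X$ is smooth and $Y$ has codimension at least $2$, excision for class groups combined with $\Pic = \Cl$ on smooth varieties yields $\Pic(X) \xrightarrow{\sim} \Pic(U)$. Applying the same excision to $\tX$, where now $Y'$ is an irreducible divisor, gives
$$ \Z \cdot [Y'] \xrightarrow{\iota} \Pic(\tX) \xrightarrow{\rho} \Pic(X) \to 0,$$
and the pullback $\pi^*: \Pic(X) \to \Pic(\tX)$ provides a right inverse of $\rho$, so the sequence will split as soon as $\iota$ is shown to be injective. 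I would check the latter by restricting $\mO_{\tX}(nY')$ to a fiber $F \cong \P^{r-1}$ of the bundle $\pi|_{Y'}: Y' \to Y$. Since $Y' \cong \P(N_{Y/X})$ and $\mO_{\tX}(Y')|_{Y'}$ is the tautological line bundle $\mO_{\P(N_{Y/X})}(-1)$, the restriction $\mO_{\tX}(nY')|_F \cong \mO_{\P^{r-1}}(-n)$ is trivial only for $n = 0$.

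\textbf{Plan for (ii) and main obstacle.} To prove (ii), I would pick a local defining equation $f$ for $D$ on some open $V \subset X$ meeting $Y$; then $\pi^*f$ is a section of $\pi^*\mO_X(D)$ over $\pi^{-1}(V)$ whose divisor of zeros is the scheme-theoretic pullback $\pi^{-1}(D)$. Working in the standard local model of the blowup along a coordinate subspace, a direct calculation shows that $\pi^{-1}(D) = \tD + m\, Y'$ with $m$ equal to the order of vanishing of $f$ at the generic point of $Y$; the hypothesis $Y \not\subset D$ forces $m = 0$, yielding $\pi^*\mO_X(D) \cong \mO_{\tX}(\tD)$. The only genuinely nontrivial ingredient in the whole argument will be the injectivity of $\iota$ in part (i), which rests on the identification $\mO_{\tX}(Y')|_{Y'} \cong \mO_{\P(N_{Y/X})}(-1)$; the rest is formal manipulation of excision sequences plus a local coordinate computation on the blowup.
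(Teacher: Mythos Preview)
Your proposal is correct and matches the paper's approach essentially verbatim. For (i) the paper simply cites Hartshorne, Chapter II, Exercise 8.5, but the argument you sketch (excision for class groups, splitting by $\pi^*$, injectivity via $\mO_{\tX}(nY')|_F \cong \mO_{\P^{r-1}}(-n)$) is precisely the standard proof of that exercise; for (ii) the paper takes a global section $s$ of $\mO_X(D)$ with $D = s^{-1}(0)$ and observes that $Y \not\subset D$ forces the zero locus of $\pi^*s$ to be exactly $\tD$, which is the same content as your local multiplicity computation phrased slightly more globally.
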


\begin{proof}
	(i) This is Exercise 8.5 of Chapter II in \cite{hartshorne}.

	(ii) Let \(s\) be a section of \(\mO_X(D)\) with \(D=s^{-1}(0)\). Since \(Y \not\subset D\), the zero set of the pullback section \(\pi^*s\) is equal to \(\tD\), providing a trivialization of \(\pi^*(\mO_X(D)) \otimes \mO_{\tX}(-\tD)\). Taking isomorphism classes gives \eqref{eq:proptran}.
\end{proof}

In the rest of the section we let \(\mH \subset \CP^n\) be an arrangement and let \(X \xrightarrow{\pi} \CP^n\) be the resolution of \(\mH\) as in Definition \ref{def:resolution}\,.
Recall that we write \(\mLi^{\circ} = \mLi \setminus \mH\).

\begin{proposition}\label{prop:picard}
	The Picard group of the resolution \(X\) is the free abelian group generated by the classes \([D_L]\) for \(L \in \mLi^{\circ}\) together with the class of \(\pi^*\mO_{\P^n}(1)\).
\end{proposition}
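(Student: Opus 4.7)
The plan is to proceed by induction along the iterated blowup description of $\pi$ given in Proposition \ref{prop:itblowup}, combining Lemma \ref{lem:blowup}(i) to obtain the free abelian group structure with Lemma \ref{lem:blowup}(ii) to match the natural generators with the classes $[D_L]$.

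Order $\mLi^{\circ} = \{L_1, \ldots, L_k\}$ compatibly with inclusion, and consider the associated tower
\[
X = X_k \xrightarrow{\sigma_k} X_{k-1} \xrightarrow{\sigma_{k-1}} \cdots \xrightarrow{\sigma_1} X_0 = \CP^n \,,
\]
where $\sigma_i$ is the blowup of $X_{i-1}$ along the proper transform $\tL_i$ of $L_i$, and let $E_i \subset X_i$ denote its exceptional divisor. Applying Lemma \ref{lem:blowup}(i) at each stage yields
\[
\Pic(X_i) \;=\; \sigma_i^* \Pic(X_{i-1}) \,\oplus\, \Z \cdot [E_i] \,.
\]
Iterating this decomposition and using $\Pic(\CP^n) = \Z \cdot [\mO_{\P^n}(1)]$ gives
\[
\Pic(X) \;=\; \Z \cdot \pi^*[\mO_{\P^n}(1)] \,\oplus\, \bigoplus_{i=1}^{k} \Z \cdot \rho_i^*[E_i] \,,
\]
where $\rho_i := \sigma_k \circ \cdots \circ \sigma_{i+1} \colon X \to X_i$ (with $\rho_k$ taken to be the identity). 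This already establishes the ``free abelian group'' part of the statement.

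The remaining step is to identify $\rho_i^*[E_i]$ with $[D_{L_i}]$ in $\Pic(X)$. Repeated application of Lemma \ref{lem:blowup}(ii) along the subtower $X \to X_{k-1} \to \cdots \to X_i$ reduces this to verifying that for every intermediate stage $j \in \{i+1, \ldots, k\}$ the blowup center $\tL_j$ is \emph{not} contained in the iterated proper transform $E_i^{(j-1)}$ of $E_i$ inside $X_{j-1}$. Since $E_i^{(j-1)}$ is a prime divisor mapping birationally onto $L_i$ under the projection to $\CP^n$, containment $\tL_j \subset E_i^{(j-1)}$ would force $L_j \subset L_i$; the chosen ordering would then give $j \leq i$, contradicting $j > i$. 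With this noncontainment secured, Lemma \ref{lem:blowup}(ii) identifies $\rho_i^*[E_i]$ with the class of the iterated proper transform of $E_i$ in $X$, which by the construction of $X$ is exactly $D_{L_i}$.

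The only non-formal input is the combinatorial check in the previous paragraph, which uses only compatibility of the ordering with inclusion; the rest of the argument is a routine iteration of Lemma \ref{lem:blowup}. I do not anticipate any serious obstacle beyond carefully tracking the iterated proper transforms through the tower.
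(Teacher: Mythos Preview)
Your argument is correct and follows essentially the same route as the paper's proof: induction along the tower of Proposition~\ref{prop:itblowup}, using Lemma~\ref{lem:blowup}(i) for the free abelian structure and Lemma~\ref{lem:blowup}(ii) together with the ordering-based noncontainment check to identify the generators with the $[D_{L_i}]$. One minor slip: $E_i^{(j-1)}$ does \emph{not} map birationally onto $L_i$ (it is a projective bundle over $\tL_i$), but your implication only needs that its image in $\CP^n$ is $L_i$, which is what the paper uses as well.
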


\begin{proof}
	We begin by recalling the construction of \(X\) by a sequence of blowups as in Proposition \ref{prop:itblowup}\,. 
	The upshot is that \(X \xrightarrow{\pi} \CP^n\) is equal to \(X_k \xrightarrow{\pi_k} \CP^n\) where \(X_i \xrightarrow{\pi_i} \CP^n\) are constructed inductively as follows.
	
	Let \(L_1, \ldots, L_k\) be a labelling of the elements of \(\mLi^{\circ}\) such that if \(L_i \subset L_j\) then \(i \leq j\). Start with \(X_0 = \CP^n\) and \(\pi_0 =\) the identity map.
	Then \(\pi_i = \pi_{i-1} \circ \sigma_i\) for \(1 \leq i \leq k\), where \(X_i \xrightarrow{\sigma_i} X_{i-1}\) is the blowup of
	\(X_{i-1}\) along \(\tL_i\) and \(\tL_i\) is the proper transform of \(L_i\) by \(\pi_{i-1}\).
	
	By construction, each \(X_i\) contains exactly \(i\) irreducible \(\pi_i\)-exceptional divisors \(D^{i}_j\) with \(\pi_{i}(D^{i}_j) = L_j\) for \(1 \leq j \leq i\). 	Moreover, if \(j \leq i-1\) then \(D^i_j = \tD^{i-1}_{j}\) is the proper transform of \(D^{i-1}_{j}\) by \(\sigma_i\) while \(D^i_i = \sigma_i^{-1}(\tL_i)\) is the \(\sigma_i\)-exceptional divisor.
	
	\textbf{Claim.} The Picard group of \(X_i\) is a free abelian group of rank \(i+1\) generated by the \(\pi_i\)-exceptional divisors \([D^i_j]\) for \(1 \leq j \leq i\) together with the class of \(\pi_i^* \left(\mO_{\P^n}(1)\right)\).
	
	We prove the claim by induction on \(i\). The case \(i=0\) follows from the fact that \(\Pic(\CP^n)\) is generated by \(\mO_{\P^n}(1)\). Suppose that \(1 \leq i \leq k\) and assume that the Picard group of \(X_{i-1}\) is the free abelian group generated by the \(\pi_{i-1}\)-exceptional divisors \([D^{i-1}_j]\) for \(1 \leq j \leq i-1\) together with the class of \(\pi_{i-1}^* \left(\mO_{\P^n}(1)\right)\). 
	By Lemma \ref{lem:blowup} (i)  the Picard group of \(X_i\) is the free abelian group generated by
	\(\sigma_i^*(\pi_{i-1}^*(\mO_{\P^n}(1))) = \pi_i^*(\mO_{\P^n}(1))\) together with \(\sigma_i^*([D^{i-1}_j])\) for \(1 \leq j\leq i-1\) and \([\sigma_i^{-1}(\tL_i)] = [D^i_i]\). Furthermore, if \(\tL_i \subset D^{i-1}_j\) for some \(1 \leq j \leq i-1\) then
	\[
	L_i = \pi_{i-1}(\tL_i)  \subset \pi_{i-1}(D^{i-1}_j) = L_j
	\]
	which contradicts the inclusion preserving property of the labels. Thus, if \(1 \leq j \leq i-1\) then \(\tL_i \not\subset D^{i-1}_j\) and by Lemma \ref{lem:blowup} (ii) \(\sigma_i^*([D^{i-1}_j]) = [\tD^{i-1}_j] = [D^i_j]\). This finishes the proof of the claim.
	
	Having proved the claim, the statement follows since the divisors \(D^k_j\) for \(1 \leq j \leq k\) are precisely the divisors of the form \(D_L\) for \(L \in \mLi^{\circ}\).
\end{proof}

For the classes \(D_H\) with \(H \in \mH\) we have the following.

\begin{lemma}\label{lem:DH}
	If \(H \in \mH\) then
	\begin{equation}\label{eq:dh}
		[D_H] = [\pi^*(\mO_{\P^n}(1))] - \sum_{L | L \subsetneq H} [D_L]
	\end{equation}
	where the sum runs over all \(L \in \mLi\) properly contained in \(H\).
\end{lemma}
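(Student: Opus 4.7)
My plan is to prove the identity at the level of effective divisors by showing
$$\pi^* H \;=\; D_H + \sum_{L \in \mLi^{\circ},\; L \subsetneq H} D_L,$$
with each component appearing with multiplicity exactly one. Since $[H] = [\mO_{\P^n}(1)]$, passing to line bundle classes then gives \eqref{eq:dh}, noting that the indexing set in the statement, $\{L \in \mLi \mid L \subsetneq H\}$, coincides with $\{L \in \mLi^{\circ} \mid L \subsetneq H\}$ because any $L \in \mLi$ strictly contained in the hyperplane $H$ automatically has codimension at least two.

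I would argue by induction along the iterated blowup $\pi = \pi_k$ of Proposition \ref{prop:itblowup}, with $L_1, \ldots, L_k$ a labelling of $\mLi^{\circ}$ compatible with inclusion (so $L_i \subset L_j$ forces $i \le j$). Let $\tilde{H}_i \subset X_i$ denote the proper transform of $H$. The inductive statement is
$$\pi_i^* H \;=\; \tilde{H}_i \;+\; \sum_{j \le i,\; L_j \subsetneq H} D^i_j,$$
where the $D^i_j$ are the exceptional divisors introduced in the proof of Proposition \ref{prop:picard}. The base case $i=0$ is trivial, and the inductive step uses the standard blowup fact: if $E \subset X_{i-1}$ is a smooth irreducible divisor and $\tilde{L}_i$ is smooth, then $\sigma_i^* E = \tilde{E} + m \cdot D^i_i$, where $m = 1$ if $\tilde{L}_i \subset E$ and $m = 0$ otherwise.

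The key combinatorial input, which I expect to be the main subtlety, is the non-containment $\tilde{L}_i \not\subset D^{i-1}_j$ for all $j < i$. Indeed, $D^{i-1}_j$ is $\pi_{i-1}$-exceptional with $\pi_{i-1}(D^{i-1}_j) = L_j$, while a generic point of $\tilde{L}_i$ lies outside the $\pi_{i-1}$-exceptional locus (since $L_i \not\subset L_{j'}$ for any $j' < i$ by the chosen ordering). Consequently, among the components of $\pi_{i-1}^* H$ the only one that can contain the new center $\tilde{L}_i$ is the proper transform $\tilde{H}_{i-1}$; and $\tilde{L}_i \subset \tilde{H}_{i-1}$ occurs precisely when $L_i \subsetneq H$. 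Thus pulling back via $\sigma_i$ contributes the new exceptional divisor $D^i_i$ with multiplicity one exactly when $L_i \subsetneq H$, and leaves the existing components as their proper transforms $D^i_j = \tilde{D}^{i-1}_j$, completing the inductive step.

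At $i = k$ the proper transform $\tilde{H}_k$ is irreducible and satisfies $\pi(\tilde{H}_k) = H$, so by the uniqueness statement of Theorem \ref{thm:conproc} we identify $\tilde{H}_k = D_H$, yielding the asserted decomposition of $\pi^* H$ and hence \eqref{eq:dh}. The crux of the argument is the inclusion-compatible labelling from Proposition \ref{prop:itblowup}, which guarantees that each previously constructed exceptional divisor $D^{i-1}_j$ contains $\tilde{L}_i$ only when $j \ge i$, ensuring that all multiplicities in $\pi^* H$ remain equal to one.
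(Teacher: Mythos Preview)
Your proof is correct. Both your argument and the paper's compute the pullback divisor $\pi^*H$ and identify its components as the $D_L$ for $L \in \mLi$ with $L \subset H$. The paper's proof is brief: it takes a section $s$ of $\mO_{\P^n}(1)$ cutting out $H$, notes that the zero locus of $\pi^*s$ equals $\pi^{-1}(H) = \bigcup_{L \subset H} D_L$, and then asserts directly that $\pi^*s$ trivializes $\pi^*(\mO_{\P^n}(1)) \otimes \mO_X\bigl(-\sum_{L \subset H} D_L\bigr)$ --- in effect claiming, without further argument, that each $D_L$ appears in $\operatorname{div}(\pi^*s)$ with multiplicity exactly one. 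Your induction along the blowup tower supplies precisely this justification, reusing the non-containment $\tilde{L}_i \not\subset D^{i-1}_j$ for $j<i$ that was already established in the proof of Proposition~\ref{prop:picard}. So your approach is more detailed than the paper's rather than genuinely different: it makes explicit the multiplicity-one step that the paper leaves implicit.
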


\begin{proof}
	Let \(s\) be a section of \(\mO_{\P^n}(1)\) with \(s^{-1}(0) = H\). The zero set of \(\pi^*s\) is equal to 
	\[
	\pi^{-1}(H) = \bigcup_{L | L \subset H} D_L
	\]
	so \(\pi^*(s)\) gives a trivialization of the tensor product of \(	\pi^*(\mO_{\P^n}(1))\) with \(\mO_X \left(-\sum D_L  \right)\) where the sum runs over all irreducible subspaces \(L\) contained in \(H\) (including \(L=H\)). Splitting the sum as \(H\) plus the sum pf all irreducible subspaces which are properly contained in \(H\) and taking classes in \(\Pic(X)\) gives \eqref{eq:dh}.
\end{proof}

Now we switch gears and discuss the integer cohomology of \(X\).

\begin{lemma}\label{lem:picx}
	The first Chern class is an isomorphism between \(\Pic(X)\) and \(H^2(X, \Z)\). 
\end{lemma}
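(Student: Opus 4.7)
The plan is to prove this via the exponential exact sequence of sheaves on $X$:
\[
0 \to \Z \to \mO_X \to \mO_X^* \to 0 \,.
\]
Taking the associated long exact sequence in cohomology, and using the standard identification $\Pic(X) \cong H^1(X, \mO_X^*)$, we obtain
\[
H^1(X, \mO_X) \to \Pic(X) \xrightarrow{c_1} H^2(X, \Z) \to H^2(X, \mO_X) \,.
\]
So it suffices to show that $H^1(X, \mO_X) = 0$ and $H^2(X, \mO_X) = 0$.

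The key input is that $X$ is built as an iterated blowup of $\CP^n$ along smooth centres, via Proposition \ref{prop:itblowup}. For a blowup $\sigma: \tY \to Y$ of a smooth projective variety $Y$ along a smooth subvariety, one has $R^0\sigma_* \mO_{\tY} = \mO_Y$ and $R^i\sigma_* \mO_{\tY} = 0$ for $i \geq 1$; in particular the Leray spectral sequence yields $H^i(\tY, \mO_{\tY}) \cong H^i(Y, \mO_Y)$ for all $i$. Starting from $\CP^n$, where $H^i(\CP^n, \mO_{\CP^n}) = 0$ for all $i \geq 1$, I would iterate this isomorphism along the chain of blowups $X = X_k \to X_{k-1} \to \cdots \to X_0 = \CP^n$ supplied by Proposition \ref{prop:itblowup}, concluding that
\[
H^1(X, \mO_X) = H^2(X, \mO_X) = 0 \,.
\]
Plugging this into the exponential sequence shows that $c_1: \Pic(X) \to H^2(X, \Z)$ is an isomorphism.

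There is no real obstacle here: the only thing to verify carefully is the vanishing of the higher direct images under a single blowup along a smooth centre, which is a standard fact (it follows, for instance, from the computation that the fibres are projective spaces $\P^{r-1}$, on which $\mO$ has vanishing higher cohomology, combined with cohomology and base change). Once that is granted, the induction on the number of blowup steps is immediate.
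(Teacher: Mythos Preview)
Your proof is correct and shares the same overall framework as the paper (the exponential sequence), but the two differ in how the vanishing $H^1(X,\mO_X) = H^2(X,\mO_X) = 0$ is established. The paper argues via Hodge theory: since $X$ is K\"ahler and simply connected, $H^{0,1}(X) = 0$; and since any holomorphic $2$-form on $X$ pushes down, by Hartogs across the codimension $\geq 2$ blowup centres, to a holomorphic $2$-form on $\CP^n$ (which must vanish), one gets $H^{2,0}(X) = 0$, hence $H^{0,2}(X) = 0$ by Hodge symmetry. Your approach instead invokes the invariance of $H^i(\,\cdot\,,\mO)$ under blowups along smooth centres, via the vanishing of the higher direct images $R^i\sigma_*\mO_{\tY}$ and Leray. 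Your route is cleaner in that it avoids appealing separately to simple connectedness and to Hartogs; the paper's route is more analytic but sidesteps the (admittedly standard) spectral-sequence/induction step along the chain of blowups. Both are entirely standard and equally acceptable here.
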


\begin{proof}
	Since \(X\) is K\"ahler and simply connected, it follows that 
	\[b^{0,1}(X) = \dim H^1(X, \mO_X) = 0 .\]
	On the other hand, if \(\alpha\) is a holomorphic \(2\)-form on \(X\) then by Hartogs it defines a holomorphic \(2\)-form on \(\CP^n\), so \(\alpha = 0\). It follows that \(b^{2,0}(X) = 0\) and, since \(X\) is K\"ahler, we have
	\[
	b^{0,2}(X) = \dim H^2(X, \mO_X) = 0.
	\]
	Consider the exponential sequence \(0 \to \Z \to \mO_X \xrightarrow{\exp} \mO_X^* \to 0\). The long exact sequence in cohomology gives us
	\[
	0 \to H^1(X, \mO_X^*) \xrightarrow{c_1} H^2(X, \Z) \to 0 ,
	\]
	showing that \(c_1\) is an isomorphism between \(\Pic(X)\) and \(H^2(X, \Z)\). 
\end{proof}

\begin{notation}\label{not:gammaL}
	For \(L \in \mLi\) we write
	\begin{equation}
		\gamma_L = c_1(D_L).
	\end{equation}
	Equivalently, \(\gamma_L \in H^2(X, \Z)\) is the Poincar\'e dual of the divisor \(D_L \subset X\).
\end{notation}

\begin{definition}\label{not:h}
	We write \(h\) for the generator of \(H^2(\CP^n, \Z)\) given by the hyperplane class
	\begin{equation}
		h = c_1(\mO_{\P^n}(1)) .
	\end{equation}
	So \(\pi^*h\) is the Poincar\'e dual of the proper transform \(\tQ \subset X\) of a generic hyperplane \(Q \subset \CP^n\) that intersects transversely all the members of \(\mLi\).
\end{definition}

\begin{corollary}\label{cor:h2}
	\(H^2(X, \Z)\) is the free abelian group generated by the classes \(\gamma_L\) with \(L \in \mLi^{\circ}\) together with \(\pi^*h\).
\end{corollary}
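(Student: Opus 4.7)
The plan is to combine the two preceding results directly: Proposition \ref{prop:picard} describes a free $\Z$-basis of $\Pic(X)$, and Lemma \ref{lem:picx} asserts that the first Chern class gives an isomorphism $c_1 : \Pic(X) \xrightarrow{\sim} H^2(X, \Z)$. Since any group isomorphism carries a free basis to a free basis, I just need to identify the image of the basis from Proposition \ref{prop:picard} under $c_1$.

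By Proposition \ref{prop:picard}, the group $\Pic(X)$ is free abelian on the classes $[D_L]$ for $L \in \mLi^{\circ}$ together with $[\pi^*\mO_{\P^n}(1)]$. By definition (Notation \ref{not:gammaL}), $\gamma_L = c_1(D_L) = c_1(\mO_X(D_L))$. By naturality of Chern classes under pullback, $c_1(\pi^*\mO_{\P^n}(1)) = \pi^* c_1(\mO_{\P^n}(1)) = \pi^*h$. Applying the isomorphism $c_1$ from Lemma \ref{lem:picx} to the basis in Proposition \ref{prop:picard} yields exactly the set $\{\gamma_L : L \in \mLi^{\circ}\} \cup \{\pi^*h\}$ as a free $\Z$-basis of $H^2(X, \Z)$, which is the claim.

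There is really no obstacle here; the corollary is essentially a one-line translation from line bundles to degree-two cohomology classes via the fact that the pieces $h^{0,1}$ and $h^{0,2}$ of $X$ vanish (already used inside Lemma \ref{lem:picx}). The only thing worth double-checking is the compatibility of conventions: that $\gamma_L$, defined as the Poincar\'e dual of $D_L$, coincides with $c_1(\mO_X(D_L))$, which is the standard identification for effective divisors on a smooth projective variety and is how $\gamma_L$ is introduced in Notation \ref{not:gammaL}.
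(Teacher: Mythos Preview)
Your proof is correct and follows exactly the paper's approach: the paper's own proof simply reads ``This follows from Proposition \ref{prop:picard} together with Lemma \ref{lem:picx}.'' Your additional checks on the identification of $c_1([D_L])$ with $\gamma_L$ and $c_1(\pi^*\mO_{\P^n}(1))$ with $\pi^*h$ are appropriate and make the one-line argument explicit.
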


\begin{proof}
	This follows from Proposition \ref{prop:picard} together with Lemma \ref{lem:picx}\,.
\end{proof}

\begin{corollary}
	For \(H \in \mH\) we have
	\begin{equation}\label{eq:tH}
	\gamma_H = \pi^*h - \sum_{L | L \subsetneq H} \gamma_L 
	\end{equation}
	where the sum runs over all \(L \in \mLi\) properly contained in \(H\).
\end{corollary}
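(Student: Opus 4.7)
The plan is to deduce the identity directly from Lemma \ref{lem:DH} by passing from the Picard group to degree-two cohomology via the first Chern class. By Lemma \ref{lem:picx}, the map $c_1 : \Pic(X) \to H^2(X, \Z)$ is an isomorphism, and in particular a group homomorphism: it turns tensor products of line bundles into sums in cohomology, and inverses into negatives. So the relation \eqref{eq:dh} in $\Pic(X)$ transports verbatim to a relation in $H^2(X, \Z)$.

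Concretely, I would apply $c_1$ to both sides of
\[
[D_H] = [\pi^*\mO_{\P^n}(1)] - \sum_{L \,|\, L \subsetneq H} [D_L]
\]
from Lemma \ref{lem:DH}. On the left, $c_1([D_H]) = \gamma_H$ by Notation \ref{not:gammaL}. On the right, naturality of Chern classes gives
\[
c_1(\pi^*\mO_{\P^n}(1)) = \pi^* c_1(\mO_{\P^n}(1)) = \pi^* h
\]
by Definition \ref{not:h}, while $c_1([D_L]) = \gamma_L$ for each $L \in \mLi$ with $L \subsetneq H$. Combining these and using additivity of $c_1$ under tensor product yields Equation \eqref{eq:tH}.

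There is really no obstacle here: the corollary is a formal consequence of Lemma \ref{lem:DH} and Lemma \ref{lem:picx}, the substantive content having been established in the proof of Lemma \ref{lem:DH} (where the trivializing section $\pi^* s$ was exhibited). The sum is finite because $\mLi$ is finite, and no convergence or positional issue arises.
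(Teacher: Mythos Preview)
Your proof is correct and follows exactly the same approach as the paper, which simply says the corollary follows from Equation \eqref{eq:dh} by taking the first Chern class. You have spelled out the details (additivity of $c_1$, naturality under pullback, and the definitions of $\gamma_L$ and $h$), but there is no substantive difference.
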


\begin{proof}
	This follows from Equation \eqref{eq:dh} by taking the first Chern class.
\end{proof}

If \(\alpha \in H^i(X, \Z)\) and \(\beta \in H^j(X, \Z)\) then 
\begin{equation}
	\alpha \cdot \beta \in H^{i+j}(X, \Z)
\end{equation}
is their cup product. In the rest of the section we calculate diverse
cup products between the classes \(\gamma_L\) and \(\pi^*h\).

\begin{remark}\label{rmk:emptyint}
	If \(\alpha\) and \(\beta\) are the Poincar\'e duals of submanifolds \(A\) and \(B\) with transverse intersection, then \(\alpha \cdot \beta\) is the Poincar\'e dual of \(A \cap B\). In particular, if \(A \cap B = \emptyset\) then \(\alpha \cdot \beta = 0\).
\end{remark}

We need the following relation between the pullback of the Poincar\'e dual of a submanifold and the Poicar\'e dual of its proper transform.

\begin{lemma}\label{lem:pdpt}
	Let \(X\) be a nonsingular variety and let \(Y\) be a nonsingular subvariety. Let \(\pi: \tX \to X\) be the blowup of \(X\) along \(Y\). Suppose that \(V \subset X\) is a nonsingular subvariety of codimension \(r\) that is transversal to \(Y\).
	Let \(\tV \subset \tX\) be the proper transform of \(V\). Then
	\begin{equation}
		\gamma_{\tV} = \pi^*\gamma_V \, ,
	\end{equation}
	where \(\gamma_V \in H^{2r}(X, \Z)\) and \(\gamma_{\tV} \in  H^{2r}(\tX, \Z)\) are the Poincar\'e duals of \(V\) and \(\tV\)
\end{lemma}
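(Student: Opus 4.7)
The plan is to establish two local facts and then invoke a standard naturality property of Poincar\'e duality. The facts are: (1) $\pi^{-1}(V)=\tV$ as subsets of $\tX$, and (2) $\pi$ is transverse to $V$, meaning that for every $\tp\in\pi^{-1}(V)$ the differential $d\pi_{\tp}:T_{\tp}\tX\to T_{\pi(\tp)}X$ is surjective modulo $T_{\pi(\tp)}V$. Granting these, the naturality of Poincar\'e duality under transverse pullback for proper maps of oriented smooth manifolds yields $\pi^{*}\gamma_{V}=\gamma_{\pi^{-1}(V)}=\gamma_{\tV}$ immediately.

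Both (1) and (2) can be checked locally, and only near $V\cap Y$, since $\pi$ is biholomorphic on $\tX\setminus\pi^{-1}(Y)$. Let $c=\codim Y$. By the transversality of $V$ and $Y$, we can choose holomorphic coordinates $(x_{1},\ldots,x_{c},y_{1},\ldots,y_{r},z_{1},\ldots,z_{s})$ on a small open neighborhood $U$ of any chosen $p\in V\cap Y$ such that
\[
Y\cap U=\{x_{1}=\cdots=x_{c}=0\}\quad\text{and}\quad V\cap U=\{y_{1}=\cdots=y_{r}=0\}.
\]
The blowup $\pi^{-1}(U)\to U$ is covered by $c$ standard affine charts; the $i$-th carries coordinates $(x_{i},t_{1},\ldots,\widehat{t_{i}},\ldots,t_{c},y_{1},\ldots,y_{r},z_{1},\ldots,z_{s})$ with $x_{j}=x_{i}t_{j}$ for $j\neq i$ and exceptional divisor $\{x_{i}=0\}$. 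In this chart, the preimage of $V$ is cut out by the pullback ideal $(y_{1},\ldots,y_{r})$: a smooth subvariety of codimension $r$ that does not contain the exceptional divisor as a component. Since $\tV$ is by definition the closure of $\pi^{-1}(V\setminus Y)$, this subvariety is precisely $\tV$ in the chart, which proves (1). For (2), observe that $d\pi(\partial/\partial y_{j})=\partial/\partial y_{j}$ for $1\le j\le r$, and these vectors span the normal directions to $V$ in $X$; hence $d\pi$ is surjective modulo $TV$ at every point of $\pi^{-1}(V)$.

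I do not foresee a serious obstacle beyond this routine local computation. The conceptual point worth emphasizing is that the transversality hypothesis is essential: without $V$ being transverse to $Y$, the scheme-theoretic preimage $\pi^{-1}(V)$ acquires an embedded component along the exceptional divisor, and the identity $\pi^{*}\gamma_{V}=\gamma_{\tV}$ would fail by an extra term supported there, whose coefficient measures the order of contact of $V$ with $Y$.
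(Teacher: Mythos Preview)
Your proof is correct. The paper itself does not give a proof: it simply states that the lemma is standard and refers to Fulton, \emph{Intersection Theory}, Corollary~6.7.2. So you are supplying more detail than the authors do.

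The two routes are genuinely different in flavour. Fulton's statement is formulated in the Chow group and proved via the algebraic machinery of refined Gysin maps and the blowup formula; one then passes to cohomology via the cycle class map. Your argument is purely differential-topological: you verify in local holomorphic coordinates that $\pi^{-1}(V)=\tV$ scheme-theoretically and that $\pi$ is transverse to $V$, then invoke the naturality of Poincar\'e duals under transverse pullback of closed oriented submanifolds. Your approach is more elementary and self-contained for the purposes of this paper, since the ambient spaces are smooth compact complex manifolds and only the cohomological statement is needed; Fulton's version, on the other hand, works in far greater generality (arbitrary schemes, Chow groups) and fits into a broader intersection-theoretic framework. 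Either is perfectly adequate here.
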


The proof of Lemma \ref{lem:pdpt} is standard and we omit it, see \cite[Corollary 6.7.2]{fulton}.  Note that, under the hypothesis of Lemma \ref{lem:pdpt}, the proper transform \(\tV\) is nonsingular and is equal to the blowup of \(V\) along \(Y \cap V\).

Let \(P\) be a linear subspace \(P \subset \CP^n\) of dimension \(k\). We say that \(P\) is \emph{generic} if it intersects transversely the elements of \(\mL\). Concretely, if \(L \in \mL\) is such that  \(\codim L > k\) then \(P \cap L = \emptyset\), while if \(\codim L \leq k\) then \(P + L = \CP^n\).	
Clearly, the generic linear subspaces \(P \subset \CP^n\) of dimension \(k\) make an open dense subset of the Grassmannian of \(k\)-planes in \(\CP^n\).

\begin{lemma}\label{lem:ptgenplane}
	The class \((\pi^*h)^{n-k}\) is the Poincar\'e dual of the proper transform \(\tP\) of a generic \(k\)-plane \(P \subset \CP^n\).
\end{lemma}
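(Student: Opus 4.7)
The plan is to pull back the identity $\gamma_P = h^{n-k}$ from $\CP^n$ to $X$ step by step through the iterated blowup of Proposition \ref{prop:itblowup}, applying Lemma \ref{lem:pdpt} at each stage. Label the elements of $\mLi^{\circ}$ as $L_1, \ldots, L_k$ compatibly with inclusion and, with the notation of that proposition, write $\sigma_i : X_i \to X_{i-1}$ for the blowup along $\tL_i$, so that $\pi = \pi_k$ and $\pi^{\ast} = \sigma_k^{\ast} \circ \cdots \circ \sigma_1^{\ast}$. Set $P^{(0)} = P$ and, inductively, let $P^{(i)} \subset X_i$ be the proper transform of $P^{(i-1)}$ under $\sigma_i$; by definition $P^{(k)} = \tP$.

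The core claim to establish is that, for each $i$, the subvariety $P^{(i-1)} \subset X_{i-1}$ is smooth and either disjoint from or transverse to the centre $\tL_i$ of $\sigma_i$. Granting this, Lemma \ref{lem:pdpt} yields $\gamma_{P^{(i)}} = \sigma_i^{\ast} \gamma_{P^{(i-1)}}$ at every stage (trivially in the disjoint case, where $\sigma_i$ restricts to an isomorphism on a neighbourhood of $P^{(i-1)}$), and iterating gives
\[
\gamma_{\tP} \;=\; \sigma_k^{\ast} \, \sigma_{k-1}^{\ast} \cdots \sigma_1^{\ast} \, h^{n-k} \;=\; \pi^{\ast}\bigl(h^{n-k}\bigr) \;=\; (\pi^{\ast} h)^{n-k},
\]
which is the desired equality, using that $h^{n-k}$ is Poincar\'e dual to the generic $k$-plane $P$ in $\CP^n$.

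The main obstacle is verifying the transversality claim. Genericity of $P$ provides the base input: $P$ meets every $L \in \mL$ transversely in $\CP^n$, and in particular $P \cap L = \emptyset$ whenever $\codim L > k$. The inductive step proceeds by observing that $P^{(i-1)}$ is itself obtained from $P$ by an iterated sequence of blowups along the transverse traces $P \cap L_j$ for $j < i$ (which are empty when $\codim L_j > k$), while $\tL_i$ admits an analogous description as an iterated blowup of $L_i$ along its intersections with the $L_j$. In local coordinates near a point of $\tL_i \cap P^{(i-1)}$, the generic linear position of $P$ relative to $L_i$ in $\CP^n$ propagates through the tower of blowups and yields the required transversality in $X_{i-1}$; when $\codim L_i > k$ the induction is automatic because $P^{(i-1)}$ does not meet $\tL_i$ at all. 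This inductive transversality argument, essentially a compatibility statement between blowups and transverse intersections, is the only delicate point of the proof.
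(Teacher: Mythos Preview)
Your approach is correct and is essentially identical to the paper's own proof: both use the iterated blowup description of Proposition~\ref{prop:itblowup}, verify that the proper transform of the generic $k$-plane stays transverse to each successive centre $\tL_i$, and then apply Lemma~\ref{lem:pdpt} at every step. The paper simply asserts the transversality (``the generic assumption implies that the proper transform $\tP_i \subset X_i$ \ldots\ is transversal to $\tL_{i+1}$''), whereas you spell out why it propagates through the tower; this is exactly the ``only delicate point'' you identify. One cosmetic remark: you use the symbol $k$ both for $\dim P$ (as in the statement) and for $|\mLi^{\circ}|$ (as in Proposition~\ref{prop:itblowup}); renaming one of them would avoid ambiguity.
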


\begin{proof}
	 We use the description of \(X\) by a sequence of blowups given in Proposition \ref{prop:itblowup}\,. The generic assumption implies that the proper transform \(\tP_i \subset X_i\) under \(X_i \xrightarrow{\pi_i} \CP^n\) is transversal to \(\tL_{i+1}\) for all \(i\). Then the statement follows by repeated application of Lemma \ref{lem:pdpt}\,.
\end{proof}

\begin{lemma}\label{lem:vanishing}
	Let \(L_1, \ldots, L_k\) be a set of not  necessarily distinct irreducible subspaces and assume that the common intersection
	\[
	L = \bigcap_{i=1}^k L_i
	\]
	has codimension \(r\). If \(k < r\) then
	\begin{equation}\label{eq:vanishingproductmixed}
	(\pi^*h)^{n-k} \cdot \prod_{i=1}^k \gamma_{L_i} = 0 .
	\end{equation}
\end{lemma}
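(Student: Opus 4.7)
The plan is to push the computation from $X$ down to $\CP^n$ via the proper birational map $\pi$, and then to conclude by a dimension count on the linear subspace $L$. Since the LHS of \eqref{eq:vanishingproductmixed} is a class in $H^{2n}(X,\Z)\cong\Z$, it suffices to show that its degree vanishes. Writing $(\pi^*h)^{n-k}=\pi^*(h^{n-k})$ and using the projection formula for the proper map $\pi$, the computation reduces to
\[
\int_X (\pi^*h)^{n-k}\cdot\prod_{i=1}^k\gamma_{L_i}\;=\;\int_{\CP^n} h^{n-k}\cdot \pi_*\Big(\prod_{i=1}^k \gamma_{L_i}\Big),
\]
so it suffices to prove that $\pi_*\bigl(\prod_i\gamma_{L_i}\bigr)$ vanishes in $H^{2k}(\CP^n,\Z)$.

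The key observation is that $\prod_i \gamma_{L_i}$ admits a cycle representative supported on $\bigcap_i D_{L_i}$. View this product as the top Chern class of the rank-$k$ bundle $E=\bigoplus_{i=1}^k\mathcal{O}_X(D_{L_i})$ (counting repetitions in the direct sum): the tautological section $(\sigma_{L_1},\ldots,\sigma_{L_k})$ has zero locus $Z=\bigcap_i D_{L_i}$, and localization of the top Chern class at the zero set of a section shows that $c_k(E)$ lies in the image of the natural pushforward $H^{*}(Z)\to H^{*+2k}(X)$. When the $L_i$ are pairwise distinct this is immediate from the simple normal crossing property of $D$ given by Theorem \ref{thm:conproc}; when some $L_i$ coincide, excess intersection theory, applied iteratively via the self-intersection formula $\gamma_L^{\,2}=i_*\bigl(c_1(\mathcal{O}_X(D_L)|_{D_L})\bigr)$ for $i:D_L\hookrightarrow X$, still localizes the class to $Z$.

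Since $\pi(D_{L_i})\subset L_i$ for every $i$, the image $\pi(Z)$ lies in $L=\bigcap_i L_i$. Therefore $\pi_*\bigl(\prod_i\gamma_{L_i}\bigr)$ is a codimension-$k$ cycle class on $\CP^n$ supported on the linear subspace $L$ of dimension $n-r$. Equivalently, it is a cycle class of dimension $n-k$ whose support is contained in a variety of dimension $n-r$; under the assumption $k<r$ one has $n-r<n-k$, so $L$ admits no subvariety of dimension $n-k$ and the pushforward cycle class must vanish, yielding \eqref{eq:vanishingproductmixed}. The one subtle step I anticipate is the support statement for repeated $L_i$: it is a brief appeal to excess intersection theory, whereas everything else is a formal consequence of the projection formula and the codimension count.
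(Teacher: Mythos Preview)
Your proof is correct and rests on the same geometric idea as the paper's: the class $\prod_i\gamma_{L_i}$ is supported on $\bigcap_i D_{L_i}$, whose image under $\pi$ lies in $L$, and $L$ is too small (dimension $n-r<n-k$) to carry a nonzero $(n-k)$-cycle. The paper executes this slightly differently: rather than pushing forward to $\CP^n$ via the projection formula, it invokes Lemma~\ref{lem:ptgenplane} to represent $(\pi^*h)^{n-k}$ as the Poincar\'e dual of the proper transform $\tilde P\subset X$ of a generic $k$-plane $P\subset\CP^n$; since $k<r$ one may choose $P$ disjoint from $L$, hence $\tilde P$ disjoint from $\bigcap_i D_{L_i}$, and the product vanishes by Remark~\ref{rmk:emptyint}. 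Your projection-formula route is equally valid and is arguably more careful about the repeated-$L_i$ case (the paper's appeal to Remark~\ref{rmk:emptyint} is, strictly speaking, stated only for Poincar\'e duals of transversal submanifolds, so it too implicitly relies on the same support argument you spell out). The paper's version is shorter and avoids the excess-intersection digression; yours trades a bit of length for making the support localization explicit.
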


\begin{proof}
	By Lemma \ref{lem:ptgenplane}, the class \((\pi^* h)^{n-k}\) is Poincar\'e dual of the proper transform \(\widetilde{P}\) of a generic \(k\)-plane \(P \subset \CP^n\). If \(k < r\) then \(P\)  does not intersect \(L\). Therefore,  \(\widetilde{P}\) does not intersect \(\bigcap_{i=1}^k D_{L_i}\) and the result follows from Remark \ref{rmk:emptyint}\,.
\end{proof}

\begin{corollary}\label{cor:vanishing}
	Let \(L \in \mLi\) with \(\codim L = r\).
	\begin{enumerate}[label=\textup{(\roman*)}]
		\item If \(r > 1\) then \((\pi^*h)^{n-1} \cdot \gamma_L = 0\).
		\item If \(r > 2\) then \((\pi^*h)^{n-2} \cdot \gamma_L^2 = 0\).
	\end{enumerate}
\end{corollary}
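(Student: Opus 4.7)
The proof plan is essentially to recognize Corollary \ref{cor:vanishing} as a direct specialization of Lemma \ref{lem:vanishing}, applied in two trivial ways.

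For part (i), the plan is to set $k=1$ and $L_1 = L$ in Lemma \ref{lem:vanishing}. Then the common intersection is just $L$ itself, with codimension $r$. The hypothesis $k < r$ becomes $1 < r$, which is exactly the assumption $r>1$. The conclusion of Lemma \ref{lem:vanishing} then reads
\[
(\pi^*h)^{n-1} \cdot \gamma_L = 0,
\]
which is exactly (i).

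For part (ii), the plan is to take $k=2$ with $L_1 = L_2 = L$ (the lemma allows the $L_i$ to be non-distinct). The common intersection is again $L$ with codimension $r$, and the hypothesis $k<r$ becomes $2<r$, matching the assumption $r>2$. The conclusion becomes
\[
(\pi^*h)^{n-2} \cdot \gamma_L^2 = 0,
\]
which is exactly (ii).

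There is no real obstacle here: the work has already been done in Lemma \ref{lem:vanishing}, whose proof relied on the geometric fact that a generic $k$-plane $P \subset \CP^n$ misses any subspace of codimension $>k$, so that its proper transform $\widetilde{P}$ is disjoint from the intersection $\bigcap D_{L_i}$. The only thing to note is that Lemma \ref{lem:vanishing} is stated for \emph{not necessarily distinct} irreducible subspaces, which is precisely what allows us to deduce (ii) by taking $L_1 = L_2 = L$. Thus the corollary follows with no further argument.
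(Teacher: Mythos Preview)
Your proof is correct and is exactly the intended argument: the paper states the corollary immediately after Lemma~\ref{lem:vanishing} without proof, and both parts are obtained precisely by the specializations $k=1$, $L_1=L$ and $k=2$, $L_1=L_2=L$ that you describe.
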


\begin{lemma}\label{lem:topth}
	For every \(H \in \mH\) we have
	\begin{equation}\label{eq:topth}
	(\pi^*h)^{n-1}  \cdot \gamma_H  = 1 .
	\end{equation}
\end{lemma}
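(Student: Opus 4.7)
The plan is to combine the expansion formula \eqref{eq:tH} for $\gamma_H$ with the vanishing in Corollary \ref{cor:vanishing}(i), reducing everything to the trivial computation $(\pi^{*}h)^n = 1$.

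Concretely, by \eqref{eq:tH} we have
\[
\gamma_H \;=\; \pi^{*}h \;-\; \sum_{L \,|\, L \subsetneq H} \gamma_L \,,
\]
where the sum is over all $L \in \mLi$ properly contained in $H$; all such $L$ have $\codim L \geq 2$. Cupping with $(\pi^{*}h)^{n-1}$ gives
\[
(\pi^{*}h)^{n-1} \cdot \gamma_H \;=\; (\pi^{*}h)^{n} \;-\; \sum_{L \,|\, L \subsetneq H} (\pi^{*}h)^{n-1} \cdot \gamma_L \,.
\]
For each term in the sum, $L \in \mLi$ has codimension $r \geq 2 > 1$, so Corollary \ref{cor:vanishing}(i) yields $(\pi^{*}h)^{n-1} \cdot \gamma_L = 0$. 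It then remains to verify $(\pi^{*}h)^n = 1$, which follows immediately from the projection formula (or, equivalently, from Lemma \ref{lem:ptgenplane} applied with $k=0$: the class $(\pi^{*}h)^n$ is Poincar\'e dual to the proper transform of a generic point, a single reduced point of $X$).

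I expect no real obstacle here; the only subtlety is making sure that every $L$ appearing in \eqref{eq:tH} satisfies $\codim L \geq 2$, which is automatic because $L \in \mLi$ is properly contained in the hyperplane $H$. Alternatively, one can give a direct geometric argument: by Lemma \ref{lem:ptgenplane} the class $(\pi^{*}h)^{n-1}$ is represented by the proper transform $\widetilde{P}$ of a generic line $P \subset \CP^n$; such a $P$ avoids every element of $\mLi^{\circ}$ (all of codimension $\geq 2$), so $\pi$ restricts to an isomorphism $\widetilde{P} \cong P$, and $P$ meets $H$ transversely at a single generic point $p \in H$ that lies in no other $L \in \mLi$. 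Hence $\widetilde{P}$ meets $D_H$ transversely at exactly one point, giving $\widetilde{P} \cdot D_H = 1$, which is precisely \eqref{eq:topth}.
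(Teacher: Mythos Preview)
Your proof is correct and follows essentially the same approach as the paper: expand \(\gamma_H\) via \eqref{eq:tH}, kill the exceptional terms using Corollary \ref{cor:vanishing}(i), and conclude from \((\pi^*h)^n = 1\). The alternative geometric argument you sketch at the end is also fine but not needed.
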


\begin{proof}
	It follows from Equation \eqref{eq:tH} together with Corollary \ref{cor:vanishing} (i) that
	\begin{align*}
	(\pi^* h)^{n-1}  \cdot \gamma_H &= (\pi^* h)^{n-1}  \cdot \left( \pi^*h - \sum_{L | L \subsetneq H} \gamma_L  \right) \\
	&= (\pi^* h)^{n} = 1  
	\end{align*}
	where the last equality holds because \(h^n=1\) and \(\pi\) has degree \(1\).
\end{proof}

\begin{lemma}
	For every \(L \in \mLi\) with \(\codim L = 2\) we have
	\begin{equation}\label{eq:selftl}
	(\pi^*h)^{n-2}  \cdot \gamma_L^2  = -1 .
	\end{equation}
\end{lemma}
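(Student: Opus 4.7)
The plan is to apply Lemma \ref{lem:ptgenplane}, which identifies $(\pi^*h)^{n-2}$ with the Poincar\'e dual of the proper transform $\tP$ of a generic $2$-plane $P \subset \CP^n$. Then $(\pi^*h)^{n-2} \cdot \gamma_L^2$ is the triple intersection number $[\tP] \cdot [D_L]^2$, and I compute it by passing to a local analytic model around the unique point where $P$ meets $L$.

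Since $\codim L = 2$ and $P$ is a generic $2$-plane, $P$ meets $L$ transversely in a single point $p$, and the genericity of $P$ ensures that $p$ is a generic point of $L$. In particular, no irreducible subspace $M \in \mLi^{\circ}$ other than $L$ passes through $p$: any such $M$ is either properly contained in $L$ (the union of which is a proper closed subset of $L$), or meets $L$ in codimension $\geq 1$ inside $L$, a locus a generic $p \in L$ avoids. Applying the iterated blowup description in Proposition \ref{prop:itblowup}, the map $\pi$ restricted to a sufficiently small open neighborhood $U$ of $p$ is thus isomorphic to the single blowup of $U$ along $U \cap L$; all other centres $\widetilde{L}_i$ are away from $p$ and do not affect the local picture.

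In this local model, $\tP$ coincides with the blowup of $P$ at the point $p$, so the intersection $\tP \cap D_L$ equals the exceptional $\CP^1$-fiber $C := \pi^{-1}(p)$ of this blowup, and $[\tP] \cdot [D_L] = [C]$. Hence
\[
(\pi^*h)^{n-2} \cdot \gamma_L^2 = [C] \cdot [D_L] = \deg\bigl(\mO_X(D_L)\big|_C\bigr).
\]
To finish, I invoke the standard blowup identification $D_L \cong \P(N_{L/\CP^n})$ near $p$, under which $\mO_X(D_L)$ restricts on every fiber to the tautological line bundle $\mO_{\P^1}(-1)$. Thus the degree equals $-1$, which gives \eqref{eq:selftl}.

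The one mildly subtle point is the local reduction to a simple blowup near $p$, which relies on the genericity argument together with the inductive structure of Proposition \ref{prop:itblowup}; once this is in place, the rest is a direct application of the tautological-line-bundle calculation on a projective fiber.
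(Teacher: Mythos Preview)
Your proof is correct and follows essentially the same approach as the paper: represent $(\pi^*h)^{n-2}$ by the proper transform of a generic $2$-plane meeting $L$ at a single generic point $p$, reduce locally near $p$ to the single blowup along $L$, identify $\tP\cdot D_L$ with the fibre $C=\pi^{-1}(p)$, and compute $C\cdot D_L=\deg(\mO_X(D_L)|_C)=-1$ via the tautological bundle on $\P(N_{L/\CP^n})$. The only difference is that you spell out the genericity argument for $p$ a bit more explicitly, whereas the paper simply takes $p\in L^\circ$.
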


\begin{proof}
	Let \(P\) be a generic \(2\)-plane that intersects \(L\) transversely at a point \(p \in L^{\circ}\). The class \((\pi^* h)^{n-2}\) is the Poincar\'e dual of the proper transform \(\tP \subset X\) of \(P\).
	
	In a neighbourhood of \(p\), we can identify the resolution \(X \xrightarrow{\pi} \CP^n\) with the blowup of \(\CP^n\) along \(L\) so that:
	\begin{itemize}
		\item \(D_L = \P(N_L)\) is the exceptional divisor, where \(\P(N_L)\) is the projectivization of the normal bundle \(N_L\) of \(L \subset \CP^n\);
		\item the class \((\pi^*h)^{n-2} \cdot \gamma_L\) is the Poincar\'e dual of a curve \(C=\pi^{-1}(p)\) which is a fibre of the bundle projection \(\P(N_L) \xrightarrow{\pi} L\).
	\end{itemize}
	Since \(N_{D_L} = \mO_{\P(N_L)}(-1)\) is the tautological bundle on \(\P(N_L)\) that restricts to \(\mO_{\P^1}(-1)\) on each fibre, we have
	\[
	C \cdot \gamma_L = \deg (N_{D_L}|_C) = -1
	\]
	and Equation \eqref{eq:selftl} follows.
\end{proof}

\begin{lemma}\label{lem:lh}
	If \(L \in \mLi^{n-2}\) and \(H \in \mH\) contains \(L\), then
	\begin{equation}\label{eq:tlth}
	(\pi^*h)^{n-2}  \cdot \gamma_L \cdot \gamma_H  = 1 \,.
	\end{equation}
\end{lemma}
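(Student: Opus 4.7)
The plan is to expand $\gamma_H$ using Equation \eqref{eq:tH}, which gives
\[
(\pi^*h)^{n-2}\cdot\gamma_L\cdot\gamma_H \;=\; (\pi^*h)^{n-1}\cdot\gamma_L \;-\; \sum_{M \,|\, M \subsetneq H}(\pi^*h)^{n-2}\cdot\gamma_L\cdot\gamma_M,
\]
where the sum runs over $M \in \mLi$ with $M \subsetneq H$. Since $\codim L = 2 > 1$, the first term vanishes by Corollary \ref{cor:vanishing}(i). The task is then to show that the sum reduces to the single term $M=L$.

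To do this, I would argue by cases on an arbitrary $M$ in the sum, using Lemma \ref{lem:vanishing} with $k=2$ applied to the pair $L_1 = L$, $L_2 = M$: the triple product $(\pi^*h)^{n-2}\cdot\gamma_L\cdot\gamma_M$ vanishes whenever $\codim(L\cap M) > 2$. First, any such $M$ has $\codim M \geq 2$ (a hyperplane other than $H$ cannot be properly contained in $H$). If $M \neq L$, there are two subcases. If $M \subsetneq L$, then $\codim(L\cap M) = \codim M > \codim L = 2$, so the contribution is zero. If $M \not\subseteq L$ and $L \not\subseteq M$, then $L\cap M \subsetneq L$, hence again $\codim(L\cap M) > 2$. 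The remaining possibility $L \subsetneq M$ is ruled out because it would force $\codim M < 2$. Hence the only surviving term is $M = L$, which lies in the sum because $L\subsetneq H$ (given $L\subset H$ and $\codim L \neq \codim H$).

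The remaining term $(\pi^*h)^{n-2}\cdot\gamma_L^2$ equals $-1$ by Equation \eqref{eq:selftl}, so combining everything,
\[
(\pi^*h)^{n-2}\cdot\gamma_L\cdot\gamma_H \;=\; 0 \,-\, (-1) \;=\; 1,
\]
as desired. The only step requiring any real work is the case analysis in the second paragraph; everything else is a direct substitution of previously established formulas. I do not expect any significant obstacle, since Lemma \ref{lem:vanishing} is precisely tailored to this kind of triple-product computation.
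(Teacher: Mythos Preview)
Your proof is correct and follows essentially the same approach as the paper's: expand $\gamma_H$ via Equation \eqref{eq:tH}, kill the $(\pi^*h)^{n-1}\cdot\gamma_L$ term with Corollary \ref{cor:vanishing}(i), use Lemma \ref{lem:vanishing} to reduce the sum to the single term $M=L$, and finish with Equation \eqref{eq:selftl}. The only difference is that the paper states the vanishing for $M\neq L$ in one line, whereas you spell out the case analysis on $\codim(L\cap M)$ explicitly; your version is a bit more careful but not substantively different.
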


\begin{proof}
	By Equation \eqref{eq:tH}, the product \((\pi^*h)^{n-2}  \cdot \gamma_L \cdot \gamma_H\) is equal to
	\begin{equation}\label{eq:prodhl}
		(\pi^*h)^{n-2}  \cdot \gamma_L \cdot 
		\left(\pi^*h - \sum_{L' | L' \subsetneq H} \gamma_{L'} \right) .
	\end{equation}
	By Corollary \ref{cor:vanishing} (i) the product \((\pi^*h)^{n-1} \cdot \gamma_L\) is zero. By Lemma \ref{lem:vanishing}, if \(L' \neq L\) then \((\pi^*h)^{n-2}  \cdot \gamma_L \cdot \gamma_{L'} = 0\).
	It follows that \eqref{eq:prodhl} is equal to
	\[
	- (\pi^* h)^{n-2}  \cdot \gamma_L^2 
	\]
	and the result follows from Equation \eqref{eq:selftl}.
\end{proof}

\begin{definition}\label{def:bh}
	For \(H \in \mH\) let \(B_H\) be the number of irreducible subspaces of codimension \(2\) contained in \(H\) minus \(1\), i.e.,
	\begin{equation}\label{eq:bh}
	B_H = \left| \{L \in \mLi^{n-2} \, | \, L \subset H  \} \right| - 1 \,.
	\end{equation}
\end{definition}

\begin{lemma}
	For every \(H \in \mH\) we have
	\begin{equation}\label{eq:selfth}
	(\pi^*h)^{n-2}  \cdot \gamma_H^2  = -B_H \,.
	\end{equation}
\end{lemma}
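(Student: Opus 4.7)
The approach is a direct expansion using Equation \eqref{eq:tH}\,, which expresses \(\gamma_H\) in terms of \(\pi^*h\) and the exceptional classes \(\gamma_L\) with \(L \subsetneq H\). Writing \(\gamma_H^2 = \gamma_H \cdot \pi^*h - \gamma_H \cdot \sum_{L \subsetneq H} \gamma_L\,\), I would multiply by \((\pi^*h)^{n-2}\) and treat the two resulting sums separately.

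The first term \((\pi^*h)^{n-1} \cdot \gamma_H\) equals \(1\) by Lemma \ref{lem:topth}\,. For the second term, I would argue that the only contributions come from \(L \in \mLi^{n-2}\) with \(L \subset H\): if \(L \in \mLi^{\circ}\) is properly contained in \(H\) but \(\codim L > 2\), then \(H \cap L = L\) has codimension \(r = \codim L > 2\), so applying Lemma \ref{lem:vanishing} with \(k = 2\) (and the irreducible subspaces \(L_1 = H\), \(L_2 = L\)) gives \((\pi^*h)^{n-2} \cdot \gamma_H \cdot \gamma_L = 0\). For \(L \in \mLi^{n-2}\) contained in \(H\)\,, Lemma \ref{lem:lh} yields \((\pi^*h)^{n-2} \cdot \gamma_H \cdot \gamma_L = 1\).

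Putting the pieces together,
\[
(\pi^*h)^{n-2} \cdot \gamma_H^2 \;=\; 1 \;-\; \bigl|\{L \in \mLi^{n-2} \mid L \subset H\}\bigr| \;=\; 1 - (B_H + 1) \;=\; -B_H\,,
\]
using the definition of \(B_H\) in \eqref{eq:bh}\,. No step presents a real obstacle here; the only subtle point is making sure the case \(L = H\) is excluded from the sum (which it is, since the sum in \eqref{eq:tH} is over \(L \subsetneq H\))\,, and that the vanishing from Lemma \ref{lem:vanishing} is invoked with the right codimension bookkeeping.
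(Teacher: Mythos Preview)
Your proof is correct and follows the same strategy as the paper: substitute for \(\gamma_H\) via Equation \eqref{eq:tH} and eliminate all but the codimension-\(2\) contributions using the vanishing lemmas. The only cosmetic difference is that the paper substitutes for both copies of \(\gamma_H\) and lands on the self-intersections \((\pi^*h)^{n-2}\cdot\gamma_L^2=-1\) from Equation \eqref{eq:selftl}, whereas you substitute for only one copy and invoke Lemma \ref{lem:lh} for the mixed products; the two routes are equivalent since Lemma \ref{lem:lh} itself reduces to Equation \eqref{eq:selftl}.
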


\begin{proof}
	By the above results, we have
	\begin{equation*}
	\begin{aligned}
	(\pi^*h)^{n-2}  \cdot \gamma_H^2 &=
	(\pi^*h)^{n-2} \cdot \left( \pi^*h - \sum_{L | L \subsetneq H} \gamma_L \right)^2  \\
	&= (\pi^*h)^{n-2} \cdot \left( (\pi^*h)^2 + \sum_{\substack{L \subset H \\ \codim L = 2}} \gamma_L^2 \right) \\
	&= 1 - \left| \{L \in \mLi^{n-2} \, | \, L \subset H  \} \right| \,.
	\end{aligned}
	\end{equation*}
	The first equality uses Equation \eqref{eq:tH}\,. 
	The second equality gets rid of the parenthesis terms  \(\gamma_L \cdot \gamma_{L'}\)\,\,,\,\, \(\pi^* h \cdot \gamma_L\)\,\,, and \(\gamma_L^2\) if \(\codim L \geq 3\)\,\,;\,\, by using
	Equation \eqref{eq:vanishingproductmixed} together with items (i) and (ii) of Corollary \ref{cor:vanishing} respectively. Finally, the third equality follows from Equation \eqref{eq:selftl}.
\end{proof}

\subsection{Polarization}\label{sec:pol}

Let \(X \xrightarrow{\pi} \CP^n\) be the resolution of \(\mH\) as in Definition \ref{def:resolution}\,. Recall that we write 
\[\mLi^{\circ} = \mLi \setminus \mH\] 
for the subset of irreducible subspaces of codimension \(\geq 2\).

\begin{lemma}\label{lem:polarization}
	We can choose integers \(b_L>0\) for each \(L \in \mLi^{\circ}\) such  that
	\begin{equation}\label{eq:pol}
	P_k = k \cdot \pi^*\big(\mO_{\P^n}(1)\big) - \sum_{L \in \mLi^{\circ}} b_L \cdot D_L
	\end{equation}
	is an ample divisor on \(X\) for all \(k \gg 1\). 
\end{lemma}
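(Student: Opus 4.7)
The plan is to realize $X$ as the iterated blowup described in Proposition \ref{prop:itblowup}, inductively build an ample divisor of the required shape, and then use that the pullback of an ample class is nef to extend ampleness to all sufficiently large $k$.

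First, I would fix an inclusion-compatible labelling $L_1,\ldots,L_k$ of $\mLi^{\circ}$ as in Proposition \ref{prop:itblowup}, so that $X_i\xrightarrow{\sigma_i}X_{i-1}$ is the blowup of the smooth subvariety $\tL_i$ with new irreducible exceptional divisor $D^i_i$. A key observation, already used in the proof of Proposition \ref{prop:picard}, is that for $j<i$ one has $\tL_i\not\subset D^{i-1}_j$ (otherwise $L_i\subset L_j$, contradicting the labelling); combined with Lemma \ref{lem:blowup}(ii) this yields $\sigma_i^*[D^{i-1}_j]=[D^i_j]$. Hence pulling back a class of the form $\alpha\cdot\pi_{i-1}^*\mO_{\P^n}(1)-\sum_{j<i}c_j\cdot D^{i-1}_j$ preserves its shape on $X_i$ with unchanged coefficients.

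Next, I would construct ample divisors $A_i$ on $X_i$ inductively of the form
\[
A_i=\alpha_i\cdot\pi_i^*\mO_{\P^n}(1)-\sum_{j\leq i}c^i_j\cdot D^i_j
\]
with $\alpha_i,c^i_j$ positive integers, starting from $A_0=\mO_{\P^n}(1)$ on $\CP^n$. For the inductive step, I would invoke the classical blowup ampleness criterion (see e.g.\ Hartshorne, Chapter II, Proposition 7.10 and surrounding exercises): if $A$ is ample on a smooth projective variety and $\sigma\colon\tX\to X$ is the blowup of a smooth subvariety with exceptional divisor $E$, then $N\sigma^*A-E$ is ample for $N\gg 1$. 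Applying this at step $i$ produces $A_i:=N_i\sigma_i^*A_{i-1}-D^i_i$ ample on $X_i$; expanding $\sigma_i^*A_{i-1}$ via the identity $\sigma_i^*D^{i-1}_j=D^i_j$ keeps all coefficients strictly positive integers.

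After all $k$ steps, we obtain an ample divisor of the form $A_k=k_0\cdot\pi^*\mO_{\P^n}(1)-\sum_{L\in\mLi^{\circ}}b_L\cdot D_L$ with $k_0$ and each $b_L$ a positive integer. Fixing these $b_L$ as the polarization coefficients gives $P_{k_0}=A_k$ ample. For any $k\geq k_0$ one has
\[
P_k=P_{k_0}+(k-k_0)\cdot\pi^*\mO_{\P^n}(1),
\]
and since $\pi^*\mO_{\P^n}(1)$ is nef (the pullback of an ample class by a morphism) while $P_{k_0}$ is ample, the sum is ample. The only non-bookkeeping step is the inductive application of the classical blowup ampleness criterion; everything else is linear algebra in $\Pic(X_i)$.
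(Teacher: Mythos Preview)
Your proposal is correct and follows essentially the same approach as the paper: the paper's proof is a one-line reference to the standard inductive argument for a single blowup along a smooth subvariety (citing Lazarsfeld, Corollary 4.1.4(ii)), and you have simply written out that induction in detail. The final step, writing $P_k=P_{k_0}+(k-k_0)\pi^*\mO_{\P^n}(1)$ and using ample $+$ nef $=$ ample, is the natural way to pass from a single ample $P_{k_0}$ to all $k\gg1$.
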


\begin{proof}
	This can be proved inductively by considering a single blowup along a smooth subvariety, see item (ii) of Corollary 4.1.4 in \cite{lazarsfeld}. 
	There is a standard way to make a choice of coefficients \(b_L\) so that \(P_k := k \cdot \pi^*(\mathcal{O}(1)) - \sum b_L D_L\) is ample for all \(k \gg 1\), for completeness we recall the argument.
	
	Fix an ample divisor $H$ on $X_0 := X$ (in our case \(H= \mathcal{O}(1)\) and \(X_0 = \mathbb{CP}^n\)). Let
	\[
	X_r \xrightarrow{\pi_r} X_{r-1} \xrightarrow{\pi_{r-1}} \cdots 
	\xrightarrow{\pi_1} X_0
	\]
	be a sequence of blow-ups, where $\pi_i$ is the blow-up of $X_{i-1}$ along a smooth centre $Z_i$, and let $E_i \subset X_r$ denote the total transform of the exceptional divisor of $\pi_i$. Writing $\pi := \pi_1 \circ \cdots \circ \pi_r$, we let
	\[
	H_r := \pi^* H - \sum_{i=1}^r \varepsilon_i E_i \,,
	\]
	for some positive rational numbers
	\[
	0 < \varepsilon_r \ll \cdots \ll \varepsilon_1 \ll 1 .
	\]
	The \(\Q\)-divisor $H_r$ is ample on $X_r$ . Let \(b_i\) and \(k_0\) be positive integers such that \(\varepsilon_i = b_i / k_0\) for all \(i\), so
	\[
	k_0 \cdot H_r = k_0 \cdot \pi^* H - \sum_{i=1}^r b_i E_i 
	\]
	is ample on \(X_r\). Thus
	\[
	P_k := (k-k_0)\cdot \pi^* H + k_0 \cdot H_r  = k \cdot \pi^* H - \sum_{i=1}^r b_i E_i 
	\]
	is is also ample on \(X_r\) for any \(k \geq k_0\).
\end{proof}

\begin{notation}\label{not:bL}
	Fix positive integers \(b_L\) such that \(P_k\) as in Lemma \ref{lem:polarization} is an ample line bundle on \(X\) for all \(k \gg 1\). We refer to \(b_L\) as the \emph{polarization coefficients}.
\end{notation}

The next bound on the volumes of exceptional divisors and proper transforms of hyperplanes will be useful later in the proof of the main result.

\begin{lemma}\label{lem:boundvol}
	Let \(L \in \mLi\) and let 
	\[\vol_{P_k} (D_L) = c_1(P_k)^{n-1} \cdot \gamma_L\] 
	be the volume of \(D_L\) with respect to \(P_k\). Then
	\begin{equation}\label{eq:vol}
		\vol_{P_k} (D_L) = \begin{cases}
		f(k) &\textup{ if } \codim L \geq 2 ,\\
		k^{n-1} + f(k) &\textup{ if } \codim L = 1 ,
	\end{cases}	
	\end{equation}
	where \(f(k) = \sum_{j=0}^{n-2} C_j k^j\) is a polynomial in \(k\) of degree at most \(n-2\). 
	
	Moreover, there is \(\Lambda >0\) that depends only on \(\mH\) and the polarization coefficients such that \(|C_j| \leq \Lambda\) for all \(j\). 
\end{lemma}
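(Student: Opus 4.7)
The plan is to expand $c_1(P_k)^{n-1}$ via the multinomial theorem, pair with $\gamma_L$, and read off the leading coefficient using the intersection number computations already developed in Section~\ref{sec:intnum}.

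First I would write, using \eqref{eq:pol},
\[
c_1(P_k)^{n-1} = \sum_{j_0 + \sum_{L'} j_{L'} = n-1} \binom{n-1}{j_0,\,(j_{L'})} \, k^{j_0} (\pi^*h)^{j_0} \prod_{L' \in \mLi^\circ} (-b_{L'})^{j_{L'}} \gamma_{L'}^{j_{L'}},
\]
where the sum runs over tuples of non-negative integers indexed by $\{0\} \cup \mLi^\circ$. Multiplying by $\gamma_L$ turns $\vol_{P_k}(D_L)$ into a polynomial in $k$ of degree at most $n-1$ whose coefficients are fixed intersection numbers on $X$ weighted by multinomials and products of the $b_{L'}$.

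The coefficient of $k^{n-1}$ is $(\pi^*h)^{n-1} \cdot \gamma_L$. By Corollary~\ref{cor:vanishing}(i) this vanishes when $\codim L \geq 2$, and by Lemma~\ref{lem:topth} it equals $1$ when $L = H \in \mH$. This immediately yields the two cases in \eqref{eq:vol}: no $k^{n-1}$ term when $\codim L \geq 2$, and exactly $k^{n-1}$ when $\codim L = 1$. All remaining contributions have $j_0 \leq n-2$, so they collectively form a polynomial $f$ of degree at most $n-2$ in $k$.

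For the uniform bound on the coefficients $C_j$ of $f$, I would observe that each is a finite sum of terms of the shape $\binom{n-1}{j_0,\,(j_{L'})}\prod_{L'}(-b_{L'})^{j_{L'}} \cdot \bigl( (\pi^*h)^{j_0} \prod_{L'} \gamma_{L'}^{j_{L'}} \cdot \gamma_L \bigr)$. The multinomial coefficient is bounded by $(n-1)!$; the product of $b_{L'}$'s is bounded by $(\max_{L'} b_{L'})^{n-1}$; the intersection numbers in parentheses are integers determined by the topology of $X$, hence by the arrangement $\mH$; and the number of tuples $(j_0,(j_{L'}))$ is bounded by a function of $n$ and $|\mLi^\circ|$. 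Taking $\Lambda$ to be the product of these bounds (maximised over $0 \leq j \leq n-2$) gives the desired uniform estimate $|C_j| \leq \Lambda$ depending only on $\mH$ and the polarization coefficients. There is no serious obstacle here; the argument is essentially bookkeeping on top of Section~\ref{sec:intnum}.
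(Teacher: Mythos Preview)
Your argument is correct and matches the paper's proof essentially line for line: the paper writes $c_1(P_k)=k\,\pi^*h-e$ with $e=\sum_{L'}b_{L'}\gamma_{L'}$ and uses the binomial expansion rather than the full multinomial, but this is the same expansion, and the leading coefficient is identified via Corollary~\ref{cor:vanishing}(i) and Lemma~\ref{lem:topth} exactly as you do. The uniform bound is likewise obtained by the same finiteness observation on intersection numbers and polarization coefficients.
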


\begin{proof}
	Write \(c_1(P_k) = k \cdot \pi^*h - e\) with \(e = \sum_{L' \in \mLi^{\circ}} b_{L'} \cdot \gamma_{L'}\) and use the binomial expansion to obtain
	\[
	\vol_{P_k}(D_L) = \sum_{j=0}^{n-1}  C_j \cdot k^j
	\]
	where
	\begin{equation}\label{eq:volcj}
		C_j = \binom{n-1}{j} \cdot (-e)^{n-1-j} \cdot (\pi^*h)^j \cdot \gamma_L .
	\end{equation}
	By Corollary \ref{cor:vanishing} (i) and Lemma \ref{lem:topth}, the leading coefficient \(C_{n-1} = (\pi^*h)^{n-1} \cdot \gamma_L\) is  
	\[
	C_{n-1} = 
	\begin{cases}
	0 \,\, &\text{ if } \,\, \codim L \geq 2 ,\\
	1 \,\, &\text{ if } \,\, L = H \in \mH 
	\end{cases}
	\]
	and \eqref{eq:vol} follows from this. It follows from \eqref{eq:volcj} that we can bound \(|C_j|\) in terms of \(\max b_L\) and the maximum absolute value of the top products
	\[
	(\pi^*h)^j \cdot \prod_{L' \in \mS} \gamma_{L'} 
	\]
	with \(\mS\) a set of not necessarily distinct \(n-j\) irreducible subspaces \(L' \in \mLi\), which only depends on \(\mH\).
\end{proof}

\begin{lemma}\label{lem:boundo1}
	The following holds 
	\[
	c_1(P_k)^{n-1} \cdot \pi^*h = k^{n-1} + f(k)
	\]
	where \(f(k)\) is as in Lemma \ref{lem:boundvol}\,.
\end{lemma}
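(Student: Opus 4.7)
The proof proceeds by the same binomial expansion used in Lemma \ref{lem:boundvol}, with $\gamma_L$ replaced by $\pi^*h$. Write
\[
c_1(P_k) = k\,\pi^*h - e, \qquad e = \sum_{L' \in \mLi^{\circ}} b_{L'}\,\gamma_{L'},
\]
and expand
\[
c_1(P_k)^{n-1}\cdot \pi^*h = \sum_{j=0}^{n-1}\binom{n-1}{j} k^{j}\,(-e)^{n-1-j}\cdot (\pi^*h)^{j+1}.
\]
The coefficient of $k^{j}$ is
\[
C_j \;=\; \binom{n-1}{j}\,(-e)^{n-1-j}\cdot (\pi^*h)^{j+1},
\]
which is a polynomial of total degree $n$ on $X$ and hence a legitimate integer intersection number.

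\textbf{Leading coefficient.} For $j=n-1$, we get $C_{n-1}=(\pi^*h)^n$. Since $\pi\colon X\to \CP^n$ has degree $1$, the projection formula gives $(\pi^*h)^n = h^n = 1$, yielding the asserted leading term $k^{n-1}$.

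\textbf{Boundedness of lower coefficients.} For $j < n-1$, expanding $(-e)^{n-1-j}$ expresses $C_j$ as a $\Z$-linear combination of monomials
\[
(\pi^*h)^{j+1}\cdot \gamma_{L_1}\cdots \gamma_{L_{n-1-j}}, \qquad L_i \in \mLi^{\circ},
\]
with coefficients of the form $\pm\binom{n-1}{j}\cdot (\text{product of }b_{L_i})$. Each such top intersection number depends only on the cohomology ring of $X$, i.e.\ only on $\mH$; and the number of monomials and the bound on the product of the $b_{L_i}$ depend only on $\mH$ and $\max_{L\in\mLi^{\circ}} b_L$. Hence there is $\Lambda>0$, depending only on $\mH$ and the polarization coefficients, such that $|C_j|\leq \Lambda$ for all $j<n-1$. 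Setting $f(k) = \sum_{j=0}^{n-2} C_j\,k^j$ finishes the proof.

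\textbf{Main obstacle.} There is essentially none: the only substantive point is that the leading term is $(\pi^*h)^n = 1$ rather than $0$, which is the birational invariance of the top self-intersection of the hyperplane class under a degree-$1$ map. In contrast to Lemma \ref{lem:boundvol}, no appeal to Corollary \ref{cor:vanishing} is needed here, since the final factor is $\pi^*h$ and not a $\gamma_L$.
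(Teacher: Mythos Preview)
Your proof is correct and follows exactly the approach the paper indicates: the paper's own proof is the single line ``The same proof of Lemma~\ref{lem:boundvol} applies by replacing $\gamma_L$ with $\pi^*h$,'' and you have carried out precisely that substitution in the binomial expansion. Your observation that the leading term is $(\pi^*h)^n=1$ (rather than requiring Corollary~\ref{cor:vanishing}) is the only new point, and it is handled correctly.
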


\begin{proof}
	The same proof of Lemma \ref{lem:boundvol} applies by replacing \(\gamma_L\) with \(\pi^*h\).
\end{proof}

\begin{remark}
	An alternative differential-geometric proof of Lemma \ref{lem:boundvol} can be done as follows. Let \(\epsilon = 1/k\) and take a K\"ahler metric \(\omega_{\epsilon}\) in the cohomology class of \((1/k)P_k\) given by 
	\[
	\omega_{\epsilon} = \pi^*\omega_{\FS} + \epsilon \cdot \eta
	\] 
	where \(\omega_{\FS}\) is the Fubini-Study metric normalized so that  \([\omega_{\FS}] \in c_1(\mO_{\P^n}(1))\) and \(\eta\) is a fixed real and closed \((1,1)\)-form. The form \(\eta\) depends only on the polarization coefficients and it is positive when restricted to the kernel of \(\pi_{*}\), it can be constructed as in the proof of Proposition 3.24 in \cite{voisin}.
	
	If \(\codim L \geq 2\), then \(\pi^*\omega_{\FS}\) is degenerate along \(TD_L\) and we get that
	\[\int_{D_L} \omega_{\epsilon}^{n-1} = O(\epsilon) .\]
	Similarly, \(\int_{D_H} \omega_{\epsilon}^{n-1} = 1 + O(\epsilon)\) for every \(H \in \mH\).
\end{remark}

\section{The parabolic bundle}\label{sec:parbun}

In Section \ref{sec:genthry}\,, we present background material from the theory of parabolic bundles. \newpar

\noindent In Section \ref{sec:pardef}\,, 
we define a natural parabolic
bundle \(\mE_{*}\) on \((X, D)\), where \(X\) is the minimal De Concini-Procesi model  of \(\mH\)
and \(D = \pi^{-1}(\mH)\). The underlying vector bundle is the pullback tangent bundle \(\mE = \pi^*(T\CP^n)\). The parabolic structure is defined by the filtrations of \(\mE|_{D_L}\) for \(L \in \mLi\) given by the subbundles \(\pi^*(TL) \subset \mE|_{D_L}\) together with weights \(a_L\).\newpar

\noindent In Section \ref{sec:locab}\,, we show that the parabolic bundle \(\mE_{*}\)
is locally abelian.\newpar

\noindent In Sections \ref{sec:parc1} and \ref{sec:parch2}\,, we calculate the first and second parabolic Chern characters of \(\mE_{*}\).
Specifically, we show that \(\parch_1(\mE_{*}) = 0\) and give a formula for \(\parch_2(\mE_{*})\) in terms of the  poset \(\mLi\).

\subsection{General theory} \label{sec:genthry}

Let \(X\) be a complex manifold and let \(D\) be a simple normal crossing divisor with irreducible decomposition \(D = \bigcup_{i \in I} D_i\)\,. Let \(\mE\) be a holomorphic vector bundle on \(X\).

\begin{definition}\label{def:parbun}
A parabolic structure \(\mE_{*}\) on \(\mE\) is given by a collection of locally free subsheaves \(\mE^i_a \subset \mE\) indexed by \(i \in I\) and \(a \in (0, 1]\) such that the following holds.
\begin{enumerate}[label=\textup{(\roman*)}]
	\item\label{it:support} Support/Increasing: for \(0 < a' < a \leq 1\) we have inclusions of \(\mO_X\)-modules
	\[
	\mE(-D_i) \subset \mE^i_{a'} \subset \mE^i_{a} \subset \mE \,,
	\]
	where \(\mE(-D_i)\) is the subsheaf of holomorphic sections of \(\mE\) that vanish along \(D_i\).
	\item\label{it:semicont} Semi-continuity: for any given \(a \in (0,1)\) there is \(\epsilon>0\) such that \(\mE^i_{a+\epsilon} = \mE^i_a\).
	\item\label{it:vb} Filtration in the category of vector bundles on \(D\): the quotient sheaves
	\begin{equation}\label{eq:fia}
		F^i_a = \left( \mE^i_a  \,\, \big/ \,\, \mE(-D_i) \right)|_{D_i}
	\end{equation}
	define an increasing filtration of \(\mE|_{D_i}\) by vector subbundles.
\end{enumerate}	
In this case, we say that \(\mE_{*}\) is a \emph{parabolic bundle} on \((X, D)\). 
\end{definition}

For an interesting source of examples of parabolic bundles on rank \(2\) vector bundles over \(\CP^1\) which relate to the existence of spherical metrics with conical singularities, see \cite{dimasphere}.

\begin{remark}\label{rmk:vb}
    Item \ref{it:vb} requires that the sheaf \(F^i_a\) obtained by restriction of the skyscraper sheaf \(\mE^i_a  \,\, \big/ \,\, \mE(-D_i)\) to \(D_i\) is a locally free \(\mO_{D_i}\)-module. Moreover, the inclusion of \(\mO_{D_i}\)-modules
	\[
	F^i_a \subset \mE|_{D_i} =   \left( \mE \,\, \big/ \,\, \mE(-D_i) \right)|_{D_i} 
	\]
	is an inclusion of vector bundles.
\end{remark}

\begin{remark}
	If we extend the index \(a \in (0,1]\) to \(a \in \R\) by requiring that \(\mE^i_{a-1} = \mE^i_a(-D_i)\) and define
	\(\mE_{\ba} = \bigcap_{i \in I} \mE^i_{a_i}\) for \(\ba = (a_i)_{i \in I} \in \R^I\). Then the collection of sheaves \(\mE_{\ba}\) form a parabolic bundle as defined in \cite[\S 2.1]{simpson}. 
	
	If \(\mE_{*}\) satisfies items \ref{it:support} and \ref{it:semicont} of Definition \ref{def:parbun} then \(\mE_{*}\) is a \emph{\(\textbf{c}\)-parabolic sheaf} as defined in \cite[\S 3.1.1]{mochizuki} where \(\mathbf{c} = \mathbf{1}\) is the vector in \(\R^I\) with all entries equal to \(1\). Item \ref{it:vb} is taken from \cite[Definition 3.12]{mochizuki}.
\end{remark}

\begin{notation}
Write
\[
F^i_{< a} = \bigcup_{a' < a} F^i_{a'} \,.
\]
By item (iii) of Definition \ref{def:parbun}\,, \(F^i_{<a}\) is a vector subbundle of \(F^i_a\).	
\end{notation}

\begin{definition}\label{def:gr}
	For \(i \in I\) and \(a \in (0,1]\), we let 
	\begin{equation}\label{eq:gr}
	\Gr^i_a = F^i_a \, \big/ \, F^i_{<a} \,.
	\end{equation}
	Thus, we have an exact sequence of vector bundles on \(D_i\)
	\[
	0 \to F^i_{< a} \to F^i_a \to \Gr^i_a \to 0 \,.
	\]
	The \(\Gr^i_a\) are the \emph{graded components} of the filtration \(F^i_a \subset \mE|_{D_i}\).
\end{definition}

\begin{definition}
	For \(i \in I\), let \(\wt(i) \subset (0,1]\) be the finite set of weights of the filtration \(F^i_a \subset \mE|_{D_i}\) given by
	\begin{equation}
		\wt(i) = \{a \,\, | \,\, \Gr^i_a \neq 0 \} \,, 
	\end{equation}
	and let
	\begin{equation}
		\lambda_i = \sum_{a \in \wt(i)} a \cdot \rk (\Gr^i_a) \,,
	\end{equation}
	where \(\rk (\Gr^i_a)\) is the rank of the vector bundle \(\Gr^i_a\) on \(D_i\)\,.
\end{definition}

\begin{definition}[{\cite[\S 3.1.2]{mochizuki}}]\label{def:pch1}
	The parabolic first Chern class of \(\mE_{*}\) is the element of \(H^2(X, \R)\) given by
	\begin{equation}
	\parc_1(\mE_*) = c_1(\mE) - \sum_{i \in I} \lambda_i \cdot  c_1(D_i) \,.
	\end{equation}	
\end{definition}

To define parabolic second Chern character, we must introduce an extra compatibility condition. 

\begin{definition}[{\cite[Definition 3.12]{mochizuki}}]\label{def:locab}
	Let \(\mE_{*}\) be a parabolic bundle on \((X, D)\) as in Definition \ref{def:parbun}\,.
	We say that \(\mE_{*}\) is \emph{locally abelian}, if for every subset \(J = \{i_1, \ldots, i_k\} \subset I\) such that the intersection \(D_J = \bigcap_{j \in J} D_j\) is non-empty, the following holds.
	\begin{itemize}
		\item (Compatibility condition.)
		There is a decomposition of \(\mE|_{D_{J}}\) locally on \(D_J\) as a direct sum of subbundles \(U_{\ba}\) indexed by \(\ba \in \R^k\)
		\[
		\mE|_{D_{J}} = \bigoplus_{\ba   \in \R^k} U_{\ba}
		\]
		such that for all \(\ba = (a_1, \ldots, a_k) \in (0,1]^k\) we have
		\begin{equation}\label{eq:compat}
		\bigcap_{j=1}^k \left.F^{i_j}_{a_j}\right|_{D_J}  = 
		\bigoplus_{\ba' \leq \ba} U_{\ba'}
		\end{equation}
		where the sum is over all \(\ba' = (a'_1, \ldots, a'_k)\) such that \(a'_j \leq a_j\) for all \(1 \leq j \leq k\).
	\end{itemize}
\end{definition}

\begin{remark}
If \(k = 1\) and \(D_J = D_i\)\,, then Equation \eqref{eq:compat} is satisfied by taking holomorphic complements to the subbundle \(F^i_{< a} \subset F^i_a\) locally along \(D_i\) in the analytic topology; so that \(F^i_a = F^i_{< a} \oplus U_a\) with \(U_a \cong \Gr^i_a\). 	
\end{remark}

\begin{remark}
	The term locally abelian is taken from \cite[\S 2.1]{simpson}. 
\end{remark}

Next, we reformulate the locally abelian condition in terms of local frames spanning the filtrations \(F^i_a\).
To do this, we introduce some notation.

Let \(\mE_{*}\) be a parabolic bundle on \((X, D)\) and
let \(J = \{i_1, \ldots, i_k\} \subset I\) be such that the intersection \(D_J = \bigcap_{j \in J} D_j\) is non-empty. 
For each \(j \in J\)\,, we have a filtration of \(\left.\mE\right|_{D_J}\) given by the subbundles \(\left.F^j_a\right|_{D_J}\). Altogether, we have \(k = |J|\) different filtrations
on \(D_J\).

\begin{notation}\label{not:Fa2}
	For \(\ba = (a_1, \ldots, a_k) \in \R^k\), we write \(F_{\ba}\) for the fibrewise intersection (see Remark \ref{rmk:compat} below)
	\[
	F_{\ba} = \bigcap_{j=1}^k \left.F^{i_j}_{a_j}\right|_{D_J} \,.
	\]
	If \(\ba = (a_1,\ldots, a_k)\) and \(\ba' = (a'_1, \ldots, a'_k)\) are vectors in \(\R^k\), we write \(\ba' \lneq \ba\) if \(a'_i \leq a_i\) for all \(i\) and \(\ba' \neq \ba\).
	Similarly, we write \(\sum_{\ba' \lneq \ba} F_{\ba'}\) for the fibrewise sum 
	\[
	\sum_{\ba' \lneq \ba} F_{\ba'} := F^{i_1}_{<a_1} \cap F^{i_2}_{a_2} \cap \ldots \cap F^{i_k}_{a_k} \,\, + \,\, \ldots \,\, + \,\, F^{i_1}_{a_1} \cap F^{i_2}_{a_2} \cap \ldots \cap F^{i_k}_{<a_k}  \,,
	\]
	where we have omitted all the restrictions \(|_{D_J}\) on the right hand side. 
\end{notation}

\begin{remark}\label{rmk:compat}
	Without compatibility conditions on \(\mE_{*}\), the above \(F_{\ba}\) and \(\sum_{\ba' \lneq \ba} F_{\ba'}\) are merely subsets of the total space of \(\mE|_{D_J}\) which intersect the fibres along linear subspaces; however the dimension of these subspaces might vary. By definition, we have an inclusion of sets \(\sum_{\ba' \lneq \ba} F_{\ba'} \subset F_{\ba}\).
\end{remark}

\begin{lemma}\label{lem:locab}
	Let \(\mE_{*}\) be a parabolic bundle on \((X, D)\). Then
	\(\mE_{*}\) is locally abelian if and only if for every non-empty intersection \(D_J = \bigcap_{j \in J} D_j\) the following conditions hold.
	\begin{enumerate}[label=\textup{(\roman*)}]
		\item For every point \(p \in D_J\) the \(k\)-tuple of filtrations \(\{\mF^j_p \,|\, j \in J\}\) of the fibre \(\mE|_p\) given by
		\(\mF^j_p = \{F^j_a|_p \,,\, a \in \R\}\)\,,   are compatible as in Definition \ref{def:compfilt}\,.
		
		\item For any \(\ba = (a_1, \ldots, a_k) \in \R^k\), the intersection
		\(F_{\ba}\) is a vector bundle on \(D_J\)\,; and the sum \(\sum_{\ba' \lneq \ba} F_{\ba'}\) is a vector subbundle of \(F_{\ba}\)\,.\footnote{For generic values of \(\ba\), e.g. if \(\ba\) does not belong to the finite set \(\prod_{j=1}^k \wt(i_j)\), the subbundle \(\sum_{\ba' \lneq \ba} F_{\ba'}\) is actually equal to \(F_{\ba}\).}
	\end{enumerate}
\end{lemma}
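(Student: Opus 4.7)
The forward direction is straightforward. Assume $\mE_{*}$ is locally abelian and fix a non-empty intersection $D_J$. The local decomposition $\mE|_{D_J} = \bigoplus_{\ba} U_{\ba}$ immediately gives, at each point $p \in D_J$, a basis obtained by concatenating bases of the fibres $U_{\ba}|_p$ that is simultaneously adapted to every filtration $\mF^j_p$; this verifies the pointwise compatibility condition (i). For (ii), since $F_{\ba} = \bigoplus_{\ba' \leq \ba} U_{\ba'}$ holds locally and each $U_{\ba'}$ is a vector bundle, $F_{\ba}$ is a vector bundle. The sum $\sum_{\ba' \lneq \ba} F_{\ba'}$ equals $\bigoplus_{\ba' \lneq \ba} U_{\ba'}$, which is a vector subbundle.

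For the reverse direction, assume (i) and (ii). The plan is to construct, locally on $D_J$, complements $U_{\ba}$ inductively along the partial order on $\R^k$. Only finitely many values of $\ba$ are relevant, namely $\ba \in \prod_{j=1}^k \wt(i_j) \subset (0,1]^k$, so the induction terminates. By hypothesis (ii), $\sum_{\ba' \lneq \ba} F_{\ba'}$ is a vector subbundle of the vector bundle $F_{\ba}$, so the quotient is locally free; shrinking $D_J$ if necessary, I can choose a subbundle $U_{\ba} \subset F_{\ba}$ such that
\[
F_{\ba} = \Bigl( \sum_{\ba' \lneq \ba} F_{\ba'} \Bigr) \oplus U_{\ba} \,.
\]
An induction on $\ba$ (using the minimal elements of $\prod_{j=1}^k \wt(i_j)$ as base case) then yields $F_{\ba} = \bigoplus_{\ba' \leq \ba} U_{\ba'}$, which is exactly the identity required by \eqref{eq:compat}.

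What remains is to verify that $\mE|_{D_J} = \bigoplus_{\ba} U_{\ba}$, i.e.\ that the subbundles $U_{\ba}$ already span all of $\mE|_{D_J}$. Here condition (i) enters in an essential way: at every point $p \in D_J$, pointwise compatibility of the $k$ filtrations means that there exists a basis of $\mE|_p$ adapted simultaneously to all of them, and such a basis decomposes $\mE|_p = \bigoplus_{\ba} U_{\ba}|_p$ with the analogous fibrewise identities. In particular, $\sum_{\ba} \rk(U_{\ba}) = \rk(\mE)$. Since the natural map $\bigoplus_{\ba} U_{\ba} \to \mE|_{D_J}$ is a morphism of vector bundles of the same rank that is fibrewise injective at every point (by the pointwise decomposition obtained from (i)), it is an isomorphism.

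The main obstacle is precisely this last step: ensuring that the inductively chosen local complements $U_{\ba}$ exhaust $\mE|_{D_J}$. The point is that condition (ii) alone controls only the individual subbundles $F_{\ba}$ and their immediate predecessors, but does not see interactions across incomparable $\ba$; it is exactly the pointwise compatibility in (i), together with the rank count coming from the pointwise adapted basis, that forces the direct sum $\bigoplus_{\ba} U_{\ba}$ to saturate $\mE|_{D_J}$. This is the only place where (i) is used in a non-redundant way, and the resulting local direct sum decomposition is the locally abelian structure required by Definition \ref{def:locab}.
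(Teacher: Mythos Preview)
Your proof is correct and close in spirit to the paper's, but the order in which you use (i) and (ii) is reversed, and there is one misstated intermediate claim worth flagging.

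In the reverse direction, the paper fixes a point $p\in D_J$, uses (i) to produce a fibrewise splitting $\mE|_p=\bigoplus_{\ba}U_{\ba}|_p$, then uses (ii) to extend each $U_{\ba}|_p$ to a subbundle $U_{\ba}$ on a neighbourhood satisfying $F_{\ba}=(\sum_{\ba'\lneq\ba}F_{\ba'})\oplus U_{\ba}$; finally, since the $U_{\ba}|_p$ are in direct sum at $p$ and linear independence is an open condition, they remain in direct sum nearby. You instead first use (ii) to choose the complements $U_{\ba}$ as subbundles, then invoke (i) for a rank count. Both routes are valid.

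The imprecision is your sentence ``An induction on $\ba$ \ldots\ then yields $F_{\ba}=\bigoplus_{\ba'\le\ba}U_{\ba'}$''. Induction alone only gives $F_{\ba}=\sum_{\ba'\le\ba}U_{\ba'}$: knowing that $\bigoplus_{\ba'\le\bb_1}U_{\ba'}$ and $\bigoplus_{\ba'\le\bb_2}U_{\ba'}$ are each direct for incomparable $\bb_1,\bb_2\lneq\ba$ does not force their union to be direct. Directness genuinely requires (i), as you yourself recognise in the next paragraph: compatibility at $p$ gives $\sum_{\ba}\dim\bigl(F_{\ba}|_p\big/\sum_{\ba'\lneq\ba}F_{\ba'}|_p\bigr)=\rk\mE$, hence $\sum_{\ba}\rk U_{\ba}=\rk\mE$; combined with the surjection $\sum_{\ba}U_{\ba}=\mE|_{D_J}$ (from the induction at $\ba=(1,\dots,1)$), this forces the sum to be direct. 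So your last paragraph is doing the real work; just replace $\bigoplus$ by $\sum$ in the inductive claim and the argument is clean.
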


\begin{proof}
	Suppose that \(\mE_{*}\) is locally abelian. We want to prove items (i) and (ii). Item (i) is immediate, since  Definition \ref{def:locab} implies Definition \ref{def:compfilt} by restricting to fibres. To show (ii), note that Equation \eqref{eq:compat} implies that \(F_{\ba} = \oplus_{\ba' \leq \ba} U_{\ba'}\) is a vector bundle. Similarly,
	\[
	\sum_{\ba' \lneq \ba} F_{\ba'} = \bigoplus_{\ba' \lneq \ba} U_{\ba'}
	\]
	is a vector subbundle of \(F_{\ba}\)\,, proving (ii). 
	
	Conversely, suppose that items (i) and (ii) hold. Fix \(p \in D_J\),
	by item (i) we can find a direct sum decomposition \(\mE|_p = \oplus_{\ba} U_{\ba}|_p\) of \(\mE|_p\)  such that
	\begin{equation}\label{eq:ua}
	\left.F_{\ba}\right|_p = \left( \sum_{\ba' \lneq \ba} \left.F_{\ba'}\right|_p \right) \oplus \left.U_{\ba}\right|_p \,.		
	\end{equation}
	By item (ii), we can locally extend \(U_{\ba}|_p\) to a vector subbundle \(U_{\ba} \subset F_{\ba}\) such that Equation \eqref{eq:ua} holds for all points in a neighbourhood of \(p\). By construction, \(F_{\ba} = \sum_{\ba' \leq \ba} U_{\ba'}\) for any \(\ba\). On the other hand, since the subspaces \(U_{\ba}|_p\) form a direct sum at \(p\), they also form a direct sum near \(p\). This shows that
	\(F_{\ba} = \oplus_{\ba' \leq \ba} U_{\ba'}\) and therefore \(\mE_{*}\) is locally abelian.
\end{proof}

\begin{definition}\label{def:adaptedframe}
	Let \(\mE_{*}\) be a parabolic bundle on \((X, D)\). 
	Let \(S = \{s_1, \ldots, s_r\}\) be a local frame of sections of \(\mE\) defined on an open set of \(D_J = \cap_{j \in J}D_j\). 
	The frame \(S\) is \emph{adapted} to \(\mE_{*}\),
	if for every \(j \in J\) and \(a \in \R\), the subset \(S^j_a \subset S\) given by
	\[
	S^j_a = \{s \in S \,\, | \,\, s \textup{ is a section of } F^j_a \}
	\] 
	is a frame of \(F^j_a\).
\end{definition}

The main characterization of the local abelian condition that we are after is given by the next.

\begin{lemma}\label{lem:locabadaptframe}
	Let \(\mE_{*}\) be a parabolic bundle on \((X, D)\). Then
	\(\mE_{*}\) is locally abelian if and only if for every non-empty intersection \(D_J\) and every \(p \in D_J\)\,, there is a frame of sections of \(\mE\) defined on a neighbourhood of \(p\) in \(D_J\) that is adapted to \(\mE_{*}\).
\end{lemma}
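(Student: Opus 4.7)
The plan is to show both implications essentially by hand, moving back and forth between the direct sum decomposition in Definition \ref{def:locab} and a choice of local basis. Throughout, fix a non-empty intersection $D_J = \bigcap_{j \in J} D_{i_j}$ with $J = \{i_1,\dots,i_k\}$ and a point $p \in D_J$.

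For the forward direction, assume $\mE_{*}$ is locally abelian, so that near $p$ we have a decomposition $\mE|_{D_J} = \bigoplus_{\ba} U_{\ba}$ satisfying \eqref{eq:compat}. Only finitely many $U_{\ba}$ are non-zero (those indexed by $\ba \in \prod_{j} \wt(i_j)$), and each is a holomorphic subbundle on an open neighborhood of $p$ in $D_J$. Choose a local frame $S_{\ba}$ of each non-zero $U_{\ba}$ and let $S = \bigsqcup_{\ba} S_{\ba}$. Then $S$ is a frame of $\mE|_{D_J}$. The adapted property follows directly from \eqref{eq:compat}: taking $a_j = a$ in the $j$-th slot and $a_{j'} = 1$ for $j' \neq j$ gives $F^{i_j}_a|_{D_J} = \bigoplus_{\ba' : a'_j \leq a} U_{\ba'}$, so the subset of $S$ consisting of sections which lie in $F^{i_j}_a$ is exactly $\bigsqcup_{\ba' : a'_j \leq a} S_{\ba'}$, which is a frame of $F^{i_j}_a|_{D_J}$.

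For the converse, suppose an adapted frame $S = \{s_1,\dots,s_r\}$ exists on a neighborhood of $p$ in $D_J$. For each $s_\alpha$ and each $j \in \{1,\dots,k\}$, set
\[
a_j(\alpha) = \min\{a \in \wt(i_j) \cup \{1\} \,|\, s_\alpha \text{ is a section of } F^{i_j}_a\},
\]
which is well-defined by the semi-continuity and support axioms of Definition \ref{def:parbun}. Set $\ba(\alpha) = (a_1(\alpha),\dots,a_k(\alpha)) \in (0,1]^k$, and define
\[
U_{\ba} = \operatorname{span}_{\mO_{D_J}}\{s_\alpha \,|\, \ba(\alpha) = \ba\}.
\]
Since $S$ is a frame of $\mE|_{D_J}$, we get a direct sum decomposition $\mE|_{D_J} = \bigoplus_{\ba} U_{\ba}$. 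It remains to verify \eqref{eq:compat}. The inclusion $\bigoplus_{\ba' \leq \ba} U_{\ba'} \subset F_{\ba}$ is immediate from the definition of $a_j(\alpha)$ and the increasing property of the filtrations. For the reverse inclusion, the adapted hypothesis gives, for each $j$ separately, that $F^{i_j}_{a_j}|_{D_J}$ is freely generated as an $\mO_{D_J}$-module by $\{s_\alpha \,|\, a_j(\alpha) \leq a_j\}$. Since $S$ is a basis of $\mE|_{D_J}$, the coefficients in the expansion of any section are unique; a section lying in $F^{i_j}_{a_j}|_{D_J}$ for every $j$ must therefore be a combination of those $s_\alpha$ satisfying $a_j(\alpha) \leq a_j$ for all $j$, i.e. $\ba(\alpha) \leq \ba$. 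This yields $F_{\ba} \subset \bigoplus_{\ba' \leq \ba} U_{\ba'}$ and completes the proof.

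The main obstacle, if any, is this uniqueness step in the converse: we need that the adapted frame really is a common refinement of all $k$ filtrations simultaneously, not just of each one separately. The point is that being a frame of $\mE|_{D_J}$ forces the decomposition into the $U_{\ba}$ to be compatible with every $F^{i_j}_\bullet$ at once, which is precisely what the locally abelian condition demands. Everything else is bookkeeping with Definition \ref{def:adaptedframe}.
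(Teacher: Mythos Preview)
Your proof is correct and follows essentially the same approach as the paper: in both directions the argument is to pass between the direct sum decomposition \(\bigoplus U_{\ba}\) and an adapted frame by collecting frame elements according to their weight vectors. The only organizational difference is that the paper routes the converse through Lemma~\ref{lem:locab} and the linear-algebra Lemma~\ref{lem:comp}, whereas you construct the \(U_{\ba}\) directly from the frame and verify \eqref{eq:compat} by the uniqueness-of-coefficients argument; this is exactly the content of Lemma~\ref{lem:comp} unwound in the bundle setting.
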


\begin{proof}
	If \(\mE_{*}\) is locally abelian, then we can decompose \(\mE = \oplus_{\ba} U_{\ba}\) locally on \(D_J\) as in Definition \ref{def:locab}\,.
	Choose a frame \(S_{\ba}\) for each component \(U_{\ba}\) and let
	\(S = \cup_{\ba} S_{\ba}\). It is easy to see that \(S\) is adapted to \(\mE_{*}\)\,. 
	
	Conversely, suppose that such adapted frames exist. To show that \(\mE_{*}\) is locally abelian, we verify items (i) and (ii) of Lemma \ref{lem:locab}\,.
	Item (i) follows immediately from Lemma \ref{lem:comp}\,. To show item (ii),
	take a local trivialization of \(\mE|_{D_J}\) given by an adapted frame. In such a trivialization, the intersections \(F_{\ba}\) and sums \(\sum_{\ba' \lneq \ba} F_{\ba'}\) are constant, hence they are vector bundles. This proves (ii) and finishes the proof of the lemma.
\end{proof}

\begin{definition}\label{def:grij}
Suppose that \(\mE_{*}\) is a locally abelian parabolic bundle on \((X, D)\).	
For each pair \(i, j \in I\)\,, let \(D_{ij} = D_i \cap D_j\).
For \(a, b \in (0,1]\)\,, let \(\Gr^{i, j}_{a, b}\) be given by
\begin{equation}\label{eq:grij}
\Gr^{i, j}_{a, b} = \left(\left. F^i_{a} \right|_{D_{ij}} \cap \left. F^j_{b} \right|_{D_{ij}}\right) \, \big/ \, \left(\left.F^i_{<a}\right|_{D_{ij}} \cap \left.F^j_{b}\right|_{D_{ij}} +  \left.F^i_{a}\right|_{D_{ij}} \cap \left.F^j_{<b}\right|_{D_{ij}}\right) \,.
\end{equation}
By Lemma \ref{lem:locab} (ii), \(\Gr^{i, j}_{a, b}\) is a vector bundle on \(D_{ij}\).
\end{definition}

The quotient \(\Gr^{i, j}_{a, b}\) is zero unless \(a \in \wt(i)\) and \(b \in \wt(j)\). 
It is helpful to picture the \(\Gr^{i,j}_{a,b}\) inside the unit square \((0,1]^2\). The \(F^i_a\) are represented by intersections of the square with  left half planes \(x \leq a\) for \(a \in \wt(i)\); similarly the \(F^j_b\) are intersections of the square with lower half planes \(y \leq b\) for \(b \in \wt(j)\). The \(\Gr^{i,j}_{a,b}\) correspond to the smallest sub-rectangles of the square obtained as intersections of half spaces, indexed by the coordinates \((a, b)\) of their upper right corners.

\begin{definition}[{\cite[\S 3.1.5]{mochizuki}}\label{def:pch2}]
	Suppose that \(\mE_{*}\) is a locally abelian parabolic bundle on \((X, D)\).
	The parabolic second Chern character of \(\mE_{*}\) is
	the element of \(H^4(X, \R)\) given by
	\begin{equation}\label{eq:pch2}
	\begin{aligned}
	\parch_2(\mE_{*}) &= \ch_2(\mE) - \sum_{i} \sum_a a \cdot \imath_{*} \big(c_1(\Gr^i_a)\big)  \\ 
	&+ \,\, \frac{1}{2} \sum_{i} \sum_{a} a^2 \cdot \rk (\Gr^{i}_{a}) \cdot c_1(D_i)^2 \\	
	&+ \,\, \sum_{i < j} \sum_{a, b} a b \cdot \rk (\Gr^{i, j}_{a, b}) \cdot c_1(D_i) \cdot c_1(D_j) \,.
	\end{aligned}
	\end{equation}
\end{definition}

The terms in Equation \eqref{eq:pch2} have the following meaning:
\(\ch_2(\mE)\) denotes the second Chern character of the vector bundle \(\mE\) given by
\begin{equation*}
\ch_2(\mE) = \frac{1}{2} \big( c_1(\mE)^2 - 2c_2(\mE) \big) \,;
\end{equation*}
\(\imath\) is the inclusion \(D_i \subset X\) and \(\imath_{*}\) is the associated Gysin map in cohomology 
\[\imath_{*}: H^2(D_i, \Z) \to H^4(X, \Z) \,.  \]  
Finally, \(\rk (\Gr^{i,j}_{a, b})\) denotes the rank of the vector bundle
\(\Gr^{i,j}_{a, b}\)\,.

\subsubsection*{Stable bundles and Bogomolov-Gieseker inequality}

Next, we define (slope) stability for parabolic bundles.
To do this, we must consider \emph{saturated subsheaves} (see Definition \ref{def:saturated}).

We recall the way a parabolic bundle induces a parabolic structure on subsheaves.
Let \(\mE_{*}\) be a parabolic bundle on \((X, D)\) as in Definition \ref{def:parbun} and suppose that \(\mV \subset \mE\) is a saturated subsheaf.

\begin{definition}\label{def:indpar}
	The induced parabolic structure \(\mV_{*}\) is defined by the collection of subsheaves \(\mV^i_a \subset \mV\) indexed by \(i \in I\) and \(a \in (0,1]\) given by
	\[
	\mV^i_a = \mV \bigcap \mE^i_a .
	\]
\end{definition}

The filtration of \(\mV\) by the subsheaves \(\mV^i_a\) endow the sheaf \(\mV\) with a parabolic structure as defined in \cite[\S 3.1.1]{mochizuki}.

\begin{remark}\label{rmk:indpar}
	Since \(\mV\) is saturated, by Corollary \ref{cor:subbund}
	there is an analytic subset \(Z \subset X\) with \(\codim Z \geq 2\) such that \(\mV\) is a vector subbundle of \(\mE\) on \(X \setminus Z\). In particular, on the complements \(D_i \setminus Z\) we have an increasing filtration of the vector bundle \(\mV|_{D_i}\) by vector subbundles \(V^i_a =  F^i_a \bigcap \mV|_{D_i}\). 
\end{remark}

Let
\begin{equation}\label{eq:indlambda}
	\lambda^{\mV}_i = \sum_{a \in (0,1]} a \cdot \rk \left(V^i_a \, / \, V^i_{<a} \right) , 
\end{equation}
where \( \rk \left(V^i_a \, / \, V^i_{<a} \right)\) is the rank of the quotient vector bundle on \(D_i \setminus Z\).


\begin{definition}
	The parabolic first Chern class of \(\mV_{*}\) is the element \(\parc_1(\mV_{*})\) of \(H^2(X, \R)\) defined as
	\[
	\parc_1(\mV_*) = c_1(\mV) - \sum_{i \in I} \lambda^{\mV}_i \cdot  c_1(D_i) \,,
	\] 
	where \(c_1(\mV)\) is as in Definition \ref{def:c1saturated}\,. 
\end{definition}

To define slope stability, we fix a polarization, given by an ample line bundle \(P\) on \(X\).
The parabolic degree of \(\mE_{*}\) is defined as
\[
\pardeg_P (\mE_{*}) = c_1(P)^{n-1} \cdot \parc_1(\mE_{*}) \,,
\]
where \(n\) is the dimension of \(X\). In the above equation we identify the top cohomology group \(H^{2n}(X, \R)\) with \(\R\) via integration on \(X\), thus we view the cup product as a non-degenerate bilinear pairing \(H^2(X, \R) \times H^{n-2}(X, \R) \to \R\).
Similarly, the parabolic degree of the induced parabolic structure \(\mV_{*}\) is defined as
\[
\pardeg_P (\mV_{*}) = c_1(P)^{n-1} \cdot \parc_1(\mV_{*}) \,.
\]

\begin{remark}\label{rmk:pc1lessc1}
	If \(\deg_P(\mV) =  c_1(P)^{n-1} \cdot c_1(\mV)\) is the degree of \(\mV \subset \mE\)\,, then
	\[
	\pardeg_P (\mV_*) = \deg_P(\mV) - \sum_i \lambda_i^{\mV} \cdot \vol_P(D_i)
	\]
	where \(\vol_P (D_i) = c_1(P)^{n-1} \cdot c_1(D_i) > 0\) is the volume of \(D_i \subset X\) with respect to \(P\). 
	Since \(\lambda^{\mV}_i \geq 0\),
	\begin{equation}\label{eq:parcilessc1}
	\pardeg_P (\mV_*) \leq \deg_P(\mV) \,.
	\end{equation}
	If the induced parabolic structure is non-trivial, in the sense that \(\lambda_i^{\mV}\) are not all zero, then
	strict inequality holds in \eqref{eq:parcilessc1}.
\end{remark}

A main notion we need is the next.

\begin{definition}[{\cite[\S 3.1.3]{mochizuki}}]\label{def:parstable}
	Let \(\mE_{*}\) be a parabolic bundle on \((X, D)\) with \(\parc_1(\mE_{*}) = 0\) and let \(P\) be an ample line bundle on \(X\).
	We say that \(\mE_{*}\) is stable with respect to \(P\), or \(P\)-stable for short, if for every non-zero and proper saturated subsheaf \(\mV \subset \mE\) we have
	\begin{equation}\label{eq:stabdef}
	\pardeg_P (\mV_{*}) < 0 \,.
	\end{equation}
\end{definition}

\begin{remark}
Our \(P\)-stable parabolic bundles are referred as \(\mu_L\)-stable in \cite{mochizuki}.	
\end{remark}

The main result we need is the next version of the Bogomolov-Gieseker inequality.

\begin{theorem}[{\cite[Theorem 6.5]{mochizuki}}]\label{thm:boggies}
	Let \(\mE_{*}\) be locally abelian parabolic bundle on \((X, D)\). Suppose that \(\parc_1(\mE_{*}) = 0\) and that \(\mE_{*}\)  is  \(P\)-stable. Then the following inequality holds:
	\begin{equation}\label{eq:boggies}
	c_1(P)^{n-2} \cdot \parch_2(\mE_{*}) \leq 0 \,.
	\end{equation}
\end{theorem}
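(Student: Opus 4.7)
The statement is quoted from Mochizuki's work, so the strategy is really to recall the Kobayashi-Hitchin framework that underlies it; what follows is how I would organize the proof.

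The plan is to pass from stability to the existence of an adapted Hermitian-Einstein metric and then run a Chern-Weil computation, following the standard Bogomolov argument. Concretely, the first step is to invoke the parabolic Kobayashi-Hitchin correspondence: a locally abelian \(P\)-stable parabolic bundle \(\mE_{*}\) on \((X,D)\) with \(\parc_1(\mE_{*})=0\) admits a Hermitian metric \(h\) on \(\mE|_{X\setminus D}\) whose singularities along each \(D_i\) are modelled (in adapted local coordinates) by \(\prod |z_i|^{2a}\)-type factors dictated by the graded pieces \(\Gr^i_a\), and whose Chern connection is Hermitian-Einstein with respect to a Kähler metric \(\omega\in c_1(P)\) in the sense
\[
\sqrt{-1}\,\Lambda_\omega F_h = \mu \cdot \mathrm{id}_{\mE},
\]
with the trace-part constant \(\mu\) determined by \(\pardeg_P(\mE_{*})/\rk\mE\); since \(\parc_1(\mE_{*})=0\), we may take \(\mu=0\) up to a conformal adjustment. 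The locally abelian assumption is what guarantees that the local filtered model is a direct sum of parabolic line bundles, so that the prescribed singular model for \(h\) makes sense at crossings \(\bigcap D_{i_j}\).

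The second step is a Chern-Weil computation with \(h\). Away from \(D\), the curvature \(F_h\) is smooth and Hermitian-Einstein, so the classical Bogomolov pointwise identity gives
\[
\tr\bigl(F_h\wedge F_h\bigr)\wedge\omega^{n-2}-\tfrac{1}{\rk\mE}\tr(F_h)^2\wedge\omega^{n-2}\;\leq\;0
\]
pointwise (as a real \((n,n)\)-form), because this expression equals \(-|F_h^{0}|^2\,\omega^n/n!\) up to a positive constant, where \(F_h^{0}\) is the trace-free, primitive part of the curvature. Integrating this inequality over \(X\setminus D\) and verifying that the integrals of \(\tr(F_h\wedge F_h)\wedge\omega^{n-2}\) and \(\tr(F_h)\wedge\tr(F_h)\wedge\omega^{n-2}\) compute, respectively, \(-2\,c_1(P)^{n-2}\cdot\parch_2(\mE_{*})\) and \(c_1(P)^{n-2}\cdot\parc_1(\mE_{*})^{2}=0\), then yields \(c_1(P)^{n-2}\cdot\parch_2(\mE_{*})\leq 0\), which is precisely \eqref{eq:boggies}. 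The combinatorial factors \(a^2\rk\Gr^i_a\) and \(ab\,\rk\Gr^{i,j}_{a,b}\) appearing in Definition \ref{def:pch2} emerge as the boundary contributions from integrating by parts near the normal-crossing strata of \(D\).

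The main obstacle in making this rigorous is the third step: controlling the curvature near \(D\). One must show that, with the prescribed singular model, the curvature \(F_h\) is in \(L^2\) on \(X\) with respect to \(\omega\), that the Chern-Weil forms \(\tr(F_h^k)\wedge\omega^{n-k}\) define distributions representing the parabolic Chern character classes, and that the boundary terms picked up at the strata of \(D\) agree with the combinatorial corrections in \eqref{eq:pch2}. For a single smooth component \(D_i\) this follows from a residue computation in a collar neighbourhood, treating the parabolic structure as a direct sum of line bundles via a local splitting of the filtration (which exists by the locally abelian property). The extension to crossings requires the compatibility condition of Definition \ref{def:locab}: there one chooses simultaneously diagonalising frames on \(D_J\) to split \(\mE\) into parabolic line bundles and then applies the single-component residue calculation iteratively, matching up exactly with the \(\rk\Gr^{i,j}_{a,b}\) terms. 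The delicate part is the convergence and regularity of the Hermitian-Einstein metric near the crossings, which is the heart of Mochizuki's analysis in \cite{mochizuki}; once that analytic input is granted, the inequality reduces to the pointwise Bogomolov identity together with the bookkeeping of residues.
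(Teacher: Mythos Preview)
The paper does not prove this theorem at all: it is stated with a citation to \cite[Theorem 6.5]{mochizuki} and used as a black box. Your outline of the Kobayashi--Hitchin correspondence followed by the pointwise Bogomolov identity and a residue analysis at the divisor is the correct architecture of Mochizuki's argument, and you correctly flag that the substantive work lies in the regularity and \(L^2\) control of the Hermitian--Einstein metric near the normal-crossing strata, which is precisely what is supplied by the cited reference.
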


\begin{remark}
	Mochizuki proves Theorem \ref{thm:boggies} in the more general case where \(\mE_{*}\) is only locally abelian in codimension \(2\), meaning that the compatibility condition in Definition \ref{def:locab} holds outside a set of codimension \(\geq 3\).
\end{remark}

\subsection{Weights and filtration}\label{sec:pardef}

Let \((\mH, \ba)\) be a weighted arrangement as in Theorem \ref{thm:main}\,.
Let \(X \xrightarrow{\pi} \CP^n\) be the resolution of \(\mH\) and let \(\mE\) be the pullback tangent bundle
\begin{equation}
\mE = \pi^*(T\P^n) \,.
\end{equation} 	 
We use the weights \(a_H\) to define a natural parabolic structure on \(\mE\) whose filtered subsheaves are indexed by the irreducible components of the simple normal crossing divisor \(D =\pi^{-1}(\mH)\).

\begin{definition}\label{def:weightaL}
	The weight \(a_L\) at an irreducible subspace \(L \in \mLi\) is 
	\begin{equation}\label{eq:aL}
	a_L = (\codim L)^{-1} \sum_{H | L \subset H} a_H .
	\end{equation}
	Note that \(0 < a_L < 1\) because of the klt condition \eqref{eq:klt}. 
\end{definition}

Recall that the irreducible components of \(D =\pi^{-1}(\mH)\) are of the form \(D_L\), where \(\pi(D_L) = L\) and \(L\) is a non-empty proper irreducible subspace.
For \(L \in \mLi\) let \(\mE^L\) be the subsheaf of \(\mE\) generated by local sections tangent to \(\pi^*TL\) when restricted to \(D_L\).\footnote{Alternatively, \(\mE^L\) is the pullback of the subsheaf of \(T\CP^n\) generated by vector fields tangent to \(L\).} Set
\begin{equation}\label{eq:filtration}
\mE^L_a = \begin{cases}
\mE^L &\mbox{ for } 0<a<a_L \,,\\
\mE &\mbox{ for } a_L \leq a \leq 1 \,.
\end{cases}
\end{equation}

\begin{lemma}
	The collection of subsheaves \(\mE^L_a \subset \mE\) given by Equation \eqref{eq:filtration} define a parabolic bundle \(\mE_{*}\) as in Definition \ref{def:parbun}\,.
\end{lemma}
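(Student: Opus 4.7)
The plan is to verify in turn the three conditions \ref{it:support}--\ref{it:vb} of Definition \ref{def:parbun} for the two-step filtration $\mE^L_a$. A convenient preliminary reformulation is to describe $\mE^L$ as the kernel of the composition
\[
\mE \;\longrightarrow\; \mE|_{D_L} \;\longrightarrow\; \mE|_{D_L}\,\big/\,\pi^*(TL)|_{D_L}\,,
\]
which makes sense because $\pi(D_L) = L$ by Theorem \ref{thm:conproc}, so $\pi^*(TL)|_{D_L}$ is a well-defined vector subbundle of $\mE|_{D_L}$ with locally free quotient $\pi^*(N_{L/\CP^n})|_{D_L}$.

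Conditions \ref{it:support} and \ref{it:semicont} will be essentially immediate. The inclusion $\mE(-D_L) \subset \mE^L$ holds because a section vanishing along $D_L$ has zero restriction to $D_L$, hence lies in any subbundle of $\mE|_{D_L}$. Right-continuity holds because the filtration has a single jump, at $a = a_L$: for $a \neq a_L$ the sheaves $\mE^L_a$ and $\mE^L_{a+\epsilon}$ agree for $\epsilon$ small enough, and at $a = a_L$ both equal $\mE$.

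The real content is condition \ref{it:vb}: I must check that $\mE^L$ is a locally free subsheaf and that $F^L_a = (\mE^L_a/\mE(-D_L))|_{D_L}$ is a vector subbundle of $\mE|_{D_L}$ for every $a$. Both assertions are local and I will verify them in coordinates adapted to the iterated blowup structure of Proposition \ref{prop:itblowup}. At a smooth point of $D_L$ disjoint from the other exceptional divisors, I will choose $(z_1,\dots,z_n)$ on $\CP^n$ with $L = \{z_1=\cdots=z_c=0\}$ and a blowup chart $(w,v_2,\dots,v_c,z_{c+1},\dots,z_n)$ with $z_1 = w$ and $z_i = w v_i$ for $2 \leq i \leq c$. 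A local section $\sum_i f_i(w,v,z)\,\partial_{z_i}$ of $\mE$ then lies in $\mE^L$ iff $f_i(0,v,z)=0$ for $i\leq c$; hence $\mE^L$ is locally freely generated by $w\partial_{z_1},\dots,w\partial_{z_c},\partial_{z_{c+1}},\dots,\partial_{z_n}$ and its image in $\mE|_{D_L}$ modulo $\mE(-D_L)$ is precisely the subbundle spanned by $\partial_{z_{c+1}},\dots,\partial_{z_n}$, i.e.\ by $\pi^*(TL)|_{D_L}$, as required. Granted this, $F^L_a = \pi^*(TL)|_{D_L}$ for $0 < a < a_L$ and $F^L_a = \mE|_{D_L}$ for $a_L \leq a \leq 1$, both vector subbundles.

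The main obstacle I anticipate is extending this local computation to points where $D_L$ meets other exceptional divisors $D_M$. I plan to handle it by induction along the blowup chain of Proposition \ref{prop:itblowup}: each successive blowup is performed along a smooth centre whose proper transform is transverse to the already-existing $D_L$, and this transversality is what is needed to ensure that pullback preserves the short exact sequence $0 \to \mE^L \to \mE \to \imath_{*}(\mE|_{D_L}/\pi^*(TL)|_{D_L}) \to 0$ and the local freeness of $\mE^L$. The local frames exhibited above will then glue to the desired global description.
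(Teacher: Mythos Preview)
Your approach is essentially the same as the paper's, but you introduce an unnecessary complication. The paper's proof uses the same local frame idea: pick a frame $s_1,\dots,s_n$ of $T\CP^n$ near $\pi(p)$ with $s_1,\dots,s_d$ spanning $TL$; then $\mE^L$ is freely generated by $\pi^*s_1,\dots,\pi^*s_d,\, z\cdot\pi^*s_{d+1},\dots,z\cdot\pi^*s_n$, where $z$ is any local defining equation for $D_L$ on $X$. The point you are missing is that because $\mE=\pi^*(T\CP^n)$ is a \emph{pullback} bundle, this frame is valid in a full neighbourhood $\pi^{-1}(U)$ of any point of $D_L$, regardless of which other divisors $D_M$ pass through that point; the only local datum needed on $X$ is a defining equation $z$ for $D_L$. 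Your explicit blowup coordinates $(w,v_2,\dots,v_c,z_{c+1},\dots,z_n)$ serve only to produce such a $z=w$, and the relations $z_i=wv_i$ are never actually used in your computation. Consequently the ``main obstacle'' you anticipate---extending the argument to points where $D_L$ meets other $D_M$ via induction along the blowup chain---does not exist: the same two-line argument already covers every point of $D_L$.
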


\begin{proof}
	First of all, the sheaves \(\mE^L\) are locally free, for if \(p \in D_L\) then we can pick a frame of sections \(s_1, \ldots, s_n\) of \(T\CP^n\) in a neighbourhood of \(\pi(p)\) of which the first \(d = \dim L\) generate \(TL\). Then \(\mE^L\) is freely generated near \(p\) by the sections 
	\[
	\pi^* s_1 , \ldots, \pi^*s_d, \,\, z \cdot \pi^* s_{d+1}, \ldots z \cdot \pi^* s_n \, 
	\] 
	where \(z\) is a local defining equation of \(D_L=\{z=0\}\). Alternatively, the sections \(s_1, \ldots, s_n\) give a splitting \(T\CP^n = V \oplus W\) in a neighbourhood of \(\pi(p)\) where \(V|_L = TL\) and \(W|_L\) is isomorphic to the normal bundle of \(L\). Then \(\mE^L\) is locally isomorphic to the direct sum of \(\pi^*V\) and \( \pi^*W(-D_L)\).
	
	Let us now verify that \(\mE^L_a\) satisfy properties \ref{it:support}, \ref{it:semicont}, and \ref{it:vb} in Definition \ref{def:parbun}\,.
	Items \ref{it:support} support and \ref{it:semicont} semi-continuity are immediate from Equation \eqref{eq:filtration}. Finally, item \ref{it:vb}  follows from 
	\[
	\mE^L|_{D_L} \, \big/ \, \mE(-D_L)|_{D_L} = \pi^*(TL) ,
	\]  
	which implies that the quotient \(\mO_{D_L}\)-modules \(F^L_a = \mE_a^L|_{D_L} \, \big/ \, \mE(-D_L)|_{D_L}\) are given by
	\begin{equation}\label{eq:FLa}
		F^L_a = \begin{cases}
		\pi^*(TL) &\text{ if } a < a_L, \\
		\mE|_{D_L} &\text{ if } a \geq a_L .
		\end{cases}
	\end{equation}
	This shows that \(F^L_a\) is an increasing filtration of \(\mE|_{D_L}\) by vector subbundles.
\end{proof}

\begin{definition}\label{def:parstr}
	The parabolic bundle \(\mE_{*}\) is defined by the increasing filtrations \(\mE^L_a\) with \(L \in \mLi\) and \(a \in (0,1]\) given by Equation \eqref{eq:filtration}.
\end{definition}

\subsection{Locally abelian property}\label{sec:locab}

\begin{theorem}\label{thm:locab}
	The parabolic bundle \(\mE_{*}\) on \((X, D)\) is locally abelian.
\end{theorem}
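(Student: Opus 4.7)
By Lemma \ref{lem:locabadaptframe}, it suffices to show that for every non-empty intersection $D_J = \bigcap_{L \in J} D_L$ (equivalently, every nested subset $J \subset \mLi$, by Proposition \ref{prop:nested}) and every point $p \in D_J$, there is a local frame of sections of $\mE$ on a neighbourhood of $p$ in $D_J$ that is adapted to $\mE_{*}$. Since the filtration $F^L_a$ from Equation \eqref{eq:FLa} has a single jump, at $a = a_L$, from $\pi^*(TL)$ up to $\mE|_{D_L}$, a frame $\{s_1,\ldots,s_n\}$ is adapted iff for each $L \in J$ some subset of the $s_i$ is a frame of $\pi^*(TL)|_{D_J}$.

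The plan is to reduce the problem to the following purely combinatorial claim: for a nested set $J$ and any $q \in \bigcap_{L \in J} L$, one can choose local holomorphic coordinates $(x_1,\ldots,x_n)$ on $\CP^n$ centred at $q$ such that each $L \in J$ is a coordinate subspace $\{x_j = 0 : j \in I_L\}$. Granting this, the pullback frame $\{\pi^*\partial_{x_1},\ldots,\pi^*\partial_{x_n}\}$ restricted to $D_J$ is adapted, because $\pi^*(TL)|_{D_J}$ is spanned precisely by those $\pi^*\partial_{x_j}$ with $j \notin I_L$.

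To establish the coordinate claim, I will induct on $|J|$, picking a minimal element $L_0 \in J$ (i.e.\ one which is minimal under inclusion) and using the inductive hypothesis for $J' = J \setminus \{L_0\}$. Partition the elements of $J'$ into $J_{\mathrm{above}} = \{L \in J' : L_0 \subsetneq L\}$ and $J_{\mathrm{ncomp}} = \{L \in J' : L,L_0 \text{ non-comparable in }J\}$. For any $L \in J_{\mathrm{above}}$ and $L' \in J_{\mathrm{ncomp}}$, the pair $\{L, L'\}$ must itself be non-comparable in $J'$: indeed $L' \subset L$ is ruled out since then $L_0 + L' \subseteq L \neq \CP^n$ contradicts the transversality $L_0 + L' = \CP^n$ coming from nestedness and Lemma \ref{lem:l1l2}, and $L \subseteq L'$ is ruled out because $L_0 \subset L$ would force $L_0 \subset L'$. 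Hence by nestedness and Lemma \ref{lem:nestedint}, such $L$ and $L'$ intersect transversely, which in coordinates adapted to $J'$ means $I_L \cap I_{L'} = \emptyset$.

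This last observation is the key: the sets $A := \bigcup_{L \in J_{\mathrm{above}}} I_L$ and $B := \bigcup_{L \in J_{\mathrm{ncomp}}} I_L$ are disjoint. Moreover, $L_0 \subset \bigcap_{L \in J_{\mathrm{above}}} L = \{x_j = 0 : j \in A\}$ and $L_0$ is transverse in $\CP^n$ to each $L \in J_{\mathrm{ncomp}}$, so $L_0$ projects surjectively onto the slice $\{x_j=0 : j \in B\}$. A linear change of coordinates involving only the variables $\{x_j : j \notin A \cup B\}$ will therefore straighten $L_0$ into coordinate form while preserving every coordinate subspace appearing in $J'$, completing the inductive step. \emph{The hard part} of the argument is exactly this combinatorial/linear-algebraic step: verifying that the nested condition propagates, via Lemma \ref{lem:nestedint} and Lemma \ref{lem:l1l2}, to a compatibility between $L_0$ and the already-adapted coordinates from $J'$ strong enough to allow simultaneous straightening. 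Once the coordinate claim is in hand, the local abelian property follows at once.
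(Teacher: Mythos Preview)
Your reduction to finding adapted coordinates is correct and matches the paper's route via Lemma \ref{lem:locabadaptframe}, but the inductive straightening step has a gap. Take the smallest test case $J = \{L_0, L'\}$ with both hyperplanes in a $2$-dimensional tangent space, say $L' = \{x_1 = 0\}$ and $L_0 = \{x_1 + x_2 = 0\}$. Here $A = \emptyset$, $B = \{1\}$, $C = \{2\}$, and no change of the $x_2$-coordinate alone straightens $L_0$; one must replace $x_2$ by $x_1 + x_2$, so the new $C$-coordinates have to be allowed to involve the $B$-variables. That more general substitution does still preserve each element of $J'$ as a coordinate subspace (because $y_j = x_j$ for $j \in A \cup B$), but it is not what ``involving only the variables $\{x_j : j \notin A \cup B\}$'' says. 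There is a second issue: the inference ``$L_0$ is transverse to each $L' \in J_{\mathrm{ncomp}}$, so $L_0$ projects surjectively onto $\{x_j = 0 : j \in B\}$'' is invalid, since pairwise transversality with each $L'$ does not imply transversality with their common intersection. What is actually needed is that $L_0$ together with the \emph{minimal} elements of $J_{\mathrm{ncomp}}$ are pairwise non-comparable in $J$, hence jointly transverse by Lemma \ref{lem:nestedint}; this gives $(T_qL_0)^\perp \cap \spn\{dx_j : j \in B\} = 0$, which is exactly the condition allowing the extra defining equations of $L_0$ (beyond those coming from $A$) to be completed to new $C$-coordinates complementary to $\{x_j : j \in A \cup B\}$. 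With both fixes your induction works.

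The paper avoids this one-at-a-time straightening entirely. It passes to annihilators in $V^*$ (Lemma \ref{lem:adapbasistar}): the maximal elements $N_1,\ldots,N_k$ of the $*$-nested set $\mS^\perp$ form a direct sum, so $\mS^\perp$ partitions into independent pieces $\{N : N \subset N_i\}$ living inside each $N_i$, and one inducts on $\dim V^*$. The adapted basis of $V^*$ then dualises to one of $V$ (Corollary \ref{cor:adapbasis}) and is lifted to a local frame of $T\CP^n|_M$ via Lemma \ref{lem:nestedframe}. Removing all maximal annihilators at once is what makes the decomposition clean, in contrast to your removal of a single minimal subspace.
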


\begin{proof}
	We use the characterization of the locally abelian property in terms of adapted frames given by Lemma \ref{lem:locabadaptframe}\,.
	Outside \(D = \pi^{-1}(\mH)\) there is nothing to prove. Take \(\bar{p} \in D\) and let \(\mS\) be the set of all irreducible subspaces \(L \in \mLi\) such that \(\bar{p} \in D_L\). Let \(D_{\mS}\) be their common intersection \(D_{\mS} = \bigcap_{L \in \mS} D_L\), so that \(\bar{p} \in D_{\mS}\) and \(\bar{p} \notin D_{L'}\) for \(L' \notin \mS\). Since the intersection \(D_{\mS}\) is non-empty, by Proposition \ref{prop:nested}\,, the set \(\mS\) is nested relative to \(\mLi\). By Lemma \ref{lem:nestedint}\,, \(\mS\) is a nested set of projective subspaces as in Definition \ref{def:nestedProj}.
	Let \(M = \bigcap_{L \in \mS} L\)
	be the projective subspace of \(\CP^n\) obtained as the common intersection of the members of \(\mS\). Let \(p = \pi(\bar{p}) \in M\). By Lemma \ref{lem:nestedframe} (ii), there is a frame of vector fields \(e_1, \ldots, e_n\) of \(T\CP^n\) defined on a neighbourhood of \(p \in U \subset M\) such that for every \(q \in U\) and \(L \in \mS\), the vectors \(e_i(q)\) that belong to \(T_qL\) form a basis of \(T_qL\). The pullbacks \(\pi^*e_1, \ldots, \pi^*e_n\) form a frame of \(\mE = \pi^*(T\CP^n)\) on \(\pi^{-1}(U)\). Since \(\pi(D_\mS) \subset M\), the preimage \(\pi^{-1}(U)\)
	contains an open neighbourhood \(\bar{U} \subset D_{\mS}\) of \(\bar{p}\).
	By construction, the frame \(\pi^*e_1, \ldots, \pi^*e_n\) is adapted -as per Definition \ref{def:adaptedframe}- to the parabolic structure \(\mE_{*}\). This finishes the proof of the theorem.
\end{proof}

\subsection{The parabolic first Chern class}\label{sec:parc1}

In this section we show that \(\parch_1(\mE_{*}) = 0\). We begin with a preliminary formula.

\begin{lemma}
	The parabolic first Chern class of \(\mE_{*}\) is given by
	\begin{equation}\label{eq:parch1}
	\parc_1(\mE_*) = c_1(\mE) - \sum_{L \in \mLi} a_L \cdot \codim L \cdot \gamma_L \,. 
	\end{equation}
\end{lemma}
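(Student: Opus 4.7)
The plan is to unwind Definition \ref{def:pch1}, which expresses the parabolic first Chern class as
\[
\parc_1(\mE_*) = c_1(\mE) - \sum_{L \in \mLi} \lambda_L \cdot c_1(D_L),
\]
with \(\lambda_L = \sum_{a \in \wt(L)} a \cdot \rk(\Gr^L_a)\). Since by Notation \ref{not:gammaL} we have \(c_1(D_L) = \gamma_L\), the whole task reduces to identifying the weights and the ranks of the graded pieces of the filtration \(F^L_a \subset \mE|_{D_L}\) defined in the previous subsection.

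Next, I would read off the filtration directly from Equation \eqref{eq:FLa}, which is a two-step filtration
\[
F^L_a = \pi^*(TL) \text{ for } a < a_L, \qquad F^L_a = \mE|_{D_L} \text{ for } a \geq a_L.
\]
Consequently \(F^L_{<a_L} = \pi^*(TL)\) and \(F^L_{a_L} = \mE|_{D_L}\), whereas \(F^L_{<a} = F^L_a\) for every other \(a \in (0,1]\). Thus the only non-trivial graded piece (as in Definition \ref{def:gr}) is
\[
\Gr^L_{a_L} = \mE|_{D_L} \big/ \pi^*(TL) \,\cong\, \pi^*(N_L),
\]
the pullback to \(D_L\) of the normal bundle \(N_L\) of \(L \subset \CP^n\). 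In particular, \(\wt(L) = \{a_L\}\) and \(\rk(\Gr^L_{a_L}) = n - \dim L = \codim L\).

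Plugging these values into the definition of \(\lambda_L\) gives \(\lambda_L = a_L \cdot \codim L\), and substituting into the formula for \(\parc_1(\mE_*)\) yields precisely Equation \eqref{eq:parch1}. There is no genuine obstacle here: the statement is a direct bookkeeping consequence of how the weights \(a_L\) were assigned to the single jump of the filtration \eqref{eq:FLa}. The only thing to be a bit careful about is confirming that no hidden graded piece appears at \(a = 1\), which follows from the klt inequality \(a_L < 1\) ensuring \(F^L_{<1} = \mE|_{D_L} = F^L_1\) so that \(\Gr^L_1 = 0\).
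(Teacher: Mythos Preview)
Your proof is correct and follows essentially the same approach as the paper: both identify the single non-trivial graded piece \(\Gr^L_{a_L} \cong \pi^*(T\CP^n|_L/TL)\) from the two-step filtration \eqref{eq:FLa}, read off its rank as \(\codim L\), and plug into Definition \ref{def:pch1}. Your explicit check that no extra graded piece appears at \(a=1\) is a nice touch not made explicit in the paper, but otherwise the arguments are identical.
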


\begin{proof}
	By Equation \eqref{eq:FLa},
	the graded terms \(\Gr^L_a =  F^L_{a}\big/F^L_{<a}\) are
	\begin{equation*}
	\Gr^L_{a}  = \begin{cases}
	\pi^* \left( T\CP^n|_L \big/ TL \right)  &\text{ if } a = a_L, \\
	0	&\text{ otherwise. }    
	\end{cases}
	\end{equation*}
	Therefore \(\rk (\Gr^L_{a_L}) = \codim L\) and Equation \eqref{eq:parch1} follows from the formula for \(\parch_1(\mE_{*})\) given in Definition \ref{def:pch1}\,.
\end{proof}

\begin{lemma}
	In \(H^2(X, \R)\) the following identity holds:
	\begin{equation}\label{eq:identitysum}
	\sum_{L \in \mLi} a_L \cdot \codim L \cdot \gamma_L = s \cdot \pi^*h \,,
	\end{equation}
	where \(s = \sum_{H \in \mH} a_H \) is the sum of all hyperplane weights.
\end{lemma}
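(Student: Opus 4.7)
The plan is to prove the identity by swapping the order of summation on the left-hand side, grouping terms according to hyperplanes rather than irreducible subspaces, and then applying Equation \eqref{eq:tH} of Lemma \ref{lem:DH} (in its cohomological form, Corollary \eqref{eq:tH}) to each hyperplane separately.

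\textbf{Step 1.} By Definition \ref{def:weightaL}, $a_L \cdot \codim L = \sum_{H \mid L \subset H} a_H$. Substituting into the left-hand side of \eqref{eq:identitysum} and interchanging the order of summation gives
\[
\sum_{L \in \mLi} a_L \cdot \codim L \cdot \gamma_L \;=\; \sum_{L \in \mLi} \sum_{H \mid L \subset H} a_H \cdot \gamma_L \;=\; \sum_{H \in \mH} a_H \sum_{\substack{L \in \mLi \\ L \subset H}} \gamma_L.
\]

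\textbf{Step 2.} Fix $H \in \mH$. Since hyperplanes are irreducible subspaces (see the example in Section \ref{sec:irrarr}), we have $H \in \mLi$, and the inner sum runs over $L = H$ together with all $L \in \mLi$ with $L \subsetneq H$. Rewriting Equation \eqref{eq:tH} from Corollary \ref{cor:h2} as
\[
\pi^*h \;=\; \gamma_H + \sum_{\substack{L \in \mLi \\ L \subsetneq H}} \gamma_L \;=\; \sum_{\substack{L \in \mLi \\ L \subset H}} \gamma_L,
\]
we see that the inner sum equals $\pi^*h$ for every $H \in \mH$.

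\textbf{Step 3.} Substituting back yields
\[
\sum_{L \in \mLi} a_L \cdot \codim L \cdot \gamma_L \;=\; \sum_{H \in \mH} a_H \cdot \pi^*h \;=\; s \cdot \pi^*h,
\]
which is the desired identity.

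There is no real obstacle here: the identity is essentially a combinatorial reorganization of Lemma \ref{lem:DH}, and the only subtlety is to remember that $H \in \mLi$ so that the cohomological expression for $\pi^*h$ can be written as a sum over all irreducible subspaces contained in $H$, including $H$ itself.
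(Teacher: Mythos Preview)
Your proof is correct and uses essentially the same ingredients as the paper's proof—namely the definition of $a_L$ and Equation \eqref{eq:tH}. The paper organizes the computation by first splitting $\mLi = \mH \cup \mLi^{\circ}$ and then applying \eqref{eq:tH} to the hyperplane part, whereas you swap the order of summation directly; the two arguments are minor rearrangements of one another.
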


\begin{proof}
	We split the left hand side of Equation \eqref{eq:identitysum} as a sum over hyperplanes and irreducible subspaces of codimension \(\geq 2\), 
	\begin{equation}\label{eq:h2sum1}
	\sum_{L \in \mLi} a_L \cdot \codim L \cdot \gamma_L = 
	\sum_{H \in \mH} a_H \cdot \gamma_H + \sum_{L \in \mLi^{\circ}} a_L \cdot \codim L \cdot \gamma_L \,.	
	\end{equation}
	On the other hand, Equation \eqref{eq:tH} implies that
	\begin{equation}\label{eq:h2sum2}
	\begin{aligned}
	\sum_{H \in \mH} a_H \cdot \gamma_H &= s \cdot \pi^*h - \sum_{L \in \mLi^{\circ}} \left(\sum_{H | L \subset H} a_H \right) \cdot \gamma_L \\
	&= s \cdot \pi^*h - \sum_{L \in \mLi^{\circ}} a_L \cdot \codim L \cdot \gamma_L \,,
	\end{aligned}
	\end{equation}
	where the second equality follows from the definition of \(a_L\) given by Equation \eqref{eq:aL}. Equation \eqref{eq:identitysum} follows from Equations \eqref{eq:h2sum1} and \eqref{eq:h2sum2}.
\end{proof}

\begin{lemma}
	The parabolic first Chern class of \(\mE_{*}\) is zero
	\begin{equation}
	\parch_1 (\mE_*) = 0 .
	\end{equation}
\end{lemma}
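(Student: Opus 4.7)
The plan is to combine the two preceding lemmas with the Calabi--Yau hypothesis in a single short calculation. First I would compute $c_1(\mE)$: since $\mE = \pi^*(T\CP^n)$ and $c_1(T\CP^n) = (n+1)h$, naturality of Chern classes gives $c_1(\mE) = (n+1)\pi^*h$.

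Next I would substitute the formula from Equation \eqref{eq:identitysum} into Equation \eqref{eq:parch1} to obtain
\[
\parc_1(\mE_*) = c_1(\mE) - s \cdot \pi^*h = (n+1)\pi^*h - s\cdot \pi^*h,
\]
where $s = \sum_{H \in \mH} a_H$. The CY condition \eqref{eq:cy} states precisely that $s = n+1$, so the two terms cancel and $\parc_1(\mE_*) = 0$.

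There is no obstacle here: the lemma is a direct corollary of \eqref{eq:parch1}, \eqref{eq:identitysum}, and the CY hypothesis. The only ingredients worth verifying are that the computation of $c_1(T\CP^n)$ uses no input beyond the Euler sequence, and that $s$ as defined in \eqref{eq:identitysum} is the same quantity appearing in \eqref{eq:cy} — both immediate.
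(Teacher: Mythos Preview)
Your proposal is correct and follows essentially the same approach as the paper: compute $c_1(\mE)=(n+1)\pi^*h$, substitute \eqref{eq:identitysum} into \eqref{eq:parch1} to obtain $\parc_1(\mE_*)=(n+1-s)\pi^*h$, and invoke the CY condition $s=n+1$.
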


\begin{proof}
	Since \(\mE=\pi^*T\P^n\), we have
	\begin{equation*}
	c_1(\mE) = (n+1) \cdot \pi^*h .
	\end{equation*}
	It follows from Equations \eqref{eq:parch1} and \eqref{eq:identitysum} that
	\begin{equation*}
	\parch_1(\mE_{*}) = \big(n+1 - s \big) \cdot \pi^*h .
	\end{equation*}
	The lemma follows by using the CY condition \eqref{eq:cy} which requires \(s=n+1\).
\end{proof}

\subsection{The parabolic second Chern character}\label{sec:parch2}

By Equation \eqref{eq:pch2}, the parabolic second Chern character of \(\mE_{*}\) is given by
\begin{equation}\label{eq:pch20}
	\begin{gathered}
	\parch_2(\mE_{*}) = \ch_2(\mE) - \sum_{L \in \mLi} a_L \cdot \imath_{*} \big(c_1(\Gr^L_{a_L})\big)  \\ 
	+ \frac{1}{2} \sum_{(L, M)} a_L  a_{M} \cdot \nu_{L, M} \cdot \gamma_L \cdot \gamma_M \,.
	\end{gathered}
\end{equation}
The last sum runs over all ordered pairs \((L,M)\) of (not necessarily distinct) elements in \(\mLi \times \mLi\) with \(D_L \cap D_M \neq \emptyset\)\,, and
\[
\nu_{L, M} = \begin{cases}
 \rk \big( \Gr^{L, M}_{a_L, a_M} \big) &\textup{ if } L \neq M \,, \\
 \rk  \big( \Gr^{L}_{a_L} \big) = \codim L &\textup{ if } L = M \,,
\end{cases} 
\]
where \(\Gr^{L, M}_{a_L, a_M}\) is given by formula \eqref{eq:grij}.

\begin{lemma}
	For every \(L \in \mLi\) the following identity holds:
	\begin{equation}\label{eq:projform}
		\imath_{*} \big( c_1(\Gr^L_{a_L}) \big)  = \codim L \cdot \pi^*h \cdot \gamma_L \,.
	\end{equation}
\end{lemma}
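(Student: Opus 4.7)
The plan is to identify the graded piece $\Gr^L_{a_L}$ as a pullback of the normal bundle of $L \subset \CP^n$, compute its first Chern class using the fact that $L$ is a linear subspace of projective space, and then apply the projection formula for the closed immersion $\imath: D_L \hookrightarrow X$.

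First, from Equation \eqref{eq:FLa} the only non-trivial jump in the filtration $F^L_a$ occurs at $a=a_L$, and
\[
\Gr^L_{a_L} \;=\; F^L_{a_L}\big/F^L_{<a_L} \;=\; \mE|_{D_L}\big/\pi^*(TL) \;=\; (\pi|_{D_L})^{*}\bigl(T\CP^n|_L\big/TL\bigr) \;=\; (\pi|_{D_L})^{*} N_{L/\CP^n}.
\]
Next I would compute $c_1(N_{L/\CP^n})$. Since $L \subset \CP^n$ is a linear subspace, the normal bundle splits as $N_{L/\CP^n} \cong \mO_L(1)^{\oplus \codim L}$; equivalently, the short exact sequence $0 \to TL \to T\CP^n|_L \to N_{L/\CP^n} \to 0$ together with the Euler sequences for $L$ and $\CP^n$ restricted to $L$ gives
\[
c_1(N_{L/\CP^n}) \;=\; c_1(T\CP^n|_L) - c_1(TL) \;=\; (n+1)\, h|_L - (\dim L + 1)\, h|_L \;=\; \codim L \cdot h|_L.
\]
Pulling back by $\pi|_{D_L}$, this yields $c_1(\Gr^L_{a_L}) = \codim L \cdot (\pi^*h)|_{D_L}$.

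Finally I would apply the projection formula. Since $\gamma_L = \imath_{*}(1_{D_L})$ is the Poincar\'e dual of $D_L$, and $(\pi^*h)|_{D_L} = \imath^*(\pi^*h)$, we have
\[
\imath_{*}\bigl(c_1(\Gr^L_{a_L})\bigr) \;=\; \codim L \cdot \imath_{*}\bigl(\imath^*(\pi^*h)\bigr) \;=\; \codim L \cdot \pi^*h \cdot \imath_{*}(1_{D_L}) \;=\; \codim L \cdot \pi^*h \cdot \gamma_L,
\]
which is Equation \eqref{eq:projform}. There is no real obstacle here: the entire argument is a direct computation, the only subtlety being the verification that the map $\pi|_{D_L}: D_L \to L$ is well-defined (guaranteed by $\pi(D_L) = L$ from Theorem \ref{thm:conproc}) so that $(\pi|_{D_L})^* h|_L$ agrees with $\imath^*(\pi^* h)$.
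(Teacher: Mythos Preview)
Your proof is correct and follows essentially the same approach as the paper: identify $\Gr^L_{a_L}$ as the pullback of the normal bundle $N_{L/\CP^n} \cong \mO_L(1)^{\oplus \codim L}$, compute its first Chern class as $\codim L \cdot \imath^*(\pi^*h)$, and apply the projection formula $\imath_*(\imath^*\alpha) = \alpha \cdot \gamma_L$. The only cosmetic difference is that you give an alternative derivation of $c_1(N_{L/\CP^n})$ via the Euler sequences, whereas the paper simply invokes the splitting of the normal bundle of a linear subspace.
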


\begin{proof}
	On \(D_L\) we have 
	\[ 
	\Gr^L_{a_L} = \pi^*\big( T\CP^n|_L \big/ TL \big) = \pi^* \big( \mO_{\P^n}(1)|_{L}^{\oplus r} \big) \,,
	\] 
	where \(r = \codim L\). This implies that 
	\[
	c_1(\Gr^L_{a_L})  = \imath^* \alpha \,,
	\] 
	where \(\alpha = r \cdot \pi^*h \in H^2(X, \Z)\) and \(\imath^*\) is the pullback by the inclusion map of \(D_L \subset X\).
	The projection formula \(\imath_{*} \left(\imath^*\alpha\right) =  \alpha \cdot \gamma_L\) -see Equation 1.6 in \cite{voisin2}- implies that
	\begin{equation*}
		\imath_{*} \big( c_1(\Gr^L_{a_L}) \big) = \imath_{*} \big( \imath^* \alpha \big) = \alpha \cdot \gamma_L = r \cdot \pi^*h \cdot \gamma_L \,. 
	\end{equation*}
    This finishes the proof.
\end{proof}

\begin{lemma}
	Let \(L\) and \(M\) be irreducible subspaces with \(D_L \cap D_M \neq \emptyset\)\,. Then 
	\begin{equation}
		\nu_{L, M} = \codim (L + M) \,.
	\end{equation}
\end{lemma}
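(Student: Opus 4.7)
The plan is to unpack the definition of $\Gr^{L,M}_{a_L,a_M}$ in \eqref{eq:grij} using the explicit description \eqref{eq:FLa} of the filtrations, reducing the computation to a straightforward piece of projective linear algebra.

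First, I would note that because the filtrations $F^L_a$ and $F^M_a$ each have only one non-trivial jump (at $a=a_L$ and $a=a_M$ respectively), the pieces that appear in \eqref{eq:grij} take a very simple form. Indeed, $F^L_{a_L}|_{D_{LM}} = \mE|_{D_{LM}}$ and $F^L_{<a_L}|_{D_{LM}} = \pi^*(TL)|_{D_{LM}}$, and analogously for $M$. Substituting, the denominator in \eqref{eq:grij} becomes $\pi^*(TL)|_{D_{LM}} + \pi^*(TM)|_{D_{LM}} = \pi^*(TL+TM)|_{D_{LM}}$ while the numerator is all of $\mE|_{D_{LM}}$. Hence
\[
\Gr^{L,M}_{a_L,a_M} \;\cong\; \mE|_{D_{LM}} \,\big/\, \pi^*(TL+TM)|_{D_{LM}}.
\]

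Next, I would compute the rank of this quotient. Since $D_L \cap D_M \neq \emptyset$, Corollary \ref{cor:2intersect} ensures that $L \cap M$ is non-empty, so $L+M$ is well-defined; moreover $\pi(D_{LM}) \subset L \cap M$, so it suffices to compute the rank of $T\CP^n/(TL+TM)$ as a quotient vector bundle on $L \cap M$. Writing $L = \P(V)$ and $M = \P(W)$, at any point $p = [\tilde p] \in L \cap M$ the tangent spaces identify as $T_pL = V/\C\tilde p$ and $T_pM = W/\C\tilde p$, so
\[
T_p L + T_p M \;=\; (V+W)/\C\tilde p \;=\; T_p(L+M),
\]
which has dimension $\dim(L+M)$. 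Therefore the quotient has rank $n - \dim(L+M) = \codim(L+M)$, and pulling back by $\pi$ preserves the rank, giving $\nu_{L,M} = \codim(L+M)$.

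There is no real obstacle here; the only thing to be slightly careful about is the two degenerate cases from Corollary \ref{cor:2intersect}. If $L \subset M$ then $L+M = M$ and the denominator already simplifies to $\pi^*(TM)|_{D_{LM}}$, giving rank $\codim M = \codim(L+M)$. If instead $L \pitchfork M$, then $L+M = \CP^n$ by Lemma \ref{lem:l1l2} and the transversality identity $\codim(L \cap M) = \codim L + \codim M$ is consistent with the formula $\codim(L+M)=0$ — corresponding to the geometric fact that $TL + TM$ fills out $T\CP^n$ along the reducible intersection. Thus a single, uniform argument handles all cases.
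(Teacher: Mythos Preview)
Your proof is correct and follows essentially the same approach as the paper: both identify $\Gr^{L,M}_{a_L,a_M}$ with $\pi^*\big(T\CP^n/(TL|_N+TM|_N)\big)$ on $D_{LM}$ using the explicit filtrations \eqref{eq:FLa}, from which the rank $\codim(L+M)$ is read off. Your version is simply more explicit about the tangent-space identification $T_pL+T_pM=T_p(L+M)$ for $p\in L\cap M$, and your case discussion at the end, while not needed for the argument, is harmless.
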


\begin{proof}
	Since \(D_L \cap D_M \neq \emptyset\), the subspace \(N = L \cap M\) is non-empty and 
	\[
	\pi(D_L \cap D_M) \subset N .
	\]
	Since \(F^L_{< a_L} = \pi^*TL\)\,, \(F^L_{a_L} = \pi^*T\CP^n\)\,, and similarly for \(M\); on \(D_L \cap D_M\) the vector bundle \(\Gr^{L, M}_{a_L, a_M}\) (given by formula
	\eqref{eq:grij}) is equal to 
	\begin{equation*}\label{eq:grpair}
	\Gr^{L, M}_{a_L, a_M} = \pi^* \left( T\P^n \big/ \big(TL|_N + TM|_N \big) \right)
	\end{equation*}
	and the result follows.
\end{proof}

Split the terms that define \(\parch_2(\mE_{*})\) in Equation \eqref{eq:pch20} as
\begin{equation}
\parch_2(\mE_{*}) = \textup{A} + \textup{B}
\end{equation}
with
\begin{equation*}
\textup{A} = \ch_2(\mE) - \sum_{L \in \mLi} a_L \cdot \imath_{*} \big( c_1(\Gr^L_{a_L}) \big)  
\end{equation*}
and
\begin{equation*}
\textup{B} = \frac{1}{2} \sum_{(L, M)} a_L a_{M} \cdot \nu_{L, M} \cdot \gamma_L \cdot \gamma_M \,.
\end{equation*}

\begin{lemma}\label{lem:Ac2}
	The term \textup{A} in \(\parch_2(\mE_{*})\) is equal to
	\begin{equation}\label{eq:Atermc2}
	\textup{A} = - \frac{n+1}{2} \cdot (\pi^*h)^2 .
	\end{equation} 
\end{lemma}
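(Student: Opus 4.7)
The plan is to compute the two pieces of $\textup{A}$ separately, then combine them using the identity \eqref{eq:identitysum} together with the CY condition. This is a direct calculation rather than a deep result; the only potential annoyance is keeping track of numerical factors from the Chern character of $T\CP^n$.

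First I would compute $\ch_2(\mE)$. Since $\mE = \pi^*(T\CP^n)$ and the total Chern class of $T\CP^n$ is $(1+h)^{n+1}$, we have $c_1(T\CP^n) = (n+1)h$ and $c_2(T\CP^n) = \binom{n+1}{2} h^2$, so
\[
\ch_2(T\CP^n) = \tfrac{1}{2}\bigl( c_1(T\CP^n)^2 - 2 c_2(T\CP^n) \bigr) = \tfrac{1}{2}\bigl( (n+1)^2 - n(n+1) \bigr) h^2 = \tfrac{n+1}{2}\, h^2.
\]
Pulling back by $\pi$ gives $\ch_2(\mE) = \tfrac{n+1}{2}\,(\pi^*h)^2$.

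Next I would rewrite the second piece using Equation \eqref{eq:projform}, which yields
\[
\sum_{L \in \mLi} a_L \cdot \imath_{*}\bigl( c_1(\Gr^L_{a_L}) \bigr) \;=\; \pi^*h \cdot \sum_{L \in \mLi} a_L \cdot \codim L \cdot \gamma_L.
\]
By Equation \eqref{eq:identitysum}, the inner sum equals $s \cdot \pi^*h$ with $s = \sum_{H \in \mH} a_H$. The CY hypothesis \eqref{eq:cy} then gives $s = n+1$, so the second piece equals $(n+1)(\pi^*h)^2$.

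Combining these two computations,
\[
\textup{A} \;=\; \tfrac{n+1}{2}\,(\pi^*h)^2 \;-\; (n+1)\,(\pi^*h)^2 \;=\; -\tfrac{n+1}{2}\,(\pi^*h)^2,
\]
which is the desired formula \eqref{eq:Atermc2}. The only step requiring any thought is the computation of $\ch_2(T\CP^n)$; everything else is an immediate application of results already established in the paper.
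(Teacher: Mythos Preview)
Your proof is correct and follows essentially the same approach as the paper: compute $\ch_2(\mE)$ from the Chern classes of $T\CP^n$, rewrite the Gysin term via \eqref{eq:projform} and \eqref{eq:identitysum}, and subtract using the CY condition $s=n+1$. The paper's argument is organized identically, so there is nothing to add.
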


\begin{proof}
	By definition, the second Chern class of the vector bundle \(\mE\) is
	\begin{equation*}
	\ch_2(\mE) = \frac{1}{2} \big( c_1(\mE)^2 - 2c_2(\mE) \big) .
	\end{equation*}
	Euler's exact sequence implies that
	\begin{equation*}
	c_k (T\CP^n) = \binom{n+1}{k} \cdot h^k 
	\end{equation*}
	and we get that the second Chern character of \(\mE = \pi^*\big(T\CP^n\big)\) is
	\begin{equation}\label{eq:ch2pn}
	\ch_2(\mE) = \frac{n+1}{2} \cdot (\pi^*h)^2 \,.
	\end{equation}	
	On the other hand, it follows from Equations \eqref{eq:projform} and \eqref{eq:identitysum} that
	\begin{equation}\label{eq:algrl}
	\begin{aligned}
	\sum_{L \in \mLi} a_L \cdot \imath_{*} \big( c_1(\Gr^L_{a_L})\big)  &= \left(\sum_{L \in \mLi} a_L \cdot \codim L \cdot \gamma_L \right) \cdot \pi^*h \\
	&= s \cdot (\pi^*h)^2 .
	\end{aligned}	
	\end{equation}
	Taking the difference of Equations \eqref{eq:ch2pn} and \eqref{eq:algrl}  gives us
	\begin{equation*}
	\textup{A} = \left( \frac{n+1}{2} - s \right) \cdot (\pi^*h)^2 .
	\end{equation*}
	The result follows from the CY condition \eqref{eq:cy} which requires \(s=n+1\).	
\end{proof}

\begin{lemma}\label{lem:Bc2}
	The term \textup{B} in \(\parch_2(\mE_{*})\) is equal to
	\begin{equation}\label{eq:c2B2}
	\textup{B} = \frac{1}{2} \sum_{L \in \mLi} a_L^2 \cdot \codim L \cdot \gamma_L^2 \, + \, \sum_{L \subsetneq M} a_L  a_{M} \cdot \codim M \cdot \gamma_L \cdot \gamma_M \,,
	\end{equation}
	where the second sum runs over all pairs of irreducible subspaces with \(L \subsetneq M\).
\end{lemma}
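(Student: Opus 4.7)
The plan is to split the sum
\[
\textup{B} = \frac{1}{2} \sum_{(L,M)} a_L a_M \cdot \nu_{L,M} \cdot \gamma_L \cdot \gamma_M
\]
according to the relative position of $L$ and $M$. By Corollary \ref{cor:2intersect}, the ordered pairs $(L,M) \in \mLi \times \mLi$ with $D_L \cap D_M \neq \emptyset$ fall into exactly three classes: (a) the \emph{diagonal} $L = M$; (b) \emph{strict inclusions}, either $L \subsetneq M$ or $M \subsetneq L$; and (c) \emph{transverse pairs} with $L \pitchfork M$, i.e., $L \cap M$ non-empty and reducible.

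Next I would compute $\nu_{L,M} = \codim(L+M)$ (or $\codim L$ if $L = M$) in each class. In the diagonal case this is $\codim L$, producing the first sum on the right-hand side of \eqref{eq:c2B2} with the prefactor $\tfrac{1}{2}$ intact. In the inclusion case, if $L \subsetneq M$ then $L + M = M$, so $\nu_{L,M} = \codim M$; symmetrically, if $M \subsetneq L$ then $\nu_{L,M} = \codim L$. In the transverse case, Lemma \ref{lem:l1l2} forces $L + M = \CP^n$, so $\nu_{L,M} = 0$ and such pairs contribute nothing to $\textup{B}$.

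The last step is to combine the two inclusion subsums. Since both $a_L a_M$ and $\gamma_L \cdot \gamma_M$ are symmetric in $(L,M)$, relabeling the summation in the $M \subsetneq L$ branch yields
\[
\sum_{M \subsetneq L} a_L a_M \cdot \codim L \cdot \gamma_L \cdot \gamma_M
= \sum_{L \subsetneq M} a_L a_M \cdot \codim M \cdot \gamma_L \cdot \gamma_M,
\]
so the two branches add up to $2 \sum_{L \subsetneq M} a_L a_M \codim M \cdot \gamma_L \gamma_M$, and the factor $\tfrac{1}{2}$ in front of $\textup{B}$ cancels the $2$. Assembling the diagonal, inclusion, and (vanishing) transverse contributions gives \eqref{eq:c2B2}.

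There is no real obstacle here; the only subtle point is verifying that transverse pairs produce a zero coefficient, which is immediate from Lemma \ref{lem:l1l2}, and tracking the factor of $\tfrac{1}{2}$ correctly when unordered inclusion pairs are recovered from the symmetric ordered sum.
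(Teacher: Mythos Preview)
Your proof is correct and follows essentially the same approach as the paper: split into diagonal, inclusion, and transverse pairs (via Corollary \ref{cor:2intersect}, which the paper phrases through Proposition \ref{prop:nested}), kill the transverse contribution using Lemma \ref{lem:l1l2}, and absorb the factor $\tfrac{1}{2}$ by pairing $(L,M)$ with $(M,L)$ in the inclusion case.
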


\begin{proof}
	Clearly,
	\begin{equation}\label{eq:B1}
	\textup{B} = \frac{1}{2} \sum_{L \in \mLi} a_L^2 \cdot \nu_L \cdot \gamma_L^2 \,\, + \,\, \frac{1}{2} \sum_{L \neq M} a_L  a_{M} \cdot \nu_{L, M} \cdot \gamma_L \cdot \gamma_M \,,		
	\end{equation}
	where the second sum runs over all pairs \((L, M) \in \mLi \times \mLi\) with \(L \neq M\) and \(D_L \cap D_M \neq \emptyset\). 
	By Proposition \ref{prop:nested}\,,
	the set \(\mS = \{L, M\}\) is nested relative to \(\mLi\), so one of the following must happen:
	\begin{enumerate}[label=\textup{(\roman*)}]
	\item one subspace is contained in the other; 
	\item the intersection \(L\cap M\) is non-empty and reducible.
	\end{enumerate}
	If (ii) occurs then, by Lemma \ref{lem:l1l2}\,, \(L+ M = \CP^n\) and \(\nu_{L, M} = \codim (L+M) = 0\). Therefore, we can assume that for all pairs \((L, M)\) occurring in the second sum in Equation \eqref{eq:B1} 
	we have either \(L \subset M\) or \(M \subset L\). Since every pair of irreducible subspaces \(L, M\) with one contained in the other contributes twice to the sum as \((L, M)\) and \((M, L)\); and since \(\nu_{L, M} = \codim (L+M)\), we have
	\begin{equation}\label{eq:B2}
	\frac{1}{2} \sum_{L \neq M} a_L  a_{M} \cdot \nu_{L, M} \cdot \gamma_L \cdot \gamma_M = \sum_{L \subsetneq M} a_L  a_{M} \cdot \codim M \cdot \gamma_L \cdot \gamma_M \,.	
	\end{equation}
	Finally, Equation \eqref{eq:c2B2} follows from Equations \eqref{eq:B1} and \eqref{eq:B2}.
\end{proof}

Lemmas \ref{lem:Ac2} and \ref{lem:Bc2} together give us the next.

\begin{corollary}
	The parabolic second Chern character of \(\mE_{*}\) is given by
	\begin{equation}\label{eq:parch2E}
	\begin{gathered}
	\parch_2(\mE_{*}) = - \frac{n+1}{2} \cdot (\pi^*h)^2 \\
	+ \, \frac{1}{2} \sum_{L \in \mLi} a_L^2 \cdot \codim L \cdot \gamma_L^2 + \sum_{L \subsetneq M} a_L  a_{M} \cdot \codim M \cdot \gamma_L \cdot \gamma_M \,,
	\end{gathered}
	\end{equation}
	where the last sum is over all pairs \((L, M) \in \mLi \times \mLi\) with \(L \subsetneq M\).
\end{corollary}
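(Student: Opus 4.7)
The plan is to simply add together the two preceding lemmas. Recall that Equation \eqref{eq:pch20} decomposes $\parch_2(\mE_{*}) = \textup{A} + \textup{B}$, where $\textup{A}$ collects the $\ch_2(\mE)$ and $\imath_{*}\bigl(c_1(\Gr^L_{a_L})\bigr)$ contributions, and $\textup{B}$ collects the quadratic terms $a_L a_M \cdot \nu_{L,M} \cdot \gamma_L \cdot \gamma_M$ indexed by ordered pairs with $D_L \cap D_M \neq \emptyset$. Substituting the value of $\textup{A}$ from Lemma \ref{lem:Ac2} and the value of $\textup{B}$ from Lemma \ref{lem:Bc2} directly yields \eqref{eq:parch2E}.

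The only points worth flagging are where the CY hypothesis enters and how the double sum in $\textup{B}$ gets pared down to the pairs $L \subsetneq M$. The CY condition $\sum_{H} a_H = n+1$ is consumed in Lemma \ref{lem:Ac2}: combined with $\ch_2(\mE) = \tfrac{n+1}{2}(\pi^*h)^2$ coming from Euler's sequence for $T\CP^n$, and with identity \eqref{eq:identitysum}, it produces the leading term $-\tfrac{n+1}{2}(\pi^*h)^2$. The restriction to $L \subsetneq M$ is what Lemma \ref{lem:Bc2} supplies: among ordered pairs $(L,M)$ with $D_L \cap D_M \neq \emptyset$ and $L \neq M$, Proposition \ref{prop:nested} forces $\{L,M\}$ to be nested relative to $\mLi$, and in the case where $L \cap M$ is non-empty and reducible, Lemma \ref{lem:l1l2} gives $L + M = \CP^n$, hence $\nu_{L,M} = \codim(L+M) = 0$. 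The surviving ordered pairs are exactly $L \subsetneq M$ and $M \subsetneq L$, which double-count unordered nested pairs and absorb the factor $\tfrac{1}{2}$.

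There is no genuine obstacle: the substantive work has already been carried out in Lemmas \ref{lem:Ac2} and \ref{lem:Bc2}. The corollary is just the recombined statement in the form that will be directly useful for computing $c_1(P_k)^{n-2} \cdot \parch_2(\mE_{*})$ in the stability argument, using the intersection numbers collected in Section \ref{sec:intnum}.
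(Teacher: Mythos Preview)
Your proposal is correct and matches the paper's approach exactly: the paper's proof is a single sentence stating that Lemmas \ref{lem:Ac2} and \ref{lem:Bc2} together give the corollary. Your additional remarks about where the CY hypothesis enters and how the double sum collapses are accurate recapitulations of those lemmas' proofs, but are not needed for the corollary itself.
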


\section{Stability of the parabolic bundle}\label{sec:stability}

In this section, we show that the parabolic bundle \(\mE_{*}\) that 
we defined in Section \ref{sec:pardef} is slope stable. 
To state a precise result, 
fix polarization coefficients \(b_L\), as in Lemma \ref{lem:polarization}\,,  so that \(P_k\) is an ample line bundle on \(X\) for all \(k \gg 1\). 
In this section, we prove the following \emph{stability theorem}.

\begin{theorem}\label{thm:stability}
	The parabolic bundle \(\mE_*\) is \(P_k\)-stable for all \(k \gg 1\). More precisely, there is \(k_0\), depending only on \((\mH, \ba)\) and the polarization coefficients \(b_L\), such that
	for every non-zero and proper saturated subsheaf \(\mV \subset \mE\), we have
	\begin{equation}
	\forall k > k_0: \,\, \pardeg_{P_k}(\mV_*) < 0 \,, 
	\end{equation}
	where \(\mV_*\) is the parabolic structure on \(\mV\) induced from \(\mE_*\)\,.
\end{theorem}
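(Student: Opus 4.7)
The strategy is to reduce the stability inequality to the key positivity estimate announced in Proposition \ref{prop:key}, plus the non-existence result Corollary \ref{cor:noauto}, by expanding $\pardeg_{P_k}(\mV_*)$ as a polynomial in $k$ and isolating its leading coefficient. A non-zero proper saturated subsheaf $\mV\subset\mE=\pi^*(T\CP^n)$ descends via $\pi$ to a saturated subsheaf of $T\CP^n$, i.e.\ a distribution on $\CP^n$ of some rank $r$ and index $\imath$ (as defined in Section \ref{sec:distributions}). Writing $c_1(\mV)=\imath\cdot\pi^*h - \sum_{L\in\mLi^{\circ}} n_L\cdot\gamma_L$ in the basis of Corollary \ref{cor:h2}, Lemmas \ref{lem:boundvol} and \ref{lem:boundo1} give
\[
\deg_{P_k}(\mV) \,=\, \imath\cdot k^{n-1} \,+\, O(k^{n-2}),
\]
with the $O(k^{n-2})$ coefficients controlled uniformly in terms of $\mH$ and the polarization coefficients only (since $|n_L|$ is bounded by the rank $r\le n$ times numerical data of $\mV\subset T\CP^n$ independent of $\mV$).

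If $\imath<0$ I would conclude at once from $\pardeg_{P_k}(\mV_*)\le\deg_{P_k}(\mV)$ (Remark \ref{rmk:pc1lessc1}). So assume $\imath\ge 0$. The next step is to analyse the induced parabolic weights $\lambda^{\mV}_L$ of Equation \eqref{eq:indlambda}. Because the filtration on $\mE|_{D_L}$ has a single jump at $a=a_L$, one has
\[
\lambda^{\mV}_L \,=\, a_L\cdot\rk\bigl(\mV|_{D_L}\,\big/\,(\mV|_{D_L}\cap\pi^*TL)\bigr),
\]
which for $L=H\in\mH$ equals $a_H$ exactly when $H\pitchfork\mV$ and $0$ otherwise, and for $L\in\mLi^{\circ}$ is uniformly bounded. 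Combining with $\vol_{P_k}(D_H)=k^{n-1}+O(k^{n-2})$ and $\vol_{P_k}(D_L)=O(k^{n-2})$ from Lemma \ref{lem:boundvol}, I would obtain the main estimate
\[
\pardeg_{P_k}(\mV_*) \,\le\, \Bigl(\imath \,-\, \sum_{H\,|\,H\pitchfork\mV} a_H\Bigr)\cdot k^{n-1} \,+\, O(k^{n-2}),
\]
with the error term again controlled by a constant depending only on $(\mH,\ba)$ and the $b_L$.

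Proposition \ref{prop:key} then provides a uniform $\delta>0$ such that $\sum_{H\pitchfork\mV} a_H \ge \imath+\delta$ for every distribution of index $\imath\ge0$, so the bracketed leading coefficient is $\le-\delta$. The uniformity of both $\delta$ and the $O(k^{n-2})$ bound allows me to choose $k_0$ depending only on $(\mH,\ba)$ and the polarization coefficients so that $\pardeg_{P_k}(\mV_*)<0$ for all $k>k_0$, uniformly in $\mV$. The degenerate boundary case $\imath=0$ with no transverse hyperplanes would correspond to a rank-$r$ subsheaf of $T\CP^n$ everywhere tangent to $\mH$, which is excluded by Lemma \ref{lem:klycy} and Corollary \ref{cor:noauto}; this case is in fact absorbed into Proposition \ref{prop:key}. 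The main obstacle of the whole argument is therefore Proposition \ref{prop:key} itself: granted the key inequality $\sum_{H\pitchfork\mV}a_H\ge\imath+\delta$, the stability theorem is essentially a careful bookkeeping of parabolic weights against the volume asymptotics of the exceptional divisors under the polarization $P_k$.
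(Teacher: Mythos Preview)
Your proposal is correct and matches the paper's argument essentially step for step: split on the sign of \(\imath\), use the volume asymptotics of Lemmas \ref{lem:boundvol}--\ref{lem:boundo1} to isolate the leading \(k^{n-1}\) coefficient, and invoke Proposition \ref{prop:key} in the case \(\imath\ge 0\). One small imprecision worth flagging: you assert that \(|n_L|\) is uniformly bounded, but the paper only proves (and only needs) the one-sided bound \(d_L\le n-\imath\) of Lemma \ref{lem:bounddL}, obtained by restricting \(\mV\) to a suitable line; since \(\vol_{P_k}(D_L)>0\) for \(k\gg1\) and you only require an \emph{upper} bound on \(\deg_{P_k}(\mV)\) and \(\pardeg_{P_k}(\mV_*)\), this one-sided estimate is enough and your argument goes through unchanged.
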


\noindent In Section \ref{sec:auxres}\,, we establish some auxiliary results. \newpar

\noindent In Section \ref{app:key}\,, we prove the key estimate needed to show stability. This boils down to analysing the tangencies between \(\mH\) and distributions \(\mV \subset T\CP^n\) with \(c_1(\mV) \geq 0\), as described in Proposition \ref{prop:key}\,. \newpar

\noindent In Section \ref{sec:pfst}\,, we prove the stability Theorem \ref{thm:stability}\,.\newpar

\noindent In Section \ref{sec:pfmain}\,, we prove our main Theorem \ref{thm:main}\,. 
We derive Theorem \ref{thm:main} as a consequence of Theorem \ref{thm:stability} together with Mochizuki's version of the Bogomolov-Gieseker inequality for parabolic bundles \cite[Theorem 6.5]{mochizuki}.

\subsection{Auxiliary results}\label{sec:auxres}

The next result is an analogue of \cite[Lemma 7.7]{panov}.

\begin{lemma}\label{lem:key}
	Let \(M \subsetneq \CP^n\) be a non-empty linear subspace, then
	\begin{equation}
	\sum_{H| M \not\subset H} a_H > \dim M + 1 .
	\end{equation}
\end{lemma}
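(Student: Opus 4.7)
The plan is to combine the Calabi-Yau normalization with the klt condition applied directly to $M$. Decompose the full sum of weights according to whether a hyperplane contains $M$ or not:
\[
\sum_{H \in \mH} a_H = \sum_{H | M \subset H} a_H + \sum_{H | M \not\subset H} a_H.
\]
By the CY condition \eqref{eq:cy}, the left-hand side equals $n+1$, so
\[
\sum_{H | M \not\subset H} a_H = (n+1) - \sum_{H | M \subset H} a_H.
\]

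Next I would invoke the klt condition in its strongest formulation. Since $M$ is non-empty and proper, Corollary \ref{cor:kltcondition}(iii) applies directly to $M$ and gives
\[
\sum_{H | M \subset H} a_H < \codim M = n - \dim M.
\]
Substituting into the previous identity yields
\[
\sum_{H | M \not\subset H} a_H > (n+1) - (n - \dim M) = \dim M + 1,
\]
which is exactly the required inequality.

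There is no real obstacle here: the lemma is essentially a repackaging of the klt inequality via the CY normalization. The only subtlety worth flagging is that one must apply the klt condition to the subspace $M$ itself, which may not lie in the intersection poset $\mL$; this is precisely the content of Corollary \ref{cor:kltcondition}(iii), which extends the klt inequality from $\mLi$ to arbitrary non-empty proper linear subspaces by grouping hyperplanes through the irreducible decomposition of $\bigcap_{H | M \subset H} H$.
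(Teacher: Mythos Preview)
Your proof is correct and follows exactly the same approach as the paper: split the total sum via the CY condition and then apply Corollary~\ref{cor:kltcondition}(iii) to bound \(\sum_{H | M \subset H} a_H < \codim M\). Your remark about needing part~(iii) because \(M\) need not lie in \(\mL\) is precisely the point the paper's proof is silently using.
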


\begin{proof}
	The CY and klt conditions together give us
	\begin{align*}
	\sum_{H| M \not\subset H} a_H &= n+1 - \sum_{H | M \subset H} a_H \\
	&> n+1 - \codim M = \dim M + 1 \,,
	\end{align*}
	where we used Corollary \ref{cor:kltcondition} (iii) in the inequality step. 
\end{proof}

Let \(\mV\) be a saturated subsheaf of \(\mE\). By Corollary \ref{cor:h2} we can write
\begin{equation}\label{eq:c1V}
	c_1(\mV) = \imath \cdot \pi^*h + \sum_{L \in \mLi^{\circ}} d_L \cdot \gamma_L 
\end{equation}
for unique integers \(\imath\) and \(d_L\).
The next result is an analogue of \cite[Lemma 7.6]{panov}.

\begin{lemma}\label{lem:bounddL}
	Let \(\imath\) and \(d_L\) be as in Equation \eqref{eq:c1V}.
	Then
	\begin{equation}\label{eq:boundi}
	\imath \leq r \,,	
	\end{equation}
	where \(r\) is the rank of \(\mV\), and 
	\begin{equation}\label{eq:bounddL}
		\forall L : \,\, d_L \leq n - \imath \,.
	\end{equation}
\end{lemma}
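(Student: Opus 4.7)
The plan is to deduce both inequalities from a single construction: choose a well-chosen line \(\ell \subset \CP^n\) and analyze the restriction of \(\mV\) to the proper transform \(\tilde\ell \subset X\), exploiting Grothendieck's splitting \(T\CP^n|_\ell \cong \mO_\ell(2) \oplus \mO_\ell(1)^{\oplus(n-1)}\). First observe that both bounds are trivial when \(\mV = 0\) and fail when \(\mV = \mE\) (where \(\imath = n+1\) and \(r = n\)), so we may assume \(0 < r < n\). Throughout, let \(\mV' \subset T\CP^n\) denote the saturated distribution obtained from \(\mV\) via pushforward on the open locus where \(\pi\) is an isomorphism.

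For the bound \(\imath \leq r\), I would take \(\ell \subset \CP^n\) generic. By Lemma \ref{lem:ptgenplane} combined with Corollary \ref{cor:vanishing}\,(i),
\[
[\tilde\ell] \cdot c_1(\mV) = (\pi^*h)^{n-1} \cdot c_1(\mV) = \imath.
\]
Since \(\mV\) is proper and saturated, \(\mV'|_q\) is a proper subspace of \(T_q\CP^n\) at a generic point \(q\), so for a generic line the tangent direction \(T\ell\) is not contained in \(\mV|_{\tilde\ell}\). Hence \(\mV|_{\tilde\ell}\) is a rank \(r\) subbundle of \(\mO(2) \oplus \mO(1)^{\oplus(n-1)}\) which injects into the quotient \(\mO(1)^{\oplus(n-1)}\); as rank-\(r\) subbundles of \(\mO(1)^{\oplus(n-1)}\) on \(\P^1\) have degree at most \(r\), this yields \(\imath = \deg(\mV|_{\tilde\ell}) \leq r\).

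For the bound \(d_L \leq n - \imath\), I would fix \(L \in \mLi^\circ\), pick a generic \(p \in L^\circ\), and take a generic line \(\ell \subset \CP^n\) through \(p\). The plan is to establish the intersection numbers \([\tilde\ell] \cdot \pi^*h = [\tilde\ell] \cdot \gamma_L = 1\) and \([\tilde\ell] \cdot \gamma_{L'} = 0\) for every \(L' \in \mLi^\circ \setminus \{L\}\). Once these are granted, Equation \eqref{eq:c1V} gives \([\tilde\ell] \cdot c_1(\mV) = \imath + d_L\), and the argument of the first part bounds \(\deg(\mV|_{\tilde\ell}) \leq r\); hence \(\imath + d_L \leq r \leq n - 1\) and therefore \(d_L \leq n - \imath\).

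The hard part will be verifying that \(\tilde\ell\) avoids \(D_{L'}\) for every \(L' \in \mLi^\circ\) with \(L' \neq L\). When \(L' \not\supseteq L\), the characterization of \(L^\circ\) from Section \ref{sec:habasicdef} gives \(p \notin L'\), and a generic line through \(p\) misses \(L'\) since \(\codim L' \geq 2\). The delicate case is \(L' \supsetneq L\) with \(L' \in \mLi^\circ\): here \(p\) does lie on \(L'\), so one must trace through the iterated blow-up of Proposition \ref{prop:itblowup}. Any admissible ordering blows up \(L\) strictly before \(L'\), and after blowing up \(L\) the proper transform \(\widetilde{L'}\) meets \(D_L\) over \(p\) along the proper subvariety \(\P(T_pL'/T_pL) \subsetneq \P(N_{L,p})\); choosing the direction of \(\ell\) at \(p\) outside the finite union of these subvarieties ensures that \(\tilde\ell\) avoids \(\widetilde{L'}\), and therefore stays disjoint from \(D_{L'}\) after \(\widetilde{L'}\) is subsequently blown up.
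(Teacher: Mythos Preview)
Your proof is correct and follows the same strategy as the paper: restrict to a suitably chosen line and use the Grothendieck splitting of \(T\CP^n|_\ell\). Two remarks are worth making. First, for the second bound the paper only uses the cruder estimate \(\deg(\mV|_{\tilde Q}) \leq n\) (any subbundle of \(\mO(2)\oplus\mO(1)^{\oplus(n-1)}\)), so it does not need to argue that the line is non-tangent to the distribution; your sharper bound \(\imath + d_L \leq r\) is valid but unnecessary. Second, your careful treatment of the case \(L' \supsetneq L\) is genuinely an improvement: the paper asserts one can take \(Q\) with \(Q\cap M=\emptyset\) for all \(M\in\mL\setminus\{L\}\) of codimension \(\geq 2\), which is literally impossible whenever some such \(M\) strictly contains \(L\); what is really needed, and what you verify, is that the proper transform \(\tilde\ell\) avoids \(D_{L'}\), which follows from choosing the direction of \(\ell\) at \(p\) outside the finitely many proper linear subspaces \(\P(T_pL'/T_pL)\subset\P(N_{L,p})\).
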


\begin{proof}
	To show that \(\imath \leq r\), to take a generic line \(Q\) in \(\CP^n\) which does not intersect any of the irreducible subspaces of the arrangement of codimension \(\geq 2\) and such that \(\mV|_Q\) is a vector subbundle of \(T\CP^n|_Q\) which does not contain \(TQ\). The same argument used to prove Lemma \ref{lem:indlessrank} in the Appendix shows that
	Equation \eqref{eq:boundi} holds.
	
	To establish the bound on the coefficients \(d_L\), we argue similarly. Given \(L\), take a projective line \(Q \subset \CP^n\)
	such that: (i) \(Q\) meets \(L\) at single point; (ii) \(Q \cap M = \emptyset\) for all \(M \in \mL \setminus \{L\}\) with \(\codim M \geq 2\); (iii)  \(\mV|_{\tQ}\) is a vector subbundle of
	\begin{equation*}\label{eq:EP}
		\mE|_{\tQ} \cong \mO_{\P^1}(2) \oplus \mO_{\P^1}(1)^{\oplus(n-1)} \,,
	\end{equation*}
	 where \(\tQ\) is the proper transform of \(Q\). It is clear that we can take a line \(Q\) with the above three properties.
	
	Item (iii) implies that
	\begin{equation}\label{eq:bdl1}
		\deg (\mV|_{\tQ_L}) \leq n.
	\end{equation}
	On the other hand, items (i) and (ii) together with Equation \eqref{eq:c1V} give us
	\begin{equation}\label{eq:bdl2}
		\deg (\mV|_{\tQ_L}) = \imath + d_L \,.
	\end{equation}
	Equations \eqref{eq:bdl1} and \eqref{eq:bdl2}
	\[
	 \imath + d_L \leq n
	\]
	which is equivalent to Equation \eqref{eq:bounddL}.
\end{proof}

Next, we discuss tangencies between hyperplanes and distributions.

\begin{definition}\label{def:disttanhyp}
Let \(\mV \subset T\CP^n\) be a distribution and let \(H \subset \CP^n\) be a hyperplane.
We say that \(\mV\) is tangent to \(H\) (or that \(H\) is tangent to \(\mV\)), if for every point \(x \in H \cap U\) -where \(U\) is the regular set of \(\mV\)- we have 
\[
\mV_x \subset T_x H \,,
\] 
where \(\mV_x\) denotes the fibre at \(x\) of the vector subbundle \(\mV \subset T\CP^n\) on \(U\).
We denote by \(\Tan(\mV)\) the collection of all hyperplanes tangent to \(\mV\), 
\[
\Tan(\mV) = \{ H \,\, | \,\, H \text{ is tangent to } \mV \}\,.
\]	
\end{definition}

We analyse the set \(\Tan(\mV)\) for distributions with non-negative \emph{index} (see Definition \ref{def:index}). 
Our next result is an analogue of \cite[Lemma 7.4]{panov}. The proof relies on results from Appendix \ref{sec:distributions}\,.


\begin{proposition}\label{prop:posindexdist}\label{prop:zeroindexdist}
	Let \(\mV \subset T\CP^n\) be a distribution of index \(\imath \geq 0\).
	\begin{enumerate}[label=\textup{(\roman*)}]
		\item If \(\imath > 0\) then there is a linear subspace \(M \subset \CP^n\) with 
		\begin{equation}\label{eq:posindexdist}
		\dim M \geq \imath -1
		\end{equation}
		such that any hyperplane \(H \in \Tan(\mV)\) contains \(M\). 
		\item Suppose that \(\imath = 0\) and that \(\mV\) is tangent to the \(n+1\) coordinate hyperplanes. Then there is a non-zero holomorphic vector field \(Y\) on \(\CP^n\) that  is tangent to all the hyperplanes in \(\Tan(\mV)\).
	\end{enumerate} 
\end{proposition}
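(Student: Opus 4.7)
The plan is to handle the two parts by separate arguments, each reducing to a Chern class computation after a natural restriction of \(\mV\).

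For part (i), I would intersect tangent hyperplanes. Set \(M = \bigcap_{H \in \Tan(\mV)} H\) and \(d = \dim M\), and pick \(H_1, \ldots, H_k \in \Tan(\mV)\) with \(k = \codim M\) meeting transversally at \(M\). Iterating the tangency condition gives \(\mV|_M \subset TM\) as subsheaves of \(T\CP^n|_M\), and since \(c_1(\mV) = \imath \cdot h\) on \(\CP^n\), pullback yields \(c_1(\mV|_M) = \imath \cdot h_M\) in \(H^2(M, \Z)\) with \(M \cong \CP^d\). Let \(\bar{\mV}_M\) be the saturation of \(\mV|_M\) inside \(TM\); its first Chern class dominates \(\imath \cdot h_M\) because saturation only adds effective divisor classes. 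If \(\bar{\mV}_M = TM\), then \(c_1(\bar{\mV}_M) = (d+1) h_M\), so \(\imath \leq d+1\), i.e.\ \(d \geq \imath - 1\). Otherwise \(\bar{\mV}_M\) is a proper saturated subsheaf of \(TM\) of rank at most \(d - 1\); applying the bound underlying Lemma~\ref{lem:bounddL} to \(\bar{\mV}_M\) on \(M \cong \CP^d\) shows its index on \(M\) is at most \(d - 1\), and combining with the Chern class inequality gives \(\imath \leq d - 1\), so \(d \geq \imath + 1\). In either case, \(\dim M \geq \imath - 1\).

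For part (ii), the assumption that \(\mV\) is tangent to the \(n+1\) coordinate hyperplanes is equivalent to the inclusion \(\mV \subset T\CP^n(\log D)\) where \(D = \sum_{i=0}^n H_i\). The logarithmic tangent bundle is trivial of rank \(n\): from the residue sequence one computes \(c_1(T\CP^n(\log D)) = c_1(T\CP^n) - [D] = (n+1)h - (n+1)h = 0\), and the \(n\)-dimensional space of diagonal linear vector fields modulo the Euler field provides a global frame, so \(T\CP^n(\log D) \cong \mO_{\CP^n}^n\). The hypothesis \(\imath = 0\) forces \(c_1(\mV) = 0\) and hence \(\det \mV \cong \mO\). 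The determinant inclusion \(\det \mV \hookrightarrow \Lambda^r \mO^n = \mO^{\binom{n}{r}}\) then corresponds to a non-zero element of \(\Lambda^r \C^n\), decomposable pointwise as the top exterior power of the fibre \(\mV_x\), which singles out an \(r\)-dimensional subspace \(V_0 \subset \C^n = H^0(T\CP^n(\log D))\). The constant subbundle \(V_0 \otimes \mO\) is saturated in \(\mO^n\) and agrees generically with \(\mV\), so \(\mV = V_0 \otimes \mO \cong \mO^r\). Any non-zero \(Y \in V_0 = H^0(\mV)\) is then a holomorphic vector field on \(\CP^n\); as a global section of \(\mV\) it is tangent to every \(H \in \Tan(\mV)\).

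The delicate point is the Chern class bookkeeping in part (i): when \(\dim M = 0\) the identity \(c_1(\mV|_M) = \imath h_M\) becomes vacuous because \(h_M\) vanishes on a point, and when \(M\) lies entirely in the codimension-\(\geq 2\) singular locus of \(\mV\) the restriction \(\mV|_M\) can acquire torsion that must be tracked carefully. I would complement the saturation argument by a direct pointwise bound at a smooth point \(x \in M\) of \(\mV\), using \(\mV_x \subset \bigcap_{H \in \Tan(\mV)} T_x H = T_x M\) to obtain \(r \leq d\) and hence \(\imath \leq d\); the residual case in which no such \(x\) exists on \(M\) would be handled by a residue analysis at the singular stratum of \(\mV\) along \(M\).
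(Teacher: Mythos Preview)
Your argument for part (ii) is correct and in fact proves more than the paper does: you show that any such \(\mV\) is a trivial subbundle \(V_0 \otimes \mO \subset T\CP^n(\log D) \cong \mO^n\), hence globally generated by vector fields tangent to every hyperplane in \(\Tan(\mV)\). The paper instead lifts \(\mV\) to a homogeneous multivector \(\bv \in T^{\circ}_{r+1}\) on \(\C^{n+1}\) and iteratively contracts it against logarithmic \(1\)-forms \(dx_i/x_i\) (Proposition~\ref{prop:zeroindex}) until reaching a linear vector field \(\bv'\) with \(e \wedge \bv' \neq 0\). Your route is shorter and more conceptual; the paper's route stays within the multivector formalism that also handles part (i) uniformly.

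Your argument for part (i), however, has a genuine gap. The pointwise bound \(\mV_x \subset T_x M\) only works when \(M\) meets the regular set of \(\mV\), and both the saturation argument and the identity \(c_1(\mV|_M) = \imath \cdot h_M\) presuppose \(M \neq \emptyset\) and that the restriction \(\mV|_M\) is a well-behaved torsion-free sheaf. You never address the case \(M = \emptyset\) --- i.e.\ \(\Tan(\mV)\) contains \(n+1\) linearly independent hyperplanes --- which is exactly the case where one must derive a contradiction from \(\imath > 0\). (Ironically, your own machinery from part (ii) would dispatch this: if \(\mV \subset T\CP^n(\log D) \cong \mO^n\) then \(\det \mV = \mO(\imath)\) injects into \(\mO^{\binom{n}{r}}\), forcing \(\imath \leq 0\). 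But you did not say this.) The proposed ``residue analysis'' for the case \(M \subset Z\) is not an argument.

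The paper avoids all of these difficulties by working upstairs on \(\C^{n+1}\): it associates to \(\mV\) the multivector \(\bv = \sum_I a_I\, \partial x_I \in V_{d+1,\, r+1}\) and observes (Lemma~\ref{lem:posindex}) that tangency to \(\{x_j = 0\}\) forces \(x_j \mid a_I\) whenever \(j \in I\); so if \(\bv\) is tangent to \(k\) linearly independent hyperplanes then a degree count on a non-vanishing coefficient \(a_I\) gives \(k \leq n+1 - \imath\). This yields \(\dim M \geq \imath - 1\) directly, with no restriction to \(M\), no saturation, and no singular-locus case analysis.
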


\begin{proof}
	Let \(r\) be the rank of \(\mV\) and let \(d =r-\imath\) be its degree. By Corollary \ref{cor:distmult}\,, the distribution \(\mV\) defines an
	\((r+1)\)-vector field \(\bv\)  on \(\C^{n+1}\) with homogeneous polynomial coefficients of degree \(d+1\). By
	Lemma \ref{lem:tanhm}\,, a hyperplane \(H \subset \CP^n\) belongs to \(\Tan(\mV)\) if and only if the corresponding linear hyperplane \(H^{\bc} \subset \C^{n+1}\) 
	belongs to \(\Tan(\bv)\), the set of linear hyperplanes in \(\C^{n+1}\) that are tangent to \(\bv\) (Definitions \ref{def:multvftanhyp} and \ref{def:tanv}).
	
	(i) By Proposition \ref{prop:posindex}\,, there is a linear subspace \(M^{\bc} \subset \C^{n+1}\) with \(\dim M^{\bc} \geq \imath\) that is contained in all hyperplanes in \(\Tan(\bv)\). Item (i) follows by taking \(M = \P(M^{\bc})\).
	
	(ii) Since \(\mV\) is tangent to the coordinate hyperplanes, the multivector field \(\bv\) belongs to the subspace \(T_{r+1}^{\circ}\) from Definition \ref{def:tr}\,. By Proposition \ref{prop:zeroindex}\,, there is a linear vector field \(\bv'\) on \(\C^{n+1}\) that is tangent to all the hyperplanes in \(\Tan(\bv)\) and such that \(e \wedge \bv'\) is non-zero. We let \(Y\) be the projection of \(\bv'\) down to \(\CP^n\). Then \(Y\) is tangent to all elements in \(\Tan(\mV)\) and, since \(e \wedge \bv' \neq 0\), the vector field \(Y\) is non-zero.
\end{proof}

\begin{example}
	Let \(M \subset \CP^n\) be a linear subspace with \(\dim M = r-1\) for some \(1 \leq r \leq n-1\). The collection of all \(r\)-dimensional subspaces that contain \(M\) defines a distribution \(\mV \cong \mO_{\P^n}(1)^{\oplus r}\) of index \(\imath = r\). A hyperplane \(H\) belongs to \(\Tan(\mV)\) if and only if \(H \supset M\). In particular,
	\[
	\bigcap_{H \in \Tan(\mV)} H = M \,.
	\]
	This example shows that the inequality \eqref{eq:posindexdist} is sharp. See also Example \ref{ex:distansubspace} in the appendix for a presentation of this distribution in terms of a multivector field.
\end{example}

\begin{remark}
The proof of item (ii) of Proposition \ref{prop:zeroindex} shows that
the vector field \(Y\) is tangent to \(\mV\), meaning that \(Y(x) \in \mV_x\) for all points \(x\) in the regular set of \(\mV\). 
The next example shows that, in general, distributions of index zero do not necessarily admit non-zero tangent vector fields.
\end{remark}

\begin{example}\label{ex:pereira}
	Let \(f\) and \(h\) be homogeneous polynomials in \(\C[x_0, \ldots, x_3]\) with \( \deg h =1\) and \(f\) generic with \(\deg f = 3\). The \(1\)-form on \(\C^4\) given by
	\[
	\omega = 3f dh - hdf
	\]
	has homogeneous polynomial coefficients of degree \(3\)
	and the contraction of \(\omega\) with the Euler vector field on \(\C^4\) is identically zero. The \(1\)-form \(\omega\) defines a codimension \(1\) distribution \(\mV\) on \(\CP^3\) of rank \(r=2\) and degree \(d = 2\) (see Remark \ref{rmk:homog1form}). The index of \(\mV\) is equal to \(\imath = r - d = 0\).
	The distribution \(\mV\) is integrable. The leaves of the foliation defined by \(\mV\) make a pencil of cubic surfaces  in \(\CP^3\) given by \(\{\lambda_1 \cdot f + \lambda_2 \cdot h^3 = 0\}\) with \([\lambda_1, \lambda_2] \in \CP^1\).
	A holomorphic vector field \(Y\) tangent to \(\mV\) must also be tangent to the smooth cubic surface \(\{f=0\}\) and therefore it must vanish. 
\end{example}

\subsection{Key estimate}\label{app:key}

Let \(\mV \subset T\CP^n\) be a distribution and let \(\Tan(\mV)\) be the set of all hyperplanes tangent to \(\mV\) (see Definition \ref{def:disttanhyp}).

\begin{definition}\label{def:HtransV}
	We say that a hyperplane \(H \subset \CP^n\)  is transverse to \(\mV\) if
	\begin{equation}
		H \notin \Tan(\mV) \,.
	\end{equation}
	We write this condition as \(H \pitchfork \mV\).
\end{definition}

According to the above definition, if \(H \notin \Tan(\mV)\) then there exists \(x \in U\) (the regular set of \(\mV\)) such that \(\mV_x \not\subset T_xH\). In particular, the subspaces \(\mV_x, T_xH \subset T_x \CP^n\) are transversal. By the openness of transversality, it follows that \(\mV_y\) and \(T_yH\) are transverse for all points \(y\) in a Zariski open subset of \(H\).

The next result lies at the core of the proof of Theorem \ref{thm:stability}\,.

\begin{proposition}\label{prop:key}
	Suppose that \((\mH, \ba)\) is a weighted arrangement that is  klt and CY.
	Then there is \(\delta>0\) such that, for any distribution \(\mV \subset T\CP^n\) with non-negative index \(\imath \geq 0\), we have
	\begin{equation}\label{eq:keyest}
		\sum_{H  | H \pitchfork \mV} a_H \geq \imath + \delta  \,,
	\end{equation}
	where the sum is over all \(H \in \mH\) that are transverse to \(\mV\).
\end{proposition}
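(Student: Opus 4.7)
The plan is to split according to the sign of $\imath$, apply Proposition \ref{prop:posindexdist}(i) or (ii) to control the set $T := \Tan(\mV) \cap \mH$ of arrangement hyperplanes tangent to $\mV$, and extract a uniform positive gap from the strict klt inequalities together with Corollary \ref{cor:noauto}. Using the CY identity $\sum_{H \in \mH} a_H = n+1$, the target inequality \eqref{eq:keyest} rewrites as $\sum_{H \in T} a_H \leq n+1-\imath-\delta$, so the task reduces to bounding $\sum_{H \in T} a_H$ from above in both regimes.

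Suppose first that $\imath > 0$. Proposition \ref{prop:posindexdist}(i) provides a linear subspace $M \subset \CP^n$ with $\dim M \geq \imath - 1$ contained in every $H \in \Tan(\mV)$, hence in every $H \in T$. If $T = \emptyset$ the bound is immediate, so assume $T \neq \emptyset$ and let $K := \bigcap_{H \in T} H$; then $M \subset K$ forces $K$ to be a nonempty proper subspace in $\mL$ with $\codim K \leq n + 1 - \imath$, and $T \subseteq \mH_K$. The klt condition at $K$ gives
\[
\sum_{H \in T} a_H \,\leq\, \sum_{H \in \mH_K} a_H \,\leq\, \codim K - \delta_1 \,\leq\, n + 1 - \imath - \delta_1,
\]
where $\delta_1 := \min_{L \in \mL}\bigl(\codim L - \sum_{H \,|\, L \subset H} a_H\bigr)$ is the uniform klt gap, positive by finiteness of $\mL$ and the klt hypothesis. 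Hence $\sum_{H \pitchfork \mV} a_H \geq \imath + \delta_1$.

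Now suppose $\imath = 0$. I claim $T \subsetneq \mH$. Otherwise $\mV$ is tangent to every $H \in \mH$, and since $\mH$ is essential by Lemma \ref{lem:klycy}, it contains $n+1$ linearly independent hyperplanes; after a linear change of coordinates these become the $n+1$ coordinate hyperplanes, and Proposition \ref{prop:posindexdist}(ii) supplies a nonzero holomorphic vector field tangent to every hyperplane in $\Tan(\mV) \supset \mH$, contradicting Corollary \ref{cor:noauto}. Hence some $H^* \in \mH$ is transverse to $\mV$, so $\sum_{H \pitchfork \mV} a_H \geq a_{\min} := \min_{H \in \mH} a_H = \imath + a_{\min}$. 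Setting $\delta := \min(\delta_1, a_{\min}) > 0$ completes both cases. The substantive mathematical input is Proposition \ref{prop:posindexdist}; the delicate point here is the zero-index case, where Corollary \ref{cor:noauto} is what prevents $\mV$ from being tangent to the entire arrangement and thereby converts klt strictness into a uniform positive gap.
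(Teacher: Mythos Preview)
Your proof is correct and follows essentially the same approach as the paper: split into the cases $\imath > 0$ and $\imath = 0$, invoke Proposition~\ref{prop:posindexdist} in each, and extract a uniform gap from the strict klt inequalities (respectively, from Corollary~\ref{cor:noauto}). The only cosmetic difference is that in the $\imath > 0$ case you pass from $M$ to $K = \bigcap_{H \in T} H \in \mL$ and apply the klt gap there, whereas the paper applies it directly at $M$; your version has the minor advantage that $K$ automatically lies in $\mL$, so no side discussion of the case $\codim M \leq 1$ is needed.
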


\begin{proof}
	Take 
	\begin{equation}\label{eq:delta}
	\delta = \min \{\delta_1, \delta_2\}	
	\end{equation}
	with 
	\[
	\delta_1 = \min\limits_{H \in \mH} a_H  \,\, \text{ and } \,\,
	\delta_2 = \min_{L \subset \CP^n} \left( \codim L - \sum_{H | L \subset H} a_H \right) \,,
	\]
	where the minimum in the definition of \(\delta_2\) is taken over all non-empty linear subspaces \(L \subset \CP^n\) with \(\codim L \geq 2\). The klt condition implies that \(\delta>0\).
	
	We prove that Equation \eqref{eq:keyest} holds with the above \(\delta\). 	
	We analyse the cases \(\imath = 0\) and \(\imath > 0\) separately.
	\begin{itemize}
		\item Case \(\imath = 0\). We want to show that there is at least one hyperplane \(H' \in \mH\) such that \(\mV\) is not tangent to \(H'\).
		We proceed by contradiction and suppose that \(\mV\) is tangent to every hyperplane in \(\mH\). Then, since \(\mH\) contains \(n+1\) linearly independent hyperplanes, by Proposition \ref{prop:zeroindexdist} there is a non-zero holomorphic vector field \(Y\) that is tangent to all the members of \(\mH\), but this contradicts Corollary \ref{cor:noauto}\,. We conclude that there is \(H' \in \mH\) such that \(\mV\) is not tangent to \(H'\); therefore
		\begin{equation}\label{eq:esti0}
			\sum_{H  | H \pitchfork \mV} a_H  \geq a_{H'}  \geq \delta_1 = \imath + \delta_1 .		
		\end{equation}
		
		\item Case \(\imath > 0\). By Proposition \ref{prop:posindexdist} there is a subspace \(M \subset \CP^n\) with \(\dim M \geq \imath -1\) such that if \(\mV\) is tangent to \(H\) then \(H \supset M\). In particular, if \(M \not\subset H\) then \(H \pitchfork \mV\) and hence
		\begin{equation*}\label{eq:posind1}
			\sum_{H  | H \pitchfork \mV} a_H \geq \sum_{H | M \not\subset H} a_H .
		\end{equation*}
		Using the CY condition, we have
		\begin{equation*}\label{eq:posind2}
			\begin{aligned}
				\sum_{H | M \not\subset H} a_H 
				&= n+1 - \sum_{H | M \subset H} a_H \\
				&=  n+1 - \codim M + \left( \codim M - \sum_{H | M \subset H} a_H \right)  \\
				&\geq \imath + \delta_2 \,,
			\end{aligned}
		\end{equation*}
		where the last inequality uses the bound \(\dim M \geq \imath -1\) and the definition of \(\delta_2\). We conclude that
		\begin{equation}\label{eq:estipost}
		\sum_{H  | H \pitchfork \mV} a_H \geq \imath + \delta_2 .
		\end{equation}
	\end{itemize}
	The statement follows from Equations \eqref{eq:delta}, \eqref{eq:esti0}, and \eqref{eq:estipost}.
\end{proof}

\begin{remark}
	The number \(\delta>0\) in Proposition \ref{prop:key} depends only on the weighted arrangement \((\mH, \ba)\) -as given by Equation \eqref{eq:delta}- and not on the distribution \(\mV\).
\end{remark}

\subsection{Proof of Theorem \ref{thm:stability}}\label{sec:pfst}

Let \(\mV\) be a saturated subsheaf of \(\mE\).
We want to show that there is \(k_0\), independent of \(\mV\), such that \(\pardeg_{P_k}(\mV_{*}) < 0\) for all \(k \geq k_0\).
Let \(\imath\) be `the index' of \(\mV\), as defined by Equation \eqref{eq:c1V}.

\begin{notation}\label{not:On}
	We denote by \(O(k^{n-2})\) a polynomial in \(k\) of degree at most \(n-2\), say \(\sum_{j=0}^{n-2} C_j k^j\), such that there is a positive number \(K = K (\mH, b_L)\), depending only on \(\mH\) and the polarization coefficients \(b_L\), such that \(|C_j| \leq K\) for all \(j\). 
\end{notation}

\begin{lemma}\label{lem:negativeindex}
	Suppose that \(\imath < 0\). Then there is \(k_0\), that depends only on the arrangement \(\mH\) and the polarization coefficients \(b_L\), such that \(\deg_{P_k} (\mV) < 0\) for all \(k > k_0\).
\end{lemma}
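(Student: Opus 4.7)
The plan is to estimate $\deg_{P_k}(\mV)$ directly from the expansion of $c_1(\mV)$ in Equation \eqref{eq:c1V}, keeping careful track of how the implicit constants depend on $\mV$. First I would write
\[
\deg_{P_k}(\mV) \;=\; \imath \cdot \bigl(c_1(P_k)^{n-1} \cdot \pi^{*}h\bigr) \;+\; \sum_{L \in \mLi^{\circ}} d_L \cdot \vol_{P_k}(D_L).
\]
By Lemma \ref{lem:boundo1}, the first term equals $\imath(k^{n-1} + f(k))$ for a polynomial $f$ of degree at most $n-2$ whose coefficients are bounded in absolute value by a constant depending only on $\mH$ and the polarization coefficients $b_L$. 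By Lemma \ref{lem:boundvol}, each $\vol_{P_k}(D_L)$ with $L \in \mLi^{\circ}$ is a polynomial in $k$ of degree at most $n-2$ with coefficients controlled by the same sort of uniform constant; and since $P_k$ is ample and $D_L$ is effective, we have in addition $\vol_{P_k}(D_L) > 0$.

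The conceptual heart of the argument is to combine this positivity with the upper bound
\(d_L \leq n - \imath = n + |\imath|\)
from Lemma \ref{lem:bounddL}. Because $\vol_{P_k}(D_L) > 0$, any summand $d_L \cdot \vol_{P_k}(D_L)$ with $d_L < 0$ is itself negative and may simply be discarded when estimating $\deg_{P_k}(\mV)$ from above. This is fortunate, because Lemma \ref{lem:bounddL} offers no matching lower bound on $d_L$ and no such uniform lower bound seems immediately available from the material developed so far; the argument sidesteps this obstacle entirely by exploiting the sign structure. Carrying out the bookkeeping yields an estimate of the form
\[
\deg_{P_k}(\mV) \;\leq\; -|\imath| \cdot k^{n-1} \;+\; \bigl(A \cdot |\imath| + B\bigr) \cdot k^{n-2},
\]
with positive constants $A, B$ depending only on $\mH$ and $b_L$.

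Since $\imath$ is a non-zero integer with $\imath < 0$, we have $|\imath| \geq 1$, and the right-hand side above is strictly negative as soon as $k > A + B$. Taking $k_0 = A + B + 1$, which depends only on $\mH$ and the polarization coefficients, then gives $\deg_{P_k}(\mV) < 0$ for every $k > k_0$, completing the proof. The routine part is the tracking of constants; the main thing I want to flag is the asymmetric use of the bound on $d_L$: a naive absolute-value estimate would force us to control $|d_L|$ and therefore to produce a lower bound on $d_L$, which is exactly what we do not have, whereas the sign-sensitive estimate above requires only the upper bound already at our disposal.
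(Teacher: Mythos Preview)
Your proof is correct and follows essentially the same approach as the paper: expand $\deg_{P_k}(\mV)$ via \eqref{eq:c1V}, use Lemmas~\ref{lem:boundvol} and \ref{lem:boundo1} for the intersection numbers, and exploit the positivity $\vol_{P_k}(D_L)>0$ together with the one-sided bound $d_L \le n-\imath$ from Lemma~\ref{lem:bounddL} to obtain an upper estimate that is negative for $k$ large (independently of $\mV$). The paper applies the bound $d_L \le n-\imath$ to every term at once rather than first discarding the summands with $d_L<0$, but this is purely cosmetic---your explicit remark that no lower bound on $d_L$ is needed is exactly the point the paper uses implicitly.
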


\begin{proof}
	By Equation \eqref{eq:c1V}, we have
	\begin{equation*}
	\deg_{P_k} (\mV) = \imath \cdot c_1(P_k)^{n-1} \cdot \pi^*h + \sum_{L \in \mLi^{\circ}} d_L \cdot c_1(P_k)^{n-1} \cdot \gamma_L \,.		
	\end{equation*}
	By Lemmas \ref{lem:boundvol} and \ref{lem:boundo1}, we have \(c_1(P_k)^{n-1} \cdot \gamma_L = f_L\) with \(f_L = O(k^{n-2})\) and 
	\[c_1(P_k)^{n-1} \cdot \pi^*h = k^{n-1} + f_0\] 
	with \(f_0 = O(k^{n-2})\). Together with the bound \(d_L \leq n - \imath\) from Lemma \ref{lem:bounddL}, we obtain
	\begin{equation*}
	\begin{aligned}
	\deg_{P_k} (\mV) &= \imath \cdot k^{n-1} + \imath \cdot f_0 + \sum_{L \in \mLi^{\circ}} d_L \cdot f_L \\
	&\leq \imath \cdot \left( k^{n-1} +  f_0 - \sum_{L \in \mLi^{\circ}} f_L   \right)  + n \sum_{L \in \mLi^{\circ}}  f_L
	\end{aligned}
	\end{equation*}
	Dividing by \(k^{n-1}\), we get
	\begin{equation*}
	\frac{1}{k^{n-1}} \deg_{P_k} (\mV) \leq \,\, \imath \cdot \textup{A}  \,\, + \,\, \textup{B}
	\end{equation*}
	with
	\[
	\textup{A} = 1 + O(\epsilon) \,\, \text{ and } \,\, \textup{B} = O(\epsilon)	,
	\]
	where \(O(\epsilon)\) denotes a polynomial in the variable \(\epsilon = 1/k\) with zero constant term  whose coefficients are uniformly bounded in absolute value in terms of the arrangement and the fixed integers \(b_L\). Thus, if we choose \(k_0\) sufficiently big so that, say \(\textup{A} > 1/2\) and \(\textup{B} < 1/3\) for all \(k \geq k_0\), then
	\[
	\frac{1}{k^{n-1}} \deg_{P_k} (\mV) \leq \imath \cdot \textup{A} + \textup{B} < -\frac{1}{2} + \frac{1}{3} < 0
	\]
	for all \(\imath \leq -1\).
\end{proof}

The saturated subsheaf \(\mV \subset \mE\) induces in a natural way a distribution on \(\CP^n\). Concretely, \(\mV\) is a vector subbundle of \(\mE = \pi^* T\CP^n\) outside a codimension \(2\) analytic subset \(Z \subset X\). The analytic subset \(W \subset \CP^n\) given by
\[
W = \pi(Z) \cup \left( \bigcup_{L \in \mLi^{\circ}} L \right)
\]
has codimension \(\geq 2\) and \(\pi\) restricts to a biholomorphism between  \(\pi^{-1}(U)\) and \(U\), where \(U = \CP^n \setminus W\). On the open set \(U\), the push-forward sheaf \(\pi_*\mV\) is a vector subbundle of \(T\CP^n\), thus defining a distribution on \(\CP^n\). By slight abuse of notation, we shall also write \(\mV\) for the distribution \(\mV \subset T\CP^n\).

\begin{lemma}\label{lem:nonnegindex}
	If \(\imath \geq 0\), then
	\begin{equation}
		\pardeg_{P_k}(\mV_{*}) \leq \left(\imath - \sum_{H  | H \pitchfork \mV} a_H\right) \cdot k^{n-1} + O(k^{n-2}) \,.
	\end{equation}
\end{lemma}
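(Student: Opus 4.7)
The plan is to expand
\[
\pardeg_{P_k}(\mV_*) = \deg_{P_k}(\mV) - \sum_{L \in \mLi} \lambda^\mV_L \cdot \vol_{P_k}(D_L)
\]
and analyse each piece using the volume estimates of Lemmas \ref{lem:boundvol} and \ref{lem:boundo1}, together with a direct computation of the induced weight $\lambda^\mV_H$ at every divisor $D_H$ with $H \in \mH$.

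First I would identify $\lambda^\mV_H$ from \eqref{eq:indlambda}. The filtration $F^H_a \subset \mE|_{D_H}$ has its unique weight at $a_H$, with $F^H_{<a_H} = \pi^*(TH)$ and $F^H_{a_H} = \mE|_{D_H}$. Hence the induced filtration $V^H_a = F^H_a \cap \mV|_{D_H}$ has at most one weight at $a_H$, and the generic rank of the graded piece $V^H_{a_H}/V^H_{<a_H}$ coincides with the generic rank of the composition
\[
\mV|_{D_H} \hookrightarrow \mE|_{D_H} \twoheadrightarrow \mE|_{D_H}/\pi^*(TH) \cong \pi^*(N_{H/\CP^n}).
\]
Because the target has rank $1$, this generic rank is either $0$ or $1$. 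Since $\pi$ restricts to a biholomorphism between a Zariski open subset of $D_H$ and a Zariski open subset of $H$, the image vanishes generically if and only if the distribution $\mV \subset T\CP^n$ induced by $\mV$ is tangent to $H$ in the sense of Definition \ref{def:disttanhyp}. Therefore $\lambda^\mV_H = a_H$ when $H \pitchfork \mV$, and $\lambda^\mV_H = 0$ when $H \in \Tan(\mV)$.

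It then remains to combine this with $\vol_{P_k}(D_H) = k^{n-1} + O(k^{n-2})$ for $H \in \mH$ and $\vol_{P_k}(D_L) = O(k^{n-2})$ for $L \in \mLi^{\circ}$. Since $\lambda^\mV_L \geq 0$ and $\vol_{P_k}(D_L) > 0$ by ampleness of $P_k$, the $L \in \mLi^{\circ}$ terms on the right hand side are non-positive and can be dropped for an upper bound, yielding
\[
\pardeg_{P_k}(\mV_*) \leq \deg_{P_k}(\mV) - \Bigl(\sum_{H \pitchfork \mV} a_H\Bigr) k^{n-1} + O(k^{n-2}).
\]
To bound $\deg_{P_k}(\mV)$, I would use \eqref{eq:c1V} together with Lemmas \ref{lem:boundo1} and \ref{lem:bounddL}: the hypothesis $\imath \geq 0$ gives $d_L \leq n - \imath \leq n$, and since $\vol_{P_k}(D_L) > 0$, we have $\sum_{L \in \mLi^{\circ}} d_L \vol_{P_k}(D_L) \leq n \sum_{L \in \mLi^{\circ}} \vol_{P_k}(D_L) = O(k^{n-2})$. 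Combined with $\imath \cdot c_1(P_k)^{n-1} \cdot \pi^*h = \imath \cdot (k^{n-1} + O(k^{n-2}))$, this gives $\deg_{P_k}(\mV) \leq \imath \cdot k^{n-1} + O(k^{n-2})$, and the claimed inequality follows.

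The main delicate point is the identification of $\lambda^\mV_H$: one must translate the parabolic structure on $\mV$, a priori defined outside the codimension-$2$ subset from Remark \ref{rmk:indpar}, into tangency of the induced distribution on $\CP^n$ to $H$. Fortunately only generic ranks are needed for \eqref{eq:indlambda}, so it suffices to work on the open set where $\pi$ is an isomorphism and $\mV$ is a vector subbundle. A subtlety one might have expected, namely obtaining a lower bound on the $d_L$, is avoided entirely by exploiting that ampleness of $P_k$ forces $\vol_{P_k}(D_L) > 0$.
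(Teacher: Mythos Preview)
Your proof is correct and follows essentially the same route as the paper: identify the induced parabolic weight at each $D_H$ via the tangency dichotomy, use the volume estimates from Lemmas \ref{lem:boundvol} and \ref{lem:boundo1}, and bound $\deg_{P_k}(\mV)$ via the coefficient bounds of Lemma \ref{lem:bounddL}. The only cosmetic difference is that you discard the contributions from $L \in \mLi^{\circ}$ by positivity of $\lambda^{\mV}_L$ and of $\vol_{P_k}(D_L)$, whereas the paper keeps these terms and bounds them using $0 \leq r_L a_L \leq n$; both yield the same $O(k^{n-2})$ conclusion.
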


\begin{proof}
The induced parabolic bundle \(\mV_{*}\) is given by the filtration
\[
\mV^L_a = \mV \cap \mE^L_a = \begin{cases}
\mV^L \, &\text{ for } \,  0 < a < a_L, \\
\mV \, &\text{ for } a_L \leq a \leq 1 ;
\end{cases}
\]
where \(L\) ranges over all elements in \(\mLi\) and \(\mV^L\) is the sheaf of sections of \(\mV\) which are tangent to \(\pi^*(TL)\) when restricted to \(D_L\). In particular, if on the regular set of \(\mV\) (c.f. Remark \ref{rmk:indpar}), the restriction of \(\mV\) to \(D_L\) is contained in \(\pi^*(TL)\), then the filtration \(\mV^L_a\) is trivial, in the sense that \(\mV^L_a = \mV\) for all \(0< a \leq 1\).

If \(L = H\) is a hyperplane, then the quotient \(\mV / \mV^H\) is non-zero if and only if
\[
\mV|_{D_H} \not\subset \pi^*(TH) \,.
\]
This is equivalent to \(H\) being transverse to the distribution \(\mV \subset T\CP^n\) as in Definition \ref{def:HtransV}. It follows that
\begin{equation}\label{eq:pf1}
\begin{gathered}
\pardeg_{P_k}(\mV_{*}) = \deg_{P_k}(\mV) \,\, - \\
\left( \sum_{H  | H \pitchfork \mV} a_H \cdot c_1(P_{k})^{n-1} \cdot \gamma_H + \sum_{L \in \mLi^{\circ}} r_L a_L \cdot c_1(P_k)^{n-1} \cdot \gamma_L   \right) \,,
\end{gathered}	
\end{equation}
where \(r_L\) is the rank (possibly zero) of the quotient sheaf on \(D_L\) given by \(\mV / \mV^L\). Same as in the proof of Lemma \ref{lem:negativeindex}\,,
\begin{equation*}
\begin{aligned}
\deg_{P_k} (\mV) &= \imath \cdot c_1(P_k)^{n-1} \cdot \pi^*h + \sum_{L \in \mLi^{\circ}} d_L \cdot c_1(P_k)^{n-1} \cdot \gamma_L		\\
&\leq \imath \cdot k^{n-1} +  \imath \cdot f_0 + \sum_{L \in \mLi^{\circ}} (n-\imath) \cdot f_L
\end{aligned}
\end{equation*}
where \(f_0 = O(k^{n-2})\) and \(f_L = O(k^{n-2})\). Since \(0 \leq \imath \leq n\), we get
\begin{equation}\label{eq:pf2}
\deg_{P_k}(\mV) \leq \imath \cdot k^{n-1} + O(k^{n-2}).	
\end{equation}
On the other hand, since the weights \(a_H, a_L \in (0,1)\) and \(0 \leq r_L \leq n\),
\begin{equation}\label{eq:pf3}
\begin{gathered}
\sum_{H  | H \pitchfork \mV} a_H \cdot c_1(P_{k})^{n-1} \cdot \gamma_H + \sum_{L \in \mLi^{\circ}} r_L a_L \cdot c_1(P_k)^{n-1} \cdot \gamma_L   \\
= \left(\sum_{H  | H \pitchfork \mV} a_H \right) \cdot k^{n-1} + O(k^{n-2}) .
\end{gathered}	
\end{equation}
The lemma follows from Equations \eqref{eq:pf1}, \eqref{eq:pf2}, and \eqref{eq:pf3}.
\end{proof}

\begin{lemma}\label{lem:polybound}
	Let \(p(k)\) be a polynomial with real coefficients 
	\[
	p(k) = c_{n-1} k^{n-1} + \ldots + c_1 k + c_0 .
	\]
	Suppose that \(c_{n-1} = - \delta\) with \(\delta>0\) and that \(c_{n-2}, \ldots, c_0 \leq C\) for some \(C>0\). Then \(p(k) < 0\) for all positive integers \(k > 2(n-1)C / \delta\).
\end{lemma}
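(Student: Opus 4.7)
The statement is a routine estimate on polynomials, so the plan is short. I would bound the sum of lower-order terms and compare against the dominant leading term $-\delta k^{n-1}$.

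First, since $c_j \leq C$ for $j = 0, 1, \ldots, n-2$ and $k \geq 1$, each lower-order term satisfies $c_j k^j \leq C k^j \leq C k^{n-2}$. Summing the $n-1$ contributions gives
\[
p(k) = -\delta k^{n-1} + \sum_{j=0}^{n-2} c_j k^j \,\leq\, -\delta k^{n-1} + (n-1) C\, k^{n-2} \,=\, k^{n-2}\bigl((n-1)C - \delta k\bigr).
\]
Since $k^{n-2} > 0$ for $k \geq 1$, the right-hand side is negative as soon as $(n-1)C - \delta k < 0$, i.e.\ $k > (n-1)C/\delta$. In particular, this holds for every positive integer $k > 2(n-1)C/\delta$, which is the claim.

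The argument is completely elementary; there is no genuine obstacle. The only minor point to be careful about is that the hypothesis is a one-sided bound $c_j \leq C$ rather than $|c_j| \leq C$, but this is exactly what is needed: very negative coefficients only help make $p(k)$ more negative, so an upper bound on the $c_j$ is enough to upper-bound $p(k)$. The slack factor of $2$ in the threshold $2(n-1)C/\delta$ (versus the sharper $(n-1)C/\delta$ that the estimate gives) is harmless.
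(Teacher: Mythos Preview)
Your proof is correct and is essentially identical to the paper's: both bound $\sum_{j=0}^{n-2} c_j k^j \leq (n-1)C k^{n-2}$ for $k\geq 1$ and then factor to conclude that $p(k) \leq k^{n-2}\bigl((n-1)C - \delta k\bigr)$, which is negative once $k > (n-1)C/\delta$.
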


\begin{proof}
	For \(k \geq 1\) we have 
	\[
	p(k) \leq \left(-\delta + \frac{(n-1)C}{k} \right) k^{n-1} 
	\]
    and the statement follows.
\end{proof}

\begin{proof}[Proof of Theorem \ref{thm:stability}]
	We divide the proof into cases according to the sign of \(\imath\). 
	
	The case \(\imath < 0\) follows from the obvious inequality:  
	\[
	\pardeg_{P_k}(\mV_{*}) \leq \deg_{P_k}(\mV)
	\]
	(see Remark \ref{rmk:pc1lessc1}) together with Lemma \ref{lem:negativeindex}.
	
	Therefore, we can assume that \(\imath \geq 0\). 
	By Lemmas \ref{lem:nonnegindex} and \ref{lem:polybound}, it is enough to show that 
	\begin{equation}
		 c := \imath - \sum_{H  | H \pitchfork \mV} a_H
	\end{equation}
	is \(< 0\).
	Since index of the distribution \(\mV \subset T\CP^n\) is equal to \(\imath \geq 0\),
	by Proposition \ref{prop:key} we get that \(c = -\delta\) for some \(\delta>0\) that depends only on \((\mH, \ba)\). This concludes the proof of Theorem \ref{thm:stability}\,.
\end{proof}

\begin{remark}
Tracing back the arguments in the proof of Theorem \ref{thm:stability}, we showed that we can take \(k_0 = C / \delta\), where \(C\) only depends on \(\mH\) and the polarization coefficients \(b_L\), and \(\delta\) is as in Proposition \ref{prop:key}\,.	
\end{remark}

\subsection{Proof of Theorem \ref{thm:main}}\label{sec:pfmain}

Let \(\mE_{*}\) be the parabolic bundle on \((X, D)\) as defined in Section \ref{sec:pardef} and let \(P_k\) be the polarization on \(X\) as given by Lemma \ref{lem:polarization}\,. Consider the top product of the polarization \(P_k\) with \(\parch_2(\mE_{*})\) given by
\begin{equation}
	p(k) = c_1(P_k)^{n-2} \cdot \parch_2(\mE_{*}) \,.
\end{equation}
The expression \(p(k)\) defines a polynomial in \(k\) of degree \(n-2\) with real coefficients. The coefficients of \(p(k)\) depend only on the weighted arrangement \((\mH, \ba)\) and the fixed integers \(b_L\) involved in the choice of polarization \(P_k\). More precisely, 
if we write 
\[
c_1(P_k) = k \cdot \pi^*h - e
\] 
with \(e = \sum_{L \in \mLi^{\circ}} b_L \cdot \gamma_L\), then
\begin{equation}\label{eq:pN}
p(k) = C_{n-2} k^{n-2} + C_{n-1} k^{n-1} + \ldots + C_0 \,,
\end{equation}
where the coefficients \(C_j\) are given by
\begin{equation}\label{eq:coefpol}
C_j = \binom{n-2}{j} \cdot  (\pi^*h)^j \cdot (-e)^{n-2-j} \cdot \parch_2(\mE_{*}) \,.
\end{equation}

\begin{lemma}\label{lem:hoc}
	The coefficient \(C_{n-2}\) in Equation \eqref{eq:pN} is given by
	\begin{equation}\label{eq:hoc}
	C_{n-2} =  \sum_{L \in \mLi^{n-2}} a_L^2 - \frac{1}{2} \sum_{H \in \mH} B_H \cdot a_H^2 - \frac{n+1}{2} \,,
	\end{equation}	
	where \(B_H + 1\) is the number of codimension \(2\) irreducible subspaces contained in \(H\).
\end{lemma}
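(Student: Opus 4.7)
The plan is direct evaluation. From \eqref{eq:coefpol} with $j = n-2$, the binomial coefficient is $1$ and the factor $(-e)^0 = 1$, so
\[
C_{n-2} = (\pi^*h)^{n-2} \cdot \parch_2(\mE_{*}).
\]
I would then substitute the formula \eqref{eq:parch2E} for $\parch_2(\mE_{*})$ and evaluate the three resulting pieces using the intersection-number results of Section \ref{sec:intnum}.

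The first piece gives $-\tfrac{n+1}{2}(\pi^*h)^n = -\tfrac{n+1}{2}$. For the self-intersection sum $\tfrac{1}{2}\sum_{L \in \mLi} a_L^2 \codim L \cdot (\pi^*h)^{n-2}\gamma_L^2$, I would split $\mLi$ according to codimension: Corollary \ref{cor:vanishing}(ii) kills every term with $\codim L \geq 3$; the codimension-$2$ irreducible terms contribute $\tfrac{1}{2}\sum_{L \in \mLi^{n-2}} a_L^2 \cdot 2 \cdot (-1) = -\sum_{L \in \mLi^{n-2}} a_L^2$ via \eqref{eq:selftl}; and the hyperplane terms contribute $\tfrac{1}{2}\sum_{H \in \mH} a_H^2 \cdot 1 \cdot (-B_H)$ via \eqref{eq:selfth}.

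The main bookkeeping step is the cross-term sum $\sum_{L \subsetneq M} a_L a_M \codim M \cdot (\pi^*h)^{n-2}\gamma_L \gamma_M$. Here I would apply Lemma \ref{lem:vanishing} with $k = 2$: the product $(\pi^*h)^{n-2}\gamma_L\gamma_M$ vanishes unless $\codim(L\cap M) = \codim L \leq 2$. Since $L \subsetneq M$ with $L,M \in \mLi$ forces $\codim L > \codim M \geq 1$, only the case $\codim L = 2$ and $M = H \in \mH$ survives. For these, $\codim M = 1$ and Lemma \ref{lem:lh} gives $(\pi^*h)^{n-2}\gamma_L\gamma_H = 1$. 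Reorganising the sum and using the defining relation $\sum_{H : L \subset H} a_H = 2a_L$ for $L \in \mLi^{n-2}$ collapses the cross-term contribution to $\sum_{L \in \mLi^{n-2}} a_L \cdot 2a_L = 2\sum_{L \in \mLi^{n-2}} a_L^2$.

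Adding the three contributions yields
\[
C_{n-2} = -\frac{n+1}{2} - \frac{1}{2}\sum_{H \in \mH} B_H a_H^2 - \sum_{L \in \mLi^{n-2}} a_L^2 + 2\sum_{L \in \mLi^{n-2}} a_L^2,
\]
which simplifies to \eqref{eq:hoc}. The only subtle point is isolating which cross terms survive; once Lemma \ref{lem:vanishing} is invoked the rest is mechanical. I do not anticipate any serious obstacle beyond keeping track of the factor of $2$ coming from the definition of $a_L$ in codimension $2$.
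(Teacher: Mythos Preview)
Your proposal is correct and follows essentially the same route as the paper: both compute $C_{n-2} = (\pi^*h)^{n-2}\cdot\parch_2(\mE_*)$ by substituting \eqref{eq:parch2E}, splitting into the three pieces $\textup{A}$, $\textup{B}$, $\textup{C}$, and evaluating each via the intersection formulas of Section~\ref{sec:intnum} (Corollary~\ref{cor:vanishing}, Equations~\eqref{eq:selftl} and~\eqref{eq:selfth}, Lemmas~\ref{lem:vanishing} and~\ref{lem:lh}). The bookkeeping, including the factor of $2$ from $2a_L = \sum_{H\supset L} a_H$, matches the paper exactly.
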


\begin{proof}
	By Equation \eqref{eq:coefpol} the coefficient \(C_{n-2}\) is equal to
	\begin{equation}
	C_{n-2} = (\pi^*h)^{n-2} \cdot \parch_2(\mE_{*}) .
	\end{equation}
	Equation \eqref{eq:parch2E} expresses \(\parch_2(\mE_{*})\) as a sum of \(3\) terms:
	\begin{equation*}
	\begin{gathered}
	\parch_2(\mE_{*}) = -\frac{n+1}{2} \cdot (\pi^*h)^2 \\
	+ \frac{1}{2} \sum_{L \in \mLi} a_L^2 \cdot \codim L \cdot \gamma_L^2 + \sum_{L \subsetneq M} a_L  a_{M} \cdot \codim M \cdot \gamma_L \cdot \gamma_M \,.
	\end{gathered}
	\end{equation*}
	Taking the corresponding products of the above \(3\) terms with \((\pi^*h)^{n-2}\) we have
	\begin{equation}
	C_{n-2} = \textup{A} + \textup{B} + \textup{C} 
	\end{equation}
	where A, B, and C are given as follows
	\begin{itemize}
		\item A is the easiest and is equal to
		\begin{equation}\label{eq:hocA}
		\textup{A} = - \frac{n+1}{2} \cdot (\pi^*h)^n = - \frac{n+1}{2} \,.
		\end{equation}
		\item B is the middle term involving self-intersections. 
		We use Corollary \ref{cor:vanishing} (ii) together with Equations \eqref{eq:selftl} and \eqref{eq:selfth} to obtain correspondingly:
		\[
		(\pi^*h)^{n-2} \cdot \gamma_L^2 = \begin{cases}
		0 & \text{ if } \codim L > 2 \,,\\
		-1 & \text{ if } \codim L = 2 \,,\\
		-B_H & \text{ if } L = H \in \mH  \,.
		\end{cases}
		\]
		We conclude that
		\begin{equation}\label{eq:hocB}
		\begin{aligned}
		\textup{B} &= \frac{1}{2} \sum_{L \in \mLi} a_L^2 \cdot \codim L \cdot \left( (\pi^*h)^{n-2} \cdot \gamma_L^2 \right) \\
		&= - \sum_{L \in \mLi^{n-2}} a_L^2 \,\, - \,\, \frac{1}{2} \sum_{H \in \mH} B_H \cdot a_H^2 \,.
		\end{aligned}
		\end{equation}
		\item C involves mixed intersections for pairs \(L \subsetneq M\). It follows from Lemma \ref{lem:vanishing}  that if \(\codim L > 2\) then
		\begin{equation*}
		(\pi^*h)^{n-2} \cdot \gamma_L \cdot \gamma_M = 0 .
		\end{equation*}
		On the other hand, if \(\codim L =2\) and \(M = H\) is a hyperplane that contains \(L\) then by Lemma \ref{lem:lh} we get
		\begin{equation*}
		(\pi^*h)^{n-2} \cdot \gamma_L \cdot \gamma_H = 1 .
		\end{equation*}
		We conclude that
		\begin{equation}\label{eq:hocC}
		\begin{aligned}
		\textup{C} &= \sum_{L \subsetneq M} a_L  a_{M} \cdot \codim M \cdot \left( (\pi^*h)^{n-2} \cdot \gamma_L \cdot \gamma_M \right)  \\
		&= \sum_{L \in \mLi^{n-2}} a_L \cdot \left(\sum_{H | L \subset H} a_H\right) \\
		&= 2  \sum_{L \in \mLi^{n-2}} a_L^2 \,.
		\end{aligned}
		\end{equation}
	\end{itemize}
	The result follows from Equations \eqref{eq:hocA}, \eqref{eq:hocB}, and \eqref{eq:hocC}.
\end{proof}

\begin{remark}
	Note that \(C_{n-2}\) is independent of the integers \(b_L\) involved in the choice of polarization, unlike the other coefficients \(C_j\).
\end{remark}

\begin{proof}[Proof of Theorem \ref{thm:main}]
	Let \((\mH, \textbf{a})\) be klt and CY. By Theorem \ref{thm:stability}, the parabolic bundle \(\mE_{*}\) on \((X, D)\) is slope stable with respect to the polarization \(P_k\) for all \(k \gg 1\). By Theorem \ref{thm:locab} the parabolic bundle \(\mE_{*}\) is locally abelian, therefore we can apply Theorem \ref{thm:boggies}\,. 
	By the Bogomolov-Gieseker inequality \eqref{eq:boggies}, for all \(k \gg 1\) we have
	\begin{equation}
	p(k) = c_1(P_k)^{n-2} \cdot \parch_2(\mE_{*}) \leq 0 \,.
	\end{equation}
	In particular, the coefficient \(C_{n-2}\) of the highest order term of \(p(k)\) in Equation \eqref{eq:pN} must be non-positive, i.e.,
	\begin{equation}\label{eq:cneg}
		C_{n-2} \leq 0 \,.
	\end{equation} 
	Equation \eqref{eq:mythm} follows from Equation \eqref{eq:cneg} together with Lemma \ref{lem:hoc}\,.
\end{proof}

\section{The quadratic form and the stable cone}\label{sec:qfsc}

In Section \ref{sec:quadform}\,, we define the \emph{quadratic form} \(Q: \R^{\mH} \to \R\) associated to \(\mH\) by extending the left hand side of Equation \eqref{eq:mythm} from the affine hyperplane \(\{\sum_H a_H = n+1\} \subset \R^{\mH}\) to the whole space \(\R^{\mH}\) as a homogeneous degree \(2\) polynomial.\newpar

\noindent In Section \ref{sec:stabcone}\,,
we introduce the concept of \emph{stable weighted arrangements} 
and the \emph{stable cone} \(C^{\circ} \subset \R^{\mH}\) of the arrangement \(\mH\).\newpar

\noindent In Section \ref{sec:matpol}\,, we introduce the \emph{semistable cone} \(C\), the \emph{matroid polytope}, and show that the quadratic form \(Q\) is \(\leq 0\) on \(C\) (Theorem \ref{thm:maingeneral}).\newpar

\noindent In Section \ref{sec:pointsGIT}\,, we provide links to GIT and stability of pairs.

\subsection{The quadratic form of an arrangement}\label{sec:quadform}

Let \(\mH \subset \CP^n\) be a hyperplane arrangement. 
Let \(s\) be the linear function on \(\R^{\mH}\) that takes a  vector \(\ba \in \R^{\mH}\) and sends it to the sum of its components,
\begin{equation}\label{eq:s}
s = \sum_{H \in \mH} a_H \,.
\end{equation}
Recall that if \(L \in \mLi^{n-2}\) is an irreducible subspace of codimension \(2\) then the weight \(a_L\) at \(L\) is given by
\begin{equation}\label{eq:aLcod2}
a_L = \frac{1}{2} \sum_{H | L \subset H} a_H \,.	
\end{equation}
Thus, we can think of \(a_L\) as a linear function on \(\R^{\mH}\) as well.

\begin{definition}\label{def:quadraticform}
	The quadratic form \(Q = Q(\ba)\) of \(\mH\) is the homogeneous polynomial of degree \(2\) on \(\R^{\mH}\)  given by
	\begin{equation}\label{eq:Q}
	Q(\ba) = 4(n+1) \cdot \sum_{L \in \mLi^{n-2}} a_L^2 \, - \,  2(n+1) \cdot \sum_{H \in \mH} B_H \cdot a_H^2 \, - \, 2 \cdot s^2 \,,
	\end{equation}
	where \(B_H + 1\) is the number of irreducible codimension \(2\) subspaces contained in \(H\).
\end{definition}

Taking common factor \(4(n+1)\), we have
\begin{equation}\label{eq:Q2}
	Q(\ba) = 4(n+1) \cdot \left( \,
	\sum_{L \in \mLi^{n-2}} a_L^2 \, - \,  \frac{1}{2} \cdot \sum_{H \in \mH} B_H \cdot a_H^2 \, - \, \frac{s^2}{2(n+1)} 
	\right) \,.
\end{equation}
Up to the constant dimensional factor \(4(n+1)\)\,, the quadratic form \(Q\) agrees with the left hand side of Equation \eqref{eq:mythm} on the affine hyperplane \(\{s=n+1\} \subset \R^{\mH}\). We record this fact as a lemma.

\begin{lemma}\label{lem:Qsn}
	If \(s(\ba) = n+1\),  then
	\[
	Q(\ba) = 4(n+1) \left( \,
	\sum_{L \in \mLi^{n-2}} a_L^2 \, - \,  \frac{1}{2} \cdot \sum_{H \in \mH} B_H \cdot a_H^2 \, - \, \frac{n+1}{2} \,
	  \right) \,.
	\]
\end{lemma}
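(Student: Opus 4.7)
The statement is an immediate substitution into the rewritten form of $Q$ displayed in Equation \eqref{eq:Q2}. The plan is simply to start from that expression and replace the term $s^2/(2(n+1))$ using the hypothesis $s(\ba)=n+1$.

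Concretely, I would write:
\[
\frac{s^2}{2(n+1)} \;=\; \frac{(n+1)^2}{2(n+1)} \;=\; \frac{n+1}{2} \,,
\]
and then plug this value into Equation \eqref{eq:Q2} to obtain the desired formula. No other step is involved; there is no genuine obstacle, since the identity in Equation \eqref{eq:Q2} was already derived directly from Definition \ref{def:quadraticform} by factoring out the common $4(n+1)$.

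The only thing worth flagging is that this lemma is essentially a bookkeeping statement whose purpose is to make the link with Theorem \ref{thm:main} transparent: the bracketed expression matches exactly the left-hand side of Equation \eqref{eq:mythm}, so that the non-positivity of $Q$ on the CY slice $\{s=n+1\}$ is literally the content of Theorem \ref{thm:main} multiplied by the positive constant $4(n+1)$. I would end the proof with this observation as a one-line remark, to motivate why the normalisation constants in Definition \ref{def:quadraticform} were chosen in this particular way.
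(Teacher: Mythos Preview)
Your proposal is correct and matches the paper's approach exactly: the paper presents this lemma as an immediate consequence of Equation~\eqref{eq:Q2} (it writes ``We record this fact as a lemma'' and gives no further proof), and your substitution $s^2/(2(n+1)) = (n+1)/2$ is precisely the one-line computation intended.
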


We can write down the symmetric matrix associated with the quadratic form \(Q\). To do this, label the hyperplanes, say
\[
\mH = \{H_1, \ldots, H_N\} \,,
\]
and let \(a_i = a_{H_i}\) be the weight at the hyperplane \(H_i\).
Then, we can write
\[
Q(\ba) = \sum_i Q_{ii} \cdot a_i^2 + \sum_{i < j}  Q_{ij} \cdot 2a_i a_j \,.
\]
The coefficients \(Q_{ij}\) can be easily calculated from Equation \eqref{eq:Q}. Let \(\sigma_i\) be the number of irreducible codimension \(2\) subspaces contained in \(H_i\), i.e., \(\sigma_i = B_{H_i} + 1\). Then 
\begin{equation}\label{eq:Qmatrix}
	Q_{ij} = \begin{cases}
	- (n+1) \sigma_i + 2n &\text{ if } i = j \,, \\
	-2 &\text{ if }  i \neq j \text{ and } H_i \cap H_j \text{ is reducible }, \\
	\,\, n-1 &\text{ if } i \neq j \text{ and }  H_i \cap H_j \text{ is irreducible } \,.
	\end{cases}
\end{equation}
By slight abuse of notation, we shall also write \(Q\) for the symmetric matrix \((Q_{ij})\).

\begin{remark}
	If \(n=2\) then \(Q_{ij} = -A_{ij}\) where \(A_{ij}\) are the entries of symmetric matrix defined  by Hirzebruch \cite[Equation (3)]{hirzebruch2} regarding H\"ofer's formula for the `proportionality' \(3c_2 - c_1^2\) of  coverings of the projective plane branched along line arrangements. 
\end{remark}

For the record, we note the following property of the quadratic form:

\begin{lemma}\label{lem:sumqij}
	The sum of the \(i\)-th column (or \(i\)-th row) of the matrix \((Q_{ij})\) is given by
	\begin{equation}\label{eq:sumqij}
	\sum_{j=1}^{N} Q_{ij} = (n - 1) \cdot N \,-\, (n+1) \cdot (t_i - 1) \,,
	\end{equation}
	where \(t_i\) is the number of codimension \(2\) subspaces \(L \in \mL\) contained in \(H_i\).
\end{lemma}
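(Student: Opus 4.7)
Fix the index $i$. The plan is to separate the diagonal contribution from the off-diagonal ones, then regroup the off-diagonal sum according to the codimension $2$ subspace $L = H_i \cap H_j$ that each $H_j \neq H_i$ cuts out on $H_i$.

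First, I would record that the diagonal term is $Q_{ii} = -(n+1)\sigma_i + 2n$. For the off-diagonal part, each $j \neq i$ produces a codimension $2$ subspace $L_j := H_i \cap H_j \in \mL^{n-2}$ contained in $H_i$, and the map $j \mapsto L_j$ is surjective onto the set of codimension $2$ subspaces in $\mL$ contained in $H_i$. By the multiplicity definition, the number of $j \neq i$ with $L_j = L$ equals $m_L - 1$. Thus if $L$ is reducible ($m_L = 2$), exactly one $j$ contributes and yields $Q_{ij} = -2$; while if $L$ is irreducible ($m_L \geq 3$), exactly $m_L - 1$ indices contribute, each yielding $Q_{ij} = n-1$.

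From this bookkeeping I would obtain
\[
\sum_{j \neq i} Q_{ij} = -2(t_i - \sigma_i) + (n-1)\!\!\sum_{\substack{L \in \mLi^{n-2}\\ L \subset H_i}}\!\! (m_L - 1),
\]
together with the tautology $N - 1 = (t_i - \sigma_i) + \sum_{L \in \mLi^{n-2},\, L \subset H_i}(m_L - 1)$ (obtained by counting the $N-1$ hyperplanes $H_j \neq H_i$ in the same way). Using the tautology to eliminate the $\sum(m_L - 1)$ term gives
\[
\sum_{j \neq i} Q_{ij} = -2(t_i - \sigma_i) + (n-1)\bigl(N - 1 - t_i + \sigma_i\bigr).
\]

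Finally I would add back $Q_{ii} = -(n+1)\sigma_i + 2n$ and collect terms. The coefficient of $\sigma_i$ is $-(n+1) + 2 + (n-1) = 0$, which is the small miracle that makes the formula clean. What remains simplifies to $(n-1)N + (n+1) - (n+1)t_i = (n-1)N - (n+1)(t_i - 1)$, as claimed. There is no real obstacle here: the only thing to be careful about is the double-counting check that $j \mapsto L_j$ has fibres of size $m_L - 1$, which I would state explicitly before performing the algebra.
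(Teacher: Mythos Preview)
Your proof is correct and follows essentially the same approach as the paper: both separate the diagonal from the off-diagonal terms, group the off-diagonal contributions by the codimension~$2$ subspace $L = H_i \cap H_j$, and invoke the counting identity $\sum_{L \subset H_i}(m_L - 1) = N - 1$. The paper writes $\tau_i$ for your $t_i - \sigma_i$ and arranges the algebra slightly differently, but the argument is the same.
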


\begin{proof}
	Let \(\mathcal{R}\) and \(\mI\) be the set of all reducible and irreducible codimension \(2\) subspaces contained in \(H_i\) and let \(T = \mathcal{R} \cup \mI\). We write \(\tau_i = |\mathcal{R}|\) and
	\(\sigma_i = |\mI|\), so that \(t_i = \tau_i + \sigma_i\).
	It follows from Equation \eqref{eq:Qmatrix} that
	\[
	\begin{aligned}
		\sum_{j=1}^{n} Q_{ij} &= 2n -(n+1) \cdot \sigma_i - 2 \tau_i + (n-1) \cdot \sum_{L \in \mI} (m_L - 1) \\
		&= 2n - (n+1) \cdot t_i \,+\, (n-1) \cdot \tau_i + (n-1) \cdot \sum_{L \in \mI} (m_L - 1) \\
		&= 2n -(n+1) \cdot t_i 
		\,+\, (n-1) \cdot \sum_{L \in T} (m_L - 1) 
	\end{aligned}
	\]
	Since for every \(j \neq i\) the intersection \(H_i \cap H_j\) is a codimension \(2\) subspace contained in \(H_i\)
	\[
	\sum_{L \in T} (m_L -1) = N-1 \,.
	\]
	Therefore,
	\[
	\sum_{j=1}^{n} Q_{ij} = 2n - (n + 1) \cdot t_i  + (n-1) \cdot (N-1) 
	\]
	which implies \eqref{eq:sumqij}.
\end{proof}

\begin{remark}
	It follows from Lemma \ref{lem:sumqij} that the vector \(\mathbf{1}\) with all components equal to \(1\) is an eigenvector of \(Q\) if and only if every hyperplane \(H \in \mH\) intersects the other hyperplanes of the arrangement along the same number of codimension \(2\) subspaces.
\end{remark}

The next example shows that, for generic arrangements, the quadratic form \(Q\) is non-degenerate and indefinite.

\begin{example}\label{ex:Qgeneric}
	Let \(\mH = \{H_1, \ldots, H_N\}\) be an arrangement in \(\CP^n\) with \(n>1\) which is \emph{generic}, in the sense that no \(3\) distinct hyperplanes are linearly dependent. Suppose that the number of hyperplanes \(N\) is \(> n+ 1\). 
	The generic assumption implies that \(\sigma_i = 0\) for all \(i\). Equation \eqref{eq:Qmatrix} gives us
	\[
	Q = 2 \cdot \begin{pmatrix}
	n & -1 & \cdots & -1 \\
	-1 & n  & \cdots & -1 \\
	\cdots & \cdots  & \cdots & \cdots \\
	-1 & -1 & \cdots & n \\
	\end{pmatrix} \,.
	\]
	
	Let \(\mathbf{1} \in \R^N\) be the vector with all entries equal to \(1\). Then \(\mathbf{1}\) is an eigenvector with
	\[
	Q \cdot \mathbf{1} = 2 (n+1 - N) \cdot \mathbf{1} \,.
	\]
	The assumption that \(N > n+1\) implies that the eigenvalue is \(<0\). On the other hand, if \(\ba = (a_1, \ldots, a_N)\) is a vector with \(\sum_i a _i = 0\), i.e., \(\ba\) is orthogonal to \(\mathbf{1}\), then
	\[
	Q \cdot \ba = 2(n+1) \cdot \ba \,. 
	\]
	This shows that the orthogonal complement \(\mathbf{1}^{\perp}\) is an eigenspace of \(Q\) with positive eigenvalue. Thus, the quadratic form \(Q\) is non-degenerate of signature \((N-1, 1)\).
\end{example}

Next, we provide an example for which the quadratic form \(Q\) is negative semidefinite and has non trivial kernel.

\begin{example}\label{ex:Qbraid}
	Let \(\mH \subset \CP^n\) be the braid (or \(A_{n+1}\)) arrangement consisting of the \(\binom{n+2}{2}\) hyperplanes \(H_{ij} = \{x_i = x_j\}\) for \(1 \leq i < j \leq n+2\) in 
	\[
	\CP^n = \P \left( \C^{n+2} \big/ \{x_1 = x_2 = \ldots = x_{n+2}\} \right) .
	\]
	The elements \(L \in \mLi^{n-2}\) are of the form \(L = \{x_i = x_j = x_k\}\) for a triplet of distinct indices \(i, j, k\). 
	The number of codimension \(2\) irreducible subspaces contained in \(H_{ij}\) is equal to \(n = |[n+2] \setminus \{i, j\}|\). Therefore, \(B_H = n -1\) for all \(H\).  Equation \eqref{eq:Q} gives us
	\begin{equation}\label{eq:Qbraid}
	Q = (n+1) \sum_{i < j < k} \left(a_{ij} + a_{ik} + a_{jk}\right)^2 - 2\cdot(n^2-1) \cdot \sum_{i < j} a_{ij}^2 - 2 \cdot \left(\sum_{i < j} a_{ij}\right)^2 \,,	
	\end{equation}
	where \(a_{ij}\) is the weight at \(H_{ij}\). This quadratic form is negative semidefinite and has non trivial kernel, as shown by the next.
\end{example}

\begin{lemma}\label{lem:qbraid}
	The quadratic form \eqref{eq:Qbraid} is negative semidefinite on \(\R^{\mH}\) with kernel the linear subspace of \(K \subset \R^{\mH}\) parametrized by \(a_{ij} = a_i + a_j\) with \((a_1, \ldots, a_{n+2}) \in \R^{n+2}\). 
\end{lemma}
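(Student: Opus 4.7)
The plan is to diagonalise $Q$ along an orthogonal decomposition $\R^{\mH} = K \oplus K^{\perp}$, where $K := \phi(\R^{n+2})$ is the $(n+2)$-dimensional image of $\phi\colon \R^{n+2} \to \R^{\mH}$, $\phi(\bv)_{ij} := v_i + v_j$ (injective for $n \geq 1$), and $K^{\perp}$ is the orthogonal complement with respect to the standard inner product on $\R^{\mH}$. The adjoint $\phi^{*}$ sends $\ba$ to the vector of ``row sums'' $(R_c)_c$ with $R_c := \sum_{j \ne c} a_{cj}$, so
\[
K^{\perp} = \{\ba \in \R^{\mH} : R_c = 0 \text{ for all } c \in [n+2]\}.
\]

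The first step is to rewrite $Q$ in a form in which this decomposition is manifest. Expanding $\sum_{i<j<k}(a_{ij}+a_{ik}+a_{jk})^2$: each $a_{ij}^2$ appears in exactly $n$ triples, while the cross-terms organise as $\sum_c R_c^2 - 2\sum_{i<j} a_{ij}^2$, giving the identity $\sum_{i<j<k}(a_{ij}+a_{ik}+a_{jk})^2 = (n-2)\sum_{i<j} a_{ij}^2 + \sum_c R_c^2$. Substituting into \eqref{eq:Qbraid} and using $\sum_c R_c = 2\sum_{i<j} a_{ij}$, the quadratic form becomes
\[
Q(\ba) = -n(n+1)\sum_{i<j} a_{ij}^2 \,+\, (n+1)\sum_c R_c^2 \,-\, 2\Big(\sum_{i<j} a_{ij}\Big)^2.
\]
Each of the three terms is block-diagonal along $K \oplus K^{\perp}$: the squared norm splits orthogonally by definition; $\sum_c R_c^2 = \|\phi^{*}\ba\|^2$ vanishes on $K^{\perp}$; and $\sum_{i<j} a_{ij} = \tfrac{1}{2}\langle \mathbf{1}_{\mH}, \ba\rangle$ depends only on $\ba_K$, because $\mathbf{1}_{\mH} = \phi(\tfrac{1}{2}\mathbf{1}_{n+2}) \in K$. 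Writing $\ba = \ba_K + \ba_{K^{\perp}}$, I obtain
\[
Q(\ba) = Q(\ba_K) - n(n+1)\|\ba_{K^{\perp}}\|^2.
\]

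To conclude I would verify $Q|_K \equiv 0$: substituting $a_{ij} = v_i + v_j$ and using $\sum_{i<j}(v_i+v_j)^2 = n|\bv|^2 + V^2$, $R_c = nv_c + V$, and $\sum_{i<j}(v_i+v_j) = (n+1)V$ (with $V := \sum_c v_c$), a short expansion shows that the coefficients of $|\bv|^2$ and $V^2$ each collapse to $0$. This establishes $Q(\ba) = -n(n+1)\|\ba_{K^{\perp}}\|^2 \leq 0$, with equality iff $\ba_{K^{\perp}} = 0$, i.e.\ iff $\ba \in K$, which is exactly the statement of the lemma.

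The main potential obstacle is purely computational: tracking the constants in the rewriting step correctly. A conceptual sanity check is provided by $S_{n+2}$-invariance: the permutation module $\R^{\mH}$ decomposes as $\mathrm{triv}\oplus \mathrm{std}\oplus W_{(n,2)}$, with $K = \mathrm{triv}\oplus \mathrm{std}$, and Schur's lemma forces $Q$ to act by a scalar on each isotypic component, reducing the lemma to computing three scalars, which turn out to be $0$, $0$, and $-n(n+1)$ respectively.
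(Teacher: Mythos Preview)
Your proof is correct and arrives at exactly the same conclusion as the paper's --- eigenvalue $0$ on $K$ and $-n(n+1)$ on $K^{\perp}$ --- but via a different route. The paper works directly with the matrix: it writes the row vectors as $\mathbf{q}_{ij} = (n+1)(\bv_i + \bv_j) - n(n+1)\mathbf{e}_{ij} - 2\cdot\mathbf{1}$, where $\bv_i$ is the indicator of ``edges through $i$'' (so $K=\spn\{\bv_i\}$), and then reads off $Q\bv_i=0$ and $Q|_{K^{\perp}}=-n(n+1)\cdot\mathrm{id}$ from a few inner products. You instead manipulate the quadratic form itself, rewriting it as $-n(n+1)\|\ba\|^2 + (n+1)\|\phi^*\ba\|^2 - 2\langle\mathbf{1},\ba\rangle^2$ and then observing that the last two terms depend only on the $K$-component. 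Your approach is arguably more transparent about \emph{why} the decomposition works (everything is expressed through $\phi^*$ and $\mathbf{1}\in K$), and the $S_{n+2}$-representation remark is a nice conceptual check that the paper does not make explicit.

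One bookkeeping remark: a literal substitution of your identity for $\sum_{i<j<k}(\cdot)^2$ into \eqref{eq:Qbraid} as printed gives $-(n+1)\sum a_{ij}^2$, not $-n(n+1)\sum a_{ij}^2$. Your displayed formula is nonetheless the correct one: it agrees with the matrix \eqref{eq:Qmatrix} (and with the general definition \eqref{eq:Q}, which yields coefficient $2(n+1)B_H = 2(n^2-1)$ rather than $(n^2-1)$ in front of $\sum a_{ij}^2$). So the discrepancy is a typo in \eqref{eq:Qbraid}, not an error in your argument; but you should flag this when presenting the proof, since as written the substitution step does not literally check out.
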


\begin{proof}
	Identify \(\R^{\mH}\) with \(\R^N\) where \(N = \binom{n+2}{2}\) with coordinates \((a_{ij})_{i < j}\) and basis vectors \(\mathbf{e}_{ij}\). If \(Q\) is the symmetric matrix of the quadratic form, then we can write
	\begin{equation}\label{eq:qbrmat}
	Q \cdot \ba =  \sum_{i < j} \langle \ba, \mathbf{q}_{ij} \rangle \cdot \mathbf{e}_{ij} \,,
	\end{equation}
	where \(\langle \cdot, \cdot \rangle\) is the Euclidean product on \(\R^N\) and the vectors \(\mathbf{q}_{ij}\) are the rows of the matrix \(Q\) given by Equation \eqref{eq:Qmatrix}
	\begin{equation}\label{eq:qbraid}
	(\mathbf{q}_{ij})_{kl} = \begin{cases}
	n(1-n) &\text{ if }  \{k, l\} = \{i, j\}\,, \\
	-2 &\text{ if }  \{k, l\} \cap \{i, j\} = \emptyset\,, \\
	n -1 &\text{ if } | \{k, l\} \cap \{i, j\}| = 1 \,.
	\end{cases}	
	\end{equation}
	
	Let \(\bv_i\) be the vector with components 
	\[
	(\bv_i)_{kl} = \begin{cases}
	1 \,\, &\textup{ if } \,\, i \in \{k, l\} \\
	0 \,\, &\textup{ if } \,\, i \notin \{k, l\} 
	\end{cases}
	\]
	so that \(\bv_1, \ldots, \bv_{n+2}\) make a basis of the linear subspace \(K\). By Equation \eqref{eq:qbraid},
	\begin{equation}\label{eq:qbrvec}
	\mathbf{q}_{ij} = (n+1) \cdot(\bv_i + \bv_j) -n(n+1) \mathbf{e}_{ij} -2 \cdot \mathbf{1} \,,	
	\end{equation}
	where \(\mathbf{1}\) is the vector with all entries equal to \(1\).
	By Equations \eqref{eq:qbrmat} and \eqref{eq:qbrvec}, together with
	\[
	\langle \bv_i , \mathbf{1} \rangle = n + 1\,, \hspace{2mm} 
	\langle \bv_i , \bv_j \rangle = \begin{cases}
	n + 1 &\text{ if } i = j \,, \\
	1 &\text{ if } i \neq j \,,
	\end{cases}
	\hspace{2mm} \text{ and } \mathbf{1} \in K \,,
	\]
	we deduce that \(Q \cdot \bv_i = 0\) for all \(i\) and \(Q \cdot \ba = -n(n+1) \cdot \ba\) for all \(\ba \in K^{\perp}\). Thus, the quadratic form \(Q\) is negative semidefinite with kernel \(K\).		
\end{proof}

\begin{remark}
	For \(n=2\), \cite[\S 5]{hirzebruch2} asserts that for all reflection line arrangements listed in Section 3 of that paper, the quadratic form \(Q\) is negative semidefinite. 
	In Theorem \ref{thm:refarr}\,,
	we provide an extension to higher dimensions.
	The calculation of the kernel of \(Q\) for reflection arrangements is also related to the classification of Dunkl connections in \cite[\S 2.6]{chl}.
\end{remark}

\begin{example}\label{ex:qprod}
	Let \(\mH = \mH_1 \times \mH_2\) be a product arrangement in \(\CP^n\) (Definition \ref{def:prod}) with \(\mH_1 \subset \CP^{n_1}\) and \(\mH_2 \subset \CP^{n_2}\). Then, in an obvious notation,
	\[
	\frac{Q}{n+1} = \frac{Q_1}{n_1+1}  + \frac{Q_2}{n_2 + 1} + \frac{2 s_1^2}{n_1+1} + \frac{2s_2^2}{n_2+1} - \frac{2s^2}{n+1} \,.
	\]
	In particular, 
	\[
	\frac{Q}{n+1} = \frac{Q_1}{n_1+1}  + \frac{Q_2}{n_2 + 1}
	\] 
	on the intersection \(\{s_1= n_1 + 1\} \cap \{s_2 = n_2 + 1\}\).
\end{example}

\subsection{The stable cone}\label{sec:stabcone}

Let \(\mH \subset \CP^n\) be a hyperplane arrangement and let
\(\ba \in \R^{\mH}\) be a weight vector with positive weights \(a_H > 0\) for all \(H \in \mH\). Let \(\ba'\) be the rescaled vector
\begin{equation}\label{eq:stable}
\ba' = \lambda \cdot \ba \,\, \text{ with } \,\, \lambda = \frac{n+1}{s} \,,
\end{equation}
where \(s\) is the total sum of the weights as in Equation \eqref{eq:s}.

\begin{definition}\label{def:stable}
	The weighted arrangement \((\mH, \ba)\) is \emph{stable}
	if  the rescaled weighted arrangement \((\mH, \ba')\) is klt. 
\end{definition}

We have the following restatement of Theorem \ref{thm:main}.

\begin{theorem}\label{thm:klt}
	If \((\mH, \ba)\) is a stable weighted arrangement, then \(Q(\ba) \leq 0\).
\end{theorem}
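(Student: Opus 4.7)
The plan is to reduce Theorem \ref{thm:klt} directly to Theorem \ref{thm:main} via a rescaling argument, exploiting homogeneity of the quadratic form $Q$. Since $Q$ is a homogeneous polynomial of degree $2$ on $\R^{\mH}$, we have $Q(\lambda \ba) = \lambda^2 Q(\ba)$ for every $\lambda \in \R$; hence the sign of $Q(\ba)$ is preserved under positive rescalings.

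First I would set $\lambda = (n+1)/s$, where $s = \sum_{H \in \mH} a_H > 0$ (positivity of $s$ follows from the standing assumption that all weights $a_H$ are strictly positive), and consider the rescaled weight vector $\ba' = \lambda \ba$. By construction $s(\ba') = n+1$, so $(\mH, \ba')$ satisfies the CY condition \eqref{eq:cy}. The definition of stability of $(\mH, \ba)$ is precisely that $(\mH, \ba')$ also satisfies the klt condition \eqref{eq:klt}. Thus $(\mH, \ba')$ is both klt and CY, which puts it in the scope of Theorem \ref{thm:main}.

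Applying Theorem \ref{thm:main} to $(\mH, \ba')$ yields
\[
\sum_{L \in \mLi^{n-2}} (a'_L)^2 \,-\, \tfrac{1}{2} \sum_{H \in \mH} B_H \cdot (a'_H)^2 \,-\, \tfrac{n+1}{2} \,\leq\, 0 .
\]
By Lemma \ref{lem:Qsn}, evaluated at $\ba'$ where $s(\ba') = n+1$, the left-hand side of this inequality is precisely $Q(\ba')/(4(n+1))$. Therefore $Q(\ba') \leq 0$. Finally, using $Q(\ba') = \lambda^2 Q(\ba)$ together with $\lambda^2 > 0$, we conclude $Q(\ba) \leq 0$.

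There is no real obstacle here: the content of Theorem \ref{thm:klt} is entirely packaged into Theorem \ref{thm:main} and the two elementary observations that $Q$ is $2$-homogeneous and that the normalizing constant $\lambda = (n+1)/s$ is positive. The whole point of introducing the notion of stability in Definition \ref{def:stable} and the quadratic form $Q$ in Definition \ref{def:quadraticform} is to package the klt+CY hypothesis of Theorem \ref{thm:main} into a scale-invariant form, so that the restriction to the affine hyperplane $\{s = n+1\}$ can be lifted to all positive weight vectors.
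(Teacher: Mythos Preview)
The proposal is correct and follows exactly the same approach as the paper: rescale by $\lambda = (n+1)/s$ so that $(\mH,\ba')$ is CY, observe that stability means $(\mH,\ba')$ is klt, apply Theorem~\ref{thm:main} together with Lemma~\ref{lem:Qsn} to get $Q(\ba')\le 0$, and conclude by homogeneity of $Q$. You have simply written out the details more explicitly than the paper does.
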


\begin{proof}
	Since \(Q\) is homogeneous, it suffices to show that \(Q(\ba') \leq 0\). The choice of rescaling factor \(\lambda\) given by Equation \eqref{eq:stable} ensures that the weighted arrangement \((\mH, \ba')\) is CY. On the other hand, the stability assumption requires that \((\mH, \ba')\) is klt.
	By Lemma \ref{lem:Qsn} and Theorem \ref{thm:main}\,,  \(Q(\ba') \leq 0\). 
\end{proof}

\begin{remark}
	If \(n=1\) and \((\mH, \ba)\) is stable then \(\mH\) must have at least \(3\) points. In this case, we take the convention that \(\mLi^{-1} = \{\emptyset\}\). Since \(\emptyset\) is contained in all \(H \in \mH\), we have that \(a_{\emptyset} = s/2\) and \(B_H = 1 - 1 = 0\) for all \(H\). Therefore, \(Q = s^2/4 - s^2/4 = 0\) and the statement of Theorem \ref{thm:klt} is trivial.	
\end{remark}

We provide \(3\) slightly different but equivalent systems of linear inequalities that characterize the weights for which the arrangement \(\mH\) is stable.
For an arbitrary non-empty and proper linear subspace \(L \subset \CP^n\),  consider the equation
\begin{equation}\label{eq:starr}
\sum_{H | L \subset H} a_H < \frac{\codim L}{n+1} \cdot \sum_{H \in \mH} a_H \,.
\end{equation}

\begin{lemma}\label{lem:stablearr}
	A weighted arrangement \((\mH, \ba)\) with positive weights \(a_H >0\) is stable if and only if
	any of the following equivalent conditions is satisfied: 
	\begin{enumerate}[label=\textup{(\roman*)}]
		\item Equation \eqref{eq:starr} holds for every \(L \in \mLi\)\,;
		\item Equation \eqref{eq:starr} holds for every \(L \in \mL\);
		\item Equation \eqref{eq:starr}  holds for every non-empty and proper linear subspace \(L \subset \CP^n\).
	\end{enumerate}
\end{lemma}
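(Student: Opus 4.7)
The plan is to reduce everything to Corollary \ref{cor:kltcondition} via the rescaling in Definition \ref{def:stable}. By definition, \((\mH, \ba)\) is stable precisely when the rescaled weights \(a'_H = \lambda \cdot a_H\), with \(\lambda = (n+1)/s\) and \(s = \sum_{H \in \mH} a_H\), satisfy the klt condition. Since the weights \(a_H\) are positive, we have \(s > 0\) and hence \(\lambda > 0\).

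First I would show that for any non-empty proper linear subspace \(L \subsetneq \CP^n\), the klt inequality for \(\ba'\) at \(L\),
\[
\sum_{H | L \subset H} a'_H < \codim L \,,
\]
is equivalent to Equation \eqref{eq:starr} for \(\ba\) at \(L\). Indeed, substituting \(a'_H = \lambda a_H\) and dividing by \(\lambda > 0\) yields
\[
\sum_{H | L \subset H} a_H < \frac{\codim L}{\lambda} = \frac{\codim L}{n+1} \cdot s \,,
\]
which is exactly \eqref{eq:starr}. This equivalence is pointwise in \(L\), so the three families of inequalities (i), (ii), (iii) for \(\ba\) correspond bijectively to the three families of klt inequalities for \(\ba'\) on \(\mLi\), \(\mL\), and on all non-empty proper subspaces, respectively.

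Finally, I would invoke Corollary \ref{cor:kltcondition}, which asserts that these three families of klt inequalities for \(\ba'\) are mutually equivalent and each characterizes \((\mH, \ba')\) being klt. Combining this with the pointwise equivalence above, conditions (i), (ii), (iii) of the lemma are mutually equivalent and each characterizes stability of \((\mH, \ba)\). There is no serious obstacle here: the content is purely a restatement via positive rescaling, and all the real work has already been done in Lemma \ref{lem:kltcondition} and Corollary \ref{cor:kltcondition}.
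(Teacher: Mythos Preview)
Your proof is correct and takes essentially the same approach as the paper: both reduce to Corollary \ref{cor:kltcondition} by observing that the klt inequality for the rescaled weights \(\ba'\) at \(L\) is equivalent to Equation \eqref{eq:starr} for \(\ba\) at \(L\). The paper's version is terser, but the content is identical.
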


\begin{proof}
	The weighted arrangement \((\mH, \ba)\) is stable if the rescaled CY weighted arrangement \((\mH, \ba')\) is klt. Corollary \ref{cor:kltcondition} implies that (i), (ii), and (iii) are equivalent.
\end{proof}

\begin{definition}\label{def:stabcone}
	The \emph{stable cone} \(C^{\circ}\) of \(\mH\) is the set of all weights \(\ba \in \R^{\mH}_{>0}\) such that the weighted arrangement \((\mH, \ba)\) is stable. Equivalently, \(C^{\circ}\) is the cone over the set of weights \(\ba \in \R_{>0}^{\mH}\) for which the weighted arrangement \((\mH, \ba)\) is klt and CY. 
\end{definition}

By Lemma \ref{lem:stablearr}\,, the stable cone \(C^{\circ}\) is an open convex polyhedral cone; it is the subset of the positive octant  defined by the linear inequalities \eqref{eq:starr}. Next, we give two explicit examples of stable cones.

\begin{example}
		If \(\mH\) is normal crossing then the only irreducible subspaces are the hyperplanes of the arrangement, i.e., \(\mLi = \mH\).
		By Lemma \ref{lem:stablearr} (i), the weighted arrangement \((\mH, \ba)\) is stable if and only if 
		\begin{equation}\label{eq:sthyp}
			\forall H \in \mH: \,\, 0 < a_H < \frac{s}{n+1} \,.
		\end{equation}
\end{example}

\begin{example}\label{ex:Qbraid2}
	Let \(\mH \subset \CP^n\) be the braid arrangement as in Example \ref{ex:Qbraid}. The irreducible subspaces \(L_I\) correspond to subsets \(I \subset [n+2]\) with \(2 \leq |I| \leq n+1\) by letting \(L_I = \{x_i = x_j \,\, \textup{ for } \,\, i,j \in I \}\). In particular, \(\codim L_I = |I|-1\). 
	By Lemma \ref{lem:stablearr} (i),
	the weighted arrangement \((\mH, \ba)\) with weights \(a_{ij}>0\) is stable if and only if for every \(I \subset [n+2]\) with \(2 \leq |I| \leq n + 1\) we have
	\[
	\sum_{i < j \,\, | \,\, i, j \in I} a_{ij} < \frac{|I|-1}{n+1} \cdot s \,.
	\]
\end{example}

Theorem \ref{thm:klt} can be restated as follows. 

\begin{theorem}\label{thm:qcone}
	The quadratic form \(Q\) is negative semidefinite on the stable cone, 
	\begin{equation}
	C^{\circ} \subset \{Q \leq 0\} \,.	
	\end{equation}
\end{theorem}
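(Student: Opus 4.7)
My plan is to observe that Theorem \ref{thm:qcone} is an immediate restatement of Theorem \ref{thm:klt}, which has already been established in the preceding paragraphs. By Definition \ref{def:stabcone}, the stable cone \(C^{\circ}\) consists of exactly those positive weight vectors \(\ba \in \R^{\mH}_{>0}\) for which \((\mH, \ba)\) is stable in the sense of Definition \ref{def:stable}. Hence for any \(\ba \in C^{\circ}\), Theorem \ref{thm:klt} directly yields \(Q(\ba) \leq 0\), which is the desired inclusion \(C^{\circ} \subset \{Q \leq 0\}\).

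I will also give a second, slightly more transparent formulation via homogeneity, which makes the link to Theorem \ref{thm:main} explicit. Since \(Q\) is homogeneous of degree \(2\) and \(C^{\circ}\) is an open convex cone, it suffices to check \(Q \leq 0\) on the slice \(C^{\circ} \cap \{s = n+1\}\). On this slice the weight \(\ba\) already satisfies the CY condition \eqref{eq:cy}, and the stability condition reduces to the klt condition \eqref{eq:klt}, because the rescaling factor \(\lambda = (n+1)/s\) appearing in \eqref{eq:stable} equals \(1\). By Lemma \ref{lem:Qsn}, on this slice \(Q(\ba)\) equals \(4(n+1)\) times the left-hand side of Equation \eqref{eq:mythm}, and the inequality \(Q(\ba) \leq 0\) then follows at once from Theorem \ref{thm:main}.

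There is no real obstacle here: all the substantive work has been absorbed into the proof of Theorem \ref{thm:main} (via the locally abelian parabolic structure on \(\pi^*T\CP^n\), the stability Theorem \ref{thm:stability}, and Mochizuki's Bogomolov-Gieseker inequality). The role of Theorem \ref{thm:qcone} is purely to repackage that result as a statement about the convex cone \(C^{\circ}\), which is determined by the intersection poset of \(\mH\) through the linear inequalities of Lemma \ref{lem:stablearr}. One caveat worth flagging is that the phrase ``negative semidefinite on \(C^{\circ}\)'' should be read as the pointwise inequality \(Q|_{C^{\circ}} \leq 0\): as Example \ref{ex:Qgeneric} shows, \(Q\) need not be negative semidefinite as a quadratic form on all of \(\R^{\mH}\).
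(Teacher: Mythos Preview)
Your proposal is correct and matches the paper's approach exactly: the paper introduces Theorem \ref{thm:qcone} with the sentence ``Theorem \ref{thm:klt} can be restated as follows'' and gives no separate proof, so the entire content is the observation that \(C^{\circ}\) is by definition the set of stable weight vectors and Theorem \ref{thm:klt} applies. Your second paragraph unpacking the homogeneity and the slice \(\{s = n+1\}\) is precisely the argument already given in the proof of Theorem \ref{thm:klt}, and your closing caveat about the meaning of ``negative semidefinite on \(C^{\circ}\)'' is a helpful clarification.
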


\begin{remark}
	Of course, by continuity, \(Q \leq 0\) on the closure \(\overline{C^{\circ}}\).
\end{remark}

For generic arrangements, we can check Theorem \ref{thm:qcone} by direct calculation, as shown in the next example.

\begin{example}\label{ex:Qsnc}
	Let \(\mH \subset \CP^n\) be generic with \(|\mH|>n+1\) as in Example \ref{ex:Qgeneric}\,.  Then
	\begin{equation}\label{eq:snc1}
	Q = 2(n+1) \cdot \sum_{H \in \mH} a_H^2 \,-\, 2 \cdot s^2 \,.	
	\end{equation}
	is non-degenerate and it has signature \((|\mH|-1, 1)\).
	By Lemma \ref{lem:stablearr}\,, if \(\ba \in C^{\circ}\) then \(a_H < s / (n+1)\).
	Therefore,
	\begin{equation}\label{eq:snc2}
	\sum_{H \in \mH} a_H^2 < \frac{s}{n+1} \cdot \sum_{H \in \mH} a_H = \frac{s^2}{n+1} \,.	
	\end{equation}
	It follows from Equations \eqref{eq:snc1} and \eqref{eq:snc2} that \(Q < 0\) on \(C^{\circ}\).
\end{example}

\subsection{The matroid polytope and the semistable cone}\label{sec:matpol}

We present a geometric description of the set of weights for which an arrangement is stable, involving standard constructions of polytopes associated to matroids (see Section \ref{sec:matroidbasics} for basics on matroids).
For a more in depth discussion and relations to toric geometry, see \cite{alexeev}\,.

Let \(\mH \subset \CP^n\) be a hyperplane arrangement.

\begin{definition}
	A \emph{basis} \(\mB\) of \(\mH\) is a subset \(\mB \subset \mH\) with \(|\mB| = n+1\) and
	\[
	\bigcap_{H \in \mB} H = \emptyset \,.
	\]
\end{definition}

The arrangement \(\mH\) has a basis if and only if \(\mH\) is essential. If \(\mB\) is a basis, then up to a linear change of coordinates, \(\mB\) can be identified with the set of \(n+1\) coordinate hyperplanes in \(\CP^n\).

\begin{definition}
	The indicator function of a basis \(\mB \subset \mH\) is the vector \(\be_{\mB} \in \R^{\mH}\) with components
	\[
	(\be_{\mB})_H = \begin{cases}
	1 &\text{ if } H \in \mB\,, \\
	0 &\text{ if } H \notin \mB \,.
	\end{cases}
	\]
\end{definition}

\begin{definition}\label{def:matpol}
	The \emph{matroid polytope} \(P\) of \(\mH\) is the convex hull of the vectors \(\be_{\mB}\) with \(\mB \subset \mH\) a basis.
\end{definition}

The matroid polytope \(P\) is non-empty precisely when \(\mH\) is essential. 	
Since for every indicator function of a basis \(\mB \subset \mH\) we have \(s(\be_{\mB}) = n+1\), the matroid polytope is contained in the affine hyperplane \(s = n+1\),
\[
P \subset \{s = n+1\} \,.
\]
Therefore, the dimension of \(P\) is at most \(|\mH|-1\). It follows from \cite[Theorem 1.12.9]{borovikgelfand} that, if \(\mH\) is an essential arrangement, then
\[
\dim P = |\mH| - k \,,
\]
where \(k\) is the number of factors in the decomposition of \(\mH\) as a product of irreducible arrangements \(\mH \cong \mH_1 \times \ldots \times \mH_k\)\,.

A dual description of the matroid polytope in terms of defining linear inequalities is given by the following result of Edmonds.

\begin{theorem}[{\cite[Corollary 40.2d]{schrijver}}]\label{thm:edmonds}
	The matroid polytope \(P\) is the subset of the affine hyperplane \(\{s = n+1\}\) in \(\R^{\mH}\) defined by the following inequalities:
	\begin{equation}\label{eq:ip}
	\begin{gathered}
	\forall H \in \mH : \,\, a_H \geq 0 \,,\\
	\forall L \in \mL : \,\, \sum_{H | L \subset H} a _H \leq \codim L \,.
	\end{gathered}
	\end{equation}
\end{theorem}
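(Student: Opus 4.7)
The plan is to establish both inclusions between the matroid polytope $P$ and the polyhedron $P' \subset \R^{\mH}$ cut out by the inequalities \eqref{eq:ip} together with the equation $\sum_H a_H = n+1$.

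The inclusion $P \subseteq P'$ is direct. For each basis $\mB$, the indicator $\be_\mB$ is nonnegative and satisfies $s(\be_\mB) = n+1$. For any $L \in \mL$, the quantity $\sum_{H | L \subset H} (\be_\mB)_H$ counts the hyperplanes of $\mB$ containing $L$; these are linearly independent (since $\mB$ consists of $n+1$ linearly independent hyperplanes), so if there are $k$ of them their intersection has codimension exactly $k$ and is contained in $L$, forcing $k \leq \codim L$.

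The reverse inclusion $P' \subseteq P$ is the substantive content of Edmonds' theorem, and I would prove it via the greedy algorithm combined with LP duality. Given any linear objective $w \in (\R^{\mH})^*$, order the hyperplanes in decreasing order of $w$ and let $\mB = \{B_1, \ldots, B_{n+1}\}$ be the greedy basis, where $B_j$ is the $j$-th hyperplane selected subject to remaining linearly independent from the previously chosen ones. Form the flag $L_j = B_1 \cap \ldots \cap B_j \in \mL$ for $1 \leq j \leq n$, so that $\codim L_j = j$, and define a dual LP solution by $y_{L_j} = w(B_j) - w(B_{j+1})$ for $1 \leq j \leq n$ and $y_L = 0$ for all other $L \in \mL$, together with $y_0 = w(B_{n+1})$ for the equality constraint. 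The nonnegativity $y_{L_j} \geq 0$ follows from the ordering, and a telescoping sum shows that $y_0 + \sum_{L \subset H} y_L = w_H$ holds with equality on each $H \in \mB$, so the primal and dual objective values agree. By LP duality, $\be_\mB$ maximizes $\langle w, \cdot\rangle$ on $P'$; since $w$ was arbitrary and $P'$ is bounded, every vertex of $P'$ is the indicator of some basis, whence $P' \subseteq P$.

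The main obstacle is verifying dual feasibility at hyperplanes $H \notin \mB$. Letting $j^*(H)$ be the smallest $j$ with $L_j \subset H$, the dual constraint at $H$ collapses to $w_H \leq w(B_{j^*(H)})$. This is the greedy exchange property: $H$ was rejected by the greedy algorithm at a step when some subset $\{B_1, \ldots, B_k\}$ with $k \geq j^*(H)$ had been selected and $H$ was linearly dependent on it; since the hyperplanes are considered in decreasing order of $w$, we have $w_H \leq w(B_k) \leq w(B_{j^*(H)})$. The sign of $w$ imposes no restriction because $y_0$ is a free multiplier for the equality constraint. This matroid-exchange step is the only non-routine ingredient, and it uses essentially only the submodularity of $\codim$ restricted to linear subspaces generated by subsets of $\mH$.
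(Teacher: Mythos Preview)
The paper does not prove this theorem; it is stated with a citation to Schrijver's book (Edmonds' description of the matroid/independence polytope) and used as a black box. Your sketch is the standard greedy-plus-LP-duality proof of that result, and it is correct in substance.

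One small edge case deserves a sentence. Your index $j^*(H)$ is defined as the least $j\le n$ with $L_j\subset H$, but it can happen that $H\notin\mB$ is processed by the greedy algorithm only after all of $B_1,\ldots,B_{n+1}$ have been selected (equivalently, $H$ is not in the closure of $\{B_1,\ldots,B_n\}$, so no $L_j$ with $j\le n$ lies in $H$). In that situation the sum $\sum_{L\subset H} y_L$ vanishes and the dual constraint reads $y_0\ge w_H$, i.e.\ $w(B_{n+1})\ge w_H$, which holds because $H$ appears after $B_{n+1}$ in the ordering. With this case added, your argument is complete.
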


Let \(P^{\circ}\) be the relative interior of the matroid polytope \(P\) inside the hyperplane \(\{s=n+1\}\). Specifically, \(P^{\circ}\) is the subset of \(\{s= n+1\}\) of points for which the inequalities \eqref{eq:ip} are strict. In particular, 
\(P^{\circ}\) is non-empty if and only if \(P\) has dimension \(|\mH|-1\).	

\begin{corollary}\label{cor:bp}
	The weighted arrangement \((\mH, \ba)\) is klt and CY if and only if 
	\begin{equation}
	\ba \in P^{\circ} \,.
	\end{equation}
\end{corollary}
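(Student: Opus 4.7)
The plan is to chain together the definitions of klt and CY on one side, and Edmonds' inequalities together with the explicit description of $P^{\circ}$ on the other. There is no real obstacle here; the content lies entirely in Theorem \ref{thm:edmonds}, which has already been invoked, and in Corollary \ref{cor:kltcondition}, which reduces the klt condition to strict inequalities over $\mL$ (rather than only over $\mLi$).

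First I would unpack the left-hand side. The CY condition is by definition the equation $\sum_{H\in\mH}a_H=n+1$, i.e.\ the statement that $\ba$ lies in the affine hyperplane $\{s=n+1\}\subset\R^{\mH}$. By Definition \ref{def:weightedarr}, the weights of a weighted arrangement are required to be strictly positive, so $a_H>0$ for every $H\in\mH$. By Corollary \ref{cor:kltcondition}(ii), the klt condition is then equivalent to the strict inequality
\[
\sum_{H\,|\,L\subset H} a_H \,<\, \codim L
\]
holding for every $L\in\mL$.

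Next I would unpack the right-hand side. By Theorem \ref{thm:edmonds}, the matroid polytope $P$ is precisely the subset of $\{s=n+1\}$ carved out by the non-strict inequalities $a_H\geq 0$ for $H\in\mH$ and $\sum_{H\,|\,L\subset H}a_H\leq\codim L$ for $L\in\mL$; and by the definition of $P^{\circ}$ stated just above the corollary, a point $\ba\in\{s=n+1\}$ lies in $P^{\circ}$ exactly when all of these inequalities hold strictly.

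Comparing the two unpacked descriptions gives the equivalence directly: $(\mH,\ba)$ is klt and CY if and only if $\ba$ lies in $\{s=n+1\}$, satisfies $a_H>0$ for every $H\in\mH$, and satisfies the strict inequality $\sum_{H\,|\,L\subset H}a_H<\codim L$ for every $L\in\mL$, which is precisely the condition $\ba\in P^{\circ}$. The only subtlety worth flagging is the degenerate case in which $\dim P<|\mH|-1$; there $P^{\circ}$ is empty by the remark following the definition, which is consistent with the left-hand side, since a klt CY weighting cannot exist either (as already noted, this corresponds exactly to $\mH$ being essential and irreducible, cf.\ Lemma \ref{lem:klycy}).
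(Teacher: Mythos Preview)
Your proof is correct and takes essentially the same approach as the paper: both unpack the klt and CY conditions and compare them directly with the strict form of Edmonds' inequalities from Theorem~\ref{thm:edmonds}. Your version is more explicit (and the appeal to Corollary~\ref{cor:kltcondition} is in fact unnecessary, since the klt condition~\eqref{eq:klt} is already stated over all of $\mL$), but the argument is the same.
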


\begin{proof}
	By definition, \((\mH, \ba)\) is klt if and only if all the inequalities in \eqref{eq:ip} are strict. The result follows from Theorem \ref{thm:edmonds}\,.
\end{proof}

Corollary \ref{cor:bp} implies the next.

\begin{corollary}\label{cor:stcone}
	The stable cone  \(C^{\circ}\) is the open polyhedral cone given by
	\begin{equation}
	C^{\circ} = \R_{>0} \cdot P^{\circ} \,.
	\end{equation}
\end{corollary}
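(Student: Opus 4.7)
The proof is essentially a bookkeeping exercise combining Definition \ref{def:stabcone} with the characterization of klt-and-CY weights given by Corollary \ref{cor:bp}. The key observation is that the rescaling map $\ba \mapsto \ba' = \frac{n+1}{s(\ba)}\ba$ is a bijection between $\R_{>0}^{\mH}$ and its intersection with the affine hyperplane $\{s = n+1\}$, and stability of $\ba$ is, by definition, klt-ness of $\ba'$.

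For the inclusion $C^{\circ} \subset \R_{>0} \cdot P^{\circ}$, I would start with $\ba \in C^{\circ}$. By Definition \ref{def:stabcone} we have $\ba \in \R_{>0}^{\mH}$ and the rescaled vector $\ba' = \lambda \ba$ with $\lambda = (n+1)/s(\ba) > 0$ gives a klt weighted arrangement $(\mH, \ba')$. By construction $s(\ba') = n+1$, so $(\mH, \ba')$ is also CY. Applying Corollary \ref{cor:bp} yields $\ba' \in P^{\circ}$, and therefore $\ba = \lambda^{-1} \ba' \in \R_{>0} \cdot P^{\circ}$.

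For the reverse inclusion, take $\ba = \mu \bb$ with $\mu > 0$ and $\bb \in P^{\circ}$. By Corollary \ref{cor:bp} the weighted arrangement $(\mH, \bb)$ is klt and CY; in particular the remark following Equation \eqref{eq:cy} gives $\bb \in \R_{>0}^{\mH}$, so $\ba \in \R_{>0}^{\mH}$ as well. Since $s(\bb) = n+1$, we have $s(\ba) = \mu(n+1)$, and hence the rescaling factor for $\ba$ is $\lambda = (n+1)/s(\ba) = 1/\mu$, giving $\ba' = \lambda \ba = \bb$. As $\bb$ is klt, $(\mH, \ba)$ is stable by Definition \ref{def:stable}, so $\ba \in C^{\circ}$.

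There is no substantive obstacle here; the only minor point to keep straight is that stability is defined by a rescaling to CY, and Corollary \ref{cor:bp} encodes exactly klt-and-CY, so these two conditions interlock cleanly. The fact that $C^{\circ}$ is an \emph{open polyhedral cone} is then automatic from the corresponding property of $P^{\circ}$ (an open polytope in the hyperplane $\{s = n+1\}$), together with the fact that scaling by $\R_{>0}$ turns an open subset of an affine hyperplane not passing through the origin into an open cone.
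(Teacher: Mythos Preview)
Your proof is correct and follows the same approach as the paper: the paper's own proof is a single line pointing to Corollary \ref{cor:bp}, and you have simply unpacked that reference in full, spelling out both inclusions via the rescaling bijection. The only minor comment is that you could appeal directly to the description of \(P^{\circ}\) (strict inequalities in \eqref{eq:ip}, including \(a_H > 0\)) rather than the remark after Equation \eqref{eq:cy} to get \(\bb \in \R_{>0}^{\mH}\), but this is cosmetic.
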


\begin{definition}\label{def:semistabcone}
	The \emph{semistable cone} \(C \subset \R^{\mH}\) is the cone over the matroid polytope,
	\begin{equation}
		C = \R_{\geq 0} \cdot P \,.
	\end{equation}
\end{definition}

\begin{remark}\label{rmk:cones}
The stable cone \(C^{\circ}\) is the interior of the semistable cone \(C\), as a subset of \(\R^{\mH}\). If \(C^{\circ}\) is non-empty then \(C\) is equal to its closure, \(C = \overline{C^{\circ}}\). By Corollary \ref{cor:essirrst}, this happens precisely when \(\mH\) is essential and irreducible. However, if the essential arrangement \(\mH\) is reducible, then \(C^{\circ}\) is empty but \(C\) is not.	
\end{remark}

We can now state our main result in its more general form.

\begin{theorem}\label{thm:maingeneral}
	Let \(\mH \subset \CP^n\) be an essential arrangement. Then the quadratic form \(Q\) of \(\mH\) is \(\leq 0\) on the semistable cone \(C \subset \R^{\mH}\).
\end{theorem}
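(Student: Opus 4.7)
The plan is to bootstrap Theorem \ref{thm:klt} -- which gives $Q(\ba)\le 0$ whenever $\ba$ lies in the open stable cone $C^\circ$ -- to obtain $Q\le 0$ on the whole semistable cone $C$. Two regimes require separate treatment depending on whether $\mH$ is irreducible.

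If $\mH$ is essential and irreducible, then by Corollary \ref{cor:essirrst} (stated in Remark \ref{rmk:cones} and proved in Appendix \ref{app:essirr}) the stable cone $C^\circ$ is non-empty and one has $C=\overline{C^\circ}$. Since $Q$ is a polynomial, hence continuous, the non-positivity on $C^\circ$ extends at once to the closure, dispatching this case.

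If $\mH$ is essential but reducible, then $C^\circ=\emptyset$ and the density argument breaks down; I would instead argue by induction on the number of irreducible factors. Decompose $\mH=\mH_1\times\cdots\times\mH_k$ with each $\mH_i\subset\CP^{n_i}$ essential and irreducible. The key matroid-theoretic observation is that the bases of a product arrangement are precisely disjoint unions of bases of the factors; working on the lift to $\C^{n+1}=V_1\oplus\cdots\oplus V_k$ coming from Definition \ref{def:prod}, this follows from the fact that the projective intersection $\bigcap_{H\in\mB}H=\emptyset$ decomposes across the direct summands and forces $|\mB_i|=n_i+1$. Consequently the matroid polytope of $\mH$ is the Cartesian product $P_1\times\cdots\times P_k$, so any $\ba=(\ba_1,\ldots,\ba_k)\in C$ has the form $\lambda(\mathbf{p}_1,\ldots,\mathbf{p}_k)$ with $\lambda\ge 0$ and $\mathbf{p}_i\in P_i$; in particular each component $\ba_i$ lies in $C_i$ and the ratio $s_i/(n_i+1)=\lambda$ is independent of $i$. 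Iterating Example \ref{ex:qprod} yields
\[
\frac{Q(\ba)}{n+1}=\sum_{i=1}^{k}\frac{Q_i(\ba_i)}{n_i+1}+2\sum_{i=1}^{k}\frac{s_i^2}{n_i+1}-\frac{2s^2}{n+1},
\]
and because $s_i=\lambda(n_i+1)$ and $s=\lambda(n+1)$, the last two sums collapse to $2\lambda^2(n+1)-2\lambda^2(n+1)=0$. The inequality then reduces termwise to the irreducible case already handled.

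The hardest part is not any single technical hurdle but the bookkeeping that glues the matroid-polytope decomposition to the algebraic identity for $Q$: one must verify both the product description of the matroid polytope and that the cross terms in the product expansion of $Q$ are exactly those that membership in $C$ -- via the constraint $s_i/(n_i+1)=\lambda$ for all $i$ -- forces to vanish. All the genuine analytic content is absorbed into Theorem \ref{thm:klt} (the stable case) and Corollary \ref{cor:essirrst} (non-emptiness of $C^\circ$ for essential irreducible arrangements), both proved elsewhere in the paper.
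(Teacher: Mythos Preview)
Your proposal is correct and follows essentially the same route as the paper: the irreducible case is handled identically (via $C=\overline{C^\circ}$ and continuity), and for the reducible case both you and the paper use the product decomposition of the matroid polytope together with Example~\ref{ex:qprod} to reduce to the irreducible factors. The only cosmetic difference is that the paper splits off one factor at a time ($\mH=\mH_1\times\mH_2$, then recurses), whereas you decompose fully into irreducible factors and verify the telescoping of the cross terms $\sum 2s_i^2/(n_i+1)-2s^2/(n+1)$ in one go; your version is more explicit but the content is the same.
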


\begin{proof} 
	It suffices to show that \(Q \leq 0\) on the matroid polytope \(P\) of \(\mH\).
	If \(\mH\) is reducible, say \(\mH = \mH_1 \times \mH_2\), then the matroid polytope of \(\mH\) is the product of the matroid polytopes of the respective factors, say \(P = P_1 \times P_2\). 
	By Example \ref{ex:qprod}\,, on the polytope \(P\), we have 
	\[
	\frac{Q}{n+1} = \frac{Q_1}{n_1 + 1} + \frac{Q_2}{n_2 + 1} \,.
	\]
	Therefore, to prove the theorem, we can assume that \(\mH\) is irreducible.
	
	Suppose that \(\mH\) is irreducible. As pointed out in
	Remark \ref{rmk:cones}\,, if the arrangement \(\mH\) is irreducible then the semistable cone \(C\) is the closure of the stable cone \(C^{\circ}\). By Theorem \ref{thm:qcone}\,, the quadratic form \(Q\) of \(\mH\) is  \(\leq 0\) on \(C^{\circ}\). By continuity, \(Q \leq 0\) on \(C\). 
\end{proof}

\subsection{Relation to GIT and K-stability of pairs}\label{sec:pointsGIT}

In this section we justify the name stable cone, by providing links to Geometric Invariant Theory and stability of pairs.

\subsubsection{Configurations of points}

It is useful to work dually with configurations of points rather than hyperplane arrangements. 
We set-up the basic definitions.

\begin{definition}
	A weighted configuration of points \((\mP, \ba)\) is a finite set 
	\[
	\mP = \{p_1, \ldots, p_m\}
	\] 
	of points  \(p_i \in \CP^n\) together with a weight vector \(\ba = (a_1, \ldots, a_m) \in \R^m_{> 0}\).
\end{definition}

\begin{remark}
	Note that we require the weights \(a_i\) to be positive. Informally, we can think of \(a_i\) as the mass of the point \(p_i\)\,.
\end{remark}

The main concept of interest is the following.

\begin{definition}\label{def:stablepoints}
	A weighted configuration of points \((\mP, \ba)\) is \emph{stable}, if for every non-empty and proper projective subspace \(W \subset \CP^n\) the following holds:
	\begin{equation}\label{eq:stpointW}
	\sum_{i \,|\, p_i \in W} a_i < \frac{\dim W + 1}{n+1} \cdot \sum_{i=1}^{m} a_i \,.
	\end{equation}
\end{definition}

The \emph{sum poset} of \(\mP\) is the finite set \(\mU\) of all proper non-empty subspaces \(U \subset \CP^n\) spanned by points in \(\mP\), equipped with the partial order given by inclusion.
The next result is dual to Lemma \ref{lem:stablearr}\,, we omit its proof.

\begin{lemma}\label{lem:stU}
	The weighted configuration of points \((\mP, \ba)\) is stable if and only if 
	\begin{equation}\label{eq:stU}
		\forall U \in \mU: \,\, \sum_{i \, | \, p_i \in U} a_i < \frac{\dim U + 1}{n+1} \cdot \sum_{i=1}^m a_i \,.
	\end{equation}
\end{lemma}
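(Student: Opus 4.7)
The plan is to mimic the proof of Lemma \ref{lem:stablearr} (or rather of its dual Lemma \ref{lem:kltcondition}), reducing an inequality over arbitrary subspaces to an inequality over the combinatorially defined poset $\mU$. One direction is immediate: if $(\mP,\ba)$ is stable, then \eqref{eq:stpointW} holds for every non-empty proper $W \subset \CP^n$, in particular for every $W \in \mU$, which is exactly \eqref{eq:stU}. So the content is in the converse.

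For the converse, assume \eqref{eq:stU} and pick an arbitrary non-empty proper subspace $W \subset \CP^n$. The key move is to replace $W$ by the smallest subspace spanned by the points of $\mP$ that lie in $W$. Concretely, set
\[
W' = \spn\bigl\{\, p_i \,\,:\,\, p_i \in W \,\bigr\}.
\]
If no point of $\mP$ lies in $W$, then the left-hand side of \eqref{eq:stpointW} is $0$, while the right-hand side is strictly positive (the weights are positive and $\dim W + 1 \geq 1$), so \eqref{eq:stpointW} holds trivially. Otherwise, $W'$ is a non-empty subspace contained in $W$, and in particular $W' \subsetneq \CP^n$ is proper, so $W' \in \mU$.

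By construction, $\{i \,|\, p_i \in W\} = \{i \,|\, p_i \in W'\}$ and $\dim W' \leq \dim W$. Applying hypothesis \eqref{eq:stU} to $W' \in \mU$ gives
\[
\sum_{i \,|\, p_i \in W} a_i \,=\, \sum_{i \,|\, p_i \in W'} a_i \,<\, \frac{\dim W' + 1}{n+1}\cdot \sum_{i=1}^m a_i \,\leq\, \frac{\dim W + 1}{n+1}\cdot \sum_{i=1}^m a_i,
\]
which is \eqref{eq:stpointW} for $W$. Hence $(\mP,\ba)$ is stable. I do not anticipate any obstacle; the only subtlety is ensuring $W'$ is proper so that it genuinely belongs to $\mU$, which is immediate since $W' \subset W \subsetneq \CP^n$.
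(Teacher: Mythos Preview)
Your proof is correct and essentially identical to the paper's intended argument (which the paper omits, noting only that it is dual to Lemma~\ref{lem:stablearr}): replace an arbitrary proper subspace $W$ by the span $W'$ of the points of $\mP$ lying in it, handle the empty case trivially, and then use $W' \in \mU$ together with $\dim W' \leq \dim W$.
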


Next, we relate stability of weighted configurations of points to our previous notion of stable weighted arrangements.

\begin{notation}\label{not:ann1}
	If \(L \subset \CP^n\) is a linear subspace, we write \(L^{\perp} \subset (\CP^n)^*\) for the (projectivization) of the annihilator of \(L\). In particular,
	\begin{equation}\label{eq:codimL}
		\codim L = \dim L^{\perp} + 1 \,.	
	\end{equation}
\end{notation}
 
Let \(\mH \subset \CP^n\) be a hyperplane arrangement.
For each \(H \in \mH\), there is a unique up to scalar multiplication linear function \(\ell_H \in (\C^{n+1})^*\) such that \(H = \{\ell_{H}=0\}\). The annihilator \(H^{\perp} \) is the uniquely defined point in \((\CP^n)^* = \P((\C^{n+1})^*)\) given by \(H^{\perp} = [\ell_H]\). This way, the  arrangement \(\mH \subset \CP^n\) corresponds to a configuration of points \(\mP \subset (\CP^n)^*\) with
\[
\mP = \mH^{\perp} = \{H^{\perp} \,|\, H \in \mH\} \,. 
\]

Recall that \(\mL\) is the poset of non-empty and proper subspaces \(L \subset \CP^n\) obtained by intersecting members \(H \in \mH\), equipped with the partial order given by reverse inclusion.
Similarly, \(\mU\) is the poset of non-empty and proper subspaces \(U \subset (\CP^n)^*\) obtained as sums of points \(p \in \mP\), equipped with the partial order given by inclusion. The correspondence
\[
L \mapsto U = L^{\perp}
\]
defines an isomorphism of posets between \(\mL\) and \(\mU\).

Let \(\ba \in \R^{\mH}\) be a weight vector with components \(a_H > 0\) for all \(H \in \mH\).
Let \(m = |\mH|\) and label the hyperplanes \(\mH = \{H_1, \ldots, H_m\}\). Write \(\mP = \{p_1, \ldots, p_n\}\) with
\(p_i = H_i^{\perp}\) and 
\(\ba = (a_1, \ldots, a_m)\) with \(a_i = a_{H_i}\). 

\begin{lemma}\label{lem:stabHP}
	The weighted arrangement \((\mH, \ba)\) is stable (as in Definition \ref{def:stable}) if and only if the weighted configuration of points \((\mP, \ba)\) is stable (as in Definition \ref{def:stablepoints}).
\end{lemma}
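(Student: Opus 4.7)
The plan is to use the duality $L \leftrightarrow L^{\perp}$ between linear subspaces of $\CP^n$ and linear subspaces of $(\CP^n)^{*}$ to match up the defining linear inequalities of the two notions of stability. Concretely, I will invoke Lemma \ref{lem:stablearr}\,(iii) to characterize stability of $(\mH, \ba)$ by the inequalities
\[
\sum_{H \,|\, L \subset H} a_H < \frac{\codim L}{n+1} \cdot s
\]
for all non-empty proper $L \subsetneq \CP^n$, and I will use Definition \ref{def:stablepoints} to characterize stability of $(\mP, \ba)$ by the inequalities
\[
\sum_{i \,|\, p_i \in W} a_i < \frac{\dim W + 1}{n+1} \cdot s
\]
for all non-empty proper $W \subsetneq (\CP^n)^{*}$.

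The key step is the elementary identification: for $H \in \mH$, the condition $L \subset H$ is equivalent to $\ell_H \in (L^{\bc})^{\mathrm{ann}}$, hence to $H^{\perp} \in L^{\perp}$. Therefore
\[
\sum_{H \,|\, L \subset H} a_H \;=\; \sum_{i \,|\, p_i \in L^{\perp}} a_i \,,
\]
and by Equation \eqref{eq:codimL} we have $\codim L = \dim L^{\perp} + 1$. This shows that the stability inequality for $(\mH, \ba)$ attached to $L$ is literally the same numerical inequality as the stability inequality for $(\mP, \ba)$ attached to $W = L^{\perp}$.

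To finish, I note that $L \mapsto L^{\perp}$ is an involutive bijection between non-empty proper subspaces of $\CP^n$ and non-empty proper subspaces of $(\CP^n)^{*}$, so the two collections of linear inequalities coincide under this correspondence. Hence $(\mH, \ba)$ is stable if and only if $(\mP, \ba)$ is stable. There is no real obstacle here; the only thing to be careful about is that one uses the \emph{all subspaces} formulations (Lemma \ref{lem:stablearr}\,(iii) and Definition \ref{def:stablepoints}) rather than the restricted versions over $\mL$ or $\mU$, so that the bijection $L \leftrightarrow L^{\perp}$ can be applied without worrying about whether $L \in \mL$ corresponds to some $U \in \mU$.
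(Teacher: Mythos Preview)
Your proof is correct and follows essentially the same approach as the paper: both match the stability inequalities via the annihilator correspondence $L \leftrightarrow L^{\perp}$ and use Equation \eqref{eq:codimL} together with the equivalence $L \subset H \iff H^{\perp} \in L^{\perp}$. The only minor difference is that the paper works with the restricted formulations over $\mL$ and $\mU$ (via Lemma \ref{lem:stU}) and invokes the bijection $\mL \to \mU$, $L \mapsto L^{\perp}$, whereas you use the unrestricted ``all subspaces'' characterizations (Lemma \ref{lem:stablearr}\,(iii) and Definition \ref{def:stablepoints}) so that the bijection is immediate --- a clean way to sidestep that check.
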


\begin{proof}
	By Lemma \ref{lem:stablearr}\,, the weighted arrangement \((\mH, \ba)\) is stable if and only if,
	for every \(L \in \mL\) we have \(g_L(\ba) > 0\), where
	\[
	g_L(\ba) = \frac{\codim L}{n+1} \cdot \sum_{i=1}^m a_i - \sum_{i \,|\, L \subset H_i} a_i  \,.
	\]
	
	By Lemma \ref{lem:stU}, the weighted configuration \((\mP, \ba)\) is stable if and only if,
	for every \(U \in \mU\) we have \(f_U(\ba) > 0\), where
	\begin{equation*}
		f_U(\ba) = \frac{\dim U + 1}{n+1} \cdot \sum_{i=1}^m a_i - \sum_{i \,|\, p_i \in U} a_i \,.
	\end{equation*}
	
	If \(U = L^{\perp}\) for some \(L \in \mL\), then \eqref{eq:codimL} asserts that \(\dim U + 1 = \codim L\).
	On the other hand, \(L \subset H_i\) if and only if \(p_i  \in U\). Therefore,
	\[
	g_L(\ba) = f_{U}(\ba) \,.
	\]
	Using this, the lemma follows from the fact that \(L \mapsto U = L^{\perp}\) is a bijection between the intersection poset \(\mL\) of \(\mH\) and the sum poset \(\mU\) of \(\mP\).
\end{proof}


\subsubsection{Geometric Invariant Theory}\label{sec:git} 
Given a reductive group \(G\) acting on a projective variety \(X\) together with a \(G\)-linearised ample line bundle \(L\), there is a standard notion of
a point \(p \in X\) being GIT stable.
In our case of interest, \(X\) is embedded in \(\CP^k\) (for some \(k \gg 1\)) via  sections of \(L\) and \(G\) acts on \(X\) through linear transformations of \(\C^{k+1}\). In this case, a point \(p \in X\) is GIT stable if the isotropy subgroup \(G_p\) is finite and the orbit \(G \cdot \tilde{p} \subset \C^{k+1}\) is closed, where \(\tp \in \C^{k+1}\) is any point that projects down to \(p \in \CP^k\).

To make the connection, let \((\mP, \ba)\) be a weighted configuration of \(m\) distinct points in \(\CP^n\) and assume that all the weights \(a_i\) are positive integers.
Let \(X\) be the product of \(m\)-copies of \(\CP^n\) and let 
\(L_{\ba}\) be the polarization on \(X\) given by
\[
L_{\ba} = \bigotimes_{j=1}^m \pr_j^*\left(\mO_{\P^n}(a_j)\right) \,,
\]
where \(X \xrightarrow{\pr_j} \CP^n\) is the projection to the \(j\)-factor.

\begin{lemma}
	The weighted configuration of points \((\mP, \ba)\) is stable if and only if the point \(p = (p_1, \ldots, p_n) \in X\) is GIT stable for the diagonal action of \(G = SL(n+1, \C)\) on \(X\)  linearised by the polarization \(L_{\ba}\). 	
\end{lemma}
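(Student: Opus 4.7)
The plan is to invoke the Hilbert-Mumford numerical criterion, which asserts that for a reductive group \(G\) acting on a polarised projective variety \((X, L)\), a point \(p \in X\) is GIT stable if and only if \(\mu^L(p, \lambda) < 0\) for every non-trivial one-parameter subgroup \(\lambda : \C^* \to G\). For the diagonal action on \(X = (\CP^n)^m\), the Hilbert-Mumford weight is additive across the factors:
\[
\mu^{L_\ba}(p, \lambda) \,=\, \sum_{j=1}^m a_j \cdot \mu^{\mO_{\P^n}(1)}(p_j, \lambda) \,.
\]
Every non-trivial 1-PS of \(SL(n+1, \C)\) is conjugate to a diagonal one with integer weights \((r_0, \ldots, r_n)\) satisfying \(\sum_i r_i = 0\), acting on a basis \(e_0, \ldots, e_n\) of \(\C^{n+1}\); analysing the action on the fibre of \(\mO_{\P^n}(1)\) at \(\lim_{t \to 0} \lambda(t) p\) yields
\[
\mu^{\mO_{\P^n}(1)}(p, \lambda) \,=\, \min \{ r_i \,:\, x_i \neq 0 \}\,,
\]
where \((x_0, \ldots, x_n)\) are the homogeneous coordinates of \(p\) in the chosen basis.

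To prove that GIT stability implies configuration stability, I would exhibit for each non-empty proper projective subspace \(W \subset \CP^n\) of dimension \(d\) a distinguished 1-PS \(\lambda_W\): choose a basis in which \(d+1\) of the \(e_i\) span \(W^{\bc}\), and assign weights \(+(n-d)\) on \(W^{\bc}\) and \(-(d+1)\) on the complementary subspace, which sum to zero. The minimum formula gives \(\mu^{\mO_{\P^n}(1)}(p_j, \lambda_W) = n-d\) if \(p_j \in W\) and \(-(d+1)\) otherwise, so
\[
\mu^{L_\ba}(p, \lambda_W) \,=\, (n-d)\, A_W \,-\, (d+1)(s - A_W) \,=\, (n+1) A_W \,-\, (d+1) s\,,
\]
where \(A_W = \sum_{p_j \in W} a_j\) and \(s = \sum_j a_j\). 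Applying Hilbert-Mumford to each \(\lambda_W\) recovers precisely the strict inequality of Definition~\ref{def:stablepoints}.

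For the converse implication, the key step is to show that the distinguished \(\lambda_W\) already detect every instability, by decomposing a general 1-PS as a non-negative combination of them. After reordering so that \(r_0 \leq \cdots \leq r_n\) and setting \(U_k = \P(\spn\{e_k, \ldots, e_n\})\) for \(k = 1, \ldots, n\) (each \(U_k\) non-empty and proper with \(\dim U_k = n-k\)), I would telescope
\[
\min\{r_i : x_i \neq 0\} \,=\, r_0 \,+\, \sum_{k=1}^n (r_k - r_{k-1})\, \mathbf{1}[p \in U_k]
\]
and eliminate the \(r_0 s\) term via \(\sum_i r_i = 0\) to obtain
\[
\mu^{L_\ba}(p, \lambda) \,=\, \sum_{k=1}^n (r_k - r_{k-1}) \Bigl[ A_{U_k} \,-\, \frac{\dim U_k + 1}{n+1}\, s \Bigr] \,.
\]
Each coefficient \(r_k - r_{k-1}\) is non-negative and not all are zero, while each bracket is strictly negative by Lemma~\ref{lem:stU} applied to \(U_k\). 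Therefore \(\mu^{L_\ba}(p, \lambda) < 0\) for every non-trivial \(\lambda\), and Hilbert-Mumford yields GIT stability.

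The main obstacle is the summation-by-parts identity above: since \(\mu^{\mO_{\P^n}(1)}\) depends on the weights \(r_i\) through a minimum rather than linearly, the passage from a general 1-PS to a non-negative combination of configuration-stability inequalities \(A_{U_k} - (\dim U_k + 1) s / (n+1)\) is not automatic and depends essentially on the telescoping step combined with the traceless condition \(\sum_i r_i = 0\). Once this identity is in hand, both implications of the lemma reduce cleanly to Hilbert-Mumford.
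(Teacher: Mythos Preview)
Your argument is correct and complete. The paper itself does not supply a proof: it simply cites \cite[Theorem 11.2]{dolgachev} together with Definition~\ref{def:stablepoints}, so what you have written is essentially a self-contained derivation of that cited result via the Hilbert--Mumford criterion. The additivity of $\mu$ across the product, the formula $\mu^{\mO_{\P^n}(1)}(p,\lambda)=\min\{r_i: x_i\neq 0\}$, the test one-parameter subgroups $\lambda_W$, and the telescoping/Abel-summation identity for a general $\lambda$ are all standard ingredients of that argument, and you have assembled them correctly; in particular the identity
\[
r_0 s + \sum_{k=1}^n (r_k - r_{k-1}) A_{U_k}
= \sum_{k=1}^n (r_k - r_{k-1})\Bigl[A_{U_k} - \tfrac{\dim U_k + 1}{n+1}\,s\Bigr]
\]
does follow from $\sum_i r_i = 0$ after a short summation by parts.

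One small correction: in the converse direction you invoke Lemma~\ref{lem:stU} for the subspaces $U_k$, but these need not lie in the sum poset $\mU$. You should instead appeal directly to Definition~\ref{def:stablepoints}, which already quantifies over \emph{all} non-empty proper subspaces $W\subset\CP^n$; this gives the strict negativity of each bracket without any restriction on $U_k$.
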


\begin{proof}
	This is follows from \cite[Theorem 11.2]{dolgachev} and Definition \ref{def:stablepoints}\,.
\end{proof}

The link to differential geometry is provided by the Kempf-Ness theorem. To avoid distinctions between stable and polystable points, which have continuous isotropy subgroups, we assume that
\(G_p = \{\textup{identity}\}\). Equivalently, we assume that \(\mP\) is essential and irreducible (see Section \ref{app:essirr}).
There is a standard embedding of \(\CP^n \subset \mathfrak{su}(n+1)^*\), as a coadjoint orbit. Then \((\mP, \ba)\) is stable if and only if there is  \(F \in SL(n+1, \C)\) such that the centre of mass of the points \(F(p_i)\) with weights \(a_i\) in the Euclidean space \(\mathfrak{su}(n+1)^*\) is equal to zero. This interpretation of the stable condition remains valid for arbitrary positive real weights, see \cite[Example 6.3]{kapovich}.

\begin{example}
	If \((\mH, \ba)\) is a collection of points \(p_i \in \CP^1\) with weights \(a_i>0\). Then \((\mH, \ba)\) is stable \(\iff a_i < (1/2) \cdot \left(\sum_j a_j\right)\) for all \(i\);  equivalently:
	\begin{equation}\label{eq:pointsP1}
	\forall i: \,\,  a_i < \sum_{j \neq i} a_j \,.	
	\end{equation}
	In this case, the Kemp-Ness theorem becomes the familiar statement that a collection of \(3\) or more points on the unit sphere \(S^2 \subset \R^3\) with masses \(a_i\)  can be moved by an element of \(SL(2, \C)\) to have zero centre of mass if and only if \eqref{eq:pointsP1} is satisfied.
\end{example}

\subsubsection{Stability of pairs}

A weighted arrangement \((\mH, \ba)\) corresponds to a log pair \((\CP^n, \Delta)\) with 
\[
\Delta = \sum_{H \in \mH} a_H \cdot H \,.
\] 
From MMP, we recall the following.

\begin{definition}\label{def:logcan}
	The weighted arrangement \((\mH, \ba)\) is \emph{log canonical} if the following inequalities hold:
	\begin{equation}\label{eq:lc}
	\begin{gathered}
	\forall \, H \in \mH: \,\, 0 < a_H \leq 1 \,, \\ 
	\forall \, L \in \mL: \,\, \sum_{H | L \subset H} a_H \leq \codim L \,.
	\end{gathered}
	\end{equation}
\end{definition}

\begin{remark}
	Definition \ref{def:logcan} is broader than klt condition, in the sense that the strict inequalities \(<\) in \eqref{eq:klt} are relaxed to weak inequalities \(\leq\) in \eqref{eq:lc}.	
\end{remark}

Recall that we write \(s\) for the linear function on \(\R^{\mH}\) given by
\[
s = \sum_{H \in \mH} a_H \,.
\]

\begin{lemma}\label{lem:stpair}
	Let \((\mH, \ba)\) be a log canonical weighted arrangement.
	\begin{enumerate}[label=\textup{(\roman*)}]
		\item If \(s > n+1\) then \((\mH, \ba)\) is stable.
		\item If \(s=n+1\) then \((\mH, \ba)\) is stable if and only if \((\CP^n, \Delta)\) is klt.
		\item If \(s < n+1\) then \((\mH, \ba)\) is stable if and only if \((\CP^n, \Delta)\) is \(K\)-stable.
	\end{enumerate}
\end{lemma}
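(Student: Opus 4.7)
For part (i), the plan is a direct manipulation of the defining inequalities. The log canonical hypothesis supplies
$$\sum_{H \,|\, L \subset H} a_H \leq \codim L \quad \text{for every } L \in \mL,$$
and the assumption $s > n+1$ gives $s/(n+1) > 1$; combining these yields
$$\sum_{H \,|\, L \subset H} a_H \leq \codim L < \frac{\codim L}{n+1} \cdot s,$$
which is precisely the strict inequality of Lemma \ref{lem:stablearr}(ii) characterising stability. Hence $(\mH, \ba)$ is stable.

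For part (ii), the approach combines the duality between stability of weighted arrangements and GIT stability of point configurations (Section \ref{sec:pointsGIT}) with a Yau-Tian-Donaldson-type correspondence for log Fano pairs with hyperplane boundary. By Lemma \ref{lem:stabHP}, $(\mH, \ba)$ is stable if and only if the dual weighted configuration $(\mH^{\perp}, \ba)$ in $(\CP^n)^*$ is GIT stable in the sense of Section \ref{sec:git}. On the other hand, the hypothesis $s < n+1$ ensures that $-(K_{\CP^n} + \Delta) = (n+1-s)\,h$ is ample, so $(\CP^n, \Delta)$ is genuinely log Fano. One then invokes the theorem that, for a log Fano pair of this special shape -- $\CP^n$ with $\Q$-boundary supported on a hyperplane arrangement -- K-stability is equivalent to GIT stability of the dual weighted configuration; chaining these two equivalences gives (ii).

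The main obstacle is the second equivalence above. The plan to establish it is to use the Fujita--Li valuative criterion together with equivariant reduction: any test configuration of $(\CP^n, \Delta)$ with non-positive generalised Futaki invariant can, by an equivariant simplification argument, be assumed to arise from a one-parameter subgroup of $\mathrm{SL}(n+1, \C)$, and every such degeneration is encoded by a filtration of $\C^{n+1}$, i.e.\ by a chain of linear subspaces of $\CP^n$. A direct computation of the generalised Futaki invariant of the resulting test configuration identifies it, up to a positive factor, with
$$\codim L \cdot \frac{s}{n+1} \; - \sum_{H \,|\, L \subset H} a_H$$
for the associated linear subspace $L$; this is exactly the numerical invariant whose strict positivity on all $L \in \mL$ characterises stability via Lemma \ref{lem:stablearr}. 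Thus the K-stability/GIT equivalence reduces to this explicit matching of numerical invariants, which is the technical core of the argument and can be extracted from the literature on K-stability of log Fano pairs with large symmetry.
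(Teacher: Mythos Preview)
Your argument for part (i) is correct and is essentially identical to the paper's: the paper phrases it as ``the rescaling factor $\lambda = (n+1)/s$ is $<1$, so the rescaled weights $a'_H = \lambda a_H$ are strictly smaller and the log canonical inequalities become strict,'' while you equivalently verify the inequality of Lemma~\ref{lem:stablearr}(ii) directly. Same content, slightly different packaging.

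For part (ii) the paper does something much shorter: it simply cites \cite[Theorem 1.5(2)]{fujita}, which is precisely the statement that for a log canonical Fano hyperplane pair $(\CP^n,\Delta)$, $K$-stability is equivalent to the numerical condition appearing in Lemma~\ref{lem:stablearr}. What you have written is, in effect, a sketch of the \emph{proof} of that cited theorem rather than an invocation of it. Your outline is broadly accurate --- the equivariant reduction to one-parameter subgroups of $\mathrm{SL}(n+1,\C)$ and the identification of the Donaldson--Futaki invariant with (a positive multiple of) $\tfrac{\codim L}{n+1}\,s - \sum_{H\supset L} a_H$ is exactly the mechanism behind Fujita's result. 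Two minor comments: first, the detour through Lemma~\ref{lem:stabHP} and the GIT interpretation of Section~\ref{sec:git} is unnecessary, since your test-configuration computation already lands directly on the inequality of Lemma~\ref{lem:stablearr}; second, the phrase ``can be extracted from the literature'' is doing real work here --- the equivariant reduction step (that it suffices to test against product test configurations coming from $\mathrm{Aut}(\CP^n)$) is the genuinely non-trivial input, and is itself the content one would cite Fujita for. So your proposal is correct but strictly more laborious than the paper's one-line citation.
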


\begin{proof}
	(i) If \(s > n+1\) then the rescaling factor \(\lambda = (n+1) / s\) in Equation \eqref{eq:stable} is \(< 1\). Therefore, the weights \(a'_H = \lambda \cdot a_H\) of the rescaled arrangement \((\mH, \ba')\) satisfy \(a'_H < a_H\) for all \(H\). This implies that the weak inequalities \(\leq\) in \eqref{eq:lc} become strict inequalities \(<\); showing that \((\mH, \ba')\) is klt.
	(ii) This is just by Definition \ref{def:stable}.
	(iii) This is an immediate consequence of item (2) in \cite[Theorem 1.5]{fujita}.
\end{proof}

A weighted arrangement \((\mH, \ba)\) is of \emph{general type} if \(s > n+1\). It is \emph{Fano} if \(s < n+1\).

\begin{remark}
	Log canonical weighted arrangements of general type are studied in \cite{alexeev} under the acronym `shas', for `stable hyperplane arrangements'.
\end{remark}

\begin{remark}
	A basic result of Odaka \cite{odaka} asserts that if \((X, L)\) is a polarized complex projective manifold with \(c_1(X) \leq 0\) (with polarization \(L=K_X\) if \(c_1(X) < 0\)) then \((X, L)\) is \(K\)-stable. Actually, Odaka allows \(X\) to have (semi)-log-canonical singularities if \(c_1(X)<0\) and log-terminal singularities if \(c_1(X)=0\). A logarithmic version of Odaka's result would suggest that if the weighted arrangement \((\mH, \ba)\) is either log canonical of general type or klt CY, then \((\CP^n, \Delta)\) is \(K\)-stable.
\end{remark}

\section{Hirzebruch arrangements and matroids}\label{sec:hirzebruch}

In Section \ref{sec:critQ} we calculate the partial derivatives \(\p Q / \p a_H\) of the quadratic form and express them in terms of total sums
of weights on induced arrangements \(\mH^H\).\newpar

\noindent In Section \ref{sec:hirz}\,, we analyse the symmetric case where all the weights of the arrangements are equal. This leads us to define a class of arrangements, which we call \emph{Hirzebruch arrangements}, for which the main diagonal of \(\R^{\mH}\) is contained in the kernel of the quadratic form \(Q\). \newpar

\noindent In Section \ref{sec:compref}\,, we show that if \(\mH \subset \CP^n\) is a complex reflection arrangement defined by an irreducible unitary complex reflection group \(G \subset U(n+1)\), then \(\mH\) is Hirzebruch (Proposition \ref{prop:reflhir}) and its quadratic form is negative semidefinite. \newpar

\noindent In Section \ref{sec:matroid}\,, we reformulate our results in the more general context of \emph{matroids}.

\subsection{Critical points of \texorpdfstring{\(Q\)}{Q} and induced arrangements}\label{sec:critQ}

We calculate the partial derivatives of \(Q\). Lemma \ref{lem:pq} expresses \(\p Q / \p a_H\) in terms of the sum of the induced weights on the induced arrangement \(\mH^H\), as defined next.

We use the notation for deletion and restriction triples as in \cite[Definition 1.14]{orlikterao}. 
Fix \(H \in \mH\) and write \((\mH, \mH', \mH'')\) where \(\mH' = \mH \setminus \{H\}\) and \(\mH'' = \mH^H\) is the induced arrangement
obtained by intersecting \(H\) with members of \(\mH'\)
as defined in Section \ref{sec:habasicdef}\,.

An element \(H'' \in \mH''\) is a codimension \(2\) subspace of \(\CP^n\). In particular, we can distinguish the elements of \(\mH''\) into two different types depending on whether \(H'' \in \mLi^{n-2}\) or not. Recall that \(H'' \in \mLi^{n-2}\) if and only if its multiplicity \(m_{H''} = |\mH_{H''}|\) is \( \geq 3\), where \(\mH_{H''}\) is the localization of \(\mH\) at \(H''\).
If \(H'' \in \mLi^{n-2}\), then \(a_{H''}\) is defined as one-half of the sum of all the weights of the hyperplanes in \(\mH\) that contain \(H''\).

\begin{definition}\label{def:inducedweights}
	The weights of the induced arrangement \(\mH''\) are defined as follows:
	\begin{equation}
		a''_{H''} = \begin{cases}
			a_{H''} & \text{if } H'' \in \mLi\,, \\
			a_{H'} & \text{if } \mH_{H''} = \{H, H'\} \,.
		\end{cases}
	\end{equation}
	The sum of the induced weights on \(\mH''\) is denoted by \(s_H\). More precisely,
	\begin{equation}\label{eq:sh}
		s_H = \sum_{H' | H' \pitchfork H} a_{H'} \,\, + \sum_{L \in \mLi^{n-2} | L \subset H} a_L \,.
	\end{equation}
\end{definition}

\begin{lemma}
	For every \(H \in \mH\) the following identity holds
	\begin{equation}\label{eq:ssh}
	B_H \cdot a_H = 
	s_H \,-\, s  \,\,+  \sum_{L \in \mLi^{n-2} | L \subset H} a_L  \,.
	\end{equation}
\end{lemma}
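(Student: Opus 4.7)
The plan is to prove the identity by a direct bookkeeping computation: partition the sum $s - a_H = \sum_{H' \neq H} a_{H'}$ according to which codimension-$2$ subspace $L \subset H$ arises as $L = H \cap H'$. Every $H' \in \mH \setminus \{H\}$ contributes to exactly one such codimension-$2$ subspace $L$, and this $L$ is either reducible (i.e., $\mH_L = \{H, H'\}$, equivalently $H' \pitchfork H$) or irreducible (i.e., $L \in \mLi^{n-2}$, $L \subset H$). This gives a clean partition of $\mH \setminus \{H\}$ into two groups, and the identity will follow by evaluating the contribution of each group and rearranging.

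More precisely, I would argue as follows. First I would write
\[
s - a_H \;=\; \sum_{H' \,|\, H' \pitchfork H} a_{H'} \;+\; \sum_{L \in \mLi^{n-2} \,|\, L \subset H} \;\; \sum_{H' \neq H \,|\, L \subset H'} a_{H'},
\]
using the dichotomy above. For each $L \in \mLi^{n-2}$ with $L \subset H$, the defining Equation \eqref{eq:aLcod2} gives $2 a_L = \sum_{H' \,|\, L \subset H'} a_{H'}$, and since $L \subset H$ this sum contains the term $a_H$. Therefore
\[
\sum_{H' \neq H \,|\, L \subset H'} a_{H'} \;=\; 2 a_L - a_H.
\]

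Substituting this in and letting $N_H = |\{L \in \mLi^{n-2} \,|\, L \subset H\}| = B_H + 1$, I obtain
\[
s - a_H \;=\; \sum_{H' \,|\, H' \pitchfork H} a_{H'} \;+\; 2 \sum_{L \in \mLi^{n-2} \,|\, L \subset H} a_L \;-\; N_H \cdot a_H.
\]
Recognising the first two terms on the right as $s_H + \sum_{L \in \mLi^{n-2} \,|\, L \subset H} a_L$ via Equation \eqref{eq:sh}, this rewrites as
\[
s - a_H \;=\; s_H \;+\; \sum_{L \in \mLi^{n-2} \,|\, L \subset H} a_L \;-\; N_H \cdot a_H,
\]
and rearranging, using $N_H - 1 = B_H$, yields Equation \eqref{eq:ssh}.

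There is no real obstacle here — the only thing to be careful about is the partition of hyperplanes $H' \neq H$ via $L = H \cap H'$, and in particular not double-counting $H$ itself inside $2 a_L$. Once that accounting is correct, the identity drops out immediately.
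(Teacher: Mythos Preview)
Your proof is correct and follows essentially the same approach as the paper: both partition the hyperplanes $H' \neq H$ according to whether $H \cap H'$ is reducible or lies in $\mLi^{n-2}$, use $\sum_{H' \neq H,\, L \subset H'} a_{H'} = 2a_L - a_H$ for the irreducible pieces, and then recognise $s_H$ via Equation~\eqref{eq:sh}. The only difference is cosmetic---you start from $s - a_H$ while the paper starts from $s$---but the bookkeeping is identical.
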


\begin{proof}
	Since every hyperplane \(H' \in \mH \setminus \{H\}\) intersects \(H\) either at \(L \in \mLi^{n-2}\) or \(H' \pitchfork H\), we have
	\[
	\begin{aligned}
	s &= a_H \,+\, \sum_{L \in \mLi^{n-2} | L \subset H} (2a_L - a_H) \,+\, \sum_{H' | H' \pitchfork H} a_{H'}    \\
	 &= -  B_H \cdot a_H  \,+\, s_H \,\,+ \sum_{L \in \mLi^{n-2} | L \subset H} a_L \,.
	\end{aligned}
	\] 
	and \eqref{eq:sh} follows.
\end{proof}

\begin{lemma}\label{lem:pq}
	The partial derivatives of the quadratic form are given by
	\begin{equation}\label{eq:pq}
		\frac{\p Q}{\p a_H} = 4n \cdot s - 4(n+1) \cdot s_H  \,.
	\end{equation}
\end{lemma}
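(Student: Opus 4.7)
The plan is to differentiate the three terms in the definition \eqref{eq:Q} of $Q$ directly, and then substitute the identity \eqref{eq:ssh} to collapse the resulting expression into the claimed form.

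Recall that $Q(\ba)$ is the sum of three terms: $4(n+1) \sum_{L \in \mLi^{n-2}} a_L^2$, $-2(n+1) \sum_{H' \in \mH} B_{H'} a_{H'}^2$, and $-2s^2$. Fix $H \in \mH$. Since $2 a_L = \sum_{H' | L \subset H'} a_{H'}$, we have $\partial a_L / \partial a_H = 1/2$ when $L \subset H$ and $0$ otherwise. Thus the first term contributes
\[
4(n+1) \sum_{L \in \mLi^{n-2},\, L \subset H} 2 a_L \cdot \tfrac{1}{2} = 4(n+1) \sum_{L \in \mLi^{n-2},\, L \subset H} a_L \,.
\]
The second term contributes $-4(n+1) B_H a_H$, and the third contributes $-4s$. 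Summing,
\[
\frac{\p Q}{\p a_H} = 4(n+1) \sum_{L \in \mLi^{n-2},\, L \subset H} a_L \,-\, 4(n+1) B_H a_H \,-\, 4s \,.
\]

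Now I invoke the identity \eqref{eq:ssh}, which gives
\[
4(n+1) B_H a_H = 4(n+1) s_H - 4(n+1) s + 4(n+1) \sum_{L \in \mLi^{n-2},\, L \subset H} a_L \,.
\]
Substituting into the previous display, the sums over $L \in \mLi^{n-2}$ with $L \subset H$ cancel, leaving
\[
\frac{\p Q}{\p a_H} = -4(n+1) s_H + 4(n+1) s - 4s = 4ns - 4(n+1) s_H \,,
\]
which is exactly \eqref{eq:pq}. There is no real obstacle here: the computation is purely mechanical, and the entire content of the lemma is that the identity \eqref{eq:ssh} is precisely the right bookkeeping device to trade the combinatorial constant $B_H$ for the induced-weight sum $s_H$.
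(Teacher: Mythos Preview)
Your proof is correct and follows essentially the same approach as the paper: differentiate the three terms of $Q$ using $\partial a_L/\partial a_H = 1/2$ for $L\subset H$, then invoke identity \eqref{eq:ssh} to eliminate $B_H a_H$ and the sum over irreducible codimension-$2$ subspaces. The only cosmetic difference is that the paper differentiates the factored form \eqref{eq:Q2} rather than \eqref{eq:Q}, which amounts to carrying the constant $4(n+1)$ on the left instead of the right.
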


\begin{proof}
	By Equation \eqref{eq:aLcod2} we have
	\[
	\frac{\p a_L}{\p a_H} = \begin{cases}
		1/2 &\text{ if } H \supset L \,, \\
		0 & \text{ otherwise.}
	\end{cases}
	\]
	Taking the partial derivative of \(Q\) with respect to \(a_H\) in Equation \eqref{eq:Q2} gives us
	\[
	\frac{1}{4(n+1)}\frac{\p Q}{\p a_H} =  \sum_{L \in \mLi^{n-2} | L \subset H} a_L \,-\, B_H \cdot a_H \,-\, \frac{s}{n+1} .
	\]
	Using Equation \eqref{eq:ssh} we obtain
	\[
	\frac{1}{4(n+1)}\frac{\p Q}{\p a_H} = \left(1 - \frac{1}{n+1}\right) s - s_H
	\]
	and \eqref{eq:pq} follows.
\end{proof}

A critical point \(\ba \in \R^H\) of \(Q\) is a point where all partial derivatives \(\p Q / \p a_H\) vanish. The set of all critical points of \(Q\) is the kernel of the quadratic form. Equation \eqref{eq:pq} gives us the following set of defining linear equations for the kernel of \(Q\).

\begin{corollary}\label{cor:critical}
		A weight vector \(\ba \in \R^{\mH}\) is a critical point of \(Q\) if and only if
	\begin{equation}
		\forall \, H \in \mH: \,\, s_H = \frac{n}{n+1} s \,.
	\end{equation}
\end{corollary}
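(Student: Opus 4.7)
The plan is to derive this corollary as a direct consequence of Lemma \ref{lem:pq}, which has already been established. First I would recall that, by definition, a weight vector $\ba \in \R^{\mH}$ is a critical point of the quadratic form $Q$ precisely when all the partial derivatives $\partial Q / \partial a_H$ vanish simultaneously, indexed by $H \in \mH$.

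Then I would apply Lemma \ref{lem:pq}, which yields the explicit formula
\[
\frac{\p Q}{\p a_H} = 4n \cdot s - 4(n+1) \cdot s_H \,.
\]
Setting the right-hand side equal to zero for each $H \in \mH$ and solving for $s_H$ gives
\[
s_H = \frac{n}{n+1} \cdot s \,,
\]
which is exactly the system of linear equations stated in the corollary. Conversely, if this equation holds for every $H \in \mH$, then each partial derivative vanishes, so $\ba$ is a critical point.

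There is no obstacle here: the corollary is really just a restatement of Lemma \ref{lem:pq} under the observation that $Q$ is a homogeneous quadratic form, so its critical locus coincides with the kernel of its associated symmetric bilinear form and is cut out by the simultaneous vanishing of the linear functionals $\partial Q / \partial a_H$. The only thing worth remarking is that, since $Q$ is quadratic, the set of critical points is a linear subspace of $\R^{\mH}$, consistent with the characterization by linear equations in $\ba$ (note that $s$ and $s_H$ are both linear functions of $\ba$).
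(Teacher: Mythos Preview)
Your proposal is correct and follows exactly the same approach as the paper: the corollary is stated immediately after Lemma~\ref{lem:pq} with the remark that Equation~\eqref{eq:pq} gives the defining linear equations for the kernel of $Q$, and no further proof is given. Your write-up simply makes this explicit.
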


We use the above calculation of partial derivatives to give an alternative expression for the quadratic form \(Q\).

\begin{notation}
	We denote by \(\mLr^{n-2}\) the set of all reducible codimension \(2\) subspaces. Concretely, \(L \in \mLr^{n-2}\) if and only if its multiplicity  is \(m_L =2\).
	In this case, we write  \(\mH_L = \{H, H'\}\) with \(H, H' \in \mH\) and \(L = H \pitchfork H'\).
\end{notation}

\begin{corollary}\label{cor:altq}
	The quadratic form of \(\mH\) is equal to
	\begin{equation}\label{eq:Qalt}
		Q = 2n \cdot s^2 \,-\, 4(n+1) \cdot \sum_{L \in \mLr^{n-2}} a_H \cdot a_{H'}  \,-\, 4(n+1) \cdot \sum_{L \in \mLi^{n-2}} a_L^2  \,\,,
	\end{equation}
	where the second sum runs over all \(L \in \mLr^{n-2}\) with \(L = H \pitchfork H'\).
\end{corollary}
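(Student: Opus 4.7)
The identity is best obtained by applying Euler's homogeneous function identity to $Q$ (which is homogeneous of degree $2$) together with the formula for the partial derivatives $\partial Q/\partial a_H$ provided by Lemma \ref{lem:pq}. Concretely, Euler's identity gives
\[
2Q = \sum_{H \in \mH} a_H \cdot \frac{\partial Q}{\partial a_H} = \sum_{H \in \mH} a_H \left( 4n \cdot s - 4(n+1) \cdot s_H \right) = 4n \cdot s^2 - 4(n+1) \cdot \sum_{H \in \mH} a_H \cdot s_H.
\]
So the proof reduces to evaluating the quantity $\sum_{H} a_H \cdot s_H$ in terms of the two sums over codimension $2$ reducible and irreducible subspaces that appear in the right hand side of \eqref{eq:Qalt}.

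For this evaluation, I substitute the definition \eqref{eq:sh} of $s_H$ and split the resulting double sum into two pieces. For the first piece, involving the pairs $(H, H')$ with $H' \pitchfork H$, each reducible subspace $L \in \mLr^{n-2}$ with $\mH_L = \{H, H'\}$ is counted exactly twice (once as $(H, H')$ and once as $(H', H)$), yielding
\[
\sum_{H \in \mH} a_H \sum_{H' \,|\, H' \pitchfork H} a_{H'} = 2 \sum_{L \in \mLr^{n-2}} a_H \cdot a_{H'}.
\]
For the second piece, involving pairs $(H, L)$ with $L \in \mLi^{n-2}$ and $L \subset H$, I swap the order of summation and use that, by definition, $\sum_{H \,|\, L \subset H} a_H = 2 a_L$ for every $L \in \mLi^{n-2}$:
\[
\sum_{H \in \mH} a_H \sum_{L \in \mLi^{n-2} \,|\, L \subset H} a_L = \sum_{L \in \mLi^{n-2}} a_L \sum_{H \,|\, L \subset H} a_H = 2 \sum_{L \in \mLi^{n-2}} a_L^2.
\]

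Combining these two identities yields $\sum_H a_H \cdot s_H = 2 \sum_{L \in \mLr^{n-2}} a_H a_{H'} + 2 \sum_{L \in \mLi^{n-2}} a_L^2$, and substituting back into the Euler identity gives \eqref{eq:Qalt} after dividing by $2$. The argument is entirely formal once Lemma \ref{lem:pq} is available; the only subtle point is the careful bookkeeping on double sums, and in particular noting that the sum over ordered transverse pairs contributes a factor of two when reorganized as a sum over $\mLr^{n-2}$. There is no substantive obstacle.
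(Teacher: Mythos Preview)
Your proof is correct and follows essentially the same approach as the paper: both use Euler's identity $2Q = \sum_H a_H \,\partial Q/\partial a_H$, insert the formula from Lemma~\ref{lem:pq}, and then regroup the two resulting double sums exactly as you do (each contributing a factor of $2$). The only cosmetic difference is that the paper divides by $2$ at an earlier stage, working with $Q = \sum_H a_H(2n\,s - 2(n+1)s_H)$ directly.
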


\begin{proof}
	Since \(Q\) is a homogeneous polynomial of degree \(2\), we have
	\begin{equation}\label{eq:pQ}
		2 Q = \sum_{H \in \mH} a_H \frac{\p Q}{\p a_H} \,.
	\end{equation}
	Replacing the values for \(\p Q /  \p a_H\) given by Lemma \ref{lem:pq} together with Equation \eqref{eq:sh} for \(s_H\) gives us
    \begin{equation}\label{eq:pfalt1}
    \begin{aligned}
    Q &= \sum_{H \in \mH} a_H \cdot \left( 2n \cdot s - 2(n+1) \cdot s_H \right) \\
    &= 2n \cdot s^2 \,-\, 2(n+1) \cdot \sum_{H \in \mH} \,\, \sum_{H' |\, H' \pitchfork H} a_H \cdot a_{H'} \\ 
    &-\, 2(n+1) \cdot \sum_{H \in \mH} \,\, \sum_{L \in \mLi^{n-2} \,|\, L \subset H} a_H \cdot a_{L} \,.
    \end{aligned}     
    \end{equation}
    Note that 
    \begin{equation}\label{eq:pfalt2}
       \sum_{H \in \mH} \,\, \sum_{H' |\, H' \pitchfork H} a_H \cdot a_{H'} = 2 \cdot  \sum_{L \in \mLr^{n-2}} a_H \cdot a_{H'}  
    \end{equation}
    and
  \begin{equation}\label{eq:pfalt3}
      \sum_{H \in \mH} \,\, \sum_{L \in \mLi^{n-2} \,|\, L \subset H} a_H \cdot a_{L} = 2 \cdot  \sum_{L \in \mLi^{n-2}} a_L^2 \,.
  \end{equation}
  Equation \eqref{eq:Qalt} follows from Equations \eqref{eq:pfalt1}, \eqref{eq:pfalt2}, and \eqref{eq:pfalt3}.
\end{proof}

\subsection{Hirzebruch arrangements}\label{sec:hirz}

Let \(\mH \subset \CP^n\) be an arrangement with \(N = |\mH|\) hyperplanes.
We consider the case for where all hyperplanes have the same weight.
Our main result is Theorem \ref{thm:hir}.

We begin with an elementary lemma that calculates the total sum of the induced weights on the induced arrangements \(\mH^H\).
Fix \(H \in \mH\) and let \(s_H\) be, as in Definition \ref{def:inducedweights}, given by
\begin{equation}\label{eq:sh2}
s_H = \sum_{H' | H' \pitchfork H} a_{H'} + \sum_{L \in \mLi^{n-2} | L \subset H} a_L \,.
\end{equation} 

\begin{lemma}\label{lem:sh}
	Suppose that all the weights of \((\mH, \ba)\) are equal, say \(a_H = a\) for all \(H\). Then, for every \(H \in \mH\), we have
	\begin{equation}\label{eq:sheq}
		2 s_H = \left(|\mH^H| + N - 1 \right) \cdot a \,.
	\end{equation}
\end{lemma}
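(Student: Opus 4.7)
The plan is to expand both sums in the definition \eqref{eq:sh2} of $s_H$ using the hypothesis $a_{H'} = a$ for all $H'$, and then use a simple double-counting of the hyperplanes in $\mH \setminus \{H\}$ according to the codimension $2$ subspace they cut out on $H$.

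First, I would split the elements of $\mH^H$ into two types according to whether $L \in \mH^H$ is reducible or irreducible as a subspace of $\CP^n$. Since $\mH^H = \{H \cap H' : H' \in \mH \setminus \{H\}\}$, and for each codimension $2$ subspace $L \subset H$ with $L \in \mL$ there are exactly $m_L - 1$ hyperplanes $H' \in \mH \setminus \{H\}$ with $H \cap H' = L$, this gives the two basic identities
\[
|\mH^H| = T_H + \sigma_H, \qquad N - 1 \;=\; T_H \,+\, \sum_{L \in \mLi^{n-2},\, L \subset H}(m_L - 1),
\]
where $T_H$ is the number of hyperplanes transverse to $H$ (equivalently, the number of reducible codimension $2$ subspaces contained in $H$) and $\sigma_H$ is the number of irreducible codimension $2$ subspaces contained in $H$.

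Next, using that $a_{H'} = a$ for every $H'$, the definition of $a_L$ for irreducible $L$ gives $a_L = (m_L/2)\,a$. Substituting into \eqref{eq:sh2} and multiplying by $2$ yields
\[
2 s_H \;=\; 2 T_H\,a \,+\, \sum_{L \in \mLi^{n-2},\, L \subset H} m_L\, a \;=\; T_H \, a \;+\; \Bigl( T_H + \sum_{L \in \mLi^{n-2},\, L \subset H} (m_L - 1) \Bigr) a \;+\; \sigma_H\, a.
\]
The middle bracket equals $N-1$ by the second identity above, and $T_H + \sigma_H = |\mH^H|$ by the first, so the right hand side collapses to $(|\mH^H| + N - 1)\,a$, which is \eqref{eq:sheq}.

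This argument is entirely a bookkeeping exercise, so I do not anticipate any real obstacle; the only mild subtlety is remembering that each irreducible $L \subset H$ is hit by $m_L - 1$ distinct hyperplanes of $\mH \setminus \{H\}$ (not $m_L$), which is what makes the two identities fit together cleanly and produces the shift from $N$ to $N-1$ in \eqref{eq:sheq}.
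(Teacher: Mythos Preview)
Your proof is correct and is essentially the same double-counting argument as the paper's. The paper streamlines the case split by using the identity $a = (a+a)/2$ to rewrite each reducible term $2a_{H'}$ as $a_{H'} + a_H$, so that both reducible and irreducible contributions to $2s_H$ take the uniform form $\sum_{\tilde H \supset L} a_{\tilde H}$; summing over $L \in \mH^H$ then immediately gives $(N-1)\cdot a$ from the hyperplanes in $\mH \setminus \{H\}$ and $|\mH^H|\cdot a$ from $H$ itself, bypassing your intermediate identities for $T_H$ and $\sigma_H$.
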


\begin{proof}
	We want to replace the values \(a_H=a\) in Equation \eqref{eq:sh2} but there is an observation to do first. The weight \(a_{H}\) of our fixed hyperplane \(H\) occurs only in the irreducible terms \(a_L\) but not in the reducible terms \(a_{H'}\). We remedy this asymmetry by using the identity \(a = (a+a)/2\). Indeed, since all the weights \(a_H\) are equal, we can replace each term \(a_{H'}\) with \((a_{H'} + a_{H})/2\).
	Since every hyperplane in \(\mH \setminus \{H\}\) intersects \(H\) at a unique codimension \(2\) subspace (see Figure \ref{fig:redirred}) we obtain 
	\begin{equation*}
	2 s_H =  \underbrace{(|\mH|-1) \cdot a}_{\text{ contribution of } \mH \setminus \{H\}}  +  \underbrace{|\mH^H| \cdot a}_{\text{ contribution of } H} 
	\end{equation*}
	from which the statement follows.
\end{proof}

\begin{figure}[h]
	\centering
	\scalebox{.7}{
		\begin{tikzpicture}
		\draw[thick, blue] (-10,0) to (5,0);
		\draw (-5,3) to (-5,-3);
		\draw (0,3) to (3,-3);
		\draw (0,-3) to (3,3);
		
		\filldraw[red] (-5, 0) circle (3pt);
		\filldraw[green!80!black] (1.5, 0) circle (3pt);
		
		\node[scale=1.4, blue] at (-9,.4){\(H\)};
		\node[scale=1.4] at (-5.4,2.4){\(H'\)};
		\node[scale=1.4] at (1.5,-.5){\(L\)};
		\node[scale=1.4] at (2.2,2.4){\(\tH\)};
		
		\node[scale=1.3] at (-2.8,-.6){\(2a_{H'} = a_{H'} + a_H\)};
		\node[scale=1.3] at (3.9,-.9){\(2a_L = \sum\limits_{\tH | L \subset \tH} a_{\tH}\)};
		
		\end{tikzpicture}		
	}
	\caption{Reducible (red) and irreducible (green) intersections.}
	\label{fig:redirred}
\end{figure}
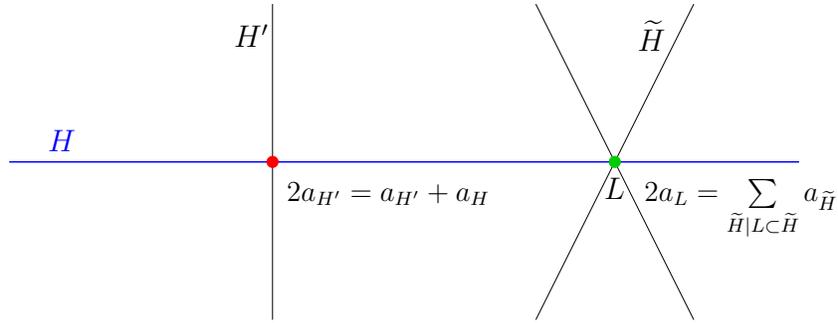

Recall that the multiplicity \(m_L\) of an element \(L \in \mL\) is the number of hyperplanes \(H \in \mH\) that contain \(L\). We denote by \(\mL^{n-2}\) the set of all codimension \(2\) subspaces of \(\CP^n\) obtained as intersection of elements \(H \in \mH\), so for any \(L \in \mL^{n-2}\) we have \(m_L \geq 2\) and \(m_L \geq 3\) if and only if \(L\) is irreducible.

\begin{lemma}
	The following identity holds:
	\begin{equation}\label{eq:summul2}
	\sum_{L \in \mL^{n-2}}m_L^2 = N^2 - N + \sum_{L \in \mL^{n-2}}m_L	\,.
	\end{equation}
\end{lemma}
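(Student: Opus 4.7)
The identity is a double counting of ordered pairs of distinct hyperplanes. The plan is to count, in two different ways, the set
\[
S = \{ (H, H') \in \mH \times \mH \,|\, H \neq H' \} \,.
\]

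On one hand, $|S| = N(N-1) = N^2 - N$. On the other hand, every ordered pair $(H, H')$ of distinct hyperplanes in $\CP^n$ determines a unique codimension $2$ linear subspace $L = H \cap H' \in \mL^{n-2}$. This gives a partition
\[
S = \bigsqcup_{L \in \mL^{n-2}} S_L \,, \qquad S_L = \{ (H, H') \in S \,|\, H \cap H' = L \} \,.
\]
Now for each $L \in \mL^{n-2}$, a pair $(H, H') \in S$ lies in $S_L$ if and only if both $H$ and $H'$ belong to the localization $\mH_L$ and $H \neq H'$; there are $m_L(m_L - 1)$ such ordered pairs. Here I use the trivial fact that if two distinct hyperplanes $H, H'$ both contain $L$ and $\codim L = 2$, then necessarily $H \cap H' = L$, so no pair is overcounted.

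Combining the two counts yields
\[
N^2 - N \;=\; \sum_{L \in \mL^{n-2}} m_L(m_L - 1) \;=\; \sum_{L \in \mL^{n-2}} m_L^2 \;-\; \sum_{L \in \mL^{n-2}} m_L \,,
\]
which rearranges to \eqref{eq:summul2}. There is no real obstacle here; the lemma is a standard bookkeeping identity, included as a preparatory step for the upcoming analysis of Hirzebruch arrangements with uniform weights.
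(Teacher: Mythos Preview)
Your proof is correct and follows essentially the same approach as the paper: both count ordered pairs of distinct hyperplanes in two ways to obtain $\sum_{L \in \mL^{n-2}} m_L(m_L-1) = N(N-1)$, then rearrange. The paper's version is just more terse.
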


\begin{proof}
	Since every pair of hyperplanes intersect at a codimension two subspace,
	\begin{equation}\label{eq:identityinter}
	\sum_{L \in \mL^{n-2}} m_L(m_L-1) = N(N-1) \,.
	\end{equation}
	Equation \eqref{eq:summul2} follows from Equation \eqref{eq:identityinter} by rearranging the terms.
\end{proof}

\begin{notation}
	Let \(\mathbf{1} \in \R^{\mH}\) be the vector with all components equal to \(1\).
\end{notation}

\begin{lemma}\label{lem:Qof1}
	The value of the quadratic form at \(\mathbf{1}\) is given by
	\begin{equation}\label{eq:Qof1}
		Q(\mathbf{1}) = (n-1) N^2 + (n+1) N - (n+1) \cdot  \sum_{L \in \mL^{n-2}}m_L \,.
	\end{equation}
\end{lemma}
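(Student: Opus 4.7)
The plan is to evaluate $Q(\mathbf{1})$ directly using the alternative expression for the quadratic form derived in Corollary \ref{cor:altq}, namely Equation \eqref{eq:Qalt}:
\[
Q = 2n \cdot s^2 - 4(n+1) \sum_{L \in \mLr^{n-2}} a_H \cdot a_{H'} - 4(n+1) \sum_{L \in \mLi^{n-2}} a_L^2 \,.
\]
At $\ba = \mathbf{1}$ we have $s = N$, $a_H \cdot a_{H'} = 1$ for every reducible $L = H \pitchfork H' \in \mLr^{n-2}$, and by Equation \eqref{eq:aLcod2} $a_L = m_L/2$ for every irreducible $L \in \mLi^{n-2}$.

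The key observation for merging the two sums over codimension-$2$ subspaces is that $m_L = 2$ for every reducible $L \in \mLr^{n-2}$, so $m_L^2 = 4$ and hence
\[
4 \, |\mLr^{n-2}| = \sum_{L \in \mLr^{n-2}} m_L^2 \,.
\]
Substituting everything into \eqref{eq:Qalt} and combining the reducible and irreducible contributions gives the uniform expression
\[
Q(\mathbf{1}) = 2nN^2 - (n+1) \sum_{L \in \mL^{n-2}} m_L^2 \,.
\]

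To finish, I would apply the identity \eqref{eq:summul2}, i.e., $\sum_{L \in \mL^{n-2}} m_L^2 = N^2 - N + \sum_{L \in \mL^{n-2}} m_L$, which yields
\[
Q(\mathbf{1}) = 2nN^2 - (n+1)(N^2 - N) - (n+1) \sum_{L \in \mL^{n-2}} m_L = (n-1)N^2 + (n+1)N - (n+1) \sum_{L \in \mL^{n-2}} m_L \,,
\]
as required. There is no genuine obstacle here: the proof is a direct computation; the only mild subtlety is recognizing that the two sums over $\mLr^{n-2}$ and $\mLi^{n-2}$ assemble naturally into a single sum of $m_L^2$ over all codimension-$2$ flats, at which point \eqref{eq:summul2} disposes of the remainder.
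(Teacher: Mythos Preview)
Your proof is correct and follows essentially the same route as the paper: evaluate the alternative expression \eqref{eq:Qalt} at $\mathbf{1}$, merge the reducible and irreducible sums into a single sum of $m_L^2$ over all codimension-$2$ subspaces, and then invoke \eqref{eq:summul2}. The only difference is that you spell out explicitly why the two sums combine (via $m_L=2$ on $\mLr^{n-2}$), which the paper leaves implicit.
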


\begin{proof}
	Since \(s(\mathbf{1}) = N\) and \(2a_L = m_L\) for all \(L \in \mLi^{n-2}\), Equation \eqref{eq:Qalt} gives us
	\[
	Q(\mathbf{1}) = 2n \cdot N^2 \, - \, (n+1) \cdot \sum_{L \in \mL^{n-2}} m_L^2 \,.
	\]
	Using Equation \eqref{eq:summul2} for the sum of squared multiplicities gives Equation \eqref{eq:Qof1}.
\end{proof}

\begin{lemma}\label{lem:1incone}
	The vector \(\mathbf{1}\) belongs to the stable cone \(C^{\circ}\) of \(\mH\) if and only if
	\begin{equation}
	\forall L \in \mL: \,\,	m_L < \codim L \cdot \frac{N}{n+1} \,.
	\end{equation}
\end{lemma}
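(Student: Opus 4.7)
The plan is to apply the characterization of the stable cone from Lemma \ref{lem:stablearr} directly to the weight vector $\ba = \mathbf{1}$. By Definition \ref{def:stabcone}, $\mathbf{1} \in C^{\circ}$ means precisely that the weighted arrangement $(\mH, \mathbf{1})$ is stable (and $\mathbf{1}$ has positive components, trivially).

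By Lemma \ref{lem:stablearr} (i), stability of $(\mH, \mathbf{1})$ is equivalent to the linear inequalities
\[
\sum_{H \,|\, L \subset H} a_H \,<\, \frac{\codim L}{n+1} \cdot \sum_{H \in \mH} a_H
\]
holding for every $L \in \mL$ (or, equivalently, for every $L \in \mLi$). Evaluating at $\ba = \mathbf{1}$, the left-hand side becomes the multiplicity $m_L = |\{H \in \mH \,|\, L \subset H\}|$, while the right-hand side becomes $\codim L \cdot N/(n+1)$, since $\sum_{H \in \mH} 1 = N$. Thus the condition reduces exactly to
\[
\forall L \in \mL: \,\, m_L \,<\, \codim L \cdot \frac{N}{n+1},
\]
which is the claim. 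There is no real obstacle here; the lemma is just the specialization of the general stability inequalities to the symmetric weight vector.
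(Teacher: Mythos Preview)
Your proof is correct and matches the paper's approach exactly: the paper's proof is the single line ``Immediate from item (ii) of Lemma \ref{lem:stablearr}.'' The only quibble is that you cite item (i) (which ranges over $\mLi$) while writing the condition for all $L \in \mL$; the correct citation is item (ii), though since the three items are equivalent this is cosmetic.
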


\begin{proof}
	Immediate from item (ii) of Lemma \ref{lem:stablearr}\,.
\end{proof}

\begin{lemma}\label{lem:1ker}
	The vector \(\mathbf{1}\) belongs to the kernel of \(Q\) if and only if
	\begin{equation}\label{eq:1ker}
		\forall H \in \mH: \,\,	
		|\mH^H| =  \left( 1 - \frac{2}{n+1} \right) N + 1	\,.
	\end{equation}
\end{lemma}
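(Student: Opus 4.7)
The plan is to interpret the statement via partial derivatives and then invoke the two earlier lemmas that compute $\partial Q/\partial a_H$ and $s_H$ in the equiweighted case. Since $Q$ is a homogeneous quadratic form, the symmetric matrix $(Q_{ij})$ has $\mathbf{1}$ in its kernel if and only if $Q \cdot \mathbf{1} = 0$, which is equivalent to the vanishing of every partial derivative $\partial Q/\partial a_H$ evaluated at $\ba = \mathbf{1}$.

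First I would apply Lemma \ref{lem:pq} to write
\[
\left. \frac{\p Q}{\p a_H} \right|_{\mathbf{1}} = 4n \cdot s(\mathbf{1}) - 4(n+1) \cdot s_H(\mathbf{1}) \,.
\]
Since $s(\mathbf{1}) = N$, and by Lemma \ref{lem:sh} (applied with common weight $a = 1$) we have $2 s_H(\mathbf{1}) = |\mH^H| + N - 1$, the vanishing condition becomes
\[
4nN \,=\, 2(n+1)\bigl(|\mH^H| + N - 1\bigr) \,.
\]

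Solving for $|\mH^H|$ yields
\[
|\mH^H| = \frac{2nN}{n+1} - N + 1 = \left(\frac{2n}{n+1} - 1\right) N + 1 = \left(1 - \frac{2}{n+1}\right) N + 1 \,,
\]
which is exactly Equation \eqref{eq:1ker}. There is no real obstacle here; the content of the lemma is the combination of Lemmas \ref{lem:pq} and \ref{lem:sh}, and the work is purely algebraic.
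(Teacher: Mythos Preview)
Your proof is correct and follows essentially the same route as the paper: the paper invokes Corollary~\ref{cor:critical} (which is nothing but the vanishing of $\partial Q/\partial a_H$ from Lemma~\ref{lem:pq}) together with Equation~\eqref{eq:sheq} from Lemma~\ref{lem:sh}, and then performs the same algebraic rearrangement you carry out. The paper also notes, as an alternative, that the result follows from Lemma~\ref{lem:sumqij}.
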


\begin{proof}
	By Corollary \ref{cor:critical}\,, the vector \(\mathbf{1}\) belongs to the kernel of \(Q\) if and only if for all \(H \in \mH\) the following equation is satisfied:
	\[
	2 s_H(\mathbf{1}) = \frac{2n}{n+1} s(\mathbf{1}) \,.
	\]
	By Equation \eqref{eq:sheq} (and using that \(s(\mathbf{1}) = N\)) this condition is equivalent to
	\[
	|\mH^H| + N - 1 = \frac{2n}{n+1} \cdot N
	\]
	which rearranges to \eqref{eq:1ker}. Alternatively, the result also follows from Lemma \ref{lem:sumqij}\,.
\end{proof}

We shall need the following elementary result, whose proof we omit.

\begin{lemma}\label{lem:kernelquadratic}
	Let \(Q:\R^N \to \R\) be a real quadratic form. Suppose that \(Q \leq 0\) on an open set \(U \subset \R^N\). Then the following holds: 
	\begin{enumerate}[label=\textup{(\roman*)}]
		\item \(Q(\bx) = 0\) for some \(\bx \in U\) if and only if \(\bx\) is in the kernel of \(Q\)\,;
		\item if \(Q(\bx) = 0\) for some \(\bx \in U\), then \(Q \leq 0\) in all of \(\R^N\).
	\end{enumerate}
\end{lemma}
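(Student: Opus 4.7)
The plan is to exploit the fact that $U$ is open together with the quadratic behaviour of $Q$ along lines through $\bx$. Let $B$ be the symmetric bilinear form on $\R^N$ with $B(\bx,\bx) = Q(\bx)$, so that for any $\by \in \R^N$ and $t \in \R$ we have
\[
Q(\bx + t\by) = Q(\bx) + 2t\, B(\bx,\by) + t^2\, Q(\by).
\]
Recall that $\bx$ is in the kernel of $Q$ by definition if $B(\bx,\by) = 0$ for all $\by \in \R^N$.

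For item (i), one direction is immediate: if $\bx$ is in the kernel, then $Q(\bx) = B(\bx,\bx) = 0$. For the other direction, assume $Q(\bx) = 0$. Fix any $\by \in \R^N$. Since $U$ is open, there exists $\epsilon > 0$ such that $\bx + t\by \in U$ for all $t \in (-\epsilon, \epsilon)$, and for such $t$ the above expansion gives
\[
2t\, B(\bx,\by) + t^2\, Q(\by) \leq 0.
\]
Dividing by $t > 0$ and letting $t \to 0^+$ yields $B(\bx,\by) \leq 0$; dividing by $t < 0$ and letting $t \to 0^-$ yields $B(\bx,\by) \geq 0$. Hence $B(\bx,\by) = 0$. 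Since $\by$ was arbitrary, $\bx$ lies in the kernel of $Q$.

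For item (ii), suppose $Q(\bx) = 0$. By (i) we know $B(\bx,\by) = 0$ for every $\by$, so the expansion collapses to $Q(\bx + t\by) = t^2 Q(\by)$. Taking $t$ small enough that $\bx + t\by \in U$, the hypothesis gives $t^2 Q(\by) \leq 0$, whence $Q(\by) \leq 0$. As $\by$ was arbitrary, $Q \leq 0$ on all of $\R^N$.

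There is no substantive obstacle here; the only care needed is to keep track of the sign of $t$ in the first-order variation argument for (i), and to observe that openness of $U$ guarantees we can vary in both directions along every line through $\bx$.
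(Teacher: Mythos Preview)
Your proof is correct; this is the standard argument via the polarization identity and first-order variation along lines through $\bx$. The paper actually omits the proof of this lemma entirely, stating only ``whose proof we omit,'' so there is no approach to compare against---your argument is exactly the kind of elementary verification one would expect here.
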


The main result of this section is the following extension of \cite[Corollary 7.8]{panov} to higher dimensions.

\begin{theorem}\label{thm:hir}
	Let \(\mH \subset \CP^n\) be an arrangement with \(N = |\mH| > n+1\) hyperplanes. Suppose that for all \(L \in \mL\) we have
	\begin{equation}\label{eq:sthir}
	m_L < \codim L \cdot \frac{N}{n+1} \,.
	\end{equation}
	Then
	\begin{equation}\label{eq:hirineq}
	\sum_{L \in \mL^{n-2}} m_L \geq \left( 1 - \frac{2}{n+1} \right) N^2 + N
	\end{equation}
	and equality holds if and only if 
	\begin{equation}\label{eq:hir}
	\forall H \in \mH: \,\,	
	|\mH^H| =  \left( 1 - \frac{2}{n+1} \right) N + 1	\,.
	\end{equation}
	Moreover, if \eqref{eq:hir} is satisfied, then the quadratic form \(Q\) of the arrangement \(\mH\) is negative semidefinite in all of \(\R^N\).
\end{theorem}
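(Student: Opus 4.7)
The strategy is to evaluate the quadratic form $Q$ at the all-ones vector $\mathbf{1}\in\R^{\mH}$ and combine the bound $Q(\mathbf{1})\le 0$ (coming from Theorem \ref{thm:qcone}) with the explicit formulas previously established. The hypothesis \eqref{eq:sthir} is precisely the condition from Lemma \ref{lem:1incone} that places $\mathbf{1}$ in the stable cone $C^{\circ}$, so Theorem \ref{thm:qcone} immediately gives $Q(\mathbf{1})\le 0$.

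Next, Lemma \ref{lem:Qof1} says
\[
Q(\mathbf{1}) \;=\; (n-1)\,N^2 + (n+1)\,N - (n+1)\sum_{L\in\mL^{n-2}}m_L.
\]
Substituting this into $Q(\mathbf{1})\le 0$ and dividing by $n+1$ yields exactly the inequality \eqref{eq:hirineq}. Equality in \eqref{eq:hirineq} is therefore equivalent to $Q(\mathbf{1}) = 0$.

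Since $\mathbf{1}\in C^{\circ}$ and $C^{\circ}$ is open in $\R^{\mH}$, Lemma \ref{lem:kernelquadratic}(i) applied to $Q$ (which is $\le 0$ on the open set $C^{\circ}$) shows that $Q(\mathbf{1})=0$ holds if and only if $\mathbf{1}$ lies in the kernel of $Q$. By Lemma \ref{lem:1ker}, membership of $\mathbf{1}$ in $\ker Q$ is in turn equivalent to condition \eqref{eq:hir}. Chaining these equivalences establishes that equality in \eqref{eq:hirineq} holds if and only if \eqref{eq:hir} is satisfied.

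Finally, suppose \eqref{eq:hir} holds. Then $Q(\mathbf{1})=0$, and since $\mathbf{1}$ is an interior point of the set $\{Q\le 0\}\supset C^{\circ}$, Lemma \ref{lem:kernelquadratic}(ii) applies and forces $Q\le 0$ on all of $\R^{\mH}$; that is, $Q$ is negative semidefinite. No genuine obstacle is expected here: all the hard work is already contained in Theorem \ref{thm:qcone} (and hence Theorem \ref{thm:main}), together with the purely combinatorial identities of Lemmas \ref{lem:Qof1}, \ref{lem:1incone}, and \ref{lem:1ker}; the remaining argument is a direct unwinding of these results via the elementary Lemma \ref{lem:kernelquadratic}.
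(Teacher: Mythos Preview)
Your proposal is correct and follows essentially the same approach as the paper: place $\mathbf{1}$ in $C^{\circ}$ via Lemma \ref{lem:1incone}, apply Theorem \ref{thm:qcone} to get $Q(\mathbf{1})\le 0$, convert this to \eqref{eq:hirineq} using Lemma \ref{lem:Qof1}, and then use Lemma \ref{lem:kernelquadratic} together with Lemma \ref{lem:1ker} to characterize the equality case and obtain negative semidefiniteness.
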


\begin{proof}
	Consider the vector \(\mathbf{1} \in \R^{\mH}\). The assumption \eqref{eq:sthir} together with Lemma \ref{lem:1incone} imply that \(\mathbf{1} \in C^{\circ}\). By Theorem \ref{thm:qcone}\,, \(Q(\mathbf{1}) \leq 0\). Using Equation \eqref{eq:Qof1} to evaluate \(Q\) at \(\mathbf{1}\), we obtain
	\[
	\frac{n-1}{n+1} \cdot N^2 \,+\,  N \,-\,  \sum_{L \in \mL^{n-2}}m_L  \leq 0 \,,
	\]
	which rearranges to \eqref{eq:hirineq}. Equality holds in \eqref{eq:hirineq} if and only if \(Q(\mathbf{1}) = 0\). By Lemma \ref{lem:kernelquadratic} (i) this is equivalent to \(\mathbf{1}\) being in the kernel of \(Q\). By Lemma \ref{lem:1ker} \(\mathbf{1}\) is in the kernel of \(Q\) if and only if Equation \eqref{eq:hir} is satisfied. By Lemma \ref{lem:kernelquadratic} (ii), if \eqref{eq:hir} is satisfied then \(Q\) is negative semidefinite.
\end{proof}

We note that Theorem \ref{thm:hir} does not necessarily hold for arrangements defined over fields other than \(\C\), as the next example illustrates.

\begin{example}\label{ex:finiteps}
	Let \(p\) be a prime number and let \(\mathbb{F}_p\) be the finite field of order \(p\). For \(n \geq 2\), let
	\(\mH\) be the arrangement  consisting of all hyperplanes in \(\PG(n, p) = \P(\mathbb{F}_p^{n+1})\). The number \(N = |\mH|\) of hyperplanes is equal to
	\[
	\left| \big(\mathbb{F}_p^{n+1} \setminus \{0\} \big) \, \big/ \, \mathbb{F}_p^* \, \right| = \frac{p^{n+1}-1}{p-1} \,.
	\]
	
	The next two claims show that the arrangement \(\mH\) violates Theorem \ref{thm:hir}. Concretely, Claim 1 proves that \eqref{eq:sthir} holds and Claim 2 proves that \eqref{eq:hirineq} doesn't.
	
	\begin{itemize}
		\item \emph{Claim 1:} the vector \(\mathbf{1}\) belongs to the stable cone \(C^{\circ}\), i.e., if \(L\) is a non-empty and proper linear subspace, then
		\begin{equation}\label{eq:mulLp}
		\frac{m_L}{\codim L} < \frac{N}{n+1} \,.
		\end{equation}
		
		\item \emph{Claim 2:} 
		the sum of multiplicities of codimension two subspaces satisfies
		\begin{equation}\label{eq:sumLp}
		\sum_{L \in \mL^{n-2}} m_L < \left( 1 - \frac{2}{n+1} \right) N^2 + N \,.
		\end{equation}
	\end{itemize}

	\emph{Proof of Claim 1.} 
	The multiplicity \(m_L\) of a linear subspace \(L\) is the number of hyperplanes in \(\PG(n, p)\) that contain it and it is given by
	\(m_L = |\PG(c-1, p)|\) where \(c\) is the codimension of \(L\). Note that \(1 \leq c \leq n\), as \(L\) is proper and non-empty. Therefore, inequality \eqref{eq:mulLp} is equivalent to
	\begin{equation*}
		\frac{p^c - 1}{c} < \frac{p^{n+1} - 1}{n+1} \,,
	\end{equation*}
	which follows  by noticing that the ratio \(p^c / c\) is strictly increasing with \(c\).

	\emph{Proof of Claim 2.}
	By Lemma \ref{lem:Qof1}\,, Equation \eqref{eq:sumLp} is equivalent to \(Q(\mathbf{1}) > 0\). We prove this by showing that \(\mathbf{1}\) is an eigenvector of \(Q\) with positive eigenvalue.
	Since every \(H \in \mH\) intersects the other hyperplanes of \(\PG(n, p)\) along the same number \(t\) of codimension \(2\) subspaces, namely \(t = |\PG(n-1, p)|\),
	Lemma \ref{lem:sumqij} implies that \(Q \cdot \mathbf{1} = \lambda \cdot \mathbf{1}\) with
	\[
	\lambda = (n-1) \cdot N - (n+1) \cdot (t-1) \,.
	\]
	Replacing the values for \(N = |\PG(n, p)|\) and \(t = |\PG(n-1, p)|\) we have
	\[
	(p-1) \lambda = (n-1) \cdot (p^{n+1} - 1) - (n+1) \cdot (p^n - p)
	\]
	from which is straightforward to check that \(\lambda > 0\) for all \((n, p)\) with \(n \geq 2\) and \(p \geq 2\).
\end{example}

Next, we give a name for the complex arrangements that saturate the inequality \eqref{eq:hirineq}. 
Note that,	by Corollary \ref{cor:hir2irred}\,, if \(\mH\) is as in Theorem \ref{thm:hir} then \(\mH\) is essential and irreducible.
To include product arrangements, we relax \eqref{eq:sthir} to a non-strict inequality.

\begin{definition}
	Let \(\mH \subset \CP^n\) be an essential arrangement. We say that \(\mH\) is Hirzebruch if the following two conditions are satisfied:
	\begin{enumerate}[label=\textup{(H\arabic*)}]
			\item \label{item:h1} for every subspace \(L \in \mL\) we have
		\begin{equation}\label{eq:hir2}
		\frac{m_L}{\codim L} \leq \frac{|\mH|}{n+1} 
		\end{equation}
		where \(m_L\) is the multiplicity of \(L\);
		
		\item \label{item:h2} every hyperplane \(H \in \mH\) intersects the other hyperplanes in \(\mH \setminus \{H\}\) along
		\begin{equation}\label{eq:hir1}
			 \left( 1 - \frac{2}{n+1} \right) |\mH| + 1
		\end{equation}
		codimension \(2\) subspaces.
	\end{enumerate}
\end{definition}


\begin{example}\label{ex:coordhyp}
	The arrangement of \(n+1\) coordinate hyperplanes in \(\CP^n\) is Hirzebruch. 
\end{example}

If \(n=1\) then any finite collection of points \(\mH \subset \CP^1\) with \(|\mH| \geq 2\) is Hirzebruch. However, if \(n \geq 2\) the Hirzebruch condition is much more rigid. 
A line arrangement \(\mH \subset \CP^2\) is Hirzebruch if and only if \(|\mH|=3k\) for some positive integer \(k\) and every line in \(\mH\) intersects the other lines of the arrangement at \(k+1\) points, all known examples are listed in \cite{hirzebruch2}\,. In dimension \(3\) we have the following characterization.

\begin{lemma}\label{lem:hirdim3}
	Let \(\mH \subset \CP^3\) be an essential arrangement of \(2k\) planes with \(k \geq 2\) such that every plane in \(\mH\) intersects the others along \(k +1\) lines. Then \(\mH\) is Hirzebruch.
\end{lemma}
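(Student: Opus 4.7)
The plan is to verify the two conditions \ref{item:h1} and \ref{item:h2} in the definition of Hirzebruch arrangement, with $N = |\mH| = 2k$ and $n = 3$. Condition \ref{item:h2} is exactly the hypothesis, since $(1 - 2/(n+1))N + 1 = k + 1$. So all the work lies in verifying \ref{item:h1}, which, after clearing denominators, reads $m_L \leq \codim L \cdot k/2$ for every $L \in \mL$. I would handle this by cases on $\codim L \in \{1,2,3\}$.

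The hyperplane case ($\codim L = 1$, $m_L = 1$) is immediate from $k \geq 2$. For a codimension-two subspace (a line) $L$ with $m_L = m$, fix any plane $H \supset L$; the induced arrangement $\mH^H$ has exactly $k+1$ lines by hypothesis, and the standard incidence count
\[
\sum_{L' \in \mH^H} (m_{L'} - 1) \;=\; |\mH| - 1 \;=\; 2k - 1
\]
together with the contribution $m - 1$ coming from $L$ itself shows that the remaining $k$ lines on $H$ account for a total of $2k - m$. Since each satisfies $m_{L'} - 1 \geq 1$, we get $k \leq 2k - m$, i.e., $m \leq k$, which is the desired bound.

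The main case is a point $p$ with $m_p = m$, where we must show $m \leq 3k/2$. Here I would use essentiality: since $p$ cannot be the centre of $\mH$, some plane $H \in \mH$ satisfies $p \notin H$. The key step is to show that the map
\[
\{H' \in \mH : p \in H'\} \longrightarrow \mH^H, \qquad H' \longmapsto H \cap H',
\]
is injective. Indeed, if two distinct planes $H_1, H_2 \ni p$ satisfied $H \cap H_1 = H \cap H_2 = L$, then $p \notin L$ (because $L \subset H$ while $p \notin H$), so the plane $L + p$ is contained in both $H_i$, forcing $H_1 = H_2 = L + p$. The image has at most $|\mH^H| = k + 1$ elements, so $m \leq k + 1 \leq 3k/2$ for $k \geq 2$.

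The main obstacle is the point case: naive counting with $\sum_{L \ni p} m_L(m_L-1) = m(m-1)$ combined with the line bound $m_L \leq k$ gives only the trivial estimate $m \leq 2k$. The decisive observation is the injectivity trick above, which exploits the fact that a line $L \subset H$ together with an external point $p$ determine a unique plane, and which genuinely requires the essentiality hypothesis to produce the external plane $H$ in the first place.
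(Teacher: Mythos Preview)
Your proof is correct and follows essentially the same approach as the paper. Both handle the point case via an external plane (supplied by essentiality) and the injectivity of $H' \mapsto H \cap H'$, and both handle the line case by counting inside an induced arrangement $\mH^H$ with $k+1$ lines; your use of the identity $\sum_{L'}(m_{L'}-1)=2k-1$ is just a more explicit packaging of the paper's observation that the planes outside $\mH_L$ contribute at most $2k - m_L$ distinct lines on $H$.
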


\begin{proof}
	We have to prove that \(\mH\) satisfies \ref{item:h1}\,. This amounts to show
	two things: (i) for every point \(x \in \mL^0\) we have \(m_x \leq (3/2)k\); (ii) for every line \(L \in \mL^1\) we have \(m_L \leq k\).
	
	(i) Let \(x \in \mL^0\). Since \(\mH\) is essential, we can take a plane \(H_0\) that does not go through \(x\). Every pair of planes that contain \(x\) must meet \(H_0\) at different lines, hence
	\begin{equation*}
	m_x \leq |\mH^{H_0}| =  k + 1.
	\end{equation*}
	Since \(k \geq 2\), we have \(k + 1 \leq k + (k/2)\)
	and therefore \(m_x \leq (3/2)k\).
	
	Let \(L \in \mL^1\) and take a plane \(H \in \mH\) that contains \(L\). Since all the planes in \(\mH_L\) cut \(H\) along the same line \(L\) whereas the elements in \(\mH \setminus \mH_L\) cut \(H\) in at most \(|\mH| - m_L\) lines, we have
	\begin{equation*}
	k = |\mH^{H}| - 1  \leq |\mH| - m_L
	\end{equation*}
	from which we get \(m_L \leq k\).
\end{proof}

\begin{remark}
	In Lemma \ref{lem:hirdim3} we require that \(\mH\) is essential. If \(\mH \subset \CP^3\) is an arrangement of \(4 =2k\) planes that intersect at a common point and whose pairwise intersection are all distinct, then every member of \(\mH\) intersects the others along \(3 = k+1\) lines but \(\mH\) is not Hirzebruch.
\end{remark}

The next example shows that the class of Hirzebruch arrangements is not closed under restriction.

\begin{example}\label{ex:sweklines}
	Let \(L\) and \(L'\) be two skew lines in \(\CP^3\) and let \(\mH\) be the arrangement of \(2k\) planes with \(k\) intersecting along \(L\) and \(k\) intersecting along \(L'\). Then \(\mH\) is a product Hirzebruch arrangement, it is a particular case of Example \ref{ex:prodhir}.
	
	Let \(H\) be one of the planes containing \(L\) and let \(x\) be the intersection point of \(H\) and \(L'\). The induced arrangement \(\mH^H\) is the near-pencil consisting of \(k\) concurrent lines meeting at \(x\) together with the extra line \(L \subset H\). The arrangement \(\mH^H\) is \emph{not} Hirzebruch if \(k \geq 3\).
\end{example}

\begin{example}\label{ex:prodhir}
	Suppose that \(\mH_1 \subset \CP^{n_1}\) and \(\mH_2 \subset \CP^{n_2}\) are Hirzebruch. Then the product arrangement \(\mH_1 \times \mH_2 \subset \CP^{n_1 + n_2 + 1}\) is Hirzebruch if and only if
	\[
	\frac{|\mH_1|}{n_1 + 1} = \frac{|\mH_2|}{n_2+1} \,.
	\]
\end{example}

\subsection{Complex reflection arrangements}\label{sec:compref}

We recall the basic definitions on complex reflection arrangements, we follow \cite{orlikterao}. 
A linear map \(f \in U(n+1)\) is a \emph{complex reflection} if it has finite order and its fixed point set is a hyperplane \(H_f \subset \C^{n+1}\).
We call \(H_f\) the \emph{reflecting hyperplane} of \(f\).
A finite subgroup \(G \subset U(n+1)\) is a \emph{complex reflection group} if it is generated by complex reflections. 
The collection \(\mH \subset \CP^n\) of reflecting hyperplanes of \(G\)
is called the \emph{reflection arrangement} of \(G\). 
The group \(G\) is \emph{irreducible} if the only \(G\)-invariant subspaces are \(\{0\}\) and \(\C^{n+1}\). 

For the rest of this section we let \(\mH \subset \CP^n\) be a complex reflection arrangement of an irreducible complex reflection group \(G \subset U(n+1)\). 
Let \(N = |\mH|\). 

Let \(\langle v, w \rangle = \sum_i v_i \bar{w}_i\) be the usual Hermitian inner product on \(\C^{n+1}\).	
For each \(H \in \mH\) choose a unit vector \(e_H \in \C^{n+1}\) orthogonal  to the corresponding linear hyperplane. 
Write \(P_{H^{\perp}}\) for the orthogonal projection to \(H^{\perp}\), given by
\[
P_{H^{\perp}} (v) = \langle v, e_H \rangle \cdot e_H .
\]

\begin{lemma}[{\cite[Proposition 6.93]{orlikterao}}]\label{lem:schur}
	For every \(v \in \C^{n+1}\) we have
	\begin{equation}\label{eq:projH}
	\sum_{H \in \mH} P_{H^{\perp}}(v) = a_0 \cdot v \,,
	\end{equation}
	where \(a_0 = N / (n+1) \)\,.
\end{lemma}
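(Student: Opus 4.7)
The plan is to recognize this as a direct consequence of Schur's lemma applied to the natural action of $G$ on $\C^{n+1}$. Define the linear operator
\[
T: \C^{n+1} \to \C^{n+1}, \qquad T(v) = \sum_{H \in \mH} P_{H^\perp}(v).
\]
First I would verify that $T$ is well defined, i.e.\ that each $P_{H^\perp}$ depends only on $H$ and not on the choice of unit normal $e_H$: replacing $e_H$ by $\lambda e_H$ with $|\lambda|=1$ changes $P_{H^\perp}(v) = \langle v, e_H\rangle e_H$ by a factor of $\bar\lambda \lambda = 1$.

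Next I would show that $T$ is $G$-equivariant. Since $G \subset U(n+1)$ permutes the reflecting hyperplanes, for any $g \in G$ the vector $g \cdot e_H$ is a unit normal to $g(H) \in \mH$. Hence
\[
P_{g(H)^\perp}(gv) \;=\; \langle gv, g e_H\rangle\, g e_H \;=\; \langle v, e_H\rangle\, g e_H \;=\; g \cdot P_{H^\perp}(v),
\]
and reindexing the sum over the $G$-invariant set $\mH$ gives $T(gv) = gT(v)$.

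Now I would invoke Schur's lemma: because $G$ is an irreducible complex reflection group, its tautological representation on $\C^{n+1}$ is irreducible, so the $G$-equivariant endomorphism $T$ must be a scalar multiple of the identity, $T = a_0 \cdot \mathrm{Id}$ for some $a_0 \in \C$. The final step is to pin down $a_0$ by computing traces: each rank-one orthogonal projection $P_{H^\perp}$ has trace $1$, so $\mathrm{tr}(T) = |\mH| = N$, whereas $\mathrm{tr}(a_0 \cdot \mathrm{Id}) = (n+1)a_0$. Equating gives $a_0 = N/(n+1)$, which is Equation \eqref{eq:projH}. There is no serious obstacle here; the only thing one needs to be careful about is confirming that irreducibility of $G$ as a complex reflection group is the same as irreducibility of its defining representation on $\C^{n+1}$, which is immediate from the definition recalled just before the lemma.
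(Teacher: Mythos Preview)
Your proof is correct and follows essentially the same approach as the paper's: apply Schur's lemma to the $G$-equivariant operator $T=\sum_H P_{H^\perp}$ (the paper phrases this as the left hand side defining an invariant Hermitian form), then compute the scalar by taking traces. Your version is simply more explicit about verifying equivariance and well-definedness.
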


\begin{proof}
	This is essentially a consequence of Schur's lemma.
	The left hand side of Equation \eqref{eq:projH} defines an invariant Hermitian form for the action of \(G\) on \(\C^{n+1}\). The assumption that the action is irreducible implies that \(\sum_{H} P_{H^{\perp}}\) is a scalar multiple of the identity.
	The value of \(a_0\) can be calculated by taking traces.
\end{proof}

Let \(\mL\) be the set of non-empty and proper subspaces obtained as intersection members of \(\mH\).
The multiplicity \(m_L\) of \(L \in \mL\) is the number of hyperplanes \(H \in \mH\) that contain \(L\), i.e., \(m_L = |\mH_L|\) where \(\mH_L\) is the localization of \(\mH\) at \(L\).
By slight abuse of notation, we also write \(L\) for the corresponding linear subspace of \(\C^{n+1}\), and \(L^{\perp} \subset \C^{n+1}\) for its orthogonal complement.
Recall that \(\mLi \subset \mL\) denotes the subset of irreducible subspaces. 
 
\begin{lemma}\label{lem:projL}
	Suppose that \(L \in \mLi\). Then, for all \(v \in L^{\perp}\), we have
	\begin{equation}\label{eq:projL}
	\sum_{H \in \mH_L} P_{H^{\perp}}(v) = a_L \cdot v \,,
	\end{equation}	
	where \(a_L = m_L / \codim L\).
\end{lemma}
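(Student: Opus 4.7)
The plan is to reduce to Lemma \ref{lem:schur} applied to the pointwise stabilizer of $L$ acting on $L^\perp$. Let $G_L = \{g \in G : g|_L = \id\}$. By Steinberg's theorem, $G_L$ is itself a complex reflection group, and its set of reflecting hyperplanes in $\C^{n+1}$ is precisely $\mH_L$; equivalently, restricted to the invariant subspace $L^\perp$, the reflecting hyperplanes of $G_L$ are $\{H \cap L^\perp : H \in \mH_L\}$, each of codimension one in $L^\perp$.

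Next, I would verify that $G_L$ acts irreducibly on $L^\perp$. This is where the assumption $L \in \mLi$ enters: the hypothesis that the localized arrangement $\mH_L$ is irreducible (in the sense of Definition \ref{def:irrarr}) corresponds, via Lemma \ref{lem:redarr}, to the nonexistence of a nontrivial orthogonal splitting of $L^\perp$ stable under $G_L$. Concretely, any $G_L$-invariant proper subspace $W \subset L^\perp$ would yield an orthogonal decomposition $L^\perp = W \oplus W^\perp$ preserved by $G_L$, and the reflecting hyperplanes of $G_L$ would split accordingly; this would produce a nontrivial splitting of $\mH_L$, contradicting irreducibility. (This step will require a small argument but is standard in the theory of reflection groups, cf.\ the correspondence between irreducibility of a reflection group and indecomposability of its arrangement.)

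With irreducibility of $G_L$ on $L^\perp$ in hand, apply Lemma \ref{lem:schur} to the reflection group $G_L \subset U(L^\perp)$: for every $v \in L^\perp$,
\[
\sum_{H \in \mH_L} P_{(H \cap L^\perp)^\perp \cap L^\perp}(v) \;=\; \frac{|\mH_L|}{\dim L^\perp} \cdot v \;=\; \frac{m_L}{\codim L} \cdot v \;=\; a_L \cdot v.
\]
Finally I would identify the projections: for $H \in \mH_L$ we have $L \subset H$, so $H^\perp \subset L^\perp$ and the unit normal $e_H$ lies in $L^\perp$. Consequently, for any $v \in L^\perp$, the orthogonal projection in $\C^{n+1}$ to $H^\perp$ and the orthogonal projection within $L^\perp$ to $(H \cap L^\perp)^\perp$ coincide, both being $\langle v, e_H \rangle e_H$. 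Substituting yields Equation \eqref{eq:projL}.

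The main obstacle is the irreducibility step: translating the combinatorial irreducibility of $\mH_L$ (no splitting as a product of arrangements with complementary centers) into the representation-theoretic irreducibility of $G_L$ on $L^\perp$. Once that bridge is crossed, the rest is a direct application of Schur's lemma and an identification of projections.
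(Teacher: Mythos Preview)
Your proof is correct and follows essentially the same approach as the paper: define the pointwise stabilizer $G_L$, use Steinberg's theorem (cited in the paper as \cite[Theorem 6.25]{orlikterao}) to identify its reflecting hyperplanes with $\mH_L$, deduce irreducibility of the $G_L$-action on $L^\perp$ from irreducibility of $\mH_L$, and apply Lemma~\ref{lem:schur}. The paper's proof is terser on the irreducibility step and on the identification of projections, but the argument is the same.
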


\begin{proof}
	Consider the subgroup \(G_L \subset G\) made of elements that fix \(L\). By \cite[Theorem 6.25]{orlikterao}, the group \(G_L\) is generated by the reflections in \(G\) whose reflecting hyperplanes belong to the localized arrangement \(\mH_L\). 
	We identify \(L^{\perp} = \C^p\) with \(p = \codim L\) and \(G_L \subset U(p)\), as the group \(G_L\) acts faithfully on \(L^{\perp}\). Thus \(\mH_L\) is a complex reflection arrangement.
	Since, by assumption, the arrangement \(\mH_L\) is irreducible, the action of \(G_L\) on \(\C^p\) is also irreducible. 
	Equation \eqref{eq:projL} follows by Lemma \ref{lem:schur}\,. 
\end{proof}

\begin{remark}
	Lemma \ref{lem:projL} implies that
	\[
	\sum_{H \in \mH_L} P_{H^{\perp}} = a_L \cdot P_{L^\perp} \,,
	\]
	where \(P_{L^{\perp}}\) is the orthogonal projection to \(L^{\perp}\). This result is well known in the context of \emph{Dunkl connections}, see \cite[Lemma 2.13 and Example 2.5]{chl}\,.
\end{remark}

\begin{lemma}\label{lem:rarrh1}
	If \(L \in \mL\) then the following inequality holds:
	\begin{equation*}
		\frac{m_L}{\codim L} < \frac{N}{n+1} \,.
	\end{equation*}
\end{lemma}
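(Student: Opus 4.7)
The plan is to prove the strict inequality first for irreducible subspaces using a Hermitian trace argument and Schur's lemma, then reduce the reducible case to the irreducible one via the decomposition into irreducible components.

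First I will treat the case $L \in \mLi$. Fix any unit vector $v \in L^\perp$ and take the Hermitian pairing of $v$ with the identities \eqref{eq:projH} and \eqref{eq:projL}. Since $\langle P_{H^\perp}(v), v\rangle = |\langle v, e_H\rangle|^2$, this yields
\[
a_0 - a_L \;=\; \sum_{H \in \mH \setminus \mH_L} |\langle v, e_H\rangle|^2 \;\geq\; 0 \,.
\]
To upgrade this to a strict inequality I will argue by contradiction. If $a_L = a_0$, then $\langle v, e_H\rangle = 0$ for every unit $v \in L^\perp$ and every $H \in \mH \setminus \mH_L$; equivalently $e_H \in L^{\bc}$, i.e.\ $L^\perp \subset H^{\bc}$, for all such $H$.

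Now let $G_L \subset G$ be the subgroup generated by reflections whose hyperplanes lie in $\mH_L$; by construction $G_L$ fixes $L^{\bc}$ pointwise and, being unitary, preserves $L^\perp$. Let $G^{*} \subset G$ be the subgroup generated by reflections whose hyperplanes lie in $\mH \setminus \mH_L$; under the above hypothesis each such reflection fixes $L^\perp$ pointwise, so $G^{*}$ fixes $L^\perp$ and preserves $L^{\bc}$. Since the reflections in $\mH$ generate $G$, the subgroups $G_L$ and $G^{*}$ together generate $G$, so $G$ preserves the orthogonal decomposition $\C^{n+1} = L^{\bc} \oplus L^\perp$. Because $L \in \mL$ is non-empty and proper, both $L^{\bc}$ and $L^\perp$ are nonzero and proper $G$-invariant subspaces, contradicting the irreducibility of $G$. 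Hence $a_L < a_0$ when $L$ is irreducible.

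For a general $L \in \mL$, let $\Irr(L) = \{L_1, \ldots, L_k\}$ be its irreducible components. By Lemma \ref{lem:irrcomp} the subspaces $L_i$ are non-empty and proper, they intersect transversely in $L$ so that $\codim L = \sum_i \codim L_i$, and every $H \in \mH$ containing $L$ contains exactly one $L_i$, so $\mH_L = \bigsqcup_i \mH_{L_i}$ and $m_L = \sum_i m_{L_i}$. Consequently
\[
\frac{m_L}{\codim L} \;=\; \frac{\sum_i \codim L_i \cdot (m_{L_i}/\codim L_i)}{\sum_i \codim L_i}
\]
is a convex combination of the ratios $m_{L_i}/\codim L_i$, each of which is strictly less than $N/(n+1)$ by the irreducible case already proven. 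The strict inequality is preserved under convex combinations, giving the desired bound. The only delicate step is the contradiction with irreducibility of $G$ in the equality case, but it reduces cleanly to the unitarity of $G$ and the fact that $G$ is generated by its reflections.
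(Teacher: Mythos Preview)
Your proof is correct and follows essentially the same approach as the paper's: both take the Hermitian pairing of a unit vector in \(L^\perp\) with the two Schur identities to get \(a_0 - a_L = \sum_{H\notin \mH_L}|\langle v,e_H\rangle|^2\), and both use irreducibility of \(G\) to rule out equality (the paper observes directly that \(L^\perp \not\subset S=\bigcap_{H\notin \mH_L}H\), while you phrase the same invariance-of-\(L^\perp\) argument as a contradiction via the generating subgroups \(G_L,\,G^{*}\)). For the reducible case the paper simply cites Lemma~\ref{lem:kltcondition}, whose proof is exactly your convex-combination argument.
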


\begin{proof}
	Write \(a_L = m_L / \codim L\) and \(a_0 = N / (n+1)\). We have to show that \(a_L < a_0\). 
	Suppose first that \(L \in \mLi\).
	
	Let \(S\) be the common intersection of all hyperplanes in \(\mH \setminus \mH_L\). Note that \(L^{\perp} \not\subset S\), otherwise \(L^{\perp}\) would be invariant by \(G\).
	Let \(v \in L^{\perp} \setminus S\) with \(|v|=1\). 
	Since \(v \notin S\), there is at least one \(H \in \mH \setminus \mH_L\) such that \(\langle v, e_H \rangle \neq 0\).
	Taking the inner product of Equations \eqref{eq:projL} and \eqref{eq:projH} with \(v\) we obtain
	\begin{equation*}
	\begin{aligned}
	a_L &= \sum_{H \in \mH_L} |\langle v, e_H \rangle|^2  \\
	&<  \sum_{H \in \mH} |\langle v, e_H \rangle|^2  = a_0 \,.
	\end{aligned}
	\end{equation*}
	
	To finish the proof, note that if \(a_L < a_0\) for \(L \in \mLi\) then \(a_L < a_0\) for all \(L \in \mL\), c.f. Lemma \ref{lem:kltcondition}\,.
\end{proof}

\begin{lemma}\label{lem:rarrh2}
	For every \(H \in \mH\) we have
	\[
	|\mH^H| = \left( 1 - \frac{2}{n+1} \right) |\mH| + 1 \,.
	\]
\end{lemma}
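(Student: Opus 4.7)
The plan is to compute $|\mH^H|$ by a double-counting argument using the inner product identities in Lemmas \ref{lem:schur} and \ref{lem:projL}, applied to the vector $v = e_H$.

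First, I would group the hyperplanes $H' \in \mH \setminus \{H\}$ according to the codimension $2$ subspace $L = H \cap H' \in \mL^{n-2}$. Writing $t = |\mH^H|$ for the number of such codimension $2$ subspaces in $H$, we have
\[
N - 1 \;=\; \sum_{L \in \mL^{n-2},\, L \subset H} (m_L - 1) \;=\; t \,+\, \sum_{L \in \mL^{n-2},\, L \subset H} (m_L - 2).
\]
Thus it suffices to show that $\sum_{L \in \mL^{n-2},\, L \subset H}(m_L - 2) = 2N/(n+1) - 2$, and the claimed equality for $|\mH^H|$ will follow by rearrangement.

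On one hand, Lemma \ref{lem:schur} with $v = e_H$ (and $|e_H|=1$) gives
\[
\sum_{H' \in \mH \setminus \{H\}} |\langle e_H, e_{H'} \rangle|^2 \;=\; \frac{N}{n+1} - 1 \;=\; \frac{N - (n+1)}{n+1}.
\]
On the other hand, I would compute the left-hand side by grouping over codimension $2$ subspaces $L \subset H$. For $L \in \mLi^{n-2}$ contained in $H$, note that $e_H \in L^{\perp}$ since $L \subset H$; applying Lemma \ref{lem:projL} with $v = e_H$ and taking the inner product with $e_H$ yields
\[
\sum_{H' \in \mH_L \setminus \{H\}} |\langle e_H, e_{H'} \rangle|^2 \;=\; a_L - 1 \;=\; \frac{m_L}{2} - 1 \;=\; \frac{m_L - 2}{2}.
\]

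The main obstacle is the reducible case: if $L \in \mLr^{n-2}$ with $L \subset H$, then $\mH_L = \{H, H'\}$ with $m_L = 2$, and $\frac{m_L - 2}{2} = 0$, so I need to show $\langle e_H, e_{H'}\rangle = 0$, i.e., $H \perp H'$. This is where the reflection group structure enters crucially. The argument: let $s_H \in G$ be the reflection with fixed hyperplane $H$. Since $s_H$ fixes $L$ pointwise, the conjugate reflection $s_H s_{H'} s_H^{-1} \in G$ has reflecting hyperplane $s_H(H') \supset L$, so $s_H(H') \in \mH_L = \{H, H'\}$. A direct check (using the spectral decomposition of $s_H$) shows that $s_H(H') = H$ forces $H = H'$, and $s_H(H') = H'$ forces either $H = H'$ or $e_{H'} \perp e_H$. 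Since $m_L = 2$ precludes introducing a new hyperplane through $L$, the only possibility is $H \perp H'$, giving $\langle e_H, e_{H'}\rangle = 0$ as required.

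Putting everything together, for every $L \in \mL^{n-2}$ with $L \subset H$ we obtain
\[
\sum_{H' \in \mH_L \setminus \{H\}} |\langle e_H, e_{H'}\rangle|^2 \;=\; \frac{m_L - 2}{2},
\]
uniformly in whether $L$ is reducible or irreducible. Summing over all such $L$ and equating with the total from Lemma \ref{lem:schur} yields
\[
\sum_{L \in \mL^{n-2},\, L \subset H} (m_L - 2) \;=\; \frac{2(N - (n+1))}{n+1} \;=\; \frac{2N}{n+1} - 2,
\]
whence $|\mH^H| = (N - 1) - \bigl(\tfrac{2N}{n+1} - 2\bigr) = \bigl(1 - \tfrac{2}{n+1}\bigr) N + 1$, as claimed.
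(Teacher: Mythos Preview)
Your proof is correct. The paper does not actually give an argument here: its proof is the single-line citation ``This is \cite[Theorem 6.97]{orlikterao}.'' So your route is genuinely different --- and more self-contained --- than what the paper does.

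Your argument fits well with the surrounding material: you reuse Lemmas~\ref{lem:schur} and~\ref{lem:projL} (which the paper had just set up to prove Lemma~\ref{lem:rarrh1}) to evaluate $\sum_{H'\neq H}|\langle e_H,e_{H'}\rangle|^2$ in two ways, and you supply the one extra ingredient those lemmas do not cover, namely that $e_H\perp e_{H'}$ whenever $H\cap H'$ is a reducible codimension~$2$ flat. That orthogonality step via $s_H(H')\in\mH_L=\{H,H'\}$ is standard for reflection groups and your case analysis is fine (the eigenspaces of any reflection $s_H\in G$ with hyperplane $H$ are $H$ and $\C e_H$, which forces $e_{H'}\in H$). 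The upshot is that your proof explains \emph{why} the identity holds using only tools already in the paper, whereas the paper outsources the result; the trade-off is that the citation is one line and yours is a page.
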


\begin{proof}
	This is \cite[Theorem 6.97]{orlikterao}.
\end{proof}

\begin{proposition}\label{prop:reflhir}
Let \(\mH \subset \CP^n\) be the complex reflection arrangement of an irreducible reflection group \(G \subset U(n+1)\).
Then \(\mH\) is Hirzebruch.
\end{proposition}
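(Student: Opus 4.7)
The plan is to verify the two defining conditions \ref{item:h1} and \ref{item:h2} of a Hirzebruch arrangement directly from the two preceding lemmas, after first noting that $\mH$ is essential.

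First, I would observe that $\mH$ is essential. Indeed, the centre $T^{\bc} = \bigcap_{H \in \mH} H^{\bc} \subset \C^{n+1}$ is stable under $G$, because $G$ permutes the reflecting hyperplanes of its reflections. Since $G \subset U(n+1)$ acts irreducibly on $\C^{n+1}$, the only $G$-invariant subspaces are $\{0\}$ and $\C^{n+1}$; as $\mH$ is non-empty, we cannot have $T^{\bc} = \C^{n+1}$, so $T^{\bc} = \{0\}$, i.e., $\mH$ is essential.

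Next, condition \ref{item:h1} is an immediate consequence of Lemma \ref{lem:rarrh1}, which gives the stronger strict inequality
\[
\frac{m_L}{\codim L} < \frac{|\mH|}{n+1}
\]
for every $L \in \mL$. Condition \ref{item:h2} is exactly the content of Lemma \ref{lem:rarrh2}: every $H \in \mH$ meets $\mH \setminus \{H\}$ along exactly $(1 - 2/(n+1))|\mH| + 1$ codimension $2$ subspaces. Combining these three facts, $\mH$ is essential and satisfies \ref{item:h1} and \ref{item:h2}, hence it is Hirzebruch.

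Since the bulk of the work has been carried out in Lemmas \ref{lem:rarrh1} and \ref{lem:rarrh2}, there is no real obstacle remaining in this final step; the only thing worth pointing out is the essentiality check, which is a one-line consequence of Schur-type irreducibility of the $G$-action on $\C^{n+1}$.
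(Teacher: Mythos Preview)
Your proof is correct and follows exactly the same approach as the paper, which simply cites Lemma~\ref{lem:rarrh1} for \ref{item:h1} and Lemma~\ref{lem:rarrh2} for \ref{item:h2}. Your explicit verification that $\mH$ is essential (via the $G$-invariance of the centre and irreducibility of the action) is a welcome addition that the paper leaves implicit.
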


\begin{proof}
	Lemma \ref{lem:rarrh1} implies Item \ref{item:h1} and
	Lemma \ref{lem:rarrh2} implies Item \ref{item:h2}.
\end{proof}

\begin{remark}
	At the moment, the only examples we know in dimension \(n \geq 2\) of irreducible Hirzebruch arrangements stem from Proposition \ref{prop:reflhir}. For \(n=2\), it is an old question of Hirzebruch \cite[\S 3]{hirzebruch2} whether all Hirzebruch arrangements come from complex reflection groups.
	It is proved in \cite{dima} that all \emph{real} Hirzebruch line arrangements come from reflection groups.
\end{remark}

Using our results one can prove the following.

\begin{theorem}\label{thm:refarr}
	Let \(\mH \subset \CP^n\) be the complex reflection arrangement of an irreducible reflection group \(G \subset U(n+1)\).
	Then the quadratic form \(Q: \R^{\mH} \to \R\) is negative semidefinite.
\end{theorem}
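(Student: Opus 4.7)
\textbf{Proof plan for Theorem \ref{thm:refarr}.}
The strategy is to realize Theorem \ref{thm:refarr} as a direct consequence of Theorem \ref{thm:hir}, once we verify that the reflection arrangement $\mH$ satisfies all three hypotheses of that theorem: namely (a) $N := |\mH| > n+1$, (b) the strict multiplicity bound $m_L < \codim L \cdot N/(n+1)$ for every $L \in \mL$, and (c) the Hirzebruch equality \eqref{eq:hir}. The final clause of Theorem \ref{thm:hir} then gives exactly the negative semidefiniteness of $Q$ on all of $\R^{\mH}$.

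First, I would observe that Lemma \ref{lem:rarrh1} already establishes (b). The key remark is that (a) is not a separate hypothesis to verify but in fact \emph{follows} from (b) applied to a single hyperplane: for $L = H \in \mH \subset \mLi$ we have $m_L = \codim L = 1$, so the strict inequality reads $1 < N/(n+1)$, i.e., $N > n+1$. One should briefly confirm that Lemma \ref{lem:rarrh1} applies to the case $L = H$; the proof of that lemma requires $L^{\perp} \not\subset S$, where $S = \bigcap_{H' \neq H} H'$, and this holds because otherwise the line $H^{\perp}$ would be preserved by every reflection in $G$ (each reflection either fixes $H^{\perp}$ pointwise, since $H^{\perp} \subset H'$, or, in the case of the reflection through $H$, sends $H^{\perp}$ to itself), contradicting irreducibility of $G$.

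Next, Lemma \ref{lem:rarrh2} establishes (c) directly: every $H \in \mH$ intersects $\mH \setminus \{H\}$ along exactly $(1 - 2/(n+1))N + 1$ codimension $2$ subspaces. Combining this with (b), all hypotheses of Theorem \ref{thm:hir} are met, and since the Hirzebruch equality \eqref{eq:hir} holds, the final sentence of that theorem yields
\[
\forall \ba \in \R^{\mH}: \,\, Q(\ba) \leq 0 \,,
\]
which is the desired statement.

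There is no genuine obstacle here; the proof amounts to assembling the pieces already present in the excerpt. If one wished to make the logical chain more transparent, one could rephrase it as: by Proposition \ref{prop:reflhir} the arrangement $\mH$ is Hirzebruch, Lemma \ref{lem:rarrh1} upgrades condition \ref{item:h1} to a strict inequality, so the vector $\mathbf{1} \in \R^{\mH}$ lies in the stable cone $C^{\circ}$ (Lemma \ref{lem:1incone}) and simultaneously in the kernel of $Q$ (Lemma \ref{lem:1ker} applied to \eqref{eq:hir}); then Lemma \ref{lem:kernelquadratic}(ii), together with the inclusion $C^{\circ} \subset \{Q \leq 0\}$ from Theorem \ref{thm:qcone}, forces $Q \leq 0$ on all of $\R^{\mH}$.
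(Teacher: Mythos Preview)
Your proposal is correct and follows essentially the same route as the paper: invoke Lemma \ref{lem:rarrh1} for the strict multiplicity bound \eqref{eq:sthir}, Lemma \ref{lem:rarrh2} for the Hirzebruch equality \eqref{eq:hir}, and then apply the final clause of Theorem \ref{thm:hir}. Your explicit check that the hypothesis $N>n+1$ follows from Lemma \ref{lem:rarrh1} at $L=H$ is a welcome clarification that the paper leaves implicit.
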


\begin{proof}
	Lemma \ref{lem:rarrh1} implies that Equation \eqref{eq:sthir} is satisfied. By Lemma \ref{lem:rarrh2}\,, Equation \eqref{eq:hir} holds. By Theorem \ref{thm:hir}\,, the quadratic form \(Q\) is negative semidefinite.
\end{proof}

\begin{remark}
    It would be desirable to find a direct proof of Theorem \ref{thm:refarr}\,, for example, using the Shephard-Todd classification of irreducible complex reflection groups. 
\end{remark}

\subsection{The Hirzebruch quadratic form of a matroid}\label{sec:matroid}

We formulate our results in combinatorial terms, using the language of matroids. We begin with a review of basic definitions, a standard reference for this material is  
\cite{oxley}.

\subsubsection{Matroids: basic definitions}\label{sec:matroidbasics}
In a nutshell, matroids are combinatorial structures that mimic finite collections of vectors in a vector space, including information about their linear dependencies (see Example \ref{ex:repmat}). Formally, a \emph{matroid} is a pair \(M = (E, \mB)\), where \(E\) is a finite set and \(\mB\) is a non-empty collection of subsets \(B \subset E\) which satisfy the \emph{exchange property}: for any \(B_1, B_2 \in \mB\) and \(b_1 \in B_1 \setminus B_2\) there exists \(b_2 \in B_2 \setminus B_1\) such that \(\left(B_1 \setminus \{b_1\}\right) \cup \{b_2\}\) is in \(\mB\). The elements of \(\mB\) are called \emph{basis} of the matroid \(M\).
The set \(E\) is referred as the \emph{ground set} of \(M\). Two matroids \(M_1 = (E_1, \mB_1)\) and \(M_2 = (E_2, \mB_2)\) are \emph{isomorphic} if there is a bijection \(f: E_1 \to E_2\) such that \(B_1 \in \mB_1\) if and only if \(B_2 = f(B_1) \in \mB_2\)\,.

It follows from the exchange property that all bases \(B \in \mB\) have the same number of elements \(|B|\). The \emph{rank} \(r\) of the matroid \(M\) is \(r = |B|\) for any \(B \in \mB\). A subset of the ground set, \(I \subset E\), is \emph{independent} if it is contained in a basis, i.e., if \(I \subset B\) for some \(B \in \mB\). 
In particular, we agree that the empty set is independent.
We denote by \(\mI\) the collection of all independent subsets.
The \emph{rank of a subset} \(S \subset E\) is the largest size of an independent set contained in it,
\[
\rk (S) = \max_{I \in \mI} \{|I| \,\,|\,\, I \subset S \} \,.
\]

An element \(x\) of the ground set of \(M\) is a \emph{loop} if the singleton \(\{x\}\) is not an independent set, equivalently \(\rk(x) = 0\). A pair of elements \(x, y\) of \(M\) are \emph{parallel} if none is a loop and \(\rk(\{x, y\}) = 1\). 

\begin{example}\label{ex:repmat}
	Suppose that \(k\) is a field and let \(E = \{v_1, \ldots, v_{N}\}\) be vectors of \(V = k^{n+1}\) that span the whole space. Let \(\mB\) be the collection of all subsets \(B \subset E\) that form a basis of the vector space \(V\). Then \(M = (E, \mB)\) is a matroid of rank \(r = n+1\). 
	An element \(v_i \in E\) is a loop if \(v_i=0\). A pair of non-zero vectors \(v_i, v_j \in E\) are parallel if one is a scalar multiple of the other.
\end{example}

\begin{definition}
	If a matroid \(M\) is isomorphic to a matroid as in Example \ref{ex:repmat}\,, we say that \(M\) is \emph{representable} over the field \(k\). 
\end{definition}

A subset \(F\) of the ground set of \(M\) is a \(k\)-\emph{flat} if \(\rk(F) = k\) and 
\[
\rk \left(F \cup \{e\}\right) > \rk(F)
\] 
for any \(e \in E \setminus F\).
The set of all flats is denoted by \(\mF\).
The \emph{closure} \(\overline{S}\) of a subset \(S \subset E\) is the intersection of all flats containing it,
\[
\overline{S} = \bigcap_{F \in \mF \, |\, F \supset S} F \,.
\]
The intersection of flats is also a flat, so the closure \(\overline{S} \in \mF\). The set of all flats of the matroid \(M\) equipped with the partial order given by inclusion, \(F_1 \leq F_2\) if \(F_1 \subset F_2\), is a  poset \((\mF, \leq)\). This poset is actually a \emph{lattice}, meaning that any two elements \(F_1, F_2 \in \mF\) have a greatest lower bound (their \emph{meet} \(F_1 \wedge F_2\)) and a
least upper bound (their \emph{join} \(F_1 \vee F_2\)). These are necessarily unique and given by
\[
F_1 \wedge F_2 = F_1 \cap F_2, \hspace{2mm} F_1 \vee F_2 = \overline{F_1 \cup F_2} \,.
\]

The matroid \(M\) is \emph{simple} if it has no loops and no pairs of parallel points. If \(M\) is simple then the poset of flats \((\mF, \leq)\) is a \emph{geometric lattice}, meaning that it is \emph{semimodular} and \emph{atomic}. The \emph{semimodular} property is:
\[
\rk(F_1) + \rk(F_2) \geq \rk(F_1 \vee F_2) + \rk (F_1 \wedge F_2) \,.
\]
\emph{Atomic} means that every flat \(F \in \mF\) is a join of singletons \(\{x\}\). The elements \(x \in M\) of the ground set (or the singletons \(\{x\} \in \mF\))  are called \emph{atoms}. Conversely, every geometric lattice is the poset of flats of a simple matroid \cite[Theorem 3.8]{stanley}.

\begin{example}\label{ex:mathyp}
	Let \(\mH = \{H_1, \ldots, H_N\}\) be a finite collection of pairwise distinct hyperplanes in \(\P_k^n = \P(k^{n+1})\) with empty common intersection. For each \(H_i\) choose a defining linear equation \(h_i \in (k^{n+1})^*\). The collection of vectors \(E = \{h_1, \ldots, h_{N}\}\) defines, as in Example \ref{ex:repmat}\,, a matroid \(M = (E, \mB)\). The elements of \(\mB\) correspond to \((n+1)\)-tuples of hyperplanes with empty common intersection. The fact that each \(h_i \neq 0\) and that every pair of hyperplanes \(H_i\) and \(H_j\) with \(i \neq j\) satisfy \(H_i \neq H_j\), implies that \(M\) is simple. 
	
	The correspondence \(L \mapsto \mH_L\) (where \(L\) is a linear subspace and \(\mH_L\) is the set of hyperplanes in \(\mH\) that contain it) defines an isomorphism between the intersection poset \(\overline{\mL}\) of \(\mH\) equipped with the order by reverse inclusion and the poset of flats \(\mF\) equipped with the order by inclusion. The whole space \(\P_k^{n} \in \overline{\mL}\) (corresponding to the intersection of an empty collection of hyperplanes) is mapped to the minimal element \(\hat{0} = \overline{\emptyset}\) of \(\mF\), while \(\emptyset \in \overline{\mL}\) (the centre of the arrangement) corresponds to the unique maximal flat \(\hat{1}\) (the join of all \(h_i\)).
\end{example}

\begin{remark}
	If \(M\) is simple and representable over the field \(k\), then \(M\) must be isomorphic to a matroid as in Example \ref{ex:mathyp}\,. In this case, we say that \(M\) is the matroid \emph{associated} to the hyperplane arrangement \(\mH \subset \P^n_k\).
\end{remark}

\begin{remark}
	If the matroid \(M\) is associated to the arrangement \(\mH\), then \(\mH\) is assumed to be essential.
\end{remark}

\subsubsection{Quadratic form for matroids}

Let \(M\) be a simple matroid of rank \(n+1\) on the ground set \([N] = \{1, \ldots, N\}\).
Let \(F\) be a rank two flat. We say that \(F\) is \emph{irreducible} if \(|F| \geq 3\). Otherwise, if \(|F|=2\) then \(F\) is \emph{reducible}.

For \(i \in [N]\) let \(\sigma_i\) be the number of irreducible rank \(2\) flats that contain \(i\).

\begin{definition}
	The \emph{Hirzebruch quadratic form of} \(M\) is the function \(Q_M: \R^N \to \R\) given by
	\[
	Q_M (\bx) = \bx^t \cdot Q \cdot \bx \,,
	\]
	where \(Q\) is the real symmetric \(N \times N\) matrix with integer entries 
	\begin{equation}\label{eq:matrix}
		Q_{ij} = \begin{cases}
			-(n+1)\sigma_i + 2n &\text{ if } i = j \,, \\
			-2          &\text{ if } |i \vee j| = 2 \,, \\
			n - 1 &\text{ if } |i \vee j| \geq 3 \,.
		\end{cases}
	\end{equation}
 By slight abuse of notation, we shall also write \(Q\) for the quadratic form \(Q_M\) as well as for the symmetric matrix.
\end{definition}

\begin{remark}
	If \(M\) is the matroid associated to a hyperplane arrangement \(\mH \subset \CP^n\)
	then Equation \eqref{eq:matrix} agrees with Equation \eqref{eq:Qmatrix}.
\end{remark}

\begin{remark}[c.f. Lemma \ref{lem:sumqij}]
	The sum of the \(i\)-th column (or \(i\)-th row) of the matrix \(Q\) is given by
	\begin{equation}\label{eq:sumqijmat}
	\sum_{j=1}^{N} Q_{ij} = (n-1) \cdot N - (n+1) \cdot (t_i-1)
	\end{equation}
	where \(t_i\) is the number of rank \(2\) flats that contain \(i\).
\end{remark}

Let \(P \subset \R^N\) be the matroid polytope of \(M\), i.e., \(P\) is the convex hull of indicator functions of bases of \(M\).
Our results give us the following.

\begin{theorem}\label{thm:matroids}
	Suppose that the matroid \(M\) is representable over \(\C\). Then the Hirzebruch quadratic form of \(M\) is non-positive on the cone over the matroid polytope, i.e.,
	\begin{equation}\label{eq:thmmat}
	Q (\bx)\, \leq \, 0 	\,\, \textup{ for all } \,\, \bx \in \R_{\geq 0} \times P \,.
	\end{equation}
\end{theorem}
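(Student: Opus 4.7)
The plan is to translate the matroid statement back to the hyperplane arrangement setting, where Theorem \ref{thm:maingeneral} directly applies. The key point is that when $M$ is simple and representable over $\C$, it arises (up to isomorphism) as the matroid associated to an essential hyperplane arrangement $\mH \subset \CP^n$, as in Example \ref{ex:mathyp}, and all the data entering the Hirzebruch quadratic form of $M$ correspond term-by-term to the combinatorial data of $\mH$.

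First I would fix a representation: since $M$ is simple of rank $n+1$ and representable over $\C$, choose non-zero vectors $h_1, \ldots, h_N \in (\C^{n+1})^*$ realizing $M$ (pairwise non-proportional, spanning the dual space), and set $H_i = \{h_i = 0\} \subset \CP^n$. Then $\mH = \{H_1, \ldots, H_N\}$ is an essential arrangement, and under the isomorphism $\overline{\mL}^{\,\mathrm{op}} \cong \mF$ given by $L \mapsto \mH_L$, rank-$2$ flats $F$ correspond to codimension-$2$ intersection subspaces $L$ with $|F| = m_L$. In particular, $|i \vee j| = 2$ precisely when $H_i \cap H_j$ is reducible, while $|i \vee j| \geq 3$ precisely when $H_i \cap H_j$ is irreducible; moreover $\sigma_i$ counts irreducible rank-$2$ flats through $i$ on the matroid side and irreducible codimension-$2$ subspaces inside $H_i$ on the arrangement side. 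Comparing \eqref{eq:matrix} with \eqref{eq:Qmatrix} entry by entry, the Hirzebruch quadratic form $Q_M$ of $M$ coincides with the quadratic form $Q$ of $\mH$ of Definition \ref{def:quadraticform}.

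Next I would identify the polytopes. Bases of $M$ are exactly the $(n+1)$-subsets $B \subset \mH$ whose corresponding hyperplanes have empty common intersection, i.e.\ bases of $\mH$ in the sense of Section \ref{sec:matpol}. Hence the matroid polytope $P$ of $M$ (convex hull of indicator vectors of bases of $M$) coincides with the matroid polytope of $\mH$ (Definition \ref{def:matpol}), and $\R_{\geq 0} \cdot P$ is the semistable cone $C$ of $\mH$ (Definition \ref{def:semistabcone}). Applying Theorem \ref{thm:maingeneral} to the essential arrangement $\mH$ gives $Q \leq 0$ on $C$, which is exactly \eqref{eq:thmmat}.

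There is really no serious obstacle here: the content of Theorem \ref{thm:matroids} is a repackaging of Theorem \ref{thm:maingeneral} in matroid language, and the proof is a dictionary check. The only point worth verifying carefully is the combinatorial identification of the two matrices, i.e.\ that the trichotomy $i = j$, $|i \vee j| = 2$, $|i \vee j| \geq 3$ in \eqref{eq:matrix} matches the trichotomy in \eqref{eq:Qmatrix}, and that the diagonal coefficient $\sigma_i$ has the same combinatorial meaning on both sides; this is immediate from the lattice isomorphism between flats of $M$ and elements of $\overline{\mL}$.
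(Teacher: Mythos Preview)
Your proposal is correct and takes exactly the same approach as the paper: the paper's proof is the one-line observation that this is Theorem~\ref{thm:maingeneral} applied to the essential arrangement $\mH \subset \CP^n$ associated to $M$. You have simply made explicit the dictionary (matching the matrices \eqref{eq:matrix} and \eqref{eq:Qmatrix}, and matching the matroid polytopes) that the paper leaves implicit.
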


\begin{proof}
    This is Theorem \ref{thm:maingeneral} for
	the essential arrangement \(\mH \subset \CP^n\)  associated to \(M\). 
\end{proof}

\subsubsection{Examples}

The advantages of presenting our results within the broader context of matroids are twofold. On the one hand, Theorem \ref{thm:matroids} brings in a potential obstruction for representability of a matroid over the complex numbers. On the other hand, it rises the question for which classes of matroids the statement of Theorem \ref{thm:matroids} holds true. The next two examples illustrate this. 

\begin{example}
	Let \(p\) be a prime number and let \(\mathbb{F}_p\) be the finite field of order \(p\). For \(n \geq 2\), let
	\(M\) be the matroid associated to the hyperplane arrangement  consisting of all hyperplanes in \(\PG(n, p) = \P(\mathbb{F}_p^{n+1})\). As Example \ref{ex:finiteps} shows, \(\mathbf{1} \in \R_{\geq 0} \times P\) but \(Q (\mathbf{1}) > 0\).
	Therefore, Theorem \ref{thm:matroids} does not hold for \(M\). As a result, \(M\) is not realizable over the complex numbers.
\end{example}

\begin{example}
	The Non-Pappus matroid \(M\) is the matroid of rank \(3\) on the set of nine elements \([9] = \{1, \ldots, 9\}\). This matroid has \(8\) circuits (minimal dependent sets) shown in Figure \ref{fig:nonpapus} as lines connecting triplets of dependant points. The number of bases is equal to
	\[
	\binom{9}{3} - 8 = 76 \,,
	\] 
	as any \(3\) points not connected by a line form a basis.
	
\begin{figure}[h]
		\centering
		\scalebox{.7}{
		\begin{tikzpicture}
			\node (1) at (-4,4) [thick,circle,draw,fill=yellow!50] {\(1\)};
			\node (2) at (0,4) [thick,circle,draw,fill=yellow!50] {\(2\)};
			\node (3) at (4,4) [thick,circle,draw,fill=yellow!50] {\(3\)};
			\node (7) at (-2,0) [thick,circle,draw,fill=yellow!50] {\(7\)};
			\node (8) at (0,0) [thick,circle,draw,fill=yellow!50] {\(8\)};
			\node (9) at (2,0) [thick,circle,draw,fill=yellow!50] {\(9\)};
			\node (4) at (-4,-4) [thick,circle,draw,fill=yellow!50] {\(4\)};
			\node (5) at (0,-4) [thick,circle,draw,fill=yellow!50] {\(5\)};
			\node (6) at (4,-4) [thick,circle,draw,fill=yellow!50] {\(6\)};
			
			\draw (1) -- (2) -- (3);
			\draw (4) -- (5) -- (6);
			
			\draw (1) -- (7) -- (5);
			\draw (2) -- (7) -- (4);
			
			\draw (2) -- (9) -- (6);
			\draw (3) -- (9) -- (5);
			
			\draw (1) -- (8) -- (6);
			\draw (3) -- (8) -- (4);
		\end{tikzpicture}
		}
		\caption{The Non-Pappus matroid.}
		\label{fig:nonpapus}
\end{figure}
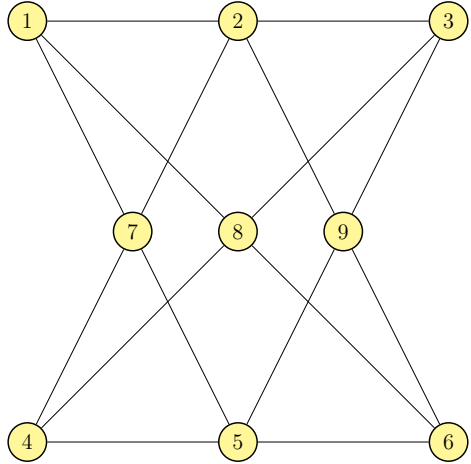

We see that \(| i \vee j | = 3\)  if \(i\) and \(j\) are joined by a line and \(| i \vee j | = 2\) otherwise. Also,
\[
\sigma_i = \begin{cases}
3 &\text{ if } 1 \leq i \leq 6 \,, \\
2 &\text{ if } i = 7, 8, 9 \,.
\end{cases}
\]
Therefore, the matrix \(Q\) of the Hirzebruch quadratic form of \(M\) is
\[
\begin{pmatrix}
	-5 & 1 & 1 & -2 & 1 & 1 & 1 & 1 & -2 \\
	1 & -5 & 1 & 1 & -2 & 1 & 1 & -2 & 1 \\
	1 & 1 & -5 & 1 & 1 & -2 & -2 & 1 & 1 \\
	-2 & 1 & 1 & -5 & 1 & 1 & 1 & 1 & -2 \\
	1 & -2 & 1 & 1 & -5 & 1 & 1 & -2 & 1 \\
	1 & 1 & -2 & 1 & 1 & -5 & -2 & 1 & 1 \\
	1 & 1 & -2 & 1 & 1 & -2 & -2 & -2 & -2 \\
	1 & -2 & 1 & 1 & -2 & 1 & -2 & -2 & -2 \\
	-2 & 1 & 1 & -2 & 1 & 1 & -2 & -2 & -2
\end{pmatrix}
\, .
\]

The matroid \(M\) is not representable over any field. However, the statement of Theorem \ref{thm:matroids} holds for \(M\). Indeed, it can be checked with computer\footnote{See \url{https://github.com/MdeBorbon/nonpappus}} that the matrix \(P = -Q\) is \emph{copositive}, meaning that \(\bx^t \cdot P \cdot \bx \geq 0\) for any vector \(\bx \in \R^9\) with components \(x_i \geq 0\).
\end{example}


\subsubsection{Pseudoline arrangements and symplectic BMY.} 
It would be very interesting to extend Theorem \ref{thm:matroids} to some large classes of non-realizable matroids. In this section we speculate about one such possibility.

Recall that a \emph{pseudoline arrangement} is a collection of circles smoothly embedded into $\mathbb{RP}^2$ such that each circle is isotopic to a projective line, any two circles intersect in exactly one point, and their common intersection is empty (see \cite{ziegler}). Just as with line arrangements, one can associate a matroid to any pseudoline arrangement. Note that the non-Pappus matroid can be obtained in this manner: take a collection of $9$ straight lines in $\mathbb{RP}^2$ realizing the Pappus configuration and slightly perturb three lines in the neighbourhood of a triple point.

In the spirit of Arnold's \emph{topologocial economy principle}\footnote{This principle reads as follows: ``If you have a geometrical or topological phenomenon, which you can realize by algebraic objects, then \it{the simplest algebraic realizations are topologically as simple
as possible}".} \cite{arnold}, 
we would like to state the following provocative (and quite possibly over-optimistic) conjecture.


\begin{conjecture}\label{conj:provocation}
Let $\mathcal L\subset\mathbb{RP}^2$ be an essential pseudoline arrangement and $M$ be its matroid. Then the Hirzebruch quadratic form $Q$ of $M$ is non-positive on the semistable cone of $M$, and moreover, whenever $Q$ vanishes in its interior, $M$ is realizable over $\R$ and the pseudoline arrangement \(\mathcal{L}\) is \emph{stretchable}.
\end{conjecture}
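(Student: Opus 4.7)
The proof strategy would parallel the proof of Theorem \ref{thm:maingeneral}, but substitute symplectic topology for complex analysis throughout. First I would build a topological resolution of \(\mathcal L\): given the pseudoline arrangement with matroid \(M\), construct a smooth closed oriented 4-manifold \(X\) homeomorphic to an iterated blow-up of \(\mathbb{CP}^2\), together with a configuration of smoothly (and symplectically) embedded 2-spheres \(\{D_L\}_{L\in\mLi}\) indexed by the irreducible flats of \(M\), whose intersection pattern reproduces the nested-set combinatorics of \(\mLi\) from Section \ref{sec:blowup}. The matroid data alone determines the required local intersection numbers and the analogues of the cohomology identities of Section \ref{sec:intnum}; the key input here would be an extension of the symplectic realizability results for line configurations (along the lines of Ruberman--Starkston) to the full nested blow-up picture.

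Next I would fix a weight vector \(\ba\) in the interior of the matroid polytope (scaled so that \(\sum_H a_H = n+1 = 3\), so that the rank-two CY condition holds), and equip \(X\) with a symplectic orbifold structure with cone angles \(2\pi(1-a_L)\) along each \(D_L\). The parabolic first Chern class of the ``symplectic tangent bundle'' can still be computed purely from the intersection poset, exactly as in Section \ref{sec:parc1}, so one still has \(\parc_1 = 0\). The second parabolic Chern character \(\parch_2\) is likewise a combinatorial quantity, and pairing it against a generic hyperplane class as in Lemma \ref{lem:hoc} returns, up to a positive factor, the Hirzebruch quadratic form \(Q(\ba)\).

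The third step would be to apply a symplectic Bogomolov--Miyaoka--Yau inequality for 4-orbifolds in the form \(c_1(P)^{n-2}\cdot\parch_2(\mE_*)\le 0\), playing the role of Mochizuki's Theorem \ref{thm:boggies}. If available, this would immediately give \(Q(\ba)\le 0\) on the relative interior of the matroid polytope, and by homogeneity and continuity on the whole semistable cone. For the equality case, I would invoke rigidity of BMY-saturating symplectic 4-manifolds: saturation of BMY forces the orbifold to be uniformized by the complex hyperbolic ball (a Kähler--Einstein ball quotient), endowing \(X\) with a genuine complex structure. The \(D_L\)'s become honest complex curves in \(\mathbb{CP}^2\), giving a complex realization of \(M\). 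Because the pseudolines already carry a real structure, the realization descends to \(\R\), and the interpolation between the real pseudoline configuration and the real line arrangement produces the sought stretchability.

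The main obstacle, and essentially the entire mathematical content of the conjecture, is the third step: a BMY-type inequality for symplectic 4-orbifolds with arbitrary rational (or real) cone angles is not in the literature, and even the smooth closed case of symplectic BMY is open beyond specific families. A plausible route would be a Seiberg--Witten-theoretic argument in the spirit of LeBrun, extended to the conical setting via the framework of singular Seiberg--Witten equations; alternatively, one could try to prove existence of a Kähler--Einstein orbifold metric directly and deduce BMY from the classical Chern--Weil argument, but this would presuppose exactly the kind of complex structure whose existence the conjecture claims only in the equality case. Overcoming this circularity is what makes the conjecture genuinely provocative rather than a routine extension of Theorem \ref{thm:maingeneral}.
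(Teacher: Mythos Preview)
The statement you are attempting to prove is labeled a \emph{conjecture} in the paper, and the paper offers no proof. The authors explicitly write that they ``don't have much evidence for Conjecture~\ref{conj:provocation} except that it holds for usual line arrangements and for the non-Pappus arrangement,'' and they frame it as a logarithmic analogue of the (open) symplectic BMY conjecture. So there is no paper proof to compare against; your proposal is a strategy for an open problem, and you correctly identify that the crucial third step---a symplectic BMY inequality for 4-orbifolds with cone angles---is not available.

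Your outline is in fact precisely the heuristic the paper gestures at: extend the pseudoline arrangement to a symplectic sphere arrangement in \(\mathbb{CP}^2\) (via Ruberman--Starkston), blow up, and hope for a symplectic Bogomolov--Gieseker/BMY input. Two points in your equality-case discussion deserve correction. First, in the CY regime (\(\sum a_H = 3\), \(\parc_1 = 0\)) the equality case of BMY corresponds to a \emph{flat} K\"ahler structure (the PK metrics of Section~\ref{sec:pk}), not to a complex hyperbolic ball quotient; the ball quotient picture belongs to the general-type case. Second, the step ``because the pseudolines already carry a real structure, the realization descends to \(\R\)'' is a genuine gap: obtaining a complex structure on the blow-up from rigidity does not by itself produce a \emph{real} linear realization of the matroid, nor an isotopy of the original pseudolines to straight lines. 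One would need an additional argument (e.g.\ invoking \cite{dima} once one knows the arrangement is Hirzebruch, or a direct rigidity statement for the real locus), and this is not a formality.
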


Recall that there is a one-to-one correspondence between (equivalence classes of) pseudoline arrangements and (reorientation classes of) simple rank 3 oriented
matroids \cite[Section 6]{ziegler}.
There exists a complete classification of pseudoline arrangements with up to $11$ pseudolines. Moreover, in case of simplicial pseudoline arrangements (that partition $\mathbb {RP}^2$ into triangles), the classification exists for up to $27$ pseudolines \cite{cuntz}.
This is a promising class of arrangements for finding counter-examples to Conjecture \ref{conj:provocation}, as simplicial arrangements seem to be \emph{extremal} for the Hirzebruch quadratic form\footnote{It follows from \cite{panov, dima} that any essential real arrangement for which the Hirzebruch quadratic form vanishes in the stable cone is simplicial.}.

We don't have much evidence for Conjecture \ref{conj:provocation} except that it holds for usual line arrangements and for the non-Pappus arrangement. However, we have a motivation coming from symplectic geometry. Recall, that to every compact $4$-dimensional manifold $(X,\omega)$ one can associate two Chern numbers $c_1^2(X)$ and $c_2(X)=e(X)$. Using Seiberg-Witten theory, one can also define the Kodaira dimension of $X$, which agrees with the standard definition in the case of K\"ahler surfaces. There exists a {\it folklore conjecture} (that we learned from Ivan Smith in 2010), that symplectic $4$-manifolds of general type satisfy Bogomolov-Myaoka-Yau inequality as well. In particular, no one has been able so far to construct a symplectic $4$-manifold of general type with $c_1(X)^2>3c_2(X)$.

A counterexample to Conjecture \ref{conj:provocation} would be interesting, as it could be interpreted as a failure of a logarithmic version of the symplectic BMY inequality.
Indeed, any pseudoline arrangement $\mathcal L\subset \mathbb{RP}^2$ can be extended to an arrangement of symplectic spheres in $\mathbb{CP}^2$ disjoint in $\mathbb{CP}^2\setminus \mathbb{RP}^2$. Such extensions are constructed in \cite{rubermanstarkston}[Theorem 1.4] 
(we learned this fact from Stepan Orevkov  in 2011). 

Going further, following Hirzebruch, one can then consider a (symplectic) blow up of $\mathbb{CP}^2$ in points of $\mathcal L$ of multiplicity $\ge 3$ and then take an appropriate ramified cover in a hope of being lucky enough to get a counter-example to symplectic BMY\footnote{This is also a suggestion hinted upon in \cite{rubermanstarkston}.}. In his seminal paper \cite{hirzebruch3}, using Kummer covers of $\mathbb {CP}^2$ Hirzebruch was able to construct three algebraic surfaces with $c_1^2=3c_2$, each associated to one complex reflection group.

\appendix

\newpage

\section{Auxiliary Results}\label{app:auxres}

Section \ref{ap:filt} provides self contained proofs on basic linear algebra related to filtrations of vector spaces, adapted basis, and nested sets.
These results are used in the proof of the locally abelian property of our parabolic bundle, Theorem \ref{thm:locab}. \newpar

\noindent Sections \ref{sec:extalg}\,, \ref{sec:satsubsheaf}, and \ref{sec:distributions} contain results on exterior algebra, saturated subsheaves, and distributions on \(\CP^n\) that are used in the proof of the stability Theorem \ref{thm:stability}.

\subsection{Filtrations, adapted basis, and nested sets}\label{ap:filt}

The key results proved in this section are: 
\begin{itemize}
	\item Lemma \ref{lem:comp}\,, which characterizes splittings of tuples of filtrations in terms of adapted bases;
	
	\item Corollary \ref{cor:adapbasis}\,, which shows that nested sets have adapted bases.
\end{itemize}

\subsubsection{Filtrations of vector spaces}

Let \(V\) be a finite dimensional vector space.

\begin{definition}
	 A filtration \(\mF = \{F_a \, | \, a \in \R\}\) of \(V\) is a family of vector subspaces  \(F_a \subset V\) parametrized by \(a \in \R\) satisfying the following conditions.
	\begin{enumerate}[label=\textup{(\roman*)}]
		\item Increasing: \(F_a \subset F_{a'}\) if \(a < a'\).
		\item Semi-continuity: for every \(a\) there is \(\epsilon >0\) such that \(F_{a+\epsilon} = F_a\).
		\item Normalization: \(F_a = \{0\}\) for \(a < 0\) and \(F_{a} = V\) for \(a \geq 1\).
	\end{enumerate} 
\end{definition}

The increasing and semi-continuity properties are equivalent to 
\[
	\forall a: \,\, F_a = \bigcap_{a'>a} F_{a'} \,.
\]
Given a filtration \(\mF = \{F_a\}\) of \(V\), we write
\[
F_{< a} = \bigcup_{a'<a} F_{a'} \,.
\]
The increasing property implies that \(F_{<a}\) is a vector subspace of \(F_a\). 

\begin{definition}
The graded components of \(\mF\) are the quotient vector spaces
\[
\Gr_a = F_a \, \big/ \, F_{< a} \,.
\]	
\end{definition}

\begin{definition}
We say that \(a \in \R\) is a \emph{weight} of \(\mF\)
if \(\Gr_a \neq 0\). The set of all weights of \(\mF\) is 
denoted by
\[
\wt(\mF) = \{a \, | \, \Gr_a \neq 0\} \,.
\]
\end{definition}

The normalization condition, together with the fact that \(V\) is finite dimensional,
imply that \(\wt(\mF)\) is a finite subset of \([0,1]\).
If we write
\[
\wt(\mF) = \{a^{\mF}_0, \ldots, a^{\mF}_p\} \,,
\]
then \(\mF\) determines a strictly increasing flag of vector subspaces
\begin{equation*}
0 \subsetneq F_{a^{\mF}_0} \subsetneq F_{a^{\mF}_1} \subsetneq \ldots \subsetneq F_{a^{\mF}_p} = V \,.	
\end{equation*}
Conversely, \(\mF\) is determined by this flag and the weights \(\wt(\mF)\) via
\begin{equation*}
F_a = \begin{cases}
\{0\} \,&\textup{ if }\, a < a^{\mF}_0 \,,\\
F_{a^{\mF}_i} \,&\textup{ if }\,  a^{\mF}_{i} \leq a < a^{\mF}_{i+1} \,, \\
V \,&\textup{ if }\, a \geq a^{\mF}_p \,. 
\end{cases}	
\end{equation*}

\subsubsection{Tuples of filtrations and splittings}\label{sec:tuplesfilt}

Suppose that \(\mF = (\mF^1, \ldots, \mF^k)\) is a tuple of \(k\) distinct filtrations \(\mF^i = \{F^i_{a}\}\) of \(V\).

\begin{notation}\label{not:Fa}
For \(\ba = (a_1, \ldots, a_k) \in \R^k\)\,, let \(F_{\ba}\) be the vector subspace
\begin{equation}\label{eq:fba}
	F_{\ba} = \bigcap_{i=1}^k F^i_{a_i} \,.
\end{equation}	

Given \(\ba = (a_1, \ldots, a_k)\) and \(\ba'=(a_1', \ldots, a_k')\) in \(\R^k\)\,, we say that \(\ba' \leq \ba\) if \(a'_i \leq a_i\) for all \(1 \leq i \leq k\).
This equips \(\R^k\) with a partial order such that \(F_{\ba'} \subset F_{\ba}\) if \(\ba' \leq \ba\).
We write \(\ba' \lneq \ba\) if \(\ba' \leq \ba\) and \(\ba' \neq \ba\). Note that
\begin{equation}\label{eq:sumint}
	\sum_{\ba' \lneq \ba} F_{\ba'} = F^1_{< a_1} \cap F^2_{a_2}\ldots \cap F^k_{a_k} + \ldots + F^1_{a_1} \cap F^2_{a_2}\ldots \cap F^k_{<a_k} 
\end{equation}
is a linear subspace of \(F_{\ba}\).	
\end{notation}

\begin{definition}\label{def:weight}
	We say that \(\ba \in \R^k\) is a weight of \(\mF\)\,, if
	\[
	\sum_{\ba' \lneq \ba} F_{\ba'} \, \subsetneq \, F_{\ba} \,.
	\]
	The set of all weights of \(\mF\) is denoted by \(\wt(\mF)\).
\end{definition}

\begin{lemma}
	The set \(\wt(\mF)\) is contained in \(\wt(\mF^1) \times \ldots \times \wt(\mF^k)\). In particular, \(\wt(\mF)\) is a finite subset of \([0,1]^k\).
\end{lemma}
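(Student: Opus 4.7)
The plan is to prove the containment $\wt(\mF) \subset \wt(\mF^1) \times \ldots \times \wt(\mF^k)$ by contrapositive: if some coordinate $a_i$ of $\ba = (a_1, \ldots, a_k)$ fails to lie in $\wt(\mF^i)$, I will exhibit a point $\ba' \lneq \ba$ with $F_{\ba'} = F_{\ba}$, which forces $\sum_{\ba'' \lneq \ba} F_{\ba''} = F_{\ba}$ and hence $\ba \notin \wt(\mF)$.

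First, assume $a_i \notin \wt(\mF^i)$, so $\Gr^i_{a_i} = 0$, i.e. $F^i_{<a_i} = F^i_{a_i}$. Because the filtration $\mF^i$ has finitely many weights and $F^i_{<a_i}$ is by definition $\bigcup_{a' < a_i} F^i_{a'}$\,, there is some $\delta > 0$ such that $F^i_{a_i - \delta} = F^i_{<a_i} = F^i_{a_i}$.

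Next, define $\ba' = (a_1, \ldots, a_{i-1}, a_i - \delta, a_{i+1}, \ldots, a_k)$. Then $\ba' \lneq \ba$ and, using the definition \eqref{eq:fba} together with the identity $F^i_{a_i - \delta} = F^i_{a_i}$, the intersection $F_{\ba'}$ equals $F_{\ba}$. Therefore $F_{\ba} \subset \sum_{\ba'' \lneq \ba} F_{\ba''}$, and the reverse inclusion is automatic, so $\sum_{\ba'' \lneq \ba} F_{\ba''} = F_{\ba}$. By Definition \ref{def:weight}, $\ba \notin \wt(\mF)$, completing the contrapositive.

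For the second assertion, each $\wt(\mF^i)$ is a finite subset of $[0,1]$ (by the normalization and semi-continuity axioms together with finite-dimensionality of $V$), so the product $\wt(\mF^1) \times \ldots \times \wt(\mF^k)$ is a finite subset of $[0,1]^k$, and the first part gives the desired conclusion. There is no real obstacle here: the whole argument reduces to the elementary observation that moving one coordinate across a non-jump of the corresponding filtration does not change the joint intersection.
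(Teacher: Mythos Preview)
Your proof is correct and follows essentially the same approach as the paper. The only cosmetic difference is that the paper appeals directly to the summand $F^1_{a_1}\cap\ldots\cap F^i_{<a_i}\cap\ldots\cap F^k_{a_k}$ appearing in Equation~\eqref{eq:sumint}, whereas you pick an explicit $\delta>0$ to produce a concrete $\ba'\lneq\ba$; both arguments amount to the same observation.
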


\begin{proof}
	If \(a_i \notin \wt(\mF^i)\) then \(F^i_{< a_i} = F^i_{a_i}\)\,, so
	\[
	F^1_{a_1} \cap \ldots \cap F^i_{<a_i} \cap \ldots \cap F^k_{a_k} = F_{\ba} \,.
	\] 
	It follows from this, together with Equation \eqref{eq:sumint}, that if \(\ba \notin \wt(\mF^1) \times \ldots \times \wt(\mF^k)\)\,, then \(\sum_{\ba' \lneq \ba} F_{\ba'} = F_{\ba}\)\,; hence \(\ba \notin \wt(\mF)\).
\end{proof}

The main notion we want to introduce is that of a splitting of a tuple of filtrations. Before doing so, we recall the following standard definition.

\begin{definition}
	Let \(U_i\) be linear subspaces of \(V\) indexed by \(i \in I\) and let \(U = \sum_i U_i\) be their sum, i.e., \(U\) is the smallest linear subspace of \(V\) (with respect to the partial order given by inclusion) that contains all \(U_i\). We say that the sum \(U = \sum_i U_i\) is direct and write \(U = \oplus_i U_i\) if any of the following equivalent conditions holds.
	\begin{enumerate}[label=\textup{(\roman*)}]
		\item \(U_i \cap \sum_{j \neq i} U_j = \{0\}\) for all \(i \in I\).
		\item For every \(u \in U\) there are \emph{unique} vectors \(u_i \in U_i\) such that \(u = \sum_i u_i\).
		\item \(\dim U = \sum_i \dim U_i\)\,.
		\item There is a basis \(B\) of \(U\) such that \(B_i = B \cap U_i\) is a basis of \(U_i\)\,, the sets \(B_i\) are pairwise \emph{disjoint}, and \(B = \cup_i B_i\)\,.
	\end{enumerate}
\end{definition}

The main definition of this section is the next.

\begin{definition}\label{def:split}\label{def:compfilt}
	A splitting of \(\mF\) is a family of linear subspaces \(U_{\ba} \subset V\) indexed by \(\ba = (a_1, \ldots, a_k) \in \R^k\) satisfying the following properties.
	\begin{enumerate}[label=\textup{(\roman*)}]
		\item The subspaces \(U_{\ba}\) are zero except for a finite number of \(\ba \in \R^k\), and they form a direct sum decomposition
		\begin{equation}\label{eq:split1}
		V = \bigoplus_{\ba \in \R^k} U_{\ba} \,.
		\end{equation}
		\item For any \(\ba \in \R^k\), we have
		\begin{equation}\label{eq:split2}
		F_{\ba} = \bigoplus_{\ba' \leq \ba} U_{\ba'} \,.
		\end{equation}
	\end{enumerate}

	The filtrations \(\mF^1, \ldots, \mF^k\) are \emph{compatible} if the tuple \(\mathcal{F} = (\mathcal{F}^1, \ldots, \mathcal{F}^k)\) admits a splitting.
\end{definition}

\begin{remark}
	Item (i) follows from (ii) by taking \(\ba = (1, \ldots, 1)\). For the sake of clarity, we keep item (i) as part of Definition \ref{def:split}\,.	
\end{remark}

\begin{example}
	If \(k=1\) and \(\mF = \{F_a \,|\, a \in \R\}\), then we can construct a splitting of \(\mF\) as follows. Set \(U_a = \{0\}\) if \(a \notin \wt(\mF)\) and, for every \(a \in \wt(\mF)\) choose a linear subspace \(U_a \subset F_a\) such that \(F_{a} = F_{< a} \oplus U_a\). It is then easy to verify that \(\{U_a \,|\, a \in \R\}\) satisfies \eqref{eq:split1} and \eqref{eq:split2}\,. Note that the subspaces \(U_a\) are isomorphic to the graded components of \(\Gr_a = F_a \, / \, F_{<a}\) of \(\mF\).
\end{example}

\begin{lemma}
	Suppose that \(\{U_{\ba} \, | \, \ba \in \R^k\}\) is a  splitting of \(\mF\), then the following holds.
	\begin{enumerate}[label=\textup{(\roman*)}]
		\item \(U_{\ba}\) is non-zero if and only if \(\ba \in \wt(\mF)\).
		\item For any \(\ba \in \R^k\), we have
	\begin{equation}\label{eq:uba}
	U_{\ba} \cong \dfrac{F_{\ba}}{\sum_{\ba' \lneq \ba} F_{\ba'}} \,.
	\end{equation}
		\item If \(\{\widetilde{U}_{\ba}\}\) is another splitting of \(\mF\) then there is a linear isomorphism \(\Phi \in GL(V)\) with \(\Phi(\mF) = \mF\) such that \(\Phi(U_{\ba}) = \widetilde{U}_{\ba}\) for all \(\ba \in \R^k\).
	\end{enumerate}

\end{lemma}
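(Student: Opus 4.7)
The plan is to derive item (ii) first as the core computation and then obtain items (i) and (iii) as immediate consequences.

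For item (ii), I would unfold the defining equation \eqref{eq:split2} inside the subspace \(\sum_{\ba' \lneq \ba} F_{\ba'}\). Each summand \(F_{\ba'}\) with \(\ba' \lneq \ba\) equals \(\bigoplus_{\ba'' \leq \ba'} U_{\ba''}\). Combining over all \(\ba' \lneq \ba\), the resulting subspace is spanned by all \(U_{\ba''}\) such that \(\ba'' \leq \ba'\) for some \(\ba' \lneq \ba\); equivalently, this is the set of \(\ba''\) with \(\ba'' \lneq \ba\) (one direction: \(\ba'' \leq \ba' \lneq \ba\) implies \(\ba'' \lneq \ba\); the other: given \(\ba'' \lneq \ba\), take \(\ba' = \ba''\)). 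Since the \(U_{\ba}\) are in direct sum inside \(V\) by \eqref{eq:split1}, this gives
\[
\sum_{\ba' \lneq \ba} F_{\ba'} = \bigoplus_{\ba'' \lneq \ba} U_{\ba''}.
\]
Applying \eqref{eq:split2} once more to \(F_{\ba}\) itself yields \(F_{\ba} = U_{\ba} \oplus \bigoplus_{\ba'' \lneq \ba} U_{\ba''}\), from which \eqref{eq:uba} follows.

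Item (i) is then immediate from Definition \ref{def:weight}: \(\ba \in \wt(\mF)\) means \(\sum_{\ba' \lneq \ba} F_{\ba'} \subsetneq F_{\ba}\), which by (ii) is equivalent to \(U_{\ba} \neq 0\).

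For item (iii), given a second splitting \(\{\widetilde{U}_{\ba}\}\), item (ii) produces natural isomorphisms \(\varphi_{\ba}: U_{\ba} \xrightarrow{\sim} F_{\ba}/\sum_{\ba' \lneq \ba} F_{\ba'}\) and similarly \(\widetilde{\varphi}_{\ba}\) for \(\widetilde{U}_{\ba}\). Setting \(\phi_{\ba} = \widetilde{\varphi}_{\ba}^{-1} \circ \varphi_{\ba}\) and using the direct sum decompositions \(V = \bigoplus_{\ba} U_{\ba} = \bigoplus_{\ba} \widetilde{U}_{\ba}\), I assemble an isomorphism \(\Phi \in GL(V)\) with \(\Phi(U_{\ba}) = \widetilde{U}_{\ba}\). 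To check \(\Phi(\mF) = \mF\), note that by the normalization axiom \(F^i_a = F_{\bc}\) where \(\bc \in \R^k\) has \(i\)-th coordinate \(a\) and all other coordinates equal to \(1\); then
\[
\Phi(F^i_a) = \Phi\Bigl(\bigoplus_{\bc' \leq \bc} U_{\bc'}\Bigr) = \bigoplus_{\bc' \leq \bc} \widetilde{U}_{\bc'} = F_{\bc} = F^i_a.
\]
There is no substantive obstacle here: the proof is essentially bookkeeping with direct sums, the only subtle point being the identification \(\{\ba'' : \ba'' \leq \ba' \text{ for some } \ba' \lneq \ba\} = \{\ba'' : \ba'' \lneq \ba\}\) used in the computation of (ii).
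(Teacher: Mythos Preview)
Your proof is correct and follows essentially the same approach as the paper: derive (ii) first by showing \(\sum_{\ba' \lneq \ba} F_{\ba'} = \bigoplus_{\ba'' \lneq \ba} U_{\ba''}\), then read off (i) from the definition of \(\wt(\mF)\), and finally build \(\Phi\) blockwise from isomorphisms \(U_{\ba} \to \widetilde{U}_{\ba}\) and verify it preserves each \(F^i_a\) via the splitting. Your write-up is slightly more explicit than the paper's (you spell out the index-set identity in (ii) and the identification \(F^i_a = F_{\bc}\) in (iii)), but the argument is the same.
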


\begin{proof}
	We begin by proving item (ii). For this, we notice that Equation \eqref{eq:split2} implies
	\[
	\sum_{\ba' \lneq \ba} F_{\ba'} = \bigoplus_{\ba' \lneq \ba} U_{\ba'} \,.
	\]
	Therefore,
	\begin{equation}\label{eq:fbaua}
		F_{\ba} = \left(\sum_{\ba' \lneq \ba} F_{\ba'}\right) \oplus U_{\ba}	
	\end{equation}
	and item (ii) follows. By Definition \ref{def:weight}\,, (ii) implies (i). To show (iii), we note that by (ii), for each \(\ba \in \wt(\mF)\), we can choose a linear isomorphism \(\Phi_{\ba}: U_{\ba} \to \widetilde{U}_{\ba}\). Since
	\[
	V = \bigoplus_{\ba \in \wt(\mF)} U_{\ba} = \bigoplus_{\ba \in \wt(\mF)} \widetilde{U}_{\ba} \,,
	\]
	the maps \(\Phi_{\ba}\) define a linear isomorphism \(\Phi \in GL(V)\) with \(\Phi(U_{\ba}) = \widetilde{U}_{\ba}\)\,. If follows from Equation \eqref{eq:split2}\,, that \(\Phi\) preserves each of the subspaces \(F^i_a\) of the filtration \(\mF^i\) for all \(i\), so \(\Phi(\mF) = \mF\).
\end{proof}

\begin{remark}
	For \(\ba \in \wt(\mF)\) the sum \(\sum_{\ba' \lneq \ba} F_{\ba'}\) is a proper linear subspace of \(F_{\ba}\). Therefore, for any \(\mF\) we can find non-zero subspaces \(U_{\ba} \subset F_{\ba}\) for \(\ba \in \wt(\mF)\) such that
	\eqref{eq:fbaua} holds. By construction, \(F_{\ba} = \sum_{\ba' \leq \ba} U_{\ba'}\). The subspaces \(U_{\ba}\) split \(\mF\) if and only if they form a direct sum. However, in general, the sum \(\sum U_{\ba}\) will not be direct, as the next example shows.
\end{remark}

\begin{example}\label{ex:linesc2}
	Let \(L_1, \ldots, L_k\) be \(k\) distinct lines in \(\C^2\) through the origin with \(k \geq 3\).
	Let \(\lambda_1, \ldots, \lambda_k \in (0, 1)\) and consider the filtrations \(\mF^1, \ldots, \mF^k\)  given by
	\[
	F^i_a = \begin{cases}
	\{0\} &\text{ if } a < 0 \,, \\
	L_i &\text{ if } 0 \leq a < \lambda_i \,, \\
	\C^2 &\text{ if } a \geq \lambda_i \,.
	\end{cases}
	\]
	For \(\ba = (a_1, \ldots, a_k) \in \R^k\), the intersection \(F_{\ba}\) is zero if \(a_i < \lambda_i\) and \(a_j < \lambda_j\) for a pair of distinct indices \(i, j\). Hence,
	the weights of \(\mF = (\mF^1, \ldots, \mF^k)\) are
	\[
	\wt(\mF) = \{(0, \lambda_2, \ldots, \lambda_k), \, (\lambda_1, 0, \ldots, \lambda_k), \ldots, \, (\lambda_1, \lambda_2, \ldots, 0)\} \,.
	\]
	If \(\ba \in \wt(\mF)\) then \(F_{\ba} = L_i\), where \(i\) is the unique component of \(\ba\) with \(a_i = 0\), and \(\sum_{\ba' \lneq \ba} F_{\ba'} = 0\)\,. Therefore, if Equation \eqref{eq:fbaua} is satisfied, we must take \(U_{\ba} = L_i\). The subspaces \(U_{\ba}\) do not make a direct sum. Therefore, the filtrations \(\mF^1, \ldots, \mF^k\) are \emph{not} compatible.
\end{example}


\subsubsection{Adapted bases}

Let \(B = \{e_1, \ldots, e_n\}\) be a basis of \(V\) and 
let \(L \subset V\) be a linear subspace.

\begin{definition}\label{def:adaptedbasis}
	We say that \(B\) is \emph{adapted} to \(L\) if \(B \cap L\) is a basis of \(L\). 
	If \(\mS\) is a collection of linear subspaces of \(V\), we say that \(B\) is adapted to \(\mS\) if \(B\) is adapted to \(L\) for every \(L \in \mS\). 
\end{definition}

Let \(\mF = (\mF^1, \ldots, \mF^k)\) be a tuple of filtrations \(\mF^i = \{F^i_{a}\}\) as in Section \ref{sec:tuplesfilt}.
A basis \(B\) of \(V\) is adapted to \(\mF\), if any of the following equivalent conditions holds:
\begin{itemize}
	\item  \(B\) is adapted to \(F^i_a\) for every \(1 \leq i \leq k\) and \(a \in \R\)\,;
	\item  \(B\) is adapted to \(F_{\ba} = \bigcap_{i=1}^k F^i_{a_i}\) for every \(\ba = (a_1, \ldots, a_k) \in \R^k\).
\end{itemize}
The main result that we are after is the next.

\begin{lemma}\label{lem:comp}
	The following two conditions are equivalent.
	\begin{enumerate}[label= \textup{(\roman*)}]
		\item The filtrations \(\mF^1, \ldots, \mF^k\) are compatible. 
		\item There is a basis \(B\) of \(V\) that is adapted to \(\mF = (\mF^1, \ldots, \mF^k)\).
	\end{enumerate}
\end{lemma}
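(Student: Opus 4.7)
The plan is to set up a canonical correspondence between splittings of $\mF$ and adapted bases via the notion of \emph{minimal weight} of a vector.

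For the direction (ii) $\Rightarrow$ (i), I would start from an adapted basis $B = \{e_1, \ldots, e_n\}$ of $V$ and, for each $e \in B$ and each $i \in \{1, \ldots, k\}$, define
\[
a_i(e) = \min\{a \in \R : e \in F^i_a\} \,.
\]
This minimum exists because $\wt(\mF^i)$ is finite and each $F^i_a$ is determined by a flag indexed by weights. Setting $\ba(e) = (a_1(e), \ldots, a_k(e)) \in \R^k$ and $U_{\ba} = \spn\{e \in B : \ba(e) = \ba\}$, one obtains automatically that $V = \bigoplus_{\ba \in \R^k} U_{\ba}$, since $B = \bigsqcup_{\ba} (B \cap U_{\ba})$ is a basis. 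To verify Equation \eqref{eq:split2}, note that since $B$ is adapted, $F_{\ba} = \bigcap_i F^i_{a_i}$ is spanned by $B \cap F_{\ba}$; and by definition of $a_i(e)$, we have $e \in F_{\ba}$ if and only if $a_i(e) \leq a_i$ for all $i$, i.e.\ $\ba(e) \leq \ba$. Thus $B \cap F_{\ba} = \bigsqcup_{\ba' \leq \ba} (B \cap U_{\ba'})$, which gives $F_{\ba} = \bigoplus_{\ba' \leq \ba} U_{\ba'}$.

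For the direction (i) $\Rightarrow$ (ii), given a splitting $\{U_{\ba}\}$, I would choose a basis $B_{\ba}$ of each (non-zero) $U_{\ba}$ and put $B = \bigsqcup_{\ba \in \wt(\mF)} B_{\ba}$. Then $B$ is a basis of $V$ by \eqref{eq:split1}. To show that $B$ is adapted to $\mF$, fix $\ba \in \R^k$. On the one hand, $\bigsqcup_{\ba' \leq \ba} B_{\ba'} \subset F_{\ba}$ is a basis of $F_{\ba}$ by \eqref{eq:split2}. On the other hand, if $e \in B \cap F_{\ba}$, say $e \in B_{\ba''}$, then writing $e$ uniquely as a sum of components in the $U_{\ba'}$ with $\ba' \leq \ba$ forces $\ba'' \leq \ba$. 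Therefore $B \cap F_{\ba} = \bigsqcup_{\ba' \leq \ba} B_{\ba'}$ is a basis of $F_{\ba}$, proving that $B$ is adapted.

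This argument is essentially routine once the notion of minimal weight $\ba(e)$ is introduced, so there is no genuine ``main obstacle''. The only point that requires a small amount of care is verifying that, in direction (ii) $\Rightarrow$ (i), the characterization $e \in F_{\ba} \iff \ba(e) \leq \ba$ is correct for every $\ba \in \R^k$ (not merely for $\ba \in \wt(\mF^1) \times \cdots \times \wt(\mF^k)$); this follows immediately from the semi-continuity and increasing properties of each filtration $\mF^i$.
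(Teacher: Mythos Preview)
Your proof is correct and follows essentially the same approach as the paper: in both directions the key construction is the same (defining $U_{\ba}$ via the minimal weight map $e \mapsto \ba(e)$, which the paper writes as $\Phi(e)$, and conversely taking a basis of each $U_{\ba}$). Your write-up is in fact slightly more detailed than the paper's in verifying that $B \cap F_{\ba} = \bigsqcup_{\ba' \leq \ba} B_{\ba'}$ in the direction (i)$\Rightarrow$(ii).
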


\begin{proof}
	(i)\(\implies\)(ii). Suppose that \(\{U_{\ba} \,|\, \ba \in \R^k\}\)
	is a splitting of \(\mF\). For each \(\ba \in \R^k\) such that \(U_{\ba}\) is non-zero, choose a basis \(B_{\ba}\) of \(U_{\ba}\). Then \(B =\bigcup_{\ba} B_{\ba}\) is adapted to \(\mF\). 
	
	(ii)\(\implies\)(i). Suppose that \(B\) is a basis of \(V\) adapted to \(\mF\). 	
	We want to define subspaces \(U_{\ba} \subset V\) that split \(\mF\).
	To do this, for each \(e \in B\) we consider the subspace \(F_{\ba}\) with smallest \(\ba\) that contains \(e\).
	In detail, note that if both \(F_{\ba}\) and \(F_{\ba'}\) contain \(e\), then \(F_{\min \{\ba, \ba'\}}\) also does, where \(\min \{\ba, \ba'\}\) is the vector with components \(\min \{a_i, a'_i\}\).
	Thus, we have a map \(\Phi: B \to \R^k\) given by
	\[
	\Phi(e) = \min \{ \ba \, | \, e \in F_{\ba} \} \,.
	\] 
	It is easy to see that the image of \(\Phi\) is \(\wt(\mF)\), the weights of \(\mF\). The preimages \(\Phi^{-1}(\ba)\) partition \(B\) into disjoint sets. We let \(U_{\ba}\) be the span of the vectors in \(\Phi^{-1}(\ba)\)\,,
	\[
	U_{\ba} = \spn \{ e \in B  \, | \, \Phi(e) = \ba \} \,.
	\]
	We want to show that \eqref{eq:split2} holds. Clearly, if
	\(\Phi(e) = \ba'\) and \(\ba' \leq \ba\), then \(e \in F_{\ba}\). Conversely, if \(e \in B \cap F_{\ba}\) and \(\Phi(e) = \ba'\), then \(\ba' \leq \ba\). Since \(B\) is adapted to \(\mF\), 
	\[
	\begin{aligned}
	F_{\ba} &= \spn \{ e \, | \, e \in B \cap F_{\ba} \} \\
	&= \spn \{ e \, | \, \Phi(e) \leq \ba \} = \bigoplus_{\ba' \leq \ba} U_{\ba'} \,.
	\end{aligned}
	\]
	This shows that \(\{U_{\ba} \, |\, \ba \in \R^k\}\) splits \(\mF\).
\end{proof}

\begin{example}
	If \(k=2\) then any two filtrations \(\mF^1\) and \(\mF^2\) on \(V\) are compatible,  see \cite[Lemma 3.5]{xu}. However, if \(k \geq 3\) then generic \(k\)-tuples of filtrations \(\mF^1, \ldots, \mF^k\) are not compatible. For example if \(k=3\), \(n=2\) and \(\mF^i\) define \(3\) distinct lines in \(\C^2\) then it is clear that there is no basis of \(\C^2\) adapted to \(\mF^1, \mF^2, \mF^3\); c.f. Example \ref{ex:linesc2}.
\end{example}

\subsubsection{Nested sets of linear subspaces}\label{sec:nestedlinear}

Let \(V\) be a finite dimensional vector space of dimension \(n\) and let \(V^*\) be its dual. We use the partial order on linear subspaces of \(V^*\) given by inclusion, i.e., \(U_1 \leq U_2\) if  \(U_1 \subset U_2\). 


\begin{definition}\label{def:nesteddual}
	A \(*\)-\emph{nested} set  is a finite set \(\mN\) of non-zero linear subspaces \(N \subset V^*\) such that, for any collection of pairwise \emph{non comparable} elements \(N_1, \ldots, N_k \in \mN\) (i.e. \(N_i \not\subset N_j\) for \(i \neq j\)), the following holds:
	\begin{enumerate}[label=\textup{(\roman*)}]
		\item their sum \(S = \sum_i N_i\) is direct, i.e., \(S = \oplus_i N_i\)\,;
		\item \(S \notin \mN\).
	\end{enumerate}
	To emphasize the ambient space, we say that \(\mN\) is a \(*\)-nested set in \(V^*\).
\end{definition}

\begin{remark}
	One can also omit condition (ii) to get a sensible definition, but we will  need it for our purposes later on, see also Remark \ref{rmk:nouse}\,.
\end{remark}

The next lemma follows immediately from Definition \ref{def:nesteddual}\,, we omit its proof.

\begin{lemma}\label{lem:subnested}
	If \(\mN\) is a \(*\)-nested set and \(\mN_0\) is a subset of \(\mN\), then \(\mN_0\) is also a \(*\)-nested set. 
	Moreover, if all elements of \(\mN_0\) are contained in a subspace \(W \subset V^*\), then \(\mN_0\) is a \(*\)-nested set in \(W\).
\end{lemma}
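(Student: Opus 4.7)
The plan is to verify the two conditions in Definition \ref{def:nesteddual} directly, observing that both conditions are \emph{hereditary} under passing to subsets. Concretely, suppose $\mN_0 \subset \mN$ and let $N_1, \ldots, N_k \in \mN_0$ be pairwise non-comparable. Since $\mN_0 \subset \mN$, these elements also belong to $\mN$ and are pairwise non-comparable there, so by the hypothesis on $\mN$ their sum $S = \sum_i N_i$ is direct and satisfies $S \notin \mN$. The first condition (directness of the sum) carries over to $\mN_0$ verbatim. For the second condition, since $\mN_0 \subset \mN$ and $S \notin \mN$, a fortiori $S \notin \mN_0$. Finiteness of $\mN_0$ and the non-zero condition on its elements are inherited from $\mN$. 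This establishes that $\mN_0$ is $*$-nested in $V^*$.

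For the second assertion, assume additionally that every $N \in \mN_0$ is contained in a fixed subspace $W \subset V^*$. The key observation is that Definition \ref{def:nesteddual} depends only on the internal linear-algebraic relations among the elements of the nested set, not on the ambient vector space: directness of the sum $S = \bigoplus_i N_i$ is an intrinsic property of the family $\{N_i\}$, and the condition $S \notin \mN_0$ makes no reference to the ambient space. Therefore the verification from the previous paragraph, interpreted with $W$ in place of $V^*$, shows that $\mN_0$ is a $*$-nested set in $W$. I do not anticipate any obstacle here; the lemma is essentially a formal consequence of the definition, recorded for later reference.
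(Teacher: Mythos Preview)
Your proof is correct and matches the paper's approach: the paper simply states that the lemma follows immediately from Definition~\ref{def:nesteddual} and omits the proof, which is precisely the direct verification you carry out.
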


Recall from Definition \ref{def:adaptedbasis} that, if \(\mN\) is a collection of linear subspaces of  \(V^*\), a basis of \(V^*\) is adapted to \(\mN\), if every  element of \(\mN\) is spanned by the basis vectors contained in it.

\begin{lemma}\label{lem:adapbasistar}
	Every \(*\)-nested set has an adapted basis.
\end{lemma}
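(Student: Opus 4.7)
The plan is to proceed by strong induction on $|\mN|$. The base case $|\mN|=0$ is trivial since any basis of $V^*$ is vacuously adapted. For the inductive step with $|\mN|\ge 1$, the strategy is to peel off one layer of inclusion at a time, working from the top down using the maximal elements of $\mN$.

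Suppose $|\mN|\ge 1$ and the result holds for every $*$-nested set of strictly smaller cardinality. Let $M_1,\dots,M_r$ be the maximal elements of $\mN$ with respect to inclusion. They are pairwise non-comparable, so Definition \ref{def:nesteddual}(i) guarantees that
\[
S := M_1 + \cdots + M_r = M_1 \oplus \cdots \oplus M_r.
\]
For each $i$, set
\[
\mN_i := \{N \in \mN : N \subsetneq M_i\}.
\]
By Lemma \ref{lem:subnested}, each $\mN_i$ is a $*$-nested set in $M_i$. The strict inclusion in the definition of $\mN_i$ gives $M_i \in \mN\setminus\mN_i$, hence $|\mN_i|<|\mN|$, so the inductive hypothesis supplies a basis $B_i$ of $M_i$ adapted to $\mN_i$. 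Because $B_i$ is already a basis of $M_i$, it is automatically adapted to $\{M_i\}\cup\mN_i = \{N\in\mN : N\subset M_i\}$.

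I then form $B := B_1 \sqcup \cdots \sqcup B_r \sqcup B_0$, where $B_0$ is a basis of any chosen complement of $S$ in $V^*$ (empty if $S=V^*$). Directness of $S$ makes $B$ a basis of $V^*$. To verify adaptedness, fix $N \in \mN$. Since $N$ is non-zero and the $M_i$'s form a direct sum, $N$ is contained in a unique $M_j$. Any vector of $B_0$ lies outside $S\supset N$; any vector of $B_i$ with $i\neq j$ lies in $M_i$, and $M_i\cap M_j=\{0\}$, so it is not in $N$. Consequently $B\cap N = B_j\cap N$, which is a basis of $N$: trivially when $N=M_j$, or, when $N\subsetneq M_j$, because $N\in\mN_j$ and $B_j$ is adapted to $\mN_j$ by induction.

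The only point to watch is that the induction must reduce $|\mN|$ even in the awkward case of a single maximal element ($r=1$); this is ensured by the strict inclusion defining $\mN_i$, which always excludes $M_i$ itself. It is worth noting that Definition \ref{def:nesteddual}(ii) is never invoked: only condition (i) is needed, both to control the global direct sum $S$ and, via Lemma \ref{lem:subnested}, to inherit the $*$-nested structure on each $\mN_i$.
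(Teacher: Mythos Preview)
Your proof is correct and follows essentially the same approach as the paper: decompose via the maximal elements of $\mN$, use condition (i) to get a direct sum, and recurse on each piece. The only difference is that you induct on $|\mN|$ rather than on $\dim V^*$; this lets you avoid the paper's preliminary step of removing $V^*$ from $\mN$, since your strict inclusion in the definition of $\mN_i$ already guarantees $|\mN_i|<|\mN|$.
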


\begin{proof}
	We proceed by induction on the dimension \(n\) of the ambient space \(V^*\). The statement is trivially true if \(n=1\).
	Let \(\mN\) be a \(*\)-nested set in \(V^*\) with
	\(n = \dim V^*\) and assume that the statement holds true for dimensions \(< n\).
	We want to show that there is a basis \(B^*\) of \(V^*\) adapted to \(\mN\). 
	
	We can assume that \(V^* \notin \mN\); otherwise take \(\mN_0 = \mN \setminus \{V^*\}\) and note that if a basis of \(V^*\) is adapted to \(\mN_0\) then it is also adapted to \(\mN\).
	Let \(N_1, \ldots, N_k\) be the maximal elements of \(\mN\) with respect to the partial order given by inclusion.
	Since \(V^* \notin \mN\), we have \(\dim N_i < n\) for all \(i\).
	The sets \(N_1, \ldots, N_k\) are pairwise non comparable, thus they form a direct sum 
	\[
	\bigoplus_{i=1}^k N_i \subset V^* \,.
	\]
	For \(1 \leq i \leq k\), let 
	\[
	\mN_i = \{N \in \mN \,|\, N \subset N_i\} \,.
	\]
	By construction, every \(N \in \mN\) is contained in a subspace \(N_i\) for some \(i\). On the other hand, if a subspace \(N\) is contained in both \(N_i\) and \(N_j\)  with \(i \neq j\), then \(N = \{0\}\). But, by Definition \ref{def:nesteddual}, \(\{0\} \notin \mN\).
	Thus, we have a disjoint union
	\[
	\mN = \bigcup_{i=1}^k \mN_i \,.
	\]
	By Lemma \ref{lem:subnested}\,,
	each \(\mN_i\) is a \(*\)-nested set in \(N_i\). 
	By induction hypothesis, since \(\dim N_i < n\),  we can chose a basis \(B^*_i\) of \(N_i\) adapted to \(\mN_i\).
	Their union \(\cup_i B^*_i\), extended to a basis of \(V^*\) if necessary, is a basis \(B^*\) of \(V^*\) adapted to \(\mN\).
\end{proof}

Next, we recall the notion of transversal intersection of linear subspaces.

\begin{definition}\label{def:transint}
	Let \(L_1, \ldots, L_k\) be linear subspaces of \(V\). We say that \(L_1, \ldots, L_k\) intersect transversely, or that the 
	their common intersection
	\[
	M = \bigcap_{i=1}^k L_i
	\]
	is transversal, if 
	\begin{equation}\label{eq:transversal}
	\codim M = \sum_{i=1}^k \codim L_i \,.		
	\end{equation}
\end{definition}

\begin{example}
	The most familiar case is when \(k=2\). In this case, the subspaces \(L_1\) and \(L_2\) are transversal if and only if \(L_1+L_2 = V\). This follows from the identity \(\dim(L_1 + L_2) = \dim L_1 + \dim L_2 - \dim L_1 \cap L_2\) together with Equation \eqref{eq:transversal}.
\end{example}

\begin{notation}\label{not:ann2}
	Recall that if \(L \subset V\) is a linear subspace, then its annihilator \(L^{\perp}\) is the linear subspace of \(V^*\) made of all linear functions on \(V\) that vanish on \(L\). The map \(L \mapsto L^{\perp}\) defines an inclusion reversing correspondence between linear subspaces of \(V\) and \(V^*\), with \(\dim L^{\perp} = \codim L\), and \((L^{\perp})^{\perp} = L\) under the natural identification \(V^{**} = V\).
\end{notation}

\begin{lemma}\label{lem:transvdirsum}
	Let \(L_1, \ldots, L_k\) be linear subspaces of \(V\). Then their common intersection \(M = \cap_i L_i\) is transversal if and only if \(M^{\perp} = \oplus_i L_i^{\perp}\).
\end{lemma}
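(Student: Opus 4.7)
The plan is to combine two standard identities: the annihilator of an intersection equals the sum of annihilators, and the dimension of a direct sum equals the sum of dimensions. Everything else is a one-line dimension count.

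First, I would recall the standard identity
\[
\left( \bigcap_{i=1}^k L_i \right)^{\perp} = \sum_{i=1}^{k} L_i^{\perp} \,,
\]
which holds for any finite collection of linear subspaces of $V$. Applied to $M = \cap_i L_i$ this gives $M^{\perp} = \sum_i L_i^{\perp}$ unconditionally, so the only question is whether this sum is direct.

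Next, I would observe that, for any collection of linear subspaces $W_1, \ldots, W_k \subset V^*$, the sum $\sum_i W_i$ is direct if and only if
\[
\dim \left( \sum_{i=1}^k W_i \right) = \sum_{i=1}^k \dim W_i \,.
\]
Applying this to $W_i = L_i^{\perp}$, and using $\dim L_i^{\perp} = \codim L_i$ together with $\dim M^{\perp} = \codim M$, we see that $M^{\perp} = \oplus_i L_i^{\perp}$ is equivalent to
\[
\codim M = \sum_{i=1}^k \codim L_i \,,
\]
which is precisely the transversality condition from Definition \ref{def:transint}. This gives both implications simultaneously, completing the proof.

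There is no real obstacle here; the only thing to be slightly careful about is citing or justifying the annihilator-of-intersection identity, which follows from the fact that taking annihilators is an inclusion-reversing bijection between subspaces of $V$ and $V^*$ that sends intersections to sums (equivalently, apply the identity $(W_1 + W_2)^{\perp} = W_1^{\perp} \cap W_2^{\perp}$ to the subspaces $L_i^{\perp}$ of $V^*$ and use $(L^{\perp})^{\perp} = L$).
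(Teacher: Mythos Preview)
Your proof is correct and follows essentially the same approach as the paper: both invoke the identity \(\left(\bigcap_i L_i\right)^{\perp} = \sum_i L_i^{\perp}\), then use that a sum of subspaces is direct if and only if dimensions add, reducing the statement to the codimension equation in Definition~\ref{def:transint}. Your write-up is slightly more detailed in justifying the annihilator identity, but the argument is the same.
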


\begin{proof}
	This follows from the identity
	\[
	\left( \bigcap_{i=1}^k L_i \right)^{\perp} = \sum_{i=1}^{k} L_i^{\perp}
	\]
	together with Equation \eqref{eq:transversal} and the fact that a sum of linear subspaces \(U = \sum_i U_i\) is direct if and only if \(\dim U = \sum_i \dim U_i\).
\end{proof}

\begin{remark}
Assume  \(L_1, \ldots, L_k\) intersect transversely. Then by  Lemma \ref{lem:transvdirsum} we can choose a basis in \(M^{\perp}\) as a union of bases in \(L_i^{\perp}\). It follows that  \(L_1, \ldots, L_k\) intersect transversely if and only if there are linear coordinates \(x_1, \ldots, x_n\) on \(V\) and pairwise disjoint subsets \(I_i \subset [n]\) for \(1 \leq i \leq k\) such that
\[
L_i = \{x_j = 0 \,|\, j \in I_i\} \,.
\]
In particular, 	if \(L_1, \ldots, L_k\) intersect transversely, then any subset of them also does.
\end{remark}

Having recalled the necessary background on transversal intersections,
we consider the corresponding dual notion to \(*\)-nested sets.

\begin{definition}\label{def:nestedV}
	A \emph{nested} set in \(V\) is a finite set \(\mS\) of proper linear subspaces \(L \subsetneq V\) such that, for any collection \(L_1, \ldots, L_k \in \mS\) of pairwise non comparable elements, the following holds: 
	\begin{enumerate}[label=\textup{(\roman*)}]
		\item their intersection \(M = \cap_i L_i\) is transversal;
		\item \(M \notin \mS\).
	\end{enumerate}
\end{definition}

Next, we relate Definitions \ref{def:nesteddual} and \ref{def:nestedV}\,.
If \(\mS\) is a collection of linear subspaces \(L \subset V\), then \(\mS^{\perp}\) denotes the corresponding collection of subspaces of \(V^*\) which are annihilators of elements in \(\mS\),
\[
\mS^{\perp} = \{L^{\perp} \,|\, L \in \mS\} \,.
\] 
With this notation,
the correspondence between
nested and \(*\)-nested sets can be stated as follows.

\begin{lemma}\label{lem:nestedstarnested}
	\(\mS\) is a nested set in \(V\) if and only if
	\(\mS^{\perp}\) is a \(*\)-nested set in \(V^*\).
\end{lemma}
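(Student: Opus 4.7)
The proof plan is to exploit the inclusion-reversing bijection $L \mapsto L^{\perp}$ between the linear subspaces of $V$ and those of $V^*$, translating each clause of Definition \ref{def:nestedV} into the corresponding clause of Definition \ref{def:nesteddual} (and vice versa).

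First I would note the basic correspondences. Since $L \mapsto L^{\perp}$ is an inclusion-reversing bijection, $\mS$ is finite if and only if $\mS^{\perp}$ is finite, and $L \subsetneq V$ if and only if $L^{\perp} \neq \{0\}$; so the ``ambient'' conditions (proper subspaces of $V$ vs.\ non-zero subspaces of $V^*$) match. Moreover, a collection $L_1, \ldots, L_k \in \mS$ is pairwise non-comparable if and only if $L_1^{\perp}, \ldots, L_k^{\perp} \in \mS^{\perp}$ is pairwise non-comparable, because $L_i \subset L_j \iff L_j^{\perp} \subset L_i^{\perp}$.

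Next I would translate condition (i) of each definition. Given pairwise non-comparable elements $L_1, \ldots, L_k \in \mS$ with intersection $M = \bigcap_i L_i$, Lemma \ref{lem:transvdirsum} says that $M$ is transversal if and only if $M^{\perp} = \bigoplus_i L_i^{\perp}$, i.e., the sum of the annihilators is direct. This is precisely condition (i) of Definition \ref{def:nesteddual} applied to $L_1^{\perp}, \ldots, L_k^{\perp}$.

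Finally, for condition (ii), using the bijectivity of $L \mapsto L^{\perp}$ and the identity $\left(\bigcap_i L_i\right)^{\perp} = \sum_i L_i^{\perp}$, the membership $M \in \mS$ is equivalent to $M^{\perp} \in \mS^{\perp}$, i.e., to $\sum_i L_i^{\perp} \in \mS^{\perp}$. Putting these three translations together, $\mS$ satisfies Definition \ref{def:nestedV} if and only if $\mS^{\perp}$ satisfies Definition \ref{def:nesteddual}, which proves the lemma. There is no serious obstacle here; the only thing to be a little careful about is that the quantification in the two definitions lines up correctly, which it does because the bijection $L \leftrightarrow L^{\perp}$ sends ``pairwise non-comparable subfamilies'' to ``pairwise non-comparable subfamilies'' and preserves all the set-theoretic data involved.
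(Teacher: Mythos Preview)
Your proposal is correct and follows essentially the same approach as the paper: both use the inclusion-reversing bijection $L\mapsto L^\perp$, invoke Lemma~\ref{lem:transvdirsum} to match condition~(i), and use the identity $(\bigcap_i L_i)^\perp=\sum_i L_i^\perp$ to match condition~(ii). The only cosmetic difference is that the paper writes out one implication explicitly and declares the converse ``similar'', whereas you phrase each step as an equivalence and thereby handle both directions at once.
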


\begin{proof}
	Suppose that \(\mS\) is a nested set in \(V\) and let \(\mN = \mS^{\perp}\). We want to show that \(\mN\) is a \(*\)-nested set in \(V^*\).
	First of all, note that the elements \(N \in \mN\) are non-zero subspaces of \(V^*\). Indeed, \(N = L^{\perp}\) with \(L \in \mS\), since the elements of \(\mS\) are proper subspaces \(L \subsetneq V\), their annihilators \(L^{\perp}\) are non-zero. In order to verify Definition \ref{def:nesteddual}, let \(N_1, \ldots, N_k \in \mN\) be pairwise non comparable. We want to show: (i) we have a direct sum \(\oplus_i N_i\) and (ii) \(\oplus_i N_i \notin \mN\).
	
	(i) For each \(1 \leq i \leq k\),
	we can write \(N_i = L_i^{\perp}\) with \(L_i \in \mS\).
	The corresponding elements \(L_1, \ldots, L_k \in \mS\) must also be pairwise non comparable. By item (i) of Definition \ref{def:nestedV}\,, the intersection \(M = \cap_i L_i\) is transversal. By Lemma \ref{lem:transvdirsum}\,, we have a direct sum \(M^{\perp} = \oplus_i N_i\); which proves (i).
	
	(ii) If \(\oplus_i N_i \in \mN\) then \(M = (\oplus_i N_i)^{\perp} \in \mS\). However, by item (ii) of Definition \ref{def:nestedV}\,, \(M \notin \mS\).
	Therefore, \(\oplus_i N_i \notin \mN\); which proves (ii).
	
	The proof that if \(\mS^{\perp}\) is \(*\)-nested then \(\mS\) is nested is similar and we omit it.
\end{proof}

\begin{notation}
	Let \(B = \{e_1, \ldots, e_n\}\) be a basis of \(V\). The dual basis \(B^* = \{e^*_1, \ldots, e^*_n\}\) is the basis of \(V^*\) defined by \(e^*_i(e_i) = 1\) and \(e^*_i(e_j) = 0\) for \(j \neq i\).
\end{notation}

\begin{lemma}\label{lem:dualbasis}
	Let \(\mS\) be a set of linear subspaces of \(V\). A basis \(B = \{e_1, \ldots, e_n\}\) of \(V\) is adapted to \(\mS\) if and only if the dual basis \(B^* = \{e^*_1, \ldots, e^*_n\}\) is adapted to \(\mS^{\perp}\).
\end{lemma}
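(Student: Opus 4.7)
The plan is to reduce the statement to a single subspace and then to the elementary observation that ``adapted'' means ``coordinate subspace.''

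First, I would reformulate adaptedness: a basis $B = \{e_1,\dots,e_n\}$ is adapted to $L \subset V$ precisely when $L = \spn(B \cap L)$, i.e., when $L = \spn\{e_i : i \in I\}$ for the subset $I = I(L) := \{i : e_i \in L\} \subset [n]$. Call such $L$ a \emph{coordinate subspace} with respect to $B$. Since being adapted to $\mS$ means being adapted to each $L \in \mS$, and similarly for $B^*$ and $\mS^{\perp}$, the lemma reduces to proving that, for each $L$, $B$ is adapted to $L$ if and only if $B^*$ is adapted to $L^\perp$.

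Next, I would establish the key computation: if $L = \spn\{e_i : i \in I\}$ is a coordinate subspace w.r.t.\ $B$, then
\[
L^{\perp} = \spn\{e_j^* : j \in [n] \setminus I\},
\]
which is a coordinate subspace w.r.t.\ $B^*$. Indeed, from $e_j^*(e_i) = \delta_{ij}$, we have $e_j^* \in L^{\perp}$ iff $e_j^*$ vanishes on each $e_i$ with $i \in I$, iff $j \notin I$; and since $\dim L^\perp = n - |I| = |[n]\setminus I|$, this subset is a basis of $L^\perp$.

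This proves one direction: if $B$ is adapted to $L$, then $B^*$ is adapted to $L^\perp$. For the converse, I would invoke the same statement in the dual setting, using $(V^*, B^*)$ in place of $(V, B)$ (whose dual basis is $B$ under $V^{**} = V$) and the subspace $L^{\perp} \subset V^*$ in place of $L$: if $B^*$ is adapted to $L^{\perp}$, then $B = (B^*)^*$ is adapted to $(L^{\perp})^{\perp} = L$. There is no obstacle here; the entire argument is a direct unwinding of the definitions, and the only content is the identification of adapted bases with coordinate decompositions.
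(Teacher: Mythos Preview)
Your proof is correct and follows essentially the same approach as the paper: both reduce to a single subspace and use the observation that if $L = \spn\{e_i : i \in I\}$ then $L^{\perp} = \spn\{e_j^* : j \in [n] \setminus I\}$. The paper's proof is simply this one sentence, while you have spelled out the reduction and the converse via double duality more explicitly.
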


\begin{proof}
	This follows from the fact that if \(L \subset V\) is the span of \(\{e_i \,|\, i \in I\}\) for a subset \(I \subset [n]\), then \(L^{\perp}\) is the span of \(\{e^*_j \,|\, j \in [n] \setminus I\}\).
\end{proof}

Putting all things together, we arrive at the main result of this section.

\begin{corollary}\label{cor:adapbasis}
	If \(\mS\) if a nested set in \(V\), then there is a basis of \(V\) adapted to \(\mS\).
\end{corollary}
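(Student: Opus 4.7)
The plan is to prove Corollary \ref{cor:adapbasis} by dualizing the problem and invoking Lemma \ref{lem:adapbasistar}. Specifically, given a nested set $\mS$ in $V$, I would first pass to the annihilator collection $\mS^{\perp} = \{L^{\perp} \mid L \in \mS\} \subset V^{*}$ and apply Lemma \ref{lem:nestedstarnested} to conclude that $\mS^{\perp}$ is a $*$-nested set in the dual space $V^{*}$.

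Then I would apply Lemma \ref{lem:adapbasistar} to produce a basis $B^{*} = \{e_1^{*}, \ldots, e_n^{*}\}$ of $V^{*}$ that is adapted to $\mS^{\perp}$. Let $B = \{e_1, \ldots, e_n\}$ be the basis of $V$ whose dual basis is $B^{*}$. By Lemma \ref{lem:dualbasis}, since $B^{*}$ is adapted to $\mS^{\perp}$, the basis $B$ is adapted to $\mS$, which is precisely the conclusion desired.

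The argument is essentially a two-line composition of previously established lemmas, so there is no substantive obstacle. The only subtlety worth flagging is notational: one must verify that the dualization convention used in Notation \ref{not:ann2} is compatible with the natural identification $V^{**} = V$, so that starting from $B^{*}$ and taking its dual recovers a genuine basis $B$ of $V$ rather than of $V^{**}$. This is standard and built into the statement of Lemma \ref{lem:dualbasis}, so no additional work is required.
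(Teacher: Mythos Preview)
Your proposal is correct and follows exactly the same route as the paper's proof: apply Lemma \ref{lem:nestedstarnested} to pass to the $*$-nested set $\mS^{\perp}$, invoke Lemma \ref{lem:adapbasistar} to get an adapted basis $B^{*}$ of $V^{*}$, and then use Lemma \ref{lem:dualbasis} to conclude that the dual basis $B$ is adapted to $\mS$.
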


\begin{proof}
	By Lemma \ref{lem:nestedstarnested}\,, \(\mS^{\perp}\) is \(*\)-nested. By Lemma \ref{lem:adapbasistar}\,, there is a basis \(B^*\) of \(V^*\) that is adapted to \(\mS^{\perp}\). By Lemma \ref{lem:dualbasis}\,, the dual basis \(B\) of \(V\) is adapted to \(\mS\).
\end{proof}

\begin{remark}\label{rmk:nouse}
	In this section we haven't made any use of item (ii) in the definition of nested and \(*\)-nested sets. In particular, Corollary \ref{cor:adapbasis} also holds for families of subspaces which don't satisfy this extra condition; but we won't need to use this. 
	
	On the other hand, using item (ii) in Definition \ref{def:nestedV}\,, it is not hard to show that if \(\mS\) is a nested set in \(V\), then \(|\mS| \leq \dim V\). If item (ii) is not satisfied then this is no longer true. For example, if \(L_1\) and \(L_2\) are two distinct lines in \(V =\C^2\) going through the origin, then the set \(\mS = \{\{0\}, L_1, L_2\}\) satisfies item (i) of Definition \ref{def:nestedV} but it does not satisfy item (ii); and \(3 = |\mS| > \dim V = 2\).
\end{remark}

\subsubsection{Nested sets of projective subspaces}\label{sec:nestedproj}

\begin{definition}\label{def:nestedProj}
	A nested set in \(\CP^n\) is
	a finite set \(\mS\) of non-empty and proper projective subspaces \(L \subsetneq \CP^n\) such that, for any collection \(L_1, \ldots, L_k \in \mS\) with \(k \geq 2\) of pairwise non comparable elements (i.e. \(L_i \not\subset L_j\) for \(i \neq j\)), the following holds:
	\begin{enumerate}[label=\textup{(\roman*)}]
		\item their common intersection \(M = \cap_i L_i\) is non-empty and transversal, i.e., 
		\[
		\codim M = \sum_i \codim L_i \,;
		\]
		\item \(M \notin \mS\).
	\end{enumerate}
\end{definition}

\begin{remark}
	If \(\mS\) is a nested set in \(\CP^n\), then 
	\[
	\bigcap_{L \in \mS} L
	\]
	is a non-empty subspace of \(\CP^n\).
\end{remark}

Let
\[
\pi: \C^{n+1} \setminus \{0\} \to \CP^n
\]
be the quotient projection. If \(L \subset \CP^n\) is a projective subspace, then we write \(L^{\bc}\) for the unique linear subspace of \(\C^{n+1}\) such that \(\pi(L^{\bc}) = L\).
If \(\mS\) is a set of projective subspaces, then
\[
\mS^{\bc} = \{L^{\bc} \,|\, L \in \mS\}
\]
is the corresponding set of linear subspaces of \(\C^{n+1}\).

\begin{lemma}\label{lem:sbc}
	If \(\mS\) is a nested set in \(\CP^n\), then \(\mS^{\bc}\) is a nested set in \(\C^{n+1}\). 
\end{lemma}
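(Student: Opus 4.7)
The plan is to verify the two defining conditions of Definition~\ref{def:nestedV} for $\mS^{\bc}$ by transporting the nested properties of $\mS$ through the correspondence $L \mapsto L^{\bc}$. The key point that makes this essentially formal is that this correspondence is inclusion-preserving, codimension-preserving, and turns intersections of non-empty projective subspaces into intersections of linear subspaces, since $L \cap M \neq \emptyset$ forces $(L \cap M)^{\bc} = L^{\bc} \cap M^{\bc}$.

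First I would record the following bookkeeping facts. Each $L \in \mS$ is non-empty and proper in $\CP^n$, so $L^{\bc}$ is a non-zero proper linear subspace of $\C^{n+1}$; in particular the elements of $\mS^{\bc}$ are proper subspaces as required. The map $L \mapsto L^{\bc}$ is inclusion preserving, hence $L_i^{\bc} \subset L_j^{\bc}$ iff $L_i \subset L_j$; pairwise non-comparability in $\mS^{\bc}$ is therefore equivalent to pairwise non-comparability in $\mS$. Finally, $\codim_{\C^{n+1}} L^{\bc} = (n+1) - \dim L^{\bc} = n - \dim L = \codim_{\CP^n} L$.

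Next, given pairwise non-comparable $L_1^{\bc},\ldots,L_k^{\bc} \in \mS^{\bc}$ (with $k \geq 2$), I would apply Definition~\ref{def:nestedProj} to the corresponding $L_1,\ldots,L_k \in \mS$: the intersection $M = \bigcap_i L_i$ is non-empty and transversal, with $\codim M = \sum_i \codim L_i$. Because $M$ is non-empty, $M^{\bc}$ is the unique linear subspace projecting to $M$, and one has $M^{\bc} = \bigcap_i L_i^{\bc}$ (the inclusion $\subset$ is obvious, and equality follows by comparing dimensions via $\dim M^{\bc} = \dim M + 1$, or equivalently because $\bigcap_i L_i^{\bc}$ strictly contains the origin, so it projects to a non-empty subspace that must equal $M$). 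Transversality of the $L_i^{\bc}$ in $\C^{n+1}$ then follows from the codimension identity above:
\[
\codim \bigcap_i L_i^{\bc} = \codim M = \sum_i \codim L_i = \sum_i \codim L_i^{\bc}.
\]

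Finally, for condition (ii) of Definition~\ref{def:nestedV}, I would argue by contradiction: if $M^{\bc} \in \mS^{\bc}$, then $M = \P(M^{\bc}) \in \mS$ (using again that $M$ is non-empty so the projection is well-defined and injective on $\mS^{\bc}$), contradicting the corresponding condition (ii) for $\mS$. There is no real obstacle here; the only subtle point worth flagging is the bijection $L \leftrightarrow L^{\bc}$ together with commutation of $(\cdot)^{\bc}$ with non-empty intersections, which is what makes the transfer of both transversality and the exclusion $M \notin \mS$ completely mechanical.
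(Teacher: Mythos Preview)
Your proof is correct and is exactly the verification that the paper leaves implicit: the paper's proof simply says this is an ``immediate consequence of Definitions~\ref{def:nestedProj} and~\ref{def:nestedV}'', and you have spelled out precisely why, via the inclusion- and codimension-preserving bijection $L \mapsto L^{\bc}$ and the fact that $(\cdot)^{\bc}$ commutes with non-empty intersections.
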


\begin{proof}
	Immediate consequence of Definitions \ref{def:nestedProj} and \ref{def:nestedV}\,.
\end{proof}


The proof of the next lemma is straightforward and we omit it.

\begin{lemma}\label{lem:nestproj}
	Let \(V\) be a vector space and let \(\mS\) be a nested set in \(V\). Suppose that \(T\) is a non-zero linear subspace of \(V\) such that \(T \subset L\) for every \(L \in \mS\). Let \(\pr: V \to V / T\) be the quotient projection. Then the following holds.
	\begin{enumerate}[label=\textup{(\roman*)}]
		\item The set \(\mS / T = \{\pr(L) \,|\, L \in \mS\}\) is nested in \(V / T\).
		\item If \(B\) is a basis of \(V\) adapted to \(\mS\) such that \(T = \spn(B \cap T)\), then \(\pr(B) = \{\pr(e) \,|\, e \in B \setminus (T \cap B)\}\) is a basis of \(V / T\) adapted to \(\mS / T\).
	\end{enumerate}
\end{lemma}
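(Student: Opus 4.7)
The plan is to verify Definitions \ref{def:nestedV} and \ref{def:adaptedbasis} directly for the quotient data, using repeatedly the key identity $\pr^{-1}(\pr(W)) = W + T = W$ for any subspace $W \supset T$. This identity yields three things at once: the injectivity of $\pr$ on $\mS$ (and on intersections of elements of $\mS$, which all contain $T$); the equality $\bigcap_i \pr(L_i) = \pr(\bigcap_i L_i)$ for any collection $L_i \in \mS$; and the codimension identity $\codim_{V/T} \pr(W) = \codim_V W$ whenever $T \subset W$, so that transversality transfers cleanly between $V$ and $V/T$.

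For item (i), I would take pairwise non-comparable $L_1, \ldots, L_k \in \mS$, set $M = \bigcap_i L_i$, and verify the two axioms of Definition \ref{def:nestedV} for the projected subspaces $\pr(L_i)$. Non-comparability of the $\pr(L_i)$ is immediate from injectivity of $\pr$ on $\mS$. The intersection axiom follows because $\bigcap_i \pr(L_i) = \pr(M)$ by the identity above, so $\codim_{V/T} \pr(M) = \codim_V M = \sum_i \codim_V L_i = \sum_i \codim_{V/T} \pr(L_i)$. Finally, $\pr(M) \notin \mS/T$ because $M \notin \mS$ and $\pr$ is injective on $\mS \cup \{M\}$ (all of whose elements contain $T$). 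The only bookkeeping is to check that each $\pr(L)$ is a proper subspace of $V/T$, which is automatic from $T \subset L \subsetneq V$.

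For item (ii), the hypothesis $T = \spn(B \cap T)$ says that $B \cap T$ is a basis of $T$, so its complement $B \setminus (B \cap T)$ maps bijectively under $\pr$ to a basis of $V/T$; this gives the basis statement. To verify adaptedness to $\mS / T$, fix $L \in \mS$. Since $T \subset L$, the set $\pr(B) \cap \pr(L)$ coincides with $\pr\bigl((B \cap L) \setminus T\bigr)$. On the other hand, $B \cap L$ spans $L$ by adaptedness of $B$ to $L$; pushing this spanning relation down to the quotient, where the vectors in $B \cap T$ vanish, shows that $\pr\bigl((B \cap L) \setminus T\bigr)$ spans $\pr(L)$, as required.

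The argument presents no serious obstacle: every claim reduces to a one-line application of $\pr^{-1}(\pr(W)) = W$ for $W \supset T$ together with the dimension formula $\dim \pr(W) = \dim W - \dim T$. The only mild care needed is to track the hypothesis $T \subset L$ throughout, which is precisely what ensures that containment, intersection, and spanning all commute with the quotient map.
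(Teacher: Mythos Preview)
Your proposal is correct and complete. The paper itself omits the proof, stating only that it is straightforward; your direct verification via the identity \(\pr^{-1}(\pr(W)) = W\) for \(W \supset T\) and the codimension formula \(\codim_{V/T}\pr(W) = \codim_V W\) is exactly the kind of routine check the authors had in mind.

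One minor phrasing point: in item (i) you begin by taking pairwise non-comparable \(L_1,\ldots,L_k \in \mS\), whereas Definition~\ref{def:nestedV} for \(\mS/T\) asks you to start from pairwise non-comparable elements of \(\mS/T\). This is harmless, since you observe that \(\pr\) restricted to \(\mS\) is a bijection onto \(\mS/T\) that preserves and reflects inclusions (again because every \(L \in \mS\) contains \(T\)); but it would read more cleanly to say this explicitly up front, so that the reader sees immediately why the two starting points are interchangeable.
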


Next, we consider frames of vectors of \(T\CP^n\) adapted to a nested set.

\begin{lemma}\label{lem:nestedframe}
	Let \(\mS\) be a nested set in \(\CP^n\). Let \(M\) be the common intersection of all the members of \(\mS\) and let \(p\) be a point in \(M\). Then the following holds. 
	\begin{enumerate}[label=\textup{(\roman*)}]
		\item The set 
		\[
		T_p \mS := \{ T_p L \,|\, L \in \mS
		\}
		\]  
		is a nested set in \(T_p \CP^n\).
		
		\item There is a neighbourhood \(U\) of \(p\) in \(M\) and a holomorphic frame \(\{e_1, \ldots, e_n\}\) of \(T\CP^n|_M\) defined on \(U\) such that, for every \(q \in U\) the basis \(\{e_1(q), \ldots, e_n(q)\}\) of \(T_q\CP^n\) is adapted to \(T_q \mS\).
	\end{enumerate}
\end{lemma}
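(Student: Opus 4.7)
The plan is to prove (i) by dualising to $\C^{n+1}$ and invoking Lemma \ref{lem:nestproj}(i), and to prove (ii) by constructing linear coordinates on $\C^{n+1}$ in which $p$ is a coordinate point and each $L\in\mS$ is a coordinate subspace; the frame $\p/\p w_i$ in the resulting affine chart will then do the job at every point of $M$ in that chart.

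For (i), I would start by observing that $\mS^{\bc}$ is nested in $\C^{n+1}$ by Lemma \ref{lem:sbc}. Since $p\in L$ for every $L\in\mS$, the line $p^{\bc}\subset\C^{n+1}$ is contained in every $L^{\bc}$, so Lemma \ref{lem:nestproj}(i) applied with $V=\C^{n+1}$ and $T=p^{\bc}$ gives that $\{L^{\bc}/p^{\bc}:L\in\mS\}$ is nested in $\C^{n+1}/p^{\bc}$. Under the canonical identification $\C^{n+1}/p^{\bc}\cong T_p\CP^n$ sending $L^{\bc}/p^{\bc}$ to $T_pL$, this set is precisely $T_p\mS$.

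For (ii), the first task is to produce a basis $\{f_0,\ldots,f_n\}$ of $\C^{n+1}$ adapted to $\mS^{\bc}$ with the extra property that $f_0\in p^{\bc}$. By Corollary \ref{cor:adapbasis} applied to $\mS^{\bc}$, one such basis exists but a priori without the constraint on $f_0$. Fixing a nonzero $v\in p^{\bc}$ and writing $v=\sum c_i f_i$, each index $i$ with $c_i\neq 0$ must satisfy $f_i\in L^{\bc}$ for every $L\in\mS$ (since $v\in L^{\bc}$ and $\{f_i\in L^{\bc}\}$ is already a basis of $L^{\bc}$); hence $f_i\in M^{\bc}$ for every such $i$. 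Picking any such $j$ and replacing $f_j$ by $v$ yields a new basis of $\C^{n+1}$ which, by a direct check that uses $c_j\neq 0$ to re-express $f_j$ in terms of $v$ and the remaining basis vectors of any given $L^{\bc}$, is still adapted to $\mS^{\bc}$. Relabel so that $f_0=v$.

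The final step is to pass to the dual coordinates $z_0,\ldots,z_n$ on $\C^{n+1}$, in which $p=[1:0:\cdots:0]$ and each $L\in\mS$ is cut out by the vanishing of a subset of coordinates $\{z_i:i\in I_L\}$ with $I_L\subset\{1,\ldots,n\}$ (the restriction to $\{1,\ldots,n\}$ is due to $f_0\in L^{\bc}$). In the affine chart $\{z_0\neq 0\}$ with coordinates $w_i=z_i/z_0$, the subspace $L$ becomes $\{w_i=0:i\in I_L\}$ and $p$ becomes the origin. Letting $U$ be a neighbourhood of $p$ in $M$ contained in this chart and setting $e_i=\p/\p w_i|_M$ for $i=1,\ldots,n$, the fact that each $L$ is linear in these coordinates gives $T_qL=\spn\{e_i(q):i\notin I_L\}$ for every $L\in\mS$ and every $q\in U$; hence $\{e_1(q),\ldots,e_n(q)\}$ is adapted to $T_q\mS$. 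The main obstacle is the basis-swap step in the third paragraph: everything else is formal, but one has to carefully arrange that the adapted basis of $\C^{n+1}$ can be chosen with a basis vector in the prescribed line $p^{\bc}$, which is what makes the dualisation descend cleanly to a coordinate chart of $\CP^n$ centred at $p$.
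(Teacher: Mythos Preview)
Your proof is correct. Part (i) is identical to the paper's argument. For part (ii), you and the paper follow closely related but not identical routes: both ultimately produce a basis of $\C^{n+1}$ adapted to $\mS^{\bc}$ with one basis vector lying on the line $p^{\bc}$, and then descend to $\CP^n$. The difference is how this basis is obtained.

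The paper first takes an adapted basis $\{e_1(p),\ldots,e_n(p)\}$ of $T_p\CP^n$ (which exists by part (i) and Corollary~\ref{cor:adapbasis}), lifts each $e_i(p)$ to a constant vector $\bar e_i$ on $\C^{n+1}$, and adjoins the Euler vector field $e_0$. The moving basis $\{e_0(\bar q),\bar e_1,\ldots,\bar e_n\}$ is then checked to be adapted to $\mS^{\bc}$ for $\bar q$ near $\bar p$ in $M^{\bc}$, and Lemma~\ref{lem:nestproj}(ii) is invoked to project back down. You instead work directly in $\C^{n+1}$: start from an adapted basis of $\mS^{\bc}$ via Corollary~\ref{cor:adapbasis}, perform the basis-swap to arrange $f_0\in p^{\bc}$, and then read off affine coordinates on $\CP^n$ in which every $L\in\mS$ is a coordinate subspace. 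Your basis-swap argument is clean and correct (the key point, that every $f_i$ with $c_i\neq 0$ lies in $M^{\bc}$, is exactly what makes the swapped basis remain adapted to every $L^{\bc}$).

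What each buys: the paper's route makes the passage from pointwise to local explicit via the Euler field and emphasises Lemma~\ref{lem:nestproj}(ii); your route is more concrete, exhibiting global affine coordinates in which the frame $\partial/\partial w_i$ visibly works at every point of the chart. Your version has the mild advantage that the frame is manifestly holomorphic on all of $M\cap\{z_0\neq 0\}$, not just a small neighbourhood.
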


\begin{proof}
	(i) Let \(\bar{p} \in \C^{n+1}\) such that \(\pi(\bar{p}) = p\).
	The differential \(d\pi_{\bar{p}}: T_{\bar{p}} \C^{n+1} \to T_p \CP^n\) gives us an identification of \(T_p \mS\) with \(\mS^{\bc} / T\) where \(T = \C \cdot \bar{p}\). The statement follows from Lemmas \ref{lem:sbc} and \ref{lem:nestproj}  (i).
	
	(ii) Let \(\{e_1(p), \ldots, e_n(p)\}\) be a basis of \(T_p\CP^n\) adapted to \(T_p \mS\). Let \(\bar{e}_1, \ldots, \bar{e}_n \in T_{\bar{p}}\C^{n+1}\)
	such that \(d\pi_{\bar{p}} (\bar{e}_i) = e_i(p)\) and extend \(\bar{e}_i\) as constant vectors on \(\C^{n+1}\).
	Let \(e_0\) be the Euler vector field in \(\C^{n+1}\) and let \(M^{\bc}\) be the unique linear subspace of \(\C^{n+1}\) such that \(\pi(M^{\bc}) = M\). 
	
	\emph{Claim}: there is a neighbourhood \(\bar{U}\) of \(\bar{p}\) in \(M^{\bc}\) such that for every \(\bar{q} \in \bar{U}\) the vectors \(\bar{B}(\bar{q}) = \{e_0(\bar{q}), \bar{e}_1, \ldots, \bar{e}_n\}\) form a basis of \(\C^{n+1}\) adapted to \(\mS^{\bc}\). 
	
	\emph{Proof of the claim}: the vectors \(\bar{B}(\bar{p})\) form a basis of \(\C^{n+1}\) adapted to \(\mS^{\bc}\). By continuity, the vectors \(\bar{B}(\bar{q})\) form a basis of \(\C^{n+1}\) for \(\bar{q}\) close to \(\bar{p}\). To check that \(\bar{B}(\bar{q})\) is adapted to \(\mS^{\bc}\), we need to show that each \(L^{\bc} \in \mS^{\bc}\) contains \(\dim L^{\bc}\) vectors from \(\bar{B}(q)\). However, the number of vectors of \(\bar{B}(q)\) contained in \(L^{\bc}\) is independent of \(q\) for \(q \in M^{\bc}\) because \(e_0(\bar{q}) \in M^{\bc} \subset L^{\bc}\) and the rest of the vectors \(\bar{e}_i\) with \(i \geq 1\) in \(\bar{B}(q)\) are constant. Since, \(\bar{B}(\bar{p})\) is adapted to \(\mS^{\bc}\), it follows that \(\bar{B}(\bar{q})\) is also adapted to \(\mS^{\bc}\) for all \(\bar{q}\) in a neighbourhood \(\bar{U}\) of \(\bar{p}\) in \(M^{\bc}\). This finishes the proof of the claim.
	
	Let \(U = \pi(\bar{U})\) where \(\bar{U}\) is as in the claim.
	Take an affine hyperplane \(K \subset M^{\bc}\) that goes through \(\bar{p}\) and is transversal to \(e_0(\bar{p})\). This way, for every \(q \in U\) there is a unique \(\bar{q} \in K \cap \bar{U}\) such that \(\pi(\bar{q}) = q\); define \(e_i(q)\) to be the projection of \(\bar{e}_i(\bar{q})\) by \(d\pi_{\bar{q}}\). It follows from Lemma \ref{lem:nestproj} (ii) that \(e_1(q), \ldots, e_n(q)\) is a basis of \(T_q\CP^n\) adapted to \(T_q \mS\). This finishes the proof of (ii).
\end{proof}

\subsection{Exterior algebra}\label{sec:extalg}

Let \(V\) be a finite dimensional complex vector space and let \(\Lambda^rV\) be the \(r\)-th exterior product of \(V\). We refer to elements \(v \in \Lambda^r V\) as \(r\)-vectors or multivectors.

\subsubsection{Multivectors and subspaces}

Let \(S \subset V\) be a linear subspace. Then \(\Lambda^r S\) is naturally embedded in \(\Lambda^r V\) as the subspace spanned by vectors of the form \(v_1 \wedge \ldots \wedge v_r\) with \(v_i \in S\). 

\begin{definition}\label{def:tansub}
	We say that the \(r\)-vector \(v \in \Lambda^r V\) is tangent to \(S\) if \(v\) belongs to the subspace \(\Lambda^r S\).	
\end{definition}

\begin{example}\label{ex:tanvec}
	Let \(v_1, \ldots, v_n\) be a basis of \(V\)  and let \(S\) be the subspace spanned by \(v_1, \ldots, v_s\). Let \(v  = \sum_I c_I v_I\) where \(I = \{i_1, \ldots, i_r\}\) runs over all \(r\)-subsets of \(\{1, \ldots, n\}\) and \(v_I = v_{i_1} \wedge \ldots \wedge v_{i_r}\) with \(i_1 < \ldots < i_r\) are the basis vectors. Then \(v\) is tangent to \(S\) if and only if \(c_I = 0\) for all \(I\) such that \(I \not\subset \{1, \ldots, s\}\).
\end{example}

\begin{remark}\label{rmk:tansub}
	(i) If \(v = 0\) then the tangency condition is trivially satisfied, i.e., \(v\) is tangent to any subspace. (ii) If \(v\) is tangent to \(S\) and \(S \subset H\) then \(v\) is also tangent to \(H\). (iii) If \(v \in \Lambda^rV\) is non-zero and tangent to a subspace \(S\) then \(\dim S \geq r\) because \(\Lambda^rS\) is zero if \(\dim S < r\).
\end{remark}

\begin{definition}
	We say that \(v \in \Lambda^rV\) is decomposable if \(v\) is non-zero and
	tangent to an \(r\)-dimensional subspace \(S \subset V\).
\end{definition}

\begin{example}\label{ex:dec}
	If \(S \subset V\) is an \(r\)-dimensional subspace and
	\(v_1, \ldots, v_r\) is basis of \(S\) then 
	\begin{equation}\label{eq:dec}
		v = v_1 \wedge \ldots \wedge v_r	
	\end{equation}
	is non-zero and tangent to \(S\), thus \(v\) is decomposable. 
	
	Taking different basis of \(S\), say \(v'_1, \ldots, v'_r\), gives a scalar multiple \(v' = \lambda v\) where \(v' = v_1' \wedge \ldots \wedge v_r'\) and \(\lambda\) is the determinant of the change of basis.
\end{example}

The following lemma shows that all decomposable multivectors are of the form given by Example \ref{ex:dec}\,. 

\begin{lemma}\label{lem:decvec}
	Suppose that \(v \in \Lambda^rV\) is decomposable and let \(S\) be an \(r\)-dimensional subspace such that \(v\) is tangent to \(S\).
	\begin{enumerate}[label=\textup{(\roman*)}]
		\item If \(v_1, \ldots, v_r\) is a basis of \(S\) then \(v = \lambda \cdot (v_1 \wedge \ldots \wedge v_r)\) with \(\lambda \in \C^*\).
		
		\item The subspace \(S\) is uniquely determined by \(v\) and it is given by
		\begin{equation}
			S = \ker(\wedge v) = \{u \in V \,\, | \,\, u \wedge v = 0 \} .
		\end{equation}
		
		\item If \(H \subset V\) is a linear subspace and \(v\) is tangent to \(H\) then \(S \subset H\).
	\end{enumerate}
\end{lemma}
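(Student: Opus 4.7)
The plan is to handle the three items in order, leveraging a basic dimension count, a basis-extension argument, and then a reduction of (iii) to (ii).

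For (i), I will observe that $\Lambda^r S$ is one-dimensional whenever $\dim S = r$, with basis vector $v_1 \wedge \ldots \wedge v_r$ (for any chosen basis $v_1, \ldots, v_r$ of $S$). Since the tangency hypothesis says $v \in \Lambda^r S$ and $v$ is decomposable (hence nonzero), we immediately get $v = \lambda \cdot v_1 \wedge \ldots \wedge v_r$ with $\lambda \in \C^*$.

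For (ii), I will show both inclusions. The inclusion $S \subset \ker(\wedge v)$ is immediate: if $u \in S$ and we use part (i) to write $v = \lambda \cdot v_1 \wedge \ldots \wedge v_r$ for a basis of $S$, then $u \wedge v$ contains a repeated factor because $u$ is a linear combination of the $v_i$'s, hence $u \wedge v = 0$. For the reverse inclusion $\ker(\wedge v) \subset S$, I extend a basis $v_1, \ldots, v_r$ of $S$ to a basis $v_1, \ldots, v_n$ of $V$. Given $u = \sum a_i v_i$ with $u \wedge v = 0$, I expand using part (i) to get $u \wedge v = \lambda \sum_{i > r} a_i \cdot v_i \wedge v_1 \wedge \ldots \wedge v_r$. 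The $(r+1)$-vectors $v_i \wedge v_1 \wedge \ldots \wedge v_r$ for $i > r$ are, up to sign, distinct basis vectors of $\Lambda^{r+1} V$ and in particular linearly independent; combined with $\lambda \neq 0$ this forces $a_i = 0$ for all $i > r$, so $u \in S$. Uniqueness of $S$ follows because $\ker(\wedge v)$ is intrinsically defined by $v$.

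For (iii), I will reduce to the algebraic content already established in (ii). By (ii) it suffices to show that every $u$ with $u \wedge v = 0$ lies in $H$. The main obstacle (though mild) is that $v$ being decomposable in $\Lambda^r V$ does not a priori come with a factorization inside $H$, so I argue coefficient-wise. Pick a basis $h_1, \ldots, h_k$ of $H$ and extend to a basis $h_1, \ldots, h_n$ of $V$; since $v$ is tangent to $H$, we can write $v = \sum_{I \subset [k],\, |I|=r} c_I h_I$. Given $u = \sum a_i h_i \in \ker(\wedge v)$, compute
\[
u \wedge v \;=\; \sum_{i \leq k} a_i \sum_{I \subset [k]} c_I\, h_i \wedge h_I \;+\; \sum_{j > k} a_j \sum_{I \subset [k]} c_I\, h_j \wedge h_I.
\]
The first summand lies in $\Lambda^{r+1} H$, while the second lies in the complementary span of the basis vectors $h_J \in \Lambda^{r+1} V$ with $J \not\subset [k]$, so both must vanish separately. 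Since $v \neq 0$, some $c_{I_0} \neq 0$, and the $h_j \wedge h_{I_0}$ for distinct $j > k$ are linearly independent, forcing $a_j = 0$ for all $j > k$. Hence $u \in H$, giving $S \subset H$ as required.
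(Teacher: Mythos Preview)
Your proof is correct and follows essentially the same approach as the paper's. The only minor difference is presentational: for (i) you use the one-dimensionality of $\Lambda^r S$ directly rather than a coordinate expansion, and for (iii) you spell out in detail the coefficient argument that the paper compresses into the one-line claim ``if $v \in \Lambda^r H$ and $u \notin H$ then $u \wedge v \neq 0$.''
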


\begin{proof}
	(i) Extend \(v_1, \ldots, v_r\) to a basis \(v_1, \ldots, v_n\) of \(V\) and write \(v  = \sum_I \lambda_I v_I\) where \(v_I = v_{i_1} \wedge \ldots \wedge v_{i_r}\) are the basis vectors of \(\Lambda^rV\). Since \(v\) is tangent to \(S\), we have \(\lambda_I = 0\) for all \(I \neq \{1, \ldots, r\}\) by Example \ref{ex:tanvec}\,.
	
	(ii) Take a basis of \(V\) as above. It follows from item (i) that \(S \subset \ker(\wedge v)\). Conversely, if \(u = \sum_{i=1}^{n}\lambda_i v_i\) is such that \(u \wedge v = 0\) then we must have \(\lambda_i=0\) for all \(i >r\), so \(u \in S\).
	
	(iii) If \(v \in \Lambda^rH\) and \(u \notin H\) then \(u \wedge v \neq 0\). Therefore, using (ii), if \(u \in S = \ker(\wedge v)\) we must have \(u \in H\).
\end{proof}

More generally, for any non-zero \(v \in \Lambda^r V\), the dimension of \(\ker(\wedge v)\) is \(\leq r\) and it is \(= r\) precisely when \(v\) is decomposable. The set of decomposable vectors in \(\Lambda^rV\) is the zero locus of a set of homogeneous quadratic equations, known as the Pl\"ucker relations.

If \(S \subset V\) is an \(r\)-dimensional subspace then \(\Lambda^r S\) is a complex line thorough the origin in \(\Lambda^r V\) whose non-zero elements are decomposable \(r\)-vectors. Conversely, a decomposable \(r\)-vector determines a unique \(r\)-dimensional subspace \(S \subset V\) and if two decomposable \(r\)-vectors \(v\) and \(v'\) determine the same subspace then \(v\) and \(v'\) are scalar multiples of each other. This correspondence between \(r\)-dimensional subspaces of \(V\) and the closed subvariety of \(\P(\Lambda^rV)\) of decomposable vectors is known as the Pl\"ucker embedding of the Grassmannian of \(r\)-planes in \(V\).

\subsubsection{Contraction}

\begin{definition}[{\cite[p. 42]{shafarevich}}]
	Let \(V\) be a vector space and let \(\omega \in V^*\). The contraction
	is a linear map \( \omega \iprod: \Lambda^r V \to \Lambda^{r-1}V \)
	defined by the following properties.
	\begin{itemize}
		\item If \(v \in \Lambda^1V = V\) then \(\omega \iprod v = \omega(v)\). 
		\item If \(v_1 \in \Lambda^r V\) and \(v_2 \in \Lambda^s V\) then
		\begin{equation}\label{eq:iprod}
			\omega \iprod (v_1 \wedge v_2) = (\omega \iprod v_1) \wedge v_2 + (-1)^r v_1 \wedge (\omega \iprod v_2) .
		\end{equation}
	\end{itemize}
\end{definition}

The contraction defines a bilinear a map from the direct product  \(V^* \times \Lambda^rV\) to \(\Lambda^{r-1}V\); or equivalently a linear map
\[
V^* \otimes \Lambda^r V \xrightarrow{\iprod} \Lambda^{r-1}V .
\]

\begin{lemma}\label{lem:convnondeg}
	The contraction is a non-degenerate bilinear pairing. More precisely,  if \(v \in \Lambda^rV\) is non-zero then there is \(\omega \in V^*\) such that \(\omega \iprod v \neq 0\). Conversely, if \(\omega \in V^*\) is non-zero then there is \(v \in \Lambda^rV\) such that \(\omega \iprod v \neq 0\).
\end{lemma}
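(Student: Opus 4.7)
The plan is to reduce both statements to a direct computation in a fixed basis, exploiting the Leibniz rule \eqref{eq:iprod} that characterizes the contraction operation.

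Fix a basis $e_1, \ldots, e_n$ of $V$ with dual basis $e_1^*, \ldots, e_n^*$ of $V^*$, and let $\{e_I = e_{i_1} \wedge \cdots \wedge e_{i_r}\}_{I}$ indexed by $r$-subsets $I = \{i_1 < \cdots < i_r\} \subset \{1, \ldots, n\}$ be the induced basis of $\Lambda^r V$. The first preliminary computation I would carry out, by repeated application of \eqref{eq:iprod}, is the formula
\[
e_j^* \iprod e_I = \begin{cases} (-1)^{k-1} \, e_{I \setminus \{j\}} & \text{if } j = i_k \in I, \\ 0 & \text{if } j \notin I, \end{cases}
\]
so that for $v = \sum_I c_I e_I$ we have $e_j^* \iprod v = \sum_{I \ni j} \pm c_I \, e_{I \setminus \{j\}}$. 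Since the subsets $I \setminus \{j\}$ for different $I \ni j$ are distinct $(r-1)$-subsets, the vectors $e_{I \setminus \{j\}}$ are linearly independent in $\Lambda^{r-1}V$, so this expression vanishes if and only if $c_I = 0$ for every $I \ni j$.

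For the first direction of the lemma, suppose $v \neq 0$, so some coefficient $c_{I_0}$ is non-zero. Pick any $j \in I_0$ and take $\omega = e_j^*$; then the computation above gives $\omega \iprod v \neq 0$.

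For the converse, suppose $\omega \in V^*$ is non-zero. The case $r = 1$ is trivial since $\omega \iprod v = \omega(v)$, so assume $r \geq 2$ (and $r \leq n$, else $\Lambda^r V = 0$ and there is nothing to do in the non-trivial direction, although the statement as written refers to $v \in \Lambda^r V$). Choose $u \in V$ with $\omega(u) = 1$, and pick linearly independent vectors $u_2, \ldots, u_r \in \ker \omega$ that together with $u$ are linearly independent; this is possible since $\dim \ker \omega = n-1 \geq r-1$. Applying \eqref{eq:iprod} to $v := u \wedge u_2 \wedge \cdots \wedge u_r$ gives
\[
\omega \iprod v \;=\; \omega(u) \cdot u_2 \wedge \cdots \wedge u_r \;-\; u \wedge \bigl(\omega \iprod (u_2 \wedge \cdots \wedge u_r)\bigr).
\]
Since $u_2, \ldots, u_r \in \ker\omega$, an easy induction on $r$ (using \eqref{eq:iprod} again) shows $\omega \iprod (u_2 \wedge \cdots \wedge u_r) = 0$, so $\omega \iprod v = u_2 \wedge \cdots \wedge u_r$, which is non-zero because $u_2, \ldots, u_r$ are linearly independent. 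Neither direction presents a real obstacle; the only care needed is to verify the sign conventions in the explicit formula for $e_j^* \iprod e_I$, which follows from a straightforward induction using the Leibniz rule.
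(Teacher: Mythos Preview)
Your proof is correct and follows essentially the same approach as the paper: both arguments fix a basis, compute the contraction of a dual basis vector against a basis multivector, and then for each direction pick the obvious witness. The only cosmetic difference is in the converse, where the paper changes coordinates so that $\omega = \eta_1$ and contracts against $v_1 \wedge \cdots \wedge v_r$, while you work coordinate-free by choosing $u$ with $\omega(u)=1$ and $u_2,\ldots,u_r \in \ker\omega$; these are the same computation in different notation.
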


\begin{proof}
	Let \(v_1, \ldots, v_n\) be a basis of \(V\) and write \(v =  \sum_I \lambda_I v_I\) where the sum is over multi-indices \(I=(i_1, \ldots, i_r)\) and \(v_I = v_{i_1} \wedge \ldots \wedge v_{i_r}\) are the basis elements of \(\Lambda^rV\). Let \(\eta_1, \ldots, \eta_n\) be the dual basis defined by \(\eta_i(v_j) = 1\) if \(i=j\) and \(0\) otherwise. Suppose that \(v\) is non-zero. Fix \(I\) such that \(\lambda_I \neq 0\) and let \(i \in I\). Take \(\omega = \eta_i\),  we claim that \(\omega \iprod v \neq 0\). Indeed, if \(I' = I \setminus \{i\}\) then
	\[
	\omega \iprod v = \pm \lambda_I v_{I'} + \sum_{J \neq I'} \tilde{\lambda}_J v_J \neq 0 
	\]
	where the sum runs over multi-indices \(J \neq I'\) with \(|J|=r-1\) which do not contain \(i\) and \(\tilde{\lambda}_J = \pm \lambda_{J\cup \{i\}}\).
	Conversely, if \(\omega \neq 0\) then we can assume that \(\omega = \eta_1\) and
	\[
	\omega \iprod (v_1 \wedge \ldots \wedge v_r) \neq 0 \,.
	\]
    This concludes the proof.
\end{proof}

If \(U \subset V\) is a linear subspace then \(\Lambda^r U\) is naturally embedded in \(\Lambda^r V\) as the subspace spanned by elements of the form 
\(u_1 \wedge \ldots \wedge u_r\) with \(u_i \in U\).

\begin{lemma}\label{lem:tangpreserve}
	Let \(U \subset V\) be a linear subspace and let \(\omega \in V^*\). Then 
	\begin{equation}
		\omega \iprod \Lambda^r U \subset \Lambda^{r-1} U .	
	\end{equation}
\end{lemma}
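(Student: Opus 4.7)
The plan is a routine induction on $r$ using the Leibniz rule \eqref{eq:iprod}, so the main obstacle is only to set up the bookkeeping cleanly; there is no conceptual difficulty here. By linearity of the contraction, it suffices to show that $\omega \iprod (u_1 \wedge \cdots \wedge u_r) \in \Lambda^{r-1} U$ for any $u_1, \ldots, u_r \in U$, since such decomposable multivectors span $\Lambda^r U$.

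First I would handle the base case $r = 1$: if $u \in U$, then $\omega \iprod u = \omega(u) \in \mathbb{C}$, which by the standard convention $\Lambda^0 U = \mathbb{C} = \Lambda^0 V$ lies in $\Lambda^{r-1} U = \Lambda^0 U$. (If one prefers to avoid the $r = 0$ convention, take $r = 2$ as the base case: $\omega \iprod (u_1 \wedge u_2) = \omega(u_1) u_2 - \omega(u_2) u_1 \in U = \Lambda^1 U$.)

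For the inductive step, assume the statement holds for $r - 1$, and take $u_1, \ldots, u_r \in U$. Write $u_1 \wedge \cdots \wedge u_r = u_1 \wedge (u_2 \wedge \cdots \wedge u_r)$ and apply \eqref{eq:iprod} with first factor of degree $1$:
\[
\omega \iprod (u_1 \wedge \cdots \wedge u_r) \;=\; \omega(u_1) \cdot (u_2 \wedge \cdots \wedge u_r) \;-\; u_1 \wedge \bigl(\omega \iprod (u_2 \wedge \cdots \wedge u_r)\bigr).
\]
The first term is a scalar multiple of $u_2 \wedge \cdots \wedge u_r \in \Lambda^{r-1} U$. For the second, the inductive hypothesis gives $\omega \iprod (u_2 \wedge \cdots \wedge u_r) \in \Lambda^{r-2} U$, and wedging with $u_1 \in U$ lands in $\Lambda^{r-1} U$. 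Both terms lie in $\Lambda^{r-1} U$, completing the induction. Extending by linearity to all of $\Lambda^r U$ yields $\omega \iprod \Lambda^r U \subset \Lambda^{r-1} U$, as claimed.
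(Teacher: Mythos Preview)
Your proof is correct and follows essentially the same approach as the paper: reduce by linearity to decomposable multivectors, then induct on $r$ using the Leibniz rule \eqref{eq:iprod}. The paper's proof is in fact more terse than yours, merely pointing to the induction without writing out the step.
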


\begin{proof}
	By linearity, it suffices to consider the contraction of \(\omega\) with decomposable \(r\)-vectors \(u = u_1 \wedge \ldots \wedge u_r\) with \(u_i \in U\). The fact that \(\omega \iprod u \in \Lambda^{r-1} U\) follows from Equation \eqref{eq:iprod} and induction on \(r\). 
\end{proof}

Let \(e \in V\) be a non-zero vector. Let \(V_r^{\circ}\) be the subspace of all \(v \in \Lambda^r V\) such that \(e \wedge v = 0\) and
let \(W^{\circ} \subset V^*\) be the subspace of all \(1\)-forms \(\omega\) with \(\omega(e) = 0\).

\begin{lemma}\label{lem:extalg}
	The following holds.
	\begin{enumerate}[label=\textup{(\roman*)}]
		\item If \(v \in \Lambda^r V\) is non-zero and \(r \geq 2\) then there is \(\omega \in W^{\circ}\) such that \(\omega \iprod v \neq 0\).
		\item If \(v \in V^{\circ}_r\) and \(\omega \in W^{\circ}\) then \(\omega \iprod v \in V^{\circ}_{r-1}\).
		\item If \(v \in V^{\circ}_r\) and \(\omega(e) = 1\) then  \(v' = \omega \iprod v\) satisfies \(v = e \wedge v'\).
	\end{enumerate}
\end{lemma}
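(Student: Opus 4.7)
The three parts all hinge on the derivation identity
\[
\omega \iprod (e \wedge v) \,=\, \omega(e) \cdot v \,-\, e \wedge (\omega \iprod v),
\]
which is a direct specialization of Equation \eqref{eq:iprod} with \(v_1 = e\), \(v_2 = v\). Items (ii) and (iii) are immediate from this identity together with the hypothesis \(e \wedge v = 0\), and I would simply set them out in one or two lines each: for (ii) the identity gives \(0 = -e \wedge (\omega \iprod v)\) since \(\omega(e) = 0\), proving that \(\omega \iprod v \in V^{\circ}_{r-1}\); for (iii) it gives \(0 = v - e \wedge v'\) since \(\omega(e) = 1\).

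The only part requiring real work is (i). The plan is to refine the proof of Lemma \ref{lem:convnondeg} by choosing the basis adapted to \(e\). Complete \(e\) to a basis \(e_1 = e,\, e_2, \ldots, e_n\) of \(V\) with dual basis \(\eta_1, \ldots, \eta_n\); then \(W^{\circ} = \spn\{\eta_2, \ldots, \eta_n\}\). Expand \(v = \sum_I \lambda_I \, e_I\) over \(r\)-subsets \(I \subset \{1, \ldots, n\}\). For each \(i \geq 2\), the contraction \(\eta_i \iprod v\) has \(\pm \lambda_I\) as the coefficient of \(e_{I \setminus \{i\}}\) whenever \(i \in I\). If one assumes that \(\eta_i \iprod v = 0\) for every \(i \geq 2\), then \(\lambda_I = 0\) for every \(r\)-subset \(I\) containing at least one index \(\geq 2\). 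When \(r \geq 2\), every \(r\)-subset of \(\{1, \ldots, n\}\) necessarily contains such an index (since \(\{1\}\) has only one element), so all coefficients vanish, contradicting \(v \neq 0\).

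I don't expect any real obstacle here: item (i) is a clean coordinate argument using the fact that \(r \geq 2\) forces every \(r\)-subset to meet \(\{2, \ldots, n\}\), and items (ii), (iii) are one-line consequences of the Leibniz rule for \(\iprod\). The only point to write carefully is the sign in the derivation identity, which is already pinned down by Equation \eqref{eq:iprod}.
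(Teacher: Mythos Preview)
Your proposal is correct and follows essentially the same approach as the paper: items (ii) and (iii) are one-line consequences of the derivation identity \(\omega \iprod (e \wedge v) = \omega(e)\, v - e \wedge (\omega \iprod v)\), and item (i) is handled by completing \(e\) to a basis and observing that every \(r\)-subset with \(r \geq 2\) contains an index \(\geq 2\). The only cosmetic difference is that the paper argues (i) directly (pick \(I\) with \(\lambda_I \neq 0\), then any \(\eta_i\) with \(i \in I \setminus \{1\}\) works) whereas you phrase it as a contradiction; the content is identical.
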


\begin{proof}
	(i) We complete the vector \(e\) to a basis of \(V\), say \(v_1=e, v_2, \ldots, v_n\). Let \(\eta_1, \ldots, \eta_n\) be the dual basis of \(V^*\) defined by \(\eta_i(v_j) = \delta_{ij}\). The subspace \(W^{\circ}\) is spanned by \(\eta_2, \ldots, \eta_n\). Write \(v =  \sum_I \lambda_I v_I\) where \(v_I = v_{i_1} \wedge \ldots \wedge v_{i_r}\) are the basis elements of \(\Lambda^r V\) and let \(I\) be such that \(\lambda_I \neq 0\). Since \(|I| = r \geq 2\), we can find an index \(i \in I\) with \(i\neq 1\) and we take \(\omega = \eta_i\). Then \(\omega \in W^{\circ}\) and the argument in the proof of Lemma \ref{lem:convnondeg} shows that \(\omega \iprod v \neq 0\).
	
	(ii) We need to show that \(e \wedge (\omega \iprod v)\) is zero. This follows from \(e \wedge v = 0\) and \(\omega (e) = 0\) together with the identity
	\[
	\omega \iprod (e \wedge v) = \omega(e) v - e \wedge (\omega \iprod v) .
	\]
	
	(iii) Since \(e \wedge v = 0\) and \(\omega (e) = 1\), 
	\begin{equation*}
		0 = \omega \iprod (e \wedge v) = \omega(e) v - e \wedge (\omega \iprod v) = v - e \wedge v' 
	\end{equation*}
    and the statement follows.
\end{proof}

\subsection{Saturated subsheaves}\label{sec:satsubsheaf}

\subsubsection{Subsheaves and subbundles}\label{sec:sheavebundles}

Let \(X\) be a complex manifold. For clarity, in this section \ref{sec:sheavebundles} only we distinguish between vector bundles and locally free sheaves by writing holomorphic vector bundles on \(X\) with straight letters \(E\) and use curly letters \(\mE\) to denote their corresponding sheaves of holomorphic sections. The correspondence \(E \mapsto \mE\)
defines an equivalence between the categories of holomorphic vector bundles on \(X\) and locally free sheaves of \(\mO_X\)-modules.
We recall the induced bijection between morphisms on these two categories. 

Let \(x \in X\). We denote by \(\mE_x\) the stalk of \(\mE\) at \(x\) and by \(E_x\) the fibre of \(E\) at \(x\). The two are related by
\begin{equation}
	E_x = \mE_x / \mathfrak{m}_x \mE_x
\end{equation}
where \(\mathfrak{m}_x \subset \mO_{X, x}\) is the ideal of germs of functions that vanish at \(x\).
If \(E\) and \(F\) are holomorphic vector bundles with sheaves of sections \(\mE\) and \(\mF\) then there is a natural correspondence between homomorphisms of \(\mO_X\)-modules
\[
\varphi \in \Hom_{\mO_X}(\mE, \mF)
\] 
and linear holomorphic maps of vector bundles 
\[
\phi \in H^0(\Hom(E, F))
\] 
where \(\Hom(E, F)\) is the vector bundle whose fibre over \(x\) is the set of linear maps \(E_x \to F_x\). More precisely, an element \(\varphi\) acts on stalks \(\varphi_x : \mE_x \to \mF_x\)
as a linear map of \(\mO_{X, x}\)-modules and, since \(\varphi_x(\mathfrak{m}_x \mE_x) \subset \mathfrak{m}_x \mF_x\), it induces a linear map of \(\C\)-vector spaces \(\phi_x : E_x \to F_x\) 
giving the action on the fibres of \(\phi\). Conversely, an element \(\phi\) acts pointwise on sections to give an element \(\varphi\). The constructions \(\varphi \mapsto \phi\) and \(\phi \mapsto \varphi\) are inverses of each other.

\begin{example}
	It is clear that if \(\varphi_x: \mE_x \to \mF_x\) is surjective then \(\phi_x: E_x \to F_x\) is also surjective. However,
	it can happen that \(\varphi_x\) is injective but \(\phi_x\) is not. For example, if \(\mE = \mO_X\) and \(\mF = \mO_X(D)\) is the locally free sheaf of meromorphic functions on \(X\) with simple poles along a divisor \(D \subset X\). Then the inclusion of \(\mO_X\)-modules \(\mE \subset \mF\) defines an element of \(H^0(\Hom(E, F))\), or equivalently a section of \(F\), that vanishes along \(D\).
\end{example}

We recall the following standard result.
\begin{lemma}\label{lem:constrank}
	Let \(f: E \to F\) be a surjective holomorphic map of vector bundles. Then \(\ker(f)\) is a vector subbundle of \(E\).
\end{lemma}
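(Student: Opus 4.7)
The plan is to verify two things: that $\ker(f)$ has constant rank as a subset of $E$, and that it is locally a holomorphically trivial subbundle near every point.

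First I would check the rank. Since $f_x \colon E_x \to F_x$ is surjective for every $x \in X$, the rank-nullity theorem gives
\[
\dim \ker(f_x) \;=\; \rk(E) - \rk(F),
\]
which is a constant $r$ independent of $x$. So set-theoretically, $\ker(f) \subset E$ is a disjoint union of $r$-dimensional linear subspaces, one over each point.

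Next I would establish local triviality. Fix $x_0 \in X$ and choose local holomorphic trivializations of $E$ and $F$ on an open neighborhood $U$ of $x_0$, so that $f$ is represented by a matrix-valued holomorphic function $A \colon U \to \mathrm{Mat}_{\rk(F) \times \rk(E)}(\C)$ with $A(x)$ of rank $\rk(F)$ for every $x \in U$. By permuting basis vectors of $E$ if necessary, assume the first $\rk(F)$ columns of $A(x_0)$ are linearly independent. By continuity of the determinant, the same $\rk(F)$ columns remain linearly independent for all $x$ in some (possibly smaller) neighborhood $U' \subset U$ of $x_0$. This choice of columns induces a holomorphic splitting $E|_{U'} = E_1 \oplus E_2$ where $\rk(E_1) = \rk(F)$ and $\rk(E_2) = r$, such that the restriction $g := f|_{E_1} \colon E_1 \to F|_{U'}$ is a fibrewise isomorphism, i.e.\ a holomorphic isomorphism of vector bundles.

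Then I would realize $\ker(f)$ over $U'$ as the graph of the holomorphic bundle map $\varphi \colon E_2 \to E_1$ given by $\varphi = -g^{-1} \circ f|_{E_2}$. Explicitly, a vector $v_1 + v_2 \in E_1 \oplus E_2$ lies in $\ker(f)$ if and only if $g(v_1) = -f(v_2)$, equivalently $v_1 = \varphi(v_2)$. The map
\[
E_2|_{U'} \;\longrightarrow\; \ker(f)|_{U'}, \qquad v_2 \;\longmapsto\; \varphi(v_2) + v_2,
\]
is a holomorphic isomorphism of vector bundles, exhibiting $\ker(f)|_{U'}$ as a holomorphic rank-$r$ subbundle of $E|_{U'}$. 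Since $x_0$ was arbitrary, $\ker(f)$ is a vector subbundle of $E$. The only potential obstacle — continuity of the rank — is handled at the outset by the surjectivity hypothesis, so the argument is essentially the holomorphic implicit function theorem applied via the graph construction.
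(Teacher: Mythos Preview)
Your proof is correct and complete. The paper itself does not prove this lemma; it is stated there as a standard result with no proof given, so there is nothing to compare approaches against. Your argument --- constant rank from surjectivity plus rank-nullity, then local triviality via the graph construction over a splitting $E_1 \oplus E_2$ with $f|_{E_1}$ invertible --- is exactly the standard proof one would expect.
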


\begin{definition}
	Let \(\mE\) be a locally free sheaf and let \(\mV \subset \mE\) be a subsheaf. 
	We say that \(\mV\) is a vector subbundle of \(\mE\)
	if \(\mV\) is locally free and the natural map of vector bundles \(V \to E\) is injective.
\end{definition}

\begin{lemma}\label{lem:subbund}
	Let \(\mE\) be a locally free sheaf and let \(\mV \subset \mE\) be a subsheaf such that \(\mE / \mV\) is locally free. Then \(\mV\) is a vector subbundle of \(\mE\).
\end{lemma}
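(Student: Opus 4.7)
The plan is to exploit the short exact sequence
\[
0 \to \mV \to \mE \to \mE/\mV \to 0
\]
in which, by hypothesis, the outer two terms are locally free. The key observation is that such a sequence splits locally: around any \(x \in X\) choose a local frame \(\overline{s}_1,\ldots,\overline{s}_q\) of \(\mE/\mV\) on a neighbourhood \(U\), lift each \(\overline{s}_i\) to a section \(s_i \in \mE(U)\) (shrinking \(U\) if needed), and extend \(\overline{s}_i \mapsto s_i\) by \(\mO_U\)-linearity to a splitting \(\sigma : (\mE/\mV)|_U \to \mE|_U\) of the projection. This exhibits \(\mE|_U \cong \mV|_U \oplus (\mE/\mV)|_U\), so \(\mV|_U\) is a direct summand of a free module of finite rank and therefore itself free. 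Hence \(\mV\) is locally free of rank \(\operatorname{rk}\mE - \operatorname{rk}(\mE/\mV)\).

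It remains to check that the induced map of vector bundles \(V \to E\) is fibrewise injective. Let \(\phi : E \to Q\) be the morphism of vector bundles corresponding to the sheaf surjection \(\varphi : \mE \to \mE/\mV\), where \(Q\) is the vector bundle whose sheaf of sections is \(\mE/\mV\). Because \(\varphi_x : \mE_x \twoheadrightarrow (\mE/\mV)_x\) is a surjection of free \(\mO_{X,x}\)-modules, Nakayama gives surjectivity on fibres, so \(\phi\) is a surjective map of vector bundles; by Lemma \ref{lem:constrank}, \(\ker \phi\) is then a vector subbundle of \(E\). The sheaf of sections of \(\ker\phi\) coincides with \(\ker \varphi = \mV\), so the inclusion \(\mV \hookrightarrow \mE\) is induced by the inclusion of \(\ker\phi\) as a subbundle of \(E\); in particular \(V \to E\) is injective on every fibre.

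There is no real obstacle here; the only mild subtlety is remembering to use Nakayama to pass from stalk-surjectivity of \(\varphi\) to fibre-surjectivity of \(\phi\) (which is needed in order to invoke Lemma \ref{lem:constrank}), and to identify the sheaf of sections of \(\ker \phi\) with the kernel \(\mV\) of the sheaf map — both are straightforward once the local splitting is in hand.
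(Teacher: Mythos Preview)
Your proof is correct and follows essentially the same route as the paper: both identify \(\mV\) with the sheaf of sections of \(\ker\phi\), where \(\phi: E \to Q\) is the surjective bundle map corresponding to \(\mE \to \mE/\mV\), and invoke Lemma~\ref{lem:constrank}. Your first paragraph (the local splitting argument showing \(\mV\) is locally free) is in fact redundant, since your second paragraph already establishes that \(\mV\) is the sheaf of sections of a subbundle of \(E\); the paper accordingly omits it. On the other hand, your explicit appeal to Nakayama to pass from stalk-surjectivity of \(\varphi\) to fibre-surjectivity of \(\phi\) fills in a detail the paper leaves implicit.
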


\begin{proof}
	The quotient projection \(\mE \to \mF := \mE / \mV\) corresponds to a surjective map of vector bundles \(f: E \to F\). 
	By Lemma \ref{lem:constrank} \(\ker(f)\) is a vector subbundle of \(E\) and by construction \(\mV\) is the sheaf of sections of \(\ker(f)\). 
\end{proof}

\subsubsection{Saturated subsheaves and subbundles}

\begin{definition}\label{def:saturated}
	Let \(\mE\) be a locally free sheaf and let \(\mV \subset \mE\) be a coherent subsheaf.
	The subsheaf \(\mV\) is saturated if the quotient sheaf \(\mE/\mV\) is torsion-free.
\end{definition}

\begin{proposition}[\cite{okonek}]\label{prop:torsionfree}
	If \(\mF\) is a torsion-free coherent sheaf on \(X\) then there is a closed analytic subset \(Z \subset X\) with \(\codim Z \geq 2\) such that \(\mF\) is locally free on \(X \setminus Z\).
\end{proposition}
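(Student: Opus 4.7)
The plan is to first identify the non-locally-free locus $Z = \{x \in X : \mF_x \text{ is not a free } \mO_{X,x}\text{-module}\}$ and show it is a closed analytic subset of $X$. Since $\mF$ is coherent, on a neighborhood of any point there is a finite free presentation $\mO_X^a \xrightarrow{\phi} \mO_X^b \to \mF \to 0$. The non-locally-free locus is then described by Fitting ideals: $\mF$ is locally free of rank $r$ at $x$ if and only if $\mathrm{Fit}_r(\mF)_x = \mO_{X,x}$ and $\mathrm{Fit}_{r-1}(\mF)_x = 0$. These conditions cut out a closed analytic subset, so $Z$ is closed analytic, and on its complement $\mF$ is locally free of the generic rank $r$.

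Next I would prove $\codim Z \geq 2$ by contradiction. Suppose some irreducible component $Z_0$ of $Z$ has codimension $1$ in $X$. Pick a point $p$ that is smooth on $Z_0$, avoids every other irreducible component of $Z$, and avoids the locus where $\mF$ has rank strictly greater than $r$. Choose holomorphic coordinates $(z_1, \ldots, z_n)$ near $p$ such that $Z_0 = \{z_1 = 0\}$ and $Z$ agrees with $Z_0$ on this neighborhood. The local ring $\mO_{X,p}$ is regular of dimension $n$, and $\mathfrak{p} = (z_1)$ is a height one prime; the localization $R := (\mO_{X,p})_\mathfrak{p}$ is a discrete valuation ring. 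The localized module $\mF_p \otimes_{\mO_{X,p}} R$ is a finitely generated torsion-free module over the DVR $R$ (torsion-freeness is preserved by localization), and hence is free by the structure theorem for finitely generated modules over a PID.

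This freeness at the height one prime $\mathfrak{p}$ implies that the Fitting ideal $\mathrm{Fit}_{r-1}(\mF)_p$ has trivial localization at $\mathfrak{p}$, so it is not contained in $\mathfrak{p}$. But by construction $Z_0 \subset Z$, so $\mathrm{Fit}_{r-1}(\mF)$ must vanish along $Z_0$ near $p$, i.e.\ be contained in the ideal $(z_1) = \mathfrak{p}$—a contradiction. Hence no irreducible component of $Z$ can have codimension $1$, which gives $\codim Z \geq 2$.

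The main obstacle is really bookkeeping: carefully translating between the analytic setting and the commutative-algebraic statements about regular local rings, ensuring that torsion-freeness of $\mF_p$ over $\mO_{X,p}$ passes to torsion-freeness over the DVR obtained by localization at a height one prime, and that the Fitting ideal description of $Z$ genuinely detects generic behaviour on codimension one components of $Z$. Both points are standard consequences of coherence of $\mF$ together with the fact that $\mO_{X,p}$ is Noetherian, regular, and an integral domain.
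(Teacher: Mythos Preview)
The paper does not give a proof of this proposition; it simply records it with a citation to Okonek--Schneider--Spindler. So there is nothing to compare against, and your write-up stands or falls on its own.

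Your overall strategy is the right one: describe the non-locally-free locus \(Z\) via Fitting ideals, then localize at a height-one prime on a putative codimension-one component and use that a finitely generated torsion-free module over a DVR is free. However, the final contradiction is miswired. For a torsion-free sheaf of generic rank \(r\) on an integral \(X\), the ideal \(\mathrm{Fit}_{r-1}(\mF)\) is already zero globally (its stalk at the generic point vanishes, and an ideal in a domain with zero localization is zero). Hence \(Z = V(\mathrm{Fit}_r(\mF))\), and the relevant obstruction lives in \(\mathrm{Fit}_r\), not \(\mathrm{Fit}_{r-1}\). Your sentence ``\(\mathrm{Fit}_{r-1}(\mF)_p\) has trivial localization at \(\mathfrak p\), so it is not contained in \(\mathfrak p\)'' is a non sequitur: an ideal whose localization at \(\mathfrak p\) is zero is, in a domain, the zero ideal --- certainly contained in \(\mathfrak p\). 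The correct step is: freeness of rank \(r\) at \(\mathfrak p\) gives \(\mathrm{Fit}_r(\mF)_{\mathfrak p} = R\), hence \(\mathrm{Fit}_r(\mF)_p \not\subset \mathfrak p\); but \(Z_0 \subset Z = V(\mathrm{Fit}_r(\mF))\) forces \(\mathrm{Fit}_r(\mF)_p \subset \mathfrak p\), which is the desired contradiction.

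A smaller point: your requirement that \(p\) ``avoid the locus where \(\mF\) has rank strictly greater than \(r\)'' is incompatible with \(p \in Z\). On an integral base, torsion-freeness plus fibre dimension equal to \(r\) already forces local freeness at that point (a surjection \(\mO^r \to \mF_p\) has torsion-free kernel with zero generic fibre, hence zero kernel). So every point of \(Z\) has fibre dimension \(>r\); simply drop that condition and choose \(p\) smooth on \(Z_0\) and off the other components.
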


\begin{corollary}\label{cor:subbund}
	Let \(\mE\) be a locally free sheaf on \(X\) and suppose that
	\(\mV \subset \mE\) is a saturated subsheaf. Then there exists a closed analytic subset \(Z \subset X\) with \(\codim Z \geq 2\) such that \(\mV\) is a vector subbundle of \(\mE\) on \(X \setminus Z\).
\end{corollary}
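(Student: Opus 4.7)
The proof is essentially a two-line chain combining the two results just stated, so the plan is simply to assemble them.

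First, I would invoke the saturation hypothesis in its primary form: by Definition \ref{def:saturated}, the quotient sheaf $\mathcal{Q} := \mE/\mV$ is torsion-free. Since torsion-freeness and coherence are preserved under taking quotients of coherent sheaves by coherent subsheaves (and $\mE$, $\mV$ are coherent), $\mathcal{Q}$ is a torsion-free coherent sheaf on $X$.

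Next, I would apply Proposition \ref{prop:torsionfree} to $\mathcal{Q}$: this produces a closed analytic subset $Z \subset X$ with $\codim Z \geq 2$ such that $\mathcal{Q}|_{X \setminus Z}$ is locally free. On the open set $U := X \setminus Z$, we thus have an exact sequence of $\mO_U$-modules
\[
0 \to \mV|_U \to \mE|_U \to \mathcal{Q}|_U \to 0
\]
in which both $\mE|_U$ and $\mathcal{Q}|_U$ are locally free. Now I would apply Lemma \ref{lem:subbund} on $U$: the hypothesis that the quotient is locally free is exactly what is needed to conclude that $\mV|_U$ is a vector subbundle of $\mE|_U$. This is the whole argument, and there is no serious obstacle; the only point worth a line of care is the implicit remark that Lemma \ref{lem:subbund} and its proof (via Lemma \ref{lem:constrank}) simultaneously yield that $\mV|_U$ is itself locally free and that the inclusion is injective on fibres, so it is genuinely a subbundle and not merely a coherent subsheaf.
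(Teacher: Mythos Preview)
Your proof is correct and follows exactly the same approach as the paper: apply Proposition~\ref{prop:torsionfree} to the torsion-free quotient $\mE/\mV$ to obtain the codimension-$2$ set $Z$, then apply Lemma~\ref{lem:subbund} on $X\setminus Z$.
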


\begin{proof}
	By Proposition \ref{prop:torsionfree} there is a closed analytic subset \(Z \subset X\) with \(\codim Z \geq 2\) such that  \(\mE/\mV\) is locally free on \(X \setminus Z\). By Lemma \ref{lem:subbund}\,, \(\mV\) is a vector subbundle of \(\mE\) on \(X \setminus Z\).
\end{proof}

\subsubsection{Determinant line bundle}

Let \(\mE\) be a vector bundle on a complex manifold \(X\) and let \(\mV\) be a vector subbundle of \(\mE\) defined on an open set \(U = X \setminus Z\) where \(Z \subset X\) is a closed analytic subset with \(\codim Z \geq 2\). Let \(\imath: U \to X\) be the inclusion map and let \(r = \rk \mV\). 

\begin{definition}\label{def:det}
	The determinant line bundle of \(\mV\) is the sheaf \(\det \mV\) on \(X\) defined as the double dual of the push-forward of \(\Lambda^r\mV\) by the inclusion map
	\begin{equation}
		\det \mV = (\imath_{*}(\Lambda^r \mV))^{**} .
	\end{equation}
\end{definition}

In particular, on the open set \(U\), the sheaf \(\det(\mV)\) is canonically isomorphic to the line bundle \(\Lambda^r\mV\). The sheaf \(\det(\mV)\) provides a canonical extension of \(\Lambda^r \mV\) to the whole \(X\) with the following desirable property.

\begin{lemma}
	\(\det \mV\) is a line bundle on \(X\).
\end{lemma}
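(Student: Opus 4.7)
The plan is to exhibit $\det \mV$ as a line bundle by extending the honest line bundle $\Lambda^r \mV|_U$ across the bad locus $Z$. By Corollary \ref{cor:subbund}, there is a closed analytic subset $Z \subset X$ with $\codim Z \geq 2$ such that $\mV|_U$ is a rank-$r$ subbundle of $\mE|_U$ on $U = X \setminus Z$; in particular $\Lambda^r \mV|_U$ is a holomorphic line bundle on $U$. The key input I would use is that, because $X$ is smooth and $\codim Z \geq 2$, every holomorphic line bundle on $U$ extends uniquely to a holomorphic line bundle on $X$. This is the analytic counterpart of $\Pic(X) = \Pic(U)$ and ultimately rests on the fact that the local rings $\mO_{X,x}$ are unique factorization domains, so that Weil divisor classes on $U$ close up to Cartier divisor classes on $X$; equivalently, on Hartogs-type extension of holomorphic functions and their reciprocals across $Z$.

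First I would produce a line bundle $L$ on $X$ with $L|_U \cong \Lambda^r \mV|_U$: locally, represent $\Lambda^r \mV|_U$ by a divisor on $U$, take its analytic closure in $X$, and use local factoriality to see that the resulting Weil divisor is Cartier, giving a line bundle that extends $\Lambda^r \mV|_U$. Uniqueness of $L$ follows from the same Hartogs principle applied to transition isomorphisms of line bundles across $Z$. Next I would identify $L$ with $\det \mV$: since $L$ is locally free and $\codim Z \geq 2$, Hartogs extension of sections of $L$ across $Z$ gives
\[
\imath_{*}(L|_U) \;=\; L ,
\]
so $\imath_{*}(\Lambda^r \mV) \cong L$ is already a line bundle on $X$, and since line bundles are reflexive, applying the double dual leaves it unchanged. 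Hence $\det \mV = (\imath_{*}(\Lambda^r \mV))^{**} \cong L$ is a line bundle.

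The hard part will be the extension step: justifying that a holomorphic line bundle on $U$ extends across the codimension-$\geq 2$ analytic subset $Z$ inside the smooth complex manifold $X$. A cleaner and more abstract route, which I would use if the concrete divisor argument becomes cumbersome, is to observe that $\det \mV$ is by construction reflexive (double duals of coherent sheaves are reflexive) and of generic rank $1$, and then invoke the general fact that every rank-one reflexive sheaf on a smooth complex manifold is locally free. In either version, smoothness of $X$ is the essential hypothesis that makes the extension available.
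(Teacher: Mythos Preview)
Your proposal is correct, and in fact the ``cleaner and more abstract route'' you mention at the end is exactly what the paper does: it simply observes that $\det\mV$ is by construction a reflexive sheaf of rank $1$, and then cites the standard fact (Hartshorne, \emph{Stable reflexive sheaves}, Proposition~1.9) that a rank-one reflexive sheaf on a smooth variety is a line bundle. The paper's proof is one line.

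Your first approach---extending the line bundle $\Lambda^r\mV|_U$ across $Z$ via $\Pic(X)\cong\Pic(U)$ and then identifying the extension with $\det\mV$ through Hartogs---is also valid and more explicit, but it is working harder than necessary: the double dual in the definition of $\det\mV$ already packages the extension step for you, and the reflexive-of-rank-one argument handles it in one stroke. The concrete route has the advantage of showing \emph{why} the abstract argument works (local factoriality of the smooth $X$ is exactly what makes rank-one reflexive sheaves invertible), but for the purposes of this lemma the paper's one-line citation is the efficient choice.
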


\begin{proof}
	By definition, \(\det\mV\) is a reflexive sheaf of rank \(1\). A reflexive sheaf of rank \(1\) is a line bundle, see \cite[Proposition 1.9]{hartshorne2}.
\end{proof}

\begin{definition}\label{def:c1saturated}
	Let \(\mV\) be a saturated subsheaf of a locally free sheaf. The first Chern class of \(\mV\) is defined as
	\[
	c_1(\mV) = c_1(\det \mV) \,,
	\]
	where \(\det \mV\) is the determinant line bundle of \(\mV\) as in Definition \ref{def:det} \,.
\end{definition}

\subsection{Distributions on \texorpdfstring{\(\CP^n\)}{CPn}}\label{sec:distributions}

A distribution on \(\CP^n\) is a holomorphic vector subbundle \(\mV \subset T\CP^n\) defined outside a closed analytic subset \(Z \subset \CP^n\) of complex codimension \( \geq 2\). 
The rank \(r\) of the subbundle is also called the rank of \(\mV\), we assume that \(1 \leq r \leq n-1\).
The \emph{singular set} of \(\mV\) is the smallest of all such \(Z\)'s, the \emph{regular set} is the complement \(U = \CP^n \setminus Z\).

\begin{definition}\label{def:index}
The index of \(\mV\) is the unique integer \(\imath\) such that there is an isomorphism of line bundles
\begin{equation}
	\det \mV \cong \, \imath \cdot \mO_{\P^n}(1) \,,
\end{equation}
where \(\det \mV\) is as in Definition \ref{def:det}\,.	
\end{definition}

\begin{lemma}\label{lem:indlessrank}
	The index is less or equal than the rank, \(\imath \leq r\).
\end{lemma}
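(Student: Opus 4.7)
The idea is to restrict $\mathcal{V}$ to a generic projective line $Q \subset \mathbb{CP}^n$, where everything becomes a direct sum of line bundles on $\mathbb{P}^1$, and then compute the degree. Concretely, I would choose a line $Q \cong \mathbb{P}^1 \subset \mathbb{CP}^n$ satisfying three genericity conditions: (i) $Q$ is disjoint from the singular set $Z$ of $\mathcal{V}$; (ii) at some (equivalently, at a generic) point $p \in Q$, the tangent line $T_pQ$ is not contained in the fibre $\mathcal{V}_p$; (iii) $\mathcal{V}|_Q$ is a vector subbundle of $T\CP^n|_Q$. Condition (i) is possible because $\codim Z \geq 2$, and then (iii) follows from (i) by Corollary \ref{cor:subbund}. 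Condition (ii) can be achieved because the rank $r$ of $\mathcal{V}$ is strictly less than $n$, so at any regular point $p$ the set of directions not lying in $\mathcal{V}_p$ is a non-empty Zariski open subset of $\mathbb{P}(T_p\CP^n)$; choosing a line through $p$ in such a direction yields the desired $Q$.

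With $Q$ so chosen, I would use the normal-bundle sequence on $\mathbb{P}^1$:
\[
0 \to TQ \to T\CP^n|_Q \to N_{Q/\CP^n} \to 0,
\]
where $TQ \cong \mathcal{O}_{\mathbb{P}^1}(2)$ and $N_{Q/\CP^n} \cong \mathcal{O}_{\mathbb{P}^1}(1)^{\oplus(n-1)}$. The saturated subsheaf $\mathcal{V}|_Q \cap TQ$ of the line bundle $TQ$ has rank either $0$ or $1$; if it had rank $1$, it would agree with $TQ$ on a dense open subset, contradicting condition (ii). Hence the composition $\mathcal{V}|_Q \hookrightarrow T\CP^n|_Q \twoheadrightarrow N_{Q/\CP^n}$ is an injective map of sheaves.

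Now I would apply Grothendieck's splitting theorem to write $\mathcal{V}|_Q \cong \bigoplus_{i=1}^r \mathcal{O}_{\mathbb{P}^1}(a_i)$. Each summand $\mathcal{O}(a_i)$ embeds into $\mathcal{O}_{\mathbb{P}^1}(1)^{\oplus(n-1)}$, and since $H^0(\mathcal{O}_{\mathbb{P}^1}(1-a)) = 0$ for $a \geq 2$, every $a_i \leq 1$. Therefore
\[
\deg(\mathcal{V}|_Q) = \sum_{i=1}^r a_i \,\leq\, r.
\]
On the other hand, by Definition \ref{def:index}, $\det \mathcal{V} \cong \imath \cdot \mathcal{O}_{\CP^n}(1)$, so $\deg(\mathcal{V}|_Q) = c_1(\mathcal{V}) \cdot [Q] = \imath$. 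Combining, $\imath \leq r$, which is the desired inequality.

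The only non-routine step is verifying that a line $Q$ satisfying (i)--(iii) exists; this is essentially the same genericity argument used in the proof of Lemma \ref{lem:bounddL}, and it is the main (mild) obstacle. Once this is in place, the rest is a short computation on $\mathbb{P}^1$ using Grothendieck splitting and the standard decomposition of $T\CP^n|_Q$.
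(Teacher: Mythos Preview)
Your proof is correct and follows essentially the same approach as the paper: restrict to a generic line $Q$ avoiding the singular set and not tangent to $\mathcal{V}$, then bound the degree of the subbundle $\mathcal{V}|_Q \subset T\CP^n|_Q \cong \mathcal{O}(2)\oplus\mathcal{O}(1)^{n-1}$ by $r$ using that $TQ\not\subset\mathcal{V}|_Q$. The paper's version is a single sentence, while you spell out the normal-bundle sequence and the Grothendieck splitting explicitly; the only cosmetic issue is that your parenthetical pointer to Lemma~\ref{lem:bounddL} is a forward reference (that lemma in fact cites this one), so you may want to drop it.
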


\begin{proof}
	Take a generic line \(P\) contained in the regular set of \(\mV\) that is not tangent to the distribution, in the sense that \(TP \not\subset \mV|_P\). Then \(\mV|_P\) is a subbundle of \(\mO_{\P^1}(2) \oplus \mO_{\P^1}(1)^{\oplus(n-1)}\) that doesn't contain \(\mO_{\P^1}(2)\), so \(\imath = \deg(\mV|_{P}) \leq r\).
\end{proof}

We handle distributions using multivector fields, as detailed next.

\begin{lemma}\label{lem:multvf}
	Let \(\mV\) be a distribution on \(\CP^n\) of index \(\imath\) and rank \(r\). Then there is a multivector field 
	\[
	\bv \in H^0 \left( \Lambda^r T \CP^n \otimes \mO_{\P^n}(-\imath) \right) ,
	\]
	uniquely determined up to scalar multiplication, such that, on the regular set \(U\) of \(\mV\),
	\[
	\mV = \{ w \in T \CP^n \,\, | \,\, w \wedge \bv = 0 \} .
	\]
	In particular, the multivector field \(\bv\) is nowhere zero outside a codimension \(2\) analytic subset of \(\CP^n\).
\end{lemma}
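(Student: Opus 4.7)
The plan is to construct $\bv$ directly from the rank-$r$ subbundle $\mV \subset T\CP^n$ on its regular set $U = \CP^n \setminus Z$, and then use Hartogs' extension across the codimension-$2$ singular locus $Z$.

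First, on $U$, take the top exterior power $\Lambda^r \mV \subset \Lambda^r T\CP^n|_U$. This is a line subbundle canonically equal to $\det \mV|_U$, which by Definition \ref{def:index} is isomorphic to $\mO_{\P^n}(\imath)|_U$. Twisting the inclusion $\Lambda^r \mV \hookrightarrow \Lambda^r T\CP^n|_U$ by $\mO_{\P^n}(-\imath)$ produces a nowhere-vanishing holomorphic section of $\Lambda^r T\CP^n \otimes \mO_{\P^n}(-\imath)$ on $U$ (unique up to scalar, since it trivialises a line bundle). Explicitly, locally one picks a frame $v_1,\ldots,v_r$ of $\mV$ and a generator $s$ of $\mO_{\P^n}(\imath)$, and sets the local section to be $(v_1 \wedge \cdots \wedge v_r) \otimes s^{-1}$; a change of frame scales this by the Jacobian, which is absorbed by the identification with $\det\mV$.

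Next, invoke Hartogs: since $\codim_{\CP^n} Z \geq 2$ and $\Lambda^r T\CP^n \otimes \mO_{\P^n}(-\imath)$ is locally free, the section on $U$ extends uniquely to a global holomorphic section $\bv \in H^0(\Lambda^r T\CP^n \otimes \mO_{\P^n}(-\imath))$. On $U$, $\bv$ is nowhere zero and, at each $x \in U$, $\bv(x)$ is a decomposable $r$-vector tangent to $\mV_x$ (Example \ref{ex:dec}); by Lemma \ref{lem:decvec}(ii),
\[
\mV_x = \ker\bigl(\wedge\, \bv(x)\bigr) = \{w \in T_x\CP^n \mid w \wedge \bv(x) = 0\}.
\]
This gives the stated description of $\mV$ on $U$. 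Since $\bv$ is nowhere zero on $U$, its zero set is contained in $Z$ and hence has codimension $\geq 2$, proving the final assertion.

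For uniqueness up to scalar, suppose $\bv'$ is another such section. On $U$ both $\bv$ and $\bv'$ are nowhere-vanishing generators of the same line subsheaf (either by construction, or equivalently because the kernel condition forces both to span $\Lambda^r\mV \subset \Lambda^r T\CP^n|_U$ fibrewise). Therefore the ratio $\bv/\bv'$ is a nowhere-vanishing holomorphic function on $U$; extending across $Z$ by Hartogs and using that $\CP^n$ is compact, this ratio must be a non-zero constant. The only real subtlety is verifying that the kernel characterisation on $U$ forces $\bv'(x)$ to be a scalar multiple of $\bv(x)$ pointwise — but this is immediate from Lemma \ref{lem:decvec}(i), since both are decomposable and tangent to the same $r$-plane $\mV_x$.
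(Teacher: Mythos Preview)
Your proof is correct and follows essentially the same approach as the paper: construct a nowhere-vanishing section of $\Lambda^r T\CP^n \otimes (\det\mV)^*$ on the regular set via the Pl\"ucker embedding, fix an isomorphism $(\det\mV)^* \cong \mO_{\P^n}(-\imath)$, extend across the singular locus by Hartogs, and deduce uniqueness from compactness of $\CP^n$. You add the explicit verification of the kernel description via Lemma~\ref{lem:decvec}(ii), which the paper leaves implicit.
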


\begin{proof}
	This is a global version of the Pl\"ucker embedding of the Grassmannian.
	Let \(x\) be a point in \(U\) and let
	\(v_1, \ldots, v_r\) be tangent vectors at \(x\) that make a basis of \(\mV_x\). Define
	\begin{equation}
		\bv_x' = (v_1 \wedge \ldots \wedge v_r) \otimes \ell 
	\end{equation}
	where \(\ell \in (\det \mV_x)^*\) is given by \(\ell (v_1 \wedge \ldots \wedge v_r) = 1\). The element \(\bv_x'\) is independent of the choice of basis and varying \(x\) we obtain a nowhere zero holomorphic section \(\bv'\) of \(\Lambda^r T\CP^n \otimes (\det \mV)^*\) defined over \(U\).
	By Hartogs, \(\bv'\) extends across the singular set of \(\mV\) as a holomorphic section on the whole \(\CP^n\).
	
	Fixing an isomorphism \(F: (\det \mV)^* \to \mO_{\P^n}(-\imath)\) we obtain a section \(\bv\) of \(\Lambda^r T\P^n \otimes \mO_{\P^n}(-\imath)\). Taking a different isomorphism \(\widetilde{F}\) produces another section \(\tilde{\bv}\). We can write \(\widetilde{F} = \Phi \circ F\) where \(\Phi\) is an automorphism of the line bundle \(\mO_{\P^n}(-\imath)\). Since \(\CP^n\) is compact, \(\Phi = \lambda \in \C^*\) and therefore \(\tilde{\bv} = \lambda \bv\). 
\end{proof}

\begin{remark}\label{rmk:dualdf}
	It is common to present distributions as kernels of  differential forms, see for example \cite[\S 1.3]{cukiermanpereira}. To get this description, fix a nowhere zero trivializing section \(\Omega\) of \(\Lambda^n T^*\CP^n \otimes \mO_{\P^n}(n+1)\). Let \(\bv\) be as in Lemma \ref{lem:multvf}\,. The contraction of \(\Omega\) with \(\bv\) defines a
	(twisted) differential form
	\[
	\omega = \bv \iprod \Omega \in H^0 \left( \Lambda^{n-r} T^*\CP^n  \otimes \mO_{\P^n}(n+1-\imath) \right) 
	\]
	such that \(\mV = \ker \omega\) on the regular set of \(\mV\).
\end{remark}

\begin{definition}
	The degree \(d\) of \(\mV\) is the difference
	\begin{equation}
		d = r -\imath .
	\end{equation}
	By Lemma \ref{lem:indlessrank}\,, \(d\) is a non-negative integer.
\end{definition}

\begin{remark}[Geometric interpretation of degree]
If \(Q\) is a generic linear subspace of complimentary dimension \(\dim Q = n-r\), then \(d\) is equal to the degree of the hypersurface \(Y \subset Q\) made of points \(x \in Q\) where the subspaces \(\mV_x\) and \(T_x Q\) have non-zero intersection.
To see this take \(\omega\) as in Remark \ref{rmk:dualdf} and note that the pullback of \(\omega\) to \(Q\) defines a non-zero section of \(\mO_{\P^{n-r}}(d)\).
\end{remark}

\subsubsection{Homogeneous multivector fields on \(\C^{n+1}\)}\label{sec:homogmultivec}

We work on \(\C^{n+1}\) with linear coordinates \(x_0, \ldots, x_n\).
For non-negative integers \(d\) and \(r\) we consider the finite dimensional vector space
\[
V_{d, r} = \C_{d}[x_0, \ldots, x_n] \otimes \Lambda^{r} \C^{n+1} 
\]
where \(\C_{d}[x_0, \ldots, x_n]\) is the space of homogeneous polynomials of degree \(d\).
We use the basis of \(\Lambda^{r}\C^{n+1}\) given by the multivectors
\[
\p x_I =  \frac{\p}{\p x_{i_1}} \wedge \ldots \wedge \frac{\p}{\p x_{i_{r}}} 
\] 
where \(I = (i_1, \ldots, i_r)\) is a multi-index with \(0 \leq i_1 < i_2 < \ldots < i_{r} \leq n\). An element \(\bv \in V_{d, r}\) takes the form
\[
\bv = \sum_I a_I \p x_I ,
\]
where the coefficients \(a_I\) are homogeneous polynomials of degree \(d\).

\begin{example}
	The Euler vector field \(e\) is the element in \(V_{1,1} \)  given by
	\begin{equation}
	e = \sum_{i=0}^{n} x_i \frac{\p}{\p x_i} .
	\end{equation}
	More generally, the elements of \(V_{1,1}\) are linear vector fields in \(\C^{n+1}\).
\end{example}

Taking wedge product with \(e\) defines a linear map \(V_{d, r} \xrightarrow{\wedge \, e} V_{d+1, r+1}\). We define \(V^{\circ}_{d, r}\) to be the kernel of \(\wedge \, e\), that is
\begin{equation}
V_{d, r}^{\circ} = \bigl\{ \bv \in  V_{d, r}  \, \text{ such that } \,  \bv \wedge e = 0 
\bigr\} .
\end{equation}
As we shall see, elements in \(V_{d, r}^{\circ}\) correspond to twisted multivector fields on \(\CP^n\). We begin with a preliminary lemma whose proof we omit.

\begin{lemma}[{\cite[Theorem 4.1.3]{hirzebruch}}]\label{lem:hirz}
	An exact sequence of vector bundles \(0 \to L \xrightarrow{\imath_0} V \xrightarrow{\pi} W \to 0\) with \(\rk L = 1\) determines an exact sequence 
	\[
	0 \to \Lambda^r W \otimes L \xrightarrow{\imath_0} \Lambda^{r+1} V \xrightarrow{\pi} \Lambda^{r+1} W \to 0 ,
	\]
	given by \(\pi (v_1 \wedge \ldots \wedge v_{r+1}) = \pi(v_1) \wedge \ldots \wedge \pi(v_{r+1})\) and \(\imath_0 ((w_1 \wedge \ldots \wedge w_r) \otimes \ell) = v_1 \wedge \ldots \wedge v_r \wedge \imath_0(\ell)\) where \(v_i \in V\) are such that \(\pi(v_i) = w_i\).
\end{lemma}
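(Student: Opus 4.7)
The statement is local on the base and the maps in question are natural in the fibers, so I would first reduce to verifying the corresponding short exact sequence of finite-dimensional $\C$-vector spaces at each point. The key simplification is that $\dim L = 1$, which collapses the standard filtration of $\Lambda^{r+1} V$ (whose associated graded, for a general short exact sequence $0 \to A \to B \to C \to 0$, is $\bigoplus_i \Lambda^i A \otimes \Lambda^{r+1-i} C$) down to only two non-zero pieces, namely $\Lambda^{r+1} W$ on top and $L \otimes \Lambda^r W$ at the bottom, since $\Lambda^i L = 0$ for $i \geq 2$.

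My plan is first to check that the formulas for $\pi$ and $\imath_0$ in the induced sequence are well-defined. For $\pi$ this is just the functoriality of exterior powers applied to the bundle map $V \to W$. For $\imath_0$ one must verify that the choice of lifts $v_i \in V$ of the $w_i \in W$ does not affect the value of $v_1 \wedge \cdots \wedge v_r \wedge \imath_0(\ell)$; this works precisely because any two lifts of the same $w_i$ differ by an element of the one-dimensional $\imath_0(L)$, and any wedge involving two independent factors from $\imath_0(L)$ automatically vanishes. Then $\pi \circ \imath_0 = 0$ is immediate, since every element in the image of $\imath_0$ has $\imath_0(\ell) \in \ker \pi$ as one of its wedge factors, and surjectivity of $\pi$ follows by lifting the factors of a decomposable element of $\Lambda^{r+1} W$ and using that decomposables span.

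The remaining points — injectivity of $\imath_0$ and exactness in the middle — I would establish by fixing a linear complement $W' \subset V$ of $\imath_0(L)$, giving $V = \imath_0(L) \oplus W'$ with $\pi|_{W'} \colon W' \xrightarrow{\cong} W$. This induces a direct sum decomposition $\Lambda^{r+1} V = \Lambda^{r+1} W' \oplus \bigl( \imath_0(L) \otimes \Lambda^r W' \bigr)$, in which $\pi$ is identified with the projection onto the first summand and $\imath_0$ with the inclusion of the second. From this picture both injectivity of $\imath_0$ and $\ker \pi = \im \imath_0$ are transparent.

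The main obstacle to watch for is the non-canonicity of the splitting $V = \imath_0(L) \oplus W'$: one must be careful to phrase exactness as a property of the canonical maps $\imath_0, \pi$ (not of the splitting), so that it descends to the original sequence and globalizes from fibers to the bundle. Thanks to $\rk L = 1$, the fiberwise construction is patently compatible with local trivializations of the short exact sequence $0 \to L \to V \to W \to 0$, and no cocycle obstruction arises.
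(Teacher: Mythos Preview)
The paper does not give its own proof of this lemma: it introduces the statement with ``a preliminary lemma whose proof we omit'' and simply cites \cite[Theorem 4.1.3]{hirzebruch}. Your argument is correct and is exactly the standard one: reduce to fibers, check well-definedness (the key point being that $\rk L = 1$ forces $\Lambda^i L = 0$ for $i \geq 2$, so ambiguity in the lifts $v_i$ is killed upon wedging with $\imath_0(\ell)$), and then read off exactness from a local splitting $V \cong \imath_0(L) \oplus W'$. There is nothing to compare against in the paper itself.
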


The desired correspondence between twisted multivector fields on \(\CP^n\) and homogeneous multivector fields on \(\C^{n+1}\) that annihilate \(e\) is given by the next.

\begin{proposition}\label{prop:homogmulti}
	Let \(r, d\) be non-negative integers and set \(\imath = r - d\). Then there is a natural linear isomorphism between \(H^0(\Lambda^rT\CP^n \otimes \mO_{\P^n}(-\imath))\) and \(V^{\circ}_{d+1, r+1}\).
\end{proposition}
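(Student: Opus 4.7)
The plan is to derive the isomorphism from the Euler sequence via Lemma \ref{lem:hirz}. Start with the standard Euler sequence on $\CP^n$,
\[
0 \to \mO_{\P^n} \xrightarrow{e} \mO_{\P^n}(1)^{\oplus(n+1)} \xrightarrow{\pi} T\CP^n \to 0 \,,
\]
where the first map sends the constant section $1$ to $(x_0, \ldots, x_n)$ (i.e., globally to the Euler vector field $e = \sum x_i \p/\p x_i$), and $\pi$ sends the basis vector $\p/\p x_i$ of the $i$-th summand to its image in $T\CP^n$. Applying Lemma \ref{lem:hirz} with $L = \mO$, $V = \mO(1)^{\oplus(n+1)}$, $W = T\CP^n$ gives the exact sequence
\[
0 \to \Lambda^r T\CP^n \xrightarrow{\iota_0} \mO(r+1)^{\oplus \binom{n+1}{r+1}} \xrightarrow{\pi} \Lambda^{r+1}T\CP^n \to 0 \,,
\]
where the middle term is identified with $\Lambda^{r+1}(\mO(1)^{\oplus(n+1)})$, whose summands are indexed by the multi-indices $I$ of size $r+1$ and each generated by the symbol $\p x_I$.

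Next I would twist by $\mO(-\imath)$, using $\imath = r - d$, to obtain
\[
0 \to \Lambda^r T\CP^n \otimes \mO(-\imath) \to \mO(d+1)^{\oplus \binom{n+1}{r+1}} \to \Lambda^{r+1}T\CP^n \otimes \mO(-\imath) \to 0 \,.
\]
Taking global sections preserves left-exactness, and the middle term is canonically identified with $V_{d+1, r+1}$ via
\[
(a_I)_I \,\longmapsto\, \sum_I a_I \,\p x_I \,,
\]
since $H^0(\mO_{\P^n}(d+1)) = \C_{d+1}[x_0, \ldots, x_n]$. Under this identification, the map $\pi$ sends $\sum_I a_I \p x_I$ to its image on $\CP^n$, which at the level of $\Lambda^{r+1}V$ (with $V = \C^{n+1}$) is exactly $\wedge$-multiplication by $e$ modulo the image of $\iota_0$. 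More precisely, the image of $\iota_0$ consists of those multivectors in $V_{d+1, r+1}$ that can be written as $e \wedge \bv'$ for some $\bv'$.

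The key identification is then: by Lemma \ref{lem:extalg} (specifically item (iii), applied fibrewise/pointwise to the polynomial coefficients), the subspace of $V_{d+1, r+1}$ consisting of multivectors of the form $e \wedge \bv'$ coincides with $V_{d+1, r+1}^{\circ}$, the kernel of $\wedge e$. Hence
\[
H^0(\Lambda^r T\CP^n \otimes \mO(-\imath)) \,\cong\, \ker\bigl(V_{d+1, r+1} \xrightarrow{\wedge e} V_{d+2, r+2}\bigr) \,=\, V_{d+1, r+1}^{\circ} \,,
\]
and the isomorphism is manifestly natural, being induced by the Euler sequence. The main point to verify is that the image of $\iota_0$ equals $V_{d+1, r+1}^{\circ}$, which reduces via the explicit description of $\iota_0$ in Lemma \ref{lem:hirz} to Lemma \ref{lem:extalg}; beyond that, the argument is a direct application of the Euler sequence, so no serious obstacle is expected.
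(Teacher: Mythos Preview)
Your overall strategy---Euler sequence plus Lemma \ref{lem:hirz}, twist by $\mO(-\imath)$, take global sections---is exactly the paper's. The difference, and the gap, is in the final identification.

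You claim that the image of $\iota_0$ in $V_{d+1,r+1}$ equals $\{e \wedge \bv' : \bv' \in V_{d,r}\}$, and that this in turn equals $V^{\circ}_{d+1,r+1}$ via Lemma \ref{lem:extalg}(iii) ``applied fibrewise/pointwise''. But Lemma \ref{lem:extalg}(iii) is stated for a \emph{fixed} nonzero vector $e$ in a fixed vector space; here $e$ is the Euler field, which varies with $x$. Applying the lemma at each point gives $\bv(x) = e(x) \wedge \bv'(x)$ with $\bv'(x) = \omega_x \iprod \bv(x)$ for some $\omega_x$ satisfying $\omega_x(e(x)) = 1$; but there is no global algebraic $1$-form $\omega$ with $\omega(e) \equiv 1$ (e.g.\ $dx_0/x_0$ works only away from $x_0 = 0$), so you do not immediately get a polynomial $\bv' \in V_{d,r}$. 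The equality $\operatorname{im}(\wedge e) = \ker(\wedge e)$ you are implicitly using is true---it is exactness of the Koszul complex for the regular sequence $x_0,\dots,x_n$---but it does not follow from Lemma \ref{lem:extalg}(iii) as stated.

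The paper sidesteps this entirely. After obtaining the exact sequence
\[
0 \to \Lambda^r T\CP^n(-\imath) \xrightarrow{\iota_0} \Lambda^{r+1}E(-\imath) \xrightarrow{\pi} \Lambda^{r+1}T\CP^n(-\imath) \to 0,
\]
it composes $\pi$ with the \emph{next} inclusion $\iota_0: \Lambda^{r+1}T\CP^n(-\imath) \hookrightarrow \Lambda^{r+2}E(-\imath)$, observing that $\iota_0 \circ \pi = \wedge\, e$ as a map of sheaves. Since that $\iota_0$ is injective, $\ker \pi = \ker(\wedge\, e)$ at the sheaf level, yielding the left-exact sequence
\[
0 \to \Lambda^r T\CP^n(-\imath) \to \Lambda^{r+1}E(-\imath) \xrightarrow{\wedge e} \Lambda^{r+2}E(-\imath).
\]
Now left exactness of $H^0$ gives directly $H^0(\Lambda^r T\CP^n(-\imath)) \cong \ker\bigl(V_{d+1,r+1} \xrightarrow{\wedge e} V_{d+2,r+2}\bigr) = V^{\circ}_{d+1,r+1}$, with no need to argue about the image of $\iota_0$ or invoke Koszul exactness.
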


\begin{proof}
	This is essentially a consequence of Euler's exact sequence. Let 
	\[E = \mO_{\P^n}(1) \otimes \C^{n+1}\] 
	be the vector bundle on \(\CP^n\) equal to the direct sum of \((n+1)\)-copies of \(\mO_{\P^n}(1)\). There is an obvious isomorphism between \(V_{1,1}\) and the space of global sections \(H^0(E)\) given by
	\[
	\sum_{i=0}^{n} \ell_i \frac{\p}{\p x_i} \mapsto (\ell_0, \ldots, \ell_n) \,,
	\]
	where \(\ell_i\) are linear functions on \(\C^{n+1}\).
	Under this isomorphism, the Euler vector field \(e \in V_{1,1}\) corresponds to a nowhere zero section \(e \in H^0(E)\). 
	
	Euler's exact sequence is given by
	\begin{equation}\label{eq:eulerseq}
	0 \to \C \xrightarrow{\imath_0} E \xrightarrow{\pi} T\CP^n \to 0 \,,
	\end{equation}
	where \(\imath_0(\lambda) = \lambda e\) is the inclusion of the trivial line bundle into \(E\) defined by the nowhere zero section \(e \in H^0(E)\) and
	\(\pi\) projects a linear vector field on \(\C^{n+1}\) down to \(\CP^n\) by the differential of the quotient map by scalar multiplication.
	By Lemma \ref{lem:hirz}\, taking exterior power of \eqref{eq:eulerseq} gives us an exact sequence 
	\begin{equation}\label{eq:euler2}
	0 \to \Lambda^r T\CP^n \xrightarrow{\imath_0} \Lambda^{r+1} E \xrightarrow{\pi} \Lambda^{r+1} T\CP^n \to 0 \,,
	\end{equation}
	where the maps \(\imath_0\) and \(\pi\) in \eqref{eq:euler2} act on decomposable vectors as follows:
	\begin{itemize}[leftmargin=*]
		\item if \(w = w_1 \wedge \ldots \wedge w_r \in \Lambda^rT\CP^n\) take \(v_i \in E\) such that \(\pi(v_i) = w_i\) then
		\begin{equation}\label{eq:i0}
		\imath_0(w) = v_1 \wedge \ldots \wedge v_r \wedge e \,;	
		\end{equation} 
		\item if \(v = v_1 \wedge \ldots \wedge v_{r+1} \in \Lambda^{r+1}E\) then
		\begin{equation}\label{eq:pi}
		\pi (v) = \pi(v_1) \wedge \ldots \wedge \pi(v_{r+1}) \,.	
		\end{equation}
	\end{itemize}
	
	Take the tensor product of the short exact sequence of vector bundles \eqref{eq:euler2} with \(\mO_{\P^n}(-\imath)\) to obtain
 
	\begin{equation}\label{eq:euler3}
	0 \to \Lambda^r T\P^n (-\imath) \xrightarrow{\imath_0} \Lambda^{r+1} E (-\imath) \xrightarrow{\pi} \Lambda^{r+1} T\P^n (-\imath) \to 0 \,.
	\end{equation}
 
	Composing \(\Lambda^{r+1} E (-\imath) \xrightarrow{\pi} \Lambda^{r+1} T\CP^n (-\imath)\) with the inclusion \(\Lambda^{r+1} T\CP^n (-\imath) \xrightarrow{\imath_0} \Lambda^{r+2} E (-\imath)\) and using that \(\imath_0 \circ \pi = \wedge \, e\) we obtain an exact sequence
	\[
	0 \to \Lambda^r T\P^n (-\imath) \xrightarrow{\imath_0} \Lambda^{r+1} E (-\imath) \xrightarrow{\wedge \, e} \Lambda^{r+2} E(-\imath) \,,
	\]
	where exactness in the middle term follows from \(\ker \pi = \ker (\imath_0 \circ \pi)\) since \(\imath_0\) is injective. 
	Taking global sections (which is a left exact functor) gives an exact sequence of vector spaces
	\begin{equation}\label{eq:hoimbed}
	0 \to H^0(\Lambda^r T\P^n (-\imath)) \xrightarrow{\imath_0} H^0(\Lambda^{r+1}E (-\imath)) \xrightarrow{\wedge \, e} H^0(\Lambda^{r+2}E (-\imath)) \,.	
	\end{equation}
	Finally, we note that, since \(\imath = r - d\), the space of global sections of
	\[
	\Lambda^{r+1} E \otimes \mO_{\P^n}(-\imath) = \mO_{\P^n}(d+1) \otimes \Lambda^{r+1} \C^{n+1}
	\]
	is naturally isomorphic to \(V_{d+1, r+1}\). Then it follows from Equation \eqref{eq:hoimbed} that 
	\[H^0(\Lambda^r T\CP^n \otimes \mO_{\P^n}(-\imath))\]
	embeds in \(V_{d+1, r+1}\) as the subspace of vectors \(\bv\) such that \(\bv \wedge e =0.\)
\end{proof}

\begin{remark}
	The case \(\imath = r -d = 0\) corresponds to genuine multivector fields on \(\CP^n\) - without any twisting. In this case the induced action \(\C^*\) action on \(V_{r+1, r+1}\) by scalar multiplication is trivial.
\end{remark}

Following next, we combine Lemma \ref{lem:multvf} and Proposition \ref{prop:homogmulti} together to associate a homogeneous multivector field \(\bv\) on \(\C^{n+1}\) to a distribution \(\mV \subset T\CP^n\). In order to state the result we first recall the notion of pullback distribution. 
Let \(X\) and \(Y\) be complex manifolds and suppose that \(f: X \to Y\) is a holomorphic submersion, i.e., the differential \(df_x\) is surjective for all \(x \in X\). In this situation, given a distribution \(\mV\) on \(Y\) we define its pullback \(f^*(\mV)\) as a distribution on \(X\) which is equal to \(df_x^{-1}(\mV_y)\) at all points \(y=f(x)\) that belong to the regular set of \(\mV\).
We consider the case where
\[
f : \C^{n+1} \setminus \{0\} \to \CP^n 
\]
is the quotient map by scalar multiplication.

\begin{corollary}\label{cor:distmult}
	Let \(\mV \subset T\CP^n\) be a distribution of rank \(r\) and degree \(d\).
	Then \(\mV\) defines a unique up to scalar multiplication homogeneous multivector field \(\bv \in V^{\circ}_{d+1, r+1}\) such that
	\(\ker (\wedge \, \bv)\) is equal to the \(\C^*\)-invariant distribution \(f^*(\mV)\) on \(f^{-1}(U)\), where \(U \subset \CP^n\) is the regular set of \(\mV\). In particular, \(\bv\) is nowhere zero and pointwise decomposable outside a codimension \(2\) analytic subset of \(\C^{n+1}\). 
\end{corollary}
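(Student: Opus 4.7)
The strategy is to combine the two preceding results: Lemma \ref{lem:multvf} produces a twisted multivector field on $\CP^n$, and Proposition \ref{prop:homogmulti} identifies such objects with homogeneous multivector fields on $\C^{n+1}$ annihilating the Euler vector field. First I would apply Lemma \ref{lem:multvf} to $\mV$, obtaining a section $\widetilde{\bv} \in H^0(\Lambda^r T\CP^n \otimes \mO_{\P^n}(-\imath))$ with $\imath = r - d$, unique up to scalar, such that on the regular set $U$ the distribution $\mV$ equals $\ker(\wedge\,\widetilde{\bv})$. Then, via the isomorphism of Proposition \ref{prop:homogmulti}, I would set $\bv := \imath_0(\widetilde{\bv}) \in V^{\circ}_{d+1,r+1}$; uniqueness up to scalar is preserved since $\imath_0$ is a linear isomorphism onto $V^\circ_{d+1,r+1}$.

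The content then is to verify the kernel condition $\ker(\wedge\,\bv) = f^*(\mV)$ pointwise on $f^{-1}(U)$. Fix $x \in f^{-1}(U)$ with $y = f(x)$. The choice of $x$ determines a natural identification $E_y = \mO_{\P^n}(1)_y \otimes \C^{n+1} \cong T_x\C^{n+1}$, under which the Euler section $e$ at $y$ corresponds to $e(x)$, and the Euler sequence at $y$ becomes
\[
0 \to \C\cdot e(x) \to T_x\C^{n+1} \xrightarrow{df_x} T_y\CP^n \to 0.
\]
By the explicit formula for $\imath_0$ on decomposable vectors given in Lemma \ref{lem:hirz} and equation \eqref{eq:i0}, if $w_1,\dots,w_r$ is a basis of $\mV_y$ and $v_i \in T_x\C^{n+1}$ are any lifts with $df_x(v_i) = w_i$, then
\[
\bv_x = v_1 \wedge \dots \wedge v_r \wedge e(x).
\]
This is a nonzero decomposable $(r+1)$-vector whose associated $(r+1)$-plane, by Lemma \ref{lem:decvec}, equals $\spn\{v_1,\dots,v_r,e(x)\} = df_x^{-1}(\mV_y) = f^*(\mV)_x$. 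Hence $\ker(\wedge\,\bv)_x = f^*(\mV)_x$ exactly where we claimed.

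Finally, since $\widetilde{\bv}$ is nowhere zero outside a closed analytic subset of $\CP^n$ of codimension $\geq 2$ (by the last assertion in Lemma \ref{lem:multvf}), and $f$ is a smooth surjective submersion with one-dimensional fibres, the preimage of that locus has codimension $\geq 2$ in $\C^{n+1} \setminus \{0\}$; together with the origin this bounds the zero locus of $\bv$ by a codimension $2$ analytic subset of $\C^{n+1}$. Outside this subset, the computation above shows $\bv_x$ is decomposable.

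The only subtle point is bookkeeping in the second paragraph: one must check that the formation of $\widetilde{\bv}$ in Lemma \ref{lem:multvf} and the map $\imath_0$ in Proposition \ref{prop:homogmulti} are compatible with pullback along $f$, i.e.\ that the fibrewise recipe yielding $v_1\wedge\dots\wedge v_r \wedge e(x)$ really does recover the homogeneous multivector $\bv$. This follows by unwinding the definition of $\imath_0$ together with the canonical identification of $E_y$ with $T_x\C^{n+1}$ above; no deep fact is needed, but it is the step that makes the assertion work.
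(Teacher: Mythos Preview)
Your proof is correct and follows essentially the same approach as the paper: combine Lemma~\ref{lem:multvf} with Proposition~\ref{prop:homogmulti}, then verify the pointwise formula $\bv_x = v_1 \wedge \cdots \wedge v_r \wedge e(x)$ (up to scalar) via the explicit description of $\imath_0$ in \eqref{eq:i0}, from which the kernel identification is immediate. You have in fact supplied more detail than the paper, which simply writes ``unravelling definitions'' for the compatibility check you flag as the subtle point, and you also spell out the codimension~$2$ argument for the zero locus, which the paper leaves implicit.
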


\begin{proof}
	Lemma \ref{lem:multvf} and Proposition \ref{prop:homogmulti} combined together give us a vector \(\bv \in V^{\circ}_{d+1, r+1}\). If \(y \in \CP^n\) belongs to the regular set of \(\mV\) and \(x \in \C^{n+1} \setminus \{0\}\) projects down to \(y\) under the quotient map
	\[
	f : \C^{n+1} \setminus \{0\} \to \CP^n \,,
	\]
	then, unravelling definitions, the value of \(\bv\) at \(x\) is equal (up to a scalar factor) to
	\begin{equation}\label{eq:pullback}
	\bv_x =  \tv_1 \wedge \ldots \wedge \tv_r \wedge e_x \,,	
	\end{equation}
	where \(\tv_i \in T_x \C^{n+1}\) satisfy \(df_x(\tv_i) = v_i\) and \(v_1, \ldots, v_r\) make a basis of \(\mV_y\). It follows from Equation \eqref{eq:pullback} that \(\ker(\wedge \bv_x)\) is equal to \(df_x^{-1}(\mV_y)\)
\end{proof}


\begin{example}\label{ex:distansubspace}
	Let \(1 \leq r \leq n-1\) and consider the linear subspace \(M\) of \(\CP^n\) of dimension \(r-1\) given by the set of points \([x_0: \ldots :x_n]\) such that
	\[
	\{x_{r} = x_{r+1} = \ldots = x_n = 0 \} \,.
	\]
	The collection of \(r\)-planes that contain \(M\) defines a distribution \(\mV\) of rank \(r\) and degree \(d = 0\). The multivector field \(\bv \in V^{\circ}_{1, r+1}\) of \(\mV\) is given by
	\[
	\bv = \frac{\p}{\p x_0} \wedge \ldots \wedge \frac{\p}{\p x_{r-1}} \wedge e \,.
	\]
\end{example}

\begin{remark}\label{rmk:homog1form}
	Let \(p=n-r\) denote the codimension of a distribution. Then,
	an analogue of Proposition \ref{prop:homogmulti}, shows that  there is a canonical identification between \(\Omega^p_{\P^n}(n+1-\imath)\) (as in Remark \ref{rmk:dualdf}) and the vector space of \(p\)-forms on \(\C^{n+1}\) with homogeneous coefficients of degree \(d+1\) whose contraction with the Euler vector field is identically zero. In particular,
	codimension \(1\) distributions on \(\CP^n\) correspond to homogeneous \(1\)-forms
	\[
	\omega = \sum_{i=0}^{n} a_i dx_i \,,
	\]
	where \(a_i\) are homogeneous polynomials of degree \(d+1\) with \(\sum_i x_i a_i = 0\) and such that the common zero set of the polynomials \(a_i\) has codimension \(\geq 2\).
\end{remark}

\subsubsection{Multivector fields tangent to hyperplanes}\label{sec:tanhyp}

\begin{definition}\label{def:multvftanhyp}
	Let \(X\) be a complex manifold and let \(D \subset X\) be a smooth complex hypersurface. We say that an \(r\)-vector field \(\bv \in H^0(\Lambda^r TX)\) is tangent to \(D\) at \(x\) if  \(\bv_x\) belongs to the linear subspace \(\Lambda^rT_xD \subset \Lambda^rT_xX\)\,. We say that \(\bv\) is tangent to \(D\) if \(\bv\) is tangent to \(D\) at all points \(x \in D\).
\end{definition}

\begin{remark}\label{rmk:dense}
	By continuity, if \(D^{\circ}\) is an open dense subset of \(D\) and \(\bv\) is tangent to \(D^{\circ}\), then \(\bv\) is also tangent to \(D\).
\end{remark}

We are primarily interested in the case that \(X\) is \(\C^{n+1}\) and \(D\) is a complex hyperplane going through the origin. 

\begin{lemma}\label{lem:xjdivide}
	Let \(\bv = \sum_I a_I \p x_I\) be an \(r\)-vector field on \(\C^{n+1}\) with polynomial coefficients \(a_I \in \C[x_0, \ldots, x_n]\). Then \(\bv\) is tangent to the hyperplane \(\{x_j = 0\}\) if and only if \(x_j\) divides \(a_I\) for every \(I\) such that \(j \in I\).
\end{lemma}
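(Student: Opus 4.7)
The plan is to identify the tangent space to the hyperplane at each point, decompose the multivector field in a basis adapted to this splitting, and then translate pointwise vanishing of polynomial coefficients along the hyperplane into divisibility by $x_j$.

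First I would fix a point $x \in H := \{x_j = 0\}$ and note that $T_x H \subset T_x \C^{n+1} = \C^{n+1}$ is the coordinate hyperplane spanned by $\{\partial_{x_i} : i \neq j\}$. By Example \ref{ex:tanvec} applied with the basis $\{\partial_{x_0}, \ldots, \partial_{x_n}\}$ (reordered to put $\partial_{x_j}$ last), the subspace $\Lambda^r T_x H \subset \Lambda^r T_x \C^{n+1}$ is precisely the span of the basis multivectors $\partial x_I$ with $j \notin I$, while the remaining basis multivectors $\partial x_I$ with $j \in I$ form a complement. Consequently, in the given expansion $\bv_x = \sum_I a_I(x) \partial x_I$, the tangency condition $\bv_x \in \Lambda^r T_x H$ at a point $x \in H$ is equivalent to
\[
a_I(x) = 0 \quad \text{for every } I \text{ with } j \in I.
\]

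Next I would translate this pointwise condition into a global algebraic one. Requiring that $\bv$ be tangent to $H$ at every $x \in H$ (which, by Remark \ref{rmk:dense}, can equivalently be checked on any dense subset of $H$) amounts to demanding that, for each $I$ with $j \in I$, the polynomial $a_I \in \C[x_0, \ldots, x_n]$ vanishes identically on the hyperplane $\{x_j = 0\}$. This is elementary: writing $a_I$ in powers of $x_j$ as
\[
a_I = \sum_{k \geq 0} x_j^k \, b_{I,k}(x_0, \ldots, \widehat{x_j}, \ldots, x_n),
\]
the restriction $a_I|_{\{x_j=0\}} = b_{I,0}$ vanishes identically iff $b_{I,0} = 0$, which occurs iff $x_j$ divides $a_I$.

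Combining the two steps yields the stated equivalence, completing the proof. There is no real obstacle here — the lemma is essentially bookkeeping, once one writes $\Lambda^r T_x H$ in terms of the standard basis $\partial x_I$ and notes that polynomials vanishing on a coordinate hyperplane are precisely those divisible by the corresponding coordinate function.
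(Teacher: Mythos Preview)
Your proof is correct and follows essentially the same approach as the paper: use Example~\ref{ex:tanvec} to see that tangency at each point of $\{x_j=0\}$ means the coefficients $a_I$ with $j\in I$ vanish there, and then observe that a polynomial vanishing on the coordinate hyperplane $\{x_j=0\}$ is exactly one divisible by $x_j$. The paper's version is just more terse.
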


\begin{proof}
	By Example \ref{ex:tanvec}, the tangency condition means that the coefficients \(a_I\) vanish along \(\{x_j = 0\}\) for every  \(I\) such that \(j \in I\). On the other hand, since \(a_I\) is a polynomial,  \(a_I\) vanishes along \(\{x_j = 0\}\) if and only if it is divisible by \(x_j\).
\end{proof}

\begin{lemma}\label{lem:posindex}
	Let \(\bv \in V_{d, r}\) be a non-zero homogeneous multivector field on \(\C^{n+1}\). Suppose that
	\(\bv\) is tangent to the \(k\) coordinate hyperplanes \(\{x_j = 0\}\) for \(j \in I_0\) where \(I_0 \subset \{0, \ldots, n\} = [n]\) with \(|I_0|=k\). Then
	\begin{equation}\label{eq:boundd}
		r - (n+1) + k \leq d .
	\end{equation}
\end{lemma}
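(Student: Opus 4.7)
The plan is to combine Lemma~\ref{lem:xjdivide} with a simple counting argument on multi-indices. Write $\bv = \sum_I a_I \, \partial x_I$ where the sum runs over multi-indices $I \subset \{0, \ldots, n\}$ with $|I|=r$ and each $a_I$ is a homogeneous polynomial of degree $d$. Since $\bv \neq 0$, there exists a multi-index $I^*$ with $a_{I^*} \neq 0$.

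Next, I would apply Lemma~\ref{lem:xjdivide} to each of the hyperplanes $\{x_j = 0\}$ with $j \in I_0$. The conclusion is that for every $j \in I_0 \cap I^*$, the coordinate $x_j$ divides $a_{I^*}$. Because these are distinct coordinate functions (hence pairwise coprime in the polynomial ring $\C[x_0, \ldots, x_n]$), the product
\[
\prod_{j \in I_0 \cap I^*} x_j
\]
divides $a_{I^*}$. Since $a_{I^*}$ is non-zero of degree $d$, we obtain $|I_0 \cap I^*| \leq d$.

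Finally, I would apply inclusion-exclusion inside the $(n+1)$-element set $\{0, \ldots, n\}$: from $|I_0 \cup I^*| \leq n+1$ and $|I_0| = k$, $|I^*| = r$, we get
\[
|I_0 \cap I^*| = |I_0| + |I^*| - |I_0 \cup I^*| \geq k + r - (n+1).
\]
Combining the two bounds yields $k + r - (n+1) \leq d$, which is the desired inequality \eqref{eq:boundd}. There is no serious obstacle here; the only thing to watch is making sure the multi-index argument is phrased so that the tangency to each hyperplane $\{x_j=0\}$ is applied independently, which is exactly what Lemma~\ref{lem:xjdivide} delivers.
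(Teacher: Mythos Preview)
Your proof is correct and follows essentially the same approach as the paper: pick a multi-index $I^*$ with $a_{I^*}\neq 0$, use Lemma~\ref{lem:xjdivide} to bound $|I_0\cap I^*|\le d$, and combine with the counting bound $|I_0\cap I^*|\ge k+r-(n+1)$. The only cosmetic difference is that you phrase the counting step via inclusion--exclusion, whereas the paper argues that $I^*$ can contain at most the $n+1-k$ indices outside $I_0$.
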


\begin{proof}
	Write \(\bv = \sum_I a_I \p x_I\), where the sum runs over all \(I \subset [n]\) with \(|I| = r\) and the coefficients \(a_I\) are homogeneous polynomials of degree \(d\).
	
	Fix \(I\) such that \(a_I\) is non-zero. By Lemma \ref{lem:xjdivide}, if \(j \in I \cap I_0\) then \(x_j\) divides \(a_I\). In particular,
	\begin{equation}\label{eq:bound1lem}
		|I \cap I_0| \leq \deg a_I = d.
	\end{equation}
	On the other hand, since at worst \(I\) contains all the \(n+1-k\) elements from \([n] \setminus I_0\)
	\begin{equation}\label{eq:bound2lem}
		|I| - (n+1-k) \leq 	|I\cap I_0| .
	\end{equation}
	Combining inequalities \eqref{eq:bound1lem} and \eqref{eq:bound2lem} gives \eqref{eq:boundd}\,.
\end{proof}

\begin{definition}\label{def:tanv}
Let \(\bv\) be multivector field in \(\C^{n+1}\). We write \(\Tan(\bv)\) for the set of all linear hyperplanes \(H \subset \C^{n+1}\) such that \(\bv\) is tangent to \(H\). 	
\end{definition}

\begin{proposition}\label{prop:posindex}
	Let \(\bv \in V_{d, r}\) be a non-zero homogeneous multivector field on \(\C^{n+1}\). Suppose that \(d < r\) so \(\imath = r - d >0\). Then there is a linear subspace \(M \subset \C^{n+1}\) with \(\dim M \geq \imath\) such that \(M \subset H\) for all \(H \in \Tan(\bv)\).
\end{proposition}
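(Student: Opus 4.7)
The plan is to reformulate the conclusion dually and then reduce the statement to the already-proved Lemma \ref{lem:posindex}. Define
\[
M = \bigcap_{H \in \Tan(\bv)} H \,,
\]
and let $W \subset (\C^{n+1})^*$ be the linear span of the defining forms $\ell_H$ (for $H \in \Tan(\bv)$). Then $M = W^\perp$, so the desired bound $\dim M \geq \imath$ is equivalent to
\[
\dim W \leq n+1-\imath = n+1-r+d \,.
\]
So the strategy is to bound the number of linearly independent tangent hyperplanes that $\bv$ can have.

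The first step is to pick a basis realizing $\dim W$. Set $k = \dim W$ and choose hyperplanes $H_1, \ldots, H_k \in \Tan(\bv)$ whose defining forms $\ell_{H_1}, \ldots, \ell_{H_k}$ form a basis of $W$. Since these linear forms are linearly independent in $(\C^{n+1})^*$, we can complete them to a basis $\ell_{H_1}, \ldots, \ell_{H_k}, \ell_{k+1}, \ldots, \ell_{n+1}$, which provides new linear coordinates $y_0, \ldots, y_n$ on $\C^{n+1}$ with $y_{j-1} = \ell_{H_j}$ for $1 \leq j \leq k$. In these coordinates $H_j = \{y_{j-1} = 0\}$ for $1 \leq j \leq k$.

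The second step is to transport $\bv$ to the new coordinates. Let $\bv'$ denote the expression of $\bv$ in the $y$-coordinates; since a $\GL(n+1,\C)$-change of coordinates preserves both the bi-degree $(d,r)$ (i.e.\ $\bv' \in V_{d,r}$) and the tangency condition to a hyperplane, $\bv'$ is a non-zero homogeneous $r$-vector field of polynomial degree $d$ that is tangent to the $k$ coordinate hyperplanes $\{y_{j-1} = 0\}$ with $j \in \{1, \ldots, k\}$. Applying Lemma \ref{lem:posindex} to $\bv'$ gives
\[
r - (n+1) + k \leq d \,,
\]
i.e.\ $k \leq n+1 - r + d = n+1-\imath$. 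Therefore $\dim W \leq n+1-\imath$, and taking annihilators yields $\dim M = n+1 - \dim W \geq \imath$, as required.

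There is no real obstacle: all the genuine polynomial-divisibility content is already packaged in Lemma \ref{lem:xjdivide} and Lemma \ref{lem:posindex}. The only care needed is the observation that tangency and bi-degree are invariant under $\GL(n+1,\C)$, which legitimizes the reduction to the coordinate-hyperplane case.
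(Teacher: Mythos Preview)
Your proof is correct and follows essentially the same approach as the paper: both take a maximal linearly independent set of tangent hyperplanes, change coordinates so they become coordinate hyperplanes, and invoke Lemma~\ref{lem:posindex} to bound their number by $n+1-\imath$. Your version is slightly more explicit in identifying $M$ as the intersection of all tangent hyperplanes and phrasing the bound dually via $\dim W$, but the argument is the same.
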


\begin{proof}
	Let \(H_j\) be \(k\) linearly independent hyperplanes in \(\Tan (\bv)\). We prove the proposition by showing that 
	\begin{equation}\label{eq:bound0}
	\imath \leq n+1-k .
	\end{equation}
	
	Without loss of generality, we can assume that \(H_j = \{x_j = 0\}\) for \(j \in I_0\) where \(I_0  = \{0, \ldots, k-1\} \subset  [n]\). By Lemma \ref{lem:posindex}
	\[
	r - (n+1) + k \leq d = r - \imath
	\]
	which is equivalent to Equation \eqref{eq:bound0}\,.
\end{proof}

We say that a distribution \(\mV \subset T\CP^n\) is tangent to a hyperplane \(H \subset \CP^n\) if for every point \(x \in H \cap U\), where \(U\) is the regular set of \(\mV\), we have \(\mV_x \subset T_x H\),
where \(\mV_x \subset T_x \CP^n\) denotes the fibre at \(x\) of the vector subbundle \(\mV|_U\).
We denote by \(\Tan(\mV)\) the collection of all hyperplanes \(H\) such that \(\mV\) is tangent to \(H\).
We note the following.

\begin{lemma}\label{lem:tanhm}
	Let \(\mV\) be a distribution on \(\CP^n\) and let \(\bv \in V^{\circ}_{d+1, r+1}\) be its multivector as in Corollary \ref{cor:distmult}\,. Then a linear hyperplane \(H \subset \C^{n+1}\) belongs to \(\Tan(\bv)\) if and only if its projection \(\P(H) \subset \CP^n\) belongs to \(\Tan(\mV)\). 
\end{lemma}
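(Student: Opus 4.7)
\textbf{Proof plan for Lemma \ref{lem:tanhm}.} The statement is essentially a diagram-chase relating the tangency of the decomposable multivector $\bv$ to the tangency of the distribution $\mV$, using the fact (Corollary \ref{cor:distmult}) that $\ker(\wedge\,\bv)$ equals the $\C^*$-invariant pullback $f^*(\mV)$ on the regular set, where $f:\C^{n+1}\setminus\{0\}\to\CP^n$ is the quotient projection.

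First, I would record the local picture. Let $H\subset\C^{n+1}$ be a linear hyperplane and $\P(H)\subset\CP^n$ its projectivization. For $x\in H\setminus\{0\}$, the tangent space $T_xH$ is canonically identified with $H$ itself, and the Euler vector $e_x$ (corresponding to $x$) lies in $H$, so $\ker df_x=\C\cdot e_x\subset H$ and consequently $df_x^{-1}\big(T_{f(x)}\P(H)\big)=H$. The key pointwise equivalence is: at any $x\in H\cap f^{-1}(U)$ where $U$ is the regular set of $\mV$ and where $\bv_x\neq 0$, the multivector $\bv_x$ is decomposable and Lemma \ref{lem:decvec} (ii)--(iii) give
\[
\bv_x\in\Lambda^{r+1}H\iff \ker(\wedge\,\bv_x)\subset H.
\]
By Corollary \ref{cor:distmult}, $\ker(\wedge\,\bv_x)=df_x^{-1}(\mV_{f(x)})$, and combining with $\ker df_x\subset H$ we obtain the chain
\[
\bv_x\in\Lambda^{r+1}H\iff df_x^{-1}(\mV_{f(x)})\subset H\iff \mV_{f(x)}\subset df_x(H)=T_{f(x)}\P(H).
\]

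Next, I would promote this pointwise equivalence to the global tangency statement. For the $(\Leftarrow)$ direction, assume $\P(H)\in\Tan(\mV)$. Then for every $y\in\P(H)\cap U$ we have $\mV_y\subset T_y\P(H)$, so for every $x\in f^{-1}(y)\cap H$ (note such $x$ exist since $f^{-1}(y)\cap H=\C^*\cdot x_0$ for any $x_0\in H$ with $f(x_0)=y$), the pointwise equivalence yields $\bv_x\in\Lambda^{r+1}H$. This shows $\bv$ is tangent to $H$ on the open subset of $H$ consisting of points $x$ with $f(x)\in U$ and $\bv_x\neq 0$, whose complement in $H$ is contained in a proper analytic subset (since the singular locus of $\mV$ has codimension $\geq 2$ in $\CP^n$ and the zero set of $\bv$ has codimension $\geq 2$ in $\C^{n+1}$). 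By Remark \ref{rmk:dense} (continuity of the tangency condition), $\bv$ is tangent to $H$ at all points, i.e., $H\in\Tan(\bv)$. The $(\Rightarrow)$ direction is symmetric: assume $H\in\Tan(\bv)$. For any $y\in\P(H)\cap U$, pick $x\in f^{-1}(y)\cap H$ avoiding the zero set of $\bv$ (possible outside a proper analytic subset of $\P(H)\cap U$), apply the pointwise equivalence to get $\mV_y\subset T_y\P(H)$ on a dense subset, and conclude by continuity that $\mV$ is tangent to $\P(H)$.

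The proof is essentially a verification rather than an insight-driven argument; the only mildly delicate point is handling the interaction of the two singular loci (the singular set of $\mV$ and the zero set of $\bv$) via the density argument, but Remark \ref{rmk:dense} together with the codimension $\geq 2$ bounds makes this routine.
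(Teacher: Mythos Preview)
Your proposal is correct and follows essentially the same approach as the paper: both reduce to the pointwise equivalence $\bv_x\in\Lambda^{r+1}H\iff\ker(\wedge\,\bv_x)\subset T_xH\iff\mV_{f(x)}\subset T_{f(x)}\P(H)$ via Lemma~\ref{lem:decvec} and Corollary~\ref{cor:distmult}, then globalize using the density remark. Your extra care about the zero set of $\bv$ is in fact unnecessary, since Corollary~\ref{cor:distmult} already guarantees $\bv_x$ is nonzero and decomposable on all of $f^{-1}(U)$, but this does no harm.
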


\begin{proof}
	Let \(U \subset \CP^n\) be the regular set of \(\mV\) and let \(\widetilde{U} = f^{-1}(U)\), where
	\[
	f:  \C^{n+1} \setminus \{0\} \to \CP^n 
	\]
	is the quotient projection.
	Given a linear hyperplane \(H \subset \C^{n+1}\), we write \(H^{\circ} = H \cap \widetilde{U}\), and note that \(H^{\circ}\) is an open dense subset of \(H\). As pointed out in Remark \ref{rmk:dense}, \(\bv\) is tangent to \(H\) if and only if \(\bv_x\) is tangent to \(T_xH\) at all points \(x \in H^{\circ}\), . 
	
	On the other hand, given \(x \in H^{\circ}\), the decomposable multivector \(\bv_x\) is tangent to \(H\) at \(x\) if and only if \(\ker (\wedge \bv_x)\) is contained in \(T_xH\), see Remark \ref{rmk:tansub} and Lemma \ref{lem:decvec}\,. 
	If we let \(\pi = df_x\)\,, then \(\ker (\wedge \bv_x)\) is equal to \(\pi^{-1}(\mV_y)\) where \(y = f(x)\). We get that \(\bv_x\) is tangent to \(T_xH\) if and only if
	\begin{equation}\label{eq:piinverse}
		\pi^{-1}(\mV_y) \subset \pi^{-1} (T_y \P(H)) = T_x H .
	\end{equation}
	Since \(\pi\) is surjective, 
	Equation \eqref{eq:piinverse} is satisfied if and only if 
	\begin{equation}\label{eq:htan}
		\mV_y \subset T_y \P(H) .	
	\end{equation}
	We conclude that \(H \in \Tan(\bv)\) if an only if Equation \eqref{eq:htan} is satisfied at all points \(y \in \P(H) \cap U\), which by definition means that \(\P(H) \in \Tan(\mV)\).
\end{proof}

\begin{definition}\label{def:tr}
Let \(T_r\) be the linear subspace of \(V_{r,r}\) consisting of \(r\)-vector fields on \(\C^{n+1}\), with homogeneous polynomial coefficients of degree \(r\), that are tangent to the \(n+1\) coordinate hyperplanes \(\{x_j = 0\}\).	
Write \(T_r^{\circ} \subset T_r\) be the linear subspace of all \(\bv\) such that \(\bv \wedge e = 0\).
\end{definition}

\begin{lemma}\label{lem:zeroindx0}
	An element \(\bv \in V_{r, r}\) belongs to \(T_r\) if and only if \(\bv\) is of the form
	\begin{equation}\label{eq:tandeg0}
	\bv = \sum_{I = (i_1, \ldots, i_r)} c_I \cdot x_{i_1} x_{i_2} \ldots x_{i_r} \frac{\p}{\p x_{i_1}} \wedge \frac{\p}{\p x_{i_2}} \wedge \ldots \wedge \frac{\p}{\p x_{i_r}}
	\end{equation} 
	where \(c_I \in \C\).
\end{lemma}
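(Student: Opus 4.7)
The proof is essentially an application of Lemma \ref{lem:xjdivide} combined with a degree count. The plan is as follows.

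First I would write a general element of $V_{r,r}$ as $\bv = \sum_{I} a_I\,\p x_I$, where $I$ ranges over $r$-subsets of $[n] = \{0,\ldots,n\}$ and each $a_I$ is a homogeneous polynomial of degree $r$ in $x_0,\ldots,x_n$. By Lemma \ref{lem:xjdivide}, the condition that $\bv$ is tangent to the coordinate hyperplane $\{x_j=0\}$ is equivalent to $x_j \mid a_I$ for every $I$ with $j\in I$. Hence $\bv \in T_r$ iff this divisibility holds for every $j\in[n]$ and every $I$ containing $j$.

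Next, I would fix a multi-index $I = \{i_1,\ldots,i_r\}$ and observe that the conditions above say precisely that each of the $r$ distinct variables $x_{i_1},\ldots,x_{i_r}$ divides $a_I$. Since these are pairwise coprime irreducible elements of the UFD $\C[x_0,\ldots,x_n]$, their product $x_{i_1}x_{i_2}\cdots x_{i_r}$ divides $a_I$. But $\deg a_I = r$ equals the degree of this product, so the quotient is a constant:
\[
a_I = c_I \cdot x_{i_1}x_{i_2}\cdots x_{i_r}, \qquad c_I\in\C.
\]
This gives the claimed form \eqref{eq:tandeg0}. Conversely, every $\bv$ of the form \eqref{eq:tandeg0} satisfies $x_j \mid a_I$ whenever $j \in I$ (each such $a_I$ contains $x_j$ as a factor), so by Lemma \ref{lem:xjdivide} it is tangent to all coordinate hyperplanes, i.e.\ $\bv\in T_r$.

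There is no real obstacle here; the lemma is essentially a bookkeeping statement. The only small point to flag explicitly is the use of unique factorization in $\C[x_0,\ldots,x_n]$ to pass from ``each $x_{i_k}$ divides $a_I$'' to ``the product $\prod_k x_{i_k}$ divides $a_I$'', and then the matching-of-degrees to conclude that $a_I$ is a scalar multiple of this product rather than a polynomial multiple.
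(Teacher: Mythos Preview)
Your proposal is correct and follows essentially the same approach as the paper: write $\bv = \sum_I a_I\,\p x_I$, apply Lemma~\ref{lem:xjdivide} to get that each $x_j$ with $j\in I$ divides $a_I$, and use the degree constraint $\deg a_I = r$ to force $a_I = c_I\,x_{i_1}\cdots x_{i_r}$; the converse is immediate from the same lemma. Your explicit mention of the UFD property is a minor elaboration the paper leaves implicit.
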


\begin{proof}
	Write \(\bv = \sum_I a_I \p x_I\) with \(\deg a_I = r\) and suppose that \(\bv\) belongs to \(T_r\). Let us fix an index \(I = (i_1, \ldots, i_r)\). By Lemma \ref{lem:xjdivide} every \(x_j\) with \(j \in I\) divides \(a_I\). Since \(\deg a_I = r\) we must have that \(a_I = c_I x_{i_1} x_{i_2} \ldots x_{i_r}\) with \(c_I \in \C\). Conversely, if \(\bv \in V_{r, r}\) is of the form given by Equation \eqref{eq:tandeg0} then by Lemma \ref{lem:xjdivide} \(\bv\) is tangent to the coordinate hyperplanes and hence belongs to \(T_r\).
\end{proof}

\begin{example}
	The space \(T_1\) is the set of all linear vector fields \(\bv\) on \(\C^{n+1}\) that are tangent to the coordinate hyperplanes. These are precisely the vector fields of the form
	\[
	\bv = \sum_{i=0}^{n} c_i x_i \frac{\p}{\p x_i}
	\]
	with \(c_i \in \C\).
\end{example}

Let \(W\) be the \((n+1)\)-dimensional vector space made of meromorphic \(1\)-forms \(\omega\) of the form given by
\[
\omega = \sum_{i=0}^{n} \lambda_i \frac{dx_i}{x_i} 
\]
with \(\lambda_i \in \C\).

\begin{lemma}\label{lem:tr}
	If \(\omega \in W\) and \(\bv \in T_r\), then \(\omega \iprod \bv\) belongs to \(T_{r-1}\).
\end{lemma}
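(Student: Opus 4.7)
The plan is to reduce the statement to a direct computation using the characterization of $T_r$ provided by Lemma \ref{lem:zeroindx0}\,. Write $\omega = \sum_{j=0}^n \lambda_j \frac{dx_j}{x_j}$ and, using Lemma \ref{lem:zeroindx0}\,, expand any $\bv \in T_r$ as
\[
\bv = \sum_{I} c_I \,\bx_I\, \p x_I\,, \qquad \bx_I := \prod_{i \in I} x_i\,,
\]
where the sum runs over $r$-subsets $I \subset \{0,\ldots,n\}$ and $\p x_I = \p x_{i_1} \wedge \ldots \wedge \p x_{i_r}$ with $i_1 < \cdots < i_r$.

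The second step is to compute the contraction termwise. Since $\frac{dx_j}{x_j} \iprod \p x_I$ vanishes when $j \notin I$, and for $j = i_k \in I$ the contraction identity \eqref{eq:iprod} gives
\[
\frac{dx_{i_k}}{x_{i_k}} \iprod \big(\bx_I \, \p x_I\big) \,=\, (-1)^{k-1}\, \frac{\bx_I}{x_{i_k}} \, \p x_{I \setminus \{i_k\}} \,=\, (-1)^{k-1}\,\bx_{I \setminus \{i_k\}} \, \p x_{I \setminus \{i_k\}}\,,
\]
we see at once that (i) the poles of $\omega$ are cancelled by the corresponding factors of $\bx_I$, so $\omega \iprod \bv$ is a polynomial $(r{-}1)$-vector field; and (ii) the result is a linear combination of terms of the form $\bx_J \, \p x_J$ with $J$ an $(r{-}1)$-subset of $\{0,\ldots,n\}$. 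Explicitly,
\[
\omega \iprod \bv \,=\, \sum_{J} \left( \sum_{\substack{I \supsetneq J \\ |I|=r}} \pm\, \lambda_{i(I,J)} \, c_I \right) \bx_J\, \p x_J\,,
\]
where $i(I,J)$ denotes the unique element of $I \setminus J$ and the signs are those appearing above.

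Applying the converse direction of Lemma \ref{lem:zeroindx0} to this expression shows that $\omega \iprod \bv \in T_{r-1}$, which is the desired conclusion. The argument is essentially a bookkeeping computation — the only point that requires a moment's care is the cancellation of the poles of $\omega$, which is automatic because $\bv \in T_r$ already contains the coordinate-hyperplane equations as factors in its coefficients (equivalently, by Lemma \ref{lem:xjdivide}\,). No further ingredients are needed.
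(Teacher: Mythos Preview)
Your proof is correct and follows essentially the same approach as the paper: both expand \(\bv\) via Lemma~\ref{lem:zeroindx0} as a combination of monomial terms \(x_I\,\p x_I\), compute the contraction \(\frac{dx_j}{x_j}\iprod(x_I\,\p x_I)\) directly to get \(\pm\,x_{I'}\,\p x_{I'}\) with \(I' = I\setminus\{j\}\) (or zero if \(j\notin I\)), and conclude by linearity and the converse of Lemma~\ref{lem:zeroindx0}. Your write-up is a bit more explicit about signs and the resulting coefficients, but the argument is the same.
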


\begin{proof}
	Let \(j \in [n]\) and \(I = \{i_1, \ldots, i_r\} \subset [n]\).
	Write \(x_I = x_{i_1} x_{i_2} \ldots x_{i_r}\). Then
	\begin{equation}\label{eq:pairing}
	\frac{dx_j}{x_j} \iprod x_I \p x_I = \begin{cases}
	0 & \text{ if } j \notin I , \\
	\pm \, x_{I'} \p x_{I'} & \text{ if } j \in I ,
	\end{cases}
	\end{equation}
	where \(I' = I \setminus \{j\}\).
	The statement follows from Lemma \ref{lem:zeroindx0} and linearity.
\end{proof}

Consider the linear subspace  \(T_r^{\circ} \subset T_r\)  of all \(\bv \in T_r\) such that \(\bv \wedge e = 0\). Under the isomorphism of Proposition \ref{prop:homogmulti}, in the case \(\imath = 0\), the vector space \(T^{\circ}_{r+1}\) is identified with the subspace of \(H^0(\Lambda^rT\CP^n)\) of multivector fields  tangent to the coordinate hyperplanes. Let \(W^{\circ} \subset W\) be the linear subspace of all \(\omega\) such that \(\omega(e) = 0\). More explicitly, if \(\omega = \sum_{i=1}^{n} \lambda_i dx_i / x_i\) then \(\omega \in W^{\circ}\) if \(\sum_{i=0}^{n} \lambda_i = 0\). We can think of elements in \(W^{\circ}\) as meromorphic \(1\)-forms on \(\CP^n\) with simple poles at the coordinate hyperplanes.

\begin{lemma}\label{lem:zeroindxcontr}
	The following holds:
\begin{enumerate}[label=\textup{(\roman*)}]
	\item If \(\bv \in T_r\) is non-zero and \(r \geq 2\) then there is \(\omega \in W^{\circ}\) such that \(\omega \iprod \bv \neq 0\).
	\item If \(\bv \in T^{\circ}_r\) and \(\omega \in W^{\circ}\)  then \(\omega \iprod \bv \in T^{\circ}_{r-1}\).
	\item If \(\bv \in T^{\circ}_r\) and \(\omega (e) = 1\) then \(\bv' = \omega \iprod \bv\) satisfies \(e \wedge \bv' = \bv\).
\end{enumerate}
\end{lemma}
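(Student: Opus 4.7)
The plan is to mirror the proof of Lemma \ref{lem:extalg}, restricting attention to the subspaces $T_r$, $T_r^{\circ}$, $W$ and $W^{\circ}$. The common tool for all three parts is the Leibniz rule \eqref{eq:iprod} applied to $\bv \wedge e$, which gives
\[
\omega \iprod (\bv \wedge e) = (\omega \iprod \bv) \wedge e + (-1)^r\, \bv \wedge (\omega \iprod e)\,,
\]
together with the observation that, for $\omega = \sum_k \lambda_k \frac{dx_k}{x_k}$, we have $\omega \iprod e = \omega(e) = \sum_k \lambda_k$, which vanishes precisely when $\omega \in W^{\circ}$.

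Items (ii) and (iii) then follow quickly. For (ii), Lemma \ref{lem:tr} already gives $\omega \iprod \bv \in T_{r-1}$; substituting $\bv \wedge e = 0$ (since $\bv \in T_r^{\circ}$) and $\omega(e) = 0$ (since $\omega \in W^{\circ}$) into the Leibniz identity yields $(\omega \iprod \bv) \wedge e = 0$, so $\omega \iprod \bv \in T_{r-1}^{\circ}$. For (iii), plugging $\bv \wedge e = 0$ and $\omega(e) = 1$ into the same identity gives $(\omega \iprod \bv)\wedge e = (-1)^{r+1}\bv$, which rearranges via graded commutativity to $e \wedge (\omega \iprod \bv) = \bv$.

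The substantive part is (i), which I would handle by an explicit choice of $\omega$ using the normal form from Lemma \ref{lem:zeroindx0}. Write $\bv = \sum_I c_I x_I \p x_I$ and pick a multi-index $I_0$ with $c_{I_0} \neq 0$. Since $r = |I_0| \geq 2$, we can choose two distinct indices $j, j' \in I_0$ and set
\[
\omega = \frac{dx_j}{x_j} - \frac{dx_{j'}}{x_{j'}} \in W^{\circ}\,.
\]
Using the pairing \eqref{eq:pairing}, I would extract the coefficient of $x_{I_0'} \p x_{I_0'}$ in $\omega \iprod \bv$, where $I_0' = I_0 \setminus \{j\}$. The $\frac{dx_j}{x_j}$-part contributes only from $J = I_0$, yielding $\pm c_{I_0}$. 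A putative contribution from the $\frac{dx_{j'}}{x_{j'}}$-part would require a multi-index $J$ of size $r$ with $j' \in J$ and $J \setminus \{j'\} = I_0'$; but $j' \in I_0'$ already, so no such $J$ exists. Hence the coefficient equals $\pm c_{I_0} \neq 0$, and $\omega \iprod \bv \neq 0$.

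The only real obstacle is the combinatorial bookkeeping in (i): one must verify that no other multi-index can cancel the contribution of $I_0$. The choice of $j, j' \in I_0$ is designed precisely so that the $\frac{dx_{j'}}{x_{j'}}$-contribution is forced to land on a basis vector indexed by a set of the wrong size, removing any possibility of cancellation; once this is observed the rest is routine.
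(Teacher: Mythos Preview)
Your proof is correct. The paper takes a slightly more structural route: it sets up a linear isomorphism $\Lambda^r\C^{n+1}\to T_r$ sending $\p x_I\mapsto x_I\p x_I$ (and $(\C^{n+1})^*\to W$ by $dx_i\mapsto dx_i/x_i$), checks via \eqref{eq:pairing} that contraction and wedge product are intertwined, observes that the Euler field $e$ corresponds to $(1,\ldots,1)\in\C^{n+1}$, and then simply invokes Lemma~\ref{lem:extalg} wholesale. This buys economy (no new computation) at the cost of a change of basis, since the proof of Lemma~\ref{lem:extalg}(i) completes $e$ to a basis. Your direct approach stays in the standard coordinates and compensates in (i) by choosing $\omega=dx_j/x_j-dx_{j'}/x_{j'}$ with both $j,j'\in I_0$; the observation that the $dx_{j'}/x_{j'}$-term cannot hit $I_0'=I_0\setminus\{j\}$ because $j'\in I_0'$ already is exactly the combinatorial point needed to avoid cancellation. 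Either argument works; the paper's is shorter to write once the identification is in place, while yours is more explicit and self-contained.
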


\begin{proof}
This Lemma follows from  \ref{lem:extalg} once we introduce a number of identifications.
We identify \(T_r \cong \Lambda^r \C^{n+1}\) by means of the	linear isomorphism 
\begin{equation}
\sum c_I \p x_I \mapsto \sum c_I x_I \p x_I.
\end{equation}
Similarly, we identify \(W \cong (\C^{n+1})^*\) by
\begin{equation}
\sum \lambda_i dx_i \mapsto \sum \lambda_i dx_i/x_i .
\end{equation}
Under these identifications, the pairing \(W \otimes T_r \xrightarrow{\iprod} T_{r-1}\) agrees with the usual contraction \((\C^{n+1})^* \otimes \Lambda^r\C^{n+1} \xrightarrow{\iprod} \Lambda^{r-1}\C^{n+1}\), as follows from Equation \eqref{eq:pairing}, and similarly for wedge products.
The subspace \(T^{\circ}_r\) is identified with the subspace of all \(v \in \Lambda^{r} \C^{n+1}\) such that \(v \wedge e = 0\) where \(e\) is the non-zero vector \(e = (1, \ldots, 1) \in \C^{n+1}\) and \(W^{\circ}\) is the subspace of all \( \omega \in (\C^{n+1})^*\) such that \(\omega(e) = 0\).	
\end{proof}

Recall that \(\Tan(\bv)\) denotes the set of all linear hyperplanes \(H \subset \C^{n+1}\) such that the multivector field \(\bv\) is tangent to \(H\). 
\begin{proposition}\label{prop:zeroindex}
	Let \(\bv\) be a non-zero element of \(T_r^{\circ}\) with \(r \geq 2\). Then there is a linear vector field \(\bv' \in T_1\) such that \(e \wedge \bv' \neq 0\) and \(\Tan(\bv) \subset \Tan(\bv')\).
\end{proposition}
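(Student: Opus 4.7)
The plan is to reduce, by successive contractions, from the given $r$-vector field $\bv \in T_r^{\circ}$ to a suitable linear vector field $\bv' \in T_1$, while preserving (or enlarging) the tangent-hyperplane set at every step. The reduction proceeds in two stages, corresponding to the two different ``modes'' of Lemma \ref{lem:zeroindxcontr}.

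Stage 1 (descent inside $T_\bullet^{\circ}$). I would iterate Lemma \ref{lem:zeroindxcontr}\,(i)-(ii) to build a sequence $\bv = \bv_r, \bv_{r-1}, \ldots, \bv_2$, with $\bv_s \in T_s^{\circ}$ non-zero and $\bv_{s-1} = \omega_s \iprod \bv_s$ for suitably chosen $\omega_s \in W^{\circ}$. Part (i) produces $\omega_s$ so that $\bv_{s-1} \neq 0$ whenever $s \geq 2$, and part (ii) guarantees that $\bv_{s-1}$ stays in $T_{s-1}^{\circ}$. Tangency to the coordinate hyperplanes is therefore automatic along the way (see also Lemma \ref{lem:tr}). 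Tangency to an arbitrary linear hyperplane $H \in \Tan(\bv)$ is preserved by each contraction: applying Lemma \ref{lem:tangpreserve} pointwise with $U = H$, we get $(\omega_s \iprod \bv_s)_x \in \Lambda^{s-1} T_xH$ for every $x \in H$. Hence $\Tan(\bv) \subset \Tan(\bv_s)$ for every $s$, and after $r-2$ steps we arrive at a non-zero $\tilde\bv := \bv_2 \in T_2^{\circ}$ with $\Tan(\bv) \subset \Tan(\tilde\bv)$. (If $r=2$ this stage is vacuous.)

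Stage 2 (breaking the relation $e\wedge \bullet = 0$). A key observation is that we must \emph{not} apply (i)-(ii) one more time, because $T_1^{\circ}$ consists only of scalar multiples of $e$, which would produce a $\bv'$ with $e \wedge \bv' = 0$. Instead, I would pick $\omega \in W$ with $\omega(e) = 1$, for instance $\omega = \tfrac{1}{n+1}\sum_{i=0}^{n} dx_i/x_i$, and set $\bv' := \omega \iprod \tilde\bv$. Three things need to be verified: (a) $\bv' \in T_1$, which follows from Lemma \ref{lem:tr}; (b) $e \wedge \bv' \neq 0$, which follows from Lemma \ref{lem:zeroindxcontr}\,(iii), since $e \wedge \bv' = \tilde\bv \neq 0$; (c) $\Tan(\tilde\bv) \subset \Tan(\bv')$, by the same pointwise application of Lemma \ref{lem:tangpreserve} as in Stage 1. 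Chaining (c) with the inclusion obtained in Stage 1 yields $\Tan(\bv) \subset \Tan(\bv')$.

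The only delicate point is checking that tangency to a hyperplane $H$ is preserved under contraction with the \emph{meromorphic} $1$-forms $\omega \in W$, which have simple poles along the coordinate hyperplanes. This is not really an obstacle, because the output $\omega \iprod \bv_s$ actually lies in $T_{s-1}$ (so it is polynomial: the denominators cancel exactly, as is implicit in the proof of Lemma \ref{lem:tr}), so tangency computed at a generic point of $H$ avoiding the coordinate hyperplanes propagates by continuity (Remark \ref{rmk:dense}) to all of $H$. With this settled, no further obstacle remains.
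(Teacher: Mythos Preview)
Your proposal is correct and follows essentially the same approach as the paper's proof: iterated contractions by forms in $W^{\circ}$ to descend to $T_2^{\circ}$, then one final contraction by an $\omega \in W$ with $\omega(e)=1$ to land in $T_1$ with $e \wedge \bv' = \tilde\bv \neq 0$, with tangency preserved via Lemma~\ref{lem:tangpreserve}. Your explicit handling of the meromorphic poles via Remark~\ref{rmk:dense} is a nice clarification of a point the paper treats somewhat implicitly.
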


\begin{proof}
	By repeated applications of items (i) and (ii) of Lemma \ref{lem:zeroindxcontr}, we can find \(1\)-forms \(\omega_i \in W^{\circ}\) for \(1 \leq i \leq r-2\)
	such that
	\[
	\tilde{\bv} = \omega_{r-2} \iprod \left( \omega_{r-3} \iprod \left( \ldots (\omega_1 \iprod X) \ldots \right) \right)
	\]
	is a non-zero element in \(T^{\circ}_2\). If \(r=2\) we simply take \(\tilde{\bv} = \bv\). 
	
	Let \(\omega \in W\) be such that \(\omega(e) = 1\), say \(\omega = dx_0 / x_0\), and define \(\bv' \in T_1\) as
	\[
	\bv' = \omega \iprod \tilde{\bv} .
	\]
	By item (iii) of Lemma \ref{lem:zeroindxcontr},
	\[
	e \wedge \bv' = \tilde{\bv} \neq 0 .
	\]
	
	We are left to show that \(\Tan(\bv) \subset \Tan(\bv')\). Notice that, since \(\bv' \in T_1\), the coordinate hyperplanes are contained in \(\Tan(\bv')\). Now, let \(H\) be a hyperplane in \(\Tan(\bv)\) different from the coordinate hyperplanes and take a point \(x \in H\) outside the origin and not lying in any of the coordinate hyperplanes. The multivector \(\bv'_x\) is obtained by successive contractions of \(\bv_x\) by elements in \((T_x \C^{n+1})^*\). Since \(\bv_x\) is tangent to \(H\), it follows from Lemma \ref{lem:tangpreserve} that \(\bv'_x\) is tangent to \(H\). Therefore, \(H \in \Tan(\bv')\) and the proposition follows.
\end{proof}




\newpage
\section{Essential and irreducible configurations}\label{app:essirr}


A configuration of points \(\mP = \{p_1, \ldots, p_m\}\) in \(\CP^n\) is \emph{essential} if
\[
\sum_{i=1}^m p_i = \CP^n \,.
\]
Equivalently, \(\mP\) is essential if it contains \(n+1\) linearly independent points.
We say that \(\mP\) is \emph{reducible} if there are two disjoint linear subspaces \(U\) and \(V\) such that
\[
\mP \subset U \cup V
\]
with both \(U \cap \mP\) and \(V \cap \mP\) non-empty.
We say that \(\mP\) is \emph{irreducible} if it is not reducible.
The main result of this section is the following.

\begin{proposition}\label{prop:stable}
	If the configuration of points \(\mP\) is essential and irreducible, then there is a weight vector \(\ba \in \R_{>0}^{m}\) such that \((\mP, \ba)\) is stable.
\end{proposition}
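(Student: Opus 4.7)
My plan is to dualize and prove the equivalent statement for the dual arrangement. Setting $H_i = p_i^{\perp} \subset (\CP^n)^*$, one obtains an arrangement $\mH = \{H_1, \ldots, H_m\}$ which is essential and irreducible in the sense of Definition \ref{def:irrarr} precisely when $\mP$ is essential and irreducible as a configuration (this parallels the poset identification $L \mapsto L^{\perp}$ used in the proof of Lemma \ref{lem:stabHP}). By Lemma \ref{lem:stabHP}, it suffices to produce $\ba \in \R_{>0}^{\mH}$ such that $(\mH, \ba)$ is stable. By Corollary \ref{cor:stcone}, this further reduces to showing that the relative interior $P^{\circ}$ of the matroid polytope $P \subset \{s = n+1\} \subset \R^{\mH}$ is non-empty.

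The key observation is that $P^{\circ} \neq \emptyset$ if and only if $\dim P = m-1$, i.e. the affine hull of the vertices $\{\be_{\mB} : \mB \text{ a basis}\}$ equals the whole hyperplane $\{s = n+1\}$. I would prove this by duality: any linear functional $f(\ba) = \sum_i c_i a_i$ that is constant on all indicator vectors $\be_{\mB}$ must have $c_1 = \cdots = c_m$ (and then $f$ is a scalar multiple of $s$, so the hyperplane $\{s = n+1\}$ is the unique affine hyperplane containing all $\be_{\mB}$). Given such $f$, the matroid basis-exchange axiom applied to two bases $\mB_1, \mB_2$ with $\mB_1 \setminus \mB_2 = \{i\}$ and $\mB_2 \setminus \mB_1 = \{j\}$ forces $c_i = c_j$. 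Taking the transitive closure of this single-exchange relation yields an equivalence relation $\approx$ on $[m] = \{1,\ldots,m\}$ on whose classes $c$ is necessarily constant.

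The main obstacle is the remaining step: showing that irreducibility of $\mH$ implies that $\approx$ has a unique equivalence class. I would argue the contrapositive: assume there are $k \geq 2$ classes $I_1, \ldots, I_k$ and construct a non-trivial splitting of $\mH$. For each $\alpha$ set $\mH_{\alpha} = \{H_i : i \in I_{\alpha}\}$ and let $T_{\alpha}$ be its centre. A standard consequence of the definition of $\approx$ is that for every basis $\mB$ the numbers $r_{\alpha} := |\mB \cap I_{\alpha}|$ are independent of $\mB$ and satisfy $\sum_{\alpha} r_{\alpha} = n+1$; moreover $\mB \cap I_{\alpha}$ is a maximal linearly independent subset of $\mH_{\alpha}$, so $\codim T_{\alpha} = r_{\alpha}$. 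Then $\sum_{\alpha} \codim T_{\alpha} = n+1$, which together with transversality (obtained by comparing any basis of $\mH$ with the union of bases of the $\mH_{\alpha}$) forces $T_1 + \cdots + T_k = \CP^n$. This exhibits $\mH = \mH_1 \uplus \cdots \uplus \mH_k$ as a non-trivial splitting, contradicting irreducibility.

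The delicate point is verifying the identities $\codim T_{\alpha} = r_{\alpha}$ and the transversality of the $T_{\alpha}$ purely from the exchange-defined equivalence relation; this is essentially the matroid-theoretic fact that the ranks of the connected components of a matroid sum to the total rank. An alternative that bypasses the combinatorial bookkeeping is to invoke the representation-theoretic result \cite[Theorem 1.12.9]{borovikgelfand} cited in the main text, which directly identifies $\dim P$ with $|\mH|$ minus the number of irreducible factors of $\mH$, yielding $\dim P = m - 1$ at once in the irreducible case.
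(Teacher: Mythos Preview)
Your argument is correct and takes a genuinely different route from the paper's own proof. The paper proceeds constructively on the point-configuration side: it first shows (Lemma~\ref{lem:basis}) that for every $U\in\mU$ there is a basis $\mB_U\subset\mP$ with $|\mB_U\cap U|<\dim U+1$, then (Lemma~\ref{lem:posweightss}) produces a semistable weight $\ba_0\in\R^m_{>0}$ by summing indicator functions of bases through each point, and finally takes $\ba=\ba_0+\sum_{U\in\mU}\be_{\mB_U}$, which is strictly stable since each $f_U$ now picks up a positive contribution from $\be_{\mB_U}$. By contrast, you argue dimensionally: you show that the affine span of the $\be_{\mB}$ equals the entire hyperplane $\{s=n+1\}$ by identifying the equivalence relation generated by single basis exchanges with matroid connectivity, and then you invoke the standard fact that the connected components of a matroid yield a direct-sum decomposition (hence a non-trivial splitting of $\mH$ if there is more than one class). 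This is precisely the route the paper alludes to in the Remark preceding its proof, where it cites \cite[Theorem~1.12.9]{borovikgelfand}; the paper then chooses to give the explicit constructive argument instead so as to remain self-contained and avoid the matroid bookkeeping you flag as ``delicate''. Your approach buys a cleaner conceptual statement ($\dim P=m-1$), while the paper's buys an explicit stable weight and avoids any external matroid references. The initial dualization to the arrangement side is harmless but unnecessary: the paper's Appendix~\ref{app:essirr} already sets up bases, indicator functions, and the cones $C$, $C^\circ$ directly for point configurations, so your exchange argument could be run there without passing through Lemma~\ref{lem:stabHP} and Corollary~\ref{cor:stcone}.
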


A hyperplane arrangement \(\mH \subset \CP^n\) corresponds to a configuration of points \(\mP \subset (\CP^n)^*\). 
It is straightforward to verify that: \(\mH\) is essential (irreducible) if and only if \(\mP\) is essential (irreducible).
Proposition \ref{prop:stable} together with Lemma \ref{lem:stabHP}\,, give us the following.

\begin{corollary}\label{cor:essirrst}
	If the hyperplane arrangement \(\mH \subset \CP^n\) is essential and irreducible, then there is a weight vector \(\ba \in \R^{\mH}_{>0}\) such that \((\mH, \ba)\) is stable.
\end{corollary}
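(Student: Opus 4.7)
The plan is to realize the existence of a stable weight as a non-emptiness statement for the relative interior of the matroid polytope of \(\mP\), and to establish this using the basis exchange axiom together with the irreducibility hypothesis.

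First I would introduce the matroid polytope \(P \subset \R^m\) as the convex hull of the indicator vectors \(\be_\mB\), where \(\mB \subset \{1,\ldots,m\}\) ranges over \((n+1)\)-subsets indexing linearly independent points. Essentiality guarantees that such bases exist, so \(P\) is non-empty and contained in the affine hyperplane \(\{\sum_i a_i = n+1\}\). Edmonds' theorem (the configuration-side analogue of Theorem~\ref{thm:edmonds}, equivalent to it via the duality of Lemma~\ref{lem:stabHP}) identifies the relative interior \(P^{\circ}\) inside this hyperplane with the set of \(\ba\) satisfying \(a_i>0\) and \(\sum_{p_i\in U} a_i < \dim U + 1\) for every \(U\in\mU\). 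By Lemma~\ref{lem:stU}, any \(\ba\in P^{\circ}\) is a stable weight. It therefore suffices to show \(P^{\circ}\) is non-empty, which amounts to \(\dim P = m-1\).

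Next I would show that the only linear forms \(\boldsymbol{\lambda}\in\R^m\) that are constant on all indicator vectors \(\be_\mB\) are scalar multiples of \(\mathbf{1}=(1,\ldots,1)\), which is equivalent to \(\dim P = m-1\). By the basis exchange axiom for linear independence, if \(\mB\) is a basis and \(j\notin\mB\), then there is \(i\in\mB\) with \((\mB\setminus\{i\})\cup\{j\}\) again a basis; constancy of \(\boldsymbol{\lambda}\) then forces \(\lambda_i=\lambda_j\). Let \(\sim\) be the equivalence relation generated by such exchanges and \(I_1,\ldots,I_r\) its equivalence classes, on each of which \(\boldsymbol{\lambda}\) is constant. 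The goal becomes to deduce \(r=1\) from irreducibility; once this is done, \(\boldsymbol{\lambda}\) is a multiple of \(\mathbf{1}\), hence \(\dim P = m-1\), and any \(\ba\in P^{\circ}\) gives the desired stable weight.

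The main obstacle is showing \(r=1\). For each \(k\), let \(U_k\subset\CP^n\) be the projective span of \(\mP_k:=\{p_i : i\in I_k\}\). I would first check that for every basis \(\mB\), writing \(\mB_k=\mB\cap I_k\), one has \(|\mB_k| = \dim U_k + 1\); that is, \(\mB_k\) is a basis of \(U_k\). Indeed, otherwise some \(p_j\in I_k\) would lie outside the span of \(\mB_k\), and expanding \(p_j\) in the basis \(\mB\) would produce a non-zero coefficient at some index \(i\in I_\ell\) with \(\ell\neq k\); the exchange \(i\leftrightarrow j\) would then link \(I_k\) to \(I_\ell\), violating the definition of the equivalence classes. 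Summing yields \(n+1 = \sum_k(\dim U_k + 1)\). Combined with \(U_1+\cdots+U_r = \CP^n\) (which holds because the sum contains a basis) and the standard subadditivity \(\dim(U_1+\cdots+U_r) \leq \sum_k \dim U_k + (r-1)\), equality must hold throughout, forcing the lifted linear subspaces \(U_k^{\bc}\subset\C^{n+1}\) to be in direct sum. Projectively, this gives \(U_1 \cap (U_2+\cdots+U_r) = \emptyset\), so if \(r\geq 2\) the decomposition \(\mP = \mP_1 \cup (\mP_2\cup\cdots\cup\mP_r)\) exhibits \(\mP\) as reducible, contradicting the hypothesis.
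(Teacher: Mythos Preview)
Your proof is correct and follows a genuinely different route from the paper's own argument in Appendix~\ref{app:essirr}. The paper does not compute \(\dim P\); instead it constructs a stable weight explicitly. Its key lemma (Lemma~\ref{lem:basis}) is that for each \(U\in\mU\) there exists a basis \(\mB_U\) with \(|\mB_U\cap U|\leq \dim U\); this is proved by choosing a maximal independent set \(\mQ\) among the points outside \(U\), extending to a basis, and using irreducibility to force \(U\cap\spn(\mQ)\neq\emptyset\), which gives the required dimension inequality. The stable weight is then obtained as the sum \(\ba_0+\sum_{U\in\mU}\be_{\mB_U}\), where \(\ba_0\) is a preliminary sum of basis indicators ensuring positivity of all coordinates.

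Your approach is the more structural one the paper cites but does not reproduce (the Borovik--Gelfand/Schrijver argument): you show \(\dim P=m-1\) by proving that the basis exchange graph on \(\{1,\ldots,m\}\) is connected, via the observation that its components would otherwise induce a projective direct-sum decomposition of \(\CP^n\) and hence a reducibility of \(\mP\). What your route buys is a clean structural statement (the dimension of the matroid polytope equals \(m\) minus the number of connected components of the matroid) which immediately gives non-emptiness of \(P^{\circ}\). What the paper's route buys is an explicit point of \(C^{\circ}\) and a proof that avoids appealing to Edmonds' description of \(P\); in particular the paper never needs to know that \(P\) coincides with the polytope cut out by the inequalities, only that each \(\be_\mB\) satisfies them.
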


\begin{remark}
Proposition \ref{prop:stable} follows from the fact that the \emph{matroid polytope} (see Section \ref{sec:matpol}) of \(\mP\) has dimension \(|\mP| - 1\) precisely when \(\mP\) is essential and irreducible. See \cite[Theorem 1.12.9]{borovikgelfand} and \cite[p. 698]{schrijver}. For completeness, we present a self-contained proof.
\end{remark}

We will use the following notion.
\begin{definition}
	We say that \(\mB \subset \mP\) is a \emph{basis} if:
	\begin{enumerate}[label=\textup{(\roman*)}]
		\item \(\sum\limits_{p \in \mB} p = \CP^n\)\,;
		\item \(|\mB| = n+1\)\,.
	\end{enumerate}
\end{definition}

\begin{remark}
	If \(\mP\) is essential then it contains at least one basis.
\end{remark}

We shall need the following elementary result, whose proof we omit.

\begin{lemma}\label{lem:sumdim}
	Let \(P\) and \(Q\) be linear subspaces of \(\CP^n\) such that \(P + Q = \CP^n\) and \(P \cap Q \neq \emptyset\). Then \(\dim P + \dim Q \geq n\)\,.
\end{lemma}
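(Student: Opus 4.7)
The plan is to reduce to the standard Grassmann dimension formula for vector subspaces of $\C^{n+1}$ by passing to the corresponding linear cones. Recall from Section \ref{sec:habasicdef} that to each projective subspace $L \subset \CP^n$ there corresponds a unique linear subspace $L^{\bc} \subset \C^{n+1}$ with $\P(L^{\bc}) = L$, and that $\dim L^{\bc} = \dim L + 1$.

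First, I would translate the two hypotheses into statements about $P^{\bc}$ and $Q^{\bc}$. The condition $P + Q = \CP^n$ means, by the definition of the sum of projective subspaces, that $P^{\bc} + Q^{\bc} = \C^{n+1}$, so $\dim(P^{\bc} + Q^{\bc}) = n+1$. The condition $P \cap Q \neq \emptyset$ means that $P^{\bc} \cap Q^{\bc}$ strictly contains the origin, so $\dim(P^{\bc} \cap Q^{\bc}) \geq 1$.

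Next, I would apply the standard dimension formula for vector subspaces,
\[
\dim(P^{\bc} + Q^{\bc}) + \dim(P^{\bc} \cap Q^{\bc}) = \dim P^{\bc} + \dim Q^{\bc},
\]
to obtain
\[
(n+1) + \dim(P^{\bc} \cap Q^{\bc}) = (\dim P + 1) + (\dim Q + 1).
\]
Rearranging gives $\dim P + \dim Q = n - 1 + \dim(P^{\bc} \cap Q^{\bc})$, and using $\dim(P^{\bc} \cap Q^{\bc}) \geq 1$ yields $\dim P + \dim Q \geq n$, as required.

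There is no real obstacle here; the statement is a direct consequence of the Grassmann formula once one tracks the shift by $1$ between projective and affine dimensions and checks that the non-emptiness of the projective intersection corresponds to having an intersection of positive vector-space dimension. The only minor point to keep in mind is the dimensional convention $\dim \emptyset = -1$, which is precisely what makes the hypothesis $P \cap Q \neq \emptyset$ indispensable: without it, $\dim(P^{\bc} \cap Q^{\bc})$ could be $0$ and the inequality would degrade to $\dim P + \dim Q \geq n-1$.
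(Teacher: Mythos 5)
Your proof is correct: passing to the linear cones $P^{\bc}, Q^{\bc} \subset \C^{n+1}$, translating the hypotheses into $\dim(P^{\bc}+Q^{\bc}) = n+1$ and $\dim(P^{\bc}\cap Q^{\bc}) \geq 1$, and applying the Grassmann dimension formula is exactly the elementary argument the paper has in mind (the paper states the lemma with ``whose proof we omit''), and your remark about the convention $\dim\emptyset = -1$ correctly identifies why the hypothesis $P \cap Q \neq \emptyset$ is needed. The only cosmetic quibble is that the shift by $1$ is between projective and \emph{vector-space} dimensions, not ``affine'' ones.
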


The key result we need to prove Proposition \ref{prop:stable} is the next. 

\begin{lemma}\label{lem:basis}
	Suppose that \(\mP\) is essential and irreducible. Then, for every \(U\) in the poset \(\mU\), there is a basis \(\mB \subset \mP\) such that
	\begin{equation}\label{eq:basisbound}
		| \mB \cap U | < \dim U + 1 \,.
	\end{equation}
\end{lemma}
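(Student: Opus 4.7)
The plan is to reformulate the bound in terms of the complement $U' = \mP \setminus (U \cap \mP)$ and then exploit both the essential and the irreducible hypothesis via Lemma \ref{lem:sumdim}. The point is that for any basis $\mB \subset \mP$, the identity $|\mB \cap U| = (n+1) - |\mB \cap U'|$ turns the desired inequality $|\mB \cap U| < \dim U + 1$ into $|\mB \cap U'| > n - \dim U$. So it suffices to produce a basis $\mB$ containing many linearly independent points from $U'$.

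First I would introduce $W = \sum_{p \in U'} p$, the linear span of the points in $U'$, and analyse it. Since $U \in \mU$ is a proper subspace of $\CP^n$ and $\mP$ contains at least one point outside $U$, the set $U'$ is non-empty; moreover $U + W = \sum_{p \in \mP} p = \CP^n$ by the essential hypothesis. Next I would argue that $U \cap W \neq \emptyset$: otherwise $\mP \subset U \cup W$ with $U$ and $W$ disjoint non-empty linear subspaces, each meeting $\mP$ non-trivially, contradicting irreducibility of $\mP$. Applying Lemma \ref{lem:sumdim} to $U$ and $W$ then gives
\[
\dim W \geq n - \dim U.
\]

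Next I would choose a maximal linearly independent subset $I \subset U'$; by definition of $W$, the set $I$ consists of $\dim W + 1$ points and spans $W$. By the previous inequality,
\[
|I| = \dim W + 1 \geq n - \dim U + 1.
\]
Since $\mP$ is essential, $\mP$ contains $n+1$ linearly independent points, i.e., a basis; a standard exchange / extension argument for linearly independent configurations allows me to enlarge $I$ to a subset $\mB \subset \mP$ of $n+1$ linearly independent points, which is thus a basis of $\mP$ with $I \subset \mB \cap U'$. Consequently,
\[
|\mB \cap U| \;=\; (n+1) - |\mB \cap U'| \;\leq\; (n+1) - |I| \;\leq\; \dim U,
\]
which is strictly less than $\dim U + 1$, proving the lemma.

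The only non-routine step is the verification that $U \cap W \neq \emptyset$, which is precisely where irreducibility enters; everything else is elementary linear algebra together with Lemma \ref{lem:sumdim}. I do not expect any real obstacle beyond formalising the extension of an independent set to a basis inside $\mP$.
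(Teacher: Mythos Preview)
Your proof is correct and is essentially identical to the paper's: your $W$ and $I$ are the paper's $Q$ and $\mQ$, and the only difference is that you invoke Lemma \ref{lem:sumdim} before extending to a basis while the paper does so afterwards. Both arguments reach $|\mB \cap U| \le n - \dim W \le \dim U$ in the same way.
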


\begin{proof}
	Let \(U \in \mU\). Consider the projective subspace \(Q \subset \CP^n\) given by
	\begin{equation*}
		Q = \spn \, \{\, p \in \mP \,|\, p \notin U  \} \,. 
	\end{equation*}
	Since \(\mP\) is essential, 
	\begin{equation}\label{eq:lperplusq}
		U + Q = \CP^n \,.
	\end{equation}
	In particular, since \(U \subsetneq \CP^n\), the subspace \(Q\) is non-empty.
	Choose a linearly independent subset 
	\begin{equation}\label{eq:mq}
		\mQ \subset \{\, p \in \mP \,|\, p \notin U  \} 	
	\end{equation}
	with \(|\mQ| = \dim Q + 1\) that spans the subspace \(Q\).
	Since \(\mP\) is essential,
	we can extend \(\mQ\) to a basis \(\mB \subset \mP\). It follows from \eqref{eq:mq} that \(\mB \cap U \subset \mB \setminus \mQ\)\,, hence
	\begin{equation}\label{eq:bcaplperp}
		\begin{aligned}
			|\mB \cap U| &\leq | \mB \setminus \mQ| \\ &=
			n + 1 - |\mQ| = n - \dim Q \,.
		\end{aligned}		
	\end{equation}
	
	On the other hand,
	\[
	\mP \subset U \cup Q
	\]
	and both \(U \cap \mP\), \(Q \cap \mP\) are non-empty.
	Since \(\mP\) is irreducible, we must have
	\begin{equation}\label{eq:lpercapq}
		U \cap Q \neq \emptyset \,.
	\end{equation}
	It follows from Equations \eqref{eq:lperplusq} and \eqref{eq:lpercapq} together with
	Lemma \ref{lem:sumdim}  that
	\begin{equation}\label{eq:dimQ}
		n - \dim Q \leq \dim U \,.
	\end{equation}
	It follows from Equations \eqref{eq:bcaplperp} and \eqref{eq:dimQ} that
	\[
	| \mB \cap U | \leq \dim U
	\]
	which is equivalent to Equation \eqref{eq:basisbound}.
\end{proof}

\emph{The semi-stable and stable cones.}
Let \(m = |\mP|\) and label the points, say 
\[
\mP = \{p_1, \ldots, p_m\} \,.
\]
Let \(s\) be the linear function on \(\R^m\) equal to the total sum of the components,
\[
s(\ba) = \sum_{i=1}^m a_i \,,
\]
where \(\ba = (a_1, \ldots, a_m) \in \R^m\).
Similarly, for an element \(U\) in the sum poset \(\mU\) of \(\mP\), let \(s_U\) be the linear function on \(\R^m\) given by
\[
s_U(\ba) =  \sum_{i \, | \, p_i \in U} a_i \,.
\]
For \(U \in \mU\), let \(f_U\) be the linear function on \(\R^m\) given by
\begin{equation}\label{eq:fU}
	f_U(\ba) = \frac{\dim U + 1}{n+1} \cdot s(\ba) - s_U(\ba) \,.
\end{equation}

\begin{definition}
	The semi-stable cone \(C\) is the closed convex polyhedral cone in \(\R^m\) obtained as intersection of the half-spaces \(\{f_U \geq 0\}\) for \(U \in \mU\) together with \(\{a_i \geq 0\}\),
	\[
	C = \R^m_{\geq 0} \cap
	\left( \bigcap_{U \in \mU} \{f_U \geq 0\} \right)  \,.
	\]
	The stable cone is the interior of the semi-stable cone:
	\[
	C^{\circ} = \R^m_{>0} \cap
	\left( \bigcap_{U \in \mU} \{f_U > 0\} \right)  \,.
	\]
\end{definition}

\begin{lemma}\label{lem:stcone}
	The weighted configuration of points \((\mP, \ba)\) is stable if and only if \(\ba\) belongs to the stable cone.
\end{lemma}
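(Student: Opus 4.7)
The plan is to unpack the definitions and recognize that the stable cone was constructed precisely to encode the stability inequalities. I will proceed as follows.

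First, I would observe that by definition of $C^{\circ}$, a vector $\ba \in \R^m$ belongs to the stable cone if and only if $a_i > 0$ for all $i$ and $f_U(\ba) > 0$ for every $U \in \mU$. Writing out $f_U$ using \eqref{eq:fU}, the inequality $f_U(\ba) > 0$ is equivalent to
\[
s_U(\ba) \,<\, \frac{\dim U + 1}{n+1}\,s(\ba)\,,
\]
which, after expanding $s_U$ and $s$, is exactly the defining condition \eqref{eq:stU} of Lemma \ref{lem:stU} applied to the subspace $U$.

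Second, by the definition of a weighted configuration of points, the weights $a_i$ of $(\mP, \ba)$ are required to be positive, so the condition $\ba \in \R^m_{>0}$ is automatic from the input data and can be discarded from the comparison. Combining this with the previous step, $\ba \in C^{\circ}$ if and only if Equation \eqref{eq:stU} holds for all $U \in \mU$.

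Finally, I would invoke Lemma \ref{lem:stU}, which states that $(\mP, \ba)$ is stable precisely when Equation \eqref{eq:stU} holds for every $U \in \mU$. Combining this equivalence with the previous one yields the statement of Lemma \ref{lem:stcone}. The argument is purely a definition chase; there is no substantive obstacle, as the work has already been carried out in the formulation of $f_U$ and in Lemma \ref{lem:stU}.
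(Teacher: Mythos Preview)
Your proposal is correct and follows essentially the same approach as the paper's proof, which simply notes that \(f_U > 0\) is equivalent to Equation \eqref{eq:stU} and then invokes Lemma \ref{lem:stU}. Your version is slightly more detailed in explicitly addressing the positivity condition \(\ba \in \R^m_{>0}\), but the argument is the same definition chase.
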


\begin{proof}
	For \(U \in \mU\), Equation \eqref{eq:stU} is equivalent to \(f_U > 0\). The statement then follows from Lemma \ref{lem:stU}\,.
\end{proof}

Since \(C\) is a cone defined by linear inequalities, we have the following.

\begin{lemma}\label{lem:sumss}
	If \(\ba, \bb \in C\) then \(\ba + \bb \in C\).
\end{lemma}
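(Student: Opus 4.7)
The plan is to observe that this is essentially immediate from the definition of $C$ as the intersection of closed half-spaces cut out by linear functionals, together with the elementary fact that any such intersection is closed under addition. Since $C = \R^m_{\geq 0} \cap \bigcap_{U \in \mU} \{f_U \geq 0\}$ and each of the defining constraints is a linear inequality of the form $\ell(\ba) \geq 0$, it suffices to check that each linear functional involved is additive and non-negative on the sum.

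Concretely, I would argue as follows. Let $\ba, \bb \in C$. First, for each coordinate $i \in \{1, \ldots, m\}$, the $i$-th component of $\ba + \bb$ equals $a_i + b_i \geq 0$, so $\ba + \bb \in \R^m_{\geq 0}$. Second, fix any $U \in \mU$. Since $f_U$ is linear, we have
\[
f_U(\ba + \bb) = f_U(\ba) + f_U(\bb) \geq 0 + 0 = 0,
\]
where the inequality uses $\ba, \bb \in C \subset \{f_U \geq 0\}$. As this holds for every $U \in \mU$, we conclude $\ba + \bb \in \bigcap_{U \in \mU}\{f_U \geq 0\}$. Combining the two verifications gives $\ba + \bb \in C$.

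There is no real obstacle here; the result is a one-line consequence of the definition and is recorded only for reference in subsequent arguments (likely to establish that $C$ is genuinely a convex cone in $\R^m$, so that the cone-theoretic language of Definition \ref{def:semistabcone} and Corollary \ref{cor:stcone} applies). If desired, one could phrase it even more succinctly by noting that $C$ is an intersection of convex cones with apex at the origin and is therefore itself a convex cone, hence closed under addition (and under multiplication by non-negative scalars).
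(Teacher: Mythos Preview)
Your proof is correct and is essentially identical to the paper's approach: the paper simply notes that \(C\) is defined by linear inequalities and records the lemma without further argument (the suppressed proof verifies \(f_U(\ba+\bb)=f_U(\ba)+f_U(\bb)\geq 0\) and \(\ba+\bb\in\R^m_{\geq 0}\), exactly as you do).
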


Let \(\mB \subset \mP\) be a basis. The indicator function of \(\mB\) is the vector \(\be_{\mB} \in \R^m\)  with components given by
\[
(\be_{\mB})_i =
\begin{cases}
	1 &\text{ if } p_i \in \mB \,, \\
	0 &\text{ if } p_i \notin \mB \,.
\end{cases}
\]

\begin{lemma}\label{lem:delta}
	Let \(\mB \subset \mP\) be a basis and let \(U \in \mU\). Then
	\begin{equation}\label{eq:fLdelta}
		f_U(\be_{\mB}) = \dim U + 1 - | \mB \cap U | \,.
	\end{equation}
	In particular, the following holds:
	\begin{enumerate}[label = \textup{(\roman*)}]
		\item \(f_U(\be_{\mB}) >0\) if \( |\mB \cap U| < \dim U + 1\)\,;
		\item 	\(f_U(\be_{\mB}) \geq 0\) \,.
		\item  \(\be_{\mB} \in C\) \,;
	\end{enumerate}
\end{lemma}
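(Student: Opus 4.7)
The plan is to evaluate the linear function $f_U$ explicitly on the indicator vector $\be_{\mB}$ and then deduce items (i)--(iii) from elementary linear-algebra considerations. First, I substitute $\ba = \be_{\mB}$ into Equation \eqref{eq:fU}. By definition of the indicator function, $s(\be_{\mB}) = |\mB|$, and since $\mB$ is a basis of $\mP$ we have $|\mB| = n+1$. Similarly, $s_U(\be_{\mB}) = |\{i \,|\, p_i \in U\} \cap \{i \,|\, p_i \in \mB\}| = |\mB \cap U|$. Plugging these two values into $f_U = \tfrac{\dim U + 1}{n+1} \cdot s - s_U$ and simplifying yields Equation \eqref{eq:fLdelta}.

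The main substantive step is the bound
\[
|\mB \cap U| \leq \dim U + 1 \,,
\]
which I will obtain from the linear independence built into the definition of a basis. Concretely, the $n+1$ points in $\mB$ span $\CP^n$, hence they are linearly independent (i.e., in general linear position). Any subset of linearly independent points is linearly independent, so if $k = |\mB \cap U|$, then the $k$ points in $\mB \cap U$ span a subspace of dimension exactly $k-1$ contained in $U$, which forces $k - 1 \leq \dim U$.

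Items (i)--(iii) are then immediate corollaries. Item (i) is the statement that $\dim U + 1 - |\mB \cap U| > 0$ whenever $|\mB \cap U| < \dim U + 1$, which is a tautology from \eqref{eq:fLdelta}. Item (ii) is the corresponding weak inequality, which is precisely the bound established in the previous paragraph combined with \eqref{eq:fLdelta}. For item (iii), the components of $\be_{\mB}$ are all in $\{0,1\}$, so $\be_{\mB} \in \R^m_{\geq 0}$; together with (ii) this gives $\be_{\mB} \in \{f_U \geq 0\}$ for every $U \in \mU$, hence $\be_{\mB} \in C$.

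I do not anticipate any real obstacles: the entire argument reduces to direct substitution in \eqref{eq:fU} and the observation that points forming part of a basis are linearly independent. No inductive argument or appeal to the irreducibility hypothesis of Lemma \ref{lem:basis} is needed here.
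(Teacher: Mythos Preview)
Your proof is correct and follows essentially the same approach as the paper: compute $s(\be_{\mB}) = n+1$ and $s_U(\be_{\mB}) = |\mB \cap U|$ directly, substitute into \eqref{eq:fU}, and then use linear independence of the basis points to obtain $|\mB \cap U| \leq \dim U + 1$, from which (i)--(iii) follow immediately. The only difference is that you spell out the $\R^m_{\geq 0}$ check for item (iii) explicitly, which the paper leaves implicit.
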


\begin{proof}
	Note that
	\begin{equation}\label{eq:sdelta}
		s(\be_{\mB}) = |\mB| = n+1
	\end{equation}
	and
	\begin{equation}\label{eq:sLdelta}
		s_U (\be_{\mB}) = |\mB \cap U| \,. 
	\end{equation}
	Equation \eqref{eq:fLdelta} follows from \eqref{eq:fU} together with Equations \eqref{eq:sdelta} and \eqref{eq:sLdelta}.
	
	Items (i) and (ii) follow from Equation \eqref{eq:fLdelta} together with the fact that, since the elements of \(\mB\) are linearly independent,
	\[
	| \mB \cap U | \leq \dim U + 1 \,.
	\]
	Item (iii) holds since (ii) holds for all \(U\).
\end{proof}

\begin{lemma}\label{lem:posweightss}
	If \(\mP\) is essential then \(C \cap \R^m_{>0}\) is non-empty.
\end{lemma}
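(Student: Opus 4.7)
The plan is to exhibit a vector in $C\cap \R^m_{>0}$ by summing indicator functions of suitably chosen bases. First I would observe that, since $\mP$ is essential, every point $p_i\in\mP$ lies in some basis $\mB_i\subset\mP$: the singleton $\{p_i\}$ is linearly independent (points in projective space are non-zero), and any maximal linearly independent subset of $\mP$ containing it must have exactly $n+1$ elements because $\mP$ spans $\CP^n$. Hence $\mB_i$ is a basis of $\mP$ with $p_i \in \mB_i$.

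Next I would set
\[
\ba \,=\, \sum_{i=1}^{m} \be_{\mB_i} \,.
\]
By Lemma \ref{lem:delta}(iii), each $\be_{\mB_i}\in C$. By Lemma \ref{lem:sumss}, the semi-stable cone $C$ is closed under addition, so $\ba\in C$. Finally, the $i$-th component of $\ba$ is
\[
a_i \,=\, \sum_{j=1}^m (\be_{\mB_j})_i \,\geq\, (\be_{\mB_i})_i \,=\, 1 \,>\, 0 \,,
\]
because $p_i\in\mB_i$ guarantees a contribution of $1$ from the $i$-th summand. Therefore $\ba\in C\cap\R^m_{>0}$, which proves the lemma.

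There is no real obstacle here: the only non-trivial input is the fact that every element of an essential configuration extends to a basis, which is the standard matroid-theoretic extension property and follows immediately from the definition of essential together with the absence of zero points in $\mP$. The closure of $C$ under positive linear combinations (Lemma \ref{lem:sumss}) does the rest.
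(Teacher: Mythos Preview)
Your proof is correct and essentially identical to the paper's own argument: both choose for each $p_i$ a basis $\mB_i$ containing it, set $\ba = \sum_i \be_{\mB_i}$, and conclude $\ba \in C \cap \R^m_{>0}$ using Lemma~\ref{lem:delta}(iii) and Lemma~\ref{lem:sumss}. Your version just spells out a bit more explicitly why each $p_i$ extends to a basis and why each coordinate of $\ba$ is positive.
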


\begin{proof}
	Since \(\mP\) is essential, for any \(p_i \in \mP\) we can find a basis \(\mB_i \subset \mP\) with \(p_i \in \mB_i\). The vector
	\[
	\ba_0 = \sum_{i=1}^m \be_{\mB_i}
	\]
	belongs to \(\R^m_{>0}\). It follows from Lemma \ref{lem:delta} (iii) and Lemma \ref{lem:sumss}\,, that \(\ba_0 \in C\).
\end{proof}

\begin{proof}[Proof of Proposition \ref{prop:stable}]
	Start with \(\ba_0 \in C \cap \R_{> 0}^{m}\) as given by Lemma \ref{lem:posweightss}. For each \(U \in \mU\), let \(\mB_U \subset \mP\) be a basis with \(|\mB_U \cap U| < \dim U + 1\) as provided by Lemma \ref{lem:basis}\,. Set
	\[
	\ba = \ba_0 + \sum_{U \in \mU} \be_{\mB_U} \,.
	\]
	Clearly, \(\ba \in \R_{>0}^m\).
	It follows from Lemma \ref{lem:delta} (i) and (ii) that \(f_U(\ba) > 0\) for all \(U \in \mU\). Therefore, \(\ba \in C^{\circ}\). By Lemma \ref{lem:stcone}\,, \((\mP, \ba)\) is stable.
\end{proof}


\newpage
\section{Miyaoka-Yau and K\"ahler-Einstein metrics}\label{app:ke}

For compact K\"ahler manifolds,
the Miyaoka-Yau inequality follows from the existence of K\"ahler-Einstein metrics together with the Chern-Weil formula
\begin{equation}\label{eq:myidentity}
	\left( 2(n+1) c_2(X) - n c_1(X)^2 \right) \cdot [\omega_{\KE}]^{n-2} = \lambda_n \int_X |\mathring{\textup{Riem}}|^2 \cdot \omega_{\KE}^n \,\, ,	
\end{equation}
where \((X^n, \omega_{\KE})\) is a compact K\"ahler-Einstein manifold,
\(\mathring{\textup{Riem}}\) is the trace free part of the Riemann curvature tensor of \(\omega_{\KE}\), and \(\lambda_n\) is a positive constant depedning on dimension. As a consequence, if equality holds in the Miyaoka-Yau inequality, then \(\omega_{\KE}\) must have constant holomorphic sectional curvature. 

The Miyaoka-Yau inequality has been extended to the setting of log pairs \((X, \Delta)\); see for instance \cite{langer}, \cite{li}, and  \cite{guenancia}\,. However, a general version of the Miyaoka-Yau inequality for ktl pairs that characterizes the equality case through the existence of constant holomorphic sectional curvature metrics with conical singularities is still missing.
In this appendix we show that, under the hypothesis of Theorem \ref{thm:main}\,, the pairs \((\CP^n, \Delta)\) with \(\Delta = \sum a_H \cdot H\)\,, admit \emph{weak} Ricci-flat K\"ahler metrics with prescribed singularities at the hyperplanes. To state this precisely, we introduce some notation.

Let \((\mH, \ba)\) be a weighted arrangement of hyperplanes \(H \subset \CP^n\) and let \((\CP^n)^{\circ}\) be the arrangement complement.
Let \(\omega_{\FS}\) be the Fubini-Study metric on \(\CP^n\) with \(\omega_{\FS} \in 2\pi \cdot c_1(\mO_{\P^n}(1))\) and let \(|\cdot|\) be the usual Hermitian metric on \(\mO_{\P^n}(1)\) with curvature \(-i \cdot \omega_{\FS}\). 
Consider the function 
\begin{equation}
	f =  \prod_{H \in \mH} |\ell_H|^{-2a_H} \,
\end{equation}
where \(\ell_H\) are sections of \(\mO_{\P^n}(1)\)  with \(H = \{\ell_H=0\}\). The main result of this appendix is the next.

\begin{proposition}\label{prop:weakrf}
	Suppose that the weighted arrangement \((\mH, \ba)\) is klt and CY. Then there is a Ricci-flat K\"ahler metric \(\omega_{\RF}\) on the arrangement complement \((\CP^n)^{\circ}\) whose volume form is proportional to \(f \cdot \omega_{\FS}^n\). Moreover, we can write 
    \[
    \omega_{\RF} = \omega_{\FS} + i \p \bar{\p} \varphi \,,
    \] 
    where \(\varphi\) extends continuously to \(\CP^n\). 
\end{proposition}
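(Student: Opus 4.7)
The idea is to reduce the existence of $\omega_{\RF}$ to a complex Monge-Amp\`ere equation on $\CP^n$ and apply Ko\l odziej's theorem on continuous solutions. Specifically, I would look for a function $\varphi \in C^0(\CP^n)$, $\omega_{\FS}$-plurisubharmonic, solving
\[
(\omega_{\FS} + i\p \bar\p \varphi)^n = c \cdot f \cdot \omega_{\FS}^n
\]
weakly on $\CP^n$, where the constant $c>0$ is fixed by the cohomological normalization $c \int_{\CP^n} f \,\omega_{\FS}^n = \int_{\CP^n} \omega_{\FS}^n = (2\pi)^n$. Setting $\omega_{\RF} = \omega_{\FS} + i\p\bar\p\varphi$ on $(\CP^n)^{\circ}$ then produces the desired metric, provided we can verify integrability of $f$ and smoothness of $\varphi$ off the arrangement.

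\textbf{Key step: $L^p$ integrability of $f$.} The heart of the matter is to show that the klt condition \eqref{eq:klt} implies $f \in L^p(\omega_{\FS}^n)$ for some $p > 1$. I would pass to the minimal De Concini-Procesi resolution $\pi: X \to \CP^n$ from Section \ref{sec:res} and work locally near a point of the snc divisor $D = \pi^{-1}(\mH)$. At such a point, a collection of divisors $D_{L_1}, \ldots, D_{L_k}$ meet transversally with $\{L_1, \ldots, L_k\}$ nested relative to $\mLi$, and in adapted coordinates $(z_1, \ldots, z_n)$ with $D_{L_i} = \{z_i = 0\}$ the pullback $\pi^*(f \omega_{\FS}^n)$ behaves, up to a bounded factor, like $\prod_{i=1}^k |z_i|^{-2\beta_i}$ times Lebesgue measure, where
\[
\beta_i = \Bigl(\sum_{H \supset L_i} a_H\Bigr) - \codim L_i + 1.
\]
The klt condition, via Corollary \ref{cor:kltcondition}, gives $\beta_i < 1$ for every $L_i \in \mLi$, which yields integrability of $\prod |z_i|^{-2p\beta_i}$ for $p \in (1, \min_i 1/\beta_i)$ close enough to $1$. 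Patching over a finite cover of $X$ gives $f \in L^p(\omega_{\FS}^n)$.

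\textbf{Existence and regularity.} Given $L^p$ integrability with $p>1$ and the normalization above, Ko\l odziej's theorem produces a continuous $\omega_{\FS}$-psh solution $\varphi \in C^0(\CP^n)$ of the Monge-Amp\`ere equation, unique up to an additive constant. On the arrangement complement $(\CP^n)^{\circ}$ the density $cf$ is smooth and strictly positive, so standard elliptic bootstrap (Evans-Krylov plus Schauder) upgrades $\varphi$ to $C^\infty$ there; consequently $\omega_{\RF}$ is a genuine smooth K\"ahler metric on $(\CP^n)^{\circ}$ with $\omega_{\RF}^n = c f \omega_{\FS}^n$.

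\textbf{Verification of the Ricci-flat condition.} On $(\CP^n)^{\circ}$ one computes
\[
\Ric(\omega_{\RF}) = -i\p\bar\p \log\bigl(cf\bigr) + \Ric(\omega_{\FS}) = \sum_{H \in \mH} a_H \cdot i\p\bar\p \log |\ell_H|^2 + (n+1)\omega_{\FS}.
\]
Since $|\ell_H|$ is the norm of a section of $\mO_{\P^n}(1)$ with Chern curvature $\omega_{\FS}$, one has $-i\p\bar\p \log |\ell_H|^2 = \omega_{\FS}$ away from $H$. The CY condition $\sum_H a_H = n+1$ then gives $\Ric(\omega_{\RF}) = -(n+1)\omega_{\FS} + (n+1)\omega_{\FS} = 0$, as required. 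The main technical obstacle is Step 2, verifying the $L^p$ bound: one must check carefully that the local exponents produced by the resolution match, on the nose, the combinatorial klt bounds in \eqref{eq:klt}; this is where the explicit description of $D$ as a sum of the exceptional divisors $D_L$ with $L \in \mLi$ and the nested intersection property from Proposition \ref{prop:nested} are indispensable.
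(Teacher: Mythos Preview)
Your proof is correct and follows the same overall architecture as the paper: verify $f \in L^p$ for some $p>1$, apply Ko\l odziej's theorem to solve the normalized Monge--Amp\`ere equation, and check Ricci-flatness on the complement via the CY identity. The Ricci computation and the appeal to Ko\l odziej are essentially identical to the paper's.

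The one substantive difference is in how you establish $L^p$ integrability of $f$. You pass to the minimal De Concini--Procesi resolution and read off the local exponents $\beta_i = \sum_{H\supset L_i} a_H - \codim L_i + 1$ from the snc model, using the nested-set description of $D$. The paper instead works directly on $\CP^n$: it stratifies by the intersection poset $\mL$, and at a point of $L^\circ$ uses homogeneity of $f$ in the transverse $\C^{\codim L}$ together with spherical polar coordinates and induction on the stratum depth to reduce integrability to $p\cdot\!\sum_{H\supset L} a_H < \codim L$. Your resolution approach is the standard birational-geometry route and makes the discrepancy/klt bookkeeping transparent; the paper's stratification argument is more elementary in that it avoids the resolution machinery of Section~\ref{sec:res} entirely and needs only polar coordinates. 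Both yield the same threshold $p^\ast = \min_{L\in\mL} \codim L / \sum_{H\supset L} a_H$. (A small cosmetic point: your interval $(1,\min_i 1/\beta_i)$ should be restricted to those $i$ with $\beta_i>0$.)
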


To prove Proposition \ref{prop:weakrf}\,, we need to establish some preliminary lemmas.
Write \(L^p = L^p(\CP^n)\) for the Lebesgue space of measurable functions \(u\) on \(\CP^n\) such that 
\[
\int_{\CP^n} |u|^p \cdot \omega_{\FS}^n < \infty \,.
\] 

\begin{lemma}
	If the klt condition \eqref{eq:klt} is satisfied then \(f \in L^p\) for some \(p > 1\). 
\end{lemma}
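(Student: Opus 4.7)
The plan is to pull the integral back to the minimal De Concini--Procesi resolution $\pi\colon X\to\CP^n$ of Section~\ref{sec:blowup}, where the singularities of $\pi^*(f\cdot\omega_{\FS}^n)$ become of simple normal crossing type, and then to establish local $L^1$-integrability of $\pi^*(f^p\omega_{\FS}^n)$ in each coordinate chart by a one-variable computation driven by the klt hypothesis.

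Fix $x\in X$ and choose local holomorphic coordinates $(w_1,\dots,w_n)$ so that the components of $D=\pi^{-1}(\mH)$ through $x$ are the coordinate hyperplanes $\{w_i=0\}$ for $i\in I$; by Proposition~\ref{prop:nested} the corresponding subspaces $\{L_i\}_{i\in I}\subset\mLi$ form a nested set. In the minimal DCP resolution $D_L$ appears with multiplicity one in $\pi^*H$ precisely when $L\subset H$, as one checks inductively from Proposition~\ref{prop:itblowup} (each new blowup centre meets the previously built exceptional divisors transversely). Consequently
\[
\pi^*|\ell_H|^2 \;=\; |u_H|^2\prod_{i\in I\,:\,L_i\subset H}|w_i|^2
\]
with $u_H$ local holomorphic and nonvanishing at $x$, which yields
\[
\pi^*f \;=\; g\cdot\prod_{i\in I}|w_i|^{-2\alpha_i}\,,\qquad \alpha_i := \sum_{H\,|\,L_i\subset H}a_H\,,
\]
with $g$ smooth and strictly positive near $x$.

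Next I would compute the Jacobian. The discrepancy formula $K_{\widetilde{X}}=\sigma^*K_X+(r-1)E$ for the blowup of a smooth codimension-$r$ centre, iterated along Proposition~\ref{prop:itblowup}, gives
\[
K_X \;=\; \pi^*K_{\CP^n}\;+\;\sum_{L\in\mLi^{\circ}}(\codim L-1)\,D_L\,,
\]
so that locally $\pi^*\omega_{\FS}^n = h\cdot\prod_{i\in I}|w_i|^{2(\codim L_i-1)}\cdot dV_{\mathrm{euc}}$ with $h$ smooth and positive (the exponent vanishes for $L_i\in\mH$, consistent with $\pi$ being a local isomorphism there). Combining,
\[
\pi^*\!\left(f^p\omega_{\FS}^n\right) \;=\; g^p h\cdot\prod_{i\in I}|w_i|^{-2(p\alpha_i-(\codim L_i-1))}\cdot dV_{\mathrm{euc}}\,.
\]

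Local $L^1$-integrability reduces by Fubini to the one-variable conditions $p\alpha_i<\codim L_i$ for every $i\in I$; at $p=1$ these are exactly the klt inequalities \eqref{eq:klt} applied to the $L_i\in\mLi$. Because $\mLi$ is finite, the threshold
\[
p_0 \;:=\; \min_{L\in\mLi}\frac{\codim L}{\sum_{H\,|\,L\subset H}a_H}
\]
satisfies $p_0>1$, and for every $p\in(1,p_0)$ each chart contributes a finite integral; a finite partition of unity on the compact manifold $X$ then gives $\int_{\CP^n}f^p\,\omega_{\FS}^n<\infty$. The main obstacle is the log-discrepancy bookkeeping in the second display, i.e.\ verifying simultaneously the formula for $K_X$ and the unit multiplicities of $D_L$ in $\pi^*H$; both facts are standard for iterated blowups of smooth centres, but need to be proved by careful induction along the labelling used in Proposition~\ref{prop:itblowup}, checking at each stage that the next centre is transverse to the current simple normal crossing divisor.
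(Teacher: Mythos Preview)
Your argument is correct, and it takes a genuinely different route from the paper's own proof. The paper works directly on \(\CP^n\): it introduces the stratification \(\Omega_0\subset\Omega_1\subset\cdots\subset\Omega_n=\CP^n\) by codimension of the intersection subspaces and proves \(f\in L^p_{\loc}(\Omega_i)\) by induction on \(i\), at each step taking spherical polar coordinates transverse to a stratum \(L^\circ\) and reducing to the one-variable condition \(p\cdot\sum_{H\supset L}a_H<\codim L\). Your approach instead passes to the log resolution \(X\), where the poles of \(f\) and the Jacobian of \(\pi\) both become monomial in simple normal crossing coordinates, and then Fubini gives the same one-variable conditions, now indexed by the irreducible subspaces. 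The two thresholds \(\min_{L\in\mL}\codim L/\sum_{H\supset L}a_H\) and \(\min_{L\in\mLi}\codim L/\sum_{H\supset L}a_H\) agree by the mediant inequality (cf.\ Lemma~\ref{lem:kltcondition}). What your method buys is a clean conceptual picture: the statement becomes the familiar fact that \((\CP^n,\Delta)\) klt is equivalent to the log-discrepancies along the DCP resolution all lying in \((-1,\infty)\), and the \(L^p\) exponent is exactly the log canonical threshold. What the paper's method buys is that it avoids invoking the resolution altogether and the attendant bookkeeping you flag (unit multiplicities of \(D_L\) in \(\pi^*H\), the discrepancy formula \(K_X=\pi^*K_{\CP^n}+\sum_{L\in\mLi^\circ}(\codim L-1)D_L\)); these are indeed standard and follow by the induction along Proposition~\ref{prop:itblowup} that you outline, but the stratification argument sidesteps them entirely.
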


\begin{proof}
	Let \(\mL\) be the collection of non-empty and proper subspaces \(\emptyset \neq L \subsetneq \CP^n\) obtained as intersection of members of \(\mH\). Let
	\[
	p^* = \min_{L \in \mL} \frac{ \codim L}{\sum\limits_{H | L \subset H} a_H} \,.
	\]
	The klt condition implies that \(p^* > 1\). Fix any \(1 < p < p^*\). We will prove that \(f \in L^p\).
	
	Consider the stratification of \(\CP^n\) given by the elements
	of \(\mL\). Specifically, let
	\[
	K_i = \bigcup_{L \in \mL^{n-i}} L
	\]
	where \(\mL^{n-i}\) is the subset of \(L \in \mL\) with \(\codim L = i\). E.g. \(K_1\) is union of all hyperplanes, \(K_2\) is the union of all codimension \(2\) subspaces and so on.
	Since every \(L \in \mL\) is contained in another \(L' \in \mL\) with \(\dim L' = \dim L + 1\), we have a decreasing filtration
	\[
	K_1 \supset K_2 \supset \ldots \supset K_n \supset K_{n+1} = \emptyset .
	\]
	The complements \(\Omega_i = \CP^n \setminus K_{i+1}\) form an increasing sequence of open sets
	\[
	\Omega_0 \subset \Omega_1 \subset \ldots \subset \Omega_n = \CP^n
	\]
	where \(\Omega_0 = (\CP^n)^{\circ}\) is the arrangement complement and 
	\[
	\Omega_i \setminus \Omega_{i-1} = K_i \setminus K_{i+1} = \bigcup_{\codim L = i} L^{\circ} 
	\]
	where the union is taken over all \(L \in \mL\) of codimension \(i\) and 
	\[
	L^{\circ} = L \setminus \bigcup_{H | L \not\subset H} (L \cap H) .
	\]
	
	\emph{Claim:}  \(f \in L^p_{\loc}(\Omega_i)\) for all \(0 \leq i \leq n\), where \(L^p_{\loc}(\Omega_i)\) denotes the space of locally \(L^p\) functions on \(\Omega_i\). \emph{Proof of the claim:} by induction on \(i\). For \(i=0\) the function \(f\) is smooth on \(\Omega_0\) and the statement is obvious. Let \(1 \leq i \leq n\) and assume that \(f \in L^p_{\loc}(\Omega_i)\). Let \(p \in \Omega_i \setminus \Omega_{i-1}\). We want to show that there is an open set \(U\) that contains \(p\) such that \(f \in L^p(U)\). By construction of the stratification, \(p \in L^{\circ}\) for some \(L \in \mL\) with \(\codim L = i\). We can assume that if \(H \in \mH\) intersects \(U\) then \(H \supset L\). Take linear coordinates \(z_1, \ldots, z_n\) centred at \(p\), so that \(L = \{z_1 = \ldots = z_i = 0\}\).
	Up to multiplication by a smooth positive factor, we can assume that
	\[
	f = \prod_{L \in \mH_L} |\ell_H|^{-2a_H}
	\]
	where \(\ell_H\) are linear functions on \(z_1, \ldots, z_i\) and \(|\cdot|\) is the usual absolute value on \(\C\). By Fubini's theorem it is enough to show that \(f\) is locally in \(L^p\) in a neighbourhood of the origin of the \(\C^i\) factor transversal to \(L\). At the same time, \(U \setminus L \subset \Omega_{i-1}\) and by induction hypothesis \(f \in L^p_{\loc}(U \setminus L)\).
	In particular, the integral of \(|f|^p\) on a sphere about \(0 \in \C^i\) is finite. Taking spherical polar coordinates on \(\C^i\) and by homogeneity of \(f\), we see that \(|f|^p\) is locally integrable around \(0 \in \C^i\) if and only if
	\begin{equation}\label{eq:lp}
		\int_{0}^{1} r^{-2 \cdot p \cdot a_L} r^{2i-1} dr < \infty
	\end{equation}
	where \(a_L = \sum\limits_{H | L \subset H} a_H\). Clearly, Equation \eqref{eq:lp} holds if and only if
	\[
	p \cdot a_L < i
	\]
	which is guaranteed for our choice of \(p\). This finishes the proof of the claim.
	
	The claim for \(i=n\) implies that \(f \in L^p_{\loc}(\CP^n)\). On the other hand, since \(\CP^n\) is compact, we have \(L^p_{\loc}(\CP^n) = L^p(\CP^n)\).
\end{proof}

We say that two volume forms \(dV_1\) and \(dV_2\) on a manifold are proportional, if they are equal up to a constant positive factor. We write this as \(dV_1 \propto dV_2\).

\begin{lemma}\label{lem:rf}
	Suppose that \(\omega\) is a K\"ahler metric on \((\CP^n)^{\circ}\) with
	\[
	\omega^n \propto f \cdot \omega_{\FS}^n \,.
	\]
	If the CY condition \eqref{eq:cy} is satisfied then \(\Ric(\omega) = 0\).
\end{lemma}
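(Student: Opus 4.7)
The strategy is to compute $\Ric(\omega)$ directly on the open set $(\CP^n)^{\circ}$, where both $\omega$ and $\omega_{\FS}$ are smooth K\"ahler metrics, and to show that it vanishes identically; this is a pointwise statement, so it suffices to work locally. The standard Ricci transformation rule
\[
\Ric(\omega) - \Ric(\omega_{\FS}) \,=\, - i \, \p\bar\p \log \left( \frac{\omega^n}{\omega_{\FS}^n} \right)
\]
holds on any open set where both metrics are smooth, because both sides equal $-i\p\bar\p \log$ of the local volume-density of the respective metrics. By hypothesis $\omega^n / \omega_{\FS}^n$ equals $f$ up to a positive constant, so the constant drops under $\log$ and we obtain
\[
\Ric(\omega) \,=\, \Ric(\omega_{\FS}) - i \, \p\bar\p \log f
\]
on $(\CP^n)^{\circ}$.

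Next I would plug in the two known ingredients. First, since $\omega_{\FS}$ is a K\"ahler--Einstein metric on $\CP^n$ with $[\omega_{\FS}] = 2\pi \, c_1(\mO_{\P^n}(1))$ and $c_1(T\CP^n) = (n+1) c_1(\mO_{\P^n}(1))$, we have the classical identity
\[
\Ric(\omega_{\FS}) \,=\, (n+1) \, \omega_{\FS} \,.
\]
Second, from the explicit formula $f = \prod_{H \in \mH} |\ell_H|^{-2a_H}$, we get $\log f = -\sum_H a_H \log |\ell_H|^2$, and on the arrangement complement $(\CP^n)^{\circ}$, where no $\ell_H$ vanishes, the Poincar\'e--Lelong formula has no singular contribution and reduces to the smooth computation
\[
i \, \p\bar\p \log |\ell_H|^2 \,=\, - \omega_{\FS}\,,
\]
which reflects the fact that $|\cdot|$ is a Hermitian metric on $\mO_{\P^n}(1)$ whose curvature form realizes $\omega_{\FS}$ in its Chern class (the conventions fixed in the paper).

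Combining these, on $(\CP^n)^{\circ}$ we compute
\[
- i \, \p\bar\p \log f \,=\, \sum_{H \in \mH} a_H \cdot i \, \p\bar\p \log |\ell_H|^2 \,=\, - \Bigl( \sum_{H \in \mH} a_H \Bigr) \, \omega_{\FS} \,=\, -(n+1)\, \omega_{\FS}\,,
\]
where the last equality uses the CY condition \eqref{eq:cy}. Adding this to $\Ric(\omega_{\FS}) = (n+1)\omega_{\FS}$ gives $\Ric(\omega) = 0$ on $(\CP^n)^{\circ}$, which is the claim. There is no real obstacle: the argument is a direct Chern--Weil/Poincar\'e--Lelong calculation, and the CY condition is used exactly once, to balance the Fubini--Study Ricci curvature against the logarithmic pole contribution of $f$.
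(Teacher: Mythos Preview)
Your proof is correct and follows essentially the same approach as the paper's own argument: both use the Ricci transformation rule to reduce to computing $i\p\bar\p\log f$, invoke $\Ric(\omega_{\FS})=(n+1)\omega_{\FS}$ and the curvature identity for $\log|\ell_H|^2$, and then apply the CY condition to cancel the two contributions. The only cosmetic difference is the choice of $\p\bar\p$ versus $\bar\p\p$ ordering (with the corresponding sign flip), which is just a convention.
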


\begin{proof}
	The identity \(i \bar{\p} \p \log (\omega^n / \omega_{\FS}^n) = \Ric(\omega) - \Ric(\omega_{\FS})\) together with 
	\[
	\Ric(\omega_{\FS}) = (n+1) \cdot \omega_{\FS}
	\] 
	imply that
	\begin{equation}\label{eq:ric1}
		\Ric(\omega) = (n+1) \cdot \omega_{\FS} + i \bar{\p}\p \log f \,.
	\end{equation}
	On the other hand, since \(i \bar{\p} \p \log |\ell_H|^2 = \omega_{\FS}\) on \(\{\ell_H\neq 0\}\), we have
	\begin{equation}\label{eq:ric2}
		i \bar{\p}\p \log f = -  \left(\sum_{H \in \mH} a_H \right) \cdot \omega_{\FS} \,.	
	\end{equation}
	Equations \eqref{eq:ric1} and \eqref{eq:ric2} give us
	\[
	\Ric(\omega) = \left( n+ 1 - \sum_{H \in \mH} a_H \right) \cdot \omega_{\FS} = 0 ,
	\]
	where the second equality holds because of the CY condition.
\end{proof}

Next, we finish the proof of Proposition \ref{prop:weakrf}.

\begin{proof}[Proof of Proposition \ref{prop:weakrf}]
	Since \(f \in L^p\) for some \(p>1\) and \(L^p \subset L^1\), the integral of \(f\) on \(\CP^n\) is finite. Let \(F = C \cdot f\) where \(C\) is a positive constant such that
	\[
	\int_{\CP^n} F \cdot \omega_{\FS}^n = \int_{\CP^n} \omega_{\FS}^n \,.
	\]
	Consider the complex Monge-Amp\`ere equation
	\begin{equation}\label{eq:ma}
		\left( \omega_{\FS} + i \p\bar{\p} \varphi \right)^n = F \cdot \omega_{\FS}^n
	\end{equation}
	for an unknown real valued function \(\varphi\).
	By \cite[Theorem 2.4.2 and Example 2 in p.91]{kolodziej}  there is a continuous solution \(\varphi\) to \eqref{eq:ma}\,. On the arrangement complement, \(\varphi\) is smooth and \(\omega_{\RF} = \omega_{\FS} + i \p\bar{\p} \varphi\) is a Ricci-flat K\"ahler metric satisfying the required conditions.
\end{proof}

\begin{remark}
By uniqueness of solutions to \eqref{eq:ma}, our Conjecture \ref{conjecture} implies that if \(Q(\ba) = 0\) then the Ricci-flat metric \(\omega_{\RF}\) is actually flat. 
Heuristically, an extension of the Chern-Weil formula \eqref{eq:myidentity} to metrics with conical singularities, should express
the `energy' or \(L^2\)-norm of the Riemann curvature tensor of \(\omega_{\RF}\) as a function of the weights \(E(\ba)\) so that \(\omega_{\RF}\) is flat precisely when \(E(\ba) = 0\). However, little is known on the behaviour of \(\omega_{\RF}\) near the support of \(\mH\), see \cite{dbs} for the case \(n=2\). In particular, it is not known whether \(|\textup{Riem}(\omega_{\RF})|^2\) is locally integrable.
\end{remark}

\newpage
\section*{List of Symbols}
\addcontentsline{toc}{section}{List of Symbols}

\small{
\begin{longtable}{l l}
	\textbf{Symbol} & \textbf{Description} \\
	\hline 
	\(a_L\) & weight at the irreducible subspace \(L\), Definition \ref{def:weightaL} \\
	
	\(B_H\) &   Definition \ref{def:bh} \\
	\(b_L\) &  polarization coefficients, Notation \ref{not:bL} \\
	
    \(C\) & semistable cone, Definition \ref{def:semistabcone} \\
	\(C^{\circ}\) & stable cone, Definition \ref{def:stabcone}  \\
	\(\gamma_L\) &  Poincar\'e dual of \(D_L\), Notation \ref{not:gammaL} \\
	\((\CP^n)^{\circ}\) & arrangement complement, Section \ref{sec:habasicdef} \\
	
	\(D_L\) & irreducible component of \(D = \pi^{-1}(\mH)\) with \(\pi(D_L) = L\), Theorem \ref{thm:conproc} \\
	
	\(\mE\) & pullback tangent bundle, Section \ref{sec:pardef} \\
	\(\mE_{*}\) & parabolic bundle on \((X, D)\), Definition \ref{def:parstr} \\
	
	\(F^i_a\) & filtration of \(\mE|_{D_i}\) by vector subbundles, Definition \ref{def:parbun} \\
	\(F_{\ba}\) & intersection of a tuple of filtrations, Notation \ref{not:Fa} and Notation \ref{not:Fa2} \\
	
	\(\Gr^i_a\) & graded component, Definition \ref{def:gr} \\
	\(\Gr^{i, j}_{a, b}\) & Definition \ref{def:grij} \\
	
	\(h\) &  generator of \(H^2(\CP^n, \Z)\) equal to  \(c_1(\mO_{\P^n}(1))\), Notation \ref{not:h} \\
	
	\(\mH\) &  an arrangement of hyperplanes \(H \subset \CP^n\) \\
	
	\((\mH, \ba)\) & weighted arrangement, Definition \ref{def:weightedarr} \\

	\(\mH^L\) & induced arrangement, Section \ref{sec:habasicdef} \\
	\(\mH_L\) & localization of \(\mH\) at \(L\), Section \ref{sec:habasicdef} \\
	
	\(H \pitchfork \mV\) & hyperplane transverse to a distribution, Definition \ref{def:HtransV} \\
	
	\(\Irr(L)\) & irreducible components of \(L \in \mL\), Notation \ref{not:IrrL} \\

	\(L^{\bc}\) &  linear subspace in \(\C^{n+1}\) that projects to \(L \subset \CP^n\), Section \ref{sec:habasicdef} \\
	\(L^{\circ}\) & complement of the induced arrangement \(\mH^L\)\,, Section \ref{sec:habasicdef}  \\
	\(L^{\perp}\) & annihilator of \(L\), Notation \ref{not:ann1} and Notation \ref{not:ann2} \\

	\(\mL\) &  poset of non-empty and proper intersections of \(\mH\), Section \ref{sec:habasicdef} \\
	\(\mLi\) & non-empty and proper irreducible subspaces of \(\mH\), Notation \ref{not:Lirr}  \\
	\(\mLi^{\circ}\) & non-empty irreducible subspaces of codimension \(\geq 2\), Notation \ref{not:Lirrcirc} \\
	\(L_1 \pitchfork L_2\) & reducible intersection of two irreducible subspaces, Notation \ref{not:transint} \\

	\(m_L\) & multiplicity of \(L\), Section \ref{sec:habasicdef} \\
	
	\(N\) & number of hyperplanes, \(N = |\mH|\) \\
	\(O(k^{n-2})\) & Notation \ref{not:On} \\
	
	\(P\) & matroid polytope, Definition \ref{def:matpol} \\
	\(P_k\) & polarization on \(X\), Lemma \ref{lem:polarization} \\
	
	\(Q\) & quadratic form of \(\mH\), Definition \ref{def:quadraticform} \\

	\(s\) & sum of all weights \(a_H\), Equation \eqref{eq:s} \\
	\(s_H\) & sum of weights on the induced arrangement, Equation \eqref{eq:sh} \\

	\(\Tan(\mV)\) &  set of all hyperplanes tangent to a distribution \(\mV \subset \CP^n\), Definition \ref{def:disttanhyp} \\


	\(X\) & minimal De Concini-Procesi model -or resolution- of \(\mH\), Definition \ref{def:resolution} \\
	\hline &
\end{longtable}
}
\clearpage

\addcontentsline{toc}{section}{References}
\bibliographystyle{alpha}
\bibliography{refs}

\newcommand{\etalchar}[1]{$^{#1}$}
\begin{thebibliography}{BLVS{\etalchar{+}}99}

\bibitem[Ale15]{alexeev}
Valery Alexeev.
\newblock {\em Moduli of weighted hyperplane arrangements}.
\newblock Advanced Courses in Mathematics. CRM Barcelona.
  Birkh\"auser/Springer, Basel, 2015.

\bibitem[Arn99]{arnold}
V.~I. Arnold.
\newblock Topological problems in wave propagation theory and topological
  economy principle in algebraic geometry.
\newblock In {\em The {A}rnoldfest ({T}oronto, {ON}, 1997)}, volume~24 of {\em
  Fields Inst. Commun.}, pages 39--54. Amer. Math. Soc., 1999.

\bibitem[BGW03]{borovikgelfand}
Alexandre~V. Borovik, I.~M. Gelfand, and Neil White.
\newblock {\em Coxeter matroids}, volume 216 of {\em Progress in Mathematics}.
\newblock Birkh\"auser Boston, Inc., Boston, MA, 2003.

\bibitem[BLVS{\etalchar{+}}99]{ziegler}
Anders Bj\"orner, Michel Las~Vergnas, Bernd Sturmfels, Neil White, and
  G\"unter~M. Ziegler.
\newblock {\em Oriented matroids}, volume~46 of {\em Encyclopedia of
  Mathematics and its Applications}.
\newblock Cambridge University Press, Cambridge, second edition, 1999.

\bibitem[CHL05]{chl}
Wim Couwenberg, Gert Heckman, and Eduard Looijenga.
\newblock Geometric structures on the complement of a projective arrangement.
\newblock {\em Publ. Math. Inst. Hautes \'{E}tudes Sci.}, 101:69--161, 2005.

\bibitem[CP08]{cukiermanpereira}
Fernando Cukierman and Jorge~Vit{\'o}rio Pereira.
\newblock Stability of holomorphic foliations with split tangent sheaf.
\newblock {\em American Journal of Mathematics}, 130(2):413--439, 2008.

\bibitem[Cun12]{cuntz}
M.~Cuntz.
\newblock Simplicial arrangements with up to 27 lines.
\newblock {\em Discrete Comput. Geom.}, 48(3):682--701, 2012.

\bibitem[dBP21]{pkc}
Martin de~Borbon and Dmitri Panov.
\newblock Polyhedral {K}\"{a}hler cone metrics on $\mathbb{C}^n$ singular at
  hyperplane arrangements.
\newblock {\em ar{X}iv:2106.13224}, 2021.

\bibitem[dBP22]{dimasphere}
Martin de~Borbon and Dmitri Panov.
\newblock Parabolic bundles and spherical metrics.
\newblock {\em Proceedings of the American Mathematical Society},
  150(12):5459--5472, 2022.

\bibitem[dBP25]{cpn}
Martin de~Borbon and Dmitri Panov.
\newblock Polyhedral {K}\"{a}hler metrics on $\mathbb{CP}^n$.
\newblock {\em arXiv:2510.17447}, 2025.

\bibitem[dBP26]{dunklnew}
Martin de~Borbon and Dmitri Panov.
\newblock A numerical characterization of {D}unkl systems.
\newblock {\em ar{X}iv:2601.15430}, 2026.

\bibitem[dBS23]{dbs}
Martin de~Borbon and Cristiano Spotti.
\newblock Calabi-{Y}au metrics with conical singularities along line
  arrangements.
\newblock {\em J. Differential Geom.}, 123(2):195--239, 2023.

\bibitem[DCP95]{conciniprocesi}
C.~De~Concini and C.~Procesi.
\newblock Wonderful models of subspace arrangements.
\newblock {\em Selecta Math. (N.S.)}, 1(3):459--494, 1995.

\bibitem[Dol03]{dolgachev}
Igor Dolgachev.
\newblock {\em Lectures on invariant theory}, volume 296 of {\em London
  Mathematical Society Lecture Note Series}.
\newblock Cambridge University Press, Cambridge, 2003.

\bibitem[Fuj21]{fujita}
Kento Fujita.
\newblock K-stability of log {F}ano hyperplane arrangements.
\newblock {\em J. Algebraic Geom.}, 30(4):603--630, 2021.

\bibitem[Ful13]{fulton}
William Fulton.
\newblock {\em Intersection theory}.
\newblock Springer Science \& Business Media, 2013.

\bibitem[GT22]{guenancia}
Henri Guenancia and Behrouz Taji.
\newblock Orbifold stability and {M}iyaoka-{Y}au inequality for minimal pairs.
\newblock {\em Geometry \& Topology}, 26(4):1435--1482, 2022.

\bibitem[Har80]{hartshorne2}
Robin Hartshorne.
\newblock Stable reflexive sheaves.
\newblock {\em Math. Ann.}, 254(2):121--176, 1980.

\bibitem[Har13]{hartshorne}
Robin Hartshorne.
\newblock {\em Algebraic geometry}, volume~52.
\newblock Springer Science \& Business Media, 2013.

\bibitem[Hir66]{hirzebruch}
F.~Hirzebruch.
\newblock {\em Topological methods in algebraic geometry}.
\newblock Die Grundlehren der mathematischen Wissenschaften. Springer-Verlag
  New York, Inc., New York, 1966.

\bibitem[Hir83]{hirzebruch3}
F.~Hirzebruch.
\newblock Arrangements of lines and algebraic surfaces.
\newblock In {\em Arithmetic and geometry, {V}ol. {II}}, volume~36 of {\em
  Progr. Math.}, pages 113--140. Birkh\"auser, Boston, MA, 1983.

\bibitem[Hir85]{hirzebruch2}
F.~Hirzebruch.
\newblock Algebraic surfaces with extremal {C}hern numbers (based on a
  dissertation by {T}. {H}\"{o}fer, {B}onn, 1984).
\newblock {\em Uspekhi Mat. Nauk}, 40(4(244)):121--129, 1985.

\bibitem[Hun06]{hunt}
Bruce Hunt.
\newblock {\em The geometry of some special arithmetic quotients}.
\newblock Springer, 2006.

\bibitem[IS06]{simpson}
Jaya~N Iyer and Carlos~T Simpson.
\newblock The {C}hern character of a parabolic bundle, and a parabolic
  {R}eznikov theorem in the case of finite order at infinity.
\newblock {\em ar{X}iv: math/0612144}, 2006.

\bibitem[Kap93]{kapranov}
M.~M. Kapranov.
\newblock Chow quotients of {G}rassmannians. {I}.
\newblock In {\em I. {M}. {G}elfand {S}eminar}, volume 16, Part 2 of {\em Adv.
  Soviet Math.}, pages 29--110. Amer. Math. Soc., Providence, RI, 1993.

\bibitem[KLM09]{kapovich}
Michael Kapovich, Bernhard Leeb, and John Millson.
\newblock Convex functions on symmetric spaces, side lengths of polygons and
  the stability inequalities for weighted configurations at infinity.
\newblock {\em J. Differential Geom.}, 81(2):297--354, 2009.

\bibitem[Kol98]{kolodziej}
Slawomir Kolodziej.
\newblock The complex {M}onge-{A}mp\`ere equation.
\newblock {\em Acta Math.}, 180(1):69--117, 1998.

\bibitem[Lan03]{langer}
Adrian Langer.
\newblock Logarithmic orbifold {E}uler numbers of surfaces with applications.
\newblock {\em Proceedings of the {L}ondon {M}athematical {S}ociety},
  86(2):358--396, 2003.

\bibitem[Laz17]{lazarsfeld}
Robert~K Lazarsfeld.
\newblock {\em Positivity in algebraic geometry {I}}, volume~48.
\newblock Springer, 2017.

\bibitem[Li09]{lili}
Li~Li.
\newblock Wonderful compactification of an arrangement of subvarieties.
\newblock {\em Michigan Math. J.}, 58(2):535--563, 2009.

\bibitem[Li21]{li}
Chi Li.
\newblock On the stability of extensions of tangent sheaves on
  {K}{\"a}hler-{E}instein {F}ano/{C}alabi-{Y}au pairs.
\newblock {\em Mathematische Annalen}, 381:1943--1977, 2021.

\bibitem[McO88]{mcowen}
Robert~C McOwen.
\newblock Point singularities and conformal metrics on {R}iemann surfaces.
\newblock {\em Proceedings of the American Mathematical Society},
  103(1):222--224, 1988.

\bibitem[Moc06]{mochizuki}
Takuro Mochizuki.
\newblock Kobayashi-{H}itchin correspondence for tame harmonic bundles and an
  application.
\newblock {\em Ast{\'e}risque}, 309:1--117, 2006.

\bibitem[Oda12]{odaka}
Yuji Odaka.
\newblock The {C}alabi conjecture and {K}-stability.
\newblock {\em International Mathematics Research Notices},
  2012(10):2272--2288, 2012.

\bibitem[OSS11]{okonek}
Christian Okonek, Michael Schneider, and Heinz Spindler.
\newblock {\em Vector bundles on complex projective spaces}.
\newblock Modern Birkh\"{a}user Classics, 2011.

\bibitem[OT92]{orlikterao}
Peter Orlik and Hiroaki Terao.
\newblock {\em Arrangements of hyperplanes}, volume 300.
\newblock Springer Science \& Business Media, 1992.

\bibitem[OT07]{othypergeometric}
Peter Orlik and Hiroaki Terao.
\newblock {\em Arrangements and hypergeometric integrals}, volume~9 of {\em MSJ
  Memoirs}.
\newblock Mathematical Society of Japan, Tokyo, second edition, 2007.

\bibitem[Oxl11]{oxley}
James Oxley.
\newblock {\em Matroid theory}, volume~21 of {\em Oxford Graduate Texts in
  Mathematics}.
\newblock Oxford University Press, Oxford, second edition, 2011.

\bibitem[Pan09]{panov}
Dmitri Panov.
\newblock Polyhedral {K}\"{a}hler manifolds.
\newblock {\em Geom. Topol.}, 13(4):2205--2252, 2009.

\bibitem[Pan18]{dima}
Dmitri Panov.
\newblock Real line arrangements with the {H}irzebruch property.
\newblock {\em Geom. Topol.}, 22(5):2697--2711, 2018.

\bibitem[PP16]{dimapetrunin}
Dima Panov and Anton Petrunin.
\newblock Ramification conjecture and {H}irzebruch’s property of line
  arrangements.
\newblock {\em Compositio Mathematica}, 152(12):2443--2460, 2016.

\bibitem[RS19]{rubermanstarkston}
Daniel Ruberman and Laura Starkston.
\newblock Topological realizations of line arrangements.
\newblock {\em Int. Math. Res. Not. IMRN}, (8):2295--2331, 2019.

\bibitem[Sch03]{schrijver}
Alexander Schrijver.
\newblock {\em Combinatorial optimization: polyhedra and efficiency}, volume 24
  (2).
\newblock Springer, 2003.

\bibitem[Sha94]{shafarevich}
Igor~R. Shafarevich.
\newblock {\em Basic algebraic geometry. 1}.
\newblock Springer-Verlag, Berlin, second edition, 1994.

\bibitem[Sta07]{stanley}
Richard~P. Stanley.
\newblock An introduction to hyperplane arrangements.
\newblock In {\em Geometric combinatorics}, volume~13 of {\em IAS/Park City
  Math. Ser.}, pages 389--496. Amer. Math. Soc., Providence, RI, 2007.

\bibitem[STV95]{stv}
Vadim Schechtman, Hiroaki Terao, and Alexander Varchenko.
\newblock Local systems over complements of hyperplanes and the {K}ac-{K}azhdan
  conditions for singular vectors.
\newblock {\em J. Pure Appl. Algebra}, 100(1-3):93--102, 1995.

\bibitem[Tro91]{troyanov}
Marc Troyanov.
\newblock Prescribing curvature on compact surfaces with conical singularities.
\newblock {\em Transactions of the American Mathematical Society},
  324(2):793--821, 1991.

\bibitem[Voi07a]{voisin}
Claire Voisin.
\newblock {\em Hodge theory and complex algebraic geometry. {I}}, volume~76 of
  {\em Cambridge Studies in Advanced Mathematics}.
\newblock Cambridge University Press, 2007.

\bibitem[Voi07b]{voisin2}
Claire Voisin.
\newblock {\em Hodge theory and complex algebraic geometry. {II}}, volume~77 of
  {\em Cambridge Studies in Advanced Mathematics}.
\newblock Cambridge University Press, 2007.

\bibitem[Xu24]{xu}
Chenyang Xu.
\newblock {\em K-stability of {F}ano varieties}.
\newblock online, Version 4/30/2024.
\newblock \url{https://web.math.princeton.edu/~chenyang/Kstabilitybook.pdf}.

\end{thebibliography}

\Address

\end{document}